\documentclass[11pt, reqno]{amsart}

\usepackage{amsmath}
\usepackage{amssymb}
\usepackage{latexsym}
\usepackage{amsthm}
\usepackage{mathrsfs}
\usepackage{color}
\usepackage{amsfonts}
\usepackage{array}
\usepackage{setspace}
\usepackage{mathtools}
\usepackage{tikz}
\usepackage{tikz-cd}
\usepackage{quiver}
\usepackage{hyperref}
\usetikzlibrary{arrows}
\usetikzlibrary{knots,patterns}
\usepackage{graphicx}
\usepackage[all]{xy}
\usepackage{mathrsfs} 
\usepackage[marginratio=1:1,height=584pt,width=500pt,tmargin=117pt]{geometry}
\usepackage{enumerate}
\usepackage{enumitem}

\usetikzlibrary{decorations.markings}
\tikzset{double line with arrow/.style args={#1,#2}{decorate,decoration={markings,%
mark=at position 0 with {\coordinate (ta-base-1) at (0,1pt);
\coordinate (ta-base-2) at (0,-1pt);},
mark=at position 1 with {\draw[#1] (ta-base-1) -- (0,1pt);
\draw[#2] (ta-base-2) -- (0,-1pt);
}}}}
\tikzset{Equal/.style={-,double line with arrow={-,-}}}

\numberwithin{itemcounter}{subsection}

\makeatletter
\newcommand{\nolisttopbreak}{\vspace{\topsep}\nobreak\@afterheading}
\makeatother

\makeatletter
\newcommand{\rellab}[2]{%
   \protected@write \@auxout {}{\string \newlabel {#1}{{#2}{\thepage}{#2}{#1}{}} }%
   \hypertarget{#1}{#2}
}
\makeatother

\def\det{\mathrm{det}}

\def\Z{\mathbb{Z}}
\def\Q{\mathbb{Q}}

\def\BBH{\Lambda}
\def\H{\mathrm{H}}
\def\IH{\mathrm{IH}}
\def\K{\mathrm{K}}
\def\N{\mathbb{N}}

\def\C{\mathbb{C}}
\def\G{\mathbb{G}}

\def\g{\mathfrak{g}}
\def\U{\mathbf{U}}
\def\A{\mathbf{A}}
\def\wotimes{{{\widehat{\otimes}}}}

\def\pmin{p^{\text{min}}\hspace{-0.01in}}
\def\pmax{p^{\text{max}}\hspace{-0.01in}}

\def\NS{\mathrm{NS}}
\def\red{\mathrm{red}}
\def\rd{\mathrm{rd}}

\def\Pic{\mathrm{Pic}}

\def\BB{\mathrm{B}}
\def\CC{\mathrm{C}}

\def\HH{\mathrm{H}}

\def\BM{{\mathrm{BM}}}

\def\calA{\mathcal{A}}
\def\calB{\mathcal{B}}
\def\calC{\mathcal{C}}

\def\calF{\mathcal{F}}
\def\calH{\mathcal{H}}
\def\calG{\mathcal{G}}
\def\calJ{\mathcal{J}}
\def\calK{\mathcal{K}}
\def\calL{\mathcal{L}}
\def\calJ{\mathcal{J}}

\def\calN{\mathcal{N}}
\def\calP{\mathcal{P}}

\def\calX{\mathcal{X}}
\def\calY{\mathcal{Y}}
\def\calZ{\mathcal{Z}}

\def\bfA{\mathbf{A}}
\def\bfH{\mathbf{H}}

\DeclareMathOperator{\hdg}{\mathbf{h}}

\def\frakg{\mathfrak{g}}

\def\calM{{{\mathfrak{M}}}}

\def\frakU{\mathfrak{U}}

\def\HDT{{\operatorname{HDT}\nolimits}}
\def\BPS{{\operatorname{BPS}\nolimits}}

\def\HN{{\operatorname{HN}\nolimits}}
\def\an{{\operatorname{an}\nolimits}}
\def\Perv{{\operatorname{Perv}\nolimits}}
\def\CC{{\operatorname{CC}\nolimits}}

\def\ch{{\operatorname{ch}\nolimits}}
\def\id{{\operatorname{id}\nolimits}}
\def\Hom{{\operatorname{Hom}\nolimits}}
\def\supp{{\operatorname{supp}\nolimits}}

\def\Ext{{\operatorname{Ext}\nolimits}}
\def\Exp{{\operatorname{Exp}\nolimits}}
\def\ker{{\operatorname{ker}\nolimits}}

\def\deg{{\operatorname{deg}\nolimits}}
\def\rk{{\operatorname{rk}\nolimits}}
\def\Ind{{\operatorname{Ind}\nolimits}}
\def\Im{{\operatorname{Im}\nolimits}}

\def\Aff{{\operatorname{Aff}\nolimits}}
\def\Perf{{\operatorname{Perf}\nolimits}}
\def\Spc{{\operatorname{Spc}\nolimits}}
\def\CAlg{{\operatorname{CAlg}\nolimits}}

\def\gr{{\operatorname{gr}\nolimits}}

\def\cl{{\operatorname{cl}\nolimits}}
\def\c{{\operatorname{c}\nolimits}}
\def\res{{\operatorname{res}\nolimits}}
\def\m{{\operatorname{m}\nolimits}}
\def\snilp{{\operatorname{snilp}\nolimits}}
\def\prim{{\operatorname{prim}\nolimits}}
\def\pr{{\operatorname{pr}\nolimits}}

\def\ad{{{\operatorname {ad}}}}
\DeclareMathOperator{\Spec}{Spec}
\def\Sym{{{\operatorname {Sym}}}}
\def\prim{{{\operatorname {prim}}}}

\DeclareMathOperator{\vdim}{vdim}
\def\vd{{{{\operatorname{vd}}}}}
\def\nil{{{{\operatorname{nil}}}}}
\def\snil{{{{\operatorname{snilp}}}}}
\def\DT{{{{\operatorname{DT}}}}}
\def\Vect{{{{\mathsf{Vect}}}}}
\def\p{{{\operatorname{p}}}}
\def\muss{{\mu-{{{\operatorname{ss}}}}}}
\DeclareMathOperator{\Map}{Map}

\newcommand{\BGM}{\mathrm{B}\mathbb{G}_{\mathrm{m}}}
\newcommand{\GM}{\mathbb{G}_{\mathrm{m}}}
\newcommand{\Grad}{\Grad}
\newcommand{\Filt}{\Filt}
\newcommand{\wM}{\widetilde{M}}
\newcommand{\wev}{\widetilde{\ev}}
\newcommand{\wgr}{\widetilde{\gr}}

\newcommand{\Pro}{\mathsf{Pro}}
\newcommand{\bigboxtimes}{\mathop{\scalebox{1.5}{$\boxtimes$}}}
\newcommand{\Lim}[1]{{\displaystyle{\mathop{``\mathrm{lim}"}_{#1}}}}

\newcommand{\RGpro}{R \Gamma_\pro}
\newcommand{\RGproBM}{ \RGpro^{\!\!\!\!\!\!\!\!\!\mathrm{BM}}}
\newcommand{\cBPS}{\mathcal{BPS}}
\newcommand{\uum}{\underline{m}}

\newcommand{\uui}{{\underline{i}}}

\newcommand{\uunu}{{\underline{\nu}}}
\newcommand{\uugamma}{{\underline\gamma}}
\newcommand{\uualpha}{{\underline\alpha}}

\newcommand{\uuell}{{\underline\ell}}

\DeclareMathOperator{\Hilb}{Hilb}

\def\MHM{{\mathsf{MHM}}}
\def\MHMmon{{\mathsf{MHM^{mon}}}}
\def\Dhmon{{ \Dh^{\mathsf{mon}}}}
\def\Dhcmon{{ \Dhc^{\mathsf{mon}}}}
\def\Dhcmonb{{ \Dhc^{\mathsf{mon}, \mathsf{b}}}}
\newcommand{\Dh}{{\mathsf{D}}_{\mathsf{H}}} 
\newcommand{\Dhc}{{\mathsf{D}}_{\mathsf{H}, \mathsf{c}}} 

\newcommand{\Dhcb}{{\mathsf{D}}_{\mathsf{H}, \mathsf{c}}^{\mathsf{b}}}
\newcommand{\Dcb}{{\mathsf{D}}_{\mathsf{c}}^{\mathsf{b}}}
\def\RGpro{{{R\Gamma_{\mathrm{{pro}}}}}}

\def\calF{\mathcal{F}}
\def\calG{\mathcal{G}}
\def\calM{\mathcal{M}}
\def\calE{\mathcal{E}}
\def\calO{\mathcal{O}}
\def\calU{\mathcal{U}}
\def\calV{\mathcal{V}}

\def\D{\mathbb{D}}
\def\G{\mathbb{G}}
\def\R{\mathbb{R}}
\def\A{\mathbb{A}}
\def\L{\mathbb{L}}
\def\Q{\mathbb{Q}}
\def\N{\mathbb{N}}
\def\Z{\mathbb{Z}}
\def\G{\mathbb{G}}

\def\pro{{{\mathrm{pro}}}}
\def\Filt{{{\mathrm{Filt}}}}
\def\Grad{{{\mathrm{Grad}}}}
\def\ev{\mathrm{ev}}
\def\rk{\mathrm{rk}}
\def\Td{\mathrm{Td}}
\def\Bun{\mathrm{Bun}}
\def\ch{\mathrm{ch}}
\def\bfH{\mathbf{H}}
\def\Z{\mathbb{Z}}

\def\ex{{\mathrm{ex}}}

\def\Ext{\mathrm{Ext}}
\def\Hom{\mathrm{Hom}}
\def\taut{\mathrm{taut}}
\def\vir{\mathrm{vir}}
\def\ss{{\mathrm{ss}}}
\def\st{{\mathrm{st}}}
\def\sst{{{\text{-}\mathrm{ss}}}}
\def\Coh{{{\mathsf{Coh}}}}
\def\Kthree{{{\mathsf{K3}}}}
\def\Abelian{{{\mathsf{Ab}}}}

\theoremstyle{plain}
\newtheorem*{thm}{Theorem}

\newtheorem{theorem}{Theorem}[section]
\newtheorem{lemma}[theorem]{Lemma}
\newtheorem{lemma-definition}[theorem]{Lemma-Definition}
\newtheorem{definition-lemma}[theorem]{Definition-Lemma}

\newtheorem{proposition}[theorem]{Proposition}
\newtheorem{conjecture}[theorem]{Conjecture}
\newtheorem{corollary}[theorem]{Corollary}
\theoremstyle{definition}
\newtheorem{definition}[theorem]{Definition}
\theoremstyle{remark}
\newtheorem{remark}[theorem]{Remark}
\newtheorem{example}[theorem]{Example}
\numberwithin{equation}{section}

\def\BGM{{{\mathrm{B}\G_m}}}

\newcommand{\kinjo}[1]{\color{magenta}{{\bf kinjo:} \small{#1}}}

\newcommand{\orange}[1]{{\color{orange} #1}} 

\title[Hecke operators on symplectic surfaces and $\chi$-independence]{Hecke operators on symplectic surfaces and $\chi$-independence}
\author{Ben Davison, Lucien Hennecart, Tasuki Kinjo, Olivier Schiffmann and Eric Vasserot}
\date{}

\address{B. Davison: School of Mathematics and Maxwell Institute for Mathematical Sciences, University of Edinburgh, Edinburgh EH9 3FD, United Kingdom}
\email{ben.davison@ed.ac.uk}

\address{L. Hennecart: LAMFA, Universit\'e de Picardie Jules Verne, 80000 Amiens, France}
\email{lucien.hennecart@u-picardie.fr}

\address{T. Kinjo: Research Institute for Mathematical Sciences, Kyoto University, Kyoto 606-8502, Japan}
\email{tkinjo@kurims.kyoto-u.ac.jp}

\address{O. Schiffmann: Laboratoire de Mathématiques d'Orsay, Bât 307, Faculté des Sciences d'Orsay, Université Paris-Saclay, 91405 Orsay, France AND Simion Stoilow Institute of Mathematics, P.O. Box 1-764, RO-014700 Bucharest, Romania}
\email{olivier.schiffmann@universite-paris-saclay.fr}

\address{E. Vasserot: IMJ-PRG, UMR7586, Universit\'e Paris-Cit\'e, 75013 Paris, France}
\email{vasserot@imj-prg.fr}

\begin{document}

\maketitle

\begin{abstract}
We prove Toda’s $\chi$-independence conjecture for the BPS cohomology of moduli spaces of one-dimensional sheaves on quasi-projective symplectic surfaces, relative to the Chow variety. We also identify the BPS Lie algebra associated with one-dimensional Mukai vectors with the subspace of tautological classes, giving an extension of Markman’s tautological generation theorem from primitive to arbitrary Mukai vectors.

The main structural input is a bialgebra structure on the cohomological Hall algebra of coherent sheaves on a quasi-projective symplectic surface $S$. The coproduct is obtained, by dimensional reduction, from a factorization coproduct for 3d cohomological Hall algebras, and gives rise to a global BPS Lie algebra attached to the stack of coherent sheaves on $S$.

The link between this structure and the applications to $\chi$-independence and tautological generation is provided by Hecke operators on BPS cohomology, which modify one-dimensional sheaves by zero-dimensional quotients. To make this construction work, we prove that there is an identification between the affinized BPS cohomology of the semistable locus and the primitive part of the coproduct on the entire moduli stack.
\end{abstract}

\setcounter{tocdepth}{1}
\tableofcontents

\section{Introduction}

Let $(S,H)$ be a polarized complex projective K3 or Abelian surface. 
For each effective Mukai vector  $\nu \in \H^{\ev}(S,\Z)$, let $M_S(\nu)$ be the moduli space of polystable coherent sheaves of Mukai 
vector $\nu$.  The geometry of $M_S(\nu)$ has attracted a lot of study, starting with the seminal work of 
Mukai \cite{MukaiInvent84}, who showed that if $\nu$ is {primitive} and $H$-generic, 
then $M_S(\nu)$ is a smooth projective symplectic 
variety of dimension $\nu\cdot \nu + 2$. 
Subsequently, Yoshioka described $M_S(\nu)$ explicitly in \cite{Yoshioka}
when the class $\nu$ is primitive and $H$-generic. For $S$ Abelian, the variety $M_S(\nu)$ is either an Abelian 
surface if $\nu \cdot \nu =0$, or it is deformation equivalent to $\widehat{S}\times \text{Hilb}^{\nu \cdot \nu / 2}(S)$ 
when $\nu \cdot \nu >0$, where $\widehat{S}$ is the dual Abelian surface. For $S$ a K3 surface, $M_S(\nu)$ is deformation equivalent to $\text{Hilb}^{\nu \cdot \nu / 2+1}(S)$ when $\nu$ is either of 
positive rank or an ample curve class. In particular, the Betti numbers and the mixed Hodge polynomial of $M_S(\nu)$ are then 
fully determined, thanks to \cite{Gottsche-Soergel}.

If $\nu=n \nu_\pr$ is a multiple of a primitive vector $\nu_\pr$, with $n>1$, i.e., in the presence of strictly semistable coherent sheaves, the moduli 
space $M_S(\nu)$ becomes singular in general. 
If $\nu \cdot \nu\leqslant 0$ then $M_S(\nu)$ is either a point or isomorphic to the 
symmetric power
$\Sym^n(S')$ of the Abelian (resp. K3) surface $S'=M_S(\nu_\pr)$. 
If $\nu \cdot \nu >0$, in only a few notable instances 
does $M_S(\nu)$ admit, as discovered by O'Grady, a symplectic resolution, see \cite{OGrady,KaledinLehnSorger}. 
One is thus led to consider the intersection homology of $M_S(\nu)$, to better understand its singularities.  Since stabilizers vary, in the presence of strictly semistable coherent sheaves, we are also naturally led to the study of the cohomology of the stack of semistable coherent sheaves. This stack has a quasi-smooth derived enhancement which is a 0-shifted symplectic stack, 
see \cite{BD21}.

One of the main outcomes of the present work is a computation of both the intersection homology of $M_S(\nu)$ and the mixed Hodge 
polynomial of $\calM^\ss_S(\nu)$ if $\nu=(0,\alpha,n\delta)$ with $n\delta\in \H^4(S,\Z)$ and $\alpha\in \H^2(S,\Z)$ the class of an effective one-cycle on $S$.  See \S \ref{formulae_sec} for exact formulae.  The theory for zero-dimensional sheaves (i.e. $\alpha=0$) is 
well-known.  We expect that one can reduce the case for general $\nu$ to the case of one-dimensional sheaves up to deformation equivalence using Fourier--Mukai transforms \cite{BrThesis, Orlov} and the minimal model program for moduli of sheaves on K3 surfaces \cite{BM14}, extending our work to arbitrary 
$\nu$. The case in which $\nu=(0,\alpha,n\delta)$ is especially important due to its close relationship to Gromov--Witten theory and to the general problem of 
counting curves on Calabi--Yau (=CY) threefolds.

Our approach  is through the theory of Donaldson--Thomas (=DT) invariants, BPS cohomology, BPS Lie 
algebras and cohomological Hall algebras (=CoHAs). Thanks to the technique of dimensional reduction, see \cite{BenDimRed,K}, our work in fact 
settles one important case of Toda's $\chi$-independence conjecture for Gopakumar--Vafa invariants \cite{T23} and provides a 
natural extension of Markman's theorem on the generation of the cohomology of moduli spaces of stable sheaves on symplectic 
surfaces from primitive to arbitrary Mukai vectors. Our main new conceptual tools are the construction of Hecke operators acting on the direct sum of the BPS cohomologies for all Mukai vectors corresponding to a fixed curve class, 
and the related construction of cocommutative coproducts on cohomological Hall algebras of not necessarily semistable coherent sheaves.

Before stating our results in detail in \S \ref{sec_main_theorems}, we briefly review the relevant notions. To improve the readability of this introduction, we do not include all 
the grading shifts.

\medskip

\subsection{Donaldson--Thomas theory and BPS cohomology}  Originally designed to provide a way to count holomorphic curves
or ideal sheaves on a 
projective complex 3CY manifold $X$, see~\cite{Th00},
Donaldson--Thomas theory may also be formulated for a smooth complex surface $S$. The main 
geometric object of study is the moduli stack $\calM^\ss_S(\nu)$ of Gieseker $H$-semistable coherent sheaves of Mukai 
vector $\nu$. This stack admits a good moduli space $p_S\colon \calM^\ss_S(\nu) \to M_S(\nu)$. 
As shown in \cite{D24}, for a symplectic surface $S$ the complex $(p_S)_*\mathbb{DQ}_{\calM^\ss_S(\nu)}$ is semisimple and is pure as a complex of mixed Hodge 
modules. Moreover, when $S$ is symplectic, there exists a perverse sheaf $\mathcal{BPS}_{M_S(\nu)}$, the {BPS sheaf}, such that
\begin{equation}\label{intro1}
\begin{split}
\Sym_{\oplus}\Big(\bigoplus_{\nu \neq 0} \cBPS_{M_S(\nu)} \otimes \H^*(\BGM,\Q) \Big) 
&\cong \bigoplus_{\nu} ({p}_S)_*\mathbb{DQ}_{\calM^\ss_S(\nu)},\\
\Sym_{\oplus}\Big(\bigoplus_{\nu \neq 0} \cBPS_{M_S(\nu)} \Big) 
&\cong  \bigoplus_{\nu} {}^{\text{p}}\calH^0\big(({p}_S)_* \mathbb{DQ}_{\calM^\ss_S(\nu)}\big).
\end{split}
\end{equation}
Here a reduced polynomial $p$ is fixed, and the sum ranges 
over the set $\Gamma_p$ of effective Mukai vectors $\nu$ whose reduced Hilbert polynomial is equal to $p$. The notation $\Sym_{\oplus}$ is explained in \S\ref{good}. The decomposition \eqref{intro1} is known as the
 {cohomological integrality} property and is proved in \cite{DHSM2},
 see also \cite{Bu25b,HK_BPS} for a wider class of stacks for which a similar statement holds.
In words, the perverse sheaves $\mathcal{BPS}_{M_S(\nu)}$ provide the building blocks for the complexes 
$(p_S)_*\mathbb{DQ}_{\calM^\ss_S(\nu)}$, and hence, by passing to derived global sections, for the Borel--Moore homology groups 
$\H^\BM_*(\calM^\ss_S(\nu),\Q)$. 

A Mukai vector $\nu$ is called generic if any semistable coherent sheaf of Mukai vector $\nu$ is also stable, see e.g., \cite{Yoshioka2}.
If $\nu$ is primitive and $H$-generic, 
then the stack $\calM^\ss_S(\nu)$ is a $\GM$-gerbe over 
$M_S(\nu)$, which is smooth, and $\cBPS_{M_S(\nu)}$ is the perverse shift of $\mathbb{Q}_{M_S(\nu)}$.
For general $\nu$, the cohomology of the complex $\mathcal{BPS}_{M_S(\nu)}$, called the {BPS cohomology}, is a better 
cohomological invariant of $\calM^\ss_S(\nu)$ than $\H^\BM_*(\calM^\ss_S(\nu),\Q)$. For instance, in the context of preprojective quiver 
varieties or moduli spaces of Higgs bundles, the Poincaré polynomial of the BPS cohomology recovers the Kac polynomials counting 
absolutely indecomposable quiver representations or vector bundles over finite fields, see \cite{DavisonKac,SchiffmannKac}.

\medskip

\subsection{BPS associative algebra and BPS Lie algebra} Fixing $p$ again, the decomposition \eqref{intro1} reflects an additional 
structure on the complex $\bigoplus_{\nu\in \Gamma_p} ({p}_S)_* (\mathbb{DQ}_{\calM^\ss_S(\nu)})$:
it is a graded associative algebra object in 
the category of complexes of mixed Hodge modules on $\bigsqcup_{\nu \in \Gamma_p} M_S(\nu)$.
Further, the complex $\bigoplus_{\nu \in \Gamma_p} \cBPS_{M_S(\nu)}$ carries the structure 
of a Lie algebra object in the category of mixed Hodge modules on $\bigsqcup_{\nu\in \Gamma_p} M_S(\nu)$, 
and \eqref{intro1} identifies the sum
$\bigoplus_{\nu\in \Gamma_p} {}^{\text{p}}\calH^0\big(({p}_S)_*\mathbb{DQ}_{\calM^\ss_S(\nu)}\big)$ with its enveloping algebra, 
see \cite{DHSM1,DHSM2} for details.

For a reduced polynomial $p$ corresponding to a primitive Mukai vector $\nu_\pr$ which is $H$-generic  
and satisfies $\nu_\pr \cdot \nu_\pr >0$, 
the sum in \eqref{intro1} restricts to $\{n\nu_\pr\,;\, n \geqslant 1\}$ and \cite{DHSM1} proves the following isomorphism
\begin{equation}\label{intro2}
\bigoplus_{n \geqslant 1} \cBPS_{M_S(n\nu_\pr)} \cong \text{FreeLie}\Big( \bigoplus_{n \geqslant 1} \text{IC}_{M_S(n \nu_\pr)}\Big),
\end{equation}
where $\text{IC}_{M_S(n \nu_\pr)}$ is the intersection complex on $M_S(n \nu_\pr)$.  Again, this isomorphism is written in the category of mixed Hodge modules.
Combining \eqref{intro1} and \eqref{intro2} we see that the knowledge, for all $n \geqslant 1$, of any one of the three collections of mixed 
Hodge modules 
$$\{(p_S)_*\mathbb{DQ}_{\calM^\ss_S(n\nu_\pr)}\,,\, \cBPS_{M_S(n\nu_\pr)}\,,\, \text{IC}_{M_S(n\nu_\pr)}\}$$ fully determines the other two. 
The same is thus true after taking derived global sections.

By passing to derived global sections in \eqref{intro1}, the graded vector space 
$$\bfH^{\ss}_S(p)\coloneq\bigoplus_{\nu \in \Gamma_p} \H^\BM_*(\calM_S^\ss(\nu),\Q)$$ acquires the structure of an 
associative algebra. Likewise, the graded vector space 
\\$\bigoplus_{\nu\in \Gamma_p} \H^*(M_S(\nu),\cBPS_{M_S(\nu)})$ acquires the structure of a graded 
Lie algebra, the {BPS Lie algebra}, whose enveloping algebra, the {BPS associative algebra}, is the $0$th piece of the perverse filtration of  
$\bfH^{\ss}_S(p)$ with respect to the map $p_S$. 

\medskip

\subsection{Cohomological Hall algebras} The associative algebra 
$\bfH^{\ss}_S(p)$
is 
an example of a semistable 2D CoHA. It was introduced in the context of preprojective quiver stacks in \cite{SVIHES} and in more geometric 
contexts in \cite{KV19}. 
The CoHA multiplication is given by the usual pull-push formalism applied to 
the natural correspondence
\begin{align}\label{intro3}
	\calM^\ss_S(\nu_1) \times \calM^\ss_S(\nu_2) \xleftarrow[]{\gr} 
	\calM^{\ss,\ex}_S(\nu_1,\nu_2)\xrightarrow[]{\ev}  \calM^\ss_S(\nu)
\end{align}
where the middle term of \eqref{intro3} classifies short exact sequences of sheaves $0 \to \calF  \to \calG \to \calE \to 0$ with $\calF,\calG,\calE$ 
semistable of respective Mukai vectors $\nu_2,\nu_1+\nu_2, \nu_1$. Note that the correspondence \eqref{intro3} makes sense when we 
replace $\calM^\ss_S(\nu)$ by the stack $\calM_S(\nu)$ classifying {all} coherent sheaves on $S$ of Mukai vector $\nu$, and allow $\nu$ to 
take all values. As the stacks $\calM_S(\nu)$ are only locally of finite type, this yields a graded associative algebra $\bfH_S$ in the category of 
topological vector spaces.

CoHAs of preprojective quiver stacks naturally act on the cohomology of Nakajima quiver varieties and as such have been much studied,
see, e.g., \cite{SVIHES,SVgenerators,Davison_affine_gl_1,Yang-Zhao,DHSM2}, etc., in relation to affine Yangians of 
Kac--Moody algebras and Maulik--Okounkov Lie algebras. In particular, it is proven in \cite{Davison_affine_gl_1,Shivang} that the CoHAs 
associated to cyclic quivers are isomorphic to enveloping algebras of explicit Lie algebras
of matrix-valued differential operators, which may be thought of as affinized versions of BPS Lie algebras, and it is proven in 
\cite{DHSM2} that the BPS Lie algebra of an arbitrary preprojective algebra is a graded Borcherds algebra. In contrast, much less is known about the 
CoHA $\bfH_S$ associated to a surface $S$. However, the subalgebra 
$$\bfH_S^0=\bigoplus_{n\geqslant 0} \H^\BM_*(\calM_S(n\delta),\Q)$$ 
corresponding to the stack of {zero}-dimensional sheaves can be explicitly described. In particular, when $S$ is symplectic,
$\bfH_S^0$ is isomorphic to the enveloping algebra of a $\H^*(S,\Q)$-colored version of $\mathfrak{w}_{1+\infty}$, the Lie algebra of differential 
operators on the plane, see \cite{MMSV} and also \S\ref{zero_dim_2d_sec} for a more detailed statement. In general, it is expected that the CoHA of the stack of objects of any reasonable 2CY Abelian 
category is the enveloping algebra of some affinized version of the BPS Lie algebra; the cocommutative coproduct that we construct in this paper, along with the Milnor--Moore theorem, provides a general mechanism for proving such results.

\medskip 

\subsection{Toda's $\chi$-independence conjecture} 
In his generalization of the mathematical approach to defining Gopakumar--Vafa invariants for projective 3CY varieties presented by Maulik and Toda in \cite{MT18},
Toda formulated a $\chi$-independence conjecture for the pushforward of the 3D BPS 
perverse sheaf to the Chow variety, see \cite{T23}.  The conjecture is a fundamental test for the Maulik--Toda definition of Gopakumar--Vafa invariants; a basic requirement of the definition, if they are to capture the definitions provided by physicists, is that they depend only on the underlying curve class of the sheaves considered in order to define them.  For 
$X=S \times \A^1$, using dimensional reduction, this conjecture may be reformulated in terms of the stacks $\calM^\ss_S(\nu)$ 
and the 2D BPS perverse sheaf $\cBPS_{M_S(\nu)}$. Let $\alpha \in \H^2(S,\Z)$ be a curve class, $\delta \in \H^4(S,\Z)$ be the class of a
 point and set $\nu=\alpha+n\delta$. Let $B_S(\alpha)$ be the Chow variety parametrizing algebraic one-cycles on $S$ of class $\alpha$,
 see \cite{Rydh}, and let $\pi_S\colon M_S(\nu) \to B_S(\alpha)$ be the support map.

\begin{conjecture}[Toda, {\cite[Conj. 1.2]{T23}}]\label{conjintro:Toda}
For any $\gamma \in B_S(\alpha)$, the invariant
$$\Phi(\alpha+n\delta)=\sum_{i\in\Z}\chi\big({}^{\p}\calH^i((\pi_S)_*\cBPS_{M_S(\alpha+n\delta)})|_{\gamma}\big)\cdot y^i\in\Z[y^{\pm 1}]$$
is independent of $n \in \Z$.
\end{conjecture}
The Euler characteristic specialization of this conjecture (obtained by putting $y=-1$) was proved by Maulik and Thomas in \cite{MT19}.
The importance of this conjecture is that it not only confirms the prediction from physics that the Gopakumar--Vafa invariants for a curve class $\alpha$ can be defined independently of the choice of $n$, but shows moreover that they can even be defined unambiguously by considering singular moduli spaces of (one-dimensional) sheaves.

\medskip

\subsection{Markman's theorem} Let us again consider a smooth and projective polarized symplectic surface $(S,H)$. 
Fix a $H$-generic and primitive Mukai vector $\nu$. 
In that situation there are no strictly semistable sheaves of Mukai vector $\nu$ and the moduli space $M_S(\nu)$ is smooth and proper. The 
cohomology ring of $M_S(\nu)$ contains a subalgebra $\H^*_{\taut}(M_S(\nu),\Q)$ of tautological classes, generated by the Künneth 
components of the universal sheaf. In that context, Markman proves the following result.

\begin{thm}[\cite{Markman}] For $\nu$ a primitive and $H$-generic Mukai vector, 
the cohomology ring of $M_S(\nu)$ is tautologically generated, i.e., we have $\H^*(M_S(\nu),\Q)=\H^*_{\taut}(M_S(\nu),\Q)$.
\end{thm}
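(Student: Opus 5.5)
Markman's theorem asserts that $\H^*(M_S(\nu),\Q)$ is generated by the Künneth components of the Chern classes of a (twisted) universal sheaf $\calE$ on $M_S(\nu)\times S$. My plan follows Markman's original strategy: realize the class of the diagonal $\Delta\subset M\times M$, with $M=M_S(\nu)$, as a characteristic class of a universal complex built from $\calE$. Consider on $M\times M$ the relative Ext complex
\[
\calK := -R\pi_{12*}\big(R\mathcal{H}om(\pi_{13}^*\calE,\pi_{23}^*\calE)\big)[1],
\]
where $\pi_{ij}$ are the projections from $M\times M\times S$. Since $\nu$ is primitive and $H$-generic, every point of $M$ is a stable sheaf, and $\calK$ is well defined up to a twist by a line bundle, which we normalize away.

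First, for $\calF_1\neq\calF_2$ stable of the same Mukai vector we have $\Hom(\calF_1,\calF_2)=0$, and by Serre duality on the symplectic surface also $\Ext^2(\calF_1,\calF_2)=0$. Hence off the diagonal $\calK$ is a vector bundle of rank $-\chi(\nu,\nu)=\nu\cdot\nu=\dim M-2$. At a point of $\Delta$ the fibre is instead $\Ext^1(\calF,\calF)$, with $\Hom$ and $\Ext^2$ one-dimensional.

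Second, I would show that $\calK$ admits a locally free resolution by a two-term complex $V_0\to V_1$ whose degeneracy locus is exactly $\Delta$, of the expected codimension $\dim M$. Porteous' formula, or equivalently an analysis of the top Chern class $c_{\dim M}(\calK)$ via a Grothendieck--Riemann--Roch computation, then gives $[\Delta]=c_{\dim M}(\calK)$. This identity is the main obstacle. It needs a careful local analysis of the degeneracy near $\Delta$, and in particular that the excess coming from the one-dimensional $\Hom$ and $\Ext^2$ contributes with multiplicity one. I would first try to deduce this from smoothness of $M$ and the symplectic pairing on $\Ext^1$, following Markman's argument, which originates with Beauville and Ellingsrud--Strømme for Hilbert schemes.

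Third, by Grothendieck--Riemann--Roch, $\ch(\calK)$ equals $\pi_{12*}$ of $\pi_{13}^*\ch(\calE)^\vee\cdot\pi_{23}^*\ch(\calE)\cdot\Td(S)$. Hence every Künneth component of $c_{\dim M}(\calK)$ in $\H^*(M)\otimes\H^*(M)$ is a sum of products $a\otimes b$ with $a,b$ tautological. Write $[\Delta]=\sum_i a_i\otimes b_i$ with $a_i,b_i\in\H^*_\taut(M,\Q)$. Since $M$ is smooth and proper, the action of $[\Delta]$ as a correspondence on $\H^*(M,\Q)$ is the identity, so any class $x$ equals $\sum_i\big(\int_M x\, a_i\big) b_i$. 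This lies in $\H^*_\taut(M,\Q)$, which completes the proof.
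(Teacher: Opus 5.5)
The paper does not reprove this statement; it quotes Markman. Your outline follows the top-level shape of Markman's argument: write $[\Delta]$ as a Chern class of the relative Ext class, then use K\"unneth and the action of $[\Delta]$ as a correspondence. Your first and third steps are fine.

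The gap is your second step, and it is not only a matter of checking a multiplicity: the mechanism you propose cannot work. As you note yourself, at a point of $\Delta$ both $\Hom(\calF,\calF)$ and $\Ext^2(\calF,\calF)$ are one-dimensional. So the derived fibres of $R\pi_{12*}R\mathcal{H}om(\pi_{13}^*\calE,\pi_{23}^*\calE)$ have cohomology in three consecutive degrees there. By base change, this complex is not quasi-isomorphic, even locally near $\Delta$, to a two-term complex of vector bundles.

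Even formally, the numerology is wrong for Porteous. Suppose $\phi\colon V_0\to V_1$ has $\rk V_1-\rk V_0=N-2$, with $N=\dim M$. The locus where $\phi$ drops rank by one has expected codimension $1\cdot(N-1)=N-1$, and its Thom--Porteous class is $c_{N-1}(V_1-V_0)$, not $c_N$. Since $\Delta$ has codimension $N$, it is an excess locus, and Porteous says nothing about $c_N(\calK)$. This is exactly where symplectic surfaces differ from Beauville's curve case, where there is no $\Ext^2$ and the Ext class has rank $N-1$. It is also the point Markman has to handle.

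One way to repair this is a localization argument.
\begin{itemize}
\item Off $\Delta$, $\calK$ is a vector bundle of rank $N-2$, so $c_N(\calK)$ restricts to zero on $M\times M\smallsetminus\Delta$. The localization sequence and the Thom isomorphism $\mathrm{H}^{2N}_\Delta(M\times M,\Q)\cong\mathrm{H}^0(\Delta,\Q)=\Q$ then give $c_N(\calK)=m[\Delta]$ for a unique $m\in\Q$; uniqueness holds because $[\Delta]\neq 0$.
\item To get $m=1$, take near a point of $\Delta$ a minimal local model $L_0\xrightarrow{a}V\xrightarrow{b}L_2$, with $L_0,L_2$ line bundles, $V$ of rank $N$, and $b\circ a=0$.
\item The section $a$ of $V\otimes L_0^\vee$ vanishes exactly on $\Delta$, and transversally: its normal derivative is the Kodaira--Spencer isomorphism $N_\Delta\cong T_M\cong\Ext^1(\calF,\calF)$. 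This is where smoothness of $M$ is used.
\item Off $\Delta$, the splitting $V\cong L_0\oplus\calK\oplus L_2$ refines the splitting $V\cong L_0\oplus V/L_0$ given by $a$. Since $\mathrm{H}^{\mathrm{odd}}(BU(k),\Q)=0$, the relative lifts of $c_N$ are unique and compatible. Hence $m$ equals the local Euler index of $a$, namely $1$.
\end{itemize}

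For K3 surfaces there is a shortcut. Since $\calK|_\Delta=[T_M]-2[\calO_M]$, restricting $c_N(\calK)=m[\Delta]$ to $\Delta$ gives $m\,e(M)=e(M)$, where $e(M)\neq 0$ is the topological Euler characteristic. This fails for Abelian surfaces, where $e(M)=0$.

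Finally, when no universal sheaf exists, the Ext complex is twisted by a Brauer class on $M\times M$, not by a line bundle. So you cannot simply ``normalize away'' the twist; you should work with normalized rational Chern characters, for example via a quasi-universal sheaf. The localization argument and your third step go through unchanged with $\Q$-coefficients.
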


This theorem has numerous applications in enumerative geometry.
It plays a crucial role in all of the proofs of the $P=W$ conjecture in nonabelian Hodge theory, see \cite{MMHS,MSP=W}. Using Poincaré duality, Markman's theorem may be restated in the Borel--Moore homology $\text{H}^\BM_*(M_S(\nu),\Q)$.
The naive extension of the latter statement is false when $\nu$ is not primitive, even if one replaces the moduli space of semistable sheaves by the corresponding moduli stack.  One of our main theorems (Theorem \ref{thmintro:Markman}) is the correct generalization of Markman's theorem, written in terms of affinized BPS Lie algebras.

\medskip

\subsection{Main theorems}\label{sec_main_theorems} Let $(S,H)$ be a polarized projective symplectic surface. 
Our first main result is a proof of Toda's $\chi$-independence conjecture for the local surface $X=S\times \A^1$. We show the following strong form of Conjecture~\ref{conjintro:Toda}.

\begin{theorem}[=Theorem~\ref{thm:Toda}]\label{thmintro:toda} Let $\alpha \in \H^2(S,\Z)$ be the class of an effective one-cycle. 
The complex of mixed Hodge modules over the Chow variety $B_S(\alpha)$
\begin{equation}\label{eqintro: Todacomplex}
\bigoplus_{i\in\Z}{}^{\p}\calH^i((\pi_S)_*\cBPS_{M_S(\alpha+n\delta)})[-i]
\end{equation}
 is independent of $n$, up to an explicit isomorphism. 
Moreover, it is independent of the choice of polarization $H$.
\end{theorem}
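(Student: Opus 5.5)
The strategy is to build, out of the Hecke operators on BPS cohomology described in the abstract, explicit morphisms relating the complexes \eqref{eqintro: Todacomplex} for $n$ and $n+1$. Then I would show these morphisms are isomorphisms by checking it at the level of the perverse filtration with respect to $\pi_S$.

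The first step is the construction. The cohomological Hall algebra $\bfH_S$ of all coherent sheaves on $S$ gets a cocommutative coproduct, obtained by dimensional reduction from the factorization coproduct of the 3d CoHA of $S\times\A^1$. This makes $\bfH_S$ a bialgebra. The key comparison result to establish is that the primitive part of the coproduct, restricted to the curve class $\alpha$, is identified with the affinized BPS cohomology $\bigoplus_n \cBPS_{M_S(\alpha+n\delta)}\otimes\H^*(\BGM)$ of the semistable locus. I would work relatively over the Chow variety $B_S(\alpha)$, where the support map is defined for all sheaves of class $\alpha+n\delta$, not only semistable ones.

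The zero-dimensional subalgebra $\bfH^0_S\cong U(\H^*(S)\otimes\mathfrak{w}_{1+\infty})$ acts on $\bfH_S$ by the commutator action $[x,-]$. This action preserves primitive elements, since commutators of primitives are primitive. Hence it yields operators
$$T_{c}\colon \cBPS_{M_S(\alpha+n\delta)}\to\cBPS_{M_S(\alpha+(n+1)\delta)}$$
and their adjoints, given geometrically by modifying a one-dimensional sheaf along a zero-dimensional quotient. Because the support cycle is unchanged by such a modification, these are morphisms of complexes on $B_S(\alpha)$ after applying $(\pi_S)_*$. The shift in $n$ comes from the $\delta$-grading of the degree-one generator $1\otimes z^0$ acting on $\H^0(S)$. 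Some care is needed with cohomological and perverse degrees, together with Tate twists. I would then pass to ${}^{\p}\calH^i$ and use semisimplicity and purity of $(\pi_S)_*\cBPS$, which follows from \cite{D24} and the decomposition theorem, to reduce to morphisms between semisimple perverse sheaves. The explicit isomorphism is then $\bigoplus_i {}^{\p}\calH^i(T)$.

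To prove the Hecke maps are isomorphisms, I would exploit a $\mathfrak{sl}_2$-type structure. The degree $\pm\delta$ pieces of $\mathfrak{w}_{1+\infty}$ paired with the point class generate, together with a Cartan element, an action similar to a relative hard Lefschetz action along $n$. I would show the induced action on $\bigoplus_n{}^{\p}\calH^i((\pi_S)_*\cBPS_{M_S(\alpha+n\delta)})$ is periodic in $n$. Concretely, tensoring with a line bundle of degree one on the curve already shifts $n$ by the intersection number $\alpha\cdot L$. This gives isomorphisms for those shifts, and the Hecke operators are needed to fill in the residues.

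The main obstacle is injectivity and surjectivity. My first attempt would be to reduce to the generic locus of $B_S(\alpha)$ by a support argument. By the BBD decomposition, a morphism of semisimple complexes that is an isomorphism over each stratum's generic point is an isomorphism on the relevant summands. Over a stratum where the cycle is $\sum m_j C_j$, I would use the integrality isomorphism \eqref{intro1} to reduce, étale-locally on $M_S$ near polystable points, to preprojective algebras of Ext-quivers. There one compares with the known $\chi$-independence of Kac polynomials of such quivers and checks the Hecke operator against the quiver-level computation. Independence of $H$ would follow the same way, since $(\pi_S)_*\cBPS$ is shown to be described via the primitive part of the global coproduct on $\calM_S$, which does not depend on $H$.
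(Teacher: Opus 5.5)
Your overall frame matches the paper: reduce via semisimplicity to multiplicity spaces, act by Hecke operators from $\bfH^0_S$, and use line-bundle twists as a source of isomorphisms. But the decisive step, that the operator from $\alpha+n\delta$ to $\alpha+(n+1)\delta$ is invertible, is not established, and the mechanisms you propose would not establish it.

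\begin{itemize}
\item Nothing in your construction provides lowering operators, since $\bfH^0_S\cong\U(\mathfrak{w}^+_S)$ has only positive $\delta$-degrees. In any case, a Lefschetz-type $\mathfrak{sl}_2$ action has nilpotent raising operators and yields reflection symmetries. What is needed is a degree-$\delta$ operator that is invertible everywhere.
\item Twisting by $L\in\Pic(S)$ shifts $n$ by $\alpha\cdot c_1(L)$. This is never $1$ when $\alpha$ is divisible in $\NS(S)$, so ``filling in the residues'' is the entire content of the theorem.
\item Your fallback does not supply it. By proper base change, the stalk of $(\pi_S)_*\cBPS_{M_S(\alpha+n\delta)}$ at a cycle is the cohomology of $\cBPS$ over a whole proper fibre of $\pi_S$. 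This is not controlled by \'etale-local preprojective models on $M_S$.
\item There is no quiver-level ``$\chi$-independence of Kac polynomials'' to compare with. Even matching dimensions would not show that your specific morphism is an isomorphism.
\item The generator you single out, $1\otimes z^0$ (i.e.\ $D_{1,0}(1)$), has cohomological degree $-2$. The operator that works is $D_{1,0}(\beta)$ with $\beta\in\H^2(S,\Z)$ ample, of bidegree $(1,0)$.
\end{itemize}

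The missing idea is that the twist is itself a Hecke operator lying in a commutative subalgebra, combined with an algebraic bootstrap. Let $C$ be a smooth curve of class $\beta$ not contained in any member of $|\alpha|$, with $m=\alpha\cdot\beta>0$. Every pure one-dimensional $\calF$ then has a unique length-$m$ quotient scheme-theoretically supported on $C$, namely $\calF\otimes\calO_C$ (Lemma~\ref{lem:keylemma}). Hence $T_{[\calM_C(m\delta)]}=\omega_{\calO(C)}$ is invertible (Lemma~\ref{lem:PicardisHecke}), and $[\calM_C(m\delta)]$ lies in the commutative algebra $\bfA_\beta$ generated by the $D_{k,0}(\beta)$.

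Proposition~\ref{prop:algebraic trick} then splits each multiplicity space $V_P$ into simultaneous generalized eigenspaces of $\bfA_\beta$. It uses $\omega_{\calO(C)}(D_{1,1}(1))=D_{1,1}(1)+D_{1,0}(\beta)$ and a trace argument based on $[D_{1,1}(1),D_{k-1,0}(\beta)]=(k-1)D_{k,0}(\beta)$ to show that $D_{1,0}(\beta)$ is invertible. No stratification or local analysis is needed.

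Separately, your construction of the relative operators is not justified. Primitivity is a property of global Borel--Moore classes, so ``commutators of primitives are primitive'' gives an action of $\mathfrak{w}^+_S$ on $\frakg_S$ (the route of Theorem~\ref{thm:chi-indep-Lie-algebra}). It does not give morphisms of complexes on $B_S(\alpha)$ compatible with the perverse filtration. The paper obtains these as follows:
\begin{itemize}
\item it works on the open substack $\calM_S^{\geqslant 1}$ of pure sheaves;
\item it characterizes $(\pi_S)_*\cBPS$ microlocally as the image of ${}^{\p}\gr$ of $i_!i^!\varphi\to\varphi$ for the semi-nilpotent cone (Proposition~\ref{prop:coker});
\item it checks that Hecke modifications preserve semi-nilpotent support;
\item it uses only the cohomological-degree-zero subalgebra $\bfA_S$, so that the action descends to the $V_P$.
\end{itemize}
That microlocal description lives on the full, $H$-independent stack, and it is also what yields independence of the polarization.
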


The fact that the invariant $\Phi(\alpha+n\delta)$ is independent of the polarization was proven in the context of projective CY threefolds by 
Toda in \cite{T23}. Our proof in the local surface case is quite different, see \S\ref{sec:main ideas} below.  Our proof covers the 
more general case of a polarized quasi-projective surface $(S,H)$ and a curve class $\alpha \in \H^2(S,\Z)$ for which there exists a 
compactification with a canonical divisor which does not intersect members of $|\alpha|$, see \S\ref{sec:Toda generalization}.

Theorem \ref{thmintro:toda} has been proved simultaneously and independently by Groechenig, Wyss, and Ziegler \cite{GWZ}, using entirely different techniques.

\medskip

The isomorphisms provided by Theorem~\ref{thmintro:toda} between the complexes associated to $\alpha+n\delta$ and $\alpha+m\delta$ are given by the action of cohomological Hecke operators of pointwise modification. More precisely, our argument crucially hinges on the following result:

\begin{proposition}\label{propintro:heckeops} There is a canonical $\N \times \Z$-graded action of the CoHA of zero-dimensional sheaves $\bfH^0_S$ on the complex of mixed Hodge modules
\begin{equation}\label{eq:propHecke}
\bigoplus_{n\in\Z}\bigoplus_{i\in\Z}{}^{\p}\calH^i((\pi_S)_*\cBPS_{M_S(\alpha+n\delta)})[-i].
\end{equation}
\end{proposition}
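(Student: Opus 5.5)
We plan to build the action from the global CoHA $\bfH_S$ of all coherent sheaves together with the cocommutative coproduct described in the abstract. For fixed curve class $\alpha$, consider the stack $\calM_S(\alpha,\bullet)=\bigsqcup_{n}\calM_S(\alpha+n\delta)$ of all (not necessarily semistable) sheaves of class $\alpha+n\delta$. The CoHA multiplication along the correspondence \eqref{intro3}, with the first factor zero-dimensional and the second of class $\alpha+m\delta$, makes $\bigoplus_n\H^\BM_*(\calM_S(\alpha+n\delta),\Q)$ a module over $\bfH^0_S$. The multiplication acts by extending a sheaf by a zero-dimensional quotient, which is exactly a Hecke modification.

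The first step is to sheafify this construction relatively over the Chow variety. The support maps $\calM_S(\alpha+n\delta)\to B_S(\alpha)$ and $\calM_S(n\delta)\to \Sym(S)$ are compatible with the addition map $B_S(\alpha)\times \Sym(S)\to B_S(\alpha)$, $(\gamma,\xi)\mapsto \gamma$, because zero-cycles are forgotten in the class $\alpha$. Pull-push along \eqref{intro3} therefore gives a morphism of complexes on $B_S(\alpha)$
\[
H^*(\Sym(S),\ldots)\otimes \pi_*\mathbb{DQ}_{\calM_S(\alpha+m\delta)}\to \pi_*\mathbb{DQ}_{\calM_S(\alpha+(n+m)\delta)}.
\]
Associativity is inherited from the CoHA. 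Since the source stack is not of finite type, we work with pro-objects or with the truncations in the paper's setup.

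The second step, which we expect to be the main obstacle, is to descend this action from the whole stack to the BPS sheaves of the semistable loci. Here we invoke the result announced in the abstract: the affinized BPS cohomology of $\calM^\ss_S(\alpha+n\delta)$, relative to $B_S(\alpha)$, is identified with the primitive part of the coproduct on the entire stack $\calM_S(\alpha+n\delta)$. Because the coproduct is a morphism of algebras (the bialgebra structure) and $\bfH^0_S$ is cocommutative with its own primitives $\mathfrak{w}_{1+\infty}$-type, we check that the action of $\bfH^0_S$ preserves the sum of primitive parts modulo decomposables. Concretely, write $\Delta(a\cdot x)=\Delta(a)\Delta(x)$. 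For $x$ primitive of class $\alpha+m\delta$, the terms of $\Delta(a)\Delta(x)$ lying in factors with nonzero curve class on both sides vanish, since $\alpha$ is not split. This shows that $\alpha$-primitives are stable under left multiplication by $\bfH^0_S$, up to a correction coming from commutators with zero-dimensional primitives. We would first try to define the action as the adjoint action of the zero-dimensional primitive Lie algebra, then extend it to its enveloping algebra $\bfH^0_S$.

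Finally, we pass to perverse cohomology. The semisimplicity and purity results recalled above (\cite{D24}) let us replace $(\pi_S)_*\cBPS$ by the direct sum of its perverse cohomologies. The action then gives maps on each ${}^{\p}\calH^i$, shifting $i$ by the cohomological degree of the operator. This yields the $\N\times\Z$ grading: $\N$ records the number of points added, and $\Z$ records the perverse or cohomological degree, matching \eqref{eq:propHecke}. Canonicity follows because every step uses only natural correspondences and the canonical coproduct.
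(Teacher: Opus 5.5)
Your Step 1 is essentially the paper's relative Hecke action, but Step 2 does not work as stated, for two reasons.

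First, the coproduct only exists on global Borel--Moore homology. The same holds for the identification of its primitives with $\H^*_{\BPS}\otimes\H^*(\BGM,\Q)$ (Theorem~\ref{thm:restriction-isom}). $\Delta$ is built from the factorization presheaf $U\mapsto\DT_{\calM,\pro}(U)$ on $\C$, and neither the paper nor the abstract gives a version relative to $B_S(\alpha)$. A ``primitive part'' of a complex of mixed Hodge modules, defined as a kernel, is not even a well-behaved object. So at best you get an action on hypercohomology. An action on $\H^*(B_S(\alpha),-)$ does not induce maps on the individual sheaves ${}^{\p}\calH^i((\pi_S)_*\cBPS_{M_S(\alpha+n\delta)})$; your last paragraph assumes the action is already a morphism of complexes on $B_S(\alpha)$.

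Second, even globally, left multiplication does not preserve primitives. For $x$ primitive and $a\in\bfH^0_S$ of positive degree, $\Delta(ax)$ contains the terms $a\otimes x$ and $\pm x\otimes a$. These have curve class on only one side and do not vanish. The correct global mechanism is to restrict to pure sheaves, where $\res^{\geqslant1}(xa)=0$ for support reasons, so $T_a\res^{\geqslant1}(x)=\res^{\geqslant1}([a,x])$ (Proposition~\ref{prop:hecke on BPS Lie}). Your adjoint-action fallback is exactly this, but it only matches the Hecke action after passing to $\calM_S^{\geqslant1}$.

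The missing idea for the sheaf-level statement is the microlocal characterization of the BPS sheaf on $X=\A^1\times S$. It is the sheaf-theoretic version of ``primitive $=$ supported at a single eigenvalue''. By Proposition~\ref{prop:coker}, $(\pi_S)_*\cBPS_{M_S(\nu)}\otimes\H^*(\BGM,\Q)_\vir\otimes\L^{-\frac12}$ is the image of the perverse associated graded of the counit $\kappa_*(\rho_X)^{\pro}_*i_!i^!\varphi_{\calM_X(\nu)}\to\kappa_*(\rho_X)^{\pro}_*\varphi_{\calM_X(\nu)}$. Here $i$ is the inclusion of the semi-nilpotent locus, and unstable HN strata are killed by the Euler class argument of Lemma~\ref{claim}.

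The geometric input is that, on pure one-dimensional sheaves, Hecke modifications preserve semi-nilpotency. Suppose $0\to\calG\to\calF\to\calE\to0$ with $\calG$ pure and semi-nilpotent and $\calE$ zero-dimensional. Then $\calF$ is semi-nilpotent, since otherwise the generalized eigensheaf decomposition would split off a zero-dimensional summand of $\calF$, contradicting purity. This makes the relevant squares Cartesian. So the $\bfH^0_X$-action on $(\rho_X^{\geqslant1})^{\pro}_*\varphi$ restricts to $(\rho_X^{\geqslant1})^{\pro}_*i_!i^!\varphi$, the counit becomes $\bfH^0_X$-linear, and the image of its associated graded inherits the action. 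Dimensional reduction then transports it to $\bfH^0_S$ (\S\ref{sec:Hecke-BPS}).

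This is also why you must work on $\calM^{\geqslant1}$ rather than on the whole stack as in your Step 1. On the whole stack, $\calG\oplus\calE$ with $\calE$ at a different eigenvalue is a Hecke modification that leaves the semi-nilpotent locus.
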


Note that Hecke modifications do \textit{not} preserve semistable sheaves, so that the general formalism only provides an action of $\bfH^0_S$ on the (much bigger) complex
\begin{equation}\label{eq:propHecke2}
	\bigoplus_{n\in\Z}\bigoplus_{i\in\Z}{}^{\p}\calH^i((\rho_S)_*\mathbb{DQ}_{\calM_S(\alpha+n\delta)})[-i],
\end{equation} 
where $\rho_S\colon \bigsqcup_n\calM_S(\alpha+n\delta) \to B_S(\alpha)$ is the support map for the full stack. The content of Proposition~\ref{propintro:heckeops} is that \eqref{eq:propHecke} is naturally a subcomplex of \eqref{eq:propHecke2} which is moreover stable under the action of $\bfH^0_S$.

\medskip

Passing to cohomology in Theorem~\ref{thmintro:toda} yields a $\chi$-independence result for 
graded pieces of different semistable BPS Lie algebras, 
see Theorem~\ref{thmintro:BPS Lie} below. Part of the setup is to work 
directly with the stacks $\calM_S(\nu)$ without imposing any semistability condition. Our second main result concerns the existence of a 
topological coproduct for the corresponding CoHA $\bfH_S=\bigoplus_\nu \H^\BM_*(\calM_S(\nu),\Q)$.

\begin{theorem}[=Corollary~\ref{cor:exists coproduct}]\label{thmintro:exists coproduct} 
Let $p$ be any reduced polynomial.
\hfill
\begin{enumerate}[label=$\mathrm{(\alph*)}$,leftmargin=8mm,itemsep=2mm]
\item 
There is a cocommutative coproduct $\Delta\colon \bfH_S \to \bfH_S\, \wotimes \,\bfH_S$
turning $\bfH_S$ into a topological graded bialgebra.
\item 
There is a cocommutative coproduct 
$\Delta^\ss\colon \bfH^{\ss}_S(p) \to \bfH^{\ss}_S(p) \otimes \bfH_S^{\ss}(p)$
turning $\bfH^{\ss}_S(p)$ into a graded bialgebra.
\item The restriction 
$\bigoplus_{\nu \in \Gamma_p} \bfH_S(\nu) \to \bfH^{\ss}_S(p)$
induced by the open immersions $\calM^\ss_S(\nu) \to \calM_S(\nu)$ is a surjective morphism of graded bialgebras.
\end{enumerate}
\end{theorem}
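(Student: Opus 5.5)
The plan is to construct $\Delta$ on the 3d side and transport it to $S$ by dimensional reduction, following the strategy announced in the abstract. Set $X=S\times\A^1$. The stack $\calM_X$ of compactly supported sheaves on $X$ carries a $(-1)$-shifted symplectic structure with orientation data. Let $\varphi$ be the corresponding DT perverse sheaf. Dimensional reduction \cite{BenDimRed,K} identifies $\H^*(\calM_X(\nu),\varphi)$ with $\H^\BM_*(\calM_S(\nu),\Q)$ up to a shift, compatibly with the CoHA products. The coproduct is then built from the factorization structure of $\calM_X$ along $\A^1$. Consider the open locus $U\subset \calM_X\times\calM_X$ of pairs whose supports have disjoint projections to $\A^1$. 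The direct sum map $\oplus\colon U\to\calM_X$ is étale there, since $\Ext^1$ between disjointly supported sheaves vanishes. Moreover $\oplus^*\varphi\cong \varphi\boxtimes\varphi|_U$ by the Thom--Sebastiani theorem and compatibility of orientations. Composing restriction along $\oplus$ with this identification gives a map $\H(\calM_X,\varphi)\to \H(U,\varphi\boxtimes\varphi)$.

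To land in $\bfH_S\,\wotimes\,\bfH_S$, I would use the $\A^1$-translation action. Shifting the second factor by $t$ and letting $t\to\infty$ gives a specialization map. Concretely, I would use the $\GM$-equivariant (weight) structure and the fact that the complement of $U$ has a Thom-type description. This should give an isomorphism $\H(U,\varphi\boxtimes\varphi)\cong (\bfH_S\,\wotimes\,\bfH_S)[[z^{\pm}]]$-type localization, from which we take the constant term, as for factorization coproducts on quiver CoHAs.

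Cocommutativity follows from the swap symmetry of $U$, which is homotopic to the identity after the translation trick, because $\A^1$ minus the diagonal is connected in the relevant sense once we pass to the limit. Coassociativity follows from the triple version of $U$. Compatibility with the product (the bialgebra axiom) is checked on $U$. The extension stack over $U$ splits: extensions between sheaves with disjoint supports decompose, so $\calM^{\ex}|_U$ is a product of extension stacks. A base-change computation then shows $\Delta(ab)=\Delta(a)\Delta(b)$ with trivial braiding, because the $\Ext$ terms between disjoint factors vanish. This gives (a). Topological completeness comes from each $\calM_S(\nu)$ being only locally of finite type; we work with the limit over finite-type opens as in the definition of $\bfH_S$.

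For (b) and (c), I would restrict to the open substacks $\calM^\ss$ of Gieseker semistable sheaves with reduced polynomial $p$. A direct sum is semistable with reduced polynomial $p$ if and only if both summands are, so $\oplus^{-1}(\calM^\ss_X(p))\cap U=(\calM^\ss\times\calM^\ss)\cap U$. The same construction therefore yields $\Delta^\ss$, and restriction to the open locus intertwines $\Delta$ and $\Delta^\ss$. Restriction is also a CoHA morphism on $\Gamma_p$, because the extension correspondence restricted to semistables of a fixed slope is the semistable one. The coproduct is a plain (uncompleted) tensor product because $\Gamma_p$ has finitely many decompositions $\nu=\nu_1+\nu_2$ and each $\calM^\ss(\nu)$ is of finite type.

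Surjectivity of restriction in (c) would come from purity: $\H^\BM(\calM^\ss_S)$ is pure \cite{D24}, and the HN stratification gives a surjection, by the standard Kirwan-type argument using that the strata are vector-bundle stacks over products of semistable loci. The main obstacle is the step comparing $\H(U,\varphi\boxtimes\varphi)$ with the completed tensor product and producing the constant-term map. Specifically, one must show that the localization along the antidiagonal divisor is well defined on the non-finite-type stack, and that it is compatible with dimensional reduction on both sides.
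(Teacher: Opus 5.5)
Your overall plan (pass to $X=S\times\A^1$, factorize along the spectrum of the Higgs field, and descend by dimensional reduction) is the right one. However, the step that actually produces $\Delta$ is missing, and the substitute you sketch does not work as stated.

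Restricting along the \'etale map $\oplus\colon U\to\calM_X$ is fine. But $\H(U,\varphi\boxtimes\varphi)$ is not a localization of $\bfH_S\wotimes\bfH_S$. Already for $\nu_1=\nu_2=\delta$ the classical truncation of $U$ is $S^2\times(\C^2\smallsetminus\Delta)\times(\BGM)^2$, so an extra factor $\H^*(\C^*)$ appears. For larger classes you get cohomology of configuration spaces of interlacing eigenvalues. There is no equivariant parameter $z$ to localize in here. Moreover, ``shifting by $t\to\infty$'' cannot separate spectra uniformly, since the eigenvalues of $\phi\in\mathrm{End}(E)$ are unbounded even over quasi-compact opens of $\calM_S$.

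The missing idea is to restrict further, to the analytic open substack $\widetilde\calM_{V_1}\times\widetilde\calM_{V_2}\subset U$. Here $V_1,V_2\subset\C$ are disjoint convex opens, and $\widetilde\calM_V$ parametrizes pairs $(E,\phi)$ with all eigenvalues of $\phi$ in $V$. You need two facts:
\begin{enumerate}
\item $\widetilde\calM_V$ is open, because the spectral support is finite over the base.
\item For convex $V$, the restriction $R\Gamma_\pro(\widetilde\calM,\varphi)\to R\Gamma_\pro(\widetilde\calM_V,\varphi)$ is an isomorphism. This is the crucial point. It follows from the $\A^1$-equivariance of $\varphi$ and its local constancy along $\GM$-orbits (Proposition~\ref{prop:equivariance} together with \cite[cor.~3.7.3]{KS13}).
\end{enumerate}
Combined with Thom--Sebastiani and the pro-K\"unneth formula, $\Delta$ is then simply a restriction map, with no constant term to extract. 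Coassociativity and cocommutativity follow from independence of the choice of disjoint convex opens up to zigzags of inclusions (Lemma~\ref{lem-DT-factorization}).

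Two further points are wrong or missing.

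First, cocommutativity and $\Delta(ab)=\Delta(a)\Delta(b)$ do not hold for the trivial braiding, given the CoHA product as defined. Vanishing of $\Ext$ between disjointly supported factors does remove Euler-class corrections. But the identification $\varphi_{\widetilde\calM_{V_1\sqcup V_2}}\cong\varphi_{\widetilde\calM_{V_1}}\boxtimes\varphi_{\widetilde\calM_{V_2}}$ depends on orientation data. Comparing the orientations induced by $\mathrm{Filt}$ and $\mathrm{Filt}_{-}$ produces the sign $(-1)^{\vdim\mathrm{Filt}(\calM)}=(-1)^{\chi(\gamma_1,\gamma_2)}$ (Lemma~\ref{lem-ori-differ-by-Euler-form}). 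This sign is genuinely nontrivial, e.g.\ $\chi(\calO_S,\calO_x)=1$. So the bialgebra structure lives in $\Gamma$-graded pro-spaces with braiding twisted by this sign. You cannot remove the sign by renormalizing $\Delta$ alone.

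Second, since $\alpha+n\delta=(\alpha+k\delta)+(n-k)\delta$ for every $k\leqslant n$, $\Delta$ lands in an infinite product $\prod_{\gamma_1+\gamma_2=\gamma}$. You must show that the multiplication extends to this completed tensor product before $\Delta(a)\Delta(b)$ even makes sense. That requires a quasi-compactness argument using properness of $\ev$ and quasi-compactness of $\oplus$ (Proposition~\ref{prop:coprod prod well defined}).

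Once $\Delta$ is constructed this way, your treatment of (b) and (c) agrees with the paper: semistability of direct sums, finiteness of decompositions in $\Gamma_p$, and Kirwan surjectivity from purity of the HN strata.
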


Our definition of the coproduct is based on the technique of factorization algebras, see, e.g., \cite{CG} and \cite{KV19}. 
We note that a related construction, in the context of vertex algebras for quivers with potential, has independently appeared in \cite{Jindal-Latyntsev-Sarunas}. 
Let  $\frakg_S=\bigoplus_\nu \frakg_S(\nu)$ and $\frakg^{\ss}_S(p)=\bigoplus_{\nu \in \Gamma_p} \frakg^\ss_S(\nu)$ be the Lie 
algebras of primitive elements of $\bfH_S$ and of $\bfH^{\ss}_S(p)$. 
Set also $\frakg_S(p)=\bigoplus_{\nu \in \Gamma_p} \frakg_S(\nu)$.
We deduce the following Milnor--Moore statements: the 
canonical morphisms 
$$\U(\frakg_S) \to \bfH_S, \quad \U(\frakg^{\ss}_S(p)) \to \bfH^{\ss}_S(p)$$ 
are respectively a dense embedding and an isomorphism.

The $\chi$-independence property for Lie algebras may now be stated as follows. 
We have $\calM_S(\delta)^\cl=S \times \BGM$. Hence there is a canonical identification 
$$\H^*(S,\Q)\otimes \H^*(\BGM,\Q) \cong \H^\BM_*(\calM_S(\delta),\Q) = \frakg_S(\delta).$$ 
Let $D_{1,0}(\beta)$ be the element corresponding to $\beta\otimes 1\in \H^*(S,\Z)\otimes \H^*(\BGM,\Q)$.

\begin{theorem}[=Theorems~\ref{thm:restriction-isom} and \ref{thm:chi-indep-Lie-algebra}]\label{thmintro:BPS Lie}
\hfill
\begin{enumerate}[label=$\mathrm{(\alph*)}$,leftmargin=8mm,itemsep=2mm]
\item The restriction $\frakg_S(p)\to \frakg^{\ss}_S(p)$
is an isomorphism of graded Lie algebras.
\item For any $\nu$ there is an isomorphism of graded vector spaces 
$\H^*(M_S(\nu),\cBPS_{M_S(\nu)}) \otimes \H^*(\BGM,\Q) \cong \frakg^\ss_S(\nu)$.
\item Let $\alpha \in \H^2(S,\Z)$ be the class of an effective one-cycle, $\nu=\alpha+n\delta$ and let $\beta \in \H^2(S,\Z)$ be ample. The 
adjoint action by the element $D_{1,0}(\beta) \in \frakg_S(\delta)$ restricts to a graded isomorphism 
$\frakg_S(\nu) \cong \frakg_S(\nu+\delta)$ 
preserving the Hodge filtration and the perverse filtration given by the proper morphism to the Chow variety.
\end{enumerate}
\end{theorem}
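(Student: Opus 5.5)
We establish (a), (b), (c) in that order. The approach is to combine Milnor--Moore for the bialgebras of Theorem~\ref{thmintro:exists coproduct} with cohomological integrality \eqref{intro1} and the Hecke action of Proposition~\ref{propintro:heckeops}.

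\emph{Part (a).} By Theorem~\ref{thmintro:exists coproduct}(c), restriction $\bigoplus_{\nu\in\Gamma_p}\bfH_S(\nu)\to\bfH^\ss_S(p)$ is a surjective morphism of bialgebras, so it sends primitives to primitives and induces a Lie algebra map $r\colon\frakg_S(p)\to\frakg^\ss_S(p)$. Surjectivity of $r$ comes from Milnor--Moore. Both sides are enveloping algebras, and a surjection $\U(\frakg_S(p))\to\U(\frakg^\ss_S(p))$ of connected graded cocommutative bialgebras is surjective on primitives. For injectivity, take the Harder--Narasimhan stratification of $\calM_S(\nu)$, $\nu\in\Gamma_p$. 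Every unstable stratum is an iterated extension of semistable pieces of \emph{distinct} reduced polynomials. Its Borel--Moore homology therefore lies in the image of multiplication $\bfH_S(\nu_1)\otimes\bfH_S(\nu_2)$ with $\nu_1,\nu_2\neq0$ and not both in $\Gamma_p$. I would show that such classes are decomposable, i.e.\ they lie in the ideal generated by products of positive-degree elements. Since products of primitives under a cocommutative coproduct are not primitive, the kernel of restriction meets $\frakg_S(\nu)$ trivially. The cleanest way to package this is to check that the induction along HN strata defines a bialgebra map $\bigotimes_{\text{slopes}}\bfH^\ss_S(p_i)\to\bfH_S$ inducing an isomorphism on associated graded. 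Then $\bfH_S\cong\widehat\bigotimes_p\bfH^\ss_S(p)$ as coalgebras, and primitives of weight in $\Gamma_p$ come only from the $p$-factor. This step, namely compatibility of the factorization coproduct with the HN filtration, is the main obstacle. I would first try to verify it by base change on the Levi-type correspondence together with purity of $\bfH_S$.

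\emph{Part (b).} Pass to global sections in \eqref{intro1}. This gives $\bfH^\ss_S(p)\cong\Sym\big(\bigoplus_\nu\H^*(M_S(\nu),\cBPS_{M_S(\nu)})\otimes\H^*(\BGM,\Q)\big)$ as graded vector spaces. Milnor--Moore gives $\bfH^\ss_S(p)\cong\Sym(\frakg^\ss_S(p))$ via PBW. Comparing graded dimensions weight by weight and inducting on $\nu$ (all pieces are finite dimensional in each cohomological degree) gives a dimension equality. To upgrade this to the stated isomorphism, I would show that the natural inclusion of $\cBPS\otimes\H(\BGM)$ from \eqref{intro1} lands in primitives. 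This follows from the support-locality of the factorization coproduct: the coproduct of a class supported on the small diagonal has no mixed terms.

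\emph{Part (c).} By (a) and (b), $\frakg_S(\nu)\cong\H^*(B_S(\alpha),\bigoplus_i{}^\p\calH^i(\pi_{S*}\cBPS_{M_S(\nu)})[-i])\otimes\H^*(\BGM)$, using the decomposition theorem for the proper map $\pi_S$. The adjoint action of $D_{1,0}(\beta)$ is the Hecke operator of Proposition~\ref{propintro:heckeops} acting on the sheaf-level complex \eqref{eq:propHecke}. Hence it is a morphism of mixed Hodge complexes over $B_S(\alpha)$, and so it preserves the Hodge filtration and the perverse filtration. It remains to show that this operator is an isomorphism $\nu\to\nu+\delta$. Restrict to a point $\gamma\in B_S(\alpha)$ and further to the open locus of integral curves. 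There $M_S(\alpha+n\delta)$ is a compactified Jacobian, $D_{1,0}(\beta)$ acts by a tensor/Abel--Jacobi type translation, and the map is an isomorphism of IC sheaves. One then extends over all of $B_S(\alpha)$ by an induction on the support stratification. The inductive step uses that both sides decompose via \eqref{intro2}-type free Lie expressions in smaller curve classes, where the operator is an isomorphism by induction. The derivation property of $\ad D_{1,0}(\beta)$ then transports the isomorphism to brackets. The final step is the independence of $H$: by (a) the left side $\frakg_S(p)$ does not involve stability.
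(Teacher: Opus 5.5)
Your part (b) is fine. Comparing graded dimensions in \eqref{intro1} with Milnor--Moore/PBW for $\bfH^\ss_S(p)$ is exactly the count the paper makes in Lemma~\ref{lem-kappa-ss-isom}, and an isomorphism of graded vector spaces needs nothing more. Part (a), however, does not go through as written, and part (c) has a more serious gap.

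\textbf{Surjectivity in (a) is circular.} You only know that $\bigoplus_{\nu\in\Gamma_p}\bfH_S(\nu)\to\bfH^\ss_S(p)$ is onto, and this source is \emph{not} $\U(\frakg_S(p))$. It contains products of nonzero $y\in\frakg_S(\delta)$ and $x\in\frakg_S(\nu-\delta)$, which by PBW lie outside $\U(\frakg_S(p))$. Moreover $\bfH_S$ is only a topological bialgebra whose coproduct has infinitely many components, so Milnor--Moore does not apply to it. Surjectivity of $\U(\frakg_S(p))\to\bfH^\ss_S(p)$ is equivalent to what you are trying to prove.

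\textbf{Injectivity in (a) rests on a false principle.} Decomposable elements can be primitive. Indeed, by (c) every element of $\frakg_S(\alpha+n\delta)$ has the form $[D_{1,0}(\beta),x]$. Your fallback, a coalgebra identification $\bfH_S\cong\widehat{\bigotimes}_p\bfH^\ss_S(p)$ along HN strata, is the whole difficulty and is left open. Even working stratum by stratum, you would still need that a primitive class vanishing on $\calM^+_S(<\uunu)$ induces a primitive class on $\calM^+_S(\uunu)$. That requires injectivity of $\HDT_{\calM,\uunu}(U_1\sqcup U_2)\to\HDT_{\calM,\leqslant\uunu}(U_1\sqcup U_2)$, which the paper gets from purity in Lemma~\ref{lem-r-prim-inj}. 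It also requires the integral isomorphism showing that strata of length $\ell\geqslant 2$ carry no primitives.

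Note that the support-locality you invoke in (b) is exactly what the paper uses for surjectivity. The images $\bfH_S(\nu)_\snilp$ of semi-nilpotent microlocal homology are primitive, because $\widetilde\calM_{U_1\sqcup U_2}\cap\Lambda_\snil$ splits as a disjoint union (Lemma~\ref{lem-snilp-prim-include}). They surject onto $\bfH^\ss_S(\nu)_\snilp=\frakg^\ss_S(\nu)$ by Kirwan surjectivity (Corollary~\ref{cor:Kirwan-surj}) and Corollary~\ref{cor:snilp-BPS}.

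\textbf{In (c), the filtration statement is handled correctly, but the invertibility argument fails.} On $\frakg^{\geqslant 1}_S$ one has $T_x=\ad_x$, because $\res^{\geqslant 1}(y\cdot x)=0$ for zero-dimensional $x$ (Proposition~\ref{prop:hecke on BPS Lie}); that part is fine. For invertibility, even the generic step over integral curves would need an argument, since the length-one operator is an Abel--Jacobi type correspondence capped with $\beta$, not a translation. The real problem is the induction. \eqref{intro2} concerns multiples $n\nu_\pr$ of a single generic primitive vector over the coarse space. It gives no expression of the summands of $(\pi_S)_*\cBPS_{M_S(\alpha+n\delta)}$ supported on loci of reducible or non-reduced curves as brackets of BPS sheaves of smaller curve classes. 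The bracket lives on the direct-sum map between classes with the same reduced Hilbert polynomial, and stable sheaves on reducible curves are not direct sums. Those summands are precisely where $n$-dependence could occur, so your induction presupposes the result.

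\textbf{The missing idea is the paper's.} Choose a smooth $C\in|k\beta|$ that is not a component of any curve in $|\alpha|$, with $m=\alpha\cdot k\beta>0$. Then every pure $\calF$ has a unique length-$m$ quotient scheme-theoretically supported on $C$, namely $\calF\otimes\calO_C$ (Lemma~\ref{lem:keylemma}). Hence the Hecke operator of $[\calM_C(m\delta)]\in\bfA_\beta$ equals $\omega_{\calO(C)}$ and is invertible (Proposition~\ref{prop:PicardisHecke}). Since $\omega_{\calO(C)}(D_{1,1}(1))=D_{1,1}(1)+kD_{1,0}(\beta)$ (Proposition~\ref{prop:formula}), the generalized-eigenspace and trace argument of Proposition~\ref{prop:algebraic trick} forces $D_{1,0}(\beta)$ itself to be invertible.
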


The element $D_{1,0}(\beta)$ corresponds to the length one Nakajima operator associated to the class $\beta$.
Statement (c) can be deduced from Theorem~\ref{thmintro:toda} by taking derived global sections; we also provide a proof directly using Hecke operators on the (affinized) BPS Lie algebra.
Statements (a) and (b) allow us to regard $\frakg_S$ or $\frakg^\ss_S(p)$ as affinized versions of the semistable BPS 
Lie algebra of \cite{DHSM2}.

Theorem~\ref{thmintro:BPS Lie} provides a way to compute the graded dimensions of $\text{IH}^*(M_S(\nu),\Q)$ and 
$\H^\BM_*(\calM^\ss_S(\nu),\Q)$, along with the graded dimensions of their Hodge filtrations, for any Mukai vector $\nu$ of dimension one. 
Indeed, both are determined, thanks to \eqref{intro1} and \eqref{intro2}, by the collection of graded dimensions and Hodge filtrations of 
the spaces $\frakg^\ss_S(\nu) \cong \frakg_S(\nu)$ for all one-dimensional Mukai vectors $\nu$. By the $\chi$-independence property, 
we may replace any $\frakg_S(\alpha+n\delta)$ by $\frakg_S(\alpha+\delta)\cong \frakg^\ss_S(\alpha+\delta)$ and pick a polarization which is 
$(\alpha+\delta)$-generic. As $\alpha+\delta$ is primitive, we have
$$\frakg^\ss_S(\alpha+\delta) =\bfH_S^\ss(\alpha+\delta) =\H^\BM_*(M_S(\alpha+\delta),\Q) \otimes \H^*(\BGM,\Q)$$
and the latter is known explicitly thanks to \cite{Gottsche-Soergel,Yoshioka}.

\smallbreak
Our final main result provides a characterization of the subspace $\frakg^\ss_S(\nu)$ of $ \H^\BM_*(\calM_S^\ss(\nu),\Q)$ for one-dimensional Mukai 
vectors. Let  $[\calM_S(\nu)^\rd]$ be the {reduced} virtual fundamental class of $\calM_S(\nu)$ (see \S\ref{red_vc_sec} for the definition). We 
set 
$$\H_*^\taut(\calM_S(\nu),\Q)= \H^*_\taut(\calM_S(\nu),\Q) \cap [\calM_S(\nu)^\rd] \subseteq \H^\BM_*(\calM_S(\nu),\Q)\ $$
and define $\H_*^\taut(\calM^\ss_S(\nu),\Q)$ similarly.

\begin{theorem}[=Theorem~\ref{thm:Markman}]\label{thmintro:Markman} 
Let $\alpha \in \H^2(S,\Z)$ be an effective curve class.
Then for any $n \in\Z$ and $\nu=\alpha+n\delta$, we have
$\H_*^\taut(\calM_S(\nu),\Q) = \frakg_S(\nu)$ and $\H_*^\taut(\calM^\ss_S(\nu),\Q)=\frakg^\ss_S(\nu).$
\end{theorem}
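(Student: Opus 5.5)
We first reduce the semistable statement to the statement for the full stack. By Theorem~\ref{thmintro:BPS Lie}(a) the restriction $\frakg_S(p)\to\frakg^\ss_S(p)$ is an isomorphism. Restriction along the open immersion $\calM^\ss_S(\nu)\subset\calM_S(\nu)$ is a ring map on cohomology, sends universal-sheaf Künneth components to universal-sheaf Künneth components, and sends $[\calM_S(\nu)^\rd]$ to $[\calM^\ss_S(\nu)^\rd]$. Hence it maps $\H_*^\taut(\calM_S(\nu),\Q)$ onto $\H_*^\taut(\calM^\ss_S(\nu),\Q)$. So once we know $\H_*^\taut(\calM_S(\nu),\Q)=\frakg_S(\nu)$, applying restriction gives the semistable equality. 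We therefore concentrate on the full stack, and prove the two inclusions separately.

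\emph{Inclusion $\H_*^\taut\subseteq\frakg_S(\nu)$.} We must show that $c\cap[\calM_S(\nu)^\rd]$ is primitive for every tautological $c$. The plan is to compute $\Delta$ on such classes using its factorization/dimensional-reduction definition. The coproduct component $\Delta_{\nu_1,\nu_2}$ is roughly restriction to the locus of direct sums $\calM_S(\nu_1)\times\calM_S(\nu_2)\to\calM_S(\nu)$, localized by an Euler class. The universal sheaf restricts to $\calE_1\boxplus\calE_2$, so tautological classes restrict to sums of tautological classes on the factors. The reduced virtual class is obtained by removing one trivial summand $\H^0(K_S)^\vee$ from the obstruction theory. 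On the direct sum locus the restricted obstruction theory has two such trivial summands, so the induced virtual class on the product carries an extra trivial factor with vanishing Euler class. This should force every component $\Delta_{\nu_1,\nu_2}$ with $\nu_1,\nu_2\neq0$ to vanish on $c\cap[\calM_S(\nu)^\rd]$, with the Euler-class denominator cleared by the $\BGM$-equivariant variable. Making this precise in the topological completed setting is the technical heart of this direction.

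\emph{Inclusion $\frakg_S(\nu)\subseteq\H_*^\taut$.} Here we use the Hecke operators from Proposition~\ref{propintro:heckeops} and Theorem~\ref{thmintro:BPS Lie}(c). Pick an ample $\beta$. Then $\ad D_{1,0}(\beta)^{k}\colon\frakg_S(\alpha+n\delta)\to\frakg_S(\alpha+(n+k)\delta)$ is an isomorphism, including for negative $k$ via the inverse isomorphisms, so every $\frakg_S(\nu)$ is obtained from $\frakg_S(\alpha+m\delta)$ for some $m$. We choose $m$ so that $\alpha+m\delta$ is primitive and, after changing the polarization (using polarization independence), generic. In that case
\[
\frakg_S(\alpha+m\delta)=\frakg^\ss_S(\alpha+m\delta)=\H^\BM_*(M_S(\alpha+m\delta),\Q)\otimes\H^*(\BGM,\Q).
\]
By Markman's theorem (for quasi-projective $S$, via the compactification hypothesis) this space is tautological, and tautological classes on the semistable locus lift to tautological classes on the full stack.

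It remains to show that $\ad D_{1,0}(\beta)$ and its inverse preserve tautological classes. For this we write the CoHA product with $D_{1,0}(\beta)$ through the Hecke correspondence $\calM_S(\nu)\leftarrow\calM^\ex\to\calM_S(\nu+\delta)$. The plan is to compute the commutator $[D_{1,0}(\beta),c\cap[\calM_S(\nu)^\rd]]$ as $c'\cap[\calM_S(\nu+\delta)^\rd]$ with $c'$ an explicit tautological class, via a Grothendieck–Riemann–Roch formula for the universal sheaf along the pointwise modification, in the spirit of Lehn's formula for the boundary operator. For the inverse direction we use a dimension count: $\ad D_{1,0}(\beta)$ is an isomorphism and injective on tautological subspaces, so it suffices that the tautological spaces have matching dimensions. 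We get this by combining the two directions at the primitive generic level.

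I expect the main obstacle to be this commutator computation. The first thing I would try is to reduce it to a universal formula on the $\calM_S(\delta)$ factor, using that the Hecke correspondence is a $\mathbb{P}(\calE_x)$-type bundle over $\calM_S(\nu)\times S$.
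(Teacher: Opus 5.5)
Your reduction of the semistable statement to the full stack is fine. Your overall plan also matches the paper's strategy: prove two inclusions, then transport Markman's theorem along $D_{1,0}(\beta)$ from a primitive generic vector.

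\textbf{The first inclusion.} The genuine gap is the inclusion $\H_*^{\taut}(\calM_S(\nu),\Q)\subseteq\frakg_S(\nu)$. In this paper $\Delta$ is the factorization restriction $\DT_{\calM_S}(\mathbb{C})\to\DT_{\calM_S}(U_1\sqcup U_2)$ on the threefold side. It is not restriction along $\oplus$ corrected by an Euler class, and no such formula is available. The map $\oplus$ is not quasi-smooth: its relative tangent complex $R\Hom(E_1,E_2)\oplus R\Hom(E_2,E_1)$ has amplitude $[0,2]$. Any Euler-class description would live in a localization with respect to an equivariant parameter, and you give no way to clear the denominator in $\bfH_S\,\widehat\otimes\,\bfH_S$. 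So the picture of an extra trivial obstruction summand does not become an argument.

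The missing idea is microlocal homology supported on the scalar cone $\Lambda_{\mathrm{scal}}=\calM_S^{\cl}\times\mathbb{A}^1$.
\begin{itemize}
\item By Proposition~\ref{prop-scal-microlocal}, $\H^{\Lambda_{\mathrm{scal}}}_*(\calM_S,\Q)\cong\H^*(\calM_S,\Q)$ up to shift, and its image in $\bfH_S$ is $\H^*(\calM_S,\Q)\cap[\calM_S]_{\rd}$.
\item A scalar endomorphism has a single eigenvalue, so $\widetilde\calM_{U_1\sqcup U_2}\cap\Lambda_{\mathrm{scal}}$ is the disjoint union of the pieces over $U_1$ and $U_2$.
\item Hence the restriction is the diagonal map into $\H^*\oplus\H^*$, and the image consists of primitive elements, exactly as in Lemma~\ref{lem-snilp-prim-include}.
\end{itemize}
This gives primitivity of $c\cap[\calM_S(\nu)]_{\rd}$ for every cohomology class $c$, with no universal-sheaf computation.

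Your second inclusion depends on this as well. At the primitive generic vector, the tautological lift of $\res^{\ss}(y)$ equals $y$ only because that lift is primitive and $\res^{\ss}$ is injective on $\frakg_S$ (Theorem~\ref{thm:restriction-isom}). Markman's theorem must also first be upgraded to the gerbe $\calM^{\ss}_S$, with the reduced class identified with the fundamental class (Proposition~\ref{prop:stackymarkman} and Remark~\ref{rem:reduce class is classical class}).

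\textbf{The inverse step.} Your dimension count is circular. Matching the dimensions of $\H_*^{\taut}(\calM_S(\alpha+(m-1)\delta),\Q)$ and $\frakg_S(\alpha+(m-1)\delta)$ is precisely the statement being proved. There are two ways out.
\begin{itemize}
\item Avoid inverses. For each $n$ choose $m\leqslant n$ with $m\equiv 1\pmod{d}$, where $d$ is the divisibility of $\alpha$, so that $\alpha+m\delta$ is primitive. Take a polarization generic for it; both sides of the claimed equality are polarization independent. Then apply only $\ad D_{1,0}(\beta)^{n-m}$.
\item Argue as the paper does. In $\bfH^{\geqslant 1}_S(\alpha+\Z\delta)$ the reduced tautological subspace is stable under $\bfA_S$ and under $\otimes\calL$. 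By Lemma~\ref{lem:PicardisHecke}, $\otimes\calO(C)$ equals the Hecke operator of $[\calM_C(\ell\delta)]$ with $\ell=\alpha\cdot\beta$. So Proposition~\ref{prop:algebraic trick} applies to this subspace itself, and $T_{D_{1,0}(\beta)}$ is invertible on it.
\end{itemize}

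Working in $\bfH^{\geqslant 1}_S$ also makes your commutator computation tractable; this is how Proposition~\ref{prop:hecke acts on reduced tautological} is proved.
\begin{itemize}
\item For $y\in\bfH_S(\nu)$ the product $y\cdot D_{1,0}(\beta)$ restricts to zero, so $\ad D_{1,0}(\beta)$ becomes a Hecke operator.
\item The Hecke correspondence is a derived projectivization over $\calM^{\geqslant 1}_S(\nu)\times S$.
\item The reduced classes are compatible by \eqref{eq:reducedclasses agree}.
\item Nothing is lost, because $\res^{\geqslant 1}$ is injective on $\frakg_S(\nu)$ (Proposition~\ref{prop:hecke on BPS Lie}).
\end{itemize}
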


All of the above theorems have analogs with similar proofs for the quasi-projective surface $S=T^*C$, where $C$ is a smooth projective curve. 
In that case, $\calM_S(\nu)$ is the moduli stack of Higgs sheaves on $C$ and Theorem~\ref{thmintro:toda} was proved by a different method 
in \cite{KinjoKoseki}. In particular, Theorem~\ref{thmintro:Markman} says that for $C$ of genus $g>1$ and for any rank $r$ and degree $d$, the 
restriction of $\frakg_S(r,d)$ to the stack of Higgs {bundles} is equal to $\H_*^\taut(\mathcal{H}iggs_{C}(r,d),\Q)$.

\medskip

\subsection{Main ideas}\label{sec:main ideas}
Let us say a few words about the main ideas used in the proof of Theorem~\ref{thmintro:toda}. 
The isomorphisms between 
the complexes \eqref{eqintro: Todacomplex} for various values of $n$ are given by cohomological Hecke operators. To 
apply these, we use a characterization of the pushforward of the BPS sheaf to the Chow variety in Proposition~\ref{prop:H0-action}. 
This is done by first passing from 2D to 3D
via the dimensional reduction isomorphism and then by using a microlocal characterization of the BPS sheaf (see Corollary \ref{cor:snilp-BPS} and Proposition \ref{prop:coker}). Once we have 
constructed the action of the algebra of Hecke operators $\bfH_S^0$ on the pushforward of the BPS sheaf, we 
show that the tensor product with a sufficiently ample line bundle is realized as a Hecke operator of high length. From this 
and the fact that $\bfH^0_S$ is generated by Hecke operators of length one, we deduce that the length one Hecke operator $D_{1,0}(\beta)$ is 
an isomorphism as well, see Lemma~\ref{lem:keylemma}.

Theorems~\ref{thmintro:exists coproduct},~\ref{thmintro:BPS Lie} and~\ref{thmintro:Markman} are proven using similar ideas.
The coproduct uses dimensional reduction and a factorization structure on  the  3D Donaldson--Thomas sheaf. 
This gives compatible coproducts on both $\bfH_S$ and $\bfH^\ss_S(p)$. 
This also yields a Hecke action of the Lie algebra version $\frakg_S(\N \delta)$ on $\frakg_S$ and we may apply the same argument as 
for the BPS sheaf. Finally, by \cite{MMSV}, Hecke operators preserve the subspace of tautological classes 
$\H_*^\taut(\calM_S(\alpha+\Z\delta),\Q)$. The fundamental class 
$[\calM_S(\alpha+n\delta)^\rd]$ is quasi-primitive; Theorem~\ref{thmintro:Markman} then essentially follows from the $\chi$-independence 
of the Lie algebra $\frakg_S$ in Theorem~\ref{thmintro:BPS Lie} together with Markman's theorem which implies that for $\nu$ primitive 
 and $H$-generic we have $\H_*^\taut(\calM^\ss_S(\nu),\Q)=\frakg^\ss_S(\nu)$.

\medskip

\subsection{Conventions}\label{conventions_sec}
For ease of reading, we adopt certain notational conventions.
The category of derived affine schemes $\Aff$ over $\C$
is opposite to the $\infty$-category $\CAlg^{\leqslant 0}$ of connective commutative 
dg-algebras over $\C$.
A derived $\infty$-prestack is a functor $\CAlg^{\leqslant 0}\to\Spc$ where $\Spc$ is the $\infty$-category of $\infty$-groupoids.
By a derived $\infty$-stack we mean the sheafification of a derived $\infty$-prestack for the \'etale topology.
Unless specified otherwise, all stacks are derived $\infty$-stacks, and we simply use the term stacks for them.
Further, an Artin stack is always assumed to be locally of finite type.
The classical truncation $\calM\mapsto\calM^\cl$ is the right adjoint functor of the fully faithful embedding of $\infty$-stacks into 
derived stacks. Let $\Map(\calM,\calN)$ be the derived mapping stack in the $\infty$-category of derived stacks.
If $\calM$ is a classical 1-stack and $\calN$ a derived 1-stack, then 
$\Map(\calM,\calN)^\cl\cong\Map(\calM,\calN^\cl)$
is the classical mapping stack of the classical stacks $\calM$ and $\calN^\cl$.
For a classical Artin stack $\calM$, the reduced substack $\calM_\red$ is the unique Artin stack with a morphism
$\calM_\red\to\calM$ which yields a natural isomorphism 
$\Hom(X,\calM_\red)=\Hom(X_\red,\calM)$ for any scheme $X$.
If $\calM$ is a derived Artin stack we abbreviate
\begin{align}\label{reduced-stack}\calM_\red=(\calM^\cl)_\red.\end{align}

Let $X$ be a separated scheme of finite type. Denote by $\Dcb(X)$ (resp.~$\Dhcb(X)$) the stable $\infty$-category of algebraically constructible complexes of sheaves with rational coefficients (resp.~mixed Hodge modules) on $X$.
We let $\mathsf{D}(X) \coloneqq \Ind\Dcb(X)$ and $\Dh(X)=\Ind\Dhcb(X)$.
The functors $\mathsf{D}(-)$ and $\Dh(-)$ have extensions to the $\infty$-category of locally of finite type Artin $\infty$-stacks with a 6-functor formalism, thanks to the work of Tubach \cite{Tubstack}.
For any such $\infty$-stack
$\calM$, let $\Dcb(\calM)\subset \mathsf{D}(\calM)$ (resp.~$\Dhcb(\calM)\subset\Dh(\calM)$) be the full subcategory of constructible complexes of
sheaves (resp.~mixed Hodge modules), i.e., the $\infty$-category of complexes of
sheaves (resp.~mixed Hodge modules) whose pull-back to any separated scheme $X$ of finite type belongs to $\Dcb(X)$ (resp.~$\Dhcb(X)$).
There is  a forgetful functor 
$\Dh(\calM)\to\mathsf{D}(\calM)$ which restricts to $\Dhcb(\calM)\to \Dcb(\calM)$.
The derived category $\Dcb(\calM)$ (resp.~$\Dhcb(\calM)$) 
is equipped with a perverse t-structure whose heart is denoted by $\Perv(\calM)$ (resp.~$\MHM(\calM)$).

Let $\Dhmon(\calM)$,  $\Dhcmonb(\calM)$, etc.,  
be the corresponding $\infty$-categories of monodromic mixed Hodge modules. 
There is a monodromy-forgetting functor $\Dhmon(\calM)\to\Dh(\calM).$
Conversely, equipping a mixed Hodge module with the trivial monodromy yields a fully faithful functor
$\Dh(\calM)\to\Dhmon(\calM)$
whose essential image consists of the monodromy-free complexes.
Let 
$\MHMmon(\calM)\subset\Dhmon(\calM)$ be the full subcategory  consisting of the objects whose underlying
mixed Hodge modules belong to $\MHM(\calM)$.

If $\calM$ is an Artin $\infty$-stack we have
$$\Dhcb(\calM)=\Dhcb(\calM^\cl),\quad
\Dhcmonb(\calM)=\Dhcmonb(\calM^\cl),\quad \text{etc.}$$
Set 
$\bullet=\Spec(\C)$
and let
$\L=\H^*_c(\A^1)$ in $\Dh(\bullet)$. 
Let ${}^\p\calH^i(-)$ and ${}^\p\gr(-)=\bigoplus_i{}^\p\calH^i(-)[-i]$ denote the perverse cohomology 
and the associated graded for the perverse filtration.
 
Given a finite type compactly generated $\infty$-category  $\calC$
let $\calM _{\calC}$ be the moduli stack of objects in $\calC$ in the sense of \cite[\S 3.1]{TV07}, i.e., 
on an affine scheme $U$ we set
$\calM _{\calC}$ to be the space of exact functors $\calC^c\to\Perf(U)$ from compact objects in $\calC$ to perfect complexes on $U$.
An $n$CY (=$n$-Calabi--Yau) category is a compactly generated finite type $\mathbb{C}$-linear $n$-Calabi--Yau stable $\infty$-category in the sense of \cite[def.~3.5]{BD19}.

\bigskip

\section{BPS cohomology and CoHAs of 2CY categories} \label{sec:2CY CoHAs}

This section is a reminder of some results on cohomological DT(=Donaldson--Thomas) theory, mainly from \cite{KKPS} and \cite{Bu25b}. 

\subsection{The DT mixed Hodge module}\label{sec: BPS and DT}

\subsubsection{Shifted symplectic structures}

Let $\calX$ be a derived Artin stack. 
For an integer $n$, an {$n$-shifted symplectic structure} on $\calX$ is an $n$-shifted closed $2$-form $\omega_{\calX} \in \calA^{2, \mathrm{cl}}(\calX, n)$ such that the 
induced pairing map $\mathbb{T}_{\calX} \to \mathbb{L}_{\calX}[n]$ is invertible.
See \cite[\S 1.2]{PTVV13} for the definition of closed forms for derived stacks and basic results in shifted symplectic geometry.
Such a pair is called an {$n$-shifted symplectic stack}.
In this paper, we are only interested in the cases $n = 0$ and $n = -1$.
The following two examples are the main objects to be considered in our paper.
 
\begin{example}
Let $\calC$ be a 2CY category.
Then it is shown in \cite[thm.~5.5]{BD21} that the moduli stack of objects $\calM _{\calC}$  in $\calC$ 
is equipped with a $0$-shifted symplectic structure.
\end{example}

\begin{example}\label{ex-shifted-cotangent}
Let $\calY$ be a quasi-smooth derived Artin stack.
The $(-1)$-shifted cotangent stack $\mathrm{T}^*[-1]\calY$ is the total space of the cotangent complex $\mathbb{L}_{\calY}[-1]$.
 We abbreviate $\widetilde\calY=\mathrm{T}^*[-1]\calY$.
 Let $\kappa \colon\widetilde\calY  \to \calY$ be the projection.
 Let $s_{\mathrm{sta}} \in \mathrm{R} \Gamma(\widetilde\calY, \kappa^* \mathbb{L}_{\calY}[-1])$ be the tautological section
and  $\lambda \in \mathrm{R} \Gamma(\widetilde\calY, \mathbb{L}_{\widetilde\calY}[-1])$ be its image under the map induced by the natural map $\kappa^*\mathbb{L}_{\calY}\rightarrow \mathbb{L}_{\widetilde\calY}$.
It is proved in \cite[thm.~2.4]{Cal19} that the de Rham differential $\omega_{\mathrm{sta}} = d_{\mathrm{dR}} \lambda$ is $(-1)$-shifted 
symplectic and we call it the {standard $(-1)$-shifted symplectic form}.
\end{example}

 \medskip

\subsubsection{The orientation}\label{sec-orientation}

For a $(-1)$-shifted symplectic stack $\calX$, an {orientation} is a pair consisting of a line bundle with locally constant $\mathbb{Z} / 2 \mathbb{Z}$-grading
$S$ on $\calX$ and an isomorphism $o \colon S^{\otimes 2} \cong \det(\L_{\calX})$.
If there is no risk of confusion, an orientation $(S, o)$ is denoted simply as $o$.
Throughout the paper, we use the Koszul sign rule for $\mathbb{Z} / 2 \mathbb{Z}$-graded line bundles: see \cite[\S 2.2]{KPS} for more details on our sign conventions on graded line bundles and the determinant.
For an orientation $(S, o)$, we let $\deg(o)$ denote the grading of $S$ defined as a locally constant function valued in $\mathbb{Z} / 2 \mathbb{Z}$.

For a $(-1)$-shifted cotangent stack $\calX = \widetilde\calY$ as in Example \ref{ex-shifted-cotangent}, there 
exists a standard orientation $(S_{\mathrm{sta}},o_{\mathrm{sta}})$ with $S_{\mathrm{sta}}= \kappa^* \det(\L_{\calY})$,
see e.g. \cite[ex.~3.20]{KPS}.
Here, $\det(\L_{\calY})$ is graded by $\vdim \calY \; (\mathrm{mod}\ 2\mathbb{Z})$.
 
  \medskip

\subsubsection{The DT mixed Hodge module}
For an oriented $(-1)$-shifted symplectic stack $(\calX, \omega_{\calX}, o)$, 
a monodromic mixed Hodge module
$\varphi_{\calX, \omega_{\calX}, o}$ in $\MHMmon(\calX)$ is constructed in \cite{BBDJS15}, 
which we refer to as the {DT mixed Hodge module}.
See \cite{Tubstack} and \cite[\S 5.2]{Bu25b} for the treatment of monodromic mixed Hodge modules on stacks.
Reasons to work with monodromic mixed Hodge modules are that in this category one has a square root of the Lefschetz
Hodge structure $\L$, and the Thom--Sebastiani theorem holds only after taking the monodromy action.
If the $(-1)$-shifted symplectic structure and the orientation are obvious from the context, 
we simply write $\varphi_{\calX} =  \varphi_{\calX, \omega_{\calX}, o}$.
The underlying perverse sheaf of $\varphi_{\calX}$ will be called the {DT perverse sheaf}.
If there is no risk of confusion, we use $\varphi_{\calX}$ also for the underlying perverse sheaf.

\medskip

\subsubsection{Compatibility with \'etale maps}

Let $(\calX_2, \omega_2, o_2)$ be an oriented $(-1)$-shifted symplectic stack and  $\eta \colon \calX_1 \to \calX_2$ be an \'etale morphism.
Then $(\calX_1, \eta^{\star}\omega_2, \eta^{\star}o_2)$ defines a structure of an oriented $(-1)$-shifted symplectic stack.
It follows from the construction of the DT mixed Hodge module that there is a natural isomorphism in $\Dhmon(\calX_1)$
\begin{equation}\label{eq-etale-compatibility}
	\varphi_{\calX_1} \cong \eta^* \varphi_{\calX_2}.
\end{equation}

 \medskip

\subsubsection{Thom--Sebastiani theorem}

Let $(\calX_1, \omega_1, o_1)$ and $(\calX_2, \omega_2, o_2)$ be oriented $(-1)$-shifted symplectic stacks.
Then $(\calX_1 \times \calX_2, \omega_1 \boxplus \omega_2, o_1 \boxtimes o_2)$ defines a structure of an oriented $(-1)$-shifted symplectic stack.
It is shown in \cite[prop.~4.3]{KPS} and \cite[cor.~5.9]{Des} that the formation of the DT mixed Hodge module is monoidal.
Namely, there is a natural isomorphism in $\Dhmon(\calX_1 \times \calX_2)$
\begin{equation}\label{eq-Thom-Sebastiani}
 \varphi_{\calX_1} \boxtimes \varphi_{\calX_2} \cong \varphi_{\calX_1 \times \calX_2}	
\end{equation}
 called the {Thom--Sebastiani isomorphism}, satisfying the associativity relation.
Here, we use the monodromic tensor product, see, e.g., \cite[\S 5.2.4]{Bu25a} for the details.
The monodromic tensor product is compatible with the standard tensor product for perverse sheaves, but not with the standard tensor product for mixed Hodge modules.
It is shown in \cite[prop.~5.23(3)]{KKPSpervpull} that the Thom--Sebastiani isomorphism is graded commutative.
Namely, if $\sigma \colon \calX_2 \times \calX_1 \to \calX_1 \times \calX_2$ is the swapping isomorphism, 
the following diagram commutes: 
\begin{equation}\label{eq-TS-commutative}
	\begin{tikzcd}
	{\sigma^*(\varphi_{\calX_1} \boxtimes \varphi_{\calX_2})} &[70pt] {\sigma^*\varphi_{\calX_1 \times \calX_2}} \\
	{\varphi_{\calX_2} \boxtimes \varphi_{\calX_1}} & {\varphi_{\calX_2 \times \calX_1}.}
	\arrow["{(-1)^{\deg(o_1) \cdot \deg(o_2)} \cdot \eqref{eq-Thom-Sebastiani}}", from=1-1, to=1-2]
	\arrow["\cong"', from=1-1, to=2-1]
	\arrow["\cong", from=1-2, to=2-2]
	\arrow["{\eqref{eq-Thom-Sebastiani}}"', from=2-1, to=2-2] 
\end{tikzcd}
\end{equation}

 \medskip

\subsubsection{Translation invariance}

If $\calX = \widetilde\calY$ as in Example \ref{ex-shifted-cotangent}, we abbreviate
$\varphi_{\widetilde\calY}\cong\varphi_{\calX, \omega_{\mathrm{sta}}, o_{\mathrm{sta}}}.$
If further $\calY$ is a $0$-shifted symplectic stack with an action of $\BGM$, 
we have an action of $\mathrm{T}[-1] \BGM \cong \A^1 / \G_m$ on $\mathrm{T}[-1]\calY$.
Under
the identification $\mathrm{T}[-1]\calY \cong \widetilde\calY$, it yields an $\A^1$-action on $\calX$.

\begin{proposition}\label{prop:equivariance}
Let $\calY$ be a $0$-shifted symplectic stack with an action of $\BGM$.
\hfill
\begin{enumerate}[label=$\mathrm{(\alph*)}$,leftmargin=8mm,itemsep=2mm]
\item
$\varphi_{\widetilde\calY}$ is equivariant with respect to the $\A^1$-action.
\item
$\varphi_{\widetilde\calY}$ is locally constant along the orbits of the scaling $\G_m$-action.
\end{enumerate}
\end{proposition}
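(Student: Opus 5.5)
Both parts come from the behaviour of the DT mixed Hodge module under symplectomorphisms that preserve the orientation. In case (a) the symplectomorphisms are the $\A^1$-action; in case (b) they are the scaling action of $\G_m$.

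\textbf{Part (a).} The first step is to check that the $\A^1$-action on $\calX=\widetilde\calY$ preserves the structure. Concretely, the action map $a\colon \A^1\times\calX\to\calX$ and the projection $\pr_2\colon \A^1\times \calX\to\calX$ should pull back $\omega_{\mathrm{sta}}$ to the same closed $(-1)$-shifted $2$-form on $\A^1\times\calX$, relative to $\A^1$. They should also pull back $o_{\mathrm{sta}}$ compatibly.

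The reason is that under $\mathrm{T}[-1]\calY\cong\widetilde\calY$, the $\A^1=\mathrm{T}[-1]\BGM$ action is generated by the Euler vector field of the $\BGM$-action. This field gives a degree $-1$ vector field $v$ on $\calY$, whose lift to $\mathrm{T}[-1]\calY$ is a translation along fibres. Translation by a closed $1$-form changes $\lambda$ by an exact term. Indeed, the $1$-form being added is $\iota_v\omega_\calY$, and it is exact because $\calY$ is $0$-shifted symplectic with a $\BGM$-action: $\iota_v\omega_\calY$ is a moment-map type form, $d_{\mathrm{dR}}$ of the weight function, which is essentially zero. Hence $d_{\mathrm{dR}}\lambda$ is preserved up to a canonical homotopy. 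The orientation $\kappa^*\det\L_\calY$ is preserved because the action commutes with $\kappa$.

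The second step uses the family version of the construction in \cite{BBDJS15}, together with smooth pullback compatibility as in \eqref{eq-etale-compatibility}. Applied to the smooth maps $a$ and $\pr_2$, it gives isomorphisms $a^*\varphi_\calX\cong\varphi_{\A^1\times\calX/\A^1}\cong\pr_2^*\varphi_\calX$. I would then verify the cocycle condition on $\A^1\times\A^1\times\calX$. Since $\varphi$ is a perverse sheaf up to shift, its automorphism data is discrete, so the cocycle condition can be checked on a dense open set or pointwise.

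\textbf{Part (b).} Scaling the fibres of $\mathrm{T}^*[-1]\calY$ by $t\in\G_m$ sends $\omega_{\mathrm{sta}}$ to $t\,\omega_{\mathrm{sta}}$. A rescaled symplectic form gives the same DT perverse sheaf, since locally the critical chart $(U,f)$ becomes $(U,tf)$ and $\phi_{tf}\cong\phi_f$. So the family over $\G_m$ of pulled-back structures yields an isomorphism $m^*\varphi\cong\pr_2^*\varphi$ on $\G_m\times\calX$. This holds only after passing to the underlying perverse sheaf or a monodromic variation, because the monodromy rotates as $t$ goes around the origin. That is why the statement only claims local constancy along orbits, not full equivariance.

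\textbf{Main obstacle.} I expect the hardest step to be the first step of part (a): producing the homotopy identifying the pulled-back closed forms and orientations canonically. Everything after that is formal. To carry it out, I would work in local Darboux charts $\mathrm{T}^*[-1]$ of a smooth chart of $\calY$, where the translation is explicit and the exactness can be seen by hand. I would then glue these local identifications using the uniqueness of the DT sheaf up to canonical isomorphism.
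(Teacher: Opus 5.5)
Your part (b) is the paper's argument. The paper justifies (b) only by the homogeneity, under fibre dilation, of the functions in the critical charts defining $\varphi_{\widetilde\calY}$; that is your observation that $(U,f)$ becomes $(U,tf)$.

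One step there is wrong. The continuation isomorphisms $\phi_{tf}\cong\phi_f$ depend on a path from $1$ to $t$ in $\G_m$. Transporting once around $0$ produces the monodromy operator of the vanishing cycles, which acts on the underlying perverse sheaf itself. So your argument does not give $m^*\varphi_{\widetilde\calY}\cong\pr_2^*\varphi_{\widetilde\calY}$ on $\G_m\times\widetilde\calY$, and forgetting the Hodge structure does not help. What it does give is such an isomorphism over $V\times\widetilde\calY$ for every simply connected open $V\subset\G_m$. That is exactly local constancy along orbits, including the $\R_{>0}$-conicity needed later to apply \cite{KS13}.

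For (a) the paper gives no argument of its own and defers to the proof of \cite[thm.~7.2.15]{Bu25b}. Your route is a genuinely different, global argument. The $\A^1$-action is fibrewise translation by multiples of $\iota_v\omega_\calY$; such translations preserve $(\omega_{\mathrm{sta}},o_{\mathrm{sta}})$; then smooth-pullback functoriality of \cite{BBDJS15} and rigidity of perverse sheaves give equivariance. This is sound, and it shows that (a) only uses that the action is by orientation-preserving symplectomorphisms. The price is that you need the smooth-pullback form of the BBDJS construction for stacks, whereas the paper simply imports the result. Three statements need fixing.

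First, nothing here is ``essentially zero''. The relevant function is the moment map $\mu\colon\calY\to\A^1[-1]$ of \cite[prop.~3.1.9]{Bu25b}. It is invisible on classical truncations but is not zero: $d\mu=\iota_v\omega_\calY$, and for $\calY=\calM$ its derived zero fibre is the stack $\calM^{\rd}$ of \S\ref{red_vc_sec}. The translation is therefore a nontrivial action. Moreover, closedness of $\iota_v\omega_\calY$ already suffices, since $\tau_s^*\lambda=\lambda+s\,\kappa^*\iota_v\omega_\calY$ holds globally on $\widetilde\calY$. So the step you flag as the main obstacle is formal once that closedness is cited, and no chart computation is needed.

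Second, for the orientation it is not enough that the action commutes with $\kappa$. You need that the derivative of a fibre translation is unipotent with respect to $\kappa^*\mathbb{L}_\calY\to\mathbb{L}_{\widetilde\calY}\to\kappa^*\mathbb{T}_\calY[1]$. It then acts trivially on $\det\mathbb{L}_{\widetilde\calY}$.

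Third, the cocycle condition cannot be checked pointwise on $\widetilde\calY$. The right mechanism is that shifted pullback along smooth morphisms with connected fibres is fully faithful on (monodromic mixed Hodge) perverse sheaves, so $\A^1$-equivariance is a property. Normalize the isomorphism along $\{0\}\times\widetilde\calY$; the cocycle identity then follows by restricting to $\{(0,0)\}\times\widetilde\calY$.
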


\begin{proof}
Part (a)  is shown in the proof of \cite[thm.~7.2.15]{Bu25b}.
Part (b) follows from the construction of the DT mixed Hodge module in terms of vanishing cycles relative to a function which is homogeneous 
with respect to dilations.
\end{proof}

\medskip 

\subsection{Shifted Lagrangian classes}

We explain basic functorial properties for the DT perverse sheaves under Lagrangian correspondences conjectured by Joyce \cite[conj~1.1]{Joyce-Safronov} and proved in \cite{KKPS}.
This construction unifies the dimensional reduction isomorphism \cite{K}, the cohomological Hall algebra for 3CY categories 
\cite{KPS}, \cite{Des}, and the virtual fundamental classes for quasi-smooth derived Artin stacks \cite{K19}.
This construction will be used when comparing the two-dimensional CoHAs and three-dimensional CoHAs under the dimensional reduction isomorphism (Theorem~\ref{thm: comparison 2D 3D}), and also when proving the primitivity of the reduced virtual fundamental classes for the moduli stack of objects in 2CY categories in \S\ref{sec:proofmarkman}. 

\subsubsection{Lagrangian correspondence}

Let $(\calX_1, \omega_{\calX_1})$ and $(\calX_2, \omega_{\calX_2})$ be $(-1)$-shifted symplectic stacks and consider a correspondence
$\calX_1 \xleftarrow{f_1} \calL \xrightarrow[]{f_2} \calX_2$.
An isotropic structure for the correspondence is a choice of homotopy
\[
 \eta \colon f_1^{\star} \omega_{\calX_1}  \sim f_2^{\star} \omega_{\calX_2}
\]
in the space $\calA^{2, \mathrm{cl}}(\calL, -1)$ of $(-1)$-shifted closed $2$-forms on $\calL$.
The isotropic structure gives a map $$\mathbb{T}_{\calL / \calX_1} \to \mathbb{L}_{\calL / \calX_2}[-2].$$
We say that $\eta$ is a Lagrangian structure if this map is invertible. Such a correspondence is called a $(-1)$-shifted Lagrangian correspondence.
A Lagrangian structure induces an isomorphism 
\begin{equation}\label{eq-Lag-det-isom}
    \det(\mathbb{L}_{\calX_1}|_{\calL})  \otimes \det(\mathbb{L}_{\calL / \calX_1})^{\otimes 2} \cong \det(\mathbb{L}_{\calX_2}|_{\calL}).
\end{equation}
An orientation for the $(-1)$-shifted Lagrangian correspondence is a choice of orientations $(S_i, o_i)$ for $\calX_i$ and an isomorphism
$o \colon S_1 |_{\calL} \otimes \det(\mathbb{L}_{\calL / \calX_1}) \cong S_2|_{\calL}$ such that the following diagram commutes
\[\begin{tikzcd}
	{(S_1 |_{\calL} \otimes \det(\mathbb{L}_{\calL / \calX_1}))^{\otimes 2}} & {(S_2 |_{\calL})^{\otimes 2}} \\
	{\det(\mathbb{L}_{\calX_1} |_{\calL}) \otimes \det(\mathbb{L}_{\calL / \calX_1})^{\otimes 2}} & {\det(\mathbb{L}_{\calX_2} |_{\calL}). }
	\arrow["{o^{\otimes 2}}", from=1-1, to=1-2]
	\arrow["{o_1 \otimes \id}"', from=1-1, to=2-1]
	\arrow["{o_2}", from=1-2, to=2-2]
	\arrow["{\eqref{eq-Lag-det-isom}}"', from=2-1, to=2-2]
\end{tikzcd}\]
The product of oriented Lagrangian correspondences is equipped with the structure of an oriented Lagrangian correspondence.
Also, a composition of oriented Lagrangian correspondences is defined in the obvious way.

For an oriented Lagrangian correspondence $\calL$ from $\calX_1$ to $\calX_2$, one can define the swapped Lagrangian correspondence
$\mathrm{swap}(\calL)$ from $\calX_2$ to $\calX_1$ with the same underlying derived stacks. 
We warn the reader that $\mathrm{swap}$ is {not} an involution: The orientations of $\mathrm{swap}^2(\calL)$ and $\calL$ differ by a factor of $(-1)^{\mathrm{vdim} \calL}$.
This is essentially due to our Koszul sign convention for graded line bundles, and will be explained in \cite{KKPS}.

\subsubsection{Shifted Lagrangian classes}

The DT perverse sheaves behave functorially with respect to oriented $(-1)$-shifted Lagrangian correspondences.

\begin{theorem}[\cite{KKPS}]\label{thm-shifted-Lag-class}
    Let $\calX_1 \xleftarrow{f_1} \calL \xrightarrow[]{f_2} \calX_2$ be an oriented $(-1)$-shifted Lagrangian correspondence.
    Set  $d = \vdim \calL$.
    Then there is a map
    \begin{equation}\label{eq-shifted-lag-class}
     [\calL]_{\mathrm{Lag}} \colon f_1^* \varphi_{\calX_1} \to f_2^{!} \varphi_{\calX_2} [- d]   
	\end{equation}
    in $\mathsf{D^b_c}(\calL)$ which we call the {shifted Lagrangian class}.
The shifted Lagrangian class satisfies the following properties.
\begin{enumerate}[label=$\mathrm{(\alph*)}$,leftmargin=8mm,itemsep=2mm]
\item \label{item-shifted-Lag-class-unital} Unitality: If $f_1$ and $f_2$ are identity maps and the Lagrangian structure is the diagonal one, then the shifted Lagrangian class $[\calL]_{\mathrm{Lag}}$ is the identity map.
In this case $d=0$ since the virtual dimension of a $(-1)$-shifted symplectic stack is $0$. 
        \item \label{item-shifted-Lag-class-associative} Associativity: Assume that we are given another oriented $(-1)$-shifted Lagrangian correspondence $\calX_2 \xleftarrow{g_2} \calL' \xrightarrow[]{g_3} \calX_3$ and let
        $\calX_1 \xleftarrow{h_1} \calL'' \xrightarrow[]{h_3} \calX_3$ be the composite.
Set $d' = \vdim \calL'$ and $d'' = \vdim \calL''$.
        Then the following diagram commutes
        \[\begin{tikzcd}
            {h_1^* \varphi_{\calX_1}} &[20pt] &&[20pt] {h_3^! \varphi_{\calX_3}[-d'']} \\
            {(g_2')^* f_1^* \varphi_{\calX_1}} & {(g_2')^* f_2^! \varphi_{\calX_2}[-d]} & {(f_2')^! g_2^* \varphi_{\calX_2}}[-d] & {(f_2')^!g_3^! \varphi_{\calX_3}[-d''].}
            \arrow["{[\calL'']_{\mathrm{Lag}} }", from=1-1, to=1-4]
            \arrow["\cong"', from=1-1, to=2-1]
            \arrow["\cong", from=1-4, to=2-4]
            \arrow["{(g_2')^* [\calL]_{\mathrm{Lag}} }"', from=2-1, to=2-2]
            \arrow[from=2-2, to=2-3]
            \arrow["{(f_2')^![\calL']_{\mathrm{Lag}} }"', from=2-3, to=2-4]
        \end{tikzcd}\]
        Here $g_2' \colon \calL'' \to \calL$ and $f_2' \colon \calL'' \to \calL'$ are the base change of $g_2$ and $f_2$ respectively.

        \item \label{item-shifted-Lag-class-multiplicative} Multiplicativity: Assume that we are given another oriented $(-1)$-shifted Lagrangian correspondence $\calX_1' \xleftarrow{f_1'} \calL' \xrightarrow[]{f_2} \calX_2'$. Set $d' = \vdim \calL'$.
        Then the following diagram commutes
        \[\begin{tikzcd}
            {f_1^*\varphi_{\calX_1} \boxtimes f_1'^*\varphi_{\calX_1'}} &[40pt] {f_2^!\varphi_{\calX_2}[-d] \boxtimes f_2'^!\varphi_{\calX_2'}[-d']} \\
            {(f_1 \times f_1')^* \varphi_{\calX_1 \times \calX_1'}} & {(f_2 \times f_2')^! \varphi_{\calX_2 \times \calX_2'}[-d - d'].}
            \arrow["{[\calL]_{\mathrm{Lag}}  \boxtimes [\calL']_{\mathrm{Lag}} }", from=1-1, to=1-2]
            \arrow["\eqref{eq-Thom-Sebastiani}"', from=1-1, to=2-1]
            \arrow["\eqref{eq-Thom-Sebastiani}"', from=1-2, to=2-2]
            \arrow["{[\calL \times \calL']_{\mathrm{Lag}} }"', from=2-1, to=2-2]
        \end{tikzcd}\]

        \item \label{item-shifted-Lag-class-etale-compat} \'Etale compatibility: Assume that we are given the following commutative diagram
        \[\begin{tikzcd}
            {\calX_1'} & {\calL'} & {\calX_2'} \\
            {\calX_1} & \calL & {\calX_2}
            \arrow["{u_1}"', from=1-1, to=2-1]
            \arrow["{f_1'}"', from=1-2, to=1-1]
            \arrow["{f_2'}", from=1-2, to=1-3]
            \arrow["{u_{\calL}}"', from=1-2, to=2-2]
            \arrow["{u_2}"', from=1-3, to=2-3]
            \arrow["{f_1}", from=2-2, to=2-1]
            \arrow["{f_2}"', from=2-2, to=2-3]
        \end{tikzcd}\]
        such that the vertical maps are \'etale. We equip the upper correspondence with the $(-1)$-shifted Lagrangian structure induced from the bottom one.
        Then the following diagram commutes:
        \[\begin{tikzcd}
            {(f_1')^*\varphi_{\calX_1'}} &&& {(f_2')^!\varphi_{\calX_2'}[-d]} \\
            {(f_1')^*u_1^*\varphi_{\calX_1}} & {u_{\calL}^* f_1^*\varphi_{\calX_1}} & {u_{\calL}^! f_2^!\varphi_{\calX_2}[-d]} & {(f_2')^!u_2^!\varphi_{\calX_2}[-d].}
            \arrow["{[\calL']_{\mathrm{Lag}}}", from=1-1, to=1-4]
            \arrow["\eqref{eq-etale-compatibility}"', from=1-1, to=2-1]
            \arrow["\eqref{eq-etale-compatibility}", from=1-4, to=2-4]
            \arrow["\cong"', from=2-1, to=2-2]
            \arrow["{[\calL]_{\mathrm{Lag}}}"', from=2-2, to=2-3]
            \arrow["\cong"', from=2-3, to=2-4]
        \end{tikzcd}\]

		\item \label{item-shift-Lag-smooth} 
		Smooth case: Assume that $f_1$ is smooth and $f_2$ induces an isomorphism on the classical truncation. Then the map \eqref{eq-shifted-lag-class} is invertible.
\qed
    \end{enumerate}

\end{theorem}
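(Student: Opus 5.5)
The plan is to reduce to a single Lagrangian, build the class locally in Darboux charts, and glue using a cohomological amplitude bound, much as virtual classes are built for quasi-smooth stacks in \cite{K19}.

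First, reduce the correspondence to a single Lagrangian. The correspondence $\calX_1 \xleftarrow{f_1} \calL \xrightarrow{f_2} \calX_2$ is the same thing as a $(-1)$-shifted Lagrangian $g=(f_1,f_2)\colon \calL \to \overline{\calX_1}\times\calX_2$, where $\overline{\calX_1}$ carries $-\omega_{\calX_1}$. Two facts convert a Lagrangian class $\Q_{\calL}\to g^!\varphi_{\overline{\calX_1}\times\calX_2}[-d]$ into a map $f_1^*\varphi_{\calX_1}\to f_2^!\varphi_{\calX_2}[-d]$:
\begin{itemize}
\item the Verdier self-duality $\mathbb{D}\varphi_{\calX_1}\cong\varphi_{\overline{\calX_1}}$, which is part of the construction in \cite{BBDJS15};
\item the Thom--Sebastiani isomorphism \eqref{eq-Thom-Sebastiani}.
\end{itemize}
So it suffices to construct, for an oriented Lagrangian $g\colon\calL\to\calX$ with $d=\vdim\calL$, a natural map $\Q_{\calL}\to g^!\varphi_{\calX}[-d]$ in $\mathsf{D^b_c}(\calL)$.

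Second, construct this map locally. By the Darboux theorem for $(-1)$-shifted symplectic stacks and its Lagrangian refinement (Joyce--Safronov), smooth-locally on $\calL$ we may take:
\begin{itemize}
\item $\calX=\mathrm{Crit}(f)$ for a regular function $f$ on a smooth $U$;
\item $\calL$ presented by a smooth map $V\to U$ together with the derived zero locus of a section of a bundle on $V$, on which $f$ pulls back to a locally constant function.
\end{itemize}
In such a chart, $\varphi_{\calX}$ is the vanishing cycle complex $\varphi_f\Q_U[\dim U]$ twisted by the orientation, and a map $\Q_{\calL}\to g^!\varphi_f$ comes from two ingredients: the natural map from the constant sheaf into vanishing cycles, available because $f$ is constant on $V$, and a Gysin/Euler class map for the zero locus. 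The orientation data $o$ fixes the sign and the square-root twist. Two checks are needed here: that this local class is independent of the chosen chart, and that it is compatible with the chart-comparison isomorphisms ("stabilisation", in the sense of adding quadratic forms) of \cite{BBDJS15}. I would organise both via Thom--Sebastiani for quadratic forms.

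Third, glue; this is the step I expect to be the main obstacle. The key input is that
\[
\mathcal{H}om\bigl(\Q_{\calL}, g^!\varphi_{\calX}[-d]\bigr)
\]
has no cohomology in negative degrees. This is a perverse amplitude statement, proved in the local model using that $\varphi_{\calX}$ is perverse and that $\calL\to\calX$ has the expected relative dimension. Given this vanishing, the local maps form a sheaf rather than a stack of choices, so agreement on overlaps, which follows from chart independence, suffices to glue them uniquely along a smooth atlas. The same rigidity makes the listed properties local:
\begin{itemize}
\item unitality, multiplicativity and étale compatibility reduce to identities in the standard local model;
\item associativity reduces to a base-change computation for composed Darboux charts, where Thom--Sebastiani and functoriality of vanishing cycles under smooth pullback do the work;
\item in the smooth case, where $f_1$ is smooth and $f_2$ is an isomorphism on classical truncations, the local model is exactly the dimensional reduction / smooth pullback situation, so the map is the dimensional reduction isomorphism of \cite{K} up to a shift, hence invertible.
\end{itemize}
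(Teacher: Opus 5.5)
The paper does not prove this statement; it cites it from \cite{KKPS}. Judged on its own, your first step (reducing to a single Lagrangian via Verdier self-duality and Thom--Sebastiani) is fine, and invertibility in the smooth case is a local question once a class exists. The gluing step, however, fails, and with it the chart-by-chart strategy.

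The amplitude bound you rely on, $\mathcal{H}^{<0}\mathcal{H}om(\Q_\calL,g^!\varphi_\calX[-d])=0$, is false. A Lagrangian structure constrains only $\vdim\calL$, not $\dim\calL^\cl$, and excess dimension produces negative-degree cohomology. Here is a concrete example:
\begin{itemize}
\item Let $\calY$ be the derived zero locus of the zero section of a bundle $E$ of rank $r>0$ on a smooth $M$.
\item Take $\calL=\mathrm{T}^*[-2]\calY$, viewed as an oriented Lagrangian correspondence $\bullet\leftarrow\calL\to\bullet$. It is the composite of the dimensional-reduction correspondences $\bullet\leftarrow\calY\to\widetilde\calY$ and $\widetilde\calY\leftarrow\calY\to\bullet$.
\item Then $\calL^\cl=M$, $d=2(\dim M-r)$ and $f_2^!\varphi_\bullet[-d]=\Q_M[2r]$. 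So the relevant $\mathcal{H}om$ sheaf sits in degree $-2r$, and $U\mapsto\mathrm{Hom}(\Q_U,\Q_U[2r])=\H^{2r}(U,\Q)$ is not a sheaf.
\end{itemize}
Worse, associativity together with Corollary~\ref{cor-VFC-via-dim-red} forces $[\calL]_{\mathrm{Lag}}$ to be the virtual class $[\calY]$, i.e.\ cup product with $e(E)$. This class restricts to zero on every open set trivializing $E$, yet is nonzero globally (e.g.\ $M=\mathbb{P}^1$, $E=\calO(1)$). So no amount of chart independence recovers the class from local Darboux-chart classes: they are all zero and agree on overlaps, but the answer is not zero. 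For the same reason you cannot reduce associativity and the other identities to local checks, since composites of Lagrangian correspondences typically have exactly this kind of excess.

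What is missing is a global construction. Note that \cite{K19}, which you cite as your model, does not glue local classes either; it specializes globally to the normal bundle. One analogous route here runs as follows:
\begin{itemize}
\item Since $\mathbb{T}_{\calL/\calX}\simeq\mathbb{L}_\calL[-2]$, the normal bundle of $\calL\to\calX$ is $\mathrm{T}^*[-1]\calL$.
\item The deformation to the normal cone (cf.\ \cite{CS}) is then a $\G_m$-equivariant family of $(-1)$-shifted symplectic stacks. It contains $\calL$ as a Lagrangian throughout and degenerates $\calX$ to $\mathrm{T}^*[-1]\calL$.
\item Specialize the DT sheaf along this family, and compose with the dimensional reduction isomorphism $\Q_\calL[d]\cong 0^!\varphi_{\mathrm{T}^*[-1]\calL}$ on the special fibre.
\end{itemize}
In such an argument only perverse sheaves, which do form a stack, need to be glued, and the Euler-class information is carried by the global degeneration. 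A minor point: in the Joyce--Safronov local model the function pulls back to $V$ into the ideal of the section ($f\circ\pi=\langle t,s\rangle$). It is therefore constant only on the classical zero locus, not on $V$.
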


\subsubsection{Example: Dimensional reduction}

Let $\calY$ be a quasi-smooth derived Artin stack. Set $\widetilde{\calY} = \mathrm{T}^*[-1]\calY$ as in Example~\ref{ex-shifted-cotangent}.
Let $0 \colon \calY \to \widetilde{\calY}$ be the zero section.
Then it is shown in \cite[Theorem 2.4]{Cal19} that the correspondences
\[
	\bullet \leftarrow  \calY \xrightarrow{0} \widetilde{\calY}, \quad \widetilde{\calY} \xleftarrow{0} \calY \to  \bullet 
\]
are Lagrangian correspondences. It follows from \cite[lem.~3.19]{KPS} that these correspondences are equipped with orientations.
Therefore Theorem~\ref{thm-shifted-Lag-class} yields maps
\[
	\mathbb{Q}_{\calY}[\vdim \calY] \to 0^! \varphi_{\widetilde{\calY}}, 
	\quad 0^* \varphi_{\widetilde\calY} \to \mathbb{D}\mathbb{Q}_{\calY}[-\vdim \calY].
\]
It is proved in \cite{KKPS} that these maps are invertible.
Now let $\kappa \colon \widetilde{\calY} \to \calY$ be the projection.
Then we have the following statement, which is called the {dimensional reduction theorem}.

\begin{theorem}\label{thm-dimensional-reduction}
	There are isomorphisms of complexes of monodromic mixed Hodge modules in $\Dhcmonb(\calY)$
	\begin{equation}\label{eq:dimensional-reduction}
		\kappa_! \varphi_{\widetilde\calY}  \cong    \mathbb{Q}_{\calY}\otimes\mathbb{L}^{- \frac12\vdim \calY}, \quad \kappa_* \varphi_{\widetilde\calY}  \cong   \mathbb{D} \mathbb{Q}_{\calY}\otimes\mathbb{L}^{\frac12\vdim \calY}.
	\end{equation}
\end{theorem}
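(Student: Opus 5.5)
We derive both isomorphisms of \eqref{eq:dimensional-reduction} from the two invertible maps
\[
\mathbb{Q}_{\calY}[\vdim \calY] \xrightarrow{\ \sim\ } 0^! \varphi_{\widetilde{\calY}},
\qquad
0^* \varphi_{\widetilde\calY} \xrightarrow{\ \sim\ } \mathbb{D}\mathbb{Q}_{\calY}[-\vdim \calY]
\]
obtained above from the oriented Lagrangian correspondences through the zero section. The bridge between restriction to the zero section and push-forward along $\kappa$ is the contraction principle. By Proposition~\ref{prop:equivariance}(b), $\varphi_{\widetilde\calY}$ is locally constant along the orbits of the fibrewise scaling $\G_m$-action on $\widetilde\calY=\mathrm{T}^*[-1]\calY$. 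Hence it is $\G_m$-monodromic with respect to a contracting action whose fixed locus is the zero section $0\colon\calY\to\widetilde\calY$. For such monodromic complexes the hyperbolic localization/contraction lemma (Braden, Springer; in the stacky mixed Hodge setting via Tubach's six functors \cite{Tubstack}) gives canonical isomorphisms
\[
\kappa_*\varphi_{\widetilde\calY}\cong 0^*\varphi_{\widetilde\calY},\qquad \kappa_!\varphi_{\widetilde\calY}\cong 0^!\varphi_{\widetilde\calY}.
\]

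The argument then runs as follows.

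First, reduce to the case where $\calY$ is a quasi-smooth derived scheme, or even an affine derived critical chart $\calY=Z(s)$ with $s$ a section of a vector bundle $E$ on a smooth $U$. Both sides are compatible with smooth pullback, since $\widetilde\calY$ and $\varphi$ base change along smooth maps $\calY'\to\calY$ by \eqref{eq-etale-compatibility} together with smooth compatibility. Constructible complexes on a stack are determined by compatible restrictions to a smooth atlas, and the maps in question are canonical, hence glue.

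Second, establish the contraction isomorphisms. The one subtlety is that the derived stack $\widetilde\calY$ is not literally a vector bundle over $\calY$. Its classical truncation is, however, a cone $\mathrm{Spec}\,\Sym(\mathcal{H}^1(\mathbb{T}_\calY))$-type object over $\calY^\cl$ with a contracting $\G_m$-action, and the lemma only needs this. In the chart model, $\widetilde\calY^\cl=\mathrm{Crit}(f)$ with $f=\langle s,\xi\rangle$ on $\mathrm{Tot}(E^\vee)$, and the contraction statement can be checked on $\mathrm{Tot}(E^\vee)$, where $\varphi_f$ extended by zero is $\G_m$-monodromic.

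Third, compose the contraction isomorphisms with the Lagrangian-class isomorphisms. This gives $\kappa_!\varphi\cong\mathbb{Q}_\calY[\vdim\calY]$ and $\kappa_*\varphi\cong\mathbb{D}\mathbb{Q}_\calY[-\vdim\calY]$ as perverse-sheaf-level statements. In the monodromic mixed Hodge category the shifts $[\pm\vdim\calY]$ upgrade to the half Tate twists $\mathbb{L}^{\mp\frac12\vdim\calY}$, because the Lagrangian classes are constructed there with those twists (the square root $\L^{1/2}$ exists only monodromically). The Hodge-theoretic refinement of the weights should then be compared with the known chart-level computation of \cite{BenDimRed}.

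The main obstacle I expect is the second step on stacks: justifying the contraction lemma for a non-representable, non-vector-bundle cone over an Artin stack, with mixed Hodge module structures and unbounded Ind-categories. I would first try to prove it locally on a smooth atlas, where it reduces to the classical statement of \cite{BenDimRed} for $\mathrm{Tot}(E^\vee)$. I would then check that the resulting isomorphisms agree with those coming from $[\calL]_{\mathrm{Lag}}$, using the étale compatibility of Theorem~\ref{thm-shifted-Lag-class}(d), so that they glue.
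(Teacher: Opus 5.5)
The constructible-level part of your argument is the paper's own, but the Hodge-theoretic upgrade in your third step is not justified.

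\textbf{Where you match the paper.} The paper composes the zero-section isomorphisms $\mathbb{Q}_{\calY}[\vdim\calY]\cong 0^!\varphi_{\widetilde\calY}$ and $0^*\varphi_{\widetilde\calY}\cong\mathbb{D}\mathbb{Q}_{\calY}[-\vdim\calY]$ with the conic contraction isomorphisms $0^!\varphi_{\widetilde\calY}\cong\kappa_!\varphi_{\widetilde\calY}$ and $0^*\varphi_{\widetilde\calY}\cong\kappa_*\varphi_{\widetilde\calY}$, quoted directly from \cite[prop.~3.7.2]{KS13}. The step you call the main obstacle is actually the routine one. The comparison maps $\kappa_*\varphi_{\widetilde\calY}\to\kappa_*0_*0^*\varphi_{\widetilde\calY}\cong 0^*\varphi_{\widetilde\calY}$ and $0^!\varphi_{\widetilde\calY}\cong\kappa_!0_!0^!\varphi_{\widetilde\calY}\to\kappa_!\varphi_{\widetilde\calY}$ are canonical. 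So their invertibility is smooth-local, and there it is the classical statement for a $\G_m$-conic complex on a cone in a vector bundle. Being six-functor natural transformations, they also exist in $\Dhcmonb$, and their invertibility is a question about underlying constructible complexes. Your reduction to critical charts is therefore not needed here.

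\textbf{The gap.} You say the shifts $[\pm\vdim\calY]$ upgrade to $\mathbb{L}^{\mp\frac12\vdim\calY}$ \emph{because} the Lagrangian classes are constructed with those twists. That is not available here. Theorem~\ref{thm-shifted-Lag-class} produces $[\calL]_{\mathrm{Lag}}$ only as a morphism in $\mathsf{D^b_c}(\calL)$. The paper is explicit that its Lagrangian-class comparisons hold only at the constructible level; this is why Hodge structures are forgotten in the 2D/3D comparison of Hecke actions. An isomorphism of underlying constructible complexes does not by itself give an isomorphism in $\Dhcmonb(\calY)$. What is missing is an actual morphism of monodromic mixed Hodge modules $\mathbb{Q}_{\calY}\otimes\mathbb{L}^{-\frac12\vdim\calY}\to 0^!\varphi_{\widetilde\calY}$, and its dual. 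One would build it chartwise from the mixed Hodge version of dimensional reduction for $f=\langle s,\xi\rangle$ on $\mathrm{Tot}(E^\vee)$. One would then check compatibility with the gluing and orientation data defining $\varphi_{\widetilde\calY}$ on the stack. The paper does not do this either: it cites \cite[\S~6.2.1]{Bu25b} for the lift, and mentions the local-model route of \cite[thm.~4.14]{K} as an alternative. Your closing remark about comparing with \cite{BenDimRed} points in the right direction, but as written the Hodge-theoretic content of the statement is not established.
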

\begin{proof}
	At the level of constructible sheaves, this follows from the above argument together with isomorphisms 
	$0^! \varphi_{\widetilde{\calY}} \cong \kappa_! \varphi_{\widetilde{\calY}}$ and $0^* \varphi_{\widetilde{\calY}} \cong \kappa_* \varphi_{\widetilde{\calY}}$ from \cite[prop.~3.7.2]{KS13}.
	A lift of these isomorphisms to monodromic mixed Hodge module complexes is explained in \cite[\S~6.2.1]{Bu25b}.
\end{proof}

\begin{remark}
We constructed the isomorphisms \eqref{eq:dimensional-reduction} using shifted Lagrangian classes.
Alternatively, one can prove the existence of the isomorphisms \eqref{eq:dimensional-reduction} using the local model of quasi-smooth 
derived Artin stacks, see \cite[thm.~4.14]{K}.
\end{remark}

\subsubsection{Example: Integral isomorphism}\label{sssec-ex-integral-isom}

Let $\calX$ be a $(-1)$-shifted symplectic Artin $1$-stack equipped with an orientation $o \colon S^{\otimes 2} \cong \det(\mathbb{L}_{\calX})$.
Assume that $\calX$ is $1$-Artin, quasi-separated and has affine stabilizers.
We consider the derived stacks
\[
    \Filt(\calX) = \Map(\A^1 / \mathbb{G}_{\mathrm{m}}, \calX)
,\quad
\Grad({\calX} ) = \Map(\mathrm{B} \mathbb{G}_{\mathrm{m}}, \calX).
\]
The classical truncations are the classical mapping stacks 
$$\Filt(\calX)^\cl =\Filt(\calX^\cl) = \Map(\A^1 / \mathbb{G}_{\mathrm{m}}, \calX^\cl)
,\quad
\Grad(\calX)^\cl=\Grad(\calX^\cl) = \Map(\mathrm{B} \mathbb{G}_{\mathrm{m}}, \calX^\cl).$$
The diagram $\mathrm{B} \mathbb{G}_{\mathrm{m}} \to \mathbb{A}^1/\mathbb{G}_{\mathrm{m}} \leftarrow \bullet$ defines the following correspondence
\begin{align}\label{attr1}\mathrm{attr}_{\calX} \colon \Grad(\calX) \xleftarrow{\gr} \mathrm{Filt}(\calX) \xrightarrow{\ev} \calX.\end{align}
It is shown in \cite[cor.~5.19]{KPS} that $\Grad({\calX})$ inherits the structure of $(-1)$-shifted symplectic 
stack and $(\gr, \ev)$ that of a Lagrangian correspondence. Further, \cite[lem.~3.19]{KPS} shows that there is a unique orientation 
$\mathrm{attr}_{\calX}^{\star} (o)$ on $\Grad(\calX)$ and the above Lagrangian correspondence up to a unique isomorphism.
In particular, Theorem~\ref{thm-shifted-Lag-class} implies that there is a map
\[
   \varphi_{\Grad(\calX)} \to \gr_* \ev^! \varphi_{\calX}[- \vdim \Filt(\calX)].
\]
It is proved in \cite{KKPS} that this map is an isomorphism. We call it the {integral isomorphism}.
There is a more direct construction of the integral isomorphism as in \cite[cor.~7.19]{KPS} and \cite[thm.~1.2]{Des} based on the compatibility 
between the vanishing cycle functor and hyperbolic localization.
With this approach, one can show that the above map refines to an isomorphism of monodromic mixed Hodge modules
\begin{equation}\label{eq-integral-isom}
\varphi_{\Grad(\calX)} \cong \mathbb{L}^{\frac{1}{2} \vdim \Filt(\calX)} \otimes \gr_* \ev^! \varphi_{\calX}.
\end{equation}

For later use, we will show that the orientation $\mathrm{attr}_{\calX}^{\star} (o)$ is preserved under the reversion of the filtrations.
Let $\mathbb{A}^1_{-}$ be $\mathbb{A}^1$ equipped with $\mathbb{G}_{\mathrm{m}}$-action of weight $-1$, 
set $\Filt_{-}(\calX) = \Map(\mathbb{A}^1_{-} / \mathbb{G}_{\mathrm{m}}, \calX)$ and define the attractor correspondence 
\begin{align}\label{attr2}
\mathrm{attr}_{\calX, -} \colon\Grad(\calX) \xleftarrow{\gr_{-}} \Filt_{-}(\calX) \xrightarrow{\ev_{-}} \calX
\end{align} 
in a similar manner.
This naturally defines an oriented Lagrangian correspondence and we let $\mathrm{attr}_{\calX, -}^{\star}(o)$ denote the induced orientation on $\Grad(\calX)$.
Consider the following composite of the positive attractor correspondence and the swap of the negative attractor correspondence:
\[\begin{tikzcd}
	&& {\Filt(\calX) \times_{\calX}  \Filt_{-}(\calX)} && \\
	& {\Filt(\calX)} && {\Filt_{-}(\calX) } \\
	{\Grad(\calX)} && \calX && {\Grad(\calX).}
	\arrow[from=1-3, to=2-2]
	\arrow[from=1-3, to=2-4]
	\arrow["\gr"', from=2-2, to=3-1]
	\arrow["\ev", from=2-2, to=3-3]
	\arrow["{\ev_{-}}"', from=2-4, to=3-3]
	\arrow["{\gr_{-}}", from=2-4, to=3-5]
\end{tikzcd}\]
As it is the composite of oriented Lagrangian correspondences, the large correspondence is naturally an oriented Lagrangian correspondence.
By \cite[lem.~A.2]{KPS}, the natural map $\Grad(\calX) \to \Filt(\calX) \times_{\calX}  \Filt_{-}(\calX)$ is \'etale.
By restricting the upper correspondence to $\Grad(\calX)$, we obtain a correspondence
$\Grad(\calX) \xleftarrow{\id} \Grad(\calX) \xrightarrow{\id} \Grad(\calX)$ where the source and target are equipped with the orientations
$\mathrm{attr}_{\calX}^{\star}(o)$ and $\mathrm{attr}_{\calX, -}^{\star}(o)$ respectively.
In particular, we obtain a natural isomorphism of orientations
\begin{equation}\label{eq-rho-X}
	\rho_{\calX} \colon \mathrm{attr}_{\calX}^{\star}( o) \cong \mathrm{attr}_{\calX, -}^{\star} (o).
\end{equation}

\subsubsection{Purity transformations and virtual fundamental classes}\label{sssec-purity-transform}

In this subsection, we recall the purity transformation from \cite{K19}, which is a sheaf-theoretic version of the virtual pullback.
Later, we will see that the shifted Lagrangian class  gives a microlocal refinement of the purity transformation.
Let $f \colon \calY_1 \to \calY_2$ be a quasi-smooth morphism of relative virtual dimension $d$. 
Then Khan \cite[const.~3.6]{K19}  constructed a natural morphism
\[
 [f] \colon \mathbb{Q}_{\calY_1} \to \mathbb{L}^{d} \otimes f^! \mathbb{Q}_{\calY_2}
\]
in $\Dhcb(\calY_1)$. Khan \cite{K19} works with motivic sheaves, but the same construction works in the setting of mixed Hodge modules, using the six-functor formalism of mixed Hodge modules for stacks recently constructed by Tubach \cite{Tubstack}.
For $M \in \Dhcb(\calY_2)$, we define the {purity transformation}
\begin{equation}\label{eq-purity-transform}
 \mathrm{pur}_f \colon f^* M \to \mathbb{L}^{d} \otimes f^!M
\end{equation}
to be the composite
\[
    f^* M \cong f^* M \otimes \mathbb{Q}_{\calY_1} \xrightarrow[]{\id \otimes [f]} f^* M \otimes (\mathbb{L}^{d} \otimes  f^!    \mathbb{Q}_{\calY_2}) \to \mathbb{L}^{d} \otimes f^! M.
\]
The purity transformation satisfies the following properties
\begin{itemize}
\item Associativity: assume that we are given quasi-smooth morphisms $\calY_1 \xrightarrow{f} \calY_2 \xrightarrow{g}  \calY_3$ and denote $d = \vdim f$ 
and $d' = \vdim g$.
Take $N \in \Dhcb(\calY_3)$. Then the following diagram commutes
		  \[\begin{tikzcd}
			{(g\circ f)^* N} && {\mathbb{L}^{d + d'} \otimes (g\circ f)^! N} \\
			{f^* g^*  N} & {\mathbb{L}^{d} \otimes f^* g^!  N} & {\mathbb{L}^{d + d'} \otimes f^! g^! N.}
			\arrow["{\mathrm{pur_{g \circ f}}}", from=1-1, to=1-3]
			\arrow["\cong"', from=1-1, to=2-1]
			\arrow["\cong", from=1-3, to=2-3]
			\arrow["{\mathrm{pur_{g}}}"', from=2-1, to=2-2]
			\arrow["{\mathrm{pur_{f}}}"', from=2-2, to=2-3]
		\end{tikzcd}\]
		This is a consequence of \cite[prop.~3.12]{K19}.
	\item Functoriality: assume that we are given the following Cartesian diagram of derived Artin stacks 
	\[\begin{tikzcd}
		{\calY_1'} & {\calY_2'} \\
		{\calY_1} & {\calY_2}
		\arrow["{f'}", from=1-1, to=1-2]
		\arrow["g"', from=1-1, to=2-1]
		\arrow["\lrcorner"{anchor=center, pos=0.125}, draw=none, from=1-1, to=2-2]
		\arrow["h", from=1-2, to=2-2]
		\arrow["f", from=2-1, to=2-2]
	\end{tikzcd}
	\]
	such that $f$ is quasi-smooth. Let $d = \vdim f$. Take $M \in \Dhcb(\calY'_2)$. Then the following diagram commutes
	\[\begin{tikzcd}
		{f^* h_* M} & {g_*f'^*M} \\
		{\mathbb{L}^d \otimes f^! h_*M} & {\mathbb{L}^d \otimes g_*f'^! M.}
		\arrow[from=1-1, to=1-2]
		\arrow["{\mathrm{pur}_f}"', from=1-1, to=2-1]
		\arrow["{g_*\mathrm{pur}_f}", from=1-2, to=2-2]
		\arrow["\cong"', from=2-1, to=2-2]
	\end{tikzcd}\]
	This is a consequence of \cite[thm.~3.13]{K19}.
\end{itemize}

The purity transformation for $M = \mathbb{D} \mathbb{Q}_{\calY_2}$ yields the following map on the global sections
\[
 f^! \colon \H^{\mathrm{BM}}_*(\calY_2,\Q) \to \H^{\mathrm{BM}}_{* + 2 \vdim f}(\calY_1,\Q), 
\]
which is called the {virtual pullback}.
In particular, for a quasi-smooth derived Artin stack $\calY$, there is a canonical class
\[
 [\calY] \in \H^{\mathrm{BM}}_{2 \vdim \calY}(\calY,\Q)
\]
which is called the virtual fundamental class.

\subsubsection{Purity transformation via shifted Lagrangian classes}\label{sssec-purity-via-Lag}

We now explain that the shifted Lagrangian class gives a microlocal refinement of the purity 
transformation \eqref{eq-purity-transform} in a special case.
Consider the correspondence $$\zeta \colon \calY_1 \xleftarrow[]{q} \calZ \xrightarrow[]{p} \calY_2.$$
Set $\widetilde{\calY}_1 = \mathrm{T}^*[-1]\calY_1$, $\widetilde{\calY}_2 = \mathrm{T}^*[-1]\calY_2$ and $\calL = \mathrm{T}^*[-2](\calZ / \calY_1 \times \calY_2)$.
Consider the following diagram
\begin{equation}\label{L-to-Z}
\begin{tikzcd}
	{\widetilde{\calY}_1} & \calL & {\widetilde{\calY}_2} \\
	{\calY_1} & \calZ & {\calY_2.}
	\arrow["{\kappa_1}"', from=1-1, to=2-1]
	\arrow["{\tilde{q}}"', from=1-2, to=1-1]
	\arrow["{\tilde{p}}", from=1-2, to=1-3]
	\arrow["{\kappa_{\calZ}}"', from=1-2, to=2-2]
	\arrow["{\kappa_2}"', from=1-3, to=2-3]
	\arrow["q"', from=2-2, to=2-1]
	\arrow["p", from=2-2, to=2-3]
\end{tikzcd}
\end{equation}
By \cite[thm.~2.13]{Cal19} the upper correspondence carries a natural Lagrangian structure together with an orientation, which we call the conormal Lagrangian correspondence and denote by $\mathrm{T}^*[-1](\zeta)$.
We will later use the following properties of conormal Lagrangian correspondences, which are proved in \cite{KKPS}.
\begin{enumerate}[label=(C\arabic*), ref=(C\arabic*)]
	\item Functoriality: The conormal correspondence of the identity correspondence is equivalent to the identity correspondence. Also, the composition of conormal Lagrangian correspondences is naturally equivalent to the conormal Lagrangian correspondence of the composition. \label{item-conormal-1}
	\item Compatibility with swaps: The swap of the conormal Lagrangian correspondence $\tilde{\calY}_1 \leftarrow \calL \to \tilde{\calY}_2$ is equivalent to the conormal Lagrangian correspondence for $\calY_2 \xleftarrow{p} \calZ \xrightarrow{q} \calY_1$ without orientations. The orientation differs by the sign $(-1)^{\vdim(p)}$. \label{item-conormal-2}
\end{enumerate}

Assume from now on that $q$ is quasi-smooth and $p$ is proper.
Since the fibre of the map $\mathbb{L}_{\mathcal{Z} / \mathcal{Y}_1 \times \mathcal{Y}_2}[-2] \to p^*\mathbb{L}_{\mathcal{Y}_2}[-1]$ is equivalent to $\mathbb{L}_{\mathcal{Z} / \mathcal{Y}_1}[-2]$,
the quasi-smoothness of $q$ implies that the map $\mathcal{L} \to \tilde{\calY}_2 \times_{\calY_2} \calZ$ is a closed immersion.
In particular, the map $\tilde{p}$ is proper.
Therefore Theorem~\ref{thm-shifted-Lag-class} yields a map
\begin{equation}\label{eq-micro-virtual-pullback}
  \tilde{p}_* \tilde{q}^* \varphi_{\tilde\calY_1} \to \varphi_{\tilde\calY_2}[- \vdim \calL].
\end{equation}
The following statement is proved in \cite{KKPS}. 
\begin{proposition}\label{prop-dim-red-vs-purity}
    The following diagram in $\mathsf{D}(\calY_2)$ commutes
    \[\begin{tikzcd}
        {p_* q^* \kappa_{1, *} \varphi_{\tilde\calY_1}} & {p_* \kappa_{\calZ, *} \tilde{q}^*  \varphi_{\tilde\calY_1}} & {\kappa_{2, *} \tilde{p}_* \tilde{q}^*  \varphi_{\tilde\calY_1}} & {\kappa_{2, *}   \varphi_{\tilde\calY_2}[- d_{\calL}]} \\
        {p_* q^* \mathbb{D}\mathbb{Q}_{\calY_1}[- d_1]} && {p_* \mathbb{D}\mathbb{Q}_{\calZ}[- d_1 - 2d_q]} & { \mathbb{D}\mathbb{Q}_{\calY_2}[- d_{\calL} -d_2].}
        \arrow[from=1-1, to=1-2]
        \arrow["{\eqref{eq:dimensional-reduction}}"', from=1-1, to=2-1]
        \arrow["\cong", from=1-2, to=1-3]
        \arrow["{\eqref{eq-micro-virtual-pullback}}", from=1-3, to=1-4]
        \arrow["{\eqref{eq:dimensional-reduction}}", from=1-4, to=2-4]
        \arrow["{\mathrm{pur}_q}"', from=2-1, to=2-3]
        \arrow["{\mathrm{counit}}"', from=2-3, to=2-4]
    \end{tikzcd}\]
Here we denote $d_1 = \vdim \calY_1$, $d_2 = \vdim \calY_2$, $d_{\calL} = \vdim \calL$ and $d_q = \vdim q$. 
The left upper map is the Beck--Chevalley map.
\end{proposition}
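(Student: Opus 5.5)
The plan is to rewrite both paths around the square as shifted Lagrangian classes of oriented $(-1)$-shifted Lagrangian correspondences, and then compare them using Theorem~\ref{thm-shifted-Lag-class} together with \ref{item-conormal-1}.

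First I recast the vertical isomorphisms. By construction, the dimensional reduction isomorphism $\kappa_*\varphi_{\widetilde\calY}\cong 0^*\varphi_{\widetilde\calY}\cong \mathbb{D}\mathbb{Q}_{\calY}[-\vdim\calY]$ is the shifted Lagrangian class of the oriented correspondence $\widetilde\calY\xleftarrow{0}\calY\to\bullet$. This correspondence is the conormal correspondence of $\calY\xleftarrow{\id}\calY\to\bullet$. Under this translation, the top row followed by the right vertical map becomes the Lagrangian class of a composite. After the Beck--Chevalley identification, it is the class of $\mathrm{T}^*[-1](\zeta)$ followed by that of $\widetilde\calY_2\leftarrow\calY_2\to\bullet$. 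By associativity (Theorem~\ref{thm-shifted-Lag-class}\ref{item-shifted-Lag-class-associative}) and \ref{item-conormal-1}, this equals the Lagrangian class of the conormal correspondence of the composite $\calY_1\xleftarrow{q}\calZ\to\bullet$. Concretely, that correspondence is $\widetilde\calY_1\leftarrow \mathrm{T}^*[-1](\calZ/\calY_1)\to\bullet$, pushed forward along the proper map $p$.

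Second I treat the bottom row. The counit $p_*\mathbb{D}\mathbb{Q}_\calZ\to\mathbb{D}\mathbb{Q}_{\calY_2}$ is formal, so it suffices to prove the following claim.

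\emph{Claim.} For quasi-smooth $q\colon\calZ\to\calY_1$, the Lagrangian class of $\widetilde\calY_1\leftarrow\mathrm{T}^*[-1](\calZ/\calY_1)\to\bullet$, composed with the dimensional reduction of the source, equals $\mathrm{pur}_q$ applied to $\mathbb{D}\mathbb{Q}_{\calY_1}$.

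Using associativity once more, I factor this correspondence through $\widetilde\calY_1\leftarrow\calL'\to\widetilde{\calZ}$ followed by $\widetilde\calZ\leftarrow\calZ\to\bullet$. The claim then becomes a statement purely about a single quasi-smooth morphism. Both sides are compatible with étale (indeed smooth) base change: for the Lagrangian class this is Theorem~\ref{thm-shifted-Lag-class}\ref{item-shifted-Lag-class-etale-compat}, and for the purity transformation it is its functoriality. Since maps between these objects in $\Dhcb$ of degree zero are determined smooth locally, up to checking that they glue, I reduce to the local model. There $\calY_1$ is smooth, $\calZ$ is the derived zero locus of a section $s$ of a vector bundle $E$ on a smooth $U$ (smooth over $\calY_1$), and $q$ factors as the inclusion $Z(s)\hookrightarrow U$ followed by a smooth map. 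The smooth part is handled by Theorem~\ref{thm-shifted-Lag-class}\ref{item-shift-Lag-smooth}, where both sides are the canonical Poincaré-type isomorphism.

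The quasi-smooth closed immersion is the main obstacle. There I would use deformation to the normal bundle, that is, the family of sections $t\cdot s$ over $\A^1$, to degenerate to the linear case $s=0$. In the linear case both transformations are cap product with the Euler/Thom class of $E$. On the Lagrangian side this comes from the Thom--Sebastiani isomorphism together with the explicit vanishing cycle of the quadratic function pairing $E$ with $E^\vee[1]$ coordinates. Khan's $[q]$ is defined by the same specialization, which is why both sides should agree after the degeneration. The delicate points are that $\mathrm{Hom}$ between the relevant complexes in the family is locally constant in $t$, so that the specialization argument is valid, and the bookkeeping of orientations. For the orientations I would track the signs using \ref{item-conormal-2} and the $(-1)^{\vdim}$ discrepancy of $\mathrm{swap}$, and check that they cancel with the standard orientation $o_{\mathrm{sta}}$ of Section~\ref{sec-orientation}.
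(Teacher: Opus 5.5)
Your first two steps are a sound reduction (the paper gives no proof of this proposition and defers to \cite{KKPS}). Writing both vertical maps as Lagrangian classes of zero-section correspondences and using associativity with \ref{item-conormal-1}, the upper-right path becomes $p_*$ of the Lagrangian class of $\widetilde\calY_1\leftarrow\mathrm{T}^*[-2](\calZ/\calY_1)\to\bullet$, followed by the counit. Note the shift is $[-2]$, not $[-1]$; this stack has classical truncation $\calZ^{\cl}$ and virtual dimension $d_1+2d_q$, matching the shift on $p_*\mathbb{D}\mathbb{Q}_\calZ$. All the content is then in your Claim, which for $\calY_1=\bullet$ is exactly Corollary~\ref{cor-VFC-via-dim-red}.

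Your argument for the Claim breaks at the localization step. The maps you compare lie in $\mathrm{Hom}(q^*\mathbb{D}\mathbb{Q}_{\calY_1},\mathbb{D}\mathbb{Q}_{\calZ}[-2d_q])\cong\H^{-2d_q}(\calZ,q^!\mathbb{Q}_{\calY_1})$, a bivariant Borel--Moore group. The internal Hom is $R\mathcal{H}om(q^*\mathbb{D}\mathbb{Q}_{\calY_1},\mathbb{D}\mathbb{Q}_{\calZ}[-2d_q])\cong q^!\mathbb{Q}_{\calY_1}[-2d_q]$, which has cohomology sheaves in negative degrees as soon as there is excess dimension. So these maps are \emph{not} determined smooth locally. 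For instance, take $\calY_1=\bullet$ and $\calZ$ the derived zero locus of the zero section of a rank $r>0$ bundle $E$ on a smooth $U$. Then $\mathrm{pur}_q$ is $e(E)\cap[U]$, which vanishes on every contractible chart but not globally, so any two candidate maps agree locally there. ``Checking that they glue'' is therefore not a verification: it means constructing a homotopy coherent with smooth base change, which is the real work. Two further problems arise in the local model. First, you cannot take $\calY_1$ smooth, because $\varphi_{\widetilde\calY_1}$ depends on its derived structure. Second, Theorem~\ref{thm-shifted-Lag-class}\ref{item-shift-Lag-smooth} only gives invertibility, so even for smooth $q$ the scalar (including the orientation sign) relating the Lagrangian class to $\mathrm{pur}_q$ must still be computed.

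The degeneration is also the wrong one. The family $Z(t\cdot s)\to\mathbb{A}^1$ is not the deformation to the normal bundle. Its fibre at $t=0$ has classical truncation $U$ rather than $\calZ^{\cl}$, so the relevant Hom groups jump. Specialization can then at best recover the pushforward of the refined class to $\H^{\BM}_*(U,\Q)$ (conservation of number), not the class on $\calZ$ that the Claim concerns. What is needed is a global argument along Khan's deformation to the normal bundle of $q$. There $\calZ\times\mathbb{A}^1$ stays inside the family and the special fibre is the normal bundle stack; the linear case is a zero section, where both sides are Thom classes. You would also need compatibility of shifted Lagrangian classes with specialization in the corresponding $\mathbb{A}^1$-family of conormal correspondences. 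That compatibility is not among properties (a)--(e) of Theorem~\ref{thm-shifted-Lag-class}, and it is the missing ingredient in your proposal.
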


Applying the above proposition for the correspondence $\bullet \leftarrow \calY = \calY$ for quasi-smooth $\calY$, we obtain the following statement, generalizing \cite[thm.~5.2]{kinjo2021virtual}.

\begin{corollary}\label{cor-VFC-via-dim-red}
Let $d = \vdim \calY$ and $\tilde\calY=\mathrm{T}^*[-1]\calY$.
The following diagram in $\Dcb(\calY)$ commutes:
	\[\begin{tikzcd}
		{\kappa_* 0_!\mathbb{Q}_{\calY}} & {\kappa_* \varphi_{\tilde\calY}[-d]} \\
		{\mathbb{Q}_{\calY}} & {\mathbb{DQ}_{\calY}[-2d].}
		\arrow["{\eqref{eq-shifted-lag-class}}", from=1-1, to=1-2]
		\arrow["\cong"', from=1-1, to=2-1]
		\arrow["{\eqref{eq:dimensional-reduction}}", from=1-2, to=2-2]
		\arrow["{[\calY]}"', from=2-1, to=2-2]
	\end{tikzcd}\]
	
\end{corollary}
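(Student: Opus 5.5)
We deduce the corollary from Proposition~\ref{prop-dim-red-vs-purity}, applied to the correspondence $\zeta\colon \bullet \xleftarrow{q} \calZ \xrightarrow{p} \calY_2$ with $\calZ=\calY_2=\calY$, $p=\id$ and $q$ the structure map. The hypotheses hold: $q$ is quasi-smooth because $\calY$ is, and $p=\id$ is proper.

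First we identify the terms in the diagram of Proposition~\ref{prop-dim-red-vs-purity}:
\begin{itemize}
\item $\calY_1=\bullet$, so $\widetilde\calY_1=\bullet$ and $\varphi_{\widetilde\calY_1}=\Q$; hence $d_1=0$.
\item $d_q=d_2=d$.
\item $\calL=\mathrm{T}^*[-2](\calY/\bullet\times\calY)$. Since $\calY\to\calY$ is the identity, $\mathbb{L}_{\calY/\bullet\times\calY}\simeq \mathbb{L}_{\calY/\calY}$ up to the relevant shift, so $\calL$ is equivalent to $\calY$. The map $\tilde p\colon\calL\to\widetilde\calY$ is the zero section $0$, and $d_\calL=\vdim\calY=d$.
\item The conormal Lagrangian correspondence $\mathrm{T}^*[-1](\zeta)$ is then the correspondence $\bullet\leftarrow\calY\xrightarrow{0}\widetilde\calY$ of the dimensional reduction example.
\end{itemize}

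For the last identification we need to check that the Lagrangian structure and orientation from \cite[thm.~2.13]{Cal19} agree with those used in the dimensional reduction example, which come from \cite[thm.~2.4]{Cal19} and \cite[lem.~3.19]{KPS}. For orientations, I would use the uniqueness statement of \cite[lem.~3.19]{KPS}. This comparison is the main obstacle: it is only a bookkeeping check, but sign conventions in the Koszul rule could introduce a factor of the form $(-1)^{d}$. If such a sign appears, it has to be tracked and shown to be trivial for this particular correspondence.

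With these identifications, the top row of the diagram becomes
\[
\kappa_*0_*\Q_\calY \to \kappa_*\varphi_{\widetilde\calY}[-d],
\]
and this map is $\kappa_*$ applied to the shifted Lagrangian class \eqref{eq-shifted-lag-class}, with $0_*=0_!$ since $0$ is a closed immersion. The Beck--Chevalley map and the map $p_*\kappa_{\calZ,*}\cong\kappa_{2,*}\tilde p_*$ both reduce to the canonical identifications, because $p=\id$ and $\kappa\circ0=\id$.

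The bottom row is $\mathrm{pur}_q$ applied to $M=\mathbb{D}\Q_\bullet=\Q$, composed with the counit for $p=\id$, which is the identity. By the definition \eqref{eq-purity-transform}, $\mathrm{pur}_q$ applied to $\Q$ is exactly the map $[q]\colon\Q_\calY\to\mathbb{L}^d\otimes q^!\Q$. This map is by definition the virtual fundamental class $[\calY]\colon \Q_\calY\to\mathbb{DQ}_\calY[-2d]$, after forgetting the Hodge structure so that $\mathbb{L}^d$ becomes the shift $[-2d]$.

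The left vertical isomorphism is dimensional reduction over a point, which is the identity $\Q\to\Q$, and it matches the identification $\kappa_*0_!\Q_\calY\cong\Q_\calY$. The right vertical map is \eqref{eq:dimensional-reduction}. Commutativity of the square in Proposition~\ref{prop-dim-red-vs-purity} is therefore exactly the commutativity asserted in the corollary.
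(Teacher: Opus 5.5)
Your proposal is correct and is exactly the paper's argument: the corollary is obtained by specializing Proposition~\ref{prop-dim-red-vs-purity} to the correspondence $\bullet \leftarrow \calY = \calY$, where $\calL\simeq\calY$, $\tilde p$ is the zero section, and $\mathrm{pur}_q$ for $q\colon\calY\to\bullet$ is the virtual fundamental class $[\calY]$. The paper treats the identification of this oriented conormal correspondence with the dimensional-reduction correspondence $\bullet\leftarrow\calY\xrightarrow{0}\widetilde\calY$ as immediate, so it does not separately address the orientation sign bookkeeping you flag.
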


\medskip 

\subsection{Microlocal homology}

\subsubsection{Microlocal homology}

Let $\calY$ be a quasi-smooth derived Artin stack and $\Lambda \subset\widetilde\calY^\cl$ a closed conic subset.
Following \cite[def.~1.24]{Schefers22} and \cite[\S 6.2]{kinjo2021virtual}, we define the {microlocal homology} associated with $\Lambda$ as follows
\[
\H^{\Lambda}_{- * + \vdim \calY} (\calY,\Q) = \H^*(\Lambda, \varphi_{\widetilde\calY} |^!_{\Lambda}).
\]
Let $\Lambda_{0} \subset \widetilde\calY^\cl$ denote the image of the zero section of $\kappa \colon \widetilde{\calY} \to \calY$.
Proposition \ref{prop:equivariance} and Theorem \ref{thm-dimensional-reduction} yield the following isomorphisms.

\begin{corollary}\label{cor-microlocal-dim-red}
We have
$\H^{\widetilde\calY}_{*}(\calY,\Q) \cong \H^\BM_{*}(\calY,\Q)$
and
$\H^{\Lambda_{0}}_{- * + 2 \vdim \calY}(\calY,\Q) \cong \H^*(\calY,\Q)$.
\qed
\end{corollary}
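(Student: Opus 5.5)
The two isomorphisms are obtained by taking derived global sections of the dimensional reduction isomorphisms of Theorem~\ref{thm-dimensional-reduction}, using Proposition~\ref{prop:equivariance} to handle the restriction to $\Lambda$.

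\emph{First isomorphism.} Here $\Lambda=\widetilde\calY^\cl$. Since $!$-restriction to the classical truncation does not change the sheaf, we have
\[
\H^{\widetilde\calY}_{-*+\vdim\calY}(\calY,\Q)=\H^*(\widetilde\calY,\varphi_{\widetilde\calY}).
\]
We rewrite this as $\H^*(\calY,\kappa_*\varphi_{\widetilde\calY})$. By the second isomorphism in \eqref{eq:dimensional-reduction}, this equals
\[
\H^*\big(\calY,\mathbb{D}\Q_{\calY}\otimes\L^{\frac12\vdim\calY}\big)=\H^{\BM}_{-*+\vdim\calY}(\calY,\Q),
\]
where the half Tate twist accounts for the cohomological shift by $\vdim\calY$. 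After reindexing, this is $\H^{\widetilde\calY}_*(\calY,\Q)\cong\H^\BM_*(\calY,\Q)$.

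\emph{Second isomorphism.} Here $\Lambda_0=0(\calY)$. We need to identify $0^!\varphi_{\widetilde\calY}$ with $\kappa_!\varphi_{\widetilde\calY}$. By Proposition~\ref{prop:equivariance}(b), $\varphi_{\widetilde\calY}$ is $\G_m$-monodromic for the fibrewise scaling on the cotangent fibres. For conic (monodromic) complexes on a vector-bundle-type stack, the standard contraction lemma gives $0^!\cong\kappa_!$ and $0^*\cong\kappa_*$ (\cite[prop.~3.7.2]{KS13}). Combining this with the first isomorphism in \eqref{eq:dimensional-reduction} gives
\[
0^!\varphi_{\widetilde\calY}\cong\Q_\calY\otimes\L^{-\frac12\vdim\calY}.
\]
Taking global sections yields
\[
\H^{\Lambda_0}_{-*+\vdim\calY}(\calY,\Q)=\H^*(\calY,0^!\varphi_{\widetilde\calY})\cong\H^{*+\vdim\calY}(\calY,\Q),
\]
since $\L^{-1/2}$ contributes a shift of $+1$. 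Substituting $*\mapsto *-\vdim\calY$ turns the left side into $\H^{\Lambda_0}_{-*+2\vdim\calY}$ and the right side into $\H^*(\calY,\Q)$, which is the stated form.

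The main obstacle is to justify $0^!\cong\kappa_!$ on the stack $\widetilde\calY$, which is only a linear stack (the total space of the perfect complex $\L_\calY[-1]$), not a vector bundle. My plan is to reduce smooth-locally on $\calY$ to a global presentation, as in the local model of \cite[thm.~4.14]{K}. There $\widetilde\calY$ is a vector bundle quotient, and the KS contraction lemma applies directly because the complex is $\G_m$-monodromic. Compatibility of these local identifications with smooth pullback then glues them globally. Alternatively, one can simply cite the proof of Theorem~\ref{thm-dimensional-reduction}, which already used exactly these isomorphisms. The remaining work is bookkeeping of shifts and Tate twists.
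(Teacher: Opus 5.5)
Your proposal is correct and follows the paper's argument: the paper records the corollary as an immediate consequence of Proposition~\ref{prop:equivariance} and Theorem~\ref{thm-dimensional-reduction}, obtained by taking global sections of $\kappa_*\varphi_{\widetilde\calY}$ and of $0^!\varphi_{\widetilde\calY}\cong\kappa_!\varphi_{\widetilde\calY}$, and your shift and Tate-twist bookkeeping is right. The step you flag as the main obstacle ($0^!\cong\kappa_!$ for conic complexes) needs no separate gluing argument: it is the input \cite[prop.~3.7.2]{KS13} already used in the proof of Theorem~\ref{thm-dimensional-reduction}, and alternatively the isomorphism $\Q_\calY[\vdim\calY]\cong 0^!\varphi_{\widetilde\calY}$, coming from the shifted Lagrangian class of $\bullet\leftarrow\calY\xrightarrow{0}\widetilde\calY$, gives the second statement directly.
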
 

The microlocal homology is functorial with respect to inclusions of cones:
for a given inclusion $\Lambda \subset \Lambda'$ there is a morphism
$\H^{\Lambda}_* (\calY,\Q)  \to \H^{\Lambda'}_{*} (\calY,\Q).$
In particular, setting $\Lambda'=\widetilde\calY^\cl$ we get a morphism
\begin{equation}\label{eq:cone-BM1}
\H^{\Lambda}_* (\calY,\Q) \to \H^\BM_{*} (\calY,\Q).
\end{equation}

 \medskip

\subsubsection{The nilpotent microlocal homology}
\label{sec:nilp-micro}
Let $\calC$ be a 2CY category and $\calM \subset \calM_{\calC}$ be an open substack which is $1$-Artin.
Then $\calM$ is 0-shifted symplectic and quasi-smooth.
Under the identification $\widetilde\calM \cong \mathrm{T}[-1] \calM$, 
a point in $\widetilde\calM$ corresponds to a pair $(\calF, \phi)$ where
$\calF \in \calM$ and $\phi \colon\calF \to \calF$.
Let
\begin{align}\label{Lambda-Lambda}
\Lambda_{\nil},\,\Lambda_{\snil}\subset\widetilde\calM_\red\end{align}
be the set of the pairs $(\calF, \phi)$ where $\phi$ is nilpotent and $\phi - c$ is nilpotent for some $c \in \C$ respectively.

\begin{proposition}
\hfill
\begin{enumerate}[label=$\mathrm{(\alph*)}$,leftmargin=8mm,itemsep=2mm]
\item 
The subsets $\Lambda_{\nil}, $ $\Lambda_{\snil}$ of $\widetilde \calM_\red$ are closed.
\item
There are natural graded vector space morphisms
\begin{align}\label{eq:cone-BM2}
\HH_*^{\Lambda_{\nil}}(\calM,\Q ) \to\HH_*^{\Lambda_{\snil}}(\calM,\Q ) \to\HH_*^\BM(\calM,\Q ).
\end{align}
\end{enumerate}
\end{proposition}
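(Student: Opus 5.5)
For part (a), I will reduce closedness to a statement about characteristic polynomials. Let $\calM\subset\calM_\calC$ be the open $1$-Artin substack. By the definition of the moduli of objects, the classical truncation $\widetilde\calM^\cl\cong (\mathrm{T}[-1]\calM)^\cl$ parametrizes pairs $(\calF,\phi)$ with $\phi\in\Hom(\calF,\calF)=\H^0$ of the fibre of $\mathbb{T}_\calM[-1]$. Closedness can be checked after pulling back along a smooth atlas $U\to\widetilde\calM_\red$ from an affine scheme, and it is enough to work on one connected component. Over $U$ we have a family $\calF_U$ and an endomorphism $\phi_U$.

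The key construction is a coherent (or perfect) sheaf $E$ on $U$ on which $\phi_U$ acts, whose fibre at each point $u$ is a finite-dimensional faithful module for the algebra $\C[\phi_u]\subset\End(\calF_u)$. For this I take $E=\mathcal{E}nd$ or $\mathcal{H}om(G,\calF_U)$ for a suitable compact generator $G$ of $\calC$. Then $\phi_u$ is nilpotent if and only if its action on $E_u$ is nilpotent, because $\End(\calF_u)$ is a finite-dimensional algebra and the map to $\End(E_u)$ is injective. Stratifying $U$ so that $E$ is locally free, the characteristic polynomial $\det(t-\phi|_E)$ has coefficients $a_1,\dots,a_r$ that are regular functions on each stratum.

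On such a stratum, $\Lambda_\nil$ is cut out by $a_1=\dots=a_r=0$. Likewise $\Lambda_\snil$ is the condition that the polynomial equals $(t-c)^r$, i.e.\ the image of $\A^1\times U\to\A^r$-coefficient conditions, equivalently the vanishing of the discriminant-type conditions $a_k=\binom{r}{k}(-a_1/r)^k$. Both are therefore closed on each stratum.

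I expect the main obstacle to be passing from stratum-wise closedness to global closedness. To handle this I would avoid stratifying and use the equivalent valuative/specialization criterion: for a DVR family $(\calF_t,\phi_t)$ with $\phi_t$ nilpotent (respectively $\phi_t-c_t$ nilpotent) at the generic point, the special fibre has the same property. For the nilpotent case, $\phi^N=0$ generically with $N$ bounded uniformly (by $\dim\End$ on the quasi-compact chart), and vanishing of the morphism $\phi^N$ is a closed condition. For the snilp case, $c_t$ is a root of the generic characteristic polynomial, so it is integral over the DVR and hence extends, since the eigenvalues stay bounded; then $(\phi-c)^N=0$ specializes. Since $\Lambda_\nil,\Lambda_\snil$ are constructible (by the stratification), stability under specialization gives closedness. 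Conicity also holds, since both conditions are invariant under $\phi\mapsto\lambda\phi$.

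Part (b) then follows formally. We have $\Lambda_\nil\subset\Lambda_\snil\subset\widetilde\calM_\red$, and these are closed conic subsets, with $\widetilde\calM^\cl$ having the same topology as $\widetilde\calM_\red$. The functoriality of microlocal homology under inclusions of closed cones, i.e.\ the adjunction map $\varphi|^!_{\Lambda}\to\varphi|^!_{\Lambda'}$ pushed forward, gives the first map. Composing with \eqref{eq:cone-BM1} gives the second, where we identify the target with $\H^\BM_*(\calM,\Q)$ via Corollary~\ref{cor-microlocal-dim-red}.
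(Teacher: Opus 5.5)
Your proposal is correct. Part (b) and the nilpotent half of (a) are the paper's argument: on a quasi-compact chart $\dim\mathrm{End}(\calF_u)$ is bounded by some $N$, and $\Lambda_{\nil}$ is the preimage of the zero section under $\phi\mapsto\phi^N$. That observation alone already gives closedness, so the DVR detour is not needed for $\Lambda_{\nil}$.

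For $\Lambda_{\snil}$ your route differs from the paper's.
\begin{itemize}
\item \emph{The paper's argument.} The action map $\A^1\times\Lambda_{\nil}\to\widetilde\calM_\red$, $(c,\phi)\mapsto\phi+c\cdot\id$, is injective and $\GM$-equivariant between cones in vector bundles over an affine chart. Compactifying both cones to $\mathbb{P}(C\times\A^1)$ gives an injective, hence finite, morphism of projective schemes over the chart. So the original map is finite and its image $\Lambda_{\snil}$ is closed.
\item \emph{Your argument.} You get constructibility from characteristic polynomials on strata, and stability under specialization from integrality of the eigenvalue $c$ over a DVR. This is in effect a hand-made valuative criterion for the properness of that same map. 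The paper uses $\GM$-homogeneity to stop $c$ from escaping to infinity; you use integrality.
\item \emph{Trade-off.} The paper's version is shorter and needs no faithful module, stratification or DVRs. Yours is more explicit, since it gives equations for $\Lambda_{\nil}$ and $\Lambda_{\snil}$ on each stratum, at the cost of more bookkeeping.
\end{itemize}

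Two points need tightening.
\begin{itemize}
\item \emph{Faithfulness of $\mathcal{H}om(G,\calF_U)$.} Your claim is false as stated. An endomorphism can act by zero on $\H^*\Hom(G,\calF_u)$ without vanishing (a ghost map). Only the weaker statement that nilpotency is detected survives, and that needs the ghost lemma. It is simpler to use $\mathrm{End}(\calF_u)$ with $\phi$ acting by post-composition, which is faithful because $\phi\circ\id=\phi$.
\item \emph{Integrality of $c$ in the DVR step.} It must come from a finitely generated module over the DVR $R$ itself. For example, take $\mathrm{End}(\calF_R)$ modulo torsion with $\phi$ acting by composition; its generic fibre is $\mathrm{End}(\calF_K)$ by flat base change. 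The characteristic polynomial on the stratum containing the generic point does not suffice, because its coefficients are a priori regular only on that stratum and need not extend over the special point. This is what ``the eigenvalues stay bounded'' has to mean.
\end{itemize}
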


\begin{proof}
Part (b) follows from (a). Let us concentrate on the proof of (a).
We first prove that the nilpotent locus $\Lambda_{\nil} \subset \widetilde\calM_\red$ is closed.
Take an affine scheme $T$ and a morphism $T \to \calM$. Consider the following Cartesian diagram
\[\begin{tikzcd}
	{\Lambda_{\nil, T}} & {\widetilde{T}} & T \\
	{\Lambda_{\nil}} & {\widetilde{\calM}_\red} & {\calM_\red.}
	\arrow[from=1-1, to=1-2]
	\arrow[from=1-1, to=2-1]
	\arrow["\lrcorner"{anchor=center, pos=0.125}, draw=none, from=1-1, to=2-2]
	\arrow[from=1-2, to=1-3]
	\arrow[from=1-2, to=2-2]
	\arrow["\lrcorner"{anchor=center, pos=0.125}, draw=none, from=1-2, to=2-3]
	\arrow[from=1-3, to=2-3]
	\arrow[from=2-1, to=2-2]
	\arrow[from=2-2, to=2-3]
\end{tikzcd}\]
It is enough to show that $\Lambda_{\nil, T}$ is closed in $\widetilde{T}$.
The dimension of the fibre $\widetilde{T} \to T$ is bounded by some integer $N$.
Let $\theta_N \colon \widetilde\calM_\red \to \widetilde\calM_\red$ be the map given by 
$(E, \phi) \mapsto (E, \phi^N)$, and $\theta_{N, T} \colon \widetilde{T} \to \widetilde{T}$ be its base change.
Then we  have 
    \[
        \Lambda_{\nil, T} = \theta_{N, T}^{- 1}(0_T(T))
    \]
 where $0_T \colon T \to \widetilde{T}$ denotes the zero section. In particular, we conclude that $\Lambda_{\nil, T}$ is closed.
The statement for $\Lambda_{\snil} \subset \widetilde\calM_\red$ follows from the following lemma applied to the action map 
$\A^1 \times  \Lambda_{\nil} \to \widetilde\calM_\red$.

\end{proof}

\begin{lemma}
Let $T$ be an affine scheme of finite type over $\C$. 
Let $C_1$ and $C_2$ be schemes with $\mathbb{G}_{\mathrm{m}}$-equivariant closed immersions into vector bundles over $T$.
Let $f \colon C_1 \to C_2$ be a $\mathbb{G}_{\mathrm{m}}$-equivariant injective morphism over $T$. Then $f$ is a finite morphism.
\end{lemma}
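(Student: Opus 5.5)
The plan is to show that $f$ is proper and quasi-finite; for a morphism of finite type schemes these two properties together give finiteness. Quasi-finiteness is immediate from injectivity on points. So the substance is properness, and here the conic structure is the essential input. Without it the statement is false: $\mathbb{A}^1\setminus 0 \sqcup \{0\} \to \mathbb{A}^1$ is injective but not finite. I will use $\mathbb{G}_m$-equivariance to rule out such escaping points.

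First, reduce to the case where $f$ is a closed immersion composed with a projection. Let $C_i\subset V_i$ with $V_i$ vector bundles over $T$, and shrink $T$ so that both bundles are trivial. Consider the graph $\Gamma_f\subset C_1\times_T C_2\subset V_1\times_T V_2$. This is closed and $\mathbb{G}_m$-stable for the diagonal action, and $f$ factors as $C_1\cong \Gamma_f\hookrightarrow C_1\times_T C_2\to C_2$. Finiteness is local on $C_2$ and can be checked on the target, so it suffices to show that the projection $\Gamma_f\to C_2$ is finite. Write $A=\calO(T)$. The coordinate ring $R_1=\calO(C_1)$ is a nonnegatively graded $A$-algebra with $(R_1)_0=A/I$, generated by finitely many degree-one elements (the coordinates of $V_1$). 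Similarly $R_2=\calO(C_2)$ is graded, and $f^*\colon R_2\to R_1$ is a graded $A$-algebra map.

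The key step is a graded Nakayama argument. By graded Nakayama, $R_1$ is a finite $R_2$-module if and only if $R_1/R_2^{+}R_1$ is a finite $A$-module, where $R_2^+$ is the irrelevant ideal. The quotient $R_1/R_2^+R_1$ is the coordinate ring of the fibre $f^{-1}(0_T(T))$ over the zero section of $C_2$. This fibre $F$ is a closed conic subscheme of $V_1$ over $T$. Injectivity gives that for each $t\in T$ the fibre $F_t$ is at most a single point, because $F_t$ maps injectively to $0_t$. Since $F_t$ is conic and contains at most one point, it is set-theoretically $\{0_t\}$ or empty. Hence $F_{\red}$ is contained in the zero section. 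Therefore each degree-one generator of $R_1/R_2^+R_1$ is nilpotent, so that algebra is killed in degrees $\geqslant N$ for some $N$. Concretely, some power of the irrelevant ideal vanishes: the generators are finitely many and nilpotent, and Noetherianity gives a uniform bound. Each graded piece is a finite $A$-module, so $R_1/R_2^+R_1$ is finite over $A$. Graded Nakayama then yields that $R_1$ is finite over $R_2$, i.e.\ $f$ is finite.

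The main obstacle is to make the graded Nakayama step precise over the base $A$, since $(R_1)_0=A/I$ is not a field. Working relative to $A$ is fine: apply the standard lemma to the positively graded ring $R_2$ with $(R_2)_0$ a quotient of $A$. Concretely, lift homogeneous $A$-module generators of $R_1/R_2^+R_1$ to $R_1$ and show by induction on degree that they generate $R_1$ over $R_2$. The grading must be bounded below for this induction, and it is, since $R_1$ is nonnegatively graded. One must also check that injectivity is being used on $\C$-points of the fibre over the zero section, which is all we need. In the intended application the map is $\A^1\times\Lambda_{\nil}\to\widetilde\calM_\red$, $(c,(E,\phi))\mapsto(E,\phi+c)$, made conic by letting $\mathbb{G}_m$ scale both $c$ and $\phi$. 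Injectivity holds there because a nilpotent $\phi$ determines $c$ as the unique eigenvalue of $\phi+c$.
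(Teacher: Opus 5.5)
Your argument is correct, but it follows a different route from the paper's. The paper compactifies fibrewise. It forms $\mathbb{P}(C_i\times\A^1)=(C_i\times\A^1\smallsetminus T)/\G_m$, which is projective over $T$. It extends $f$ to an injective map $\bar f$ between these completions; this uses exactly that $f$ sends nonzero vectors to nonzero vectors, a consequence of equivariance plus injectivity. Then $\bar f$ is proper and quasi-finite, hence finite, and restricting to the open $C_2$ gives finiteness of $f$.

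You instead stay affine and run a graded Nakayama argument. You reduce finiteness of $R_1$ over $R_2$ to finiteness over $A$ of $R_1/R_2^{+}R_1$, the coordinate ring of the preimage of the vertex locus. Conicity plus injectivity over the zero section then forces the degree-one generators of that ring to be nilpotent.

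The two proofs share the same key geometric input: $f^{-1}(0_{C_2})\subseteq 0_{C_1}$ set-theoretically. What each buys:
\begin{itemize}
\item Your version is purely algebraic and avoids Zariski's main theorem. It also visibly proves the sharper criterion that only this condition is needed; injectivity away from the zero section is never used.
\item The paper's version is shorter and coordinate-free, so it needs no local trivialization of the bundles.
\end{itemize}

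One expository point: your opening announces a ``proper plus quasi-finite'' strategy and a reduction via the graph $\Gamma_f$, but neither is actually used. The graded Nakayama step proves finiteness directly, so you can drop both and state the plan that way.

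Both arguments tacitly assume that $\G_m$ acts on the vector bundles by scaling, i.e.\ with positive weight. In your proof this is what makes each $R_i$ nonnegatively graded with $(R_i)_0$ a quotient of $A$; in the paper's it is what makes $\mathbb{P}(C_i\times\A^1)$ projective over $T$.
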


\begin{proof}
Set $\mathbb{P}(C_i \times \A^1) = (C_i \times \A^1 \smallsetminus T) / \mathbb{G}_{\mathrm{m}}$ for $i =1, 2$.
The assumption implies that $\mathbb{P}(C_i \times \A^1)$ is projective over $T$ and the map
$\bar f \colon \mathbb{P}(C_1 \times \A^1) \to \mathbb{P}(C_2 \times \A^1)$ is injective.
So this map is proper and quasi-finite, hence it is finite.
Since the map $\bar f$ restricts to $f$, we obtain the desired claim.
\end{proof}

The microlocal homology associated with the nilpotent and semi-nilpotent cones will play an important role.

\begin{proposition}
The morphism
$\HH_*^{\Lambda_{\nil}}(\calM,\Q ) \to\HH_*^{\Lambda_{\snil}}(\calM,\Q )$ in \eqref{eq:cone-BM2}
is zero.
We have an isomorphism
\begin{equation}\label{eq-nilp-vs-seminilp}
\HH_*^{\Lambda_{\snil}}(\calM,\Q ) \cong  \HH_*^{\Lambda_{\nil}}(\calM,\Q )\otimes\mathbb{L}^{-1}.
\end{equation}
\end{proposition}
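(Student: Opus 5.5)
We will exploit the $\A^1$-translation action of Proposition~\ref{prop:equivariance}. Since $\calM\subset\calM_\calC$ carries the $\BGM$-action given by scaling automorphisms, $\widetilde\calM$ carries an $\A^1$-action; on points it is $c\cdot(\calF,\phi)=(\calF,\phi+c\,\id)$. By definition $\Lambda_{\snil}$ is the image of the action map
\[
a\colon \A^1\times\Lambda_{\nil}\to\widetilde\calM_\red .
\]

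The first step is to show that $a$ is an isomorphism onto $\Lambda_{\snil}$, at least after passing to reduced structures. Bijectivity on points is immediate: if $\phi-c$ is nilpotent, then $c$ is the unique eigenvalue of $\phi$ and is determined by $(\calF,\phi)$ (take $c=\mathrm{tr}(\phi)/\rk$ locally, or use the Jordan decomposition of $\phi$). By the preceding lemma $a$ is finite. To get an isomorphism we construct an inverse $\Lambda_{\snil}\to\A^1$, $(\calF,\phi)\mapsto c$, first on affine charts $T\to\calM$ and then by descent. The cleanest way I see is to note that $\Lambda_{\snil,T}$ lies in the locus where the characteristic polynomial of $\phi$ is $(t-c)^N$, and to read off $c$ from the subleading coefficient. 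Alternatively one can argue that homeomorphisms induce equivalences on $!$-restrictions of sheaves, so a universal homeomorphism suffices, since we only need sheaf-theoretic statements.

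The second step uses $\A^1$-equivariance of $\varphi:=\varphi_{\widetilde\calM}$, which gives $a^*\varphi\cong \mathbb{Q}_{\A^1}\boxtimes \varphi$. Applying $!$-restrictions, and using that $!$-pullback along the smooth projection from $\A^1\times\Lambda_{\nil}$ is a shift and twist by $\L$, we get
\[
\varphi|^!_{\Lambda_{\snil}}\cong \mathbb{Q}_{\A^1}[2]\otimes\L^{-1}\boxtimes \varphi|^!_{\Lambda_{\nil}} .
\]
Taking cohomology and applying Künneth with $\H^*(\A^1)=\Q$ yields \eqref{eq-nilp-vs-seminilp}. This is a shift by $2$ and a Tate twist, matching the $\mathbb{L}^{-1}$ in the statement.

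The third step shows the pushforward map is zero. The map $\H^{\Lambda_{\nil}}\to\H^{\Lambda_{\snil}}$ is induced by the closed embedding $i_0\colon \Lambda_{\nil}=\{0\}\times\Lambda_{\nil}\hookrightarrow \A^1\times\Lambda_{\nil}$. Under the product decomposition it becomes $i_{0*}\colon \H^\BM_*(\{0\})\to\H^\BM_*(\A^1)$ tensored with the identity on $\H^*(\Lambda_{\nil},\varphi|^!)$. The map $i_{0*}$ goes from the degree-$0$ class of a point to $\H^\BM_0(\A^1)=0$, since $\H^\BM(\A^1)$ is concentrated in degree $2$, so it vanishes. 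To make this rigorous, we write the map as cohomology of the counit $i_{0*}i_0^!\to\id$ applied to $\mathbb{Q}_{\A^1}\boxtimes K$, and note that $\Hom(\mathbb{Q}_0[-2],\mathbb{Q}_{\A^1})=\H^2_{\{0\}}(\A^1)\to \H^2(\A^1)=0$.

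The main obstacle is the first step: realizing the Künneth decomposition for $\varphi|^!_{\Lambda_{\snil}}$ on a stack. This requires that $a$ is an isomorphism, or at least a universal homeomorphism, and that the $\A^1$-equivariance of Proposition~\ref{prop:equivariance}(a) is compatible with restriction to the $\A^1$-stable closed subset $\Lambda_{\nil}\times\A^1$.
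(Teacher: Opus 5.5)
Your proposal is correct and is essentially the paper's proof. Both arguments identify $\Lambda_\snil\cong\A^1\times\Lambda_\nil$ via the translation action, use the $\A^1$-equivariance of Proposition~\ref{prop:equivariance} to get $\varphi_{\widetilde\calM}|^!_{\Lambda_\snil}\cong\D\Q_{\A^1}\boxtimes\varphi_{\widetilde\calM}|^!_{\Lambda_\nil}$, and conclude from $\H^*(\A^1,\D\Q_{\A^1})\cong\L^{-1}$ together with the vanishing of $\Q_{\{0\}}\to\D\Q_{\A^1}$ on cohomology; you add detail on why the action map is an isomorphism, which the paper only asserts.

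Two minor points. First, since $\L^{-1}$ already contains the shift, $\D\Q_{\A^1}=\Q_{\A^1}\otimes\L^{-1}$, so the extra $[2]$ in your displayed formula should be dropped. Second, like the paper, your Step~1 tacitly excludes the zero object: on the component $\widetilde\calM_0$ the translation action is trivial, $\Lambda_\snil=\Lambda_\nil$ is a point, and the map there is the identity, so the statement should be read for $\gamma\neq0$.
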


\begin{proof}
By Proposition \ref{prop:equivariance} the DT perverse sheaf $\varphi_{\widetilde\calM}$ is $\A^1$-equivariant.
Hence, under the identification $\Lambda_{\snil}\cong\A^1\times\Lambda_{\nil}$, we have an isomorphism
$$\varphi_{\widetilde\calM}\big|_{\Lambda_\snil}^!\cong\D\Q_{\A^1}\boxtimes\varphi_{\widetilde\calM}\big|_{\Lambda_\nil}^!$$
such that the morphism
$\HH_*^{\Lambda_{\nil}}(\calM,\Q ) \to\HH_*^{\Lambda_{\snil}}(\calM,\Q )$ in \eqref{eq:cone-BM2}
is given by the obvious morphism $\Q_{\{0\}}\to\D\Q_{\A^1}$, which is zero in cohomology.
Finally, we have $\H^*(\A^1,\D\Q_{\A^1})\cong\Q[2]\cong \L^{-1}$.
\end{proof}

\medskip 

\subsubsection{Reduced virtual fundamental class via microlocal homology}\label{red_vc_sec}

We adopt the notation from the previous section.
Consider the natural $\mathrm{B}\G_{\mathrm{m}}$-action on $\calM$.
By \cite[prop.~3.1.9]{Bu25b} a $\BB\G_m$-action on a 0-shifted symplectic stack admits a moment map
\[
 \mu \colon \calM \to \mathbb{A}^1[-1].	
\]
We set $\calM^{\mathrm{rd}} = \mu^{-1}(0)$. Then $\calM^{\mathrm{rd}}$ is quasi-smooth and we have $(\calM^{\mathrm{rd}})^{\cl} = \calM^{\cl}$.
We define the reduced virtual fundamental class of $\calM$ by
\[
 [\calM]_{\mathrm{rd}} = [\calM^{\mathrm{rd}}] \in \H^{\mathrm{BM}}_{2 \vdim \calM^{\mathrm{rd}}}(\calM^{\mathrm{rd}}) \cong \H^{\mathrm{BM}}_{2 \vdim \calM + 2}(\calM).
\]
Note that $[\calM] = 0$ since the obstruction bundle contains $\calM \times \mathbb{A}^1$. 
Therefore the reduced virtual fundamental class is the
more reasonable object to study.
We now explain a construction of the reduced virtual fundamental class using microlocal homology.
Let 
\begin{equation}\label{eq-def-Lambda-scal}
	\Lambda_{\mathrm{scal}} = \calM^\cl \times \mathbb{A}^1 \subset \widetilde{\calM}^{\cl}
\end{equation}
be the closed subset consisting of pairs $(\calF, \phi)$ 
where $\phi \colon \calF \to \calF$ is a scalar multiplication.
We have the following statement.

\begin{proposition}\label{prop-scal-microlocal} \hfill
\begin{enumerate}[label=$\mathrm{(\alph*)}$,leftmargin=8mm,itemsep=2mm]
\item There is a natural isomorphism 
\begin{equation}\label{eq-scal-micro}
\H^{\Lambda_{\mathrm{scal}}}_{- * + 2 \vdim \calM + 2}(\calM,\Q) \cong \H^*(\calM,\Q).
\end{equation} \label{item-scal-compute}
\item Consider the composition of the following maps
\begin{equation}\label{eq-composite-Lambda-scal}
\H^0(\calM,\Q) \xrightarrow[\cong]{\eqref{eq-scal-micro}} 
\H^{\Lambda_{\mathrm{scal}}}_{2 \vdim \calM + 2}(\calM,\Q) \xrightarrow{\eqref{eq:cone-BM1}} \H^{\mathrm{BM}}_{2 \vdim \calM + 2}(\calM,\Q).
\end{equation}
The image of $1 \in \H^0(\calM,\Q)$ under the above map is $[\calM]_{\rd}$. \label{item-scal-VFC}
\end{enumerate}
\end{proposition}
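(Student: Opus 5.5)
The plan is to identify $\Lambda_{\mathrm{scal}}$ with the $(-1)$-shifted cotangent of a quasi-smooth stack and then invoke dimensional reduction together with Corollary~\ref{cor-VFC-via-dim-red}.

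\emph{Geometric input.} The moment map $\mu\colon\calM\to\A^1[-1]$ exhibits $\calM^{\rd}=\mu^{-1}(0)$ as quasi-smooth with $\vdim\calM^{\rd}=\vdim\calM+1$. The closed immersion $i\colon\calM^{\rd}\to\calM$ induces a correspondence $\widetilde\calM\leftarrow\calL\to\mathrm{T}^*[-1]\calM^{\rd}$, namely the conormal correspondence for $\calM\xleftarrow{i}\calM^{\rd}=\calM^{\rd}$. At the classical level one should have:
\begin{itemize}
\item the fibre of $\mathbb{L}_{\calM^{\rd}}[-1]$ contains the extra trivial summand $\A^1$ dual to the $\BGM$-direction;
\item the image of the zero section of $\mathrm{T}^*[-1]\calM^{\rd}$ in $\widetilde\calM^\cl$ is $\calM^\cl\times\A^1=\Lambda_{\mathrm{scal}}$, the scalar endomorphisms being exactly the infinitesimal $\BGM$-action.
\end{itemize}
Said differently, the $\A^1$-action of Proposition~\ref{prop:equivariance}(a) sweeps the zero section $\Lambda_0$ out to $\Lambda_{\mathrm{scal}}$.

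\emph{Proof of (a).} I would use Proposition~\ref{prop:equivariance}(a) directly. The $\A^1$-action gives an isomorphism $\A^1\times\Lambda_0\cong\Lambda_{\mathrm{scal}}$, since scalar translation acts freely on the fibre coordinate. $\A^1$-equivariance then gives
$$\varphi_{\widetilde\calM}|^!_{\Lambda_{\mathrm{scal}}}\cong\D\Q_{\A^1}\boxtimes\varphi_{\widetilde\calM}|^!_{\Lambda_0},$$
exactly as in the proof of \eqref{eq-nilp-vs-seminilp}. Taking cohomology gives $\H^{\Lambda_{\mathrm{scal}}}_*\cong\H^{\Lambda_0}_*\otimes\L^{-1}$. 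Combining this with Corollary~\ref{cor-microlocal-dim-red}, $\H^{\Lambda_0}_{-*+2\vdim\calM}(\calM)\cong\H^*(\calM)$, yields \eqref{eq-scal-micro}, with the degree shift by $2$ coming from $\L^{-1}$. Naturality would be checked against the identification $\calM^\cl=(\calM^{\rd})^\cl$.

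\emph{Proof of (b).} I would compare with the microlocal description of $[\calM^{\rd}]$.
\begin{enumerate}
\item By Corollary~\ref{cor-VFC-via-dim-red} applied to $\calM^{\rd}$, the class $[\calM^{\rd}]$ is the image of $1$ under the shifted Lagrangian class of the zero section $0\colon\calM^{\rd}\to\mathrm{T}^*[-1]\calM^{\rd}$, followed by dimensional reduction.
\item Push this forward along the conormal Lagrangian correspondence attached to $i$. By associativity (Theorem~\ref{thm-shifted-Lag-class}(b)) and functoriality \ref{item-conormal-1}, the composite of the zero-section correspondence of $\calM^{\rd}$ with the conormal correspondence of $i$ is the correspondence $\bullet\leftarrow\calM^{\rd}\to\widetilde\calM$, whose image is $\Lambda_{\mathrm{scal}}$.
\item Proposition~\ref{prop-dim-red-vs-purity} says that the resulting map on Borel--Moore homology is the proper pushforward $i_*$ (here a homeomorphism), composed with purity. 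Hence the image of $1$ lands on $[\calM^{\rd}]=[\calM]_{\rd}$.
\end{enumerate}
It remains to see that the map $\H^0(\calM)\to\H^{\Lambda_{\mathrm{scal}}}$ obtained this way agrees with the isomorphism of (a). Both are isomorphisms on $\H^0$, a one-dimensional space on each connected component. So it suffices to check that they agree up to a nonzero scalar, then fix the normalization by checking at a generic point, or to define \eqref{eq-scal-micro} via this Lagrangian class from the start.

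\emph{Main obstacle.} The hardest step is the derived identification: showing that the correspondence through $\calM^{\rd}$ is an oriented Lagrangian correspondence and that its classical image is precisely $\Lambda_{\mathrm{scal}}$, with compatible orientations so that no sign or scalar discrepancy arises. To handle this, I would first work étale-locally on a smooth chart where $\mu$ is explicit as the $\G_m$-moment map, and then glue using étale compatibility (Theorem~\ref{thm-shifted-Lag-class}(d)).
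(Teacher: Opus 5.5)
Your proof of (a) via the $\A^1$-equivariance of $\varphi_{\widetilde\calM}$, copying the argument for \eqref{eq-nilp-vs-seminilp}, is a legitimate alternative route. It does give a natural isomorphism with the correct degree shift.

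The gap is in (b), which is a statement about a \emph{specific} isomorphism. Your Lagrangian argument produces a second map $L\colon \H^0(\calM,\Q)\to\H^{\Lambda_{\mathrm{scal}}}_{2\vdim\calM+2}(\calM,\Q)$. It comes from the composite Lagrangian $\mathrm{T}^*[-2](\calM^{\rd}/\calM)\simeq\calM^{\rd}\times\A^1\to\widetilde\calM$; note this is not $\calM^{\rd}$ itself, which would only reach $\Lambda_0$. You then show that $L(1)$ maps to $[\calM]_{\rd}$ in Borel--Moore homology. But you never prove that $L$ is an isomorphism, so ``both are isomorphisms on $\H^0$'' is unjustified. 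On each connected component, $L$ differs from your map \eqref{eq-scal-micro} by an undetermined scalar $c$, a priori possibly $0$ or $-1$, and nothing in your outline fixes it. The orientation of a conormal correspondence is automatic, so that is not the real obstacle. ``Checking at a generic point'' would require computing both the equivariance isomorphism and the Lagrangian class explicitly in a local model, which you do not do. Moreover, a general $\calM$, e.g.\ $\calM_S(n\delta)$ with $n\geqslant 2$, has no open locus of simple objects where such a check is easy. Your fallback of defining \eqref{eq-scal-micro} through the Lagrangian class leaves (a) open, since the $\A^1$-argument only identifies the two sides abstractly.

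The missing idea is the smoothness statement the paper uses. Put $i\colon\calM^{\rd}\to\calM$ on the proper side, i.e.\ use $\calM^{\rd}\xleftarrow{\id}\calM^{\rd}\xrightarrow{i}\calM$. It cannot sit on the quasi-smooth side as you wrote, because $\mathbb{L}_{\calM^{\rd}/\calM}\simeq\calO[2]$. The conormal correspondence is then $\widetilde{\calM^{\rd}}\xleftarrow{a}\widetilde\calM\times_{\calM}\calM^{\rd}\to\widetilde\calM$, and it has the following properties.
\begin{enumerate}
\item The right map is an isomorphism on classical truncations.
\item The map $a$ is induced by $\mathbb{L}_{\calM}[-1]|_{\calM^{\rd}}\to\mathbb{L}_{\calM^{\rd}}[-1]$ and is smooth of relative dimension one; fibrewise it is $\mathrm{End}(\calF)\to\mathrm{End}(\calF)/\C\cdot\id$.
\item If $\Lambda^{\rd}_0$ denotes the zero section of $\widetilde{\calM^{\rd}}$, then $\Lambda_{\mathrm{scal}}=a^{-1}(\Lambda^{\rd}_0)$.
\end{enumerate}
By Theorem~\ref{thm-shifted-Lag-class}\ref{item-shift-Lag-smooth} the Lagrangian class is invertible, so $\varphi_{\widetilde\calM}\cong a^*\varphi_{\widetilde{\calM^{\rd}}}[1]$. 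Smooth base change together with dimensional reduction for $\calM^{\rd}$ then gives $\varphi_{\widetilde\calM}|^!_{\Lambda_{\mathrm{scal}}}\cong\Q_{\Lambda_{\mathrm{scal}}}[\vdim\calM+2]$, which is (a). With (a) constructed this way, the composite \eqref{eq-composite-Lambda-scal} is by construction the zero-section map for $\calM^{\rd}$ followed by forgetting supports. Hence (b) is just Corollary~\ref{cor-VFC-via-dim-red} applied to $\calM^{\rd}$, and no comparison of normalizations is needed.
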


\begin{proof}
Let $\Lambda^{\rd}_0 \subset (\widetilde{\calM^{\rd}})^{\cl}$ be the image of the zero section.
Then we have the following Cartesian diagram
\[\begin{tikzcd}
		{\Lambda_{\mathrm{scal}}} & {\Lambda^{\rd}_0} \\
		{(\widetilde{\calM})^{\cl}} & {(\widetilde{\calM^{\rd}})^{\cl}}
		\arrow["{a_{\Lambda}}", from=1-1, to=1-2]
		\arrow[from=1-1, to=2-1]
		\arrow["\lrcorner"{anchor=center, pos=0.125}, draw=none, from=1-1, to=2-2]
		\arrow[from=1-2, to=2-2]
		\arrow["a", from=2-1, to=2-2]
\end{tikzcd}\]
where $a$ is induced by the map $\mathbb{L}_{\calM} [-1] |_{\calM^{\rd}} \to \mathbb{L}_{\calM^{\rd}} [-1]$.
Since $a$ is smooth, using Theorem~\ref{thm-shifted-Lag-class}\ref{item-shift-Lag-smooth}, we have $\varphi_{\widetilde{\calM}} \cong a^* \varphi_{\widetilde{\calM^{\rd}}}[1]$.
	In particular, we have
	\[
	 \varphi_{\widetilde{\calM}} |^! _{\Lambda_{\mathrm{scal}}} \cong (a^*\varphi_{\widetilde{\calM^{\rd}}}) |^! _{\Lambda_{\mathrm{scal}}}[1] \cong a_{\Lambda}^* (\varphi_{\widetilde{\calM^{\rd}}} |^!_{\Lambda^{\rd}_0})[1] \xrightarrow[\cong]{\eqref{eq:dimensional-reduction}} \mathbb{Q}_{\Lambda_{\mathrm{scal}}}[\vdim \calM + 2].
	\]
This proves the first claim.
By construction, the composite \eqref{eq-composite-Lambda-scal} is identified with
	\[
	  \H^0(\calM,\Q) \cong \H^0(\calM^{\rd},\Q) \xrightarrow[\cong]{\textnormal{cor.~\ref{cor-microlocal-dim-red}}} \H^{\Lambda_{0}^{\rd}}_{2 \vdim \calM^{\rd}}(\calM^{\rd},\Q) \xrightarrow{\eqref{eq:cone-BM1}} \H^{\mathrm{BM}}_{2 \vdim \calM^{\rd}}(\calM^{\rd},\Q) \cong \H^{\mathrm{BM}}_{2 \vdim \calM + 2}(\calM,\Q).
	\]
Therefore the second claim follows from Corollary~\ref{cor-VFC-via-dim-red}.

\end{proof}

\subsection{The CoHA}

Now, we recall the construction of the CoHA (=cohomological Hall algebra) for 3CY categories, 
specializing to the case of the $3$CY completion of a 2CY category.
Let $\calC$ be a 2CY category.

\subsubsection{The assumptions on the stack \texorpdfstring{$\calM$}{\calM}}\label{para-assumptions-M}
Let $\calM \subset \calM_{\calC}$ be an open substack which parametrizes objects in 
a $\C$-linear Abelian category.
Recall that the classical stack $\calM^\cl$  is $\Theta$-reductive if the map 
$$\Filt(\calM^\cl) = \Map(\A^1 / \mathbb{G}_{\mathrm{m}}, \calM^\cl) \to \calM^\cl$$ is proper on each 
connected component of the source.
We refer to \cite[def.~5.1.1]{halpern2014structure} for more details on $\Theta$-reductiveness.
We say that $\calM$ has quasi-compact graded points if the morphism
$$\Grad(\calM^\cl) = \Map(\mathrm{B} \mathbb{G}_{\mathrm{m}}, \calM^\cl)\to \calM^\cl$$ induced by pullback along the obvious morphism 
$\bullet\to\mathrm{B} \mathbb{G}_{\mathrm{m}}$ is quasi-compact on each connected component of the source.
We will consider the following conditions
\hfill
\begin{enumerate}[leftmargin=8mm,itemsep=2mm]
    \item $\calM$ is a quasi-smooth  Artin $1$-stack locally of finite type, \label{item-M-1-Artin}
    \item $\calM$ is closed under direct sums and summands and it contains $\{ 0 \}$ as an open and closed substack, \label{item-M-closed}
    \item $\calM$ has affine diagonal, \label{item-M-affine-diagonal}
    \item $\calM$  is $\Theta$-reductive,\label{item-M-theta-reductive}
    \item $\calM$ has quasi-compact connected components and quasi-compact graded points.\label{item-ac-graded}
 \end{enumerate}

 \medskip

\subsubsection{The integral isomorphism}\label{para-integral-identity}\label{sec:int-identity}
Let $\calM$ be an open substack of $\calM_{\calC}$ satisfying 
the conditions \eqref{item-M-1-Artin}-\eqref{item-M-theta-reductive}
in \S\ref{para-assumptions-M}.
Since $\widetilde{\calM}$ is affine over $\calM$, the stack $\widetilde\calM$  is also $\Theta$-reductive.
The diagram
    \[
     \mathrm{B} \mathbb{G}_{\mathrm{m}} \to \A^1 / \mathbb{G}_{\mathrm{m}} \leftarrow \bullet    
    \]
yields the correspondences
\begin{equation}\label{attractor1}
\Grad(\calM ) \xleftarrow[]{\mathrm{gr}} \Filt(\calM ) \xrightarrow[]{\mathrm{ev}} \calM 
\end{equation}
and
\begin{equation}\label{attractor2}
\Grad(\widetilde\calM) \xleftarrow[]{\widetilde{\gr}} \Filt(\widetilde\calM) \xrightarrow[]{\widetilde{\ev}} \widetilde\calM.
\end{equation}
By \cite[lem.~1.1.5, prop.~1.1.13, lem.~1.3.8]{halpern2014structure}, the following hold
   \hfill
\begin{enumerate}[leftmargin=8mm,itemsep=2mm]
    \item the morphisms $\ev$, $\tilde\ev$ are representable and proper on each connected component of the source, 
    \item the morphisms $\gr$, $\tilde\gr$ are quasi-compact and $\gr$ is quasi-smooth.
    \item the morphisms $\gr$, $\tilde\gr$ induce isomorphisms on connected components.
 \end{enumerate}
 Note also that both stacks $\Filt(\calM)$ and $\Grad(\calM)$ are quasi-smooth.

As we have seen in \S\ref{sssec-ex-integral-isom}, the $(-1)$-shifted symplectic structure and the orientation on 
$\widetilde\calM$ in \S\ref{sec-orientation} yield a $(-1)$-shifted symplectic structure on $\Grad(\widetilde{\calM })$, 
and a Lagrangian structure and orientation on the  correspondence \eqref{attractor2}.
The following lemma shows that it is a special case of the conormal Lagrangian correspondence \eqref{L-to-Z}.

\begin{lemma}\label{lem-attractor-is-conormal}
The oriented Lagrangian correspondence \eqref{attractor2} is equivalent to the conormal correspondence
	\begin{equation}\label{eq-conormal-of-attractor}
		\mathrm{T}^*[-1](\mathrm{attr}_{\calM}) \colon \mathrm{T}^*[-1] \Grad(\calM) \leftarrow \mathrm{T}^*[-2](\Filt(\calM) / \Grad(\calM) \times \calM) \to \mathrm{T}^*[-1]\calM.
	\end{equation}
\end{lemma}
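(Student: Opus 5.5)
The plan is to identify the two correspondences first on underlying derived stacks, and then to match the Lagrangian structures and the orientations, using the uniqueness statement \cite[lem.~3.19]{KPS}.

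\emph{Underlying stacks.} Since $\widetilde\calM=\mathrm{T}^*[-1]\calM\cong \mathrm{T}[-1]\calM$ by the $0$-shifted symplectic form, mapping stacks out of $\A^1/\G_m$ and $\BGM$ commute with forming $\mathrm{T}[-1]$. Hence
\[
\Filt(\widetilde\calM)\cong \mathrm{T}[-1]\Filt(\calM), \qquad \Grad(\widetilde\calM)\cong \mathrm{T}[-1]\Grad(\calM).
\]
Next we identify these shifted tangent stacks with shifted cotangent stacks. For $\Grad(\calM)$, the $0$-shifted symplectic form on $\calM$ induces one on $\Grad(\calM)$, so $\mathrm{T}[-1]\Grad(\calM)\cong \mathrm{T}^*[-1]\Grad(\calM)$. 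For $\Filt(\calM)$, the correspondence $\Grad(\calM)\leftarrow\Filt(\calM)\to\calM$ is a $0$-shifted Lagrangian, by the AKSZ-type argument of \cite{KPS} or Calaque. This gives a fibre sequence
\[
\mathbb{T}_{\Filt(\calM)}\to \gr^*\mathbb{T}_{\Grad(\calM)}\oplus \ev^*\mathbb{T}_{\calM}\to \mathbb{L}_{\Filt(\calM)},
\]
which, together with the Lagrangian identification $\mathbb{T}_{\Filt/\Grad\times\calM}\cong \mathbb{L}_{\Filt}[-1]$, identifies $\mathrm{T}[-1]\Filt(\calM)$ with the total space of $\mathbb{L}_{\Filt(\calM)/\Grad(\calM)\times\calM}[-2]$, that is, with $\mathrm{T}^*[-2](\Filt(\calM)/\Grad(\calM)\times\calM)$. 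One then checks that the maps $\widetilde\gr,\widetilde\ev$ correspond to $\tilde q,\tilde p$ in \eqref{L-to-Z}. This holds because both are induced by the same maps of (co)tangent complexes.

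\emph{Symplectic and Lagrangian structures.} The $(-1)$-shifted symplectic form on $\Grad(\widetilde\calM)$ from \cite[cor.~5.19]{KPS} is obtained by transgression of $\omega_{\widetilde\calM}$. The standard form $\omega_{\mathrm{sta}}$ is exact, $\omega_{\mathrm{sta}}=d_{\mathrm{dR}}\lambda$, and transgression commutes with $d_{\mathrm{dR}}$. So it suffices to match the Liouville $1$-forms $\lambda$ under the identifications above. The Lagrangian homotopy on the attractor side is the canonical one coming from evaluation along $\A^1/\G_m$. On the conormal side, \cite[thm.~2.13]{Cal19} builds it from the tautological $1$-forms. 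Both homotopies are therefore induced by equalities of $1$-forms $\tilde q^*\lambda\sim\tilde p^*\lambda$, and comparing these $1$-form level homotopies reduces to functoriality of the tautological section.

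\emph{Orientations and main obstacle.} The orientation of the attractor correspondence is the unique one compatible with $o_{\mathrm{sta}}$ on $\widetilde\calM$, by \cite[lem.~3.19]{KPS}. The conormal correspondence also carries an orientation restricting to $o_{\mathrm{sta}}$ on $\widetilde\calM$. Uniqueness then forces the two orientations to agree, and likewise the standard orientation on $\mathrm{T}^*[-1]\Grad(\calM)$. Some care is needed here: the Koszul signs $(-1)^{\vdim}$ that appear under swap, see \ref{item-conormal-2}, must be tracked so that the conormal correspondence is taken in the right direction.

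The hardest step is the second one: producing an equivalence of Lagrangian \emph{structures}, as data in the space of closed forms, rather than merely of underlying stacks. The approach would be to do this universally, for a $0$-shifted symplectic target, by formal AKSZ/transgression functoriality. If that is too heavy, a fallback is available: by \cite[lem.~3.19]{KPS}, Lagrangian structures on a fixed correspondence over a fixed symplectic target with the same underlying nondegeneracy data form a contractible space in the $\Theta$-reductive setting of \S\ref{para-assumptions-M}. It would then suffice to compare the induced maps $\mathbb{T}_{\calL/\calX_1}\to\mathbb{L}_{\calL/\calX_2}[-2]$, which agree by construction.
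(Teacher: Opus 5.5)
Your outline is sound, and you treat orientations exactly as the paper does: by the uniqueness in \cite[lem.~3.19]{KPS} it suffices to compare the Lagrangian correspondences. You differ in how you identify the underlying stacks.

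The paper stays on the cotangent side. A $T$-point of $\Filt(\widetilde\calM)$ is a map $T\times\A^1/\G_m\to\calM$ together with a section of $\mathbb{L}_{\calM}[-1]$ over $T\times\A^1/\G_m$. These sections are matched, naturally in $T$, with sections of $\mathbb{L}_{\Filt(\calM)/\Grad(\calM)\times\calM}[-2]|_T$. This is done by writing that relative cotangent complex as a cofibre of $\sharp$-pushforwards \cite[prop.~B.3.7]{Roz} and using the relative $0$-orientation of $\iota\colon\BGM\sqcup\bullet\to\A^1/\G_m$ \cite[prop.~5.17]{KPS}. The symplectic comparison on $\Grad$ and the comparison of Lagrangian structures are then taken from (a relative version of) \cite[prop.~3.9]{CS}.

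You instead pass through shifted tangent stacks: you use $\omega_{\calM}$ to write $\mathrm{T}^*[-1]\calM\cong\mathrm{T}[-1]\calM$, commute $\mathrm{T}[-1]$ with $\Map(\A^1/\G_m,-)$, and return to the cotangent side via the Lagrangian duality $\mathbb{T}_{\Filt/\Grad\times\calM}\cong\mathbb{L}_{\Filt}[-1]$. That $0$-shifted Lagrangian structure is itself produced by the relative orientation of $\iota$, so this is the same duality in different clothing.

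Your version replaces Rozenblyum's formula by a familiar Lagrangian statement and is natural in the $2$CY setting. The paper's version does not use the symplectic form of $\calM$ for the underlying identification and matches the tautological sections directly. With your detour that last point becomes an extra check. Before comparing Liouville forms, you must verify that the composite $\mathrm{T}^*\to\mathrm{T}\to\mathrm{T}^*$, built from $\omega_{\calM}$ and the induced forms on $\Grad(\calM)$ and $\Filt(\calM)$, coincides with the direct identification, sign included. Otherwise you may be off by the fibrewise automorphism $-1$, which sends $\lambda$ to $-\lambda$.

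Your fallback, however, does not work. \cite[lem.~3.19]{KPS} is a uniqueness statement for orientations, not for Lagrangian structures. When nonempty, isotropic structures on a fixed correspondence form a torsor under $\calA^{2,\mathrm{cl}}(\calL,-2)$, and the Lagrangian ones are a union of components of it. This space is not contractible in general. Moreover, the induced map $\mathbb{T}_{\calL/\calX_1}\to\mathbb{L}_{\calL/\calX_2}[-2]$ depends only on the image of the homotopy in non-closed $2$-forms. So the comparison of Lagrangian structures genuinely requires the transgression and $1$-form argument of your main route, which is also what the paper relies on through \cite{CS}.
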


\begin{proof}
Since the orientations of these Lagrangian correspondences are uniquely determined by the orientation on $\widetilde{\calM}$ by \cite[lem.~3.19]{KPS}, it is sufficient to compare them as Lagrangian correspondences.
This is a relative version of \cite[prop.~3.9]{CS}, which we briefly explain below.
First, an equivalence of $(-1)$-shifted symplectic stacks $\mathrm{T}^*[-1]\Grad(\calM) \cong \Grad(\widetilde{\calM})$ is a direct consequence of \cite[prop.~3.9]{CS}.
Next, we construct an equivalence 
	\begin{equation}\label{eq-Filt-is-shifted-conormal}
	 \Filt(\widetilde{\calM}) \cong 	\mathrm{T}^*[-2](\Filt(\calM) / \Grad(\calM) \times \calM) 
	\end{equation}
as derived stacks.
For a derived affine scheme $T$, a  $T$-valued point $T \to \Filt(\widetilde{\calM})$ is 
a pair of a morphism 
$T \times \mathbb{A}^1 / \mathbb{G}_{\mathrm{m}} \to \calM$ and a section 
$$s \in \Gamma(T \times \mathbb{A}^1 / \mathbb{G}_{\mathrm{m}}, 
\mathbb{L}_{\calM}[-1] |_{T \times \mathbb{A}^1 / \mathbb{G}_{\mathrm{m}}}).$$
On the other hand, a $T$-valued point $T \to \mathrm{T}^*[-2](\Filt(\calM) / \Grad(\calM) \times \calM)$ is a pair of a morphism 
$T \times \mathbb{A}^1 / \mathbb{G}_{\mathrm{m}} \to \calM$, or equivalently $T \to \Filt(\calM)$
and a section 
$$t \in \Gamma(T, \mathbb{L}_{\Filt(\calM) / \Grad(\calM) \times \calM}[-2]|_{T}).$$
Consider the following diagram
	\[\begin{tikzcd}
		{\Filt(\calM) \times (\mathrm{B}\mathbb{G}_{\mathrm{m}} \coprod \bullet)} & {(\Grad(\calM) \times \calM) \times (\mathrm{B}\mathbb{G}_{\mathrm{m}} \coprod \bullet)} \\
		{\Filt(\calM) \times (\mathbb{A}^1  / \mathbb{G}_{\mathrm{m}} )} & \calM \\
		{\Filt(\calM).}
		\arrow[from=1-1, to=1-2]
		\arrow["{\id \times \iota}"', from=1-1, to=2-1]
		\arrow["{\ev_{\mathrm{B}\mathbb{G}_{\mathrm{m}} \coprod \bullet}}", from=1-2, to=2-2]
		\arrow["{\ev_{\mathbb{A}^1  / \mathbb{G}_{\mathrm{m}}}}"', from=2-1, to=2-2]
		\arrow["{\pr_1}"', from=2-1, to=3-1]
	\end{tikzcd}\]
	By \cite[prop.~B.3.7]{Roz}, the relative cotangent complex $\mathbb{L}_{\Filt(\calM) / \Grad(\calM) \times \calM}$ is described by
	\[
	 \mathop{\mathrm{cofib}}( (\pr_1 \circ (\id \times \iota))_{\sharp} (\ev_{\mathbb{A}^1 / \mathbb{G}_{\mathrm{m}}} \circ (\id \times \iota))^* \mathbb{L}_{\calM} \to \pr_{1, \sharp} \ev_{\mathbb{A}^1 / \mathbb{G}_{\mathrm{m}}}^* \mathbb{L}_{\calM}),
	\]
	where $(-)_{\sharp}$ denotes the left adjoint of the pullback functor between perfect complexes.
	Since $\iota \colon \mathrm{B} \mathbb{G}_{\mathrm{m}} \coprod \bullet \to \mathbb{A}^1 / \mathbb{G}_{\mathrm{m}}$ is equipped with a relative $0$-orientation by \cite[prop.~5.17]{KPS},
	the above complex is naturally equivalent to $\pr_{1, *} \ev_{\mathbb{A}^1 / \mathbb{G}_{\mathrm{m}}, *} \mathbb{L}_{\calM}[-1]$, which implies an equivalence
	\[
		\Gamma(T \times \mathbb{A}^1 / \mathbb{G}_{\mathrm{m}}, \mathbb{L}_{\calM}[-1] |_{T \times \mathbb{A}^1 / \mathbb{G}_{\mathrm{m}}}) \cong  \Gamma(T, \mathbb{L}_{\Filt(\calM) / \Grad(\calM) \times \calM}[-2]|_{T})
	\]
	which is natural in $T$. Therefore we obtain the equivalence of derived stacks \eqref{eq-Filt-is-shifted-conormal}.

	The comparison of the Lagrangian structure is parallel to the absolute case \cite[prop.~3.9]{CS}.

\end{proof}

Since the correspondence \eqref{attractor1} is 0-shifted Lagrangian, we have
$$\vdim\Filt(\calM)=\frac12\big(\vdim\Grad(\calM)+\vdim\calM\big).$$
Combining this with Lemma~\ref{lem-attractor-is-conormal}, we obtain
\begin{equation}\label{eq-vanish-vdim}\vdim\Grad(\widetilde\calM)=\vdim\Filt(\widetilde\calM)=\vdim\widetilde\calM=0.\end{equation}
Let $o_{\mathrm{sta}, \widetilde{\calM}}$ and $o_{\mathrm{sta}, \Grad(\widetilde{\calM})}$ denote the standard orientations defined in Example~\ref{ex-shifted-cotangent} for $\widetilde{\calM}$ and $\Grad(\widetilde{\calM})$, where 
we identify $\Grad(\widetilde{\calM})$ with $\mathrm{T}^*[-1]\Grad(\calM)$.
By Lemma~\ref{lem-attractor-is-conormal} and the uniqueness of the orientation on the attractor correspondence \cite[lem.~3.19]{KPS}, there is a natural isomorphism of orientations
\begin{equation}\label{eq-loc-of-standard-ori-is-standard}
 \mathrm{attr}_{\widetilde{\calM}}^{\star} (o_{\mathrm{sta}, \widetilde{\calM}}) \cong o_{\mathrm{sta}, \Grad(\widetilde{\calM})}.
\end{equation}
The following technical statement will be used later for the compatibility between the CoHA multiplication and the coproduct.

\begin{lemma}\label{lem-ori-differ-by-Euler-form}
The following diagram commutes
\[\begin{tikzcd}
{\mathrm{attr}_{\widetilde{\calM}}^{\star} (o_{\mathrm{sta}, \widetilde{\calM}})} &[55pt] {\mathrm{attr}_{\widetilde{\calM}, -}^{\star} (o_{\mathrm{sta}, \widetilde{\calM}}) } \\
		{o_{\mathrm{sta}, \Grad(\widetilde{\calM})}} & {o_{\mathrm{sta}, \Grad(\widetilde{\calM})}.}
		\arrow["{(-1)^{\vdim \Filt(\calM)} \cdot \rho_{\widetilde{\calM}}}", from=1-1, to=1-2]
		\arrow["{\eqref{eq-loc-of-standard-ori-is-standard}}"', from=1-1, to=2-1]
		\arrow["{\eqref{eq-loc-of-standard-ori-is-standard}}", from=1-2, to=2-2]
		\arrow[equals, from=2-1, to=2-2]
	\end{tikzcd}\]
	See \eqref{eq-rho-X} for the definition of $\rho_{\widetilde{\calM}}$.
\end{lemma}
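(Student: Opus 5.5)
Both the source and the target orientations are transported to $o_{\mathrm{sta},\Grad(\widetilde\calM)}$ through conormal correspondences. So the plan is to rewrite $\rho_{\widetilde\calM}$ entirely in conormal language and then count signs using properties \ref{item-conormal-1} and \ref{item-conormal-2}.

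\textbf{Step 1: describe $\rho_{\widetilde\calM}$ as a composite.} By definition (see \eqref{eq-rho-X}), $\rho_{\widetilde\calM}$ is obtained from the composite of two oriented Lagrangian correspondences. The first is $\mathrm{attr}_{\widetilde\calM}$. The second is the swap of $\mathrm{attr}_{\widetilde\calM,-}$. One then restricts along the étale map $\Grad(\widetilde\calM)\to \Filt(\widetilde\calM)\times_{\widetilde\calM}\Filt_-(\widetilde\calM)$.

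\textbf{Step 2: identify each piece with a conormal correspondence.}
\begin{itemize}
\item By Lemma~\ref{lem-attractor-is-conormal}, $\mathrm{attr}_{\widetilde\calM}\cong \mathrm{T}^*[-1](\mathrm{attr}_\calM)$ as oriented correspondences, with the orientation on $\Grad$ given by \eqref{eq-loc-of-standard-ori-is-standard}.
\item The same argument, with $\A^1_-$ in place of $\A^1$, gives $\mathrm{attr}_{\widetilde\calM,-}\cong \mathrm{T}^*[-1](\mathrm{attr}_{\calM,-})$.
\item By \ref{item-conormal-2}, the swap of $\mathrm{T}^*[-1](\mathrm{attr}_{\calM,-})$ is the conormal correspondence of $\calM\xleftarrow{\ev_-}\Filt_-(\calM)\xrightarrow{\gr_-}\Grad(\calM)$. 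The orientation changes by the sign $(-1)^{\vdim(\ev_-)}$.
\end{itemize}

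\textbf{Step 3: compose at the underived level.} By \ref{item-conormal-1}, the composite is the conormal correspondence of the composite correspondence
\[\Grad(\calM)\leftarrow \Filt(\calM)\times_\calM\Filt_-(\calM)\to\Grad(\calM).\]
Restricting along the étale open $\Grad(\calM)\subset \Filt(\calM)\times_\calM\Filt_-(\calM)$ gives the identity correspondence $\Grad(\calM)\xleftarrow{\id}\Grad(\calM)\xrightarrow{\id}\Grad(\calM)$. The conormal of this is the identity correspondence of $\mathrm{T}^*[-1]\Grad(\calM)$ with the standard orientation on both sides.

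Two compatibilities are needed here. First, forming conormal correspondences commutes with étale restriction, which I would argue from étale locality of the constructions of \cite{Cal19}. Second, the identification used in the lemma is compatible with the one used for $\Filt_-$.

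Putting Steps 1--3 together, $\rho_{\widetilde\calM}$ corresponds, under \eqref{eq-loc-of-standard-ori-is-standard}, to the identity multiplied by the sign $(-1)^{\vdim(\ev_-)}$ from Step 2.

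\textbf{Step 4: evaluate the sign.} Since $\ev_-\colon\Filt_-(\calM)\to\calM$, we have $\vdim(\ev_-)=\vdim\Filt_-(\calM)-\vdim\calM$. Two facts reduce this:
\begin{itemize}
\item $\vdim\calM$ is even, because $\calM$ is $0$-shifted symplectic and its tangent complex is self-dual.
\item $\vdim\Filt_-(\calM)=\vdim\Filt(\calM)$. Both equal $\tfrac12(\vdim\Grad(\calM)+\vdim\calM)$, since the negative attractor correspondence is also $0$-shifted Lagrangian.
\end{itemize}
Hence the sign is $(-1)^{\vdim\Filt(\calM)}$. Multiplying $\rho_{\widetilde\calM}$ by this sign makes the square commute, as claimed.

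\textbf{Main obstacle.} The hard part is the bookkeeping of Koszul signs. Several places could contribute extra signs: the conventions in \ref{item-conormal-2}, the fact that $\mathrm{swap}$ is not an involution, and the composition in \ref{item-conormal-1}. I would first check each potential source carefully, using the sign rules of \cite[\S2.2]{KPS}. As a sanity check, I would test the claimed sign on the local model $\calM=V/\!/G$, where a linear $\G_m$-weight computation makes all the orientations explicit.
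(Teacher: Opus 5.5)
Your proposal is correct and follows the paper's proof essentially step for step. You rewrite $\rho_{\widetilde\calM}$ via the composite of $\mathrm{T}^*[-1](\mathrm{attr}_{\calM})$ with the swap of $\mathrm{T}^*[-1](\mathrm{attr}_{\calM,-})$, take the sign from \ref{item-conormal-2}, and compose and restrict to $\Grad(\calM)$ using \ref{item-conormal-1}. Your Step~4 converts $(-1)^{\vdim(\ev_-)}$ into $(-1)^{\vdim\Filt(\calM)}$ using the evenness of $\vdim\calM$ and $\vdim\Filt_-(\calM)=\vdim\Filt(\calM)$, which just makes explicit a parity check the paper leaves implicit.
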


\begin{proof}
	By the construction of $\rho_{\widetilde{\calM}}$, it is enough to show that the composite of the conormal correspondence \eqref{eq-conormal-of-attractor} and the swap of the conormal correspondence of the negative attractor correspondence
	\begin{equation}\label{eq-negative-conormal-of-attractor}
	 \mathrm{swap}(\mathrm{T}^*[-1](\mathrm{attr}_{\calM, -})) \colon \mathrm{T}^*[-1]\calM \leftarrow \mathrm{T}^*[-2](\Filt_{-}(\calM) / \Grad(\calM) \times \calM) \to \mathrm{T}^*[-1]\Grad(\calM)	
	\end{equation}
	restricts to the identity correspondence for $\mathrm{T}^*[-1]\Grad(\calM)$ with the orientation given by multiplication by $(-1)^{\vdim \Filt(\calM)}$ under the \'etale map 
	\[
		\mathrm{T}^*[-1]\Grad(\calM) \to \mathrm{T}^*[-2](\Filt(\calM) / \Grad(\calM) \times \calM) \times_{\mathrm{T}^*[-1]\calM } \mathrm{T}^*[-2](\Filt_{-}(\calM) / \Grad(\calM) \times \calM).
	\]
	To see this, using \ref{item-conormal-2} in \S\ref{sssec-purity-via-Lag}, we see that \eqref{eq-negative-conormal-of-attractor} is identified with the conormal correspondence after inserting the sign $(-1)^{\vdim \Filt(\calM)}$ on the orientation.
	Therefore by \ref{item-conormal-1} the composite is the conormal correspondence for 
	\[
		\Grad(\calM) \leftarrow \Filt(\calM) \times_{\calM} \Filt_{-}(\calM) \to \Grad(\calM)
	\]
	after inserting the sign $(-1)^{\vdim \Filt(\calM)}$ on the orientation.
	In particular, the restriction to $\mathrm{T}^*[-1]\Grad(\calM)$ is identified with the conormal correspondence for the identity correspondence $\Grad(\calM)$ after inserting the sign $(-1)^{\vdim \Filt(\calM)}$ on the orientation.
	By using \ref{item-conormal-1} again, we obtain the desired claim.

\end{proof}

We now describe $\Grad(\calM )$ and $\Filt(\calM )$ explicitly, following \cite[\S 7.1]{Bu25a}.
Let 
\begin{align}\label{Gamma}
\Gamma = \pi_0(\calM ) \cong \pi_0(\widetilde\calM)
\end{align} 
be the monoid of the connected components.
For each $\gamma \in \Gamma$ let 
\begin{align}\label{M-gamma}
\calM _{\gamma} \subset \calM
,\quad
\widetilde\calM _{\gamma} \subset \widetilde\calM
\end{align}
denote the corresponding connected components.
Note that 
$$\calM_0=\widetilde\calM_0=\{0\}.$$
The stack $\Grad(\calM)$ is the union of the connected components of $\calM^\Z$ involving finitely many components 
$\calM_\gamma$ with $\gamma\neq 0$. Following \cite[(7.1.2.2)]{Bu25a}, we write
    \begin{equation}\label{eq-grad-explicit}
     \Grad(\calM )    \cong \bigsqcup_{\tilde{\gamma} \colon \mathbb{Z} \to \Gamma} \ \ 
     \Big( \prod_{i \in \supp(\tilde{\gamma})} \calM _{\tilde{\gamma}(i)} \Big) 
    \end{equation}
where $\tilde{\gamma}$ runs over all maps with finite support, where $\supp(\tilde\gamma)=\Z\smallsetminus\tilde\gamma^{-1}(0)$.
Since we have an isomorphism
$\pi_0(\Grad(\calM )) \cong \pi_0(\Filt(\calM ))$ 
by \cite[lem.~1.3.8]{halpern2014structure}, the same decomposition exists for $\Filt(\calM )$. 
Let 
$$\Filt _{\tilde{\gamma}} \subset \Filt(\calM )$$ denote the connected component corresponding to $\tilde{\gamma}$.
It is shown in \cite[\S 7.1.6]{Bu25a} that the stack $\Filt_{\tilde{\gamma}}$ depends only on the ordered sequence 
$\{ \tilde{\gamma}(i)\,;\,i \in \supp(\tilde{\gamma})\}$ up to an isomorphism.
Given a tuple
\begin{align}\label{uugamma}\uugamma = (\gamma_1, \ldots, \gamma_\ell)
,\quad
\gamma = \sum_{i=1}^\ell \gamma_i\end{align}
let $\Filt_{\uugamma}$ be the stack $\Filt _{\tilde{\gamma}}$ where $\tilde{\gamma}$ is the function 
$i \mapsto \gamma_{l - i}$ for $i = 0, \ldots, \ell-1$ and zero otherwise.
Set
$$\calM_{\uugamma}= \prod_{i=1}^\ell \calM _{\gamma_i}
,\quad
\varphi_{\widetilde\calM_{\uugamma}}= \mathop{\bigboxtimes_{i=1}^\ell}\displaylimits \varphi_{\widetilde\calM_{\gamma_i}}.$$
The stack $\Filt_{\uugamma}$ parametrizes ascending filtrations with ordered stepwise sub-quotients of classes 
$\gamma_{\ell}, \ldots, \gamma_1$.
We define $\widetilde\Filt_{\uugamma}$ and $\widetilde\calM_{\uugamma}$ in a similar manner.
The correspondences \eqref{attractor1} and \eqref{attractor2} restrict to the following correspondences
 \begin{equation}\label{attractor-explicit}
 \calM _{\uugamma} \xleftarrow[]{\gr_{\uugamma}} \Filt_{\uugamma} \xrightarrow{\ev_{\uugamma}} \calM _{\gamma}
 ,\quad 
 \widetilde\calM_{\uugamma} \xleftarrow[]{\tilde\gr_{\uugamma}} 
 \widetilde\Filt_{\uugamma} \xrightarrow{\tilde\ev_{\uugamma}} \widetilde\calM_{\gamma}.
 \end{equation}
The next result follows from the integral isomorphism \eqref{eq-integral-isom} together with the Thom--Sebastiani isomorphism \eqref{eq-Thom-Sebastiani}.
Note that in our setting the virtual dimension of  $\widetilde\Filt_{\uugamma}$ vanishes by \eqref{eq-vanish-vdim}.

\begin{theorem}
There is an isomorphism of monodromic mixed Hodge modules in $\MHMmon(\widetilde\calM_{\uugamma})$
\begin{equation}\label{eq-integral-identity}
\varphi_{\widetilde\calM_{\uugamma}}
\cong (\tilde\gr_{\uugamma})_* (\tilde\ev_{\uugamma})^! \varphi_{\widetilde\calM_{\gamma}}
\end{equation} 
satisfying the obvious unitality and associativity constraints.
\qed
\end{theorem}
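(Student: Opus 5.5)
The plan is to specialize the integral isomorphism \eqref{eq-integral-isom} to the connected component of $\Grad(\widetilde\calM)$ indexed by $\uugamma$, and then rewrite its left-hand side with the Thom--Sebastiani isomorphism \eqref{eq-Thom-Sebastiani}.

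First I check that the hypotheses of \S\ref{sssec-ex-integral-isom} hold for $\calX=\widetilde\calM$. The stack $\widetilde\calM$ is affine over $\calM$. Since $\calM$ satisfies \eqref{item-M-1-Artin}--\eqref{item-M-affine-diagonal}, $\widetilde\calM$ is a quasi-separated Artin $1$-stack with affine stabilizers. It is oriented by $o_{\mathrm{sta},\widetilde\calM}$. Then \eqref{eq-integral-isom} gives an isomorphism $\varphi_{\Grad(\widetilde\calM)}\cong \L^{\frac12\vdim\Filt(\widetilde\calM)}\otimes \tilde\gr_*\tilde\ev^!\varphi_{\widetilde\calM}$. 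Here the DT module on the left is the one for the attractor orientation $\mathrm{attr}^\star_{\widetilde\calM}(o_{\mathrm{sta},\widetilde\calM})$. By \eqref{eq-vanish-vdim} the Tate twist is trivial.

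Next I restrict to the component $\widetilde\Filt_{\uugamma}$. Because $\tilde\gr$ induces a bijection on connected components, restricting the isomorphism over $\widetilde\calM_{\uugamma}\subset\Grad(\widetilde\calM)$ only sees $\widetilde\Filt_{\uugamma}$ in the pushforward. This yields $\varphi_{\Grad(\widetilde\calM)}|_{\widetilde\calM_{\uugamma}}\cong(\tilde\gr_{\uugamma})_*(\tilde\ev_{\uugamma})^!\varphi_{\widetilde\calM_\gamma}$. One small point is needed here: $\tilde\ev$ restricted to this component lands in $\widetilde\calM_\gamma$, and $!$-pullback commutes with the open-closed inclusion of that component.

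Then I identify the left-hand side. Using \eqref{eq-loc-of-standard-ori-is-standard}, the attractor orientation on $\Grad(\widetilde\calM)\cong\mathrm T^*[-1]\Grad(\calM)$ is the standard one. On the component $\widetilde\calM_{\uugamma}=\prod_i\widetilde\calM_{\gamma_i}=\mathrm T^*[-1]\prod_i\calM_{\gamma_i}$, the standard symplectic form and orientation are the products of the standard ones on the factors, up to the Koszul sign conventions. The Thom--Sebastiani isomorphism \eqref{eq-Thom-Sebastiani}, iterated and using its associativity, then identifies $\varphi_{\Grad(\widetilde\calM)}|_{\widetilde\calM_{\uugamma}}$ with $\varphi_{\widetilde\calM_{\uugamma}}=\bigboxtimes_i\varphi_{\widetilde\calM_{\gamma_i}}$. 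The right-hand side is a genuine monodromic mixed Hodge module, not just a complex, because the left-hand side is one.

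Finally, unitality and associativity. Unitality is the case $\ell=1$: there $\widetilde\Filt_{(\gamma)}=\widetilde\calM_\gamma$ with $\tilde\gr,\tilde\ev$ the identity, and the claim follows from Theorem~\ref{thm-shifted-Lag-class}\ref{item-shifted-Lag-class-unital}. For associativity, I would express the integral isomorphism as the shifted Lagrangian class of the attractor correspondence, as in \S\ref{sssec-ex-integral-isom}. Refining a filtration $\uugamma$ by a filtration of each piece is then a composite of attractor correspondences. Theorem~\ref{thm-shifted-Lag-class}\ref{item-shifted-Lag-class-associative} and \ref{item-shifted-Lag-class-multiplicative}, together with associativity of Thom--Sebastiani, give the compatibility.

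I expect the main obstacle to be this associativity step. It requires identifying the iterated filtration stack $\widetilde\Filt_{\uugamma}\times_{\widetilde\calM_{\uugamma'}}\cdots$ with $\widetilde\Filt_{\uugamma''}$ as oriented Lagrangian correspondences, and tracking the signs from the orientations. I would first use Lemma~\ref{lem-attractor-is-conormal} to reduce to conormal correspondences and then invoke functoriality \ref{item-conormal-1}. In that form the composition is the conormal correspondence of a composite of ordinary correspondences of filtration stacks, which is standard.
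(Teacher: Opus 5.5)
Your argument is the paper's: apply the integral isomorphism \eqref{eq-integral-isom} to $\widetilde\calM$, kill the Tate twist using \eqref{eq-vanish-vdim}, restrict to the component $\widetilde\Filt_{\uugamma}$, and identify the left-hand side with $\varphi_{\widetilde\calM_{\uugamma}}$ via \eqref{eq-loc-of-standard-ori-is-standard} and the Thom--Sebastiani isomorphism \eqref{eq-Thom-Sebastiani}. The paper only asserts unitality and associativity, so your sketch via Theorem~\ref{thm-shifted-Lag-class} adds detail; note that this theorem is stated in $\Dcb$, so you are implicitly using that the Hodge-theoretic integral isomorphism lifts the shifted Lagrangian class, and that equalities of morphisms between objects of $\MHMmon$ can be checked on the underlying perverse sheaves.
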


\begin{remark}\label{rmk-etale-integral-identity}
We end this section with the following observation which will be useful later.
Assume that we are given the following diagram
\[\begin{tikzcd}
{\calU_{\uugamma}} & {\calU_{\uugamma}^+} & {\calU_{\gamma}} \\
{\widetilde{\calM}}_{\uugamma}  & \widetilde{\Filt}_{\uugamma} & {\widetilde\calM_{\gamma}}
\arrow[equals, from=1-1, to=1-2]
\arrow[from=1-1, to=2-1]
\arrow["{\tilde\ev_{\calU}}", from=1-2, to=1-3]
\arrow[from=1-2, to=2-2]
\arrow[from=1-3, to=2-3]
\arrow["{\tilde\gr_{\uugamma}}"', from=2-2, to=2-1]
\arrow["{\tilde\ev_{\uugamma}}", from=2-2, to=2-3]
\end{tikzcd}\]
where the vertical maps are open immersions and $\tilde\ev_{\calU}$ is \'etale.
The map $\tilde\ev_{\calU}$ preserves the orientation and the $(-1)$-shifted symplectic structure.
The integral isomorphism \eqref{eq-integral-identity} restricts to the obvious isomorphism
$\varphi_{\calU_{\uugamma}} \cong (\tilde\ev_{\calU_\gamma})^! \varphi_{\calU_\gamma}$.
This follows from the compatibility between the shifted Lagrangian classes with the \'etale pullback (Theorem~\ref{thm-shifted-Lag-class}\ref{item-shifted-Lag-class-etale-compat})
and the unitality of the shifted Lagrangian class (Theorem~\ref{thm-shifted-Lag-class}\ref{item-shifted-Lag-class-unital}) applied to $\calU_{\uugamma}$.
\end{remark}

 \medskip

\subsubsection{The CoHA}\label{sec:COHA}
We adopt the notation from \eqref{para-integral-identity} with $\uugamma = (\gamma_1, \gamma_2)$.
Taking the left adjoint in the isomorphism \eqref{eq-integral-identity} yields a map
 \begin{align}\label{adjunction}
 (\tilde\ev_{\uugamma})_!( \tilde\gr_{\uugamma})^* \left(\varphi_{\widetilde\calM_{\gamma_1}} \boxtimes \varphi_{\widetilde\calM_{\gamma_2}}  \right) \to \varphi_{\widetilde\calM_{\gamma}} .
 \end{align}
 We do not assume the condition \eqref{item-ac-graded}.
 However,  the stack $\widetilde\calM_\gamma$ is locally quasi-compact,
 the morphism $\tilde\ev$ is representable,
 and $\tilde\gr$ is quasi-compact.
 Thus, using the quasi-compact filtration as in \S~\ref{sec-pro} and the K\"unneth 
theorem in Proposition \ref{prop-kunneth}, the morphism \eqref{adjunction} yields a map of pro-complexes
 \begin{align}\label{mult0}\RGpro(\widetilde\calM_{\gamma_1}, \varphi_{\widetilde\calM_{\gamma_1}}) \,\widehat{\otimes}\, 
 \RGpro(\widetilde\calM_{\gamma_2}, \varphi_{\widetilde\calM_{\gamma_2}})  \to
  \RGpro(\widetilde\calM_{\gamma}, 
 \varphi_{\widetilde\calM_{\gamma}}).\end{align}
 It is proved in \cite[cor.~8.8]{KPS} that this map defines an associative algebra object
in $\Pro(\Dhcb(\bullet))$. 
In loc.~cit., the authors only verify the $1$-categorical associativity, but the $\infty$-categorical associativity is 
automatic since the construction of the multiplication is based on an isomorphism of perverse sheaves. 
For our purpose, the $1$-categorical associativity is sufficient.
Using the dimensional reduction isomorphism \eqref{eq:dimensional-reduction}, we obtain an associative multiplication 
\begin{equation}\label{eq-COHA-multi}
*\colon \calA_{\calM,\pro } \,\widehat{\otimes}\, \calA_{\calM,\pro } \to \calA_{\calM,\pro }
 \end{equation}
on the pro-complex defined as in \eqref{RGproBM} by
\begin{equation}\label{AMpro}
	\calA_{\calM,\pro } = \bigoplus_{\gamma \in \Gamma} \RGproBM(\calM _{\gamma},\Q)[- \vdim \calM_{\gamma}]
\end{equation}
  and a topological algebra structure on
\begin{align}\label{AM}\calA_{\calM } = \bigoplus_{\gamma \in \Gamma} R \Gamma^\BM(\calM _{\gamma},\Q)[- \vdim \calM_{\gamma}]. \end{align}
Taking the cohomology, we get a graded algebra structure 
on the graded pro-vector space in $\Pro(\Vect^{\heartsuit, \Gamma \times \mathbb{Z}})$ given by 
$$\H\calA_{\calM,\pro} =\bigoplus_{\gamma \in \Gamma} 
\H_{-*+\vdim\calM_\gamma}^{\BM}(\calM_{\gamma},\Q)_\pro $$
and, via the realization functor, a topological graded algebra structure 
on the graded vector space  in $\Vect^{\heartsuit, \Gamma \times \mathbb{Z}}$ given by
$$ \H\calA_\calM =\bigoplus_{\gamma \in \Gamma} 
\H_{-*+\vdim\calM_\gamma}^{\BM}(\calM_{\gamma},\Q).$$
This algebra is called the (absolute) CoHA associated with the $2$CY category $\calC$ and $\calM$.

\medskip

\subsubsection{The good moduli space}\label{good}
We assume that for each $\gamma \in \Gamma$ the stack $\calM _{\gamma}$ has a quasi-compact good moduli space  
$$p_{\gamma} \colon \calM _{\gamma} \to M_{\gamma}.$$ 
We adopt the definition of a good moduli space for derived Artin stacks as given in \cite[def.~2.1]{AHPS23}.
By  \cite[thm.~2.12]{AHPS23}, the derived stack $\calM_\gamma$ admits a good moduli space if and only if its classical truncation
$\calM_\gamma^\cl$ does.
This, in turn,  means that $M_{\gamma}$ is an algebraic space, the map $p_\gamma$ is quasi-compact and quasi-separated,
the pushforward of the structure sheaf of $\calM _{\gamma}$ is the structure sheaf of $M_{\gamma}$, and the pushforward functor is 
exact on quasi-coherent sheaves (cohomological affineness). 
 Since the morphism
 $$\kappa_\gamma \colon \widetilde\calM_{\gamma} \to \calM _{\gamma}$$ is affine, by base change the stack
 $\widetilde\calM_{\gamma}$ also admits a good moduli space
 $$\tilde{p}_{\gamma} \colon \widetilde\calM_{\gamma} \to \wM_{\gamma}.$$
We abbreviate 
$$M = \bigsqcup_{\gamma} M_{\gamma}
,\quad
\wM = \bigsqcup_{\gamma} \wM_{\gamma}.$$
The direct sum yields maps
$$\oplus \colon M \times M \to M
,\quad
\oplus \colon \wM \times \wM \to \wM.$$
There is a symmetric monoidal structure on locally bounded below monodromic mixed Hodge modules given by
 \[\boxtimes_{\oplus} \colon \Dh^{\mathsf{mon}, +} (\wM) \times \Dh^{\mathsf{mon}, +} (\wM) \to \Dh^{\mathsf{mon}, +} (\wM), \quad (E, F) 
 \mapsto \oplus_{*}(E \boxtimes F).\]
We define a symmetric product operation by
\begin{equation}\label{eq-sym-prod}
\Sym_{\oplus} \colon \Dh^{\mathsf{mon}, +}(\wM \smallsetminus \wM_0) \to \Dh^{\mathsf{mon}, +}(\wM), \quad E \mapsto \bigoplus_n (\underbrace{E 
\boxtimes_{\oplus} \cdots \boxtimes_{\oplus} E}_{n \text{ copies}})^{{\mathrm{S}_n}}.
\end{equation}

 \medskip

\subsubsection{The relative CoHA}\label{sec:relative-COHA}

In this section, we will introduce a sheaf-theoretic refinement of the cohomological Hall algebra introduced in \S~\ref{sec:COHA}.
The set-up we consider is a $\Gamma$-graded monoid scheme $(B = \coprod_{\gamma \in \Gamma} B_{\uugamma}, \oplus)$ together with a map $\tilde{\rho} = \coprod_{\gamma}  \tilde{\rho}_{\gamma} \colon \widetilde\calM \to B$ such that the following diagram commutes for pairs $\uugamma =(\gamma_1 , \gamma_2)$:
\begin{equation}\label{induction-X}
\begin{tikzcd}
	{\widetilde\calM_{\gamma_1} \times \widetilde\calM_{\gamma_2} } &{\widetilde\Filt_{\uugamma}}& {\widetilde\calM_{\gamma_1 + \gamma_2}} \\
	{B_{\gamma_1} \times B_{\gamma_2}} && {B_{\gamma_1 + \gamma_2}.}
	\arrow["\tilde\gr_{\uugamma}"', from=1-2, to=1-1]
	\arrow["\tilde\ev_{\uugamma}", from=1-2, to=1-3]
	\arrow["{\tilde{\rho}_{\gamma_1} \times \tilde{\rho}_{\gamma_2}}"', from=1-1, to=2-1]
	\arrow["{\tilde{\rho}_{\gamma_1 + \gamma_2}}", from=1-3, to=2-3]
	\arrow["\oplus", from=2-1, to=2-3]
\end{tikzcd}
\end{equation}
A basic example we consider is $B_{\gamma} = \wM_{\gamma}$, the good moduli space of $\widetilde\calM_{\gamma}$.  In this case, the commutativity of the above diagram is proved in \cite[\S 8.1.3]{Bu25b}.
Later, we will also consider the case where $B$ is the Chow variety of a Calabi--Yau surface.
We will define a relative version of the CoHA as an associative algebra object in $\Pro(\Dhmon(B))$.

For this, we consider the {Hall monoidal product} on $\Dhmon(\widetilde{\calM})$
\[
  \boxtimes_{\mathrm{Hall}} \colon 	\Dhmon(\widetilde{\calM}) \times \Dhmon(\widetilde{\calM}) \to \Dhmon(\widetilde{\calM}), \quad (E, F) \mapsto \tilde{\ev}_* \tilde{\gr}^* (E \boxtimes F).
\]
and the {direct sum monoidal product} on $\Pro(\Dhmon(B))$
\[
	\widehat{\boxtimes}_{\oplus} \colon \Pro(\Dhmon(B)) \times \Pro(\Dhmon(B)) \to \Pro(\Dhmon(B)), \quad (E, F) \mapsto \oplus_* (E \boxtimes F).
\]
The functor $\tilde{\rho}^{\pro}_* \colon \Dhmon(\widetilde{\calM}) \to \Pro(\Dhmon(B))$ is lax monoidal by construction.
In particular, the functor $\tilde{\rho}^{\pro}_*$ induces a functor between associative algebra objects
\[
	\tilde{\rho}^{\pro}_* \colon \mathsf{Alg}_{\boxtimes_{\mathrm{Hall}}}(\Dhmon(\widetilde{\calM})) \to \mathsf{Alg}_{\widehat{\boxtimes}_{\oplus}}(\Pro(\Dhmon(B))).
\]	
By using the map \eqref{adjunction}, we can equip $\varphi_{\widetilde{\calM}}$ with the structure of an associative algebra object in $\Dhmon(\widetilde{\calM})$.
Therefore the complex $\tilde{\rho}^{\pro}_* \varphi_{\widetilde{\calM}}$ upgrades to an object in $\mathsf{Alg}_{\widehat{\boxtimes}_{\oplus}}(\Pro(\Dhmon(B)))$.
In other words, there is a map 
\begin{equation}\label{relative-COHA-X}
	(\tilde{\rho}_{\gamma_1})^{\pro}_* \varphi_{\widetilde\calM_{\gamma_1}} \widehat{\boxtimes}_{\oplus} (\tilde{\rho}_{\gamma_2})^{\pro}_*
	\varphi_{\widetilde\calM_{\gamma_2}} \to 
	(\tilde{\rho}_\gamma)^{\pro}_* \varphi_{\widetilde\calM_{\gamma}}
\end{equation}
for $\gamma_1, \gamma_2 \in \Gamma$ with $\gamma = \gamma_1 + \gamma_2$, satisfying the associativity property.
Assume now that $\tilde{\rho}_{\gamma}$ factors as $\widetilde{\calM}_{\gamma} \to \calM_{\gamma} \xrightarrow{\rho_{\gamma}} B_{\gamma}$.
Then the dimensional reduction isomorphism \eqref{eq:dimensional-reduction} identifies the map \eqref{relative-COHA-X} with the following map
\begin{equation}\label{eq-relative-COHA-S}
	({\rho}_{\gamma_1})^{\pro}_* \D\Q_{\calM_{\gamma_1}} \boxtimes_{\oplus} ({\rho}_{\gamma_2})^{\pro}_*
	\D\Q_{\calM_{\gamma_2}} \to ({\rho}_\gamma)^{\pro}_*\D\Q_{\calM_{\gamma}}\otimes\L^{\vdim\gr_\uugamma}.
\end{equation}

Following \cite{KV19}, there is an alternative definition of relative CoHA on $\rho^{\pro}_* \D\Q_{\calM}$
using the quasi-smoothness of the map $\gr_\uugamma$.
For $\uugamma = (\gamma_1, \gamma_2)$, we denote $\rho_{\uugamma} = \rho_{\gamma_1} \times \rho_{\gamma_2}$.
Consider the following chain of morphisms
of monodromic mixed Hodge modules 
\begin{align}\label{eq-relative-COHA-KV}
\begin{split}
\oplus_*(\rho_{\uugamma})^{\pro}_*\D\Q_{\calM_{\uugamma}}
&\to\oplus_*(\rho_{\uugamma})^{\pro}_*(\gr_\uugamma)_*(\gr_\uugamma)^*\D\Q_{\calM_{\uugamma}}\\
&\cong(\rho_{\gamma})^{\pro}_*(\ev_{\uugamma})_*(\gr_{\uugamma})^*\D\Q_{\calM_{\uugamma}}\\
&\to (\rho_{\gamma})^{\pro}_*(\ev_{\uugamma})_*\D\Q_{\Filt_{\uugamma}}\otimes\L^{\vdim\gr_{\uugamma}}\\
&\cong (\rho_{\gamma})^{\pro}_*(\ev_{\uugamma})_* (\ev_{\uugamma})^! \D\Q_{\calM_{\gamma}}\otimes\L^{\vdim\gr_{\uugamma}}\\
&\to(\rho_{\gamma})^{\pro}_*\D\Q_{\calM_{\gamma}}\otimes \L^{\vdim\gr_{\uugamma}}.
\end{split}
 \end{align}
The first morphism is the unit,  the third one is  the purity transformation \eqref{eq-purity-transform}
and the last one is the counit, using the properness of the map $\ev_\uugamma$.
See also the discussion in \cite[\S8.1.8]{Bu25b}.
We call \eqref{eq-relative-COHA-S} the 3D relative CoHA and \eqref{eq-relative-COHA-KV} the 2D relative CoHA.
Taking global sections we recover the CoHA on $\H\calA_\calM$ in \S\ref{sec:COHA}. It also admits two realizations:
the 2D CoHA as in \cite{KV19} and the 3D CoHA  as in \cite[cor.~8.8]{KPS}, modulo dimensional reduction 
\eqref{eq:dimensional-reduction}. The following comparison result between the above two multiplications is a consequence of Proposition~\ref{prop-dim-red-vs-purity} and Lemma~\ref{lem-attractor-is-conormal}.

\begin{theorem}\label{thm: comparison 2D 3D}
	The following diagram commutes
	\begin{equation*}
	\begin{tikzcd}
			(\tilde{\rho}_{\gamma_1})^{\pro}_*(\varphi_{\widetilde{\calM}_{\gamma_1}})\boxtimes_{\oplus} (\tilde{\rho}_{\gamma_2})^{\pro}_*(\varphi_{\widetilde{\calM}_{\gamma_2}})
			 &	(\tilde{\rho}_{\gamma})^{\pro}_*(\varphi_{\widetilde{\calM}_{\gamma}})\\
			\big((\rho_{\gamma_1})^{\pro}_*\D\Q_{\calM_{\gamma_1}} \otimes \mathbb{L}^{\frac{1}{2} \vdim \calM_{\gamma_1}} \big) \boxtimes_\oplus \big((\rho_{\gamma_2})^{\pro}_*\D\Q_{\calM_{\gamma_2}} \otimes \mathbb{L}^{\frac{1}{2} \vdim \calM_{\gamma_2}} \big)  & (\rho_{\gamma})^{\pro}_*\D\Q_{\calM_{\gamma}} \otimes \mathbb{L}^{\frac{1}{2} \vdim \calM_{\gamma}}
			\arrow["\kappa_*\eqref{relative-COHA-X}", from=1-1, to=1-2]
		\arrow["\eqref{eq-relative-COHA-KV}", from=2-1, to=2-2]
			\arrow["\eqref{eq:dimensional-reduction}"', from=1-1, to=2-1]
			\arrow["\eqref{eq:dimensional-reduction}", from=1-2, to=2-2]		
	\end{tikzcd}
	\end{equation*}
	for any $\uugamma=(\gamma_1,\gamma_2)$ in $\Gamma^2$ with $\gamma=\gamma_1+\gamma_2$.
	\qed
\end{theorem}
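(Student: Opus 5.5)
The 3D relative multiplication \eqref{relative-COHA-X} is adjoint to the integral isomorphism \eqref{eq-integral-identity}. That isomorphism is the shifted Lagrangian class of the attractor correspondence \eqref{attractor2}. By Lemma~\ref{lem-attractor-is-conormal}, this correspondence is the conormal Lagrangian correspondence $\mathrm{T}^*[-1](\mathrm{attr}_{\calM})$ of $\Grad(\calM)\xleftarrow{\gr}\Filt(\calM)\xrightarrow{\ev}\calM$, orientations included. So the plan is to specialize Proposition~\ref{prop-dim-red-vs-purity} to $\calY_1=\calM_{\uugamma}$, $\calY_2=\calM_\gamma$, $\calZ=\Filt_{\uugamma}$, $q=\gr_{\uugamma}$, $p=\ev_{\uugamma}$. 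Its hypotheses hold: $\gr$ is quasi-smooth and $\ev$ is proper on components (\S\ref{sec:int-identity}). Then $\calL=\widetilde\Filt_{\uugamma}$, and the map \eqref{eq-micro-virtual-pullback} $\tilde p_*\tilde q^*\varphi\to\varphi[-\vdim\calL]$ is exactly the adjoint \eqref{adjunction} of the integral isomorphism. Here $\vdim\calL=0$ by \eqref{eq-vanish-vdim}, and $\tilde p_*=\tilde p_!$ since $\tilde\ev$ is proper.

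Next, push forward along $\rho_\gamma$ and precompose with the Künneth/Thom--Sebastiani identification $\oplus_*(\tilde\rho_{\uugamma})_*\varphi_{\widetilde\calM_{\uugamma}}$. Diagram \eqref{induction-X} and the factorization $\tilde\rho=\rho\circ\kappa$ let us rewrite the top row of the theorem as $\rho_{\gamma,*}\kappa_{2,*}\tilde p_*\tilde q^*\varphi\to\rho_{\gamma,*}\kappa_{2,*}\varphi$. Before this we need the unit $\varphi_{\widetilde\calM_{\uugamma}}\to\tilde\gr_*\tilde\gr^*\varphi$. Compatibility of the units with $\kappa_*$, via the Beck--Chevalley map $q^*\kappa_{1,*}\to\kappa_{\calZ,*}\tilde q^*$, identifies this prefix with the first line of \eqref{eq-relative-COHA-KV}. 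Proposition~\ref{prop-dim-red-vs-purity} then says that, after dimensional reduction, the remaining composite equals the purity transformation $\mathrm{pur}_{\gr_{\uugamma}}$ followed by the counit of $\ev_*\ev^!$. Together these are precisely lines two through five of \eqref{eq-relative-COHA-KV}. Compatibility of dimensional reduction with products (Thom--Sebastiani \eqref{eq-Thom-Sebastiani} and multiplicativity, Theorem~\ref{thm-shifted-Lag-class}\ref{item-shifted-Lag-class-multiplicative}) handles the left vertical arrow. Passing to pro-objects is formal, since all maps are defined componentwise on the quasi-compact exhaustion.

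Finally we have to match the Tate twists. From $\vdim\Filt=\frac12(\vdim\Grad+\vdim\calM)$ we get $\vdim\gr_{\uugamma}=\frac12(\vdim\calM_\gamma-\vdim\calM_{\uugamma})$. This is exactly the twist needed to pass from the half-twists $\L^{\frac12\vdim\calM_{\gamma_i}}$ on the left to $\L^{\frac12\vdim\calM_\gamma}$ on the right.

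The main obstacle is bookkeeping of orientations and signs. We must check that the orientation on \eqref{attractor2} used in the CoHA is the standard conormal one. This is \eqref{eq-loc-of-standard-ori-is-standard} together with uniqueness from \cite[lem.~3.19]{KPS}; no sign appears here because we use the positive attractor and not its swap. We also have to check the Koszul signs in the Thom--Sebastiani identifications of the dimensional reduction isomorphisms for $\calM_{\gamma_1}\times\calM_{\gamma_2}$. I would verify these last, using the graded commutativity \eqref{eq-TS-commutative} if a sign $(-1)^{\vdim\calM_{\gamma_1}\vdim\calM_{\gamma_2}}$ arises.
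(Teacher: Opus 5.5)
Your proposal is correct and follows the paper's route. The paper deduces the theorem directly from Proposition~\ref{prop-dim-red-vs-purity}, applied to $\calM_{\uugamma}\xleftarrow{\gr_{\uugamma}}\Filt_{\uugamma}\xrightarrow{\ev_{\uugamma}}\calM_\gamma$, together with Lemma~\ref{lem-attractor-is-conormal}, which identifies the oriented attractor correspondence \eqref{attractor2} with the conormal Lagrangian correspondence. Your extra bookkeeping (the unit/Beck--Chevalley compatibility, Thom--Sebastiani for the left vertical arrow, and the twist $\vdim\gr_{\uugamma}=\frac12(\vdim\calM_\gamma-\vdim\calM_{\uugamma})$) spells out what the paper leaves implicit, at the level of constructible complexes where Proposition~\ref{prop-dim-red-vs-purity} is stated.
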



\medskip

\subsection{BPS cohomology}

In this section, we recall the cohomological integrality theorem \cite[thm.~10.2.11]{Bu25b}.
It yields a decomposition of the 
cohomology of the DT perverse sheaf
on the moduli stack of objects in a 3CY category.
We will use the setting of \S \ref{sec:nilp-micro}, and assume the existence of good moduli spaces as in \S \ref{good}.
Let $\calC$ be a 2CY category and $\calM  \subset \calM _{\calC}$ an open substack
satisfying the assumptions in \S\ref{para-assumptions-M}.
Let $\Gamma$ and $\calM_\gamma$ be as in \eqref{Gamma}, \eqref{M-gamma}.

 \medskip

\subsubsection{The support lemma}
Under the identification in \S\ref{sec:nilp-micro}, 
 the zero section $[E] \mapsto [(E, 0)]$ and the action map yield closed immersions
 \begin{align}\label{0-i-gamma}
0_\gamma\colon M_{\gamma} \to \wM_{\gamma}
,\quad
i_{\gamma} \colon \A^1 \times M_{\gamma} \hookrightarrow \wM_{\gamma}.
\end{align}
We abbreviate
$$\Lambda_{\gamma,\snil}=\widetilde\calM_\gamma\cap\Lambda_\snil
,\quad
\Lambda_{\gamma,\nil}=\widetilde\calM_\gamma\cap\Lambda_\nil.$$

\begin{proposition}\label{prop:nilp-equiv-def}
We have the following Cartesian diagram
\begin{align}\label{nilp-equiv-def}
\vcenter{\hbox{$\xymatrix{
\widetilde\calM_{\gamma,\red}\ar[d]_-{\tilde p_\gamma}&\ar[l]\Lambda_{\gamma,\snil}\ar[d]&\ar[l]\Lambda_{\gamma,\nil}\ar[d]\\
\widetilde M_{\gamma,\red}&\ar[l]_-{i_\gamma}\A^1\times M_{\gamma,\red}&\ar[l]_-{0\times\id}M_{\gamma,\red}.\ar@/^1.5pc/[ll]^{0_\gamma}
}$}}
\end{align}
\end{proposition}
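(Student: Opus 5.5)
Both squares in \eqref{nilp-equiv-def} should be Cartesian at the level of reduced stacks, so the statement is set-theoretic: it suffices to check equality of closed points. Since $\tilde p_\gamma$ is a good moduli space, closed points of $\widetilde M_\gamma$ correspond to closed points of $\widetilde\calM_\gamma$. A point $(\calF,\phi)$ of $\widetilde\calM_\gamma$ maps to the closed point of its orbit closure. By the $\Theta$-reductivity and the standard theory of good moduli spaces for moduli of objects in an Abelian category, that closed point is the associated graded $(\gr\calF,\gr\phi)$ of a Jordan--Hölder type filtration of the pair $(\calF,\phi)$. Here the category is that of pairs (object of $\calC$ in $\calM$, endomorphism), which is again closed under extensions, subquotients and direct sums inside $\calM$ by assumption \eqref{item-M-closed}. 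The closed points of $\widetilde M_\gamma$ are thus semisimple pairs $(\calE,\psi)=\bigoplus_j(\calE_j,\psi_j)$ with each $(\calE_j,\psi_j)$ simple.

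The key algebraic step is the description of the image of $i_\gamma$. Its points are the closed points of the form $(\calE,c\cdot\id)$ with $\calE$ semisimple in $\calM$, and $0_\gamma$ is the subcase $c=0$. One shows that a semisimple pair $(\calE,\psi)$ lies in this image if and only if $\psi$ acts by a single scalar $c$. For a simple pair $(\calE_j,\psi_j)$, the endomorphism $\psi_j-c$ has kernel and image that are sub-pairs. Taking $c$ an eigenvalue of $\psi_j$ on $\Hom(\calE_j',\calE_j)$ for a simple subobject $\calE_j'$, we get $\psi_j-c$ non-injective, hence zero by simplicity. So $\psi_j=c_j$ and $\calE_j$ is simple in $\calM$. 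The closed point is in $\mathrm{im}(i_\gamma)$ exactly when all $c_j$ coincide.

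Next, compute the closed point attached to $(\calF,\phi)$. Decompose $\calF$ into generalized eigen-summands $\calF=\bigoplus_c\calF_c$ using that $\End(\calF)$ is finite-dimensional, so that $\phi|_{\calF_c}-c$ is nilpotent. Any filtration by sub-pairs has graded pieces on which $\gr\phi$ has only the eigenvalues occurring in $\phi$, counted with multiplicity. Hence the associated closed point has all scalars $c_j$ equal to a single $c$ if and only if $\phi$ has a single eigenvalue, i.e.\ $\phi-c$ is nilpotent. This identifies $\tilde p_\gamma^{-1}(\mathrm{im}\, i_\gamma)_\red=\Lambda_{\gamma,\snil}$. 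The same argument with $c=0$ identifies $\tilde p_\gamma^{-1}(0_\gamma(M_\gamma))_\red=\Lambda_{\gamma,\nil}$, and shows that the map to $\A^1\times M_{\gamma,\red}$ sends $(\calF,\phi)$ to $(c,p_\gamma(\calF))$. This map is well defined because $\gr$ of the filtration on $\calF$ maps to the closed point $p_\gamma(\calF)$. The right square is then the fibre of the middle one over $0\in\A^1$, hence Cartesian.

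The main obstacle is making the "closed point = Jordan--Hölder graded of the pair" step rigorous in families, rather than only on $\C$-points. Since both sides are reduced closed substacks, Cartesianness of reduced spaces reduces to a statement about $\C$-points. One can also check it via the invariant functions $\mathrm{tr}(\phi^k)$ pulled back from $\widetilde M_\gamma$: $i_\gamma$ is cut out set-theoretically by the condition that the characteristic polynomial equal $(t-c)^{\rk}$. To avoid a notion of rank, I would instead use the degeneration $(\calF,\phi)\rightsquigarrow(\gr\calF,\gr\phi)$ along $\Filt(\widetilde\calM)$ together with the closedness of $\Lambda_{\snil}$ proved above.
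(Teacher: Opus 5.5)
Your argument is correct, but it is organized differently from the paper's.

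\textbf{Your route.} You describe the closed points of $\wM_\gamma$ completely as semisimple pairs. A Schur-type argument shows that simple pairs are $(\calE,c\cdot\id)$ with $\calE$ simple. Hence $\mathrm{im}(i_\gamma)$ consists of the semisimple pairs with a single scalar. You then observe that the Jordan--H\"older graded of $(\calF,\phi)$ carries exactly the generalized eigenvalues of $\phi$.

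\textbf{The paper's route.} The paper never identifies the closed points of $\wM_\gamma$.
\begin{itemize}
\item For $\phi$ nilpotent, it takes the filtration of $\calF$ induced by $\phi$ (e.g.\ $\ker\phi\subset\ker\phi^2\subset\cdots$). This gives a point of $\widetilde\Filt_{\uugamma}$ whose image under $\tilde\gr_{\uugamma}$ has zero endomorphism. The commutativity of \eqref{induction-X} with $B=\wM$ then gives $\tilde p_\gamma(\calF,\phi)=\oplus\,\tilde p_{\uugamma}(\calF',0)\in 0_\gamma(M_\gamma)$.
\item For $\phi$ not nilpotent, it splits off a generalized eigen-summand $\calF_{c_0}$ with $c_0\neq 0$, whose image lies in $i_{\gamma'}(\{c_0\}\times M_{\gamma'})$. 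It then uses that $\oplus$ on good moduli spaces is finite and equivariant for the scaling $\G_m$-action, so the sum cannot land in $0_\gamma(M_\gamma)$.
\item The semi-nilpotent case follows by $\A^1$-translation.
\end{itemize}

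\textbf{Comparison.} The paper only uses properties of $\tilde p$ already available in the abstract setting of \S\ref{good}: compatibility with $\tilde\ev,\tilde\gr$ and finiteness of $\oplus$. Yours needs an additional, standard input: that $\tilde p_\gamma$ sends a point to its Jordan--H\"older graded in the abelian category of pairs. This presupposes finite length, which holds for $\Coh^{\ss}(p)$, where the proposition is actually used. Note also that closure under subquotients and extensions comes from $\calM$ parametrizing an abelian category, not from assumption \eqref{item-M-closed} as you state. In exchange, your version gives an explicit pointwise description of $\tilde p_\gamma$ on $\Lambda_{\gamma,\snil}$. This includes the induced map $(\calF,\phi)\mapsto(c,p_\gamma(\calF))$ to $\A^1\times M_{\gamma,\red}$, which the paper leaves implicit.
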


\begin{proof}
It is enough to prove that for each closed point $(\calF, \phi)$ of $\widetilde\calM_{\gamma}$ we have
\begin{enumerate}[label=$\mathrm{(\alph*)}$,leftmargin=8mm,itemsep=2mm]
\item 
$(\calF, \phi) \in \Lambda_{\nil}$
  if and only if $\tilde{p}_{\gamma}(\calF, \phi)$ is contained in the image of the map $0_\gamma\colon M_{\gamma} \to \wM_{\gamma}$,
 \item
$(\calF, \phi) \in\Lambda_{\snil}$ if and only if $\tilde{p}_{\gamma}(\calF, \phi)$ is contained in the image 
 of the map $i_\gamma\colon\A^1 \times M_{\gamma} \to \wM_{\gamma}$.
 \end{enumerate}

Part (b) follows from (a). Let us concentrate on the part (a).
    Assume first that $(\calF, \phi)$ is contained in $\Lambda_{\nil}$. 
Then, the filtration of $\calF$ induced by the nilpotent operator defines a 
    filtered point $x \in \widetilde\Filt_{\uugamma}$ for some $\uugamma = (\gamma_1, \ldots, \gamma_n)$ such that $\tilde\ev_{\uugamma}(x) = (\calF, \phi)$ and $\tilde\gr_{\uugamma}(x) = (\calF', 0)$ for some 
    $\calF' \in \calM _{\uugamma}$.
 We consider the commutative diagram \eqref{induction-X} for $B_{\gamma} = M_{\gamma}$. The object
    $\tilde{p}_{\gamma}(\calF, \phi) = \oplus  \tilde{p}_{\uugamma}(\calF', 0)$ 
    is contained in the image of $0_\gamma\colon M_{\gamma} \to \wM_{\gamma}$.

 Conversely, assume that $(\calF, \phi)$ is not contained in $\Lambda_{\nil}$.
By the Jordan decomposition in the finite dimensional algebra $\mathrm{End}(\calF)$, we see that there is a decomposition 
$\calF = \bigoplus_{c \in \C} \calF_c$ where $\calF_c$ is the generalized eigenspace of $\phi$ with eigenvalue $c$.
Take $ 0 \neq c_0 \in \C$ with $\calF_{c_0} \neq 0$.
Write $\gamma'$ for the class of $\calF_{c_0}$ and $\gamma''$ for the class of the complement $\calF_{\neq c_0}$.
It is clear that $\tilde{p}_{\gamma'}(\calF_{c_0}, \phi |_{\calF_{c_0}})$ is not contained in the image of 
 the map $0_{\gamma'}\colon M_{\gamma'} \to\wM_{\gamma'}$.
Since the direct sum map $\oplus_{\gamma', \gamma''}$ is a finite morphism preserving the scaling $\mathbb{G}_{\mathrm{m}}$-actions, we 
conclude that 
\[\tilde{p}_{\gamma}(\calF, \phi) = 
\oplus_{\gamma', \gamma''} \left( \tilde{p}_{\gamma'}(\calF_{c_0}, \phi |_{\calF_{c_0}}),   
\tilde{p}_{\gamma''}(\calF_{\neq c_0}, \phi |_{\calF_{\neq c_0}}) \right).\]
is not contained in the image of the map $M_{\gamma} \hookrightarrow \wM_{\gamma}$.
\end{proof}

The  BPS perverse cohomology sheaf on $\wM_{\gamma}$ is the object in $\MHMmon(\wM_{\gamma})$ 
given by
\[\cBPS_{\wM_{\gamma}} = {}^{\text{p}}\calH^1((\tilde{p}_\gamma)_* \varphi_{\widetilde\calM_{\gamma}}). \]
The following support property is proved in \cite[thm.~7.2.14]{Bu25b}.

\begin{proposition}\label{prop:support-lemma}
There is a pure Hodge module $\mathcal{BPS}_{M_{\gamma}}$ in $\MHM(M_{\gamma})$ with an isomorphism
\begin{align}\label{eq:support-lemma}
\cBPS_{\wM_{\gamma}} \cong (i_\gamma)_! (\mathcal{IC}_{\A^1} \boxtimes \mathcal{BPS}_{M_{\gamma}}).
\end{align}
 \qed
\end{proposition}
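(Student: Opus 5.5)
The plan is to combine the two equivariance properties of $\varphi_{\widetilde\calM_\gamma}$ from Proposition~\ref{prop:equivariance} with the support description of Proposition~\ref{prop:nilp-equiv-def}, and then reduce to a decomposition result for perverse sheaves that are equivariant along $\A^1$. Throughout, I will work with the perverse sheaf $\cBPS_{\wM_{\gamma}}={}^{\p}\calH^1((\tilde p_\gamma)_*\varphi_{\widetilde\calM_\gamma})$ and upgrade to monodromic mixed Hodge modules at the end.

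\textbf{Step 1: support.} First I will show that $\cBPS_{\wM_\gamma}$ is supported on the image of $i_\gamma$, that is, on the locus $\A^1\times M_\gamma$ of semi-nilpotent Higgs fields. The natural approach is a cohomological integrality or localization argument in the style of \cite{Bu25b}. By Jordan decomposition, as in the proof of Proposition~\ref{prop:nilp-equiv-def}, a point of $\wM_\gamma$ outside the image of $i_\gamma$ decomposes as a direct sum over at least two distinct eigenvalues. Near such a point, the direct sum map $\oplus$ is étale onto its image from the product of the eigenvalue-separated pieces. Thom--Sebastiani \eqref{eq-Thom-Sebastiani} then writes $(\tilde p_\gamma)_*\varphi$ locally as $\oplus_*$ of a product. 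Each factor has cohomological amplitude starting in perverse degree $\ge 1$, because of the $\BGM$ (or $\H^*(\BGM)$) factor. The product therefore starts in degree $\ge 2$, so ${}^{\p}\calH^1$ vanishes there. The details of this step are the content of \cite[thm.~7.2.14]{Bu25b}, which I take as the main input.

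\textbf{Step 2: translation structure.} The $\A^1$-action on $\widetilde\calM_\gamma$ from Proposition~\ref{prop:equivariance}(a) descends to the good moduli space $\wM_\gamma$. It preserves $\A^1\times M_\gamma$, acting there by translation on the first factor. Since $\tilde p_\gamma$ is equivariant, $(\tilde p_\gamma)_*\varphi$ is $\A^1$-equivariant, and hence so is its perverse cohomology. Write $\cBPS_{\wM_\gamma}=(i_\gamma)_*P$ for a perverse sheaf $P$ on $\A^1\times M_\gamma$; then $P$ is equivariant for the translation action. An $\A^1$-equivariant perverse sheaf on $\A^1\times M$ for the free translation action is a pullback from the quotient. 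Since $\A^1$ is contractible, equivariance is a property and not extra structure, so $P\cong \pr_2^*Q[1]$ with $Q$ perverse on $M_\gamma$. Because $\mathcal{IC}_{\A^1}=\Q_{\A^1}[1]$, this gives $P\cong \mathcal{IC}_{\A^1}\boxtimes Q$. Finally, $i_\gamma$ is a closed immersion, so $(i_\gamma)_*=(i_\gamma)_!$.

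\textbf{Step 3: Hodge structure and purity.} Set $\mathcal{BPS}_{M_\gamma}=Q$, recovered as a shifted restriction of $P$ to $\{0\}\times M_\gamma$. To lift the isomorphism to $\MHMmon$, I will run the same argument with Tubach's six functors for mixed Hodge modules. For the claim that $Q$ lies in $\MHM(M_\gamma)$ (trivial monodromy) and is pure, I use Proposition~\ref{prop:equivariance}(b): the scaling $\G_m$ action makes the restriction of $\varphi$ to $\Lambda_\snil$ computable via dimensional reduction (Theorem~\ref{thm-dimensional-reduction}). This should identify $Q$ with a summand of ${}^{\p}\calH$ of $(p_\gamma)_*\D\Q_{\calM_\gamma}$, which is pure with trivial monodromy by \cite{D24}.

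I expect Step 1 to be the main obstacle. The local étaleness of $\oplus$ away from coincident eigenvalues and the degree count need care on stacks, which is why I would rely on \cite{Bu25b} for it.
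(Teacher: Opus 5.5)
Steps~1 and~2 are fine, but Step~3 does not establish the part of the statement asserting that $\cBPS_{M_\gamma}$ is pure with trivial monodromy. The paper itself gives no argument and simply quotes \cite[thm.~7.2.14]{Bu25b}.

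\textbf{Steps 1 and 2.} These correctly reconstruct the standard argument. Step~1 does not need to invoke the theorem it is meant to prove. Its actual inputs are:
\begin{itemize}
\item the vanishing of ${}^{\p}\calH^{i}((\tilde p_{\gamma'})_*\varphi_{\widetilde\calM_{\gamma'}})$ for $i<1$ and $\gamma'\neq 0$ (\cite[prop.~7.2.9]{Bu25b}, quoted in \S\ref{sec:nilp-int});
\item the splitting $\widetilde\calM_{U_1\sqcup U_2}\cong\widetilde\calM_{U_1}\times\widetilde\calM_{U_2}$ with compatible orientations, from the proof of Lemma~\ref{lem-DT-factorization};
\item Thom--Sebastiani;
\item $t$-exactness of $\boxtimes$ and of the finite pushforward $\oplus_*$.
\end{itemize}
In Step~2, $\A^1$-equivariance is a property both for perverse sheaves and for mixed Hodge modules, so the argument goes through.

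\textbf{The gap in Step 3.} Dimensional reduction computes $\kappa_*\varphi_{\widetilde\calM_\gamma}$, or equivalently by conicity the $*$-restriction to the zero section. It does not compute $\varphi_{\widetilde\calM_\gamma}|^!_{\Lambda_\snil}$. In this paper the semi-nilpotent microlocal homology is only computed in Proposition~\ref{prop:nilp-cohomological-integrality}, whose proof already uses \eqref{eq:support-lemma}, so that route is circular.

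More fundamentally, transferring purity and trivial monodromy from $(p_\gamma)_*\D\Q_{\calM_\gamma}$ to $Q$ requires $Q$ to be a direct summand. Nothing in your argument splits the truncation map $\cBPS_{\wM_\gamma}[-1]\to(\tilde p_\gamma)_*\varphi_{\widetilde\calM_\gamma}$. After applying $\kappa_*$, which is affine and hence only right $t$-exact, the term ${}^{\p}\tau_{\geqslant 2}(\tilde p_\gamma)_*\varphi_{\widetilde\calM_\gamma}$ may contribute in perverse degree $-1$. So you do not even get injectivity of $Q\to{}^{\p}\calH^0$ of the target.

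\textbf{The fix.} The missing ingredient is cohomological integrality, Theorem~\ref{thm:coh-integrality}, in the setting of \S\ref{sec:nilp-int}. Its statement lives on $\wM$ and does not involve $\cBPS_{M_\gamma}$, and it shows that \eqref{eq-BPS-inclusion} is split injective. Then argue as follows, exactly as in the proof of Corollary~\ref{cor:coh-integrality}:
\begin{itemize}
\item Apply $\kappa_*$ to the split injection.
\item Use $\kappa\circ i_\gamma=\pr_2$ together with your Step~2 to get $\kappa_*\cBPS_{\wM_\gamma}\cong Q$ up to shift and twist.
\item Use \eqref{eq:dimensional-reduction} to identify the target.
\end{itemize}
This exhibits $Q\otimes\H^*(\BGM,\Q)$ as a direct summand of $(p_\gamma)_*\D\Q_{\calM_\gamma}\otimes\L^{\frac12\vdim\calM_\gamma}$. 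That complex is pure by \cite{D24}. It has trivial monodromy because $\vdim\calM_\gamma$ is even. Since $\Dh(M_\gamma)\to\Dhmon(M_\gamma)$ is fully faithful with idempotent-complete source, $Q$ is a pure Hodge module in $\MHM(M_\gamma)$.
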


\medskip

\subsubsection{Global equivariant parameters}\label{sec-global}
To state the cohomological integrality theorem, we recall the definition of global equivariant parameters following 
\cite[\S 9.1.2]{Bu25b}. Take $\gamma \in \Gamma $. 
By \eqref{eq-grad-explicit}, there is a connected component of $\Grad(\calM _{\gamma})$ corresponding to the map 
$\mathbb{Z} \to \Gamma$ such that
$0 \mapsto\gamma$ and
$n\mapsto 0$ for each 
$n\neq 0.$
As explained in \cite[\S 3.4]{IN23}, the corresponding closed and open immersion
$\calM _{\gamma}\to\Grad(\calM _{\gamma})$
corresponds to a $\BGM$-action, which we refer to as the standard $\BGM$-action.
The action map gives rise to a linear map 
$$\H^*(\calM_\gamma,\Q)\to \H^*(\BGM,\Q).$$
A global equivariant parameter on $\calM _{\gamma}$ is a choice of a line bundle $\calL_{\gamma}$ which has non-zero weights with 
respect to the standard  $\BGM$-action, i.e., such that the Chern class $c_1(\calL_\gamma)$ 
generates $\H^*(\BGM,\Q)$ under the map above.
See \cite[\S 9.1.5]{Bu25b} for an existence criterion for a global equivariant parameter.

 \medskip

\subsubsection{The cohomological integrality isomorphism}\label{sec:nilp-int}
Set
$$\H^*(\BGM,\Q)_{\vir} = \H^*(\BGM,\Q)\otimes\mathbb{L}^{\frac12}.$$
Assume that we are given a global equivariant parameter $\calL_{\gamma}$ on $\widetilde\calM_{\gamma}$,
see \S\ref{sec-global}.
It yields an action of $\H^*(\BGM,\Q)$ on $(\tilde{p}_\gamma)_* \varphi_{\widetilde\calM_{\gamma}}$.
It is proved in \cite[prop.~7.2.9]{Bu25b} that 
${}^{\text{p}}\calH^{i}((\tilde{p}_\gamma)_* \varphi_{\widetilde\calM_{\gamma}})$ 
vanishes for $i < 1$. Thus, there is a map
 \[ \cBPS_{\wM_{\gamma}} \otimes\mathbb{L}^{\frac12} \to (\tilde{p}_\gamma)_* \varphi_{\widetilde\calM_{\gamma}}\]
which yields a map 
\begin{equation}\label{eq-BPS-inclusion}
        \cBPS_{\wM_{\gamma}} \otimes \H^*(\BGM,\Q)_{\vir} \to (\tilde{p}_\gamma)_* \varphi_{\widetilde\calM_{\gamma}}.
\end{equation}
If we are given $\uugamma$ and $\gamma$  as in \eqref{uugamma},
the CoHA multiplication map \eqref{relative-COHA-X} and \eqref{eq-BPS-inclusion} yield a map
    \begin{equation}\label{eq-BPS-induction}
        \Big(\cBPS_{\wM_{\gamma_1}} \otimes \H^*(\BGM,\Q)_{\vir}\Big)  \boxtimes_{\oplus} \cdots \boxtimes_{\oplus} 
        \Big(\cBPS_{\wM_{\gamma_n}} \otimes \H^*(\BGM,\Q)_{\vir}\Big) \to (\tilde{p}_\gamma)_* \varphi_{\widetilde\calM_{\gamma}}.
    \end{equation}
The relative cohomological integrality theorem proved in \cite[thm.~10.2.11]{Bu25b} states the following.

\begin{theorem}\label{thm:coh-integrality}
 The map \eqref{eq-BPS-induction} yields an isomorphism in $\Dh^{\mathsf{mon}, +}(\widetilde M)$
 \begin{equation}\label{eq:rel-cohint-X}
 \Sym_{\oplus}\Big(\bigoplus_{\gamma \in \Gamma \smallsetminus 0} \cBPS_{\wM_{\gamma}} 
 \otimes \H^*(\BGM,\Q)_{\vir} \Big) 
 \cong \bigoplus_{\gamma\in\Gamma} (\tilde{p}_\gamma)_* \varphi_{\widetilde\calM_{\gamma}}.
 \end{equation}
 \qed
\end{theorem}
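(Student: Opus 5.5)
The plan is to reduce Theorem~\ref{thm:coh-integrality} to the absolute cohomological integrality statement by working étale locally on the good moduli space $\wM$, using the support lemma (Proposition~\ref{prop:support-lemma}) to control the building blocks. Both sides of \eqref{eq:rel-cohint-X} are bounded-below complexes of monodromic mixed Hodge modules on $\wM$. By construction the map is built from the relative CoHA multiplication \eqref{relative-COHA-X} and the inclusions \eqref{eq-BPS-inclusion}. It is therefore enough to check that it is an isomorphism after restriction to an étale neighbourhood of every closed point $x\in\wM_\gamma$, one $\gamma$ at a time.

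\textbf{Step 1 (local model).} A closed point $x$ corresponds to a polystable object $\bigoplus_i (\calF_i,\phi_i)^{\oplus m_i}$ of the $3$CY completion. By the Luna étale slice theorem for stacks with good moduli spaces, together with the Ext-quiver description available for $2$CY categories and their $(-1)$-shifted cotangent stacks, $\widetilde\calM_\gamma$ near $x$ is étale locally modelled on a quiver-with-potential stack $\mathrm{Rep}_{\mathbf d}(Q,W)/\mathrm{GL}_{\mathbf d}$. Here $Q$ is the tripled Ext-quiver and $W$ is the canonical cubic potential. This model is compatible with the $(-1)$-shifted symplectic structure and orientation, so \eqref{eq-etale-compatibility} identifies the DT sheaves.

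\textbf{Step 2 (compatibility of operations).} Next I will check that the Hall product \eqref{adjunction} and the maps \eqref{eq-BPS-inclusion} are compatible with these étale charts. For the product, I will use Remark~\ref{rmk-etale-integral-identity} and the étale compatibility of Lagrangian classes (Theorem~\ref{thm-shifted-Lag-class}\ref{item-shifted-Lag-class-etale-compat}). For the BPS inclusions, the key point is that the global equivariant parameter $\calL_\gamma$ restricts to a determinant-type character, which yields the same $\H^*(\BGM)$-action as in the quiver case. Since $\oplus$ is finite, $\Sym_\oplus$ commutes with the base change to the étale chart. This reduces the claim to the relative integrality theorem for symmetric quivers with potential, in the form of Davison--Meinhardt.

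\textbf{Step 3 (quiver case).} In the quiver case I will follow the standard argument. The cohomological integrality isomorphism for $(Q,W)$ holds in $\Dh^{\mathsf{mon}}$ of the coarse space, and the perverse degree-one piece of the pushforward is the BPS sheaf. The perverse vanishing of ${}^{\p}\calH^{<1}$ and the purity properties then show that the PBW map built from the CoHA is an isomorphism, by induction on the dimension vector. Supports are controlled via Proposition~\ref{prop:support-lemma}, which shows that every BPS summand is supported on $\A^1\times M_\gamma$.

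\textbf{Main obstacle.} I expect the hardest step to be Step 2. It requires showing that the global map \eqref{eq-BPS-induction} agrees locally with the Davison--Meinhardt PBW map, including orientations and sign conventions for the Thom--Sebastiani isomorphism \eqref{eq-TS-commutative}. Establishing the local product structure and the independence of the choice of $\calL_\gamma$ up to filtered automorphisms may demand the most care. My first approach is to compare the associated graded pieces for the perverse filtration, where any discrepancy in the choice becomes invisible.
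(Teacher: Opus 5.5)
The paper does not prove this statement. It imports it from \cite[thm.~10.2.11]{Bu25b}, where relative integrality is proved directly for the map \eqref{eq-BPS-induction}, i.e.\ for the global Hall product \eqref{relative-COHA-X} with any choice of global equivariant parameter $\calL_\gamma$. So the paper needs neither an Ext-quiver model nor any comparison of conventions with quivers with potential.

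Your route is genuinely different: an étale-local reduction on $\wM$ to the tripled Ext-quiver with its canonical cubic potential, followed by \cite{DM}. It is close in spirit to how the 2D decomposition \eqref{intro1} was obtained in \cite{DHSM2} from the neighbourhood theorem of \cite{D24}. It exploits a feature special to $3$CY completions of $2$CY categories, namely that all local quivers are symmetric. Your approach buys a concrete reduction to a well-established theorem. The citation buys generality and avoids all chart-by-chart bookkeeping.

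The strategy is viable, but several inputs are asserted rather than justified.
\begin{enumerate}
\item You need a chart of the \emph{derived} stack $\calM_\gamma$ by the derived zero locus $\mu^{-1}(0)/\mathrm{GL}_{\mathbf d}$, not only of its classical truncation. The classical truncation, the d-critical structure and the orientation of $\mathrm{T}^*[-1]\calM_\gamma$ all depend on the derived structure. Given a strongly étale derived chart, $\mathrm{T}^*[-1]$ of it is again strongly étale and preserves the standard orientation, so the DT sheaves match.
\item Finiteness of $\oplus$ only gives base change. To identify the pulled-back left-hand side with the local $\Sym_\oplus$, you need charts compatible with direct sums: the chart at $x$ must restrict to the charts at all direct summands of $x$, with Cartesian $\oplus$-squares.
\item Step 2 is best done on ${}^{\p}\gr$. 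The source of \eqref{eq-BPS-induction} is a direct sum of shifted perverse sheaves. Hence the map is an isomorphism iff the induced maps on each ${}^{\p}\calH^m$ are, which is an étale-local question about morphisms of perverse sheaves. You must still show that on each ${}^{\p}\calH^m$ the restricted map agrees with that of \cite{DM} up to an automorphism of the source, for instance one triangular for the $\Sym$-length filtration. This requires two things:
\begin{itemize}
\item identifying the restricted product of \cite{KPS} with the critical CoHA product (e.g.\ via Theorem~\ref{thm: comparison 2D 3D} and \cite{BenDimRed}), with signs compatible with the braiding used in $\Sym_\oplus$; a change of symmetry convention would not be harmless;
\item controlling the passage from the character used in \cite{DM} to the restriction of $\calL_\gamma$. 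Near $x$ only $c_1$ of some character of the stabilizer is forced, not the one \cite{DM} uses.
\end{itemize}
\end{enumerate}

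A small correction to Step 3: \cite{DM} does not argue via purity, which vanishing cycles need not preserve. It treats $W=0$ first and then applies vanishing cycles, using their exactness and their compatibility with $p_*$, $\oplus_*$ and $\Sym$. Since you only cite that theorem, this does not affect your argument.
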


\begin{corollary}\label{cor:coh-integrality}
 The CoHA multiplication yields an isomorphism in $\Dh^{\mathsf{mon}, +}(M)$
 \begin{equation}\label{eq:rel-cohint-S}
 \Sym_{\oplus}\Big(\bigoplus_{\gamma \in \Gamma \smallsetminus 0} \cBPS_{M_{\gamma}} \otimes \H^*(\BGM,\Q)_{\vir}
 \otimes\L^{-\frac12} \Big) 
 \cong \bigoplus_{\gamma\in\Gamma} (p_\gamma)_* \D\Q_{\calM_{\gamma}}\otimes\L^{\frac12\vdim\calM_\gamma}.
 \end{equation}
\end{corollary}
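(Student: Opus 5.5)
The plan is to deduce Corollary~\ref{cor:coh-integrality} from Theorem~\ref{thm:coh-integrality} by applying the pushforward along the zero section and then using the support lemma and dimensional reduction. Concretely, let $\kappa\colon \widetilde\calM\to\calM$ be the projection. It induces a map of good moduli spaces $\bar\kappa\colon \wM\to M$, and $\bar\kappa\circ \tilde p_\gamma = p_\gamma\circ\kappa_\gamma$. Since $\bar\kappa$ is compatible with direct sums, $\bar\kappa_*$ commutes with $\boxtimes_\oplus$. It is also additive and commutes with taking $\mathrm{S}_n$-invariants, because we work with $\Q$-coefficients and invariants are a direct summand. Hence $\bar\kappa_*$ commutes with $\Sym_\oplus$. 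We apply $\bar\kappa_*$ to the isomorphism \eqref{eq:rel-cohint-X}.

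On the right-hand side, $\bar\kappa_*(\tilde p_\gamma)_*\varphi_{\widetilde\calM_\gamma}\cong (p_\gamma)_*\kappa_*\varphi_{\widetilde\calM_\gamma}$. By Theorem~\ref{thm-dimensional-reduction} this is $(p_\gamma)_*\D\Q_{\calM_\gamma}\otimes\L^{\frac12\vdim\calM_\gamma}$. On the left-hand side, Proposition~\ref{prop:support-lemma} gives $\bar\kappa_*\cBPS_{\wM_\gamma}\cong \bar\kappa_*(i_\gamma)_!(\mathcal{IC}_{\A^1}\boxtimes\cBPS_{M_\gamma})$. Since $\bar\kappa\circ i_\gamma$ is the projection $\A^1\times M_\gamma\to M_\gamma$, this equals $\H^*(\A^1,\mathcal{IC}_{\A^1})\otimes\cBPS_{M_\gamma}$. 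The factor $\H^*(\A^1,\mathcal{IC}_{\A^1})=\H^*(\A^1,\Q)[1]\otimes\L^{-1/2}$ (with the standard Hodge normalization) is one-dimensional, and should equal $\L^{-\frac12}$ in the paper's conventions. Pinning down this exact Tate twist and cohomological shift against the normalization of $\mathcal{IC}_{\A^1}$ is the main bookkeeping point. I expect it to be consistent with the $\L^{-\frac12}$ in \eqref{eq:rel-cohint-S}, given that $\cBPS_{\wM_\gamma}$ was defined as ${}^{\p}\calH^1$.

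Finally, to justify the phrase "the CoHA multiplication yields", we check compatibility of maps as well as objects. Applying $\bar\kappa_*$ turns the relative 3D CoHA \eqref{relative-COHA-X} with $B=\wM$ into the relative 3D CoHA with $B=M$ via $\rho_\gamma=p_\gamma$; the latter factors through $\calM_\gamma$, as required. Under dimensional reduction, this is the multiplication \eqref{eq-relative-COHA-S}, or equivalently the 2D multiplication \eqref{eq-relative-COHA-KV}, by Theorem~\ref{thm: comparison 2D 3D}. Similarly, $\bar\kappa_*$ of the map \eqref{eq-BPS-inclusion} gives the inclusion of $\cBPS_{M_\gamma}\otimes\H^*(\BGM,\Q)_\vir\otimes\L^{-\frac12}$. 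Hence the image of \eqref{eq-BPS-induction} is the map built from the CoHA multiplication on $M$. Since $\bar\kappa_*$ preserves isomorphisms, the resulting map is an isomorphism in $\Dh^{\mathsf{mon},+}(M)$.

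One technical caveat remains: $\bar\kappa_*$ must preserve the bounded-below categories and commute with the infinite direct sums involved. This holds because on each component the sums are locally finite, as $\oplus$ is finite and only finitely many decompositions of $\gamma$ occur in each degree.
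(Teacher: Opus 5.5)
Your proposal is correct and is the paper's argument: push \eqref{eq:rel-cohint-X} forward along $\kappa\colon \wM\to M$, commute $\kappa_*$ past $\Sym_{\oplus}$ using $\kappa\circ\oplus=\oplus\circ(\kappa\times\kappa)$, and then apply dimensional reduction on the right-hand side and the support lemma on the left. The bookkeeping point you flag resolves cleanly if you take $\mathcal{IC}_{\A^1}=\Q_{\A^1}\otimes\L^{-\frac12}$; the twist already supplies the perverse shift, so the extra $[1]$ in your formula should be dropped, and then $\H^*(\A^1,\mathcal{IC}_{\A^1})\cong\L^{-\frac12}$, which is exactly the twist in \eqref{eq:rel-cohint-S}.
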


\begin{proof}
Applying the pushforward by the maps $\kappa_\gamma$ to \eqref{eq:rel-cohint-X} and using the commutativity of the following diagram
\begin{align*}
\begin{split}
\xymatrix{M_\uugamma\ar[r]^-{\oplus}&M_\gamma\\
\widetilde M_\uugamma\ar[r]^-{\oplus}\ar[u]^-{\kappa_\uugamma}&\widetilde M_\gamma\ar[u]_-{\kappa_\gamma}}
\end{split}
\end{align*}
we get the isomorphism
$$ \Sym_{\oplus}\Big(\bigoplus_{\gamma \in \Gamma \smallsetminus 0} 
 (\kappa_\gamma)_*\cBPS_{\wM_{\gamma}} \otimes \H^*(\BGM,\Q)_{\vir} \Big) 
 \cong \bigoplus_{\gamma\in\Gamma} (\kappa_\gamma)_*(\tilde{p}_\gamma)_* \varphi_{\widetilde\calM_{\gamma}}.$$
 Applying the dimensional reduction \eqref{eq:dimensional-reduction}
 and the support property \eqref{eq:support-lemma} to this isomorphism yields
 the corollary.
\end{proof}

We define the BPS cohomology of $\wM_{\gamma}$ and $\calM_{\gamma}$ to be the graded vector spaces
\[\H^*_{\BPS}(\wM_\gamma,\Q) = \H^*(\wM_{\gamma}, \cBPS_{\wM_{\gamma}});\quad \H^*_{\BPS}(\calM_\gamma,\Q) = \H^*(\wM_{\gamma}, \cBPS_{\calM_{\gamma}}).\]
By Proposition \ref{prop:support-lemma} there is an isomorphism $\H^*_{\BPS}(\wM_\gamma,\Q)\cong \H^*_{\BPS}(\calM_\gamma,\Q)\otimes\L^{1/2}$.  
Using the additive structure on $\Gamma$, we have as in \eqref{eq-sym-prod} a functor
$$ \Sym_{+} \colon \Dh^{\mathsf{mon}, +}(\Gamma \smallsetminus  0 ) \to \Dh^{\mathsf{mon}, +}(\Gamma).$$ 
Taking global sections, the dimensional reduction \eqref{eq:dimensional-reduction} yields the following 
cohomological integrality theorem.

\begin{corollary}\label{cor:absolute-integrality}
The CoHA multiplication yields the following isomorphism 
\begin{align}\label{eq:absolute-integrality}
\Sym_{+}\Big(\bigoplus_{\gamma \in \Gamma \smallsetminus 0} \H^*_{\BPS}(\wM_\gamma,\Q) \otimes \H^*(\BGM,\Q)_{\vir} \Big)  
\cong\bigoplus_{\gamma \in \Gamma}  \H^\BM_{- *}(\calM _\gamma,\Q)\otimes\mathbb{L}^{\frac12\vdim \calM _{\gamma}}.
\end{align}
\qed
\end{corollary}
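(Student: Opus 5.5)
The plan is to deduce Corollary~\ref{cor:absolute-integrality} from the relative statement Corollary~\ref{cor:coh-integrality} (or from Theorem~\ref{thm:coh-integrality}) by pushing forward to a point and taking cohomology.

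First, push the isomorphism \eqref{eq:rel-cohint-X} on $\wM$ forward along the map $\wM\to\Gamma$ that collapses each component $\wM_\gamma$ to the point $\gamma$. The functor $\Sym_\oplus$ should go to $\Sym_+$ under this pushforward. To see this, note that $\boxtimes_\oplus$ is built from $\oplus_*$ and external products, and the square relating $\oplus\colon\wM\times\wM\to\wM$ with $+\colon\Gamma\times\Gamma\to\Gamma$ commutes. Pushforward to a point is monoidal for $\boxtimes$ by the K\"unneth theorem (Proposition~\ref{prop-kunneth}). Taking $\mathrm{S}_n$-invariants commutes with pushforward because we work rationally and invariants are a direct summand. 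One caveat: each $\wM_\gamma$ is quasi-compact but only locally finite-type data enter, so for each fixed $\gamma$ only finitely many terms of $\Sym_\oplus$ contribute. The identification therefore holds degreewise, using that the complexes are bounded below.

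On the left side this produces $\Sym_+$ of $\H^*_{\BPS}(\wM_\gamma,\Q)\otimes\H^*(\BGM,\Q)_{\vir}$, since $\cBPS_{\wM_\gamma}$ is a single object and $\H^*(\BGM,\Q)_\vir$ is a constant graded vector space. On the right side we get $\H^*(\wM_\gamma,(\tilde p_\gamma)_*\varphi_{\widetilde\calM_\gamma})=\H^*(\widetilde\calM_\gamma,\varphi_{\widetilde\calM_\gamma})$. Then write $\H^*(\widetilde\calM_\gamma,\varphi)=\H^*(\calM_\gamma,\kappa_*\varphi)$ and apply the dimensional reduction Theorem~\ref{thm-dimensional-reduction}. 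This gives $\H^*(\calM_\gamma,\D\Q)\otimes\L^{\frac12\vdim}$, which is $\H^\BM_{-*}(\calM_\gamma,\Q)\otimes\L^{\frac12\vdim\calM_\gamma}$.

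It remains to check that the resulting map is the one induced by CoHA multiplication. The map \eqref{eq-BPS-induction} is built from the relative multiplication \eqref{relative-COHA-X}. Its global sections are the absolute multiplication \eqref{mult0}, because the relative CoHA was defined via the lax monoidal functor $\tilde\rho^{\pro}_*$. Under dimensional reduction this agrees with the CoHA on $\H\calA_\calM$ by Theorem~\ref{thm: comparison 2D 3D}.

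The main obstacle is the interchange of global sections with $\Sym_\oplus$ and with pro-objects. This needs care because $\widetilde\calM_\gamma$ is not quasi-compact and the stacks are not of finite type, whereas the $\wM_\gamma$ are quasi-compact. I would handle it using the quasi-compact exhaustion and bounded-belowness, so that in each cohomological degree only finitely many terms contribute.
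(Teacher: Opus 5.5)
Your proposal is correct and follows the same route as the paper: take global sections of the relative integrality isomorphism \eqref{eq:rel-cohint-X}, then apply the dimensional reduction isomorphism \eqref{eq:dimensional-reduction}. The extra care you propose about pro-objects is unnecessary here, because in this section each $\calM_\gamma$ has a quasi-compact good moduli space, so $\calM_\gamma$ and hence $\widetilde\calM_\gamma$ are quasi-compact, and global sections commute with the direct sums defining $\Sym_\oplus$ anyway.
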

We will need a version of the cohomological integrality theorem for the nilpotent and semi-nilpotent microlocal 
homology introduced in \S\ref{sec:nilp-micro}.

\begin{proposition}\label{prop:nilp-cohomological-integrality}
There are isomorphisms
\begin{align*}
& \Sym_{+}\Big(\bigoplus_{\gamma \in \Gamma \smallsetminus 0} \H^*_{\BPS}(\wM_\gamma,\Q) \otimes \H^*(\BGM,\Q)_{\vir} \otimes 
\mathbb{L} \Big)  \cong   \bigoplus_{\gamma \in \Gamma} 
\H^{\Lambda_\nil}_{- *}(\calM _\gamma,\Q)\otimes\mathbb{L}^{\frac12\vdim \calM _{\gamma}}, \\
& \Sym_{+}\Big(\bigoplus_{\gamma \in \Gamma \smallsetminus 0} \H^*_{\BPS}(\wM_\gamma,\Q) \otimes \H^*(\BGM,\Q)_{\vir} \otimes 
\mathbb{L} \Big) \otimes \mathbb{L}^{-1}  \cong   
\bigoplus_{\gamma \in \Gamma}  \H^{\Lambda_{\snil}}_{- *}(\calM _\gamma,\Q)\otimes\mathbb{L}^{\frac12\vdim \calM _{\gamma}}.
\end{align*}
\end{proposition}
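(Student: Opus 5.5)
The plan is to derive both isomorphisms from the relative cohomological integrality of Theorem~\ref{thm:coh-integrality} by restricting to the nilpotent and semi-nilpotent loci. Base change along the Cartesian diagram \eqref{nilp-equiv-def} of Proposition~\ref{prop:nilp-equiv-def} will do the restriction.

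\emph{Semi-nilpotent case.} Write $j\colon \A^1\times M_\gamma \to \wM_\gamma$ for $i_\gamma$, and let $\iota\colon \Lambda_{\gamma,\snil}\to\widetilde\calM_\gamma$ be the inclusion. Since \eqref{nilp-equiv-def} is Cartesian on reduced stacks, base change gives
\[
(\tilde p_\gamma|_{\Lambda_{\gamma,\snil}})_*\,\iota^!\varphi_{\widetilde\calM_\gamma}\cong j^!(\tilde p_\gamma)_*\varphi_{\widetilde\calM_\gamma}.
\]
Taking global sections, the left side computes $\H^{\Lambda_\snil}(\calM_\gamma,\Q)$. I then apply $j^!$ to \eqref{eq:rel-cohint-X}, summed over $\gamma$. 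The functor $j^!$ commutes with $\Sym_\oplus$ here, because $\oplus$ is finite and by Proposition~\ref{prop:nilp-equiv-def} we have
\[
\oplus^{-1}(\A^1\times M)=\textstyle\bigcup(\A^1\times M)^{\times n}\cap\{\text{equal eigenvalue}\}.
\]
More precisely, the preimage under $\oplus$ of the image of $j$ is the locus where all factors lie in the images of $i_{\gamma_k}$ with the same $\A^1$-coordinate. By the support property \eqref{eq:support-lemma}, each factor $\cBPS_{\wM_{\gamma_k}}$ is already supported on $\A^1\times M_{\gamma_k}$. So restricting the external product to the locus with equal $\A^1$-coordinates amounts to $!$-restricting $\mathcal{IC}_{\A^1}^{\boxtimes n}$ to the small diagonal $\A^1\hookrightarrow(\A^1)^n$. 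This yields $\mathcal{IC}_{\A^1}\otimes\L^{n-1}$ up to the standard shift. Equivalently, each factor contributes $\L$ and one overall $\L^{-1}$ is left over. Taking global sections, with $\H^*(\A^1,\mathcal{IC}_{\A^1})$ absorbed by the convention $\H_\BPS(\wM_\gamma)=\H_\BPS(\calM_\gamma)\otimes\L^{1/2}$, produces the second formula, including the twist $\otimes\L$ inside and $\otimes\L^{-1}$ outside.

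\emph{Nilpotent case.} Here I would not redo the computation but instead deduce it from \eqref{eq-nilp-vs-seminilp}, which says
\[
\H^{\Lambda_\snil}\cong\H^{\Lambda_\nil}\otimes\L^{-1}.
\]
Cancelling $\L^{-1}$ from both sides of the second isomorphism gives the first. As a consistency check, one could also run the same argument with $0_\gamma$ in place of $i_\gamma$. Then the $!$-restriction of $\mathcal{IC}_{\A^1}^{\boxtimes n}$ to the origin gives $\L$ per factor, with no leftover factor. This agrees with the first formula.

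\emph{Main obstacle.} I expect the hardest step to be making the base-change and $\Sym$ compatibility rigorous. There are two issues. First, $j^!$ must commute with the infinite direct sum and with $\oplus_*$, which requires the finiteness of $\oplus$ and bounded-below complexes. Second, the $\mathrm S_n$-invariants must be taken compatibly with the restriction to the small diagonal. On the diagonal $\mathrm S_n$ acts trivially on the $\A^1$ factor, so the invariants pass through. The Tate twists also need careful tracking, and I would check them against the case $\gamma$ primitive with $n=1$.
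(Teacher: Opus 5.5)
Your argument is correct in outline, but it runs in the opposite direction from the paper's.

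\textbf{The paper's route.} The paper proves the nilpotent isomorphism directly by applying $(0_\gamma)^!$ to \eqref{eq:rel-cohint-X}. The square formed by $\oplus\colon M_{\uugamma}\to M_\gamma$, $\oplus\colon \wM_{\uugamma}\to\wM_\gamma$ and the zero sections is Cartesian, since a direct sum of pairs has zero endomorphism iff every summand does. Hence $0^!$ is compatible with $\boxtimes_\oplus$ and commutes with $\Sym_\oplus$ by plain base change. Base change along \eqref{nilp-equiv-def} and the identity \eqref{eq:BPS-S-X} then give the first formula, and the second follows from \eqref{eq-nilp-vs-seminilp}. Your ``consistency check'' is exactly this proof.

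\textbf{Your route.} Going through $i_\gamma^!$ is harder because $i^!$ is not monoidal, so your sentence that it commutes with $\Sym_\oplus$ is not literally true. The preimage under $\oplus$ of the image of $i_\gamma$ is the common-eigenvalue locus. You therefore have to pass through the small diagonal exactly as in \eqref{eq:adjunction lemma1}, and then control Tate twists and the $\mathrm{S}_n$-action on $\Delta^!\mathcal{IC}_{\A^n}$.

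What your route buys is that the semi-nilpotent isomorphism comes out summand by summand from applying $i^!$ to \eqref{eq:rel-cohint-X}. It is therefore compatible with the counit $i_!i^!\to 1$, which is the form actually used in Corollary~\ref{cor:snilp-BPS}. The paper redoes this diagonal computation there, because the abstract isomorphism \eqref{eq-nilp-vs-seminilp} does not by itself give that compatibility.

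Both routes must set $\gamma=0$ aside. There $i_0$ is not an immersion, and \eqref{eq-nilp-vs-seminilp} fails because $\Lambda_\snil\cap\widetilde\calM_0$ is a point. The nilpotent statement is trivial for $\gamma=0$.

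\textbf{Two bookkeeping corrections.} Neither changes your final formula.
\begin{enumerate}
\item We have $\Delta^!\mathcal{IC}_{\A^n}\cong\mathcal{IC}_{\A^1}\otimes\L^{(n-1)/2}$, not $\otimes\L^{n-1}$. Purity in codimension $n-1$ gives $\L^{n-1}$ for the constant sheaf, and the IC normalizations contribute $\L^{-n/2}\otimes\L^{1/2}$. When absorbing $\H^*(\A^1,\mathcal{IC}_{\A^1})$, use the relation \eqref{eq:BPS-S-X}, namely $(\kappa_\gamma)_*\cBPS_{\wM_\gamma}\cong\cBPS_{M_\gamma}\otimes\L^{-1/2}$. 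The length-$n$ term then carries $\L^{n/2-1}=\L^{-1}\otimes(\L^{1/2})^{\otimes n}$. Combined with $\H^*(M_{\gamma_k},\cBPS_{M_{\gamma_k}})\otimes\L^{1/2}\cong\H^*_{\BPS}(\wM_{\gamma_k},\Q)\otimes\L$, this gives exactly your formula.
\item Invariants pass through any additive functor, so that is not the point to check. What must be checked is that $\mathrm{S}_n$ acts on the coefficient line of $\Delta^!\mathcal{IC}_{\A^1}^{\boxtimes n}\cong\Q_{\A^1}\otimes\L^{n/2-1}$ by the sign character. This sign comes from the Koszul signs of the $n$ odd shifts, while the Thom class of the complex normal bundle of the diagonal is $\mathrm{S}_n$-invariant. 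The sign character matches the Koszul action on $\L^{-1}\otimes(\L^{1/2})^{\otimes n}$, so the invariants give the graded-symmetric power. The quickest way to see this is to apply $0_{\A^1}^!$, which reduces the check to the paper's computation.
\end{enumerate}
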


\begin{proof}
The second isomorphism follows from the first one together with the isomorphism \eqref{eq-nilp-vs-seminilp}.
To prove the first isomorphism we apply the functor $(0_\gamma)^!$ to the isomorphism \eqref{eq:rel-cohint-X}.
The following commutative square is Cartesian
\begin{align}\label{oplus-S-X} 
\begin{split}
\xymatrix{M_\uugamma\ar[r]^-{\oplus}\ar[d]_-{0_\uugamma}&M_\gamma\ar[d]^-{0_\gamma}\\
\widetilde M_\uugamma\ar[r]^-{\oplus}&\widetilde M_\gamma.}
\end{split}
\end{align}
Hence, by base change, applying $(0_\gamma)^!$ to the isomorphism \eqref{eq:rel-cohint-X} yields the following isomorphism 
in $\Dh^{\mathsf{mon}, +} (M)$
\begin{align}\label{eq:int-isom0}
 \Sym_{\oplus}\Big(\bigoplus_{\gamma \in \Gamma \smallsetminus 0} 
(0_\gamma)^! \cBPS_{\wM_{\gamma}} \otimes \H^*(\BGM,\Q)_{\vir} \Big) \cong \bigoplus_{\gamma} 
(0_\gamma)^! (\tilde{p}_\gamma)_* \varphi_{\widetilde\calM_{\gamma}}. 
\end{align}
 Taking the cohomology, we get the following isomorphism in $\Dh^{\mathsf{mon}, +}(\Gamma)$
 \[ \Sym_{\oplus}\Big(\bigoplus_{\gamma \in \Gamma \smallsetminus 0} 
\H^* (M_{\gamma}, (0_\gamma)^! \cBPS_{\wM_{\gamma}}) \otimes \H^*(\BGM,\Q)_{\vir} \Big) \cong 
\bigoplus_{\gamma}
\H^*(M_{\gamma}, (0_\gamma)^! (\tilde{p}_\gamma)_* \varphi_{\widetilde\calM_{\gamma}}).\]
By Proposition \ref{prop:nilp-equiv-def}, we have the following Cartesian diagram
\begin{align*}
\begin{split}
\xymatrix{\Lambda_{\gamma,\nil}\ar[r]\ar[d]&M_\gamma\ar[d]^-{0_\gamma}\\
\widetilde\calM_\gamma\ar[r]^-{\tilde p_\gamma}&\widetilde M_\gamma.}
\end{split}
\end{align*}
Hence, by base change we get
\begin{align*}
\H^*(M_{\gamma}, (0_\gamma)^! (\tilde{p}_\gamma)_* \varphi_{\widetilde\calM_{\gamma}})
&=\H^{\Lambda_{\nil}}_{- *}(\calM _\gamma,\Q)\otimes\mathbb{L}^{\frac12\vdim \calM _{\gamma}}.
\end{align*}
The isomorphism \eqref{eq:support-lemma} yields
\begin{align}\label{eq:BPS-S-X}
(0_\gamma)^! \cBPS_{\wM_{\gamma}}\cong \cBPS_{M_{\gamma}}\otimes\L^{\frac12}\cong
(\kappa_\gamma)_*\cBPS_{\widetilde M_{\gamma}}\otimes\L.
\end{align}
Hence, we have the following isomorphism which proves the claim
$$\H^* (M_{\gamma}, (0_\gamma)^! \cBPS_{\wM_{\gamma}})
\cong \H_\BPS^* (\widetilde M_{\gamma}, \Q)\otimes\L.$$
\end{proof}
In order to construct Hecke modifications of BPS cohomology in \S\ref{sec:CoHA_hecke} we will use in an essential way the following \textit{microlocal characterization} of BPS cohomology:
\begin{corollary}\label{cor:snilp-BPS}
Under the isomorphism \eqref{eq:absolute-integrality}, the image of the map 
 $\H^{\Lambda_{\snil}}_*(\calM _\gamma,\Q) \to \H^\BM_*(\calM _\gamma,\Q)$
 in \eqref{eq:cone-BM2} is identified with the graded vector space
\[ \H^*_{\BPS}(\wM_\gamma,\Q) \otimes \H^*(\BGM,\Q)_{\vir}\otimes\mathbb{L}^{-\frac12 \vdim \calM _{\gamma}}.\]
\end{corollary}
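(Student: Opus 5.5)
The plan is to prove the statement sheaf-theoretically on $\wM_\gamma$ and then take global sections, in the same way as in the proof of Proposition~\ref{prop:nilp-cohomological-integrality}. The main tool is the relative integrality isomorphism \eqref{eq:rel-cohint-X}, with the semi-nilpotent locus in place of the nilpotent one.

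By Proposition~\ref{prop:nilp-equiv-def}, $\Lambda_{\gamma,\snil}=\tilde p_\gamma^{-1}(i_\gamma(\A^1\times M_\gamma))$. Base change then identifies the map $\H^{\Lambda_\snil}_*(\calM_\gamma,\Q)\to\H^\BM_*(\calM_\gamma,\Q)$ of \eqref{eq:cone-BM2}, up to the shift $\L^{\frac12\vdim\calM_\gamma}$ coming from \eqref{eq:dimensional-reduction}, with the map
\[
\H^*\big(\wM_\gamma,(i_\gamma)_*(i_\gamma)^!(\tilde p_\gamma)_*\varphi_{\widetilde\calM_\gamma}\big)\to\H^*\big(\wM_\gamma,(\tilde p_\gamma)_*\varphi_{\widetilde\calM_\gamma}\big)
\]
induced by the counit $(i_\gamma)_*(i_\gamma)^!\to\id$. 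The counit is natural, so it is compatible with the isomorphism \eqref{eq:rel-cohint-X}. It therefore suffices to compute the counit summand by summand on the left-hand side of \eqref{eq:rel-cohint-X}.

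Consider a summand indexed by $\uugamma=(\gamma_1,\dots,\gamma_n)$, before taking $\mathrm{S}_n$-invariants and ignoring the harmless tensor factors $\H^*(\BGM,\Q)_\vir$. By \eqref{eq:support-lemma} it equals
\[
\oplus_*\Big(\bigboxtimes_j (i_{\gamma_j})_!(\mathcal{IC}_{\A^1}\boxtimes\cBPS_{M_{\gamma_j}})\Big),
\]
which is supported on the image of $(\A^1)^n\times M_{\uugamma}$. Its fibre product with $i_\gamma(\A^1\times M_\gamma)$ over $\wM_\gamma$ is, on reduced spaces, $\Delta(\A^1)\times M_{\uugamma}$. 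This uses that $\oplus$ is finite and $\G_m$-equivariant, exactly as in the proof of Proposition~\ref{prop:nilp-equiv-def}; the eigenvalue decomposition forces all the $\A^1$-coordinates to coincide. By base change along the finite map $\oplus$ and Künneth, the counit on this summand becomes the tensor product of $\H^*(M_\uugamma,\boxtimes_j\cBPS_{M_{\gamma_j}})$ with the restriction-with-supports map
\[
\H^*_{\Delta}\big(\A^n,\Q_{\A^n}[n]\big)\to\H^*\big(\A^n,\Q_{\A^n}[n]\big).
\]
The source is one-dimensional in degree $2(n-1)-n$, and the target is one-dimensional in degree $-n$. For $n\ge2$ these degrees differ, so the map is zero. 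For $n=1$ it is the identity, since $i_\gamma^!(i_\gamma)_!=\id$ for the closed immersion $i_\gamma$.

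Hence the image is exactly the $n=1$ part of $\Sym_\oplus$ after global sections. Since taking $\mathrm{S}_n$-invariants is exact, this image is $\H^*_\BPS(\wM_\gamma,\Q)\otimes\H^*(\BGM,\Q)_\vir$. Undoing the twist $\L^{\frac12\vdim\calM_\gamma}$ from \eqref{eq:absolute-integrality} gives the claimed identification.

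I expect the main obstacle to be the support computation for $n\ge2$. One must justify, without reducedness issues, that $i_\gamma^!$ of the $\oplus$-pushforward is the $!$-restriction to the small diagonal $\A^1\subset(\A^1)^n$, compatibly with the $\mathrm{S}_n$-action and the Künneth decomposition. I would first do this on reduced spaces, which suffices for constructible sheaves, using that $\oplus$ restricted to $(\A^1)^n\times M_\uugamma$ is finite and that off the diagonal the eigenvalue decomposition separates the summands. A secondary point is that \eqref{eq:rel-cohint-X} is an isomorphism built from the CoHA multiplication; the counit is compared with it only through naturality, so no multiplicativity of the counit is required.
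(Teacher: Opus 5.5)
Your proposal is correct and follows essentially the same route as the paper's proof: use naturality of the counit $(i_\gamma)_!(i_\gamma)^!\to 1$ with respect to \eqref{eq:rel-cohint-X}; base change along the Cartesian square over $\A^1\times M_{\uugamma}\to\wM_{\uugamma}$ to reduce each summand to the counit for the small diagonal $\Delta\colon\A^1\to\A^\ell$; then show this vanishes on cohomology for $\ell>1$ and is an isomorphism for $\ell=1$. The only cosmetic difference is that you kill the diagonal map by a degree count on $\H^*_{\Delta}(\A^n,\Q_{\A^n}[n])\to\H^*(\A^n,\Q_{\A^n}[n])$, whereas the paper (Lemma~\ref{claim}) phrases the same fact as the vanishing of the Euler class of the trivial normal bundle of the diagonal.
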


\begin{proof}
By Corollary \ref{cor:absolute-integrality} and Proposition \ref{prop:nilp-cohomological-integrality}, for each 
tuple $\uugamma = (\gamma_1, \ldots, \gamma_\ell)$ it is enough to show that the map
\begin{align}\label{map00}
\bigboxtimes_{i} \left( \H^*_{\mathrm{BPS}}(\wM_{\gamma_{i}},\Q) \otimes \H^*(\BGM,\Q)_{\vir} \otimes 
\mathbb{L} \right) \otimes \mathbb{L}^{-1}
\to \bigboxtimes_{i} \left( \H^*_{\mathrm{BPS}}(\wM_{\gamma_{i}},\Q) \otimes \H^*(\BGM,\Q)_{\vir} \right)
\end{align}
is an isomorphism for $\ell = 1$ and $0$ for $\ell > 1$. As the former statement is obvious, we focus on the latter. 
Let $\gamma=\gamma_1+\gamma_2+\cdots+\gamma_\ell$ and
$i_\uugamma=i_{\gamma_1}\times i_{\gamma_2}\times\cdots\times i_{\gamma_\ell}$.
To unburden the notation, we abbreviate
$$
\calB_{M}=\cBPS_{M} \otimes \H^*(\BGM,\Q)_{\vir}
,\quad
\calB_{\widetilde{M}}=\cBPS_{\widetilde{M}} \otimes \H^*(\BGM,\Q)_{\vir}$$
and
$$
\calB_{{M}_\uugamma}= \calB_{{M}_{\gamma_1}}\boxtimes\calB_{{M}_{\gamma_2}} \boxtimes \cdots\boxtimes \calB_{{M}_{\gamma_\ell}}
,\quad
\calB_{\widetilde{M}_\uugamma}= \calB_{\widetilde{M}_{\gamma_1}}\boxtimes\calB_{\widetilde{M}_{\gamma_2}} \boxtimes \cdots\boxtimes \calB_{\widetilde{M}_{\gamma_\ell}}.$$
Let $\Delta\colon\A^1\to\A^\ell$ be the diagonal.
A base change along the Cartesian diagram
\begin{align*}
\begin{split}
\xymatrix{\A^1\times M_{\uugamma} \ar[rr]^-{i_\uugamma \circ( \Delta\times\id)} \ar[d]_-{\id\times\oplus} && 
\widetilde{M}_{\uugamma}  \ar[d]^-{\oplus}\\ 
\A^1\times M_{\gamma} \ar[rr]^-{i_\gamma} && \widetilde{M}_{\gamma}}			
\end{split}
\end{align*} 
yields the isomorphisms
\begin{equation}\label{eq:adjunction lemma1}
\begin{split}
(i_\gamma)_!(i_\gamma)^!\oplus_*\calB_{\widetilde{M}_{\uugamma}}
&\cong(i_\gamma)_!(\id\times\oplus)_*(\Delta\times\id)^!(i_\uugamma)^!
\calB_{\widetilde{M}_{\uugamma}}\\
&\cong\oplus_*(i_\uugamma)_!(\Delta\times\id)_!(\Delta\times\id)^!(i_\uugamma)^!
\calB_{\widetilde{M}_{\uugamma}}.
\end{split}	
\end{equation}
Using the isomorphism \eqref{eq:support-lemma} we also get
\begin{equation}\label{eq:adjunction lemma2}
\begin{split}
\oplus_*\calB_{\widetilde{M}_{\uugamma}}
&\cong\oplus_*(i_\uugamma)_!(\mathcal{IC}_{\A^\ell}  
\boxtimes\calB_{{M_{\uugamma}}})\\
&\cong\oplus_*(i_\uugamma)_!(i_\uugamma)^!(i_\uugamma)_!(\mathcal{IC}_{\A^\ell} 
\boxtimes\calB_{{M_{\uugamma}}})\\
&\cong\oplus_*(i_\uugamma)_!(i_\uugamma)^!\calB_{\widetilde{M}_{\uugamma}}.
\end{split}	
\end{equation}
Under the isomorphisms \eqref{eq:adjunction lemma1} and \eqref{eq:adjunction lemma2}, the morphisms
\begin{align}\label{map11}
(i_\gamma)_!(i_\gamma)^!\oplus_*\calB_{\widetilde{M}_{\uugamma}}
\to
\oplus_*\calB_{\widetilde{M}_{\uugamma}}
\end{align}
given by the counits $(i_\gamma)_!(i_\gamma)^!\to 1$
and
$(i_\uugamma)_!(\Delta\times\id)_!(\Delta\times\id)^! (i_\uugamma)^! \to 1$
are identified. 
If $\ell>1$ this last morphism vanishes in cohomology  by the lemma below. 
Further, taking the cohomology, the map \eqref{map11} gives the map \eqref{map00}.
The corollary follows.

\end{proof}
\begin{lemma}\label{claim}
Let $X$ be a scheme, $\mathcal{F}$ a constructible sheaf on $X$ and $\ell > 1$.
The counit $(\Delta\times\id)_!(\Delta\times\id)^! \to 1$ yields a morphism
$\psi\colon(\Delta \times \id)_!  (\Delta\times \id)^! (\mathcal{IC}_{\A^\ell} \boxtimes \mathcal{F}) \to 
\mathcal{IC}_{\A^\ell} \boxtimes \mathcal{F}$.
Applying the pushforward by the projection $\pi_X\colon \A^\ell\times X\to X$ to this map yields the 0 map.
\end{lemma}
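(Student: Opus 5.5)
The plan is to reduce to a statement about $\A^\ell$ alone and then conclude by a degree count. Since $\mathcal{IC}_{\A^\ell}\cong\Q_{\A^\ell}[\ell]$ and all the functors involved commute with external products over the product decomposition $\A^\ell\times X$, the morphism $\psi$ should factor as $\psi_0\boxtimes\id_{\calF}$. Here
\[
\psi_0\colon \Delta_!\Delta^!\Q_{\A^\ell}[\ell]\to\Q_{\A^\ell}[\ell]
\]
is the counit on $\A^\ell$.

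\emph{Step 1: splitting off $\calF$.} For the product morphism $\Delta\times\id$ there are canonical isomorphisms
\[
(\Delta\times\id)^!(\mathcal{G}\boxtimes\calF)\cong \Delta^!\mathcal{G}\boxtimes\calF,
\qquad
(\Delta\times\id)_!(\mathcal{G}'\boxtimes\calF)\cong \Delta_!\mathcal{G}'\boxtimes\calF .
\]
The first holds because $\id^!=\id$ and $!$-pullback is compatible with external products. Under these isomorphisms the counit of $(\Delta\times\id)_!\dashv(\Delta\times\id)^!$ is identified with $(\text{counit of }\Delta_!\dashv\Delta^!)\boxtimes\id_{\calF}$. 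I will check this compatibility of counits directly from the construction of the external product isomorphisms; this is the only point needing care, and it is formal.

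\emph{Step 2: pushing forward.} By the Künneth formula for the projection $\pi_X=p\times\id_X$, where $p\colon\A^\ell\to\bullet$, we get $(\pi_X)_*(\psi_0\boxtimes\id_{\calF})\cong p_*\psi_0\otimes\id_{\calF}$. So it suffices to show that $p_*\psi_0=0$.

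\emph{Step 3: the degree count.} Since $\Delta$ is a closed immersion of smooth varieties of codimension $\ell-1$, we have $\Delta^!\Q_{\A^\ell}\cong\Q_{\A^1}[-2(\ell-1)]$ (up to a Tate twist). Using the contractibility of $\A^1$ and $\A^\ell$, the map $p_*\psi_0$ becomes a morphism
\[
\Q[2-\ell]\longrightarrow\Q[\ell]
\]
in $\Dcb(\bullet)$, namely the Gysin pushforward $\H^*(\A^1)\to\H^{*+2\ell-2}(\A^\ell)$. Such a morphism is an element of $\Hom(\Q,\Q[2\ell-2])=\H^{2\ell-2}(\bullet,\Q)$. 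This group vanishes because $2\ell-2>0$ when $\ell>1$. The same argument works in mixed Hodge modules, since the degree shift is the same.

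The only real obstacle is the bookkeeping in Step 1, that is, identifying the counit on the product with the external product of the counits. Once that is done, the vanishing is forced purely by cohomological degree.
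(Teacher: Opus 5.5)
Your proof is correct, and it detects the vanishing differently from the paper.

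The paper works directly on $\A^\ell\times X$. Using the homotopy equivalences $\A^\ell\times X\simeq X\simeq\Delta\times X$, it identifies $(\pi_X)_*\psi$ with cup product by the Euler class $e(N)\in\H^{2\ell-2}$ of the normal bundle of $\Delta\times X\subset\A^\ell\times X$ (the self-intersection formula). It then concludes because $N$ is trivial.

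You instead use the product structure to split off $\calF$, via Künneth/base change for $(\Delta\times\id)^!$, $(\Delta\times\id)_!$ and $(p\times\id_X)_*$. This reduces everything to the single morphism $p_*\psi_0\colon\Q[2-\ell]\to\Q[\ell]$ over a point, which vanishes because $\Hom(\Q,\Q[2\ell-2])=0$.

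What each route buys:
\begin{itemize}
\item Your route avoids the self-intersection formula and needs only standard Künneth compatibilities.
\item The counit bookkeeping in your Step 1 is tautological. Define $(\Delta\times\id)^!(\mathcal{G}\boxtimes\calF)\cong\Delta^!\mathcal{G}\boxtimes\calF$ as the adjoint of $(\Delta\times\id)_!(\Delta^!\mathcal{G}\boxtimes\calF)\cong\Delta_!\Delta^!\mathcal{G}\boxtimes\calF\xrightarrow{\epsilon\boxtimes\id}\mathcal{G}\boxtimes\calF$; the only content is that this map is an isomorphism.
\item The splitting is genuinely needed in your approach. Without it, the degree count would only place $(\pi_X)_*\psi$ in $\Ext^{2\ell-2}(\calF,\calF)$, which need not vanish.
\item The paper's argument is shorter and names the geometric reason. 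Your degree count is really the same fact in disguise, namely $e(N_{\A^1/\A^\ell})\in\H^{2\ell-2}(\A^1,\Q)=0$.
\end{itemize}

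One small correction. Your remark that the argument carries over to mixed Hodge modules ``since the degree shift is the same'' is not a valid justification on its own, because $\Ext^1$ between mixed Hodge structures can be nonzero. It does work, but for a different reason: $\Ext^n$ of mixed Hodge structures vanishes for $n\geq 2$, and here $2\ell-2\geq 2$. In any case this is not needed for the lemma as stated.
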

\begin{proof}
The morphism $(\pi_X)_*\psi$ coincides with the action of $e(N)\in \H^{2\ell-2}(\A^{\ell}\times X,\mathbb{Q})\cong \H^{2\ell-2}(X,\mathbb{Q})\cong \H^{2\ell-2}(\Delta\times X,\mathbb{Q})$ on $\pi_*(\mathcal{IC}_{\A^\ell} \boxtimes \mathcal{F})$, where $e(N)$ is the Euler class of the normal bundle of $\Delta\times X\subset \A^{\ell} \times X$, and the isomorphisms are provided by the obvious homotopy equivalences.  Since the normal bundle is trivial, $e(N)=0$.
\end{proof}

\begin{remark}\label{rem:snilp-BPS}
The proof above implies that the morphism
$$\kappa_*(i_\gamma)_!(i_\gamma)^!(\tilde{p}_\gamma)_*\varphi_{\widetilde{\calM}_{\gamma}} 
\to \kappa_*(\tilde{p}_\gamma)_*\varphi_{\widetilde{\calM}_{\gamma}}$$ 
given by the counit $(i_\gamma)_!(i_\gamma)^!\to 1$
is identified with the composition of the chain of maps
$$\kappa_*(i_\gamma)_!(i_\gamma)^!(\tilde{p}_\gamma)_*\varphi_{\widetilde{\calM}_{\gamma}} 
\to\kappa_*(i_\gamma)_!(i_\gamma)^!(\cBPS_{\widetilde{M}_\gamma} \otimes \H^*(\BGM,\Q)_{\vir})
\cong\kappa_*\cBPS_{\widetilde{M}_\gamma} \otimes \H^*(\BGM,\Q)_{\vir}
\to\kappa_*(\tilde{p}_\gamma)_* \varphi_{\widetilde\calM_{\gamma}}.$$
The first map is the composition of the isomorphism
\eqref{eq:rel-cohint-X} and the obvious projection, the second map is the counit, which is an isomorphism
by \eqref{eq:adjunction lemma2}, 
and the last map is the inclusion of a direct summand, by \eqref{eq:rel-cohint-X}.
\end{remark}

\bigskip

\section{The BPS sheaf of the stack of coherent sheaves on a CY surface}\label{sec:BPSsurface}
\label{BPS-surface}
Fix a smooth connected projective CY surface $S$, let
$\calC = \mathsf{Coh}(S)$ be the stable $\infty$-category of coherent sheaves on $S$ and
let $\calM_S \subset \calM_{\calC}$ be the open derived $\infty$-substack parametrizing 
coherent sheaves in the heart of the standard t-structure.
Let $X$ be the CY 3-fold given by $X=\A^1\times S$.
Let $\mathsf{Coh}(X)$ be the stable $\infty$-category of coherent sheaves on $X$ and
$\calM_X$ the derived $\infty$-stack of compactly supported coherent sheaves in the heart of the standard t-structure.
Let $\Coh^{\leqslant d}(S)$ and $\Coh^{\leqslant d}(X)$ be the categories of sheaves of dimension $\leqslant d$.
Let $\calM_S^{\leqslant d}$ and $\calM_X^{\leqslant d}$ be the corresponding derived $\infty$-stacks.
We will apply the constructions from \S \ref{sec:2CY CoHAs} with 
$\calM=\calM_S$ and
$\widetilde\calM=\calM_X.$

\medskip

\subsection{The numerical invariants of $S$} 
We first recall the basic cohomological invariants of the $\infty$-category $\calC$. 
We refer to \cite{K3Global} and \cite{HL} for details.
Let $\cdot$ denote the intersection pairing on $\H^*(S,\Q)$.
Let 
$$\NS(S)\subset\H^2(S,\Z)$$ be the N\' eron--Severi group and $\NS(S)^\mathrm{eff}$ the cone of effective classes. 
Let $\K_0(S)$ be the Grothendieck group of coherent sheaves on $S$ and let
$\K_0(S)^+$ be the submonoid of the classes of coherent sheaves. 
Via the Chern character, we identify 
$$\K_0(S) \otimes \Q\cong
\H^0(S,\Q) \oplus (\NS(S)\otimes\Q) \oplus \H^4(S,\Q)\subset\H^{\mathrm{ev}}(S,\Q).$$ 
Let $\rk \colon \K_0(S) \otimes \Q \to \Q$ denote the generic rank.
There is a canonical morphism  
$$\Pic(S) \to \NS(S).$$
By the Hodge index theorem, the restriction of the 
intersection form to 
$\NS(S) \otimes\R$
has signature $(1,\rho(S)-1)$ where $\rho(S)$ is the Picard 
number of $S$. A class $H\in \NS(S)\otimes \R$ is ample if and only if 
$$H^2>0, \quad H \cdot C>0,\quad \forall\, C \in \NS(S)^{\mathrm{eff}}\smallsetminus\{0\}.$$ 
In that case, the restriction of the intersection pairing to $\Q H^\perp$ is negative definite.
The class of the structure sheaf of a point of $S$ is denoted $\delta\in \H^4(S,\Q)$. 
Let $t,-t$ be the Chern roots of $S$. The Chern classes of $S$ are
$c_1=0$, $c_2=-t^2$. The Todd class is 
\begin{equation}\label{E:ToddclassS}
\Td_S=1-\frac{t^2}{12}=1+\chi(\calO_S)\delta.
\end{equation}
The last equality comes from the Riemann--Roch theorem. We have $\Td_S=1+a\delta$ where
$a=2$ for a K3 surface and $a=0$ for an Abelian surface. 
The Euler form $$\langle \calF,\calF'\rangle=\dim\Hom(\calF,\calF')-\dim\Ext^1(\calF,\calF')+\dim\Ext^2(\calF,\calF')$$
descends to the Grothendieck group $\K_0(S)$.
If $\alpha=\sum_i \alpha^{(i)}$ with $\alpha^{(i)} \in \H^{2i}(S,\Q)$ then we set $\alpha^\vee=\sum_i (-1)^i \alpha^{(i)}$. 
We define the Mukai pairing
$$\langle \alpha,\beta \rangle=\int_S \alpha^\vee \cup \beta.$$
This pairing is symmetric and even.
The Mukai vector of a coherent sheaf $\calF$ is the class $\nu(\calF)\in \H^{\mathrm{ev}}(S,\Z)$ defined as 
\begin{align}\nu(\calF)=\ch(\calF) \cup \Td(S)^{\frac12}.
\end{align}
By Riemann--Roch 
\[
\chi(\calF,\calF')=\int_S \ch(\calF)^{\vee}\cup \ch(\calF')\cup\mathrm{Td}_S=\langle \nu(\calF),\nu(\calF')\rangle.
\]
Note that for any $\alpha$ we have
\begin{equation}\label{E:eulerdeltaO}
\langle \alpha,\delta \rangle=\langle \delta, \alpha \rangle=\rk(\alpha).
\end{equation}
Given a Mukai vector $\nu$ we denote by $\nu^{(i)}$ the projection of $\nu$ to $\H^{2i}(S,\Z)$.  If a Mukai vector $\nu$ factors through the extension by zero $\H^{\geqslant 4-2n}(S,\Z)\hookrightarrow \H^{\mathrm{ev}}(S,\Z)$ we say it is $n$-dimensional.
The group $\H^{\mathrm{ev}}(S,\Z)$ inherits a bilinear pairing from the symmetric bilinear form $(-,-)$ on $\H^{\mathrm{ev}}(S,\Q)$ given by
$$ (\nu,\nu')=  \nu^{(0)} \cdot \nu'^{(2)} -\nu^{(1)} \cdot \nu'^{(1)} + \nu^{(2)} \cdot \nu'^{(0)}.$$
Equipped with this pairing, this group is called the Mukai lattice.
An element $\nu$ of the Mukai lattice is called primitive if it is indivisible. 
We may identify the Mukai vector $\nu(\calF)$ with the triple
$$\big( \ch_2(\calF)+\frac{a}{2}\rk(\calF)\delta\,,\,c_1(\calF)\,,\,\rk(\calF)\big)=\big( \chi(\calF)\delta-\frac{a}{2}\rk(\calF)\delta\,,\,c_1(\calF)\,,\,\rk(\calF)\big)$$
where $\chi(\calF)$ is the Euler characteristic of $\calF$.
Note that if $\calF$ has dimension $\leqslant 1$ then 
$\rk(\calF)=0$, the Chern class $c_1(\calF)=\alpha$ is the class of the curve supporting $\calF$, and $\ch_2(\calF)=\chi(\calF)\,\delta$. 
In particular we have
$$\nu(\calF)=\alpha+\chi(\calF)\,\delta= \big(\chi(\calF)\,\delta\,,\,\alpha\,,\,0\big).$$
Let $\calM_S$ and $\calM_X$ be
the derived moduli stack of coherent sheaves on $S$ and coherent sheaves on $X$ with proper support, respectively.
Let $\calM_S(\nu)\subset\calM_S$ be
the substack of coherent sheaves with Mukai vector $\nu$.
Let  
\begin{align}\label{kappa}\kappa\colon\calM_X\to\calM_S\end{align}
be the pushforward by the projection $\varkappa\colon X\to S$.
We set $\calM_X(\nu)=\kappa^{-1}(\calM_S(\nu))$.
Since $\Td(\A^1)=1$, for any properly supported coherent sheaf $\calF$ on $X$
we have $\varkappa_*\ch(\calF)=\ch(\kappa_*\calF)$.
Hence $\calM_X(\nu)$ is the derived moduli stack of coherent sheaves on $X$
with proper support satisfying $\varkappa_*\ch(\calF)\mathrm{Td}_S^{1/2}=\nu$.

\medskip

\subsection{The stack of coherent sheaves on $S$}

The virtual dimension of $\calM_S$ is a locally constant function, with (even) value on $\calM_S(\nu)$ given by
\begin{align}\label{dimM}
\vdim \calM_S(\nu)=-(\nu,\nu)=(\nu^{(1)})^2-2\nu^{(0)}\cdot\nu^{(2)}.\end{align}

\subsubsection{The dimension filtration}
Any coherent sheaf $\calF$ has a canonical support filtration
\begin{equation}\label{eq:support filtration}
	\calF^0 \subseteq \calF^{\leqslant 1} \subseteq \calF^{\leqslant 2} =\calF
\end{equation}
such that $\calF^0$ is zero-dimensional and $\calF^d=\calF^{\leqslant d}/\calF^{\leqslant d-1}$ is pure of dimension $d$ for $d>0$.
Set $\calF^{\geqslant d}= \calF/\calF^{\leqslant d-1}$.

\begin{lemma}\label{lem:support} Let $f\colon \calF \to \calG$ be a morphism in $\Coh(S)$ and $d=0,1,2.$ 
	\hfill
	\begin{enumerate}[label=$\mathrm{(\alph*)}$,leftmargin=8mm,itemsep=2mm]
		\item
		The map $f$ restricts to a map $f^{\leqslant d}\colon\calF^{\leqslant d} \to \calG^{\leqslant d}$ and
		induces maps
		$f^{\geqslant d}\colon \calF^{\geqslant d} \to \calG^{\geqslant d}$
		and
		$f^d\colon \calF^d \to \calG^d$.
		\item If $f$ is injective then so is $f^{\leqslant d}$.
		If $f$ is  surjective then so is  $f^{\geqslant d}$.
		\item If $f$ is injective then so is $f^d$. If $f$ is surjective then so is $f^2$.
	\end{enumerate}
\end{lemma}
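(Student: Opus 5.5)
The plan is to reduce everything to one characterization of the support filtration. For $d\in\{0,1,2\}$, $\calF^{\leqslant d}$ is the \emph{maximal} subsheaf of $\calF$ of dimension $\leqslant d$. This follows from existence and uniqueness of \eqref{eq:support filtration}: the sum of two subsheaves of dimension $\leqslant d$ again has dimension $\leqslant d$. Combined with the elementary fact that $\dim\supp$ does not increase under quotients or subsheaves, this should give all three parts.

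For (a), I would argue as follows. The image $f(\calF^{\leqslant d})$ is a quotient of $\calF^{\leqslant d}$, so its support lies in $\supp \calF^{\leqslant d}$ and it has dimension $\leqslant d$. By maximality, $f(\calF^{\leqslant d})\subseteq \calG^{\leqslant d}$, which defines $f^{\leqslant d}$. Applying the same reasoning with $d-1$ shows that $f$ sends $\calF^{\leqslant d-1}$ into $\calG^{\leqslant d-1}$ (for $d=0$ this is vacuous). Hence $f$ descends to the quotients $f^{\geqslant d}\colon \calF/\calF^{\leqslant d-1}\to \calG/\calG^{\leqslant d-1}$, and $f^{\leqslant d}$ descends to the subquotients $f^d$.

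For (b), injectivity of $f^{\leqslant d}$ is immediate, since it is a restriction of an injective map. For surjectivity, the composite $\calF\to\calG\to\calG^{\geqslant d}$ is surjective and factors through $\calF\to\calF^{\geqslant d}$ as $f^{\geqslant d}$. So $f^{\geqslant d}$ is surjective.

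For (c), take $f$ injective. The kernel of $\calF^{\leqslant d}\to \calG^{\leqslant d}\to\calG^d$ is $\calF^{\leqslant d}\cap f^{-1}(\calG^{\leqslant d-1})$. Since $f$ is injective, this sheaf is isomorphic to a subsheaf of $\calG^{\leqslant d-1}$, so it has dimension $\leqslant d-1$. By maximality it is contained in $\calF^{\leqslant d-1}$, and conversely it contains $\calF^{\leqslant d-1}$ by (a). Hence the kernel equals $\calF^{\leqslant d-1}$ and $f^d$ is injective. The surjectivity claim for $d=2$ follows from $\calF^2=\calF/\calF^{\leqslant 1}=\calF^{\geqslant 2}$, and likewise for $\calG$, so $f^2=f^{\geqslant 2}$ and (b) applies.

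The only point needing care is the maximality characterization of $\calF^{\leqslant d}$. I would justify it by noting that for any subsheaf $\calE$ of dimension $\leqslant d$, the image of $\calE$ in $\calF/\calF^{\leqslant d}$ is a subsheaf of dimension $\leqslant d$. But by the purity of the graded pieces $\calF^{d'}$ for $d'>d$, the quotient $\calF/\calF^{\leqslant d}$ has no nonzero subsheaves of dimension $\leqslant d$. So the image vanishes, i.e.\ $\calE\subseteq\calF^{\leqslant d}$. I do not expect any genuine obstacle beyond this.
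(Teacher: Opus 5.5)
Your proof is correct and follows essentially the same route as the paper. The paper calls (a) and (b) obvious. For the injectivity in (c), it lifts a local section of $\ker(f^d)$ to $\calF^{\leqslant d}$; its image lies in $\calG^{\leqslant d-1}$, so by injectivity of $f$ it has support of dimension $\leqslant d-1$, which forces it to vanish in the pure sheaf $\calF^d$. Your identification of the kernel of $\calF^{\leqslant d}\to\calG^d$ with $\calF^{\leqslant d}\cap f^{-1}(\calG^{\leqslant d-1})$, followed by the appeal to maximality of $\calF^{\leqslant d-1}$, is the same mechanism phrased with subsheaves instead of local sections. Your deduction of surjectivity of $f^2$ from $f^2=f^{\geqslant 2}$ matches the paper exactly.
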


\begin{proof} 
Parts (a) and (b) are well-known and obvious. We deal with (c). The first statement is clear if $d=0$, so we assume $d\geq 1$.  Assume that $f$ is 
injective and let $\calK=\mathrm{ker}(f^d)$.  It is a pure $d$-dimensional sheaf, since it is a subsheaf of a pure $d$-dimensional sheaf.  
Let $s\in \calK(U)$ be a non-zero section of an affine open $U$.  Shrinking $U$ if necessary, we can lift $s$ to a section of $\calF^{\leqslant d}(U)$, 
inducing a section $s'\in\calG^{\leqslant d}(U)$, which by supposition lifts to a section of $\calG^{\leqslant d-1}(U)$.  
Thus $s'$ (and so $s$, by injectivity of $f^{\leqslant d}$) is zero on an open dense subspace of its support, and is zero, a contradiction.
The second claim in (c) is a direct application of (b) since $f^{\geqslant 2}=f^2$.
\end{proof}

Fix Mukai vectors 
$\nu(\calF^2)=\nu_2,$ $\nu(\calF^1)=\nu_1$ and $\nu(\calF^0)=\nu_0$ of dimensions $2,1,0$ respectively.  We define $\calM_S(\nu_2,\nu_1,\nu_0)$ to be the open substack
of $\Filt_{\nu_2,\nu_1,\nu_0}$ given by the following Cartesian square
$$\xymatrix{
\calM_S(\nu_2,\nu_1,\nu_0)\ar[r]\ar[d]&\ar[d]\Filt_{\nu_2,\nu_1,\nu_0}\\
 \calM_S(\nu_2,0,0) \times \calM_S(0,\nu_1,0) \times \calM_S(\nu^0)\ar[r]& \calM_S(\nu_2) \times \calM_S(\nu_1) \times \calM_S(\nu_0)
}
$$
where the substacks $$\calM_S(0,\nu_1,0)\subset\calM_S(\nu_1)
,\quad
\calM_S(\nu_2,0,0)\subset\calM_S(\nu_2)$$ 
are the open substacks parametrizing pure sheaves.
In particular $\calM_S(\nu_2,\nu_1,\nu_0)$ has a canonical derived structure which is a
quasi-smooth locally closed substack of $\calM_S(\nu)$ where $\nu=\nu_2+\nu_1+\nu_0$.
Thus these stacks admit canonical derived enhancements which are quasi-smooth. 
We have an obvious map
$$\pi\colon \calM_S(\nu_2,\nu_1,\nu_0) \to \calM_S(\nu_2,0,0) \times \calM_S(0,\nu_1,0) \times \calM_S(\nu_0) \quad
\calF\mapsto (\calF^2,\calF^1,\calF^0).$$
It is a quasi-smooth morphism. 

\begin{lemma}\label{lem:affine fibration pure types}
\hfill
\begin{enumerate}[label=$\mathrm{(\alph*)}$,leftmargin=8mm,itemsep=2mm]
\item
The map $\pi$ is an iterated vector bundle stack.
\item 
Up to some cohomological shift, the pullback by the map $\pi$ gives an isomorphism
$$\H_*^\BM(\calM_S(\nu_2,0,0),\Q) \otimes \H_*^\BM(\calM_S(0,\nu_1,0),\Q) \otimes \H_*^\BM(\calM_S(\nu_0),\Q) 
\cong \H_*^\BM(\calM_S(\nu_2,\nu_1,\nu_0),\Q).$$
\end{enumerate}
\end{lemma}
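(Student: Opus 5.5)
The plan is to factor $\pi$ as a composite of two maps, each of which is a vector bundle stack over its target, by using the uniqueness of the support filtration \eqref{eq:support filtration}. Write $\pi$ as the composite
$$\calM_S(\nu_2,\nu_1,\nu_0)\xrightarrow{\pi_1}\calM_S(\nu_2,0,0)\times\calM_S(0,\nu_1,\nu_0)\xrightarrow{\id\times\pi_2}\calM_S(\nu_2,0,0)\times\calM_S(0,\nu_1,0)\times\calM_S(\nu_0).$$
Here $\pi_1$ sends $\calF$ to $(\calF^{\geqslant 2},\calF^{\leqslant 1})$ and $\pi_2$ sends $\calG$ to $(\calG^{1},\calG^{0})$.

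By Lemma~\ref{lem:support}, the filtration is functorial and canonical. Hence a point of $\calM_S(\nu_2,\nu_1,\nu_0)$, viewed inside $\Filt_{\nu_2,\nu_1,\nu_0}$, is the same as an iterated extension whose graded pieces have the prescribed purity types. The filtration is determined by $\calF$, so the open substack of $\Filt$ is identified with a locally closed substack of $\calM_S(\nu)$. Consequently the fibre of $\pi_1$ over $(\calE,\calG)$, with $\calE$ pure $2$-dimensional and $\calG$ of dimension $\leqslant1$, is the stack of extensions $0\to\calG\to\calF\to\calE\to0$. This is the total space of the complex $\mathrm{RHom}(\calE,\calG)[1]$, truncated to degrees $\geqslant0$ as in the standard description of $\Filt$ via \eqref{attractor1}.

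The key step is the vanishing of $\Hom(\calE,\calG)$ for $\calE$ pure $2$-dimensional and $\calG$ of dimension $\leqslant 1$: the image of such a map is a quotient of $\calE$ supported in dimension $\leqslant 1$, hence zero since $\calE$ is torsion free. Serre duality with $K_S\cong\calO_S$ gives $\Ext^2(\calE,\calG)\cong\Hom(\calG,\calE)^\vee$, and this vanishes because $\calE$ is pure of dimension $2$ and so has no subsheaves of dimension $\leqslant 1$. Thus $\mathrm{RHom}(\calE,\calG)\simeq\Ext^1(\calE,\calG)[-1]$. By upper semicontinuity together with the constancy of $\chi=\langle\nu(\calE),\nu(\calG)\rangle$, the dimension of $\Ext^1$ is locally constant, so $\pi_1$ is a vector bundle stack (in fact a vector bundle, with no stacky $\Hom$-part) of rank $-\langle\nu_2,\nu_1+\nu_0\rangle$.

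The same argument applies to $\pi_2$ with $\calE$ pure $1$-dimensional and $\calG$ $0$-dimensional. Here $\Hom(\calE,\calG)$ need not vanish. However, $\Ext^2(\calE,\calG)\cong\Hom(\calG,\calE)^\vee=0$ by purity, so $\mathrm{RHom}(\calE,\calG)[1]$ has amplitude $[-1,0]$ with locally constant Euler characteristic. Its total space is therefore a vector bundle stack $[\Ext^1/\Hom]$ in the sense of a two-term complex of vector bundles, locally a quotient of a vector bundle by a vector bundle acting additively. I expect the main technical point to be justifying this locally in families, that is, showing that $\tau^{\geqslant0}$ of the pushforward of $\mathrm{RHom}$ is perfect of amplitude $[-1,0]$ on the base. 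This follows from base change together with the fibrewise vanishing of $\Ext^2$.

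For (b), a vector bundle stack $\calV\to\calB$ of rank $r$ induces an isomorphism $\H^\BM_*(\calB)\cong\H^\BM_{*+2r}(\calV)$ via smooth pullback. Locally, $\calV$ is the quotient of an affine bundle by a vector group, so $\Q_\calV$ is pulled back from the base up to shift, and the homotopy invariance isomorphism $p^!\cong p^*[2r]$ applies. Composing the two pullbacks for $\pi_1$ and $\id\times\pi_2$, and then applying the Künneth isomorphism for Borel--Moore homology of products (taken in the pro/topological sense, Proposition~\ref{prop-kunneth}, since the stacks are only locally of finite type), gives the stated isomorphism up to the shift $2\,\mathrm{rk}(\pi)$.
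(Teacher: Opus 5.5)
Your argument is the paper's in substance. Both factor $\pi$ through two projections from extension stacks, and both kill $\Ext^2$ using Serre duality ($K_S\cong\calO_S$) together with purity. The only structural difference is the order of the factorization:
\begin{itemize}
\item you split off the pure $2$-dimensional quotient first, through $\calM_S(\nu_2,0,0)\times\calM_S(0,\nu_1,\nu_0)$;
\item the paper splits off the $0$-dimensional subsheaf first, through $\calM_S(\nu_2,\nu_1,0)\times\calM_S(\nu_0)$, with fibres $R\Hom(\calE^{\geqslant 1},\calE^0)[1]$.
\end{itemize}
Both are legitimate, since a three-step filtration stack is a fibre product of two-step ones in either order.

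There is, however, a false step in your treatment of $\pi_1$: $\Hom(\calE,\calG)$ does \emph{not} vanish for $\calE$ torsion free and $\calG$ of dimension $\leqslant 1$. Torsion-freeness excludes torsion subsheaves of $\calE$, not torsion quotients; for instance $\Hom(\calO_S,\calO_x)=\C$ for a closed point $x$. Consequently:
\begin{itemize}
\item $\pi_1$ is not a vector bundle;
\item $\dim\Ext^1(\calE,\calG)$ is not locally constant;
\item you may not truncate away the degree $-1$ term of $R\Hom(\calE,\calG)[1]$, which is the additive stabilizer group $\Hom(\calE,\calG)$ of the extension.
\end{itemize}

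None of this is needed: treat $\pi_1$ exactly as you treat $\pi_2$. You correctly prove $\Ext^2(\calE,\calG)\cong\Hom(\calG,\calE)^\vee=0$. Together with base change, this gives a perfect complex of amplitude $[-1,0]$, hence a vector bundle stack of rank $\dim\Ext^1-\dim\Hom=-\langle\nu_2,\nu_1+\nu_0\rangle$. So your rank formula survives, as a virtual rank.

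Your part (b) is fine. Your remark that the K\"unneth step must be read in the completed (pro) sense is apt, since $\calM_S(\nu_2,0,0)$ and $\calM_S(0,\nu_1,0)$ need not be quasi-compact.
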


\begin{proof}
The map $\pi$ factors as the composition
\begin{equation}
\calM_S(\nu_2,\nu_1,\nu_0) \xrightarrow{\pi_1} \calM_S(\nu_2,\nu_1,0) \times \calM_S(\nu_0) 
\xrightarrow{\pi_2 \times \id}  \calM_S(\nu_2,0,0) \times \calM_S(0,\nu_1,0) \times \calM_S(\nu_0)
\end{equation}
where $\pi_1(\calF)=(\calF/\calF^0,\calF^0)$ and $\pi_2(\calF)=(\calF/\calF^1,\calF^1)$. 
The map $\pi_1$ is identified with the projection  
$$R\Hom_{S}(\calE^{\geqslant 1},\calE^0)[1]\to\calM_S(\nu_2,\nu_1,0) \times \calM_S(\nu_0)$$
to its base, where $\calE^{\geqslant 1}$ and $\calE^0$ are the tautological sheaves of $\calM_S(\nu_2,\nu_1,0)$ and $\calM_S(\nu_0)$. 
Since the $\infty$-category $\Coh(S)$ is 2CY, we have an isomorphism
$$\Ext^2_S(\calE^{\geqslant 1},\calE^0) \cong \Hom_S(\calE^0,\calE^{\geqslant 1})^\vee=\{0\}.$$ 
Hence the complex $R\Hom_S(\calE^{\geqslant 1},\calE^0)[1]$ is  concentrated in degrees $[-1,0]$ and $\pi_1$ is a vector bundle stack. 
The same argument shows that the same holds for the map $\pi_2$. Part (b) is a direct consequence of (a).
\end{proof}

In particular, for each class $\alpha\in \H_2(S,\Q)$ and each integer $n$, 
we have the functor $\calM_S(\alpha+n\delta)$ which parametrizes coherent sheaves with one-dimensional support and Mukai vector
$\nu= \alpha+n\,\delta$.
Then, we have
$$\calM_S(\nu)=\bigsqcup_{\nu_1+\nu_0=\alpha+n\delta}\calM_S(0,\nu_1,\nu_0).$$
We abbreviate
$$\calM_S(\alpha+\Z\delta)=\bigsqcup_{n\in\Z}\calM_S(\alpha+n\delta).$$
We will also consider the open substack $\calM_S^{\geqslant 1}$ of $\calM_S$ 
of sheaves with no zero-dimensional subsheaf. It is given by
\begin{equation}\label{eq:defMgeq1}
\calM_S^{\geqslant 1}=\bigcup_{\nu_1,\nu_2}\calM_S(\nu_2,\nu_1,0).
\end{equation}
We abbreviate
$\calM_S^{\geqslant 1}(\nu)=\calM_S^{\geqslant 1} \cap \calM_S(\nu).$

 \medskip

\subsubsection{The stacks of $\mu$-semistable and Gieseker semistable sheaves}

A numerical polynomial $P$ is a polynomial $P\in\Q[x]$ such that $P(\Z)\subset\Z$.
Write 
$$P=\sum_{k=0}^d\alpha_k(P) x^k/k!
,\quad
\alpha_k(P)\in\Q.$$
Fix a polarization of $S$, i.e., an ample rational divisor $H$.
Let $\omega \in  \H^2(S,\Q)$ be the class of $H$.
For now, assume that $H$ is integral.  The Hilbert polynomial of a compactly supported pure $d$-dimensional coherent sheaf $\calF$ 
is the numerical polynomial $P_H(\calF)$ given for each integer $m$ by
$$P_H(\calF)(m)=\chi(F\otimes\calO(mH))=\sum_{k=0}^d\alpha_k(\calF)\frac{m^k}{k!}.$$
Now assume, more generally, that $H=\frac{1}{n}H'$ with $H'$ integral.  Then we define $P_H(\calF)(m)=P_{H'}(\calF)(\frac{m}{n})$.  
When the rational divisor $H$ is clear we omit it and just write $P(\calF)(m)$ instead of $P_H(\calF)(m)$.
The leading term $\alpha_d(\calF)$ is positive. 
Since $S$ is a smooth surface with trivial canonical class, we get
$$\alpha_2(\calF)=\rk(\calF)\,\omega\cdot\omega,\quad\alpha_1(\calF)=c_1(\calF)\cdot\omega,\quad \alpha_0(\calF)=\chi(\calF).$$
The slope is defined by $\mu_H^d(\calF)=\alpha_{d-1}(\calF)/\alpha_d(\calF)$ if $d>0$, and
$\mu_H^0(\calF)=l(\calF)$ (=the length) otherwise.
We get
\begin{align*}
\mu_H^2(\calF)=\frac{c_1(\calF)\cdot \omega}{\text{rk}(\calF) {\omega \cdot \omega }}
,\quad
\mu_H^1(\calF)=\frac{\chi(\calF)}{c_1(\calF)\cdot \omega}.
\end{align*}
The coefficients $\alpha_i(\calF)$ only depend on the Mukai vector $\nu(\calF)$. 
Thus we write $\alpha_i(\nu)=\alpha_i(\calF)$ and $\mu(\nu)=\mu_H^d(\calF)$ if $\nu=\nu(\calF)$.
If either $d$ or $H$ is clear from the context then we omit them from the notation and write $\mu=\mu^d_H$.
For $d$-dimensional $\nu$, the subset of pure $d$-dimensional $\mu$-semistable  sheaves forms a quasi-compact quasi-smooth open substack 
$$\calM_S^\muss(\nu)\subset\calM_S(\nu).$$

A Mukai vector $\nu$ is called effective if $\calM_S(\nu)\neq\emptyset$.
The Bogomolov inequality states that for an effective Mukai vector $\nu$ of positive rank such that $\calM^\muss(\nu) \neq \emptyset$, we have
\begin{equation}\label{eq:Bogomolov}
	c_1(\nu)^2 \geqslant 2 \rk(\nu) \,\ch_2(\nu)
\end{equation}
where $\ch_2(\nu)=\nu^{(2)}-\frac{a}{2}\nu^{(0)}\,\delta$, $c_1(\nu)=\nu^{(1)}$ and $\rk(\nu)=\nu^{(0)}$.
Equivalently, we have
$$(\nu^{(1)})^2-a\nu^{(0)}\cdot\nu^{(2)}\geqslant -2(\nu^{(0)})^2.$$
We will mostly use the finer notion of Gieseker semistability. We equip the set of polynomials $\Q[x]$ with the lexicographic order of the coefficients:
we have $P\leqslant Q$ if and only if $P(m)\leqslant Q(m)$ for all $m$ large enough. 
For any pure $d$-dimensional non-zero sheaf $\calF$, the reduced Hilbert polynomial is
$$p(\calF)=\frac{P(\calF)}{\alpha_d(\calF)}.$$
The sheaf $\calF$ is Gieseker-semistable if for any nonzero $\calG \subset \calF$ 
we have $p(\calG) \leqslant p(\calF)$. 
Given a polynomial $P$, let
$\calM_S^\ss(P)\subset\calM_S$
be the quasi-compact quasi-smooth open substack of pure dimensional Gieseker semistable sheaves with Hilbert polynomial $P$,
see, e.g., \cite[thm.~3.3.7]{HL}. We also set $$\calM^\ss_S(\nu)=\calM_S^\ss(P(\nu)) \cap \calM_S(\nu),$$ where $P(\nu)$ is the Hilbert polynomial of pure sheaves of Mukai vector $\nu$.
We write
$\calM_S^\ss=\bigsqcup_\nu\calM_S^\ss(\nu).$ 
Given an effective curve class $\alpha\in \H_2(S,\Q)$, we will abbreviate
$$
\calM_S^\ss(\alpha+\Z\delta)=\bigsqcup_{n\in\Z}\calM_S^\ss(\alpha+n\delta).$$

\medskip

\subsubsection{The HN-stratification of pure sheaves}\label{sec:partial order1}

In this section we partition the stacks of pure sheaves into HN 
(=Harder--Narasimhan) strata. 
Recall that we have fixed a polarization $H$ of $S$.
Any pure $d$-dimensional coherent sheaf $\calF$ with $d>0$  has a unique HN filtration
\begin{align}\label{HNfiltration}
\{0\}=\calF_\ell \subsetneq \calF_{\ell-1} \subsetneq \cdots \subsetneq \calF_0 =\calF
\end{align}
where $\calF_{i-1}/\calF_i$ is a pure $d$-dimensional
Gieseker-semistable coherent sheaf with Mukai vector $\nu_{i}=\nu(\calF_{i-1}/\calF_{i})$ and 
\begin{align}\label{mu-nu}p(\nu_1)< \cdots < p(\nu_\ell).\end{align}
The tuple 
$\uunu(\calF)=(\nu_1,\dots,\nu_\ell)$ 
is called the HN-type of $\calF$. We set
\begin{align*}
\pmin(\calF)=p(\nu_1), \quad
\pmax(\calF)=p(\nu_{\ell}).
\end{align*}
A tuple $\uunu=(\nu_1, \ldots, \nu_\ell)$ of Mukai vectors of pure $d$-dimensional sheaves such that 
\eqref{mu-nu} holds is called a pure $d$-dimensional HN-type.
Note that we could have defined HN-types with respect to slope stability, and for $d=0,1$ the two definitions of pure $d$-dimensional HN-types coincide.
The stack parametrizing pure $d$ 
dimensional coherent sheaves $\calF$ of HN-type $\uunu$ is called an HN-stratum and is denoted $ \calM_S^+(\uunu)$. 
We may identify $ \calM_S^+(\uunu)$ with a moduli stack of filtrations, hence there is an open embedding
$ \calM_S^+(\uunu) \subset \Filt(\calM_S(\nu))$.
This defines a derived structure on the stack $\calM_S^+(\uunu)$.
This derived stack is quasi-smooth, quasi-compact, and
is a locally closed substack of $\calM_S(\nu)$, with $\nu=\sum_i\nu_i$ the weight of $\uunu$. 
If $\uunu=(\nu)$ consists of a single element, then $ \calM_S^+(\uunu)=\calM_S^\ss({\nu})$. 
If $d=0$, we define the HN-type of a 0 dimensional sheaf $\calF$ to be its length, i.e., we set  $\uunu(\calF)=\nu(\calF)=l(\calF)$.
We abbreviate
\begin{equation}\label{def:Mssuunu}
\calM_S^\ss(\uunu)= \prod_{i=1}^\ell \calM_S^\ss(\nu_i).
\end{equation}
In this setting, the correspondence \eqref{attractor-explicit} is the following 0-shifted Lagrangian correspondence
\begin{equation}\label{eq-strata-corresp}
\calM_S^\ss(\uunu) \xleftarrow{\gr_{\uunu}}  \calM_S^+(\uunu) \xhookrightarrow[]{\ev_{\uunu}} \calM_S(\nu).
\end{equation}
We have
\begin{equation}\label{eq:dnu}
\vdim\gr_\uunu=\frac{1}{2}\Big(\vdim \calM_{S}(\nu)-\sum_{i\leqslant l} \vdim \calM_S^\ss(\nu_i) \Big)=-\sum_{1\leqslant i<j\leqslant l} (\nu_i,\nu_j).
\end{equation}

\begin{proposition}\label{prop: purity-ss}
Let $\uunu$ be a pure HN-type.\hfill
\begin{enumerate}[label=$\mathrm{(\alph*)}$,leftmargin=8mm,itemsep=2mm]
\item
If $\calF,$ $\calG$ are Gieseker-semistable sheaves with $p(\calF) > p(\calG)$, then 
$\Hom(\calF,\calG)= \Ext^2(\calG,\calF)=\{0\}.$
\item
The map  $\gr_{\uunu}$  is an iterated vector bundle stack of rank $ \vdim\gr_\uunu$. 
\end{enumerate}
\end{proposition}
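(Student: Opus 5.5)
For part (a), I will use the standard argument for Gieseker-semistable sheaves. Suppose $f\colon\calF\to\calG$ is nonzero, with $\calF,\calG$ pure of the same dimension $d$. Let $\calI=\mathrm{im}(f)$. Then $\calI$ is a quotient of $\calF$ and a subsheaf of $\calG$, and it is pure of dimension $d$, since it is a nonzero subsheaf of the pure sheaf $\calG$. Semistability of $\calF$, applied to the quotient $\calI$, gives $p(\calF)\leqslant p(\calI)$; this uses additivity of Hilbert polynomials in the exact sequence $0\to\ker f\to\calF\to\calI\to 0$. Semistability of $\calG$, applied to the subsheaf $\calI$, gives $p(\calI)\leqslant p(\calG)$. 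Together these contradict $p(\calF)>p(\calG)$. The vanishing $\Ext^2(\calG,\calF)=\{0\}$ then follows from Serre duality in the 2CY category $\Coh(S)$, which gives $\Ext^2(\calG,\calF)\cong\Hom(\calF,\calG)^\vee$.

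For part (b), I will proceed by induction on the length $\ell$ of $\uunu$, exactly as in Lemma~\ref{lem:affine fibration pure types}. The map $\gr_\uunu$ factors through the map
\[
\calM^+_S(\uunu)\to \calM^\ss_S(\nu_1)\times\calM^+_S(\nu_2,\dots,\nu_\ell),\qquad \calF\mapsto(\calF/\calF_1,\calF_1).
\]
Here $\calF_1$ is the first step of the HN filtration \eqref{HNfiltration}, so $\calF_1$ has HN-type $(\nu_2,\dots,\nu_\ell)$ and the quotient $\calF/\calF_1$ is semistable of vector $\nu_1$. This factorization comes from the fact that HN filtrations are unique, so a filtered object with these graded pieces is exactly an extension of $\calF/\calF_1$ by $\calF_1$. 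I will also need that every such extension again has HN-type $\uunu$. This holds because $p(\nu_1)<p(\nu_2)=\pmin(\calF_1)$, so the extension's filtration satisfies \eqref{mu-nu} and by uniqueness it is the HN filtration. Hence the map above is identified with the total space of the stack $R\Hom_S(\calE'',\calE')[1]$ over the base, where $\calE''$ and $\calE'$ are the tautological families of $\calF/\calF_1$ and $\calF_1$.

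The key vanishing is $\Ext^2(\calE'',\calE')\cong\Hom(\calE',\calE'')^\vee=0$. To show $\Hom(\calF_1,\calF/\calF_1)=0$ I will apply part (a) to each HN factor of $\calF_1$ mapping to $\calF/\calF_1$. Each such factor has $p(\nu_j)>p(\nu_1)$ for $j\geqslant 2$, so the iterated extension argument (dévissage over the filtration of $\calF_1$) gives the vanishing. Consequently the complex is concentrated in degrees $[-1,0]$ and the map is a vector bundle stack. Its rank is $-\chi(\nu_1,\nu_2+\dots+\nu_\ell)=-\sum_{j\geqslant2}(\nu_1,\nu_j)$. Composing with the inductive hypothesis for $\calM^+_S(\nu_2,\dots,\nu_\ell)$ gives an iterated vector bundle stack of total rank $-\sum_{i<j}(\nu_i,\nu_j)$, which matches \eqref{eq:dnu}. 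I will also note that the derived structures agree, since $\calM^+_S(\uunu)$ is open in $\Filt$ and $\gr$ is quasi-smooth with relative tangent complex given by these $R\Hom$ terms.

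The main point requiring care is the identification of the HN stratum with the full extension stack, namely that every extension has HN-type $\uunu$ and lands in the open substack. I expect this to follow from uniqueness of HN filtrations, as above. The sign convention for the Mukai pairing in the rank also needs checking, using $\chi=\langle-,-\rangle$ and comparing with the formula \eqref{eq:dnu}.
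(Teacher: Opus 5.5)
Your proposal is correct and follows essentially the same route as the paper: for (a), the standard image argument together with Serre duality (the Serre functor is $[2]$); for (b), the factorization of $\gr_\uunu$ into successive projections from total spaces of $R\Hom(\text{top quotient},\text{remaining subsheaf})[1]$, each of which is a vector bundle stack because its $\Ext^2$ term vanishes by (a). Two steps the paper leaves implicit are spelled out in your version: the dévissage over the HN filtration of $\calF_1$, which is needed because $\calF_1$ itself is not semistable, and the check that every such extension again has HN-type $\uunu$.
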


\begin{proof}
The first statement of (a) is well-known, the second one follows from Serre duality: 
as $\Omega_S=\calO_S$ the Serre functor coincides with the shift functor $\calF \mapsto \calF[2]$. 
For (b), note that the map $\gr_{\uunu}$ may be factored into a sequence of morphisms
$$ \calM_S^+(\uunu) \xrightarrow{\pi_1} \calM_S^\ss(\nu_1) \times \calM_S^+(\nu_2,\ldots,\nu_\ell) \xrightarrow{\pi_2} \cdots \xrightarrow{\pi_{\ell}} \calM_S^\ss(\uunu)$$
where each map $\pi_i$ is the projection to its base from
the total space of a complex $R\Hom(\calF_i/\calF_{i+1},\calF_{i+1})[1].$ 
By (a), we have 
$\Ext^2(\calF_i/\calF_{i+1},\calF_{i+1})=0,$ which means that each $\pi_i$ is in fact a stack vector bundle, as wanted. 
\end{proof}

We now define a partial order on the set of pure HN-types of a fixed weight. 
Let $d=1,2$. For any $d$-dimensional HN-type 
$\uunu=(\nu_1, \ldots, \nu_\ell)$ set $\nu_{\leqslant i}=\sum_{j\leqslant i} \nu_j$
and let $\nu=\sum_{i\leqslant l} \nu_i$ be the weight of $\uunu$.
We consider the convex path 
$\rho^d_\uunu$ in $\Q^2$ joining the vertices
$$(0,0), \quad (\alpha_d(\nu_1), \alpha_{d-1}(\nu_1)), 
\quad 
(\alpha_d(\nu_{\leqslant 2}), \alpha_{d-1}(\nu_{\leqslant 2})),
\quad \ldots \quad 
(\alpha_d(\nu_{\leqslant \ell}),\alpha_{d-1}(\nu_{\leqslant \ell})).$$
The path $\rho^d_\uunu$ starts at $0$ and ends at the point
$(\alpha_d(\nu),\alpha_{d-1}(\nu))$. If $\calF$ is a pure $d$-dimensional sheaf in 
$\calM^+_S(\uunu)$ then we set $\rho^d(\calF)=\rho^d_\uunu$. Let $\uunu,$ $\underline{\sigma}$ be pure $d$-dimensional 
HN-types of the same weight. 
We write $\uunu\preccurlyeq \underline{\sigma}$ if $\rho^d_\uunu$ lies above $\rho^d_{\underline{\sigma}}$. 
Hence $\uunu\preccurlyeq \underline{\sigma}$ if and only if $ \rho^d_\uunu \subset \rho^d_{\underline{\sigma}} + (\{0\} \times \Q^+).$
By Proposition~\ref{prop:good HN order} below, this partial order has nice compatibilities with the closure relation of HN-strata. 

\medskip
 
\subsubsection{Relation between Gieseker and slope HN stratification} For pure zero-dimensional or one-dimensional sheaves, the notions of Gieseker and $\mu$-stability coincide: 
for a one-dimensional Mukai vector $\nu=\alpha+\chi\delta$, the Hilbert polynomial is $P(\nu)(x)=(\alpha\cdot\omega)x+\chi$ and we have $\calM_S^\muss(\nu)=\calM_S^\ss(\nu)$.
For two-dimensional sheaves, we in general only have
$$(\text{Gieseker semistable}) \Rightarrow (\text{slope semistable})$$
and hence an open immersion $\calM^\ss_S(\nu) \subset \calM^\muss_S(\nu)$. Nevertheless, the following holds.

\begin{proposition} Let $\nu \in \H^\ev(S,\Z)$ be a two-dimensional Mukai vector. Then
$$\calM^\muss_S(\nu)=\bigsqcup_{\underline{\sigma}} \calM^+_S(\underline{\sigma})$$
where the union ranges over the finite set of HN-types $\underline{\sigma}=(\sigma_1,\ldots,\sigma_s)\in \HN(\nu)$ such that 
\begin{equation}\label{eq:Gieseker vs slope}
	\mu^2(\sigma_1)=\cdots =\mu^2(\sigma_s)=\mu^2(\nu).
\end{equation}
\end{proposition}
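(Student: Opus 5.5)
The plan is to compare the two stratifications directly, since both are set-theoretic partitions of the stack of pure two-dimensional sheaves into locally closed substacks. The approach is to show that a pure two-dimensional sheaf $\calF$ of Mukai vector $\nu$ is $\mu$-semistable exactly when every Gieseker HN factor has slope $\mu^2(\nu)$. Given this, the decomposition is an identity of underlying sets, and the derived structures agree because both are open substacks of the appropriate stacks. On the right, each $\calM^+_S(\underline{\sigma})$ carries its structure as an open substack of $\Filt$, and the left is read as a disjoint union of locally closed substacks.

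The first step is to show that every Gieseker HN factor of a $\mu$-semistable $\calF$ has slope $\mu^2(\nu)$. Let $\calF_{\ell-1}\subset\calF$ be the first step of the HN filtration \eqref{HNfiltration}; it is the maximal destabilizing subsheaf, with reduced Hilbert polynomial $p(\sigma_\ell)=\pmax(\calF)$. Since $p(\sigma_\ell)\geqslant p(\nu)$ in the lexicographic order, comparing $x$-coefficients gives $\mu^2(\sigma_\ell)\geqslant\mu^2(\nu)$. Slope semistability gives the reverse inequality, so $\mu^2(\sigma_\ell)=\mu^2(\nu)$. The quotient $\calF/\calF_{\ell-1}$ is then $\mu$-semistable of the same slope: any quotient of it is a quotient of $\calF$, and slope semistability is equivalent to the quotient condition. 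Inducting on the length of the filtration gives \eqref{eq:Gieseker vs slope}.

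For the converse, suppose all $\sigma_i$ have slope $\mu^2(\nu)$. Each factor is Gieseker semistable and hence $\mu$-semistable. Extensions of $\mu$-semistable sheaves of the same slope are $\mu$-semistable, because slope is additive on $(\rk,c_1\cdot\omega)$ and torsion-free sheaves have positive rank. Therefore $\calF$ is $\mu$-semistable.

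The remaining point, and the main obstacle, is finiteness of the set of HN types $\underline{\sigma}$. The plan is to use that $\calM^\muss_S(\nu)$ is quasi-compact, since $\mu$-semistable sheaves of fixed Mukai vector form a bounded family. The HN type is constructible in flat families by Grothendieck/Harder–Narasimhan openness, so a quasi-compact stack meets only finitely many strata. Alternatively, one can argue numerically. Each $\sigma_i$ has fixed slope and rank at most $\rk\nu$. Bogomolov \eqref{eq:Bogomolov} bounds $\ch_2(\sigma_i)$ from above, and the $\sigma_i$ sum to $\nu$, which bounds each $\ch_2(\sigma_i)$ from below. For the $c_1(\sigma_i)$, the condition $c_1\cdot\omega$ fixed together with the Hodge index theorem (negative definiteness on $H^\perp$) and the Bogomolov bound leaves only finitely many classes. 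I would present the boundedness argument and mention the numerical one as the check.
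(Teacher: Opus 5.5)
Your proposal is correct. The numerical argument you offer as a check is exactly the paper's proof; where you differ is in what you treat as the main content, and in your primary finiteness argument.

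The paper takes the set-theoretic identity for granted: its proof opens with ``the only thing to prove is that the set of HN-types \dots\ is finite''. You prove that identity, and you do so correctly. The $x$-coefficient of the reduced Hilbert polynomial is $\mu^2$, so the maximal destabilizing subsheaf has slope exactly $\mu^2(\nu)$, and you induct on the quotient. Conversely, same-slope extensions of $\mu$-semistable torsion-free sheaves remain $\mu$-semistable.

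For finiteness, your main route is qualitative. It combines two inputs. The first is quasi-compactness of $\calM^{\muss}_S(\nu)$, which follows from boundedness of $\mu$-semistable sheaves with fixed Hilbert polynomial; the paper also asserts this when it introduces the stack. The second is that a flat family over a Noetherian base realizes only finitely many HN types, via the generic relative HN filtration and Noetherian induction (cf.\ \cite{HL}, \cite{N11}). This is valid, but it imports two general theorems.

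The paper's route is elementary:
\begin{enumerate}
\item The tuples $(\rk(\sigma_i))_i$ and $(c_1(\sigma_i)\cdot\omega)_i$ range over a finite set.
\item The Hodge index inequality $c_1(\sigma_i)^2\,\omega^2\leqslant(c_1(\sigma_i)\cdot\omega)^2$, combined with Bogomolov, bounds $\ch_2(\sigma_i)$ from above. So you need Hodge index already at this step, not only for $c_1$.
\item The relation $\sum_i\ch_2(\sigma_i)=\ch_2(\nu)$ then bounds each $\ch_2(\sigma_i)$ from below.
\item Bogomolov and Hodge index sandwich $c_1(\sigma_i)^2$, and negative definiteness on $\omega^\perp$ leaves finitely many $c_1(\sigma_i)$.
\end{enumerate}
This keeps the proof self-contained. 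It is also the same estimate the paper reuses in Appendix~\ref{app:C} to control $\HN(\nu)$.

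You should drop the sentence claiming that ``the derived structures agree''. The identity is a partition of the underlying stack into locally closed strata. A non-open stratum $\calM^+_S(\underline{\sigma})$ carries its derived structure from $\Filt$, of virtual dimension $\vdim\calM_S(\nu)-\vdim\gr_{\underline{\sigma}}$, and not one induced from $\calM^{\muss}_S(\nu)$. So there is nothing to compare.
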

\begin{proof}
The only thing to prove is that the set of HN-types satisfying \eqref{eq:Gieseker vs slope} is finite. This follows from Bogomolov's inequality. We recall the argument for the reader's convenience. Let $\underline{\sigma}=(\sigma_1,\ldots,\sigma_s)\in \HN(\nu)$ satisfy \eqref{eq:Gieseker vs slope}. Then we have
\begin{equation}\label{eq:Gieseker vs slope2}
c_1(\sigma_i)\cdot \omega=\text{rk}(\sigma_i)
	\frac{c_1(\nu)\cdot \omega}{\text{rk}(\nu)}
\end{equation}
for all $i$, since all $\sigma_i$ have the same slope. Note also that
$$\sum_i \ch_2(\sigma_i)=\ch_2(\nu), \qquad \sum_i \text{rk}(\sigma_i)=\text{rk}(\nu).$$
The second equation, along with \eqref{eq:Gieseker vs slope}, implies that the tuples $(\text{rk}(\sigma_i))_i$ and $(c_1(\sigma_i)\cdot \omega)_i$ can take only finitely many values.
Now we use the Bogomolov inequality \eqref{eq:Bogomolov} for each of the $\sigma_i$. By the Hodge index theorem $c_1(\sigma_i)^2 \omega^2\leqslant (c_1(\sigma_i)\cdot \omega)^2= \text{rk}(\sigma_i)^2(\mu^2(\nu))^2(\omega^2)^2$. Combining this with the Bogomolov inequality this yields the inequality $\ch_2(\sigma_i)\omega^2\leqslant \frac{1}{2}\mu^2(\nu)^2\textrm{rk}(\sigma_i)(\omega^2)^2$. The relation $\sum_i \ch_2(\sigma_i)=\ch_2(\nu)$ now implies that the tuple $(\ch_2(\sigma_i))_i$ may only take finitely many values.  The Bogomolov inequality gives lower bounds for each $c_1(\sigma_i)^2$, while the Hodge index theorem gives upper bounds.  Since $\omega^{\perp}$ is negative definite, and $c_1(\sigma_i)^2$ is bounded above and below, there are only finitely many values of $c_1(\sigma_i)$ satisfying \eqref{eq:Gieseker vs slope2}.
\end{proof}

\medskip

\subsubsection{The HN-stratification of arbitrary sheaves}\label{sec:partial order2}
Now, we consider arbitrary coherent sheaves, not necessarily 
pure. The HN-type of $\calF$ is the triple of pure $\HN$-types
\begin{align}\label{HN}\uunu(\calF)=(\uunu(\calF^2)\,,\,\uunu(\calF^1)\,,\,\uunu(\calF^0)).\end{align}
Note that $\uunu(\calF^0)$ is the length of $\calF^0$. A coherent sheaf $\calF$ has a canonical Harder-Narasimhan filtration
$$\{0\}\subseteq \calF_{\ell+m} \subsetneq \calF_{\ell+m-1} \subsetneq \cdots \subsetneq \calF_{\ell}\subsetneq\calF_{\ell-1}\subsetneq 
\cdots \subsetneq\calF_0=\calF$$
such that $\calF^{\leqslant 1}=\calF_{\ell}$ and $\calF^0=\calF_{\ell+m}$,
 whose subquotients are semistable of strictly increasing reduced Hilbert polynomial.
Let $\calM_S^+(\uunu)^\cl$ be the classical stack parametrizing coherent sheaves on $S$ of HN-type $\uunu$.
 Let $\HN(\nu)$ be the set of all effective HN-types of weight $\nu$, i.e., for which $ \calM_S^+(\uunu)^\cl\neq\emptyset$.

\begin{proposition}\label{prop: purity-ss2} Let $\nu$ be a Mukai vector and $\uunu\in\HN(\nu)$. \hfill
\begin{enumerate}[label=$\mathrm{(\alph*)}$,leftmargin=8mm,itemsep=2mm]
\item $\calM_S^+(\uunu)^\cl$ has a derived enhancement $\calM_S^+(\uunu)$.
It is quasi-smooth, quasi-compact and locally closed in $\calM_S(\nu)$.
\item There is a correspondence 
\begin{equation}\label{eq-strata-corresp-not-pure}
\calM_S^\ss(\uunu) \xleftarrow{\gr_{\uunu}}  \calM_S^+(\uunu) \xhookrightarrow[]{\ev_{\uunu}} \calM_S(\nu).
\end{equation}
The map  $\gr_{\uunu}$  is an iterated vector bundle stack
of rank $ \vdim\gr_\uunu$ given by \eqref{eq:dnu}. 
\item 
There is a locally finite partition 
$$\calM_S(\nu)=\bigsqcup_{\uunu\in \HN(\nu)}  \calM_S^+(\uunu).$$
Any quasi-compact open substack $\calU \subset \calM_S(\nu)$ 
intersects finitely many strata nontrivially.
\end{enumerate}
\end{proposition}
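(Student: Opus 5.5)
We reduce everything to the pure case, which is already handled by the Harder--Narasimhan theory for pure sheaves together with Proposition~\ref{prop: purity-ss} and Lemma~\ref{lem:affine fibration pure types}.

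\textbf{Part (a).} Write $\uunu=(\uunu^2,\uunu^1,\uunu^0)$ and concatenate the three pure HN-types into a single ordered tuple. This gives the HN filtration of $\calF$ described above, whose subquotients are semistable with strictly increasing reduced Hilbert polynomials when compared in the sense where the dimension is lowered first. We then define $\calM_S^+(\uunu)$ as the open substack of $\Filt_{\uunu}\subset\Filt(\calM_S(\nu))$ consisting of filtrations whose stepwise subquotients are semistable of the prescribed classes. This is exactly the Cartesian construction used for $\calM_S(\nu_2,\nu_1,\nu_0)$, with $\calM_S^\ss(\uunu)$ in place of the product of pure stacks.

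Quasi-smoothness follows because $\Filt(\calM_S)$ is quasi-smooth and the substack is open. Quasi-compactness follows from quasi-compactness of each $\calM^\ss_S(\nu_i)$ and the fact that $\gr_\uunu$ is of finite type; the latter is shown in (b).

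For the locally closed embedding, $\ev_\uunu$ is proper by $\Theta$-reductivity. It is also a monomorphism on points, since the HN filtration of a sheaf is unique and preserved by automorphisms. Its image is the constructible set of sheaves of HN-type $\uunu$. This set is locally closed because HN-type is upper semicontinuous in the order $\preccurlyeq$, which is the content of the forthcoming closure compatibility statement. The comparison of derived structures is automatic: $\Filt\to\calM$ restricted to HN filtrations is an isomorphism onto its image on classical truncations.

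\textbf{Part (b).} Factor $\gr_\uunu$ as in Proposition~\ref{prop: purity-ss}(b) into successive maps $\pi_i$. Each $\pi_i$ is the total space of $R\Hom(\calF_{i-1}/\calF_i,\calF_i)[1]$ over the relevant product. The needed vanishing is $\Ext^2(\calA,\calB)=\Hom(\calB,\calA)^\vee=0$, where $\calA$ is an earlier (lower-$p$) quotient and $\calB$ a later subquotient. Two cases arise.
\begin{itemize}
\item If $\calA$ and $\calB$ have the same dimension, this is Proposition~\ref{prop: purity-ss}(a).
\item If $\dim\calB<\dim\calA$, then $\Hom(\calB,\calA)=0$ because $\calA$ is pure of larger dimension. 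This is the argument of Lemma~\ref{lem:affine fibration pure types}.
\end{itemize}
Hence each $\pi_i$ is a vector bundle stack. The rank equals $-\chi$ of the summed pairs, namely $-\sum_{i<j}(\nu_i,\nu_j)$, which is the formula \eqref{eq:dnu}.

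\textbf{Part (c).} Every sheaf has a unique HN-type, which gives a set-theoretic partition into the locally closed pieces from (a).

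The main obstacle is local finiteness. Let $\calU$ be a quasi-compact open substack. Then $\calU$ is bounded, so $\mu_{\max}$ of the pure parts and the length of $\calF^0$ are bounded on $\calU$. By the Mukai/Grothendieck boundedness lemma, the classes of the HN factors of a bounded family range over a finite set. For one-dimensional and zero-dimensional parts this follows from finiteness of effective curve classes of bounded degree and bounded $\chi$. For two-dimensional parts we argue as in the proof of the proposition on Gieseker versus slope stratifications, using Bogomolov's inequality \eqref{eq:Bogomolov} together with the Hodge index theorem to bound $c_1$ and $\ch_2$ of each factor. We therefore get only finitely many HN-types meeting $\calU$.
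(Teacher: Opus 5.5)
Your proposal is correct. For (a) and (b) it is the paper's argument. The derived structure is the open substack of $\Filt_{\uunu}$ on which the subquotients are semistable, and quasi-compactness comes from (b). The map $\gr_{\uunu}$ is peeled off one extension at a time using $\Ext^2(\calA,\calB)\cong\Hom(\calB,\calA)^\vee=0$: in equal dimension this is Proposition~\ref{prop: purity-ss}(a), and across dimensions it is purity, as in Lemma~\ref{lem:affine fibration pure types}. Since $\calB=\calF_i$ is not itself semistable, your two cases should be applied to its HN factors and combined by d\'evissage; the paper leaves this implicit too.

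The genuine difference is in (c). The paper's one-line argument points to quasi-compactness of the $\calM^\ss_S(\nu)$ and the vector bundle stack structure, which gives quasi-compactness of each stratum. The local finiteness it actually relies on comes from Propositions~\ref{prop:finite interval partial} and~\ref{prop:good HN order}. There, Bogomolov makes each down-set $\{\underline{\sigma}\preccurlyeq\uunu\}$ finite, and the path-straightening Lemma~\ref{lem:ev and HN order} makes $\calM^+_S(\leqslant\uunu)$ open. These opens form an exhaustion by quasi-compact opens, so a quasi-compact $\calU$ lies in one of them. You instead bound the HN types on a bounded family directly, via Grothendieck's lemma together with Bogomolov and the Hodge index theorem. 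That route is standard and self-contained, but it does not produce the $\N$-indexed exhaustion by quasi-compact opens that the paper needs later (Corollary~\ref{cor-BM-limit}, Kirwan surjectivity). Since you invoke Proposition~\ref{prop:good HN order} for local closedness in (a) anyway, the saving is modest.

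One correction in (a): $\ev_{\uunu}$ restricted to the open HN locus of $\Filt_{\uunu}$ is not proper, so ``proper by $\Theta$-reductivity'' is misapplied. What holds is the following.
\begin{itemize}
\item The map $\ev\colon\Filt_{\uunu}\to\calM_S(\nu)$ is proper.
\item By Lemma~\ref{lem:ev and HN order}, its preimage of the open $\calM^+_S(\leqslant\uunu)$ consists of filtrations of sheaves of HN-type $\uunu$, and one checks these are exactly the HN filtrations. So $\ev_{\uunu}$ is proper over that open.
\item Together with the monomorphism property (from the Hom-vanishings, which also hold in flat families), $\ev_{\uunu}$ is a closed immersion into $\calM^+_S(\leqslant\uunu)$.
\end{itemize}
This is the honest content behind your ``automatic'' comparison of structures, which the paper also leaves implicit.
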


\begin{proof}
The proof is the same as for Proposition~\ref{prop: purity-ss}. 
The last statement is obtained by combining Lemma~\ref{lem:affine fibration pure types}, 
the diagram \eqref{eq-strata-corresp}, Proposition~\ref{prop: purity-ss} and quasi-compactness of each of the stacks $\calM_S^\ss(\nu)$. 
\end{proof}

The HN partition depends on the polarization and is constrained by the 
Bogomolov inequality \eqref{eq:Bogomolov}. 
We extend the partial order from pure HN-types to arbitrary HN-types. 
To do this, for each tuple $\uunu=(\uunu_2,\uunu_1,\uunu_0)$ of pure HN-types of dimensions $2,1,0$ respectively we set
$$\rho^2_{\uunu}=\rho^2_{\uunu_2}
, \quad 
\rho^1_{\uunu}=(\alpha_1(\nu_2)\,,\,\alpha_0(\nu_2))+\rho_{\uunu_1}^1,$$
where $\nu_2$ is the weight of $\uunu_2$. Notice that the starting and ending points of the paths $\rho^d_{\underline{\nu}}$ depend on the individual weights of $\underline{\nu}_2, \underline{\nu}_1$ and $\underline{\nu}_0$.
Given two HN-types $\uunu,$ 
$ \uunu'$ of the same weight such that $(\rho^2_{\uunu}, \rho^1_{\uunu})\neq (\rho^2_{\uunu'},\rho^1_{\uunu'})$ we set 
$$\uunu \preccurlyeq \uunu'\quad \Leftrightarrow \quad  
\rho^2_{\uunu} \preccurlyeq \rho^2_{\uunu'} \quad \text{and}\quad \rho^1_{\uunu} \preccurlyeq \rho^1_{\uunu'}.$$
Note that if $\uunu \neq \uunu'$ and $\rho^d_{\uunu}=\rho^d_{\uunu'}$ for $d=1,2$,
 then $\uunu$ and $\uunu'$ are uncomparable.
 We define
 $$\calM^+_S(\succcurlyeq\!\uunu)=\bigsqcup_{\uunu'\succcurlyeq\uunu}\calM^+_S(\uunu')
 ,\quad
 \calM^+_S(\preccurlyeq\!\uunu)=\bigsqcup_{\uunu'\preccurlyeq\uunu}\calM^+_S(\uunu').$$
The following propositions are proved in \S \ref{app:C}.

\begin{proposition}\label{prop:finite interval partial} 
	Let $\nu$ be any Mukai vector. 
	\hfill
	\begin{enumerate}[label=$\mathrm{(\alph*)}$,leftmargin=8mm,itemsep=2mm]
		\item
		The poset $\HN(\nu)$ is locally finite, i.e., for any $\HN$ types $\uunu'$ and $\uunu$ the interval
		$[\uunu',\uunu]$ in $\HN(\nu)$ is finite.
		\item
		There is a total order $\leqslant$ refining the partial order $\preccurlyeq$ on $\HN(\nu)$ with an order preserving bijection 
		$\N \cong \HN(\nu)$.
		\qed
	\end{enumerate}
\end{proposition}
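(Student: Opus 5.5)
The plan is to reduce both statements to finiteness properties of convex lattice paths with bounded vertices. I first fix the weight $\nu$ and list the data attached to an HN-type $\uunu=(\uunu_2,\uunu_1,\uunu_0)\in\HN(\nu)$. These are the two polygonal paths $\rho^2_{\uunu}$ and $\rho^1_{\uunu}$, the individual weights $\nu_2,\nu_1,\nu_0$, and the Mukai vectors of the successive semistable factors.

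For (a), the first step is to see that the vertices of $\rho^d_{\uunu}$ lie in a fixed lattice. Their coordinates are $\alpha_d,\alpha_{d-1}$ of partial sums of Mukai vectors, i.e. rank times $\omega^2$, $c_1\cdot\omega$, and $\chi$. These lie in $\frac1N\Z$ for a fixed $N$ depending on $H$ and $a$.

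Next I bound the paths in an interval. If $\uunu'\preccurlyeq\uunu''\preccurlyeq\uunu$, then $\rho^d_{\uunu''}$ lies between $\rho^d_{\uunu'}$ and $\rho^d_{\uunu}$. The only subtlety is that the endpoints of the paths can move, because the splitting $\nu=\nu_2+\nu_1+\nu_0$ varies. Here I use the fixed total: $\alpha_1(\nu_2)+\alpha_1(\nu_1)=c_1(\nu)\cdot\omega$, and both summands are nonnegative lattice values. So the starting point of $\rho^1$ takes finitely many values, and $\alpha_0(\nu_2)$ is bounded by the path comparison. A convex path with lattice vertices confined to a bounded region has only finitely many possibilities. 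Hence the interval yields finitely many pairs $(\rho^2,\rho^1)$.

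It then remains to show that a fixed pair of paths, together with fixed $\nu$, is realized by only finitely many HN-types. The path fixes the rank, $c_1\cdot\omega$ and $\chi$ of each factor; I must also control $c_1$ itself and $\ch_2$ of the two-dimensional factors. For two-dimensional factors I use the Bogomolov and Hodge index argument from the previous proposition. This gives upper bounds on $c_1(\sigma_i)^2$ and $\ch_2(\sigma_i)$, and the fixed sum $\sum\ch_2=\ch_2(\nu_2)$ gives lower bounds. For one-dimensional factors, $c_1$ is an effective class with fixed $c_1\cdot\omega$, and effective classes of bounded degree form a finite set. The length $\nu_0$ is then determined.

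I expect this last step to be the main obstacle: ensuring that $\nu_2$ and the $c_1$-classes range over a finite set. This needs the negative definiteness of $\omega^\perp$, and the two-dimensional part must be bounded uniformly across the interval.

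For (b), I take a function $h\colon\HN(\nu)\to\N$ with finite fibres that is strictly monotone for $\preccurlyeq$. A candidate is the area between $\rho^d_{\uunu}$ and a fixed reference path, measured in the lattice. I also need a unique minimal-type bound, so that $\{\uunu'\preccurlyeq\uunu\}$ is finite. This follows from (a) together with the existence of a bottom-ish element, namely the types of semistable sheaves, which are extremal. I then order by $h$ and break ties arbitrarily within the finite fibres. Countability together with the finiteness of initial segments gives the order-preserving bijection with $\N$.
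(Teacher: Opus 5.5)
Your part (a) follows the paper's route: first, finitely many pairs of paths in an interval; then finite fibres of $\uunu\mapsto(\rho^2_\uunu,\rho^1_\uunu)$, via Bogomolov, Hodge index, negative definiteness of $\omega^\perp$, and finiteness of effective classes of bounded degree. There is one slip. $\alpha_1(\nu_2)=c_1(\nu_2)\cdot\omega$ need not be nonnegative when $\rk(\nu)>0$ (take $\nu_2=\nu(\calO_S(-C))$). So the finiteness of the starting points of $\rho^1$ must come instead from the sandwich of the $\rho^2$ paths, since $\alpha_1(\nu_2)$ is the height of the endpoint of $\rho^2_\uunu$.

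The genuine gap is in (b). Its whole content is that every lower set $\{\uunu'\in\HN(\nu)\,;\,\uunu'\preccurlyeq\uunu\}$ is finite. This is necessary for any refinement order-isomorphic to $\N$, and for a countable poset it is also sufficient (the paper's inductive refinement lemma). It does not follow from local finiteness (think of $\Z$), and neither of your two ingredients establishes it.

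\begin{itemize}
\item The finite fibres of your area function $h$ are only asserted. Proving them is exactly this uniform finiteness problem: your (a) argument controls only paths squeezed between two fixed paths, and a fibre of $h$ is not a priori contained in an interval.
\item Your other reduction, to an interval $[\uunu_{\min},\uunu]$ with $\uunu_{\min}$ a semistable type, needs $\calM^{\ss}_S(\nu)\neq\emptyset$, which fails in general. On a K3 surface, $\nu=\nu(\calO_S)+k\delta$ with $k>0$ is the class of $\calO_S\oplus\calO_Z$ for $Z$ of length $k$. But a torsion-free rank one sheaf with $c_1=0$ is an ideal sheaf, so it has $\ch_2\leqslant 0$. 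Hence there is no torsion-free sheaf of class $\nu$ at all, and $\HN(\nu)$ contains no semistable type. You would then have to show that $\HN(\nu)$ has finitely many minimal elements and that every type dominates one of them, which is essentially the original claim.
\end{itemize}

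What is missing is a direct bound, and the paper supplies it. Only finitely many convex paths $\rho^2$ lie above $\rho^2_\uunu$, since they are trapped between $\rho^2_\uunu$ and the chord. This bounds $c_1\cdot\omega$ of the one-dimensional part, leaving finitely many effective classes. The condition that $\rho^1$ lies above $\rho^1_\uunu$ bounds $\ch_2$ of the one-dimensional part. The Bogomolov--Hodge index argument of (a) then leaves finitely many possibilities for the two-dimensional factors, and then for the one-dimensional ones. With finite lower sets in hand, the refinement lemma gives the total order. The same bound is also exactly what you would need to make your $h$ work.
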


\begin{proposition}\label{prop:good HN order}
Let $\nu$ be a Mukai vector and $\uunu\in\HN(\nu)$. 
\hfill
\begin{enumerate}[label=$\mathrm{(\alph*)}$,leftmargin=8mm,itemsep=2mm]
\item
The substack  $\calM^+_S(\succcurlyeq\!\uunu)\subset \calM_S$ is closed.
\item
The substack $\calM^+_S(\preccurlyeq \!\uunu)\subset \calM_S$ is open and quasi-compact.
\item 
We have $\calM_S(\nu)=\bigcup_{\uunu\in\HN(\nu)}  \calM^+_S(\preccurlyeq\uunu)$.
\qed
\end{enumerate}
\end{proposition}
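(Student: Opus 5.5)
The plan is to reduce (a)–(c) to two inputs: the semicontinuity of Harder--Narasimhan polygons, and the boundedness supplied by Proposition~\ref{prop: purity-ss2}(c) together with Proposition~\ref{prop:finite interval partial}.

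\textbf{Part (a).} This is a closure statement, so I would check it with the valuative/specialization criterion. Take a family $\calF_t$ over a DVR whose generic fibre has HN-type $\uunu'\succcurlyeq\uunu$, and specialize it to a special fibre $\calF_0$.

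First, the support filtration \eqref{eq:support filtration} specializes in the expected way. The generic torsion $\calF^{\leqslant 1}_\eta$ extends to a flat subsheaf, so its limit is a subsheaf of $\calF_0$. Since $\calF_0^{\leqslant d}$ can only grow under specialization, the dimension-$d$ pieces behave as follows: the starting points of the paths $\rho^1$ move so that the polygons only go up in the order $\preccurlyeq$.

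Second, within each dimension, apply the classical upper semicontinuity of the HN polygon (Shatz's theorem, \cite{HL} thm.~2.3.2). The limit of each generic HN subsheaf is a subsheaf of $\calF_0$ of the same Mukai vector. Hence its vertex lies on or below the HN polygon of $\calF_0$, that is $\rho^d(\calF_0)\succcurlyeq\rho^d(\calF_\eta)$.

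Combining the two steps gives $\uunu(\calF_0)\succcurlyeq\uunu'\succcurlyeq\uunu$, so $\calM^+_S(\succcurlyeq\uunu)$ is stable under specialization. One technical point: this substack is a locally finite union of locally closed strata by Proposition~\ref{prop: purity-ss2}(c), so it is constructible locally on quasi-compact opens. Stability under specialization then gives closedness.

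\textbf{Part (b).} Openness should follow by comparing with the complement. I would show that the complement of $\calM^+_S(\preccurlyeq\uunu)$ is a union of sets $\calM^+_S(\succcurlyeq\uunu'')$ for $\uunu''\not\preccurlyeq\uunu$. This is not literal, because the order is only partial. Instead I would argue directly: a specialization of a point of type $\uunu'\not\preccurlyeq\uunu$ has type $\succcurlyeq\uunu'$, and so again is $\not\preccurlyeq\uunu$. So the complement is closed under specialization, and it is constructible by the same local finiteness argument.

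For quasi-compactness I would show that $\{\uunu'\preccurlyeq\uunu\}$ is finite and then use quasi-compactness of each stratum from Proposition~\ref{prop: purity-ss2}(a). The polygons lie above $\rho_\uunu$ and share its endpoints. This bounds the rank, degree, and $\chi$ of the HN factors from one side. The other side comes from Bogomolov \eqref{eq:Bogomolov} and the Hodge index theorem, as in the finiteness proof for slope-semistable HN-types; I would adapt that argument here.

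\textbf{Part (c).} This is immediate, since every point lies in $\calM^+_S(\uunu(\calF))\subset\calM^+_S(\preccurlyeq\uunu(\calF))$.

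\textbf{Main obstacle.} I expect the hardest step to be the finiteness needed in (b). One must control the curve classes $c_1$ of the one-dimensional HN factors in the $\omega^\perp$ directions, which polygons alone do not see. My first attempt would be to use that the effective classes of bounded $\omega$-degree form a finite set, which handles the one-dimensional pieces. For the two-dimensional pieces I would use the Bogomolov/Hodge index bounds.
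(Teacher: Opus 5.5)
Your skeleton is the paper's. Extending the generic HN filtration over a DVR is the valuative form of the properness of $\ev_{\uunu}\colon\Filt_{\uunu}\to\calM_S(\nu)$. Your observation that every quotient of $\calF_0$ has its point on or above $\rho^d(\calF_0)$ (with vertical tails for the lower-dimensional parts) replaces the straightening argument of Lemma~\ref{lem:ev and HN order}. Parts (b), (c) and the finiteness of $\{\uunu'\preccurlyeq\uunu\}$ go as in the paper and Appendix~\ref{app:C}. One small correction: Shatz's theorem cannot be applied layer by layer, since the pure layers of a flat family are not flat when the torsion jumps. The direct limit-of-subsheaves argument you sketch is what is actually needed.

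The missing step is the equality case. The order declares two \emph{distinct} HN-types with the same pair $(\rho^2,\rho^1)$ incomparable. So ``$\rho^d(\calF_0)$ lies on or below $\rho^d(\calF_\eta)$ for $d=1,2$'' yields $\uunu(\calF_0)\succcurlyeq\uunu'$ only once you know that equality of both polygons forces $\uunu(\calF_0)=\uunu'$. Without this, neither the specialization-stability in (a) nor the one you use for the complement in (b) follows. The paper makes this step explicit in the last sentence of the proof of Lemma~\ref{lem:ev and HN order}.

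In the zero- and one-dimensional layers you can supply it. There the coordinates of the polygons are exactly the Hilbert polynomials. If the polygons agree, each limit subsheaf $\calG$ has the same Hilbert polynomial $P$ as the corresponding HN subsheaf $\calF'$ of $\calF_0$. Vertices of the HN polygon are extreme points of the convex region containing the points of all subsheaves. Hence the identity $P(\calG+\calF')+P(\calG\cap\calF')=2P(\calF')$ forces $P(\calG+\calF')=P(\calF')$. This gives $\calG\subseteq\calF'$, so $\calG=\calF'$ and the Mukai vectors agree.

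In the two-dimensional layer this step fails, because $\rho^2$ only records $(\alpha_2,\alpha_1)$ while the HN-types are Gieseker types. Take a K3 surface and a non-split extension $0\to I_W\to E\to\calO_S\to 0$ with $W$ of length two; it is Gieseker semistable. The sheaf $I_W\oplus\calO_S$ lies in the closure of $[E]$ (rescale the extension class to $0$). It has the same pair of polygons as the type $(\nu)$, but its type is $(\nu(I_W),\nu(\calO_S))$. Hence $\calM^+_S(\succcurlyeq\!(\nu))$ is not closed. The paper's sentence quoted above has the same problem. With the supplement from the previous paragraph, your argument proves the statement for $\nu$ of dimension at most one. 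For two-dimensional $\nu$ the order must first be refined, for instance by also recording $\alpha_0$ of the two-dimensional Gieseker factors.
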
  

We will always equip $\HN(\nu)$ with the above refined total order.
We define
 $$\calM^+_S(\geqslant\!\uunu)=\bigsqcup_{\uunu'\geqslant\uunu}\calM^+_S(\uunu')
 ,\quad
 \calM^+_S(\leqslant\!\uunu)=\bigsqcup_{\uunu'\leqslant\uunu}\calM^+_S(\uunu')$$
which are closed, resp. quasi-compact and open substacks of $\calM_S(\nu)$ respectively.

\medskip

\subsubsection{Abelian categories of semistable sheaves} \label{Abelian_subcats_sec}
In order to apply the DT formalism to stacks of semistable
 sheaves, we need to identify a suitable Abelian category.
 Fix a reduced polynomial $p=x^d/d!+\ldots$ and put
$$\Gamma_p=\{\nu\,\text{effective}\,;\,p_H(\nu)=p\}\cup\{0\}
,\quad
\Gamma_p^\times=\Gamma_p\smallsetminus\{0\}.$$  
Since the leading term of $P_H(\calF)$ is always positive, $\Gamma_p$ is a strictly convex cone.  We define
$$\calM^\ss_S(p)=\bigsqcup_{\nu\in\Gamma_p} \calM^\ss_S(\nu).$$

\begin{lemma}\label{lem:2CY semistable}
The full subcategory $\Coh^{\ss}(p)$ of $\calC$ whose objects are points of $\calM^\ss_S(p)$ is a $2$CY Abelian category.	
\end{lemma}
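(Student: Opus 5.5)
We need to show that $\Coh^{\ss}(p)$ is an Abelian subcategory of $\Coh(S)$ closed under extensions, and that the 2CY structure of $\calC=\Coh(S)$ restricts to it. The plan has three steps.

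\emph{Abelian.} Take a morphism $f\colon\calF\to\calG$ between Gieseker semistable sheaves of reduced Hilbert polynomial $p$. Both are pure of dimension $d$, the degree of $p$. Set $\calI=\mathrm{im}(f)$.

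\begin{itemize}
\item Since $\calI\subset\calG$ and $\calG$ is pure of dimension $d$, $\calI$ is pure of dimension $d$ (or zero). Semistability of $\calG$ gives $p(\calI)\leqslant p$.
\item $\calI$ is also a quotient of $\calF$. Since $\calF$ is semistable, the standard seesaw argument (additivity of $P$, with leading coefficients positive) gives $p(\calI)\geqslant p$.
\item Hence $p(\calI)=p$. Additivity then gives $p(\ker f)=p$ whenever $\ker f\neq 0$; as a subsheaf of $\calF$, $\ker f$ is pure of dimension $d$.
\item For $\mathrm{coker} f=\calG/\calI$, additivity gives $P(\mathrm{coker} f)=P(\calG)-P(\calI)$, with leading coefficient proportional to $p$. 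Purity of $\mathrm{coker} f$ is the delicate point. A subsheaf $\calT\subset\mathrm{coker} f$ of dimension $<d$ has preimage $\calI'\supset\calI$ in $\calG$. Then $P(\calI')=P(\calI)+P(\calT)$ has the same leading coefficient as $P(\calI)$ but a larger lower-order part, so $p(\calI')>p$. This contradicts semistability of $\calG$ unless $\calT=0$.
\item Semistability of $\ker f$ and $\mathrm{coker} f$ is inherited directly: subsheaves of $\ker f$ are subsheaves of $\calF$, and quotients of $\mathrm{coker} f$ are quotients of $\calG$.
\end{itemize}

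\emph{Extension-closed.} Let $0\to\calF\to\calE\to\calG\to 0$ with $\calF,\calG$ in $\Coh^{\ss}(p)$.

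\begin{itemize}
\item $\calE$ is pure of dimension $d$: a lower-dimensional subsheaf of $\calE$ maps to zero in $\calG$ by purity of $\calG$, so it lies in $\calF$, hence is zero.
\item For $\calE'\subset\calE$, write $\calE'\cap\calF$ and its image in $\calG$. Each has reduced Hilbert polynomial $\leqslant p$, so additivity gives $p(\calE')\leqslant p$.
\item Direct sums and summands are then clear, and the semistable locus is open, so the substack $\calM^\ss_S(p)$ is open in $\calM_{\calC}$.
\end{itemize}

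\emph{2CY.} We take the full stable subcategory of $\calC$ generated by $\Coh^{\ss}(p)$, or equivalently regard $\Coh^{\ss}(p)$ as an Abelian subcategory of the heart. Its Ext groups are those computed in $\Coh(S)$. Serre duality with $\omega_S\cong\calO_S$ gives functorial nondegenerate pairings $\Ext^i(\calF,\calG)\cong\Ext^{2-i}(\calG,\calF)^\vee$. These restrict verbatim because the subcategory is full, and the left/right Calabi--Yau structure of $\calC$ in the sense of \cite{BD19} restricts to the full subcategory.

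\emph{Main obstacle.} The main obstacle is the purity of the cokernel, which requires the comparison of Hilbert polynomials under the lexicographic order described in the fourth point of the first step. Everything else is the standard argument from \cite[\S1.5]{HL}.
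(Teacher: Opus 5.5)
Your proof is correct and follows essentially the same route as the paper, which also squeezes the image $\calH$ of $f$ via $p=p_H(\calF)\leqslant \pmin(\calH)\leqslant \pmax(\calH)\leqslant p_H(\calG)=p$, and then gets the reduced Hilbert polynomials and semistability of $\mathrm{Ker}(f)$ and $\mathrm{Coker}(f)$ from additivity together with $\pmax(\mathrm{Ker}(f))\leqslant p\leqslant \pmin(\mathrm{Coker}(f))$; your explicit purity check for the cokernel and your extension-closure argument make precise points the paper leaves implicit. As in the paper, the 2CY property is simply inherited from the ambient category $\calC$ (Serre duality on Ext groups computed in $\Coh(S)$), so you do not need your extra claim that a Brav--Dyckerhoff CY structure restricts to the full subcategory, which is in any case not automatic for left CY structures.
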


\begin{proof}
Let $\calF,\calG \in \calM^\ss_S(p)$ be non-zero and let $f \in \Hom_\calC(\calF,\calG)$. 
Let $\calH$ be the image of $f$ in the Abelian category $\calC$. 
Again, we may assume $\calH$ is non-zero, for otherwise $\text{Ker}(f)$ and $\text{Coker}(f)$ are obviously Gieseker-semistable with reduced Hilbert polynomial $p$.  
By semistability, we have 
$$p=p_H(\calF) \leqslant \pmin(\calH) \leqslant \pmax(\calH) \leqslant p_H(\calG)=p.$$
Hence $\calH$ is semistable and $p_H(\calH)=p$. Hence $p_H(\text{Ker}(f))=p$ if $\text{Ker}(f)\neq 0$ and $p_H(\text{Coker}(f))=p$ if $\text{Coker}(f)\neq 0$. Since
$$ \pmax(\text{Ker}(f)) \leqslant p_H(\calF)=p, \quad \pmin(\text{Coker}(f)) \geqslant p_H(\calF)=p$$
we deduce that $\text{Ker}(f)$ and $\text{Coker}(f)$ are semistable, and both are either zero or have reduced Hilbert polynomial $p$.
\end{proof}


\begin{remark}
Let $\nu_\pr$ be a primitive Mukai vector. Assume that $\nu_\pr$ is $H$-generic, 
see \cite[\S 4.C]{HL}. Then, the Mukai vector of any direct summand 
of a polystable coherent sheaf of Mukai vector in $\N \nu_\pr$ belongs also to $\N\nu_\pr$. 
Thus, the full subcategory of 
$\calC$ consisting of semistable sheaves of Mukai vectors in $\N\nu_\pr$ is an Abelian 2CY subcategory.
This may be false if $H$ is not generic.
\end{remark}

\medskip

\subsubsection{Geometric properties of the stacks $\calM_S$ and $\calM_S^{\ss}$}
We may now confirm that the assumptions in \S\ref{para-assumptions-M} hold.

\begin{lemma}
\hfill
\begin{enumerate}[label=$\mathrm{(\alph*)}$,leftmargin=8mm,itemsep=2mm]
\item
The stack $\calM_S$ satisfies the conditions \eqref{item-M-1-Artin}-\eqref{item-M-theta-reductive}.
\item 
For any reduced polynomial $p$, the Gieseker-semistable locus $\calM^\ss_S(p)$ in $\calM_S$ satisfies the conditions 
\eqref{item-M-1-Artin}-\eqref{item-ac-graded}.\end{enumerate}
\end{lemma}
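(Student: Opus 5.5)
We check the five conditions of \S\ref{para-assumptions-M} one by one, first for $\calM_S$ and then for the open substack $\calM^\ss_S(p)$.

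\textbf{Conditions for $\calM_S$.}
\begin{enumerate}[leftmargin=8mm,itemsep=1mm]
\item[(1)] Since $\Coh(S)$ is 2CY, \cite[thm.~5.5]{BD21} shows that $\calM_{\calC}$ is $0$-shifted symplectic. The substack $\calM_S$ parametrizes objects of the heart, so it is the classical Artin $1$-stack of coherent sheaves, locally of finite type. Its cotangent complex at $\calF$ is dual to $R\Hom(\calF,\calF)[1]$, which is concentrated in degrees $[-1,1]$. Hence $\calM_S$ is quasi-smooth.
\item[(2)] Coherent sheaves are closed under direct sums and direct summands, and the zero sheaf forms an open and closed point, namely the component of Mukai vector $0$.
\item[(3)] Affine diagonal is standard for stacks of coherent sheaves on a projective scheme. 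Isomorphisms between two families form a locally closed subscheme of the affine $\underline{\Hom}$-scheme cut out by invertibility, which is affine. Equivalently, one presents $\calM_S$ locally as $[Q/\mathrm{GL}_N]$ with $Q$ an open subscheme of a Quot scheme.
\item[(4)] $\Theta$-reductiveness of stacks of objects in the heart of a noetherian abelian category with proper-type behaviour is \cite[\S 7]{halpern2014structure}. Concretely, $\Filt(\calM_S)$ is the stack of filtered sheaves, and $\ev$ is representable and proper on each component because it is locally a relative flag-Quot scheme.
\end{enumerate}

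\textbf{Conditions for $\calM^\ss_S(p)$.} By Lemma~\ref{lem:2CY semistable}, $\Coh^\ss(p)$ is an abelian 2CY category, so it defines an open substack of $\calM_{\calC}$ with a $0$-shifted symplectic structure.
\begin{enumerate}[leftmargin=8mm,itemsep=1mm]
\item[(1),(3)] These are inherited by openness from $\calM_S$.
\item[(2)] Direct sums and summands of semistable sheaves with reduced Hilbert polynomial $p$ are again of this type.
\item[(4)] The filtrations of a semistable object in $\Coh^\ss(p)$ are exactly those whose subquotients lie in $\Coh^\ss(p)$. So $\Filt(\calM^\ss_S(p))$ is the preimage of $\calM^\ss_S(p)$ under $\ev$, and by Lemma~\ref{lem:2CY semistable} this preimage lies over $\Grad(\calM^\ss_S(p))$. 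Properness of $\ev$ then follows by base change from $\calM_S$.
\item[(5)] Each component $\calM^\ss_S(\nu)$ is quasi-compact by boundedness of semistable sheaves \cite[thm.~3.3.7]{HL}. By \eqref{eq-grad-explicit}, a component of $\Grad$ is a finite product $\prod_i\calM^\ss_S(\nu_i)$ with $\sum_i\nu_i=\nu$ fixed and $\nu_i\in\Gamma_p$. Only finitely many such decompositions exist, because $\Gamma_p$ is a strictly convex cone with finitely many effective classes below $\nu$. Hence these products are quasi-compact, and the map to $\calM^\ss_S(\nu)$ is quasi-compact.
\end{enumerate}

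\textbf{Expected main obstacle.} The main work is in (4), namely properness of $\ev$ on each component of $\Filt(\calM_S)$. I would reduce this to the valuative criterion: a filtration on the generic fibre of a flat family of sheaves over a DVR extends uniquely, by saturation, as in Langton-type arguments. I would also cite \cite{halpern2014structure} for the resulting $\Theta$-reductiveness of stacks of objects in the heart.
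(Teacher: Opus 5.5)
Your treatment of $\calM_S$ is essentially the paper's. The paper calls (1) and (2) obvious, says (3) follows from (1), and obtains (4) from properness of the relative flag--Quot scheme \cite[ex.~2.4]{halpern2016theta}. For $\calM^\ss_S(p)$ the paper instead cites \cite[prop.~6.14]{alper2023existence} together with Lemma~\ref{lem:2CY semistable}, and does not address (5). Your direct base-change argument, plus an explicit check of (5), is therefore a more self-contained alternative.

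However, the step that carries your (4) for the semistable locus is wrong as written. The full preimage $\ev^{-1}(\calM^\ss_S(p))\subset\Filt(\calM_S)$ is strictly larger than $\Filt(\calM^\ss_S(p))$. It contains, for example, $0\subset\calG\subset\calF$ with $\calF$ semistable and $\calG$ any subsheaf with $p(\calG)<p$, and the associated graded of such a filtration is not in $\Coh^\ss(p)$. Lemma~\ref{lem:2CY semistable} cannot exclude this, because it only concerns morphisms whose source and target already lie in $\Coh^\ss(p)$. Your first sentence (filtrations in $\Coh^\ss(p)$ have subquotients in $\Coh^\ss(p)$) is a tautology and does not imply the claimed equality.

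The missing input is numerical and must be stated component by component. Take $\uugamma=(\gamma_1,\dots,\gamma_\ell)$ with every $\gamma_i\in\Gamma_p$, and a filtration in $\Filt_{\uugamma}$ of a semistable $\calF$.
\begin{enumerate}
\item Each nonzero piece $\calF_i$ is a subsheaf of $\calF$, hence pure, and its class is a sum of elements of $\Gamma_p$, so it has reduced Hilbert polynomial $p$.
\item Each $\calF_i$ is semistable, since any destabilizing subsheaf of $\calF_i$ would destabilize $\calF$.
\item Lemma~\ref{lem:2CY semistable} applied to $\calF_{i+1}\hookrightarrow\calF_i$ then puts every subquotient in $\Coh^\ss(p)$.
\end{enumerate}
Hence $\Filt_{\uugamma}(\calM^\ss_S(p))=\ev_{\uugamma}^{-1}(\calM^\ss_S(p))$. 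These $\uugamma$ index exactly the components of $\Filt(\calM^\ss_S(p))$, so properness follows by base change along the open immersion, as you intended.

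In (5), the finiteness of decompositions is not needed. The condition is per connected component, and each component of $\Grad$ is a finite product of quasi-compact stacks $\calM^\ss_S(\nu_i)$ mapping to a quasi-separated target. Moreover, that finiteness is false as stated when $\deg p=2$ and $\rho(S)\geqslant 2$: every positive-rank class is effective, so $c_1$ can move freely in $\omega^\perp$. It only holds among classes with $\calM^\ss_S(\nu_i)\neq\emptyset$, by Bogomolov.

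A minor point in (3): $\underline{\mathrm{Isom}}(\calF,\calG)$ is affine because it is closed in $\underline{\Hom}(\calF,\calG)\times_T\underline{\Hom}(\calG,\calF)$. A locally closed subscheme of an affine scheme need not be affine.
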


\begin{proof}
The conditions \eqref{item-M-1-Artin} and \eqref{item-M-closed} are obvious.
Since the condition \eqref{item-M-affine-diagonal} follows from the condition \eqref{item-M-1-Artin}, 
it is enough to check the condition \eqref{item-M-theta-reductive}.
The condition \eqref{item-M-theta-reductive} for $\calM_S$ follows from the properness of the flag space,
see \cite[ex.~2.4]{halpern2016theta}.
Then the condition for the semistable locus follows from \cite[prop.~6.14]{alper2023existence} and Lemma~\ref{lem:2CY semistable}.
\end{proof}

\medskip

\subsubsection{The HN-stratification of $\calM_X(\nu)$}\label{sec:HN-X}
Recall the map $\kappa\colon\calM_X\to\calM_S$.
We define 
$$\calM_X^\ss(\nu)=\kappa^{-1}(\calM_S^\ss(\nu)).$$
Pulling back the HN-stratification we obtain a stratification
\begin{equation}\label{eq-HN-stratification-wMS}
\calM_X(\nu) =  \bigsqcup_{\uunu \in \HN(\nu)}  \calM_X^+(\uunu).
\end{equation}
Note that the open embedding
$\calM_X^+(\uunu) \subset \Filt(\calM_X(\nu))$
defines a derived structure on $\calM_X^+(\uunu)$.
In this setting, the correspondence \eqref{attractor-explicit} is
\begin{equation}\label{w-eq-strata-corresp}
\calM_X^\ss(\uunu) \xleftarrow{\wgr_{\uunu}} \calM_X^+(\uunu) 
\xhookrightarrow[]{\tilde\ev_{\uunu}} \calM_X(\nu).
\end{equation}
 an open part of the correspondence \eqref{attractor2}.
We abbreviate
\begin{align}\label{eq:phi-ss}\varphi_{\calM_X^\ss(\uunu)}=\varphi_{\calM_X^\ss(\nu_1)}\boxtimes\varphi_{\calM_X^\ss(\nu_2)}
\boxtimes\cdots\boxtimes\varphi_{\calM_X^\ss(\nu_\ell)}.
\end{align}
The integral isomorphism \eqref{eq-integral-identity} yields an isomorphism
\begin{equation}\label{eq-integrality-HN-strata}
\varphi_{\calM_X^\ss(\uunu)} \cong(\wgr_{\uunu})_* (\tilde\ev_{\uunu})^! \varphi_{\calM_X(\nu)}.
\end{equation}

\begin{remark}
Since $X=\mathbb A^1\times S$, a coherent sheaf $\mathcal E$ on $X$
with proper support is equivalent to a Higgs pair $(\mathcal F,\phi)$,
where $\mathcal F=\kappa_*\mathcal E$ is a coherent sheaf on $S$ and
$\phi\in \operatorname{End}(\mathcal F)$; see
\S\ref{sec:nilp-micro}. By \cite[Lemma 2.9]{TanakaThomas}, the Gieseker
semistability of $\mathcal E$ with respect to $\kappa^*H$ is equivalent
to the Gieseker semistability of the Higgs pair $(\mathcal F,\phi)$.
This is equivalent to the Gieseker semistability of $\mathcal F$ with
respect to $H$, since the maximal destabilizing subsheaf of $\mathcal F$
is preserved by $\phi$. This justifies our definition of
$\mathcal M_X^\ss(\nu)$.


\end{remark}

\subsubsection{The good moduli space and the Chow variety}\label{good-Chow}

The stack $\calM_S^\ss$ admits a good moduli space
given by the obvious morphism to its GIT moduli space.
Given the cohomology class $\alpha\in \H_2(S,\Q)$ of an effective 1-cycle on $S$ and any integer $n$, it yields the morphism
$$p_S\colon\calM_S^{\ss}(\alpha+n\delta)\to M_S(\alpha+n\delta).$$
For any scheme $M$ of finite type Rydh defines in \cite{Rydh}  a  
functor of proper equidimensional relative cycles on $M$. It is represented by a scheme
whose reduced subscheme coincides with the Suslin-Voevodsky quasi-projective variety if $M$ is a complex quasi-projective variety.
We call this reduced subscheme the Chow variety.
More precisely, let $B_S(\alpha)$ be the Chow variety parametrizing effective 1-cycles on the surface $S$ with homology class $\alpha$.
By \cite[thm.~7.14]{Rydh}, there is a morphism of reduced classical stacks
called the Hilbert-Chow map
$$\rho_S\colon\calM_S(\alpha+\Z\delta)_\red\to B_S(\alpha).$$
We now give a list of properties of good moduli spaces and Chow varieties for later use.

\smallskip

\noindent{(a)}
Since a good moduli space is a categorical quotient in the category of algebraic spaces,
the morphism $\rho_S$ factorizes through $\pi_S$.
To simplify the notation we omit the Mukai vector whenever it is clear from the context.
This yields a commutative square
\begin{align}\label{p-pi-rho-S}
\begin{split}
\xymatrix{
(\calM_S^\ss)_\red\ar@{^{(}->}[r]\ar[d]_-{p_S}&(\calM_S)_\red\ar[d]^-{\rho_S}\\
(M_S)_\red\ar[r]^-{\pi_S}&B_S.}
\end{split}
\end{align}
Replacing the surface $S$ by $X=\A^1\times S$, we have a good moduli space
$p_X\colon\calM_X^\ss\to M_X$ 
which yields the commutative square   
 \begin{align}\label{p-pi-rho-X}
\begin{split}
\xymatrix{
(\calM_X^{\ss})_\red\ar@{^{(}->}[r]\ar[d]_-{p_X}&(\calM_X)_\red\ar[d]^-{\rho_X}\\
(M_X)_\red\ar[r]^-{\pi_X}&B_X.}
\end{split}
\end{align}   
The morphisms $\pi_S$ and $\pi_X$ are projective.  For $\pi_S$ this is clear, since the domain is projective.  For $\pi_X$ it follows from the analogous statement for $\pi_{\mathbb{P}^1\times S}$ by base-changing along the morphism $B_{\mathbb{A}^1\times S}\hookrightarrow B_{X}$ induced by any inclusion $\mathbb{A}^1\hookrightarrow \mathbb{P}^1$.


\smallskip

\noindent{(b)} 
The direct sums of sheaves and cycles yield the morphisms
$$\oplus\colon\calM_S\times\calM_S\to\calM_S
,\quad
\oplus\colon M_S\times M_S\to M_S
,\quad
\oplus\colon B_S\times B_S\to B_S.$$
Indeed, by \cite{Rydh}, any finite morphism $f\colon Y\to Z$ yields a morphism of functors $B_Y\to B_Z$.
Applying this natural transformation to the obvious morphisms $S\coprod S\to S$  yields the
direct sum $\oplus\colon B_S\times B_S\to B_S$.
These morphisms fit into the following commutative diagram
 \begin{align}\label{oplus_M-B-S}
\begin{split}
\xymatrix{
(\calM_S)_\red\times (\calM_S)_\red\ar[d]_-{\rho_S\times\rho_S}\ar[r]^-\oplus&(\calM_S)_\red\ar[d]^-{\rho_S}\\
B_S\times B_S\ar[r]^-{\oplus}&B_S.}
\end{split}
\end{align}   
We also have direct sum morphisms $\oplus$ on $\calM_X$, $M_X$ and $B_X$ giving the diagrams
 \begin{align}\label{oplus_M-B-X}
\begin{split}
\xymatrix{
(\calM_X)_\red\times (\calM_X)_\red\ar[d]_-{\rho_X\times\rho_X}\ar[r]^-\oplus&(\calM_X)_\red\ar[d]^-{\rho_X}\\
B_X\times B_X\ar[r]^-{\oplus}&B_X.}
\end{split}
\end{align}   
Quasi-finiteness of the direct sum maps $\oplus$ on $M_S$, $B_S$, $M_X$ and $B_X$ is clear, since polystable sheaves admit only finitely many direct sum decompositions, and effective cycles admit only finitely many sum decompositions. Finiteness of these morphisms follows from projectivity.  For $S$ projectivity is clear, since the domains of both morphisms are projective, and for $X$ the statements can be deduced by base change along the morphisms induced by $X\hookrightarrow \mathbb{P}^1\times S$.

\smallskip

\noindent{(c)}
Given pairs $\uunu=(\nu_1,\nu_2)$ and $\uualpha=(\alpha_1,\alpha_2)$ with $\nu_i\in\alpha_i+\Z\delta$, 
we set $B_S(\uualpha)=\prod_iB_S(\alpha_i)$.
We have the following commutative diagram
\begin{align}\label{induction-B-S}
\begin{split}
\xymatrix{ 
\calM_S^\ss(\uunu)_\red\ar[d]_-{\rho_S} &\ar[l]_-{\gr_{\uunu}}  \calM_S^+(\uunu)_\red \ar[r]^-{\ev_{\uunu}}& 
\calM_S(\alpha)_\red\ar[d]^-{\rho_S}\\
B_S(\uualpha)  \ar[rr]^-\oplus&&B_S(\alpha)}
\end{split}
\end{align}
and
\begin{align}\label{induction-B-X}
\begin{split}
\xymatrix{ 
\calM_X^\ss(\uunu)_\red\ar[d]_-{\rho_X} &\ar[l]_-{\gr_{\uunu}}  \calM_X^+(\uunu)_\red \ar[r]^-{\ev_{\uunu}}& 
\calM_X(\alpha)_\red\ar[d]^-{\rho_X}\\
B_X(\uualpha)  \ar[rr]^-\oplus&&B_X(\alpha).}
\end{split}
\end{align}

\smallskip

\noindent{(d)}
The nilpotent and semi-nilpotent substacks introduced in \S\ref{sec:nilp-micro} are
$\Lambda_\nil\,,\,\Lambda_\snil\subset(\calM_X)_\red$.
We define
\begin{align*}
\Lambda_\nil^\ss=(\calM_X)_\red^\ss\cap\Lambda_\nil,\quad
\Lambda_\snil^\ss=(\calM_X)_\red^\ss\cap\Lambda_\snil.
\end{align*}
Following \eqref{0-i-gamma}, 
the zero section and the action map yield closed immersions
\begin{align}\label{0-MMB}
0\colon \Lambda_\nil\to(\calM_X)_\red,\quad
0\colon (M_S)_\red\to (M_X)_\red,\quad
0\colon B_S\to B_X
\end{align}
and
\begin{align}
i\colon \Lambda_\snil\to(\calM_X)_\red,\quad
i\colon \A^1\times (M_S)_\red\to (M_X)_\red,\quad
i\colon \A^1\times B_S\to B_X.
\end{align}
They yield the following commutative diagram 
\begin{align}\label{i-MMMB}
\begin{split}
\xymatrix@C=3em@R=3em{
 \A^1\times B_S  \ar[d]^-{i} &
\ar[l]_-{\id\times\pi_S} \A^1\times (M_S)_\red  \ar[d]^-{i} & \ar[l]_-{p_X}\Lambda_\snil^\ss \ar@{^{(}->}[r] \ar[d]^-i&
 \Lambda_\snil \ar[d]^-i\\
 B_X & \ar[l]_-{\pi_X} (M_X)_\red &\ar[l]_-{p_X}  (\calM_X)_\red^\ss \ar@{^{(}->}[r] & (\calM_X)_\red.
}
\end{split}
\end{align}
The right square is obviously Cartesian, the middle one by \eqref{nilp-equiv-def}.
We claim that the left square is also Cartesian.
Similarly, we have the following commutative diagram with Cartesian squares
\begin{align}\label{0-MMMB}
\begin{split}
\xymatrix@C=3em@R=3em{
B_S \ar[d]^-{0\times\id}
& (M_S)_{\red} \ar[l]_-{\pi_S} \ar[d]^-{0\times\id}
& \Lambda_\nil^\ss \ar[l]_-{p_X} \ar@{^{(}->}[r] \ar[d]
& \Lambda_\nil\ar[d] \\
\A^1\times B_S
& \A^1\times (M_S)_{\red} \ar[l]_-{\id\times\pi_S}
& \Lambda_\snil^\ss \ar[l]_-{p_X} \ar@{^{(}->}[r]
& \Lambda_\snil.
}
\end{split}
\end{align}

  
\smallskip

\noindent{(e)} 
We have the affine morphisms
$$\kappa\colon \calM_X\to\calM_S
,\quad
\kappa\colon M_X\to M_S
,\quad
\kappa\colon B_X\to B_S$$
yielding the following commutative square    
\begin{align}\label{oplus-S-X} 
\begin{split}
\xymatrix{B_X\times B_X\ar[r]^-{\oplus}\ar[d]_-{\kappa\times\kappa}&B_X\ar[d]^-\kappa\\
B_S\times B_S\ar[r]^-{\oplus}&B_S.}
\end{split}
\end{align}
We have also the Cartesian square
\begin{align}\label{oplus-X-S} 
\begin{split}
\xymatrix{B_S\times B_S\ar[r]^-{\oplus}\ar[d]&B_S\ar[d]\\
B_X\times B_X\ar[r]^-{\oplus}&B_X.}
\end{split}
\end{align}
The latter square is Cartesian since a union of families of cycles $C_1,C_2$ factors through the inclusion $S\hookrightarrow X$ if and only if both $C_1$ and $C_2$ do.

\medskip

\subsection{Kirwan surjectivity}\label{sec:deffiltration}

\subsubsection{Kirwan surjectivity}
 We now prove the Kirwan surjectivity property for the BM(=Borel--Moore) homology and the nilpotent 
 microlocal homology of the stack $\calM_S$.
We begin with the following purity statement.

\begin{theorem}\label{thm-purity}
Let $\uunu\in\HN(\nu)$. 
\hfill
\begin{enumerate}[label=$\mathrm{(\alph*)}$,leftmargin=8mm,itemsep=2mm]
\item
The mixed Hodge structures $\HH_*^{\BM}( \calM_S^+(\uunu),\Q)$ 
and  $\HH_*^{\Lambda_{\nil}}( \calM_S^+(\uunu),\Q)$ are pure.
\item
The mixed Hodge structures $\HH_*^{\BM}( \calM^+_S(\leqslant \!\uunu),\Q)$ 
and  $\HH_*^{\Lambda_{\nil}}(  \calM^+_S(\leqslant \!\uunu),\Q)$ are pure.
\end{enumerate}
\end{theorem}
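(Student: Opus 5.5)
Part (a) reduces to the semistable case, and part (b) follows from (a) by an induction over the strata in the total order on $\HN(\nu)$.

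\textbf{Part (a).} By Proposition~\ref{prop: purity-ss2}(b), $\gr_\uunu\colon \calM_S^+(\uunu)\to\calM_S^\ss(\uunu)$ is an iterated vector bundle stack. Pullback along it is therefore an isomorphism on Borel--Moore homology, up to a Tate twist. This reduces purity of $\HH^\BM_*(\calM_S^+(\uunu),\Q)$ to purity of $\HH^\BM_*(\calM^\ss_S(\uunu),\Q)\cong\bigotimes_i \HH^\BM_*(\calM^\ss_S(\nu_i),\Q)$.

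For each factor we use the good moduli space $p_S\colon\calM^\ss_S(\nu_i)\to M_S(\nu_i)$. By \cite{D24} the complex $(p_S)_*\D\Q$ is pure, and $M_S(\nu_i)$ is projective, so its derived global sections are pure. Equivalently, one can invoke Corollary~\ref{cor:coh-integrality} together with purity of the BPS sheaf.

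For microlocal homology we use the relative identity
\[
\H^{\Lambda_\nil}\cong \H^*(M,0^!(\tilde p)_*\varphi),
\]
obtained from Proposition~\ref{prop:nilp-equiv-def} by base change, as in the proof of Proposition~\ref{prop:nilp-cohomological-integrality}. By \eqref{eq:int-isom0} and \eqref{eq:BPS-S-X}, on the semistable locus this is $\Sym_\oplus$ of the pure objects $\cBPS_{M}\otimes\H^*(\BGM)_\vir$ up to twist, hence pure after applying global sections on the projective $M$. To pass from $\calM^\ss_S(\uunu)$ to the stratum $\calM_S^+(\uunu)$, we use the integral isomorphism \eqref{eq-integrality-HN-strata}. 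Restricted to nilpotent cones, this identifies $\HH^{\Lambda_\nil}_*(\calM^+_S(\uunu))$ with $\bigotimes_i\HH^{\Lambda_\nil}_*(\calM^\ss_S(\nu_i))$ up to twist. This step uses that the nilpotent cone of $\calM_X^+(\uunu)$ is the preimage of the product of the nilpotent cones, since nilpotence of $\phi$ is detected on associated gradeds.

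\textbf{Part (b).} Induct along the total order on $\HN(\nu)$ (Proposition~\ref{prop:finite interval partial}). Write $\calM^+_S(<\uunu)=\calM^+_S(\leqslant\uunu)\smallsetminus\calM^+_S(\uunu)$; by Proposition~\ref{prop:good HN order} the stratum $\calM^+_S(\uunu)$ is closed in $\calM^+_S(\leqslant\uunu)$ with this open complement. This gives the long exact sequence
\[
\cdots\to\HH^\BM_k(\calM^+_S(\uunu))\to\HH^\BM_k(\calM^+_S(\leqslant\uunu))\to\HH^\BM_k(\calM^+_S(<\uunu))\to\HH^\BM_{k-1}(\calM^+_S(\uunu))\to\cdots
\]
The outer terms are pure by (a) and by induction. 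The connecting map goes from a pure structure of weight $-k$ to one of weight $-k+1$, so it vanishes. The sequence therefore splits into short exact sequences, and the middle term is pure.

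The same argument applies to $\HH^{\Lambda_\nil}$, using the localization triangle for $\varphi|^!$ along the closed–open decomposition of $\Lambda_\nil$ pulled back to $\calM_X$.

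The main obstacle is the microlocal part of (a): making the reduction from the stratum to the semistable product rigorous with supports in $\Lambda_\nil$, and showing that the resulting cohomology carries weights in the correct range. I would first try to deduce it formally from the Borel--Moore case via Proposition~\ref{prop:nilp-cohomological-integrality}.
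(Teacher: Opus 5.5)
Your proof is correct and follows essentially the same route as the paper. For (a), purity comes from \cite{D24} together with the (nilpotent) cohomological integrality, and the stratum is reduced to the semistable product via the integral isomorphism \eqref{eq-integrality-HN-strata}, exactly as the paper does in the proof of Theorem~\ref{thm-transition-surjective}; for (b), you induct along the total order on $\HN(\nu)$ using the localization sequences and the weight argument. The differences are only cosmetic: the paper cites \cite[thm.~7.10]{D24} directly for the stratum and deduces purity of BPS cohomology from Borel--Moore purity via Corollary~\ref{cor:absolute-integrality}, whereas you go through the vector bundle stack $\gr_\uunu$ and the pure BPS sheaf on the projective good moduli space.
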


\begin{proof}
The purity of the mixed Hodge module $\HH_*^{\BM}( \calM_S^+(\uunu),\Q)$ follows from \cite[thm.~7.10]{D24}.
Using the cohomological integrality in Corollary \ref{cor:absolute-integrality}, we deduce that 
the BPS cohomology $\HH_{\mathrm{BPS}}^*(M_X(\nu),\Q)$ is also a pure mixed Hodge module.
The nilpotent cohomological integrality in Proposition \ref{prop:nilp-cohomological-integrality} implies the purity of 
$\HH_*^{\Lambda_{\nil}}( \calM_S^+(\uunu),\Q)$, proving (a).
Part (b) follows from (a) and Propositions \ref{prop:good HN order} and \ref{prop:finite interval partial}.
\end{proof}

\begin{theorem}\label{thm-transition-surjective}
The restriction yields split surjections of complexes of mixed Hodge structures
\begin{align*}
R\Gamma^{\BM}( \calM^+_S(\leqslant \!\uunu),\Q) &\to R\Gamma^{{\BM}}( \calM^+_S(<\!\uunu),\Q),\\
R\Gamma^{\Lambda_{\nil}}( \calM^+_S(\leqslant \!\uunu),\Q) &\to R\Gamma^{\Lambda_{\nil}}( \calM^+_S(<\!\uunu),\Q).
\end{align*}
\end{theorem}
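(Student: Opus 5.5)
The plan is the standard Kirwan-surjectivity argument: the localization triangle for the stratum $\calM^+_S(\uunu)$, together with purity, forces the boundary map to vanish. Write $\calU=\calM^+_S(\leqslant\!\uunu)$, an open quasi-compact substack by Proposition~\ref{prop:good HN order}. Inside it, $\calU'=\calM^+_S(<\!\uunu)$ is open with closed complement $Z=\calM^+_S(\uunu)$; this uses that $\leqslant$ refines $\preccurlyeq$ and that $\calM^+_S(\succcurlyeq\!\uunu)$ is closed. Let $i\colon Z\to\calU$ and $j\colon\calU'\to\calU$ be the inclusions. Applying $R\Gamma(\calU,-)$ to $i_*i^!\to\id\to j_*j^*$ with coefficients $\D\Q$ gives the triangle
\[R\Gamma^{\BM}(Z,\Q)\to R\Gamma^{\BM}(\calU,\Q)\to R\Gamma^{\BM}(\calU',\Q)\xrightarrow{+1}.\]
For microlocal homology I would do the same on $\widetilde\calU=\kappa^{-1}(\calU)$ with $\varphi_{\widetilde\calM}|^!_{\Lambda_\nil}$. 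Here $\kappa^{-1}$ of the strata stratifies $\Lambda_\nil\cap\widetilde\calU$, and \'etale/open restriction of the DT sheaf (\eqref{eq-etale-compatibility}) identifies the terms with $R\Gamma^{\Lambda_\nil}$ of $Z$, $\calU$ and $\calU'$.

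It therefore suffices to show that the connecting map $\H^{\BM}_*(\calU')\to\H^{\BM}_{*-1}(Z)$ vanishes, and likewise in the microlocal setting. The long exact sequence then splits into short exact sequences, and the restriction becomes surjective on cohomology. Since we work over a point in $\Dh(\bullet)$, whose objects are (pro-)complexes of graded-polarizable mixed Hodge structures, every complex is quasi-isomorphic to the sum of its cohomologies. A morphism that is surjective on cohomology therefore admits a section, which gives the required split surjection of complexes of mixed Hodge structures.

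For the vanishing, I would use weights. By Theorem~\ref{thm-purity}(a) and (b), $\H^\BM_*(Z)$, $\H^\BM_*(\calU)$ and $\H^\BM_*(\calU')$ are all pure; here $\calU'=\calM^+_S(\leqslant\!\uunu^-)$ for the predecessor $\uunu^-$ in the order $\N\cong\HN(\nu)$ of Proposition~\ref{prop:finite interval partial}, or $\calU'$ is empty. Purity means $\H^\BM_k$ has weight $-k$. The connecting map shifts homological degree by one while being a morphism of mixed Hodge structures, so it goes from weight $-k$ to weight $-(k-1)$, and morphisms between pure structures of different weights are zero. The microlocal case is identical using the second half of Theorem~\ref{thm-purity}, with the degree conventions adjusted for the Tate twists $\L^{\frac12\vdim}$.

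The step I expect to be the main obstacle is making sure the purity convention matches the degree, i.e. that "pure" means exactly weight $=-$degree in every degree and not merely weight concentrated up to a uniform shift. This matters particularly in the monodromic microlocal setting, where half Tate twists and the shifts $\vdim$ of the strata differ between $Z$ and $\calU$. I would normalize everything as $\H^\BM_*$ without twists before comparing weights. I would also check that the localization triangle exists for the pro-objects (ind/pro constructible sheaves on non-finite-type stacks), which is harmless here because $\calU$ is quasi-compact.
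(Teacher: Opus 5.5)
Your strategy is the paper's: localization triangle for the closed stratum, purity forcing the connecting map to vanish, hence a splitting. Your Borel--Moore half is fine. Here $Z=\calU\cap\calM^+_S(\succcurlyeq\!\uunu)$ is closed in $\calU$, the fibre term really is $R\Gamma^{\BM}(Z,\Q)$, and Theorem~\ref{thm-purity} applies to it. The paper gets purity of $\calM^+_S(<\!\uunu)$ by induction along the total order rather than by quoting Theorem~\ref{thm-purity}(b), which is itself deduced by this same argument; either works.

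The gap is in the microlocal half, at exactly the term you wave through. The fibre of $R\Gamma^{\Lambda_\nil}(\calU,\Q)\to R\Gamma^{\Lambda_\nil}(\calU',\Q)$ is
\[ R\Gamma\bigl(\Lambda^+_\nil(\uunu),\,\varphi_{\calM_X(\nu)}|^!_{\Lambda^+_\nil(\uunu)}\bigr),\qquad \Lambda^+_\nil(\uunu)=\Lambda_\nil\cap\calM^+_X(\uunu). \]
This is the local cohomology of the ambient DT sheaf along a \emph{closed} subset of $\widetilde\calU$.
\begin{itemize}
\item It is not an open or \'etale restriction, so \eqref{eq-etale-compatibility} gives you nothing.
\item It is not the microlocal homology of $\calM^+_S(\uunu)$ viewed as a quasi-smooth stack in its own right. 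That would be built on $\mathrm{T}^*[-1]\calM^+_S(\uunu)$. By contrast, $\calM^+_X(\uunu)$ is open in $\Filt(\calM_X)\cong\mathrm{T}^*[-2](\Filt(\calM_S)/\Grad(\calM_S)\times\calM_S)$ (Lemma~\ref{lem-attractor-is-conormal}), and the two differ in general already on classical fibres.
\end{itemize}
So as written you have no purity statement for this term.

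The missing idea is the integral isomorphism \eqref{eq-integrality-HN-strata},
\[ (\wgr_{\uunu})_*(\tilde\ev_{\uunu})^!\varphi_{\calM_X(\nu)}\cong\varphi_{\calM^\ss_X(\uunu)}. \]
\begin{itemize}
\item The HN filtration of $(\calF,\phi)$ is $\phi$-stable, so $\phi$ is nilpotent if and only if its associated graded is. Hence $\Lambda^+_\nil(\uunu)$ is the $\wgr_{\uunu}$-preimage of the product of the semistable nilpotent loci.
\item Base change and the K\"unneth formula then identify the fibre term with $\bigotimes_i R\Gamma(\Lambda_\nil\cap\calM^\ss_X(\nu_i),\varphi_{\calM^\ss_X(\nu_i)}|^!)$. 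This is a tensor product of nilpotent microlocal homologies of semistable stacks, which are pure by Theorem~\ref{thm-purity}.
\item Since $\vdim\widetilde\Filt=0$ by \eqref{eq-vanish-vdim}, no Tate twist appears. All three terms are cohomology with supports of the single sheaf $\varphi_{\calM_X(\nu)}$.
\end{itemize}
So the normalization issue you flagged is not the real obstacle; this identification is.

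A minor point on the splitting: formality over a point does not by itself give a section of a map that is surjective on cohomology. What makes the boundary morphism vanish in the derived category, including its $\Ext^1$-components, is purity of fibre and cofibre. The $\Ext^1$-components die because $\Ext^1$ between polarizable pure Hodge structures of equal weight vanishes.
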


\begin{proof}
We first prove the claim for the nilpotent microlocal homology.
Proposition \ref{prop:good HN order} yields a closed immersion
$$\calM_X^+(\uunu)\to \calM^+_X(\leqslant \!\uunu).$$ 
We abbreviate
$$\Lambda_\nil^+(\uunu)=\Lambda_{\nil}\cap\calM_X^+(\uunu).$$
Consider the localization fibre sequence
\[R\Gamma(\Lambda_\nil^+(\uunu)\,,\,\varphi_{\calM_X(\nu)} |^!_{\Lambda_\nil^+(\uunu)})  \to 
R\Gamma^{\Lambda_{\nil}}( \calM^+_S(\leqslant \!\uunu),\Q) \to 
R\Gamma^{\Lambda_{\nil}}( \calM^+_S(<\!\uunu),\Q).\]
 We claim that this fibre sequence splits.
Indeed, it is enough to show that the first two complexes are pure, because the purity imposes the boundary map to be zero.
By an induction with respect to the total order on $\HN(\nu)$,  
 it is enough to show that the first complex is pure.
 The integral isomorphism \eqref{eq-integrality-HN-strata} implies that
 \[(\wgr_{\uunu})_* \left( \varphi_{\calM_X(\nu)} |^!_{\Lambda_\nil^+(\uunu)}  \right)  \cong 
  \bigboxtimes_{i=1}^\ell \left( \varphi_{\calM_X^+(\nu_{i})} |^!_{\Lambda^+(\nu_{i})_{\nil}}  \right)\]
where $\uunu=(\nu_1,\nu_2,\dots,\nu_\ell)$. Therefore the purity of the complex of mixed Hodge modules
$$R\Gamma\Big(\Lambda_\nil^+(\uunu)\,,\,
 \varphi_{\calM_X(\nu)} \big|^!_{\Lambda_\nil^+(\uunu)}\Big)$$
 follows from the purity of the nilpotent microlocal homology
 $\H^{\Lambda_{\nil}}_{*}(\calM_X^\ss(\nu_i),\Q)$ proved in Theorem \ref{thm-purity}.
 Repeating the argument  with $\calM_X(\nu)$ instead of $\Lambda_{\nil}$,
 we obtain the statement for the BM homology.
\end{proof}

\begin{corollary}\label{cor-BM-limit}
\hfill
\begin{enumerate}[label=$\mathrm{(\alph*)}$,leftmargin=8mm,itemsep=2mm]
\item
There are isomorphisms of pro-vector spaces
\begin{align*}
 \H^\BM_{*} (\calM_{S}(\nu),\Q)_\pro
& \cong \Lim{\uunu \in \HN(\nu)} \H^{\BM}_{*}(\calM^+_S(\leqslant\!\uunu),\Q), \\
 \H^{\Lambda_{\nil}}_{*} (\calM_{S}(\nu),\Q)_\pro
 &\cong \Lim{\uunu \in \HN(\nu)} \H^{\Lambda_{\nil}}_{*}(\calM^+_S(\leqslant\!\uunu),\Q).  
 \end{align*}
 \item
 There are isomorphisms of topological vector spaces
\begin{align*}
 \H^\BM_{*} (\calM_{S}(\nu),\Q)
& \cong \lim_{\uunu \in \HN(\nu)} \H^{\BM}_{*}(\calM^+_S(\leqslant\!\uunu),\Q), \\
 \H^{\Lambda_{\nil}}_{*} (\calM_{S}(\nu),\Q)
 &\cong \lim_{\uunu \in \HN(\nu)} \H^{\Lambda_{\nil}}_{*}(\calM^+_S(\leqslant\!\uunu),\Q).  
 \end{align*}

 \end{enumerate}
\end{corollary}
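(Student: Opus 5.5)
The plan is to deduce both parts from three earlier results: the exhaustion of $\calM_S(\nu)$ by the quasi-compact opens $\calM^+_S(\leqslant\!\uunu)$ (Propositions~\ref{prop:good HN order} and~\ref{prop:finite interval partial}), the definition of $\RGpro$ as a formal limit over quasi-compact opens (\S\ref{sec-pro}), and the split surjectivity of restriction maps (Theorem~\ref{thm-transition-surjective}).

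\textbf{Step 1: cofinality.} By Proposition~\ref{prop:finite interval partial}(b) the refined total order gives an order-preserving bijection $\N\cong\HN(\nu)$. By Proposition~\ref{prop:good HN order}(b),(c), the substacks $\calM^+_S(\leqslant\!\uunu)$ form an increasing exhaustive chain of quasi-compact opens, indexed by $\N$. Conversely, any quasi-compact open $\calU\subset\calM_S(\nu)$ meets only finitely many strata (Proposition~\ref{prop: purity-ss2}(c)). Hence $\calU\subset\calM^+_S(\leqslant\!\uunu)$ for $\uunu$ large in the total order. So this chain is cofinal in the system of quasi-compact opens defining $\RGproBM(\calM_S(\nu),\Q)$, and
\[
\RGproBM(\calM_S(\nu),\Q)\cong \Lim{\uunu}R\Gamma^\BM(\calM^+_S(\leqslant\!\uunu),\Q).
\]
The same holds for $\Lambda_\nil$, using the open substacks $\kappa^{-1}(\calM^+_S(\leqslant\!\uunu))\subset\calM_X(\nu)$ and the sheaves $\varphi|^!_{\Lambda_\nil}$ restricted to them.

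\textbf{Step 2: passing to cohomology.} This is the only point needing care, since taking cohomology does not commute with formal limits of complexes in general. Here Theorem~\ref{thm-transition-surjective} says that each transition map $R\Gamma(\leqslant\!\uunu)\to R\Gamma(<\!\uunu)$ is a split surjection. Because the index set is $\N$, one can choose the splittings compatibly by induction along the chain. This identifies the pro-complex with a pro-system of graded vector spaces whose transition maps are surjective. Taking cohomology levelwise then gives (a).

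\textbf{Step 3: part (b).} The realization functor takes the honest limit of the pro-system. Since the system is indexed by $\N$ with surjective transitions, it satisfies Mittag-Leffler, so $\lim^1=0$. Hence $\H^\BM_*(\calM_S(\nu),\Q)=\H^*(\lim R\Gamma)$ equals $\lim \H^\BM_*(\calM^+_S(\leqslant\!\uunu),\Q)$. The topology on the left is by definition the limit topology, so the isomorphism is one of topological vector spaces. The microlocal case is identical.

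The main obstacle is expected to be Step 2, namely justifying that cohomology commutes with the formal limit. The split surjectivity from purity is exactly what makes this work.
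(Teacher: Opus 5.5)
Your proof is correct, and for part (a) it is the paper's argument: (a) is exactly your Step 1, the cofinality of the chain $\calM^+_S(\leqslant\!\uunu)$ among quasi-compact opens (and of its preimages in $\calM_X(\nu)$, for the $\Lambda_{\nil}$ version).

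Your Step 2, however, addresses a non-issue. As in \S\ref{sec-pro}, $\H^i$ on $\Pro(\calC)$ is the levelwise extension. So $\H^i(\Lim{\alpha}X_\alpha)=\Lim{\alpha}\H^i(X_\alpha)$ holds by definition, and no splittings are needed for (a). Cohomology fails to commute with limits only after realization, which is the content of (b).

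For (b), your route differs from the paper's in where Mittag--Leffler comes from. You use surjectivity of the transition maps, supplied by Theorem~\ref{thm-transition-surjective}. The paper uses only that each $\H^\BM_i(\calM^+_S(\leqslant\!\uunu),\Q)$ and $\H^{\Lambda_{\nil}}_i(\calM^+_S(\leqslant\!\uunu),\Q)$ is finite dimensional, so the images in the tower stabilize.

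The two inputs buy different things. The paper's is a soft finiteness property of quasi-compact stacks, independent of the purity theorem of \cite{D24} and of cohomological integrality, on which Theorem~\ref{thm-transition-surjective} rests. Yours is heavier but does not need finite-dimensionality. Combined with the compatible splittings of your Step 2, it already yields the product decomposition of Corollary~\ref{cor:Kirwan-surj}(b). There is no circularity, since Theorem~\ref{thm-transition-surjective} is proved without this corollary.
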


\begin{proof}
Part (a) follows from the fact that the open subsets $\calM^+_S(\leqslant\!\uunu)$ are quasi-compact and recover 
$\calM_S(\nu)$.
Part (b) follows from (a), because the homology groups 
$\H^{\BM}_i(\calM^+_S(\leqslant\!\uunu),\Q)$ and
$\H^{\Lambda_{\nil}}_i(\calM^+_S(\leqslant\!\uunu),\Q)$ are finite dimensional
for each $i$, 
hence the Mittag--Leffler condition is satisfied. 
\end{proof}

Here $\Lim{}$ is the limit as pro-vector space as in \S\ref{sec-pro}.
Theorem~\ref{thm-transition-surjective} and Corollary~\ref{cor-BM-limit} implies the following surjectivity result to which we will refer to as Kirwan surjectivity.

\begin{corollary}\label{cor:Kirwan-surj}\hfill
\hfill
\begin{enumerate}[label=$\mathrm{(\alph*)}$,leftmargin=8mm,itemsep=2mm]
\item
The restriction yields split surjections of mixed Hodge structures
\begin{align*}
\H_*^{\BM}(\calM_{S}(\nu),\Q) \to \H_*^{\BM}(\calM_{S}^{\ss}(\nu),\Q), \quad
\H_*^{\Lambda_{\nil}}(\calM_{S}(\nu),\Q) \to \H_*^{\Lambda_{\nil}}(\calM_{S}^{\ss}(\nu),\Q). 
\end{align*}
\item
We have the following isomorphisms of mixed Hodge structures
 \begin{align*}
\H_*^{\BM}(\calM_{S}(\nu),\Q) \cong \prod_{\uunu}\mathbb{L}^{ -\vdim\gr_\uunu} \otimes \H_*^{\BM}(\calM_S^\ss(\uunu),\Q)
,\quad
\H_*^{\Lambda_{\nil}}(\calM_{S}(\nu),\Q) \cong \prod_{\uunu}\mathbb{L}^{ -\vdim\gr_\uunu} \otimes 
\H_*^{\Lambda_{\nil}}(\calM_S^\ss(\uunu),\Q).
\end{align*}
\end{enumerate}
\qed
\end{corollary}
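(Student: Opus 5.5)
The plan is to derive both parts from the split surjections of Theorem~\ref{thm-transition-surjective} and the limit description of Corollary~\ref{cor-BM-limit}. The BM and nilpotent microlocal cases run in parallel, so I describe only the BM case.

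For part (a), I fix $\nu$ and use the total order on $\HN(\nu)$ from Proposition~\ref{prop:finite interval partial}, identified with $\N$. The semistable stratum $\calM_S^\ss(\nu)=\calM_S^+((\nu))$ is the minimal element for $\preccurlyeq$: its convex path is the straight segment, and every other path lies below it. It is therefore also minimal for the refined total order, so $\calM^+_S(\leqslant\!(\nu))=\calM_S^\ss(\nu)$. Every restriction $\H^\BM_*(\calM^+_S(\leqslant\!\uunu'),\Q)\to\H^\BM_*(\calM^+_S(\leqslant\!\uunu),\Q)$ with $\uunu\leqslant\uunu'$ is a finite composite of the split surjections of Theorem~\ref{thm-transition-surjective}. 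Composing the isomorphism of Corollary~\ref{cor-BM-limit}(b) with the projection to the term $\uunu=(\nu)$ shows that the restriction $\H^\BM_*(\calM_S(\nu),\Q)\to\H^\BM_*(\calM^\ss_S(\nu),\Q)$ is a split surjection of mixed Hodge structures. To see that the splitting is compatible with Hodge structures, I note that the splittings at each finite stage are splittings of pure complexes of mixed Hodge structures, and an inverse limit of compatible sections gives a section because the system is indexed by $\N$.

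For part (b), I identify each kernel. By the localization sequence in the proof of Theorem~\ref{thm-transition-surjective}, the kernel of $\H^\BM_*(\calM^+_S(\leqslant\!\uunu),\Q)\to\H^\BM_*(\calM^+_S(<\!\uunu),\Q)$ is $\H^\BM_*(\calM_S^+(\uunu),\Q)$. Proposition~\ref{prop: purity-ss2}(b) says $\gr_\uunu$ is an iterated vector bundle stack of rank $\vdim\gr_\uunu$, so smooth pullback gives an isomorphism $\H^\BM_*(\calM_S^+(\uunu),\Q)\cong\L^{-\vdim\gr_\uunu}\otimes\H^\BM_*(\calM_S^\ss(\uunu),\Q)$. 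For the microlocal version I instead use the integral isomorphism \eqref{eq-integrality-HN-strata} restricted to $\Lambda_\nil$. Because the sequences split, induction along $\N\cong\HN(\nu)$ gives $\H^\BM_*(\calM^+_S(\leqslant\!\uunu),\Q)\cong\bigoplus_{\uunu'\leqslant\uunu}\L^{-\vdim\gr_{\uunu'}}\otimes\H^\BM_*(\calM^\ss_S(\uunu'),\Q)$. Here the splittings are chosen compatibly, so that the transition maps become the projections. Passing to the limit via Corollary~\ref{cor-BM-limit}(b) then turns these direct sums into the product.

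I expect the main obstacle to be making the splittings compatible and genuinely Hodge-theoretic. The tools are purity (Theorem~\ref{thm-purity}) and the semisimplicity of the category of pure Hodge structures of fixed weight. In each cohomological degree the finite-stage groups are finite dimensional and pure, so I can choose the sections inductively, extending each one to the next stage. The limit is then the product, and Mittag--Leffler holds as in Corollary~\ref{cor-BM-limit}.
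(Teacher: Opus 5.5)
Your argument is essentially the paper's: the corollary follows directly from the split surjections of Theorem~\ref{thm-transition-surjective} and the limit description of Corollary~\ref{cor-BM-limit}. The successive kernels are identified via the vector bundle stack $\gr_\uunu$ for Borel--Moore homology and via the integral isomorphism \eqref{eq-integrality-HN-strata} on $\Lambda_\nil$, and along an $\N$-indexed chain any choice of sections is automatically compatible.

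One caveat concerns your claim that $(\nu)$ is the least element of $\HN(\nu)$. For two-dimensional $\nu$, slope-semistable but Gieseker-unstable HN-types have the same pair of paths as $(\nu)$, and the paper declares them incomparable to it. So $(\nu)$ is only minimal, not least, for $\preccurlyeq$. You should therefore use a refining total order that places $(\nu)$ first, which is in any case what openness of the sets $\calM^+_S(\leqslant\uunu)$ requires.
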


\begin{remark}
\hfill
\begin{enumerate}[label=$\mathrm{(\alph*)}$,leftmargin=8mm,itemsep=2mm]
\item
 $\H_i^\BM( \calM_S^\ss(\nu),\Q)$ is finite dimensional for all $i$ and $\nu$.
 However
 $\H_i^\BM(\calM_S(\nu),\Q)$ is infinite dimensional unless
$\nu \in \N \delta$. 
\item 
As a topological vector space $\H^\BM_i(\calM_{S}(\nu),\Q)$ is the limit of $\H^\BM_i(\mathcal{U},\Q)$
over all quasi-compact open subsets $\calU$, with the pro-topology.
It is also the image by the realization functor 
$\Pro(\Vect)\to\Vect$
of the pro-vector space $\H^\BM_{i}(\calM_{S}(\nu),\Q)_\pro$.
The action of $\H^*(\calM_S(\nu),\Q)_\pro$ on $\H^\BM_{*}(\calM_{S}(\nu),\Q)_\pro$ is the collection of actions of 
$\H^*(\mathcal{U},\Q)$ on $\H_*^\BM(\mathcal{U},\Q)$ for all quasi-compact open subset $\calU$.
\end{enumerate}
\end{remark}

\subsubsection{Relative Kirwan surjectivity}\label{sec:rel Kirwan}
Let $\nu$ be a one-dimensional Mukai vector.
In this section we decompose the complexes 
$(\rho_{X})_*\varphi_{\calM_X(\nu)}$ and $(\rho_S)_*\mathbb{D}\mathbb{Q}_{\calM_S(\nu)}$ 
into pieces associated with Harder-Narasimhan strata.
Recall that for each tuple $\uunu=(\nu_1, \ldots, \nu_\ell)$ in $\HN(\nu)$ we defined in \eqref{eq:phi-ss} the complex of mixed Hodge modules
$\varphi_{\calM_X^\ss(\uunu)}$ on $\calM_X^\ss(\uunu)$. 
Taking the pushforward by the map $\rho_X$ and the sum $\oplus$ we get
the complex of mixed Hodge modules over the Chow variety $B_X$ given by
\begin{equation}\label{eq:rho-phi-ss}
(\rho_X)_*\varphi_{\calM_X^\ss(\uunu)} =
(\rho_X)_*\varphi_{\calM_X^\ss(\nu_1)}\boxtimes_\oplus(\rho_X)_*\varphi_{\calM_X^\ss(\nu_2)}
\boxtimes_\oplus\cdots \boxtimes_{\oplus}(\rho_X)_*\varphi_{\calM_X^\ss(\nu_\ell)}\ .
\end{equation} 


\begin{proposition}\label{prop:rho-phi}

There exist isomorphisms of complexes of mixed Hodge modules in $\Pro(\Dh(B_X))$ and $\Pro(\Dh(B_S))$ 
\begin{align}\label{eq:rho-phi}
	\begin{aligned}
(\rho_X)^{\mathrm{pro}}_*\varphi_{\calM_X(\nu)} &\cong\Lim{\uunu \in \HN(\nu)}\bigoplus_{\uunu'\leqslant\uunu}(\rho_X)_*\varphi_{\calM_X^\ss(\uunu')} 
,\\
(\rho_S)^{\mathrm{pro}}_*\D\Q_{\calM_S(\nu)} &\cong\Lim{\uunu \in \HN(\nu)}\bigoplus_{\uunu'\leqslant\uunu}(\rho_S)_*\D\Q_{\calM_S^\ss(\uunu')} 
\otimes\L^{-\vdim\gr_\uunu}.
	\end{aligned}
\end{align} 
\end{proposition}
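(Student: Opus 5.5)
We prove the sheaf-level statement by mimicking the proofs of Theorem~\ref{thm-transition-surjective} and Corollary~\ref{cor-BM-limit} over the Chow variety rather than over a point. First, by Proposition~\ref{prop:good HN order} and Proposition~\ref{prop:finite interval partial}, the open substacks $\calM^+_X(\leqslant\!\uunu)=\kappa^{-1}\calM^+_S(\leqslant\!\uunu)$ are quasi-compact and exhaust $\calM_X(\nu)$ along the total order $\N\cong\HN(\nu)$. Hence, by the definition of the pro-pushforward,
\[(\rho_X)^{\pro}_*\varphi_{\calM_X(\nu)}\cong \Lim{\uunu\in\HN(\nu)}(\rho_X)_*\big(\varphi_{\calM_X(\nu)}|_{\calM^+_X(\leqslant\uunu)}\big).\]
It therefore suffices to prove, for each $\uunu$, a non-canonical splitting
\[(\rho_X)_*\big(\varphi_{\calM_X(\nu)}|_{\calM^+_X(\leqslant\uunu)}\big)\cong\bigoplus_{\uunu'\leqslant\uunu}(\rho_X)_*\varphi_{\calM_X^\ss(\uunu')}\]
in $\Dh(B_X)$. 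These splittings must be compatible with the restriction maps, so that the transition maps of the pro-system become the projections onto the summands indexed by $\uunu'<\uunu$.

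We argue by induction along the total order. The closed stratum $\calM^+_X(\uunu)\subset\calM^+_X(\leqslant\!\uunu)$ gives a localization triangle
\[(\rho_X)_*\tilde\ev_{\uunu,*}\tilde\ev_\uunu^!\varphi\to(\rho_X)_*\varphi|_{\leqslant\uunu}\to(\rho_X)_*\varphi|_{<\uunu}\xrightarrow{+1}.\]
We identify the first term by writing $\rho_X\circ\tilde\ev_\uunu=\oplus\circ\rho_X\circ\wgr_\uunu$, using the commutative diagram \eqref{induction-B-X}; this is legitimate since the sheaf theory only sees reduced stacks. Combining this with the integral isomorphism \eqref{eq-integrality-HN-strata}, the first term becomes
\[\oplus_*(\rho_X)_*\varphi_{\calM_X^\ss(\uunu)}=(\rho_X)_*\varphi_{\calM_X^\ss(\uunu)}\]
in the notation of \eqref{eq:rho-phi-ss}, where we use the Thom--Sebastiani isomorphism and the fact that $\oplus$ is finite, so that $\oplus_*=\oplus_!$.

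The main obstacle is showing that the boundary map of this triangle vanishes. Pointwise purity of global sections, as used in Theorem~\ref{thm-transition-surjective}, is no longer enough at the sheaf level; we need purity of the complexes on $B_X$. Here we use that each $(p_X)_*\varphi_{\calM^\ss_X(\nu_i)}$ is pure: it is semisimple and pure by \cite{D24}, transported through dimensional reduction and cohomological integrality (Theorem~\ref{thm:coh-integrality}). Since $\pi_X$ is projective and $\oplus$ is finite, the decomposition theorem makes $(\rho_X)_*\varphi_{\calM_X^\ss(\uunu)}$ pure of the expected weight. By induction the third term is also pure of that weight. A morphism between pure complexes of the same weight, shifted by $[1]$, vanishes, because $\Hom(K,L[1])$ has weight $>0$ for $K,L$ pure of weight $w$ (weight formalism in Saito's theory). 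So the triangle splits, and the splitting of $(\rho_X)_*\varphi|_{\leqslant\uunu}$ restricted to $<\!\uunu$ is compatible with the previous one.

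For the second isomorphism we can either repeat the same argument with the 2D correspondence \eqref{eq-strata-corresp-not-pure}, or apply $\kappa_*$ to the first. In the repetition, the vector bundle stack $\gr_\uunu$ of Proposition~\ref{prop: purity-ss2} gives
\[\gr_{\uunu,*}\ev_\uunu^!\D\Q\cong\D\Q_{\calM^\ss_S(\uunu)}\otimes\L^{-\vdim\gr_\uunu},\]
and purity of $(\rho_S)_*\D\Q_{\calM^\ss_S(\nu_i)}$ again follows from \cite{D24} and projectivity of $\pi_S$. If we instead apply $\kappa_*$, we use dimensional reduction (Theorem~\ref{thm-dimensional-reduction}) together with the square $\rho_S\kappa=\kappa\rho_X$ and the compatibility \eqref{oplus-S-X}; the Lefschetz twists then come out as \eqref{eq:dnu}.
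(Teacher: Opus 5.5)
Your proposal is correct and follows essentially the same route as the paper: you exhaust $\calM_X(\nu)$ by the quasi-compact opens $\calM^+_X(\leqslant\uunu)$, identify the closed-stratum term of each localization triangle via the integral isomorphism \eqref{eq-integrality-HN-strata} and the diagram \eqref{induction-B-X}, and split the triangles inductively using purity of $(\rho_X)_*\varphi_{\calM_X^\ss(\uunu)}$ (obtained from \cite{D24}, the support lemma and cohomological integrality, together with projectivity of $\pi_X$ and finiteness of $\oplus$). The paper deduces the $S$-version exactly as in your second option, by applying $\kappa_*$ and dimensional reduction and using \eqref{eq:dnu}; your alternative of rerunning the induction on $S$, using that $\gr_\uunu$ is a vector bundle stack, works equally well.
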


\begin{proof}
The proof is similar to the proof of Theorem \ref{thm-transition-surjective}.
The complex of mixed Hodge modules $(p_S)_*\D\Q_{\calM_S^\ss(\nu)}$ is semisimple and pure by \cite[thm.~B]{D24}.
The morphism $\pi_S$ is proper by \S\ref{good-Chow}.
Hence the complexes of mixed Hodge modules $(\rho_S)_*\D\Q_{\calM_S^\ss(\nu)}$ 
and $(\rho_S)_*\D\Q_{\calM_S^\ss(\uunu)}$ are also semisimple and pure.
Using Proposition~\ref{prop:support-lemma} and Theorem~\ref{thm:coh-integrality} we deduce that the complexes of mixed Hodge modules $(\rho_X)_*\varphi_{\calM_X^\ss(\nu)}$ 
and $(\rho_X)_*\varphi_{\calM_X^\ss(\uunu)}$ are also semisimple and pure.

Next, for each $\uunu\in\HN(\nu)$ we have the locally closed immersion 
$$\tilde\ev_\uunu\colon \calM_X^+(\uunu)\to\calM_X(\nu).$$
By Proposition \ref{prop:good HN order}, we also have the open immersion
$$\tilde\ev_{\leqslant\uunu}\colon \calM^+_X(\leqslant\!\uunu)\to\calM_X(\nu).$$
We claim that, in $\Dh(B_X(\nu))$, there is an isomorphism
\begin{equation}\label{decomp1}
(\rho_X)_*(\tilde\ev_{\leqslant\uunu})_*(\tilde\ev_{\leqslant\uunu})^!\varphi_{\calM_X(\nu)} \cong
\bigoplus_{\uunu'\leqslant\uunu}(\rho_X)_*\varphi_{\calM_X^\ss(\uunu')}.
\end{equation} 
We prove this claim by induction on the tuple $\uunu$.
Indeed, consider the triangle 
$$\xymatrix{(\tilde\ev_\uunu)_*(\tilde\ev_\uunu)^!\varphi_{\calM_X(\nu)}\ar[r]&
(\tilde\ev_{\leqslant\uunu})_*(\tilde\ev_{\leqslant\uunu})^!\varphi_{\calM_X(\nu)}\ar[r]
&(\tilde\ev_{<\uunu})_*(\tilde\ev_{<\uunu})^!\varphi_{\calM_X(\nu)}\ar[r]^-{+1}&}$$
Applying the pushforward by the map $\rho_X$ yields the triangle
$$\xymatrix{(\rho_X)_*(\tilde\ev_\uunu)_*(\tilde\ev_\uunu)^!\varphi_{\calM_X(\nu)}\ar[r]&
(\rho_X)_*(\tilde\ev_{\leqslant\uunu})_*(\tilde\ev_{\leqslant\uunu})^!\varphi_{\calM_X(\nu)}\ar[r]
&(\rho_X)_*(\tilde\ev_{<\uunu})_*(\tilde\ev_{<\uunu})^!\varphi_{\calM_X(\nu)}\ar[r]^-{+1}&}$$
By induction, the complex $(\rho_X)_*(\tilde\ev_{<\uunu})_*(\tilde\ev_{<\uunu})^!\varphi_{\calM_X(\nu)}$ is semisimple, pure 
and it decomposes as in \eqref{decomp1}.
The isomorphism \eqref{eq-integrality-HN-strata}
and the commutative diagram \eqref{induction-B-X} yield an isomorphism
\begin{align*}
(\rho_X)_*(\tilde\ev_\uunu)_*(\tilde\ev_\uunu)^!\varphi_{\calM_X(\nu)}
&\cong(\rho_X)_*\varphi_{\calM_X^\ss(\uunu)}.
\end{align*}
Since the complex of mixed Hodge modules 
$(\rho_X)_*\varphi_{\calM_X^\ss(\uunu)}$ is pure,
the triangle above splits, proving the proposition.
More precisely, the complex $(\rho_X)_*\varphi_{\calM_X(\nu)}$ is non canonically isomorphic to the image by the realization functor
of the system of semisimple objects  $$\bigoplus_{\uunu'\leqslant\uunu}{}^\p\gr\Big((\rho_X)_*\varphi_{\calM_X^\ss(\uunu')}\Big)$$
as $\uunu$ runs over $\HN(\nu)$.
The second isomorphism in \eqref{eq:rho-phi} follows from the first one, the dimensional reduction isomorphism 
\eqref{eq:dimensional-reduction} and the formula \eqref{eq:dnu}.
\end{proof}

Recall the closed immersion $i\colon \A^1\times M_S\to M_X$.
The counit $i_!i^!\to 1$ yields a morphism of pro-complexes of mixed Hodge modules 
\begin{align}\label{morphism1}
f_\nu\colon \kappa_*(\rho_X)^{\pro}_*i_!i^!\varphi_{\calM_X(\nu)}\to\kappa_*(\rho_X)^{\pro}_*\varphi_{\calM_X(\nu)}.
\end{align}
We have the following analog of Corollary \ref{cor:snilp-BPS}.

\begin{proposition}\label{prop:coker} 
The image of the associated graded
${}^\p\widehat\gr(f_\nu)$ of the morphism 
$f_\nu$ in \eqref{morphism1} is isomorphic to the complex of mixed Hodge modules
$$(\pi_S)_*\cBPS_{M_S(\nu)}\otimes \H^*(\BGM,\Q)_\vir\otimes\L^{-\frac12}.$$
\end{proposition}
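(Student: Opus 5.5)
The plan is to reduce Proposition~\ref{prop:coker} to the relative microlocal statement in Remark~\ref{rem:snilp-BPS}, applied stratum by stratum using the relative Kirwan decomposition of Proposition~\ref{prop:rho-phi}. Both the source and the target of $f_\nu$ are computed as pro-limits over $\uunu\in\HN(\nu)$ of the truncations $(\tilde\ev_{\leqslant\uunu})_*(\tilde\ev_{\leqslant\uunu})^!$. The counit $i_!i^!\to 1$ is natural, so it is compatible with the localization triangles used in the proof of Proposition~\ref{prop:rho-phi}.

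\textbf{Step 1: decompose $f_\nu$ over the strata.} For each $\uunu$, apply $i_!i^!$ to the split triangle associated with $\calM^+_X(\uunu)\subset\calM^+_X(\leqslant\!\uunu)$. Here I use that $\rho_X$ restricted over $\Lambda_\snil$ lands in $\A^1\times B_S$; this is the Cartesian left square of \eqref{i-MMMB}, which the paper claims and I take as given. The goal is to identify ${}^\p\widehat\gr(f_\nu)$ with the pro-limit of the direct sum over $\uunu'\leqslant\uunu$ of the maps
$$\kappa_*i_!i^!(\rho_X)_*\varphi_{\calM_X^\ss(\uunu')}\to\kappa_*(\rho_X)_*\varphi_{\calM_X^\ss(\uunu')}.$$
This requires that the splittings chosen by purity are compatible with the counit. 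I would argue this by taking perverse associated graded pieces: after applying ${}^\p\gr$, everything is semisimple, the triangles split canonically at the level of ${}^\p\gr$, and the counit becomes a morphism of graded semisimple objects.

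\textbf{Step 2: compute each summand.} For $\uunu'=(\nu_1,\dots,\nu_\ell)$ with $\ell>1$, the object $(\rho_X)_*\varphi_{\calM_X^\ss(\uunu')}$ is the $\boxtimes_\oplus$-product \eqref{eq:rho-phi-ss}. By base change along $\oplus$ (the Cartesian square \eqref{oplus-X-S} together with its $\A^1$-analogue) and the support lemma \eqref{eq:support-lemma}, the counit on this summand factors through the diagonal $\A^1\subset\A^\ell$ exactly as in \eqref{eq:adjunction lemma1}--\eqref{eq:adjunction lemma2}. Lemma~\ref{claim} then shows that this map vanishes after $\kappa_*$.

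For $\ell=1$, i.e.\ the semistable stratum $\uunu'=(\nu)$, Remark~\ref{rem:snilp-BPS}, pushed forward along the proper map $\pi_X$, identifies the image of the counit with $\kappa_*(\pi_X)_*\cBPS_{\wM}\otimes\H^*(\BGM,\Q)_\vir$. Combining \eqref{eq:BPS-S-X} with the support lemma, this is $(\pi_S)_*\cBPS_{M_S(\nu)}\otimes\H^*(\BGM,\Q)_\vir\otimes\L^{-1/2}$.

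\textbf{Step 3: conclude.} Summing over the strata, only the semistable stratum contributes to the image, so the pro-limit stabilizes and the image is the claimed complex.

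\textbf{Main obstacle.} The delicate point is the vanishing for $\ell>1$. Lemma~\ref{claim} gives vanishing only after pushing forward along the $\A^\ell$-directions, that is, after $\kappa_*$, and not at the sheaf level over $B_X$. I need to check that the vanishing survives at the level of ${}^\p\gr$ on $B_S$.

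I would attempt this by writing $\kappa_*$ of the counit as the action of the Euler class $e(N)\in\H^{2\ell-2}$, which is zero because $N$ is trivial. This is a statement about a morphism of complexes on $B_S$, obtained via base change along $B_S\times\A^\ell\to B_S$, which is legitimate by Proposition~\ref{prop:equivariance}(a). So it should hold relatively and not just on global sections.
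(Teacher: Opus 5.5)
Your proposal is essentially the paper's proof: you split $f_\nu$ along the relative Kirwan decomposition of Proposition~\ref{prop:rho-phi}, base change $i_!i^!$ along $\oplus$ to $\uui_!(\Delta\times\id)_!(\Delta\times\id)^!\uui^!$, kill the strata with $\ell>1$ after $\kappa_*$ using Lemma~\ref{claim}, and read off the semistable stratum from Remark~\ref{rem:snilp-BPS} and \eqref{eq:BPS-S-X}. Two steps can be simplified. The compatibility in Step~1 comes for free by applying $i_!i^!$ to the same splitting and using naturality of the counit; your appeal to splittings that are ``canonical at the level of ${}^\p\gr$'' is not a valid justification, since split short exact sequences of semisimple objects do not split canonically. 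Your ``main obstacle'' is already resolved by Lemma~\ref{claim}, which is stated relatively over an arbitrary base; take that base to be $B_S^\ell$, since $\kappa^\ell\circ\uui$ is the projection $\A^\ell\times B_S^\ell\to B_S^\ell$, and use the $\A^1$-equivariance of Proposition~\ref{prop:equivariance}(a) to put $\uui^!$ of each factor in the product form the lemma requires.
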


\begin{proof}
For future use, recall the following commutative diagram with Cartesian squares
\begin{align}\label{triple-diag}
\begin{split}
\xymatrix@C=3em@R=3em{
\ar[d]\ar@/_2.2pc/[dd]_-{0^\ell}(B_S)^\ell \ar[d]^-{0\times\id} \ar[r]^-{\oplus} & B_S  \ar[d]^-{0\times\id} &
\ar[l]_-{\pi_S} M_{S,\red}  \ar[d]^-{0\times\id} & \ar[l]_-{p_X}\Lambda_\nil^\ss \ar@{^{(}->}[r] \ar[d]&
\ar@/_1.8pc/[lll]_-{\rho_X} \Lambda_\nil \ar[d]\ar@/^1.8pc/[dd]^-0 \\
\A^1\times (B_S)^\ell \ar[r]^-{\id\times\oplus} \ar[d]^-{i^\ell\circ(\Delta\times\id)} & \A^1\times B_S  \ar[d]^-{i} &
\ar[l]_-{\id\times\pi_S} \A^1\times M_{S,\red}  \ar[d]^-{i} & \ar[l]_-{p_X}\Lambda_\snil^\ss \ar@{^{(}->}[r] \ar[d]^-i&
\Lambda_\snil \ar[d]^-i\\
(B_X)^\ell \ar[r]^-{\oplus} & B_X & \ar[l]_-{\pi_X} M_{X,\red} &\ar[l]_-{p_X}  \calM_{X,\red}^\ss \ar@{^{(}->}[r] & \calM_{X,\red}
\ar@/^1.8pc/[lll]^-{\rho_X} 
}
\end{split}
\end{align}

We apply the continuous functor $i_!i^!$ to the first isomorphism \eqref{eq:rho-phi}.
A base change as in \eqref{eq:adjunction lemma1} yields the following isomorphism of
complexes of mixed Hodge modules
\begin{align}\label{eq:rho-0-phi4bis}
\begin{split}
(\rho_X)^{\pro}_*i_!i^!\varphi_{\calM_X(\nu)} 
&\cong\Lim{\uunu \in \HN(\nu)}\bigoplus_{\uunu'\leqslant\uunu}
(\rho_X)_*\oplus_*\uui_!(\Delta\times\id)_!(\Delta\times\id)^!\uui^!\boxtimes_{i=1}^\ell\varphi_{\calM_X^\ss(\nu'_i)}
\end{split}
\end{align} 
where we abbreviate $\uui=i^\ell$.
We apply the continuous functor $\kappa_*$. We get the isomorphism
\begin{align}\label{eq:rho-0-phi4}
\begin{split}
\kappa_*(\rho_X)^{\pro}_*i_!i^!\varphi_{\calM_X(\nu)} 
&\cong\Lim{\uunu \in \HN(\nu)}\bigoplus_{\uunu'\leqslant\uunu}
\kappa_*(\rho_X)_*\oplus_*\uui_!(\Delta\times\id)_!(\Delta\times\id)^!\uui^!\boxtimes_{i=1}^\ell\varphi_{\calM_X^\ss(\nu'_i)}.
\end{split}
\end{align} 
We claim that
all summands in \eqref{eq:rho-0-phi4} associated with unstable HN-strata, i.e., strata with $\ell>1$,  are taken to 0
by the morphism of complexes
\begin{align*}
f_\nu\colon \kappa_*(\rho_X)^{\pro}_*i_!i^!\varphi_{\calM_X(\nu)}\to\kappa_*(\rho_X)^{\pro}_*\varphi_{\calM_X(\nu)}
\end{align*}
given by  the counit $i_!i^!\to 1$.
Note that, since the union of all unstable strata is a closed subset, this sum is canonically defined as a subsheaf of \eqref{eq:rho-0-phi4}.
Hence $f_\nu$ factorizes in the following way
\begin{align}\label{eq:main-map1}
\kappa_*(\rho_X)^{\pro}_*i_!i^!\varphi_{\calM_X(\nu)}\to
\kappa_*(\rho_X)_*i_!i^!\varphi_{\calM_X^\ss(\nu)}\to
\kappa_*(\rho_X)^{\pro}_*\varphi_{\calM_X(\nu)}.
\end{align}
The first map is the restriction.
Further, a base change yields the isomorphism
\begin{align*}
\kappa_*(\rho_X)_*i_!i^!\varphi_{\calM_X^\ss(\nu)}
\cong\kappa_*(\pi_X)_*i_!i^!(p_X)_!\varphi_{\calM_X^\ss(\nu)}.
\end{align*}
Hence, by Remark \ref{rem:snilp-BPS}, the image of the complex 
${}^\p\widehat\gr\big(\kappa_*(\rho_X)_*i_!i^!\varphi_{\calM_X^\ss(\nu)}\big)$ 
by the associated graded of the second map in
\eqref{eq:main-map1} is isomorphic to the complex of
mixed Hodge modules
$$\kappa_*(\pi_X)_*\cBPS_{M_X(\nu)}\otimes \H^*(\BGM,\Q)_\vir.$$
The proposition follows because \eqref{eq:BPS-S-X} yields
$$\kappa_*(\pi_X)_*\cBPS_{M_X(\nu)}\cong(\pi_S)_*\cBPS_{M_S(\nu)}\otimes\L^{-\frac12}.$$

Now, we prove the claim.
The proof is done in two steps.
\hfill
\begin{enumerate}[leftmargin=8mm,itemsep=2mm]
\item
For each tuple $\uunu$ of length $\ell$ we consider the map
\begin{align*}
f_\uunu\colon \kappa_*(\rho_X)_*\oplus_*\uui_!(\Delta\times\id)_!(\Delta\times\id)^!\uui^!\boxtimes_{i=1}^\ell\varphi_{\calM_X^\ss(\nu_i)}
\to\oplus_*\boxtimes_{i=1}^\ell\kappa_*(\rho_X)_*\varphi_{\calM_X^\ss(\nu_i)}
=\kappa_*(\rho_X)_*\varphi_{\calM_X^\ss(\uunu)} 
\end{align*}
given by the counits $\uui_!\uui^!\to 1$ and $(\Delta\times\id)_!(\Delta\times\id)^!\to 1$. 
We claim that this map vanishes if $\ell>1$.
\item
We claim that we can choose the isomorphisms 
\begin{align*}
\kappa_*(\rho_X)^{\pro}_*\varphi_{\calM_X(\nu)} &\cong\Lim{\uunu \in \HN(\nu)}\bigoplus_{\uunu'\leqslant\uunu}\kappa_*(\rho_X)_*\varphi_{\calM_X^\ss(\uunu')} ,\\
\kappa_*(\rho_X)^{\pro}_*i_!i^!\varphi_{\calM_X(\nu)} 
&\cong\Lim{\uunu \in \HN(\nu)}\bigoplus_{\uunu'\leqslant\uunu}
\kappa_*(\rho_X)_*\oplus_*\uui_!(\Delta\times\id)_!(\Delta\times\id)^!\uui^!\boxtimes_{i=1}^\ell\varphi_{\calM_X^\ss(\nu'_i)}
\end{align*} 
in \eqref{eq:rho-phi} and \eqref{eq:rho-0-phi4} such that they identify the map
${}^\p\widehat\gr(f_\nu)$ on the left hand side with the sum of the maps ${}^\p\gr(f_{\uunu'})$ 
of each summand on the right hand side.
\end{enumerate}

The proof of (1) is similar to the proof of Corollary \ref{cor:snilp-BPS}, using the isomorphism 
\eqref{eq:support-lemma} and Lemma \ref{claim}.
The proof of (2) is a consequence of the construction.

\end{proof}


\bigskip

\section{The CoHA of coherent sheaves on a CY surface and Hecke operators}\label{sec:CoHA_hecke}

Given a 2CY category $\calC$ and an open substack $\calM$ of $\calM_{\calC}$ parametrizing objects in a $\C$-linear Abelian category and satisfying the assumptions 
\eqref{item-M-1-Artin}-\eqref{item-M-theta-reductive} in \S\ref{para-assumptions-M},
we defined in \eqref{eq-COHA-multi} a CoHA on the graded vector space $\H\calA_\calM$.
In this section we consider the particular case of the stack $\calM_S$ associated with the $\infty$-category $\calC=\Coh(S)$ 
of coherent sheaves on a smooth projective CY algebraic surface $S$.

\medskip

\subsection{The CoHA of the surface $S$}\label{sec:COHA-S}
We abbreviate
$$\bfH_{S,\pro}= \H\calA_{\calM_S,\pro}
,\quad
\bfH_S= \H\calA_{\calM_S}.$$
We have the $\K_0(S)^+\times\Z$-grading on $\bfH_S$ given by
\begin{equation}\label{eq:degreeshiftin defCOHA}
\bfH_S=\bigoplus_{\nu} \bfH_S(\nu)
,\quad
\bfH_S(\nu)= \bigoplus_n \bfH_{S,n}(\nu)
,\quad
\bfH_{S,n}(\nu)= \H_{-n+\vd_\nu}^\BM(\calM_S(\nu),\Q)
\end{equation}
where the summand $\bfH_{S,n}(\nu)$ has the degree $(\nu,n)$.
The $K$-theoretic degree $\nu$ is called the horizontal degree,
the cohomological degree $n$ is called the vertical degree. 
We also have
\begin{equation}\label{eq:degreeshiftin defCOHAX}
\bfH_X=\bigoplus_{\nu} \bfH_X(\nu)
,\quad
\bfH_X(\nu)= \bigoplus_n \bfH_{X,n}(\nu)
,\quad
\bfH_{X,n}(\nu)= \H^{n}(\calM_X(\nu),\varphi_{\calM_X(\nu)}).
\end{equation}
Dimensional reduction yields a graded vector space isomorphism
$\bfH_X\cong\bfH_S.$

Now, we consider the multiplication.
For any pair of Mukai vectors $\nu_1$ and $\nu_2$ with $\nu=\nu_1+\nu_2$, by \eqref{attractor2}
we have the following correspondence
\begin{align}\label{induction-diagram-X}
\calM_X(\nu_1) \times \calM_X(\nu_2) \xleftarrow[]{\tilde\gr} 
\calM_X^\ex(\nu_1,\nu_2)\xrightarrow[]{\tilde\ev}  \calM_X(\nu).
\end{align}
The map $\tilde\ev$ is representable and proper.
The integral isomorphism \eqref{eq-integral-identity} yields the topological product on $\bfH_X$ 
\begin{align}\label{mult}
\tilde\ev_*\tilde\gr^! \colon  \bfH_X \,\widehat\otimes\, \bfH_X \to \bfH_X.
\end{align}
This yields a topological algebra structure on $\bfH_X$ called the 3D CoHA, see \S\ref{sec:COHA}.
It is the Kinjo--Park--Safronov algebra introduced in \cite{KPS}.
We can also consider the correspondence
\begin{align}\label{induction-diagram-S}
\calM_S(\nu_1) \times \calM_S(\nu_2) \xleftarrow[]{\gr} 
\calM_S^\ex(\nu_1,\nu_2)\xrightarrow[]{\ev}  \calM_S(\nu).
\end{align}
The map $\gr$ is quasi-smooth. The map $\ev$ is representable and proper.
The purity transformation for the map $\gr$ equips $\bfH_S$ with the structure of a topological graded algebra called the 2D CoHA.
It is the Kapranov--Vasserot algebra introduced in \cite{KV19}.

As explained in \S\ref{sec:relative-COHA}, 
the 2D multiplication on $\bfH_S$ is identified with the 3D multiplication on $\bfH_X$ under the dimensional reduction isomorphism.
We use the 2D product in order to use the results of \cite{MMSV}.
We also need the 3D product in the proof of the Toda conjecture.

\subsection{The semistable CoHA of the surface $S$} 
Fix a reduced polynomial $p=x^d/d!+\ldots$ and define
 $\Gamma_p$ and $\Coh^{\ss}(p)$ as in \S \ref{Abelian_subcats_sec}.
Since the category $\Coh^{\ss}(p)$ is Abelian, 
stable under extensions, and 2CY, the correspondence \eqref{induction-diagram-S} restricts to a correspondence
on the open substack of semistable sheaves.
This yields an  algebra structure on the sums $\bfH_S^{\ss}(p)$ and $\bfH_S(p)$ given by
$$\bfH_S^{\ss}(p)= \bigoplus_{\nu\in\Gamma_p} \bfH_S^\ss(\nu)
,\quad
\bfH_S(p) = \bigoplus_{\nu\in\Gamma_p} \bfH_S(\nu)$$
where we write
$\bfH_S^\ss(\nu)= \H_{*+\vd_\nu}^\BM(\calM_S^\ss(\nu),\Q).$
The restriction 
$\bfH_S(p) \rightarrow  \bfH_S^\ss(p)$
is a surjective topological graded algebra morphism:
it is a morphism of algebras because of the open base change property, and
it is surjective by Corollary \ref{cor:Kirwan-surj} which gives a section of the restriction morphism and 
realizes $\bfH_S^{\ss}(p)$, as a direct summand of $\bfH_S(p)$ as a graded vector space.

\medskip

\subsection{The CoHA of zero-dimensional sheaves}\label{zero_dim_2d_sec}
We review some results from \cite{MMSV} on punctual Hecke modifications.
Let $\bfH_S^0$ and $\bfH_X^0$ be the graded subalgebras of $\bfH_S$ and $\bfH_X$ of sheaves with zero-dimensional support.
Let $\bfH_S^{0,+}$ be the augmentation ideal of $\bfH_S^0$. We have
$$\bfH_S^0= \bigoplus_{m\in\N} \bfH_S(m\delta)
,\quad
\bfH_S^{0,+}= \bigoplus_{m>0} \bfH_S(m\delta).$$
The algebra $\bfH_S^0$ is $\Z^2$-graded with $\bfH_{S,n}(m\delta)$ in degree $(m,n)$.
We define the horizontal subalgebra to be 
$$\bfA_S= \bfH_{S,0}^0= \bigoplus_{m\in\N} \bfH_{S,0}(m\delta).$$
Let $z=c_1(\calO(1))$ be the degree $2$ generator of $\H^*(\BB\G_m)$.
Since
$\calM_S(\delta)^\cl \cong S \times \BB\G_m$,
we have 
$$\bfH_S(\delta)=\H_{2-*}^\BM(S,\Q)[z].$$
The algebra $\bfH_S^0$ is computed in \cite{MMSV} for any smooth projective surface $S$. 
If $S$ is symplectic, the algebra  $\bfH_S^0$ is a deformed,  $\H^*(S,\Q)$-colored version of the $W_{1+\infty}$-algebra 
of differential operators on the plane.

\begin{definition} Let $\mathfrak{w}^+_S$ be the Lie algebra linearly spanned by elements 
$Z^mD^n(\beta)$
of degree $(m,2n-2+\deg\beta)$
with $\beta\in \H^*(S,\Q)$, $m\in\N\smallsetminus\{0\}$ and $n\in\N$, 
modulo the relations
\begin{equation*}
Z^mD^n(t\beta)=tZ^mD^n(\beta), \quad \quad \quad Z^mD^n(\beta+\beta')=Z^mD^n(\beta) + Z^mD^n(\beta')\\
\end{equation*}
for $t \in \Q$ and $\beta,\beta' \in \H^*(S,\Q)$.
Let us pick a class $q\in \H^2(S,\Z)$ such that $q^2=s_2(S)$; if $S$ is a K3 surface this amounts to picking a $q$ in the K3 lattice $U^{\oplus 3}\oplus E_8(-1)^{\oplus 2}$ with $q\cdot q=-24$, while if $S$ is Abelian $s_2(S)=0$ and we set $q=0$.
We  equip $\mathfrak{w}_S^{+}$ with the Lie bracket satisfying
$$[Z^mD^n(\beta),Z^{m'}D^{n'}(\beta')]=Z^{m+m'}\frac{(D+m'q)^nD^{n'}-D^n(D+mq)^{n'}}{q}(\beta \beta').$$ 
\end{definition}

\begin{theorem}[\cite{MMSV}] \label{thm:Hecke is gen in degree one} \hfill
\begin{enumerate}[label=$\mathrm{(\alph*)}$,leftmargin=8mm,itemsep=2mm]
\item
There is an algebra isomorphism
$\U(\mathfrak{w}^+_S) \to \bfH_S^0$
such that 
$$
ZD^n(\beta)\mapsto \beta z^n\cap [\calM_S(\delta)^\cl]
,\quad
n \in\N
\,,\,
\beta \in \H^*(S,\Q).$$
\item 
The Lie algebra $\mathfrak{w}_S^+$ is generated  by the subspace $\mathfrak{w}_S^+(\delta)$ spanned by the elements $ZD^n(\beta)$ for $n\in\N$ and $\beta\in \H^*(S,\Q)$.
\end{enumerate}
\qed
\end{theorem}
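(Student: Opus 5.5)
This is a theorem cited from \cite{MMSV}, so the proof is a reconstruction of the argument there rather than something new. I would proceed in three steps.

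\textbf{Step 1: identify the vector spaces.} The first goal is a Poincaré--Birkhoff--Witt type description of $\bfH_S^0$. Every zero-dimensional sheaf is Gieseker semistable with respect to the reduced polynomial $p=1$. So $\calM_S(m\delta)=\calM_S^\ss(m\delta)$, and $\Coh^{\ss}(1)$ is the 2CY Abelian category of zero-dimensional sheaves.

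Corollary~\ref{cor:absolute-integrality} then expresses $\bigoplus_m\H^\BM_*(\calM_S(m\delta),\Q)$ as $\Sym_+$ of $\bigoplus_m\H^*_\BPS(\wM_{m\delta})\otimes\H^*(\BGM,\Q)_\vir$. The BPS sheaf of zero-dimensional sheaves should be supported on the small diagonal $S\subset\Sym^m(S)$ and equal to $\Q_S$ up to shift for every $m$. I would prove this by working \'etale locally, where $S$ looks like $\A^2$ (the commuting-variety case), and using the support lemma, Proposition~\ref{prop:support-lemma}. It follows that the $m$-th graded piece is $\H^*(S,\Q)\otimes\Q[z]$. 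This matches the graded dimension of $\mathfrak{w}_S^+(m\delta)$, which is spanned by the $Z^mD^n(\beta)$.

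\textbf{Step 2: construct the generators and verify the relations.} I would define the images of $Z^mD^n(\beta)$ as explicit classes supported on the small diagonal in $\calM_S(m\delta)$, and for $m=1$ as $\beta z^n\cap[\calM_S(\delta)^\cl]$. I would then show that their commutators satisfy the stated bracket. This is done by computing the product of the degree-one elements through the correspondence \eqref{induction-diagram-S}, using the purity transformation for $\gr$, localizing to a torus-fixed situation, and comparing with the classical computation for Hilbert schemes and Nakajima operators.

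The shift $D\mapsto D+mq$ comes from the Euler class of the virtual normal bundle $R\Hom(\calF,\calF')$ restricted to the diagonal. This class is expressed through the Chern classes of $S$ via Grothendieck--Riemann--Roch, which is why the class $q$ with $q^2=s_2(S)$ appears. Together with the PBW statement of Step 1, this gives a homomorphism $\U(\mathfrak{w}_S^+)\to\bfH_S^0$ that is an isomorphism on associated graded pieces, hence an isomorphism. This proves (a).

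\textbf{Step 3: generation.} Part (b) is a purely Lie-theoretic computation. Bracketing degree-one elements gives
\[
[ZD^{n}(\beta),ZD^{0}(1)]=Z^2\frac{(D+q)^n-D^n}{q}(\beta)=nZ^2D^{n-1}(\beta)+\ldots
\]
By induction on $m$ and $n$, using triangularity in $n$, this produces every $Z^mD^n(\beta)$. When $q=0$ the leading coefficient is exactly $n\,m'$-type and is nonzero.

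\textbf{Main obstacle.} The hard part is the bracket computation in Step 2: pinning down the exact normal-bundle correction term and controlling its signs under the purity transformation. The identification of the BPS sheaf on the small diagonal in Step 1 is also delicate.
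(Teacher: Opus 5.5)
Your Steps 1 and 3 are sound. Zero-dimensional sheaves form the semistable locus for $p=1$, and the BPS sheaf is $\Delta_*\Q_S$ up to shift. For that identification you must cite the known computation for $\A^2$; the support lemma alone does not give it. Corollary~\ref{cor:absolute-integrality} and PBW then show that $\bfH^0_S$ and $\U(\mathfrak{w}^+_S)$ have the same finite bigraded dimensions. Part (b) follows by induction from $[Z^mD^{k+1}(\beta),ZD^0(1)]=(k+1)Z^{m+1}D^{k}(\beta)+(\text{lower powers of }D)$. The paper gives no proof of this statement; it is quoted from \cite{MMSV}.

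The gap is in Step 2, which carries all the content of (a). A Lie map $\phi\colon\mathfrak{w}^+_S\to\bfH^0_S$ plus a dimension count yields an isomorphism only once you know that $\U(\mathfrak{w}^+_S)\to\bfH^0_S$ is injective or surjective.
\begin{itemize}
\item By (b), surjectivity is exactly the statement that $\bfH^0_S$ is generated by $\bfH_S(\delta)$. That is a genuine theorem, and your outline never proves it.
\item Injectivity needs a faithful representation, which you do not supply.
\end{itemize}
Your substitute, ``an isomorphism on associated graded pieces'', needs a multiplicative filtration of $\bfH^0_S$ with associated graded $\Sym$ of the BPS pieces, in which the $\phi(Z^mD^n(\beta))$ span the BPS piece. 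You give none. The filtration implicit in your definition (classes pushed forward from sheaves supported at a single point) fails for compact $S$. In the relative integrality isomorphism on $M_S(2\delta)=\Sym^2(S)$, the summand $\Sym^2_\oplus(\Q_S\otimes\H^*(\BGM,\Q))$ meets the small diagonal through the Gysin map $\H^{*-4}(S,\Q)\to\H^*(S\times S,\Q)$. That map sends $1$ to $[\Delta_S]=\sum_i\pi_i\otimes\pi^i\neq 0$. Hence the nonzero element $\sum_i ZD^0(\pi_i)\,ZD^0(\pi^i)$, which lies in the $\Sym^2$ summand and not in any BPS complement, is also ``supported on the small diagonal''. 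For $\A^1\subset\A^\ell$ the corresponding map vanishes (Lemma~\ref{claim}). That vanishing is what makes the paper's microlocal characterization of BPS classes (Corollary~\ref{cor:snilp-BPS}) work.

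The relations themselves are also not established.
\begin{itemize}
\item K3 and abelian surfaces carry no nontrivial $\G_m$-action. So ``localizing to a torus-fixed situation'' is available only in the local model $\A^2$, where $q$ (with $q^2=s_2(S)$, a global invariant) is invisible. Your Euler-class/Grothendieck--Riemann--Roch account of $D\mapsto D+mq$ is a heuristic, not a computation.
\item You only propose computing products of degree-one classes, but the relations involve the images of all $Z^mD^n(\beta)$. If you define these for $m\geq 2$ independently, you must compute brackets among them. If you define them as iterated commutators of degree-one classes (legitimate by (b)), you must show they are well defined. You must also show that every element of the kernel of $\mathrm{FreeLie}(\mathfrak{w}^+_S(\delta))\to\mathfrak{w}^+_S$ vanishes in $\bfH^0_S$.
\item Comparing with Nakajima operators on $\bigoplus_n\H^*(\Hilb^n(S),\Q)$ could do this only after you construct an action of $\bfH^0_S$ there and prove it faithful. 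Such faithfulness would also supply the injectivity missing above.
\end{itemize}
That input (a faithful module, or a proof of generation in degree one) is the missing idea, and none of the ingredients you list provides it.
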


From now on, we will denote by $D_{m,n}(\beta)$ the element $Z^mD^n(\beta)$. Note that
$$[D_{m,n}(\beta),D_{m',n'}(\beta')]=(m'n-mn')D_{m+m',n+n'-1}(\beta\beta')$$
for any $m,m' \geqslant 1$ and $n,n'\leqslant 1$.
Recall that  $\bfA_S$ is the subalgebra of $\bfH_S^0$ of vertical (i.e. cohomological) degree zero.
Under the isomorphism $\U(\mathfrak{w}_S^+) \cong \bfH^0_{S}$ in 
Theorem \ref{thm:Hecke is gen in degree one} we have
$$D_{m,1}(1)\,,\,D_{m,0}(\beta)\in\bfA_S,\quad\forall m\geqslant 1,\quad\forall\beta \in \H^2(S,\Q).$$


\medskip

\subsection{The Hecke operators supported on a curve}\label{sec:Hecke-curve}
Let $C \subset S$ be a smooth, connected, projective curve of class $\beta\in\H_2(S,\Z)$. 
For any $m \geqslant 1$ let $\calM_{S,C}(m\delta)$ be the stack parametrizing length $m$ sheaves set-theoretically supported on $C$.
It is pure dimensional of dimension $0$. Its irreducible components are parametrized by the set 
$\calP(m)$ of partitions of $m$. Let $\calM_\lambda$ be the closure of the (irreducible) substack $\calM_\lambda^\circ$ 
parametrizing sheaves whose support cycle is of the form $\sum_i \lambda_i x_i$ for some distinct points $x_i \in C$.

Let $\bfA_\beta \subset \bfA_S$ 
be the commutative subalgebra generated by $\{D_{m,0}(\beta)\,;\, m \geqslant 1\}$. 
Set $\bfA_\beta(m\delta)=\bfA_\beta\cap\bfH_S(m\delta)$.
For each invertible sheaf $\calL$ on $S$, the tensor product with $\calL$
is an automorphism $\otimes \calL$ of the stack $\calM_S$, which induces
 an automorphism $\omega_\calL$ of the algebra $\bfH_S$
which preserves the subalgebras $\bfH_S^0$ and $\bfA_S$.

\begin{proposition}\label{prop:formula}Let $\calL \in \Pic(S)$ with Chern class $c_1(\calL)=\beta$. 
\begin{enumerate}[label=$\mathrm{(\alph*)}$,leftmargin=8mm,itemsep=2mm]
\item
The vector space $\bfA_\beta(m\delta)$ 
is linearly spanned by the fundamental classes $[\calM_\lambda]$ with $\lambda\in\calP(m)$. 
\item
The automorphism $\omega_\calL$ of $\bfA_S$ restricts to the identity on $\bfA_\beta$.
\item
We have $\omega_\calL(D_{1,1}(1))=D_{1,1}(1) + D_{1,0}(\beta)$.
\end{enumerate}
\end{proposition}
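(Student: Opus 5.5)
The plan is to prove the three parts in order, working throughout in the Borel--Moore homology of the zero-dimensional stacks $\calM_S(m\delta)$ and using the explicit description of $\bfH_S^0\cong\U(\mathfrak{w}_S^+)$ from Theorem~\ref{thm:Hecke is gen in degree one}.

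\textbf{Part (a).} The element $D_{1,0}(\beta)$ is $\beta\cap[\calM_S(\delta)^\cl]$. Since $\beta=c_1(\calO(C))$ is Poincar\'e dual to $[C]$, this class equals the fundamental class of $\calM_{S,C}(\delta)\cong C\times\BGM$, i.e.\ $[\calM_{(1)}]$.

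Next, products $D_{1,0}(\beta)^{*k}$ and more generally products of the $D_{m,0}(\beta)$ are computed by the 2D CoHA correspondence \eqref{induction-diagram-S}. Restricted to sheaves supported on $C$, this is a correspondence of stacks of dimension $0$ with $\gr$ a vector bundle stack, so pull-push takes fundamental classes of components to linear combinations of fundamental classes $[\calM_\lambda]$. Hence $\bfA_\beta(m\delta)$ lies in the span of the $[\calM_\lambda]$, $\lambda\in\calP(m)$.

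For the reverse inclusion, I would argue by dimension and triangularity. The $\bfA_\beta(m\delta)$ has dimension $|\calP(m)|$: the monomials $\prod D_{\lambda_i,0}(\beta)$ are linearly independent in $\U(\mathfrak{w}_S^+)$, since the $D_{m,0}(\beta)$ commute ($[D_{m,0},D_{m',0}]=0$ by the bracket formula with $n=n'=0$) and PBW applies. The classes $[\calM_\lambda]$ span a space of dimension at most $|\calP(m)|$, which gives equality.

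To be safe, I would also check that $\prod_i D_{\lambda_i,0}(\beta)$ equals $c\,[\calM_\lambda]$ plus terms $[\calM_\mu]$ with $\mu$ coarser, for some $c\neq 0$. This holds because the generic point of the product lies over distinct support points, so the leading term is a nonzero multiple of $[\calM_\lambda]$.

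\textbf{Part (b).} Tensoring by $\calL$ induces an automorphism of each $\calM_{S,C}(m\delta)$. This automorphism preserves the support cycle, hence preserves each irreducible component $\calM_\lambda$ together with its fundamental class. So $\omega_\calL$ fixes every $[\calM_\lambda]$, and by (a) it is the identity on $\bfA_\beta$.

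\textbf{Part (c).} Work on $\calM_S(\delta)^\cl=S\times\BGM$. Let $\calE$ be the universal sheaf. Then $z=c_1$ of the weight-one line bundle, which is the fibre $\calE|_x$ with its $\BGM$ action. Twisting by $\calL$ sends $\calE$ to $\calE\otimes\calL$, so the automorphism $\otimes\calL$ pulls back $z$ to $z+\pr_S^*c_1(\calL)=z+\beta$ and acts trivially on $\H^*(S)$. Therefore
\[
\omega_\calL(z\cap[\calM_S(\delta)])=(z+\beta)\cap[\calM_S(\delta)],
\]
which is the claim $\omega_\calL(D_{1,1}(1))=D_{1,1}(1)+D_{1,0}(\beta)$.

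\textbf{Main obstacle.} The delicate step is the sign and direction convention in (c): whether $\omega_\calL$ is defined by pushforward or pullback along $\otimes\calL$, and whether the identification in Theorem~\ref{thm:Hecke is gen in degree one} uses $z$ or $-z$. I would fix these by checking against the tautological normalization used in \cite{MMSV}. The independence step in (a) is routine given PBW.
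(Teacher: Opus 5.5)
Your parts (b) and (c) follow the paper's argument. Part (a), however, has a genuine gap, and (b) inherits it. Your inclusion $\bfA_\beta(m\delta)\subseteq\operatorname{span}\{[\calM_\lambda]\}$ multiplies generators of $\bfA_\beta$ that you assume are already supported on $C$, but you only verify this for $D_{1,0}(\beta)=[\calM_{(1)}]$.

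That case alone is not enough. By your own PBW count, $D_{1,0}(\beta)$ generates only a line inside $\bfA_\beta(m\delta)$, which has dimension $|\calP(m)|$. For $k\geqslant 2$, the element $D_{k,0}(\beta)$ is defined only through the abstract isomorphism $\U(\mathfrak{w}^+_S)\cong\bfH^0_S$ of Theorem~\ref{thm:Hecke is gen in degree one}, which is pinned down on degree-$\delta$ elements. Concretely, all you know is
\[
D_{k,0}(\beta)=\tfrac{1}{k-1}\,\bigl[D_{1,1}(1),D_{k-1,0}(\beta)\bigr],
\]
and $D_{1,1}(1)=z\cap[S\times\BGM]$ is supported on all of $S$. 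Each of the two products in this commutator is a priori supported on sheaves with a length-one constituent at an arbitrary point of $S$. So proving $D_{k,0}(\beta)\in\Im(i_C)_*$, with $i_C\colon\calM_{S,C}(k\delta)\to\calM_S(k\delta)$, requires showing that the contributions off $C$ cancel. This is the real content of (a). The paper does not prove it directly: it reduces (a) to $\bfA_\beta(m\delta)=\Im(i_C)_*$ and imports that statement from \cite[prop.~IV.9.12]{DPSSV} (see also \cite[prop.~7.9]{MMSV}).

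A smaller inaccuracy: $\gr$ is not a vector bundle stack on sheaves supported on $C$, since $\Ext^2(\calE,\calG)\cong\Hom(\calG,\calE)^\vee$ need not vanish. This is harmless, because your top-degree argument only needs that $\gr^!$ and $\ev_*$ preserve supports.

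If you want to avoid the citation, one way to supply the missing step is induction on $k$ using the displayed identity. For $x\in\Im(i_C)_*$, show that $[D_{1,1}(1),x]$ restricts to zero on the open complement $\calM_S(k\delta)\smallsetminus\calM_{S,C}(k\delta)$.
\begin{itemize}
\item Over that open set, the relevant extensions are between $\calO_y$ with $y\notin C$ and a sheaf supported on $C$.
\item There $R\Hom$ vanishes in both directions, so $\gr$ is an isomorphism and the extension splits.
\item Hence both orders of multiplication reduce to pushforward along the commutative direct sum map. No sign arises, since the Euler form vanishes on zero-dimensional classes and both factors have vertical degree $0$.
\end{itemize}
The localization sequence, together with the fact that the top-degree group $\H^\BM_0(\calM_{S,C}(k\delta))$ is spanned by fundamental classes of components, then places the commutator in the span of the $[\calM_\mu]$. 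Your PBW dimension count finishes (a) as you describe.

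On the sign you flagged in (c): with the pushforward convention $\omega_\calL=(\otimes\calL)_*$ used in the paper, the projection formula gives
\[
(\otimes\calL)_*(z\cap[S\times\BGM])=((\otimes\calL)^{-1})^*z\cap[S\times\BGM]=(z-\beta)\cap[S\times\BGM].
\]
The formula with $+\beta$, which both you and the paper compute, corresponds to pullback, i.e.\ to $\calL^{-1}$. This does not matter for the later application in Proposition~\ref{prop:algebraic trick}, which only needs $\sigma D_{1,1}(1)\sigma^{-1}\in D_{1,1}(1)+\Q D_{1,0}(\beta)$.
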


\begin{proof}
Let $i_C\colon  \calM_{S,C}(m\delta) \to \calM_S(m\delta)$ be  the obvious closed immersion.
For (a) we must prove that
$\bfA_\beta(m\delta)=\Im(i_C)_*$.
This is \cite[prop.~IV.9.12]{DPSSV}, see also \cite[\S7.4, prop.~7.9]{MMSV}.
For (b), note that $\otimes \calL$ preserves the support, hence it stabilizes the closed substacks $\calM_\lambda$ and acts trivially on the 
fundamental classes $[\calM_\lambda]$.
For (c), recall that $z=c_1(\calO(1))$ where $\calO(1)$ is the universal sheaf on $\BB\G_m$.
Let $f_\calL\colon  S \to \BB\G_m$ be the morphism associated to $\calL$.
The automorphism $\otimes \calL$ of $\calM_S(\delta)^\cl=S \times \BB\G_m$ is 
the composition of morphisms
$$S \times \BB\G_m \xrightarrow{(\id_S \times f_\calL) \times \id_{\BB\G_m}} S \times \BB\G_m \times \BB\G_m \xrightarrow{\id_S \times \otimes} S \times \BB\G_m$$
where $\otimes\colon  \BB\G_m \times \BB\G_m \to \BB\G_m$ is the tensor product. Thus, we have 
\begin{align*}
\omega_\calL(D_{1,1}(1))=\omega_\calL(z\cap[S \times \BB\G_m])
=c_1(\calO(1) \otimes \calL) \cap [S \times \BB\G_m]
=(z+\beta) \cap [S \times \BB\G_m]
=D_{1,1}(1) + D_{1,0}(\beta).
\end{align*}
\end{proof}

We will need the following algebraic lemma.

\begin{proposition}\label{prop:algebraic trick}
Let $V$ be a $\Z$-graded $\bfA_S$-representation 
with finite dimensional homogeneous components. 
Let $\beta \in \H^2(S,\Q)$ be a class such that 
\begin{enumerate}[label=$\mathrm{(\alph*)}$,leftmargin=8mm,itemsep=2mm]
\item there is a positive degree element $\omega \in \bfA_\beta$ 
which acts as an automorphism $\sigma$ of $V$,
\item $\sigma D_{1,1}(1)\sigma^{-1} \in D_{1,1}(1) + \Q D_{1,0}(\beta)$ as operators on $V$.
\end{enumerate}
Then $D_{1,0}(\beta)$ acts as a degree one automorphism of $V$.	
\end{proposition}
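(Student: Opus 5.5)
The plan is to pass from $V$ to the commutative algebra of operators generated by $\bfA_\beta$, and to read hypothesis (b) there as a logarithmic-derivative identity. Let $R\subset\mathrm{End}(V)$ be the image of $\bfA_\beta$. It is commutative and graded by horizontal degree, with $d:=D_{1,0}(\beta)$ in degree one. By the bracket formula following Theorem~\ref{thm:Hecke is gen in degree one} (with $n=1$, $n'=0$) we have
\[
[D_{1,1}(1),D_{m,0}(\beta)]=m\,D_{m+1,0}(\beta).
\]
So $\partial:=\mathrm{ad}\,D_{1,1}(1)$ is a degree-one derivation of $\bfA_\beta$. It descends to $R$: if $a$ acts by zero on $V$, then so does $[D_{1,1}(1),a]$. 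Moreover $D_{k,0}(\beta)=\partial^{k-1}(d)/(k-1)!$, so $R$ is generated as an algebra by the iterates $\partial^j(d)$.

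Since $\sigma^{-1}$ commutes with $R$, we may adjoin it and extend $\partial$ to $R'=R[\sigma^{-1}]$ by $\partial(\sigma^{-1})=-\sigma^{-2}\partial(\sigma)$. Hypothesis (b) says $\sigma D_{1,1}(1)\sigma^{-1}=D_{1,1}(1)+c\,d$ for some $c\in\Q$. Rewriting the left side as $D_{1,1}(1)-\partial(\sigma)\sigma^{-1}$ gives the key identity
\[
\partial(\sigma)=-c\,d\,\sigma \quad\text{in } R'.
\]
Iterating, $\sigma^N D_{1,1}(1)\sigma^{-N}=D_{1,1}(1)+Nc\,d$, and $\partial(\sigma^N)=-Nc\,d\,\sigma^N$.

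It remains to show that $c\neq 0$ and that $d$ is invertible. The identity then gives $d=-c^{-1}\partial(\sigma)\sigma^{-1}$, and invertibility is what we need to establish.

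For $c\neq 0$ I would argue by contradiction. If $c=0$, then $\partial\sigma=0$. Now $\sigma=\omega$ has positive degree $m$, so it is a polynomial without constant term in the $D_{k,0}(\beta)$. In the symmetric-function model of Proposition~\ref{prop:formula}(a), where $D_{k,0}(\beta)$ corresponds to the power sum $p_k$, the derivation $\partial$ is $\sum_i x_i^2\,\partial/\partial x_i$. A $\partial$-constant of positive degree must then vanish on the image, which contradicts the invertibility of $\sigma$ on the nonzero graded pieces.

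For the invertibility of $d$ I would argue geometrically. View $R'$ as functions on a $\G_m$-scheme $Z$ with a weight-one vector field $\partial$; the finite dimensionality of the graded pieces of $V$ makes this reasonable. The invertibility of $\sigma$, of weight $m>0$, forces $\G_m$ to act on $Z$ without fixed points. Suppose $d$ vanished at some point. On the closed $\G_m$-stable locus $\{d=0\}$ the identity gives $\partial\log\sigma=0$. The aim is then to propagate this vanishing along the iterates $\partial^j d$, which generate $R^+$, to conclude that $R^+$ vanishes at that point. That would contradict $\sigma\in R^+$ being a unit.

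I expect this last step to be the main obstacle: controlling the zero locus of $d$ under a derivation that does not preserve the ideal $(d)$. As a fallback I would use the finite dimensionality of the graded pieces of $V$ more directly. Decompose each weight space under the commuting family $R$, and compare $\sigma^N D_{1,1}(1)\sigma^{-N}=D_{1,1}(1)+Nc\,d$ for large $N$ on a generalized eigenspace where $d$ would be nilpotent.
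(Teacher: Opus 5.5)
Everything up to $\partial(\sigma)=-c\,d\,\sigma$ is correct, but the actual content of the proposition, namely invertibility of $d=D_{1,0}(\beta)$, is never established. You note yourself that $\partial$ does not preserve $(d)$, so vanishing of $d$ at a point cannot simply be ``propagated''. The fallback (comparing conjugates of $D_{1,1}(1)$ by powers of $\sigma$ on a generalized eigenspace where $d$ is nilpotent) is not yet an argument. Write $N$ for the degree of $\omega$.

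The paper closes this step with a trace computation. It works on a simultaneous generalized eigenspace $V(\underline{\ell})$ of the commuting degree-zero operators $D^N\sigma^{-\deg D}$, $D\in\bfA_\beta$, on which $D_{1,0}(\beta)$ is nilpotent; these spaces are $D_{1,1}(1)$-stable. There it writes $(m_1-1)D_{m_1,0}(\beta)=[D_{1,1}(1),D_{m_1-1,0}(\beta)]$ and moves the commutator between $V(\underline{\ell})_n$ and $V(\underline{\ell})_{n-m_1+1}$ by cyclicity of the trace. It then telescopes over $N$ steps and closes the telescope by conjugating with $\sigma^{m_1-1}$. This is where (b) enters: the error term is traceless because $D_{1,0}(\beta)$ is nilpotent there. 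Hence every degree-$N$ monomial in the $D_{m,0}(\beta)$, times $\sigma^{-1}$, is nilpotent on $V(\underline{\ell})$, which contradicts $\omega\sigma^{-1}=1$ unless $V(\underline{\ell})=0$.

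Your commutative-algebra route can also be completed, but it needs two ingredients you do not supply. First, finiteness: since $\sigma\in R'$ is central and invertible, each $R'_k$ embeds into $\mathrm{Hom}\big(\bigoplus_{0\le n<N}V_n,V\big)$, so $R'_0$ is finite dimensional and $R'$ is finite over $\Q[\sigma^{\pm1}]$. Second, Seidenberg's lemma: in characteristic zero $a^n=0$ implies $(\partial a)^{2n-1}=0$. So $\partial$ descends to $R'_{\mathrm{red}}$, where it kills the idempotents $\epsilon_i$ splitting the finite-dimensional reduced algebra $(R'_{\mathrm{red}})_0$ into fields. In each factor $\epsilon_iR'_{\mathrm{red}}$, a nonzero homogeneous $y$ of degree $k$ is a unit, because $y^N\sigma^{-k}$ is a nonzero element of the field $\epsilon_i(R'_{\mathrm{red}})_0$. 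So $\epsilon_i d$ is either $0$ or a unit. If $\epsilon_i d=0$, then $\epsilon_iD_{m,0}(\beta)=\partial^{m-1}(\epsilon_i d)/(m-1)!=0$ for all $m$, hence $\epsilon_i\sigma=0$, which is absurd. Thus $d$ is a unit in $R'_{\mathrm{red}}$, hence in $R'$. This version does not use (b) at all.

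Your step $c\neq0$ is wrongly argued and should simply be dropped. The symmetric-function computation shows that $\partial$ has no nonzero constants of positive degree in $\bfA_\beta$. But $\sigma$ lives in the image $R$, a quotient by a $\partial$-stable ideal, where such constants can even be units. For instance, let $D_{1,0}(\beta)$ and $D_{1,1}(1)$ act on $\Q[t^{\pm1}]$ by multiplication by $t$, and let all other elements of a homogeneous basis of $\mathfrak{w}_S^+$ act by $0$; in particular everything of degree $\geq2$ acts by $0$. This is a representation, since all operators commute and all brackets have degree $\geq2$, so it is an $\bfA_S$-module via $\bfH_S^0\cong\mathbf{U}(\mathfrak{w}_S^+)$. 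With $\omega=D_{1,0}(\beta)^N$ all hypotheses hold and $\sigma=t^N$ commutes with $D_{1,1}(1)$, so $c=0$. Nothing is lost by dropping the step: the conclusion never required $c\neq0$, and the paper's argument works for any value of the scalar in (b).
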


\begin{proof}
The operator $D_{1,0}(\beta)$ acts as an automorphism of $V$ if and only if it acts as an automorphism of $V\otimes_\Q \C$.  So in the proof we replace $V$ with its complexification.
Set $V=\bigoplus_{n \in \Z} V_n$. 
Let $N$ be the degree of $\omega$.
Let $D \in \bfA_\beta$ be homogeneous of degree $d>0$.
The degree zero operator $D^N\sigma^{-d}$ gives the generalized 
eigenspace decomposition 
$$V=\bigoplus_{\ell\in\C} V(D\,,\,\ell), \quad 
V(D\,,\,\ell)\coloneqq \text{Ker}\left( (D^N\sigma^{-d}-\ell)^\infty\right).$$
Since the algebra $\bfA_\beta$ is commutative, 
there is a simultaneous decomposition $V= \bigoplus_{\uuell} V(\uuell)$
parametrized by sequences $\uuell=(\ell_m)$ in $\C^\N$, where we define
$$V(\uuell)= \bigcap_{m\geqslant 1} V(D_{m,0}(\beta)\,,\, \ell_m). $$
The vector space $V(\uuell)$ is $\Z$-graded, with
$$V(\uuell)=\bigoplus_n V(\uuell)_n
,\quad
V(\uuell)_n=V(\uuell) \cap V_n.$$ 
We claim that $V(\uuell)$ is stable under the action of $D_{1,1}(1)$.
Indeed, since
$[D_{1,1}(1)\,,\, D_{m,0}(\beta)] \in \bfA_\beta$, we have $$[D_{1,1}(1)\,,\,\bfA_\beta] \subset \bfA_\beta,\quad 
[D_{1,1}(1)\,,\, \sigma^{\pm 1}] \in \sigma^{-1}\bfA_\beta .$$
The second inclusion comes from assumption (b) and the fact that $\sigma \in \bfA_\beta$. The claim now follows from the inclusion, for each integer $M>0$ and scalar $l$
$$[D_{1,1}(1)\,,\, (D_{d,0}(\beta)^N\sigma^{-d}-\ell)^M] \subset \bfA_\beta[\sigma^{-1}](D_{d,0}(\beta)^N\sigma^{-d}-\ell)^{M-1}.$$ 

Next, the operator $D_{1,0}(\beta)$ restricts to an isomorphism on $V(\uuell)$ if $\ell_1\neq 0$.
Hence it is enough to prove that $V(\uuell)=\{0\}$ if $\ell_1= 0$. 
Fix a sequence $\uuell$ such that $\ell_1=0$. 
The following degree zero operators commute with each other
\begin{equation}\label{eq:proof-alg-lemma1}
D_{\uum,0}(\beta)\sigma^{-1}=D_{m_1,0}(\beta) D_{m_2,0}(\beta)\cdots D_{m_s,0}(\beta)\sigma^{-1}
,\quad
\sum_im_i=N.
\end{equation}
The restriction of $D_{\uum,0}(\beta)\sigma^{-1}$ to $V(\uuell)$ is either invertible 
if $\ell_{m_i}\neq 0$ for all $i$ or nilpotent if $\ell_{m_i}= 0$ for some $i$. 
Since $\omega$ is a linear combination of monomials $D_{\uum,0}(\beta)$ and $\sigma$ is invertible, at least one of
the operators \eqref{eq:proof-alg-lemma1}
is invertible when restricted to $V(\uuell)$. 
We claim that if this happens then $m_i=1$ for all $i$. 
To prove the claim, it is enough to show that the restriction to $V(\uuell)$ of any monomial  $D_{\uum,0}(\beta)$
with $\sum_im_i=N$ and $m_1>1$ is nilpotent. Using the relation 
$$[D_{1,1}(1),D_{m_1-1,0}(\beta)]=(m_1-1)D_{m_1,0}(\beta),$$
we compute, for any integer $k \geqslant 1$ and any degree $n$ the trace
\begin{equation}\label{eq:proof-alg-lemma2}
\begin{split}
(m_1-1)\text{Tr}_{V(\uuell)_n}\left( D_{\uum,0}(\beta)^k  \sigma^{-k}\right)
&=\text{Tr}_{V(\uuell)_n}\left( [D_{1,1}(1)\,,\,D_{m_1-1,0}(\beta)]D_{m_1,0}(\beta)^{k-1} \cdots D_{m_s,0}(\beta)^k \sigma^{-k}\right)\\
&=\text{Tr}_{V(\uuell)_n}	\left( D_{1,1}(1)D_{m_1-1,0}(\beta)D_{m_1,0}(\beta)^{k-1} \cdots D_{m_s,0}(\beta)^k \sigma^{-k}\right)\\
&\quad 
-\text{Tr}_{V(\uuell)_{n-m_1+1}}\left( D_{1,1}(1)D_{m_1-1,0}(\beta)D_{m_1,0}(\beta)^{k-1} \cdots D_{m_s,0}(\beta)^k \sigma^{-k}\right).
\end{split}
\end{equation}
Summing up \eqref{eq:proof-alg-lemma2} for $n, n-m_1+1, \ldots, n-(N-1)(m_1-1)$, we obtain
\begin{equation}\label{eq:proof-alg-lemma3}
\begin{split}
(m_1-1)\sum_{r=0}^{N-1} \text{Tr}_{V(\uuell)_{n-r(m_1-1)}}\left(D_{\uum,0}(\beta)^k  \sigma^{-k}\right)
&=\text{Tr}_{V(\uuell)_n}	\left( D_{1,1}(1)D_{m_1-1,0}(\beta)D_{m_1,0}(\beta)^{k-1} \cdots D_{m_s,0}(\beta)^k \sigma^{-k}\right)\\
 -\text{Tr}_{V(\uuell)_{n-N(m_1-1)}}&	\left( D_{1,1}(1)D_{m_1-1,0}(\beta)D_{m_1,0}(\beta)^{k-1} \cdots D_{m_s,0}(\beta)^k \sigma^{-k}\right).\\ 
\end{split}
\end{equation}
Let $\sigma D_{1,1}(1) \sigma^{-1}=D_{1,1}(1)+a\,D_{1,0}(\beta)$ with $a\in\Q$. We have
\begin{equation}
\begin{split}
&\text{Tr}_{V(\uuell)_{n-N(m_1-1)}}\left( D_{1,1}(1)D_{m_1-1,0}(\beta)D_{m_1,0}(\beta)^{k-1} \cdots D_{m_s,0}(\beta)^k \sigma^{-k}\right)\\
&\quad
=\text{Tr}_{V(\uuell)_n}\left(\sigma^{m_1-1} D_{1,1}(1)D_{m_1-1,0}(\beta)D_{m_1,0}(\beta)^{k-1} \cdots D_{m_s,0}(\beta)^k \sigma^{-k-m_1+1}\right)\\
&\quad=\text{Tr}_{V(\uuell)_n}\left(\sigma^{m_1-1} D_{1,1}(1)\sigma^{1-m_1}D_{m_1-1,0}(\beta)D_{m_1,0}(\beta)^{k-1} \cdots D_{m_s,0}(\beta)^k \sigma^{-k}\right)\\
&\quad=\text{Tr}_{V(\uuell)_n}\left( D_{1,1}(1)D_{m_1-1,0}(\beta)D_{m_1,0}(\beta)^{k-1} \cdots D_{m_s,0}(\beta)^k \sigma^{-k}\right)\\
&\quad\quad +(m_1-1)\,a\,\text{Tr}_{V(\uuell)_n}\left( D_{1,0}(\beta)D_{m_1-1,0}(\beta)D_{m_1,0}(\beta)^{k-1} \cdots D_{m_s,0}(\beta)^k \sigma^{-k}\right)\\
&\quad=\text{Tr}_{V(\uuell)_n}\left( D_{1,1}(1)D_{m_1-1,0}(\beta)D_{m_1,0}(\beta)^{k-1} \cdots D_{m_s,0}(\beta)^k \sigma^{-k}\right)\\
\end{split}
\end{equation}
because $D_{1,0}(\beta)$ is nilpotent on $V(\uuell)$ since $\ell_1=0$ and the algebra $\bfA_\beta$ is commutative. 
We deduce that for all $k\geqslant 1$ and $n\in\Z$ we have
$$\sum_{r=0}^{N-1} \text{Tr}_{V(\uuell)_{n-r(m_1-1)}}\left(D_{\uum,0}(\beta)^k  \sigma^{-k}\right)=0.$$
This implies that the operator $D_{\uum,0}(\beta)\sigma^{-1}$ is nilpotent on $V(\uuell)$ as wanted. 
Putting this all together, we deduce that $D_{1,0}(\beta)^N$ is invertible on $V$,
hence $D_{1,0}(\beta)$ is also invertible on $V$.
\end{proof}

\medskip


\medskip

\subsection{Hecke operators on pure one-dimensional sheaves} 
\label{sec:Hecke-action}
Recall the open substack 
$\calM_S^{\geqslant 1}$ of $\calM_S$ parametrizing sheaves with no zero-dimensional subsheaf. 
Let $\nu$ be a one-dimensional Mukai vector.  We set
$$\calM_S^{\geqslant 1}(\nu)=\calM_S^{\geqslant 1} \cap \calM_S(\nu).$$
Thus, the stack $\calM_S^{\geqslant 1}(\nu)$ parametrizes pure  one-dimensional sheaves with Mukai vector $\nu$.
Taking the pullback by the morphism $\kappa\colon \calM_X\to\calM_S$ we get the open substacks
$\calM_X^{\geqslant 1}(\nu)$ and $\calM_X^{\geqslant 1}$ in $\calM_X$.
We define 
$$\bfH^{\geqslant 1}_S=\bigoplus_{\nu}\bfH^{\geqslant 1}_S(\nu)
,\quad
\bfH^{\geqslant 1}_S(\nu) =\H^\BM_{-*+\vd_\nu}(\calM_S^{\geqslant 1}(\nu),\Q).$$
By Corollary \ref{cor:Kirwan-surj}, the open restriction morphism
$\res^{\geqslant 1}\colon \bfH_S \to \bfH^{\geqslant 1}_S$ is surjective
because $\calM_S^{\geqslant 1}$ is a union of HN-strata.
For each class $\alpha\in\H^2(S,\Q)$, we define
$$\bfH_S^{\geqslant 1}(\alpha+\Z\delta)=\bigoplus_{n\in \Z} \bfH_S^{\geqslant 1}(\alpha+n\delta).$$
For any element $x\in \bfH_S^0$
we may define the Hecke operator $T_x$ on $\bfH_S^{\geqslant 1}(\alpha+\Z\delta)$
to be the left translation by $x$, thanks to
the following proposition.

\begin{proposition}\label{prop:Hecke-COHA}
The multiplication in $\bfH_S$ descends to a left $\bfH_S^0$-action 
$T$ on $\bfH_S^{\geqslant 1}(\alpha+\Z\delta)$.
\end{proposition}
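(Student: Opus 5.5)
We would realize $\bfH_S^{\geqslant 1}(\alpha+\Z\delta)$ as a quotient of the left $\bfH_S^0$-module $\bfH_S(\alpha+\Z\delta)=\bigoplus_n\bfH_S(\alpha+n\delta)$ by a left submodule. Since $\bfH_S$ is associative, left multiplication already gives an action of $\bfH_S^0$ on $\bfH_S(\alpha+\Z\delta)$, because $m\delta+\alpha+n\delta\in\alpha+\Z\delta$. The restriction $\res^{\geqslant 1}\colon\bfH_S\to\bfH_S^{\geqslant 1}$ is surjective by Corollary~\ref{cor:Kirwan-surj}. It therefore suffices to show that its kernel
$$K=\ker\big(\bfH_S(\alpha+\Z\delta)\to\bfH_S^{\geqslant 1}(\alpha+\Z\delta)\big)$$
is stable under left multiplication by $\bfH_S^0$. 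Then $T_x(\res^{\geqslant1}y):=\res^{\geqslant1}(x*y)$ is well defined, and associativity and unitality descend automatically.

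The key geometric input is the following. Let $x\in\bfH_S(m\delta)$ and $y\in\bfH_S(\nu)$, where $\nu$ is one-dimensional. Consider the extension correspondence \eqref{induction-diagram-S}, whose middle term $\calM_S^\ex(m\delta,\nu)$ parametrizes $0\to\calF\to\calG\to\calE\to 0$ with $\calE$ zero-dimensional of length $m$ and $\calF$ of class $\nu$. If $\calG$ lies in the open substack $\calM_S^{\geqslant1}$, then $\calF\subset\calG$ has no zero-dimensional subsheaf by Lemma~\ref{lem:support}(b). Hence
$$\ev^{-1}\big(\calM_S^{\geqslant1}(m\delta+\nu)\big)\subset\gr^{-1}\big(\calM_S(m\delta)\times\calM_S^{\geqslant1}(\nu)\big).$$
We then use compatibility of the 2D CoHA multiplication \eqref{eq-relative-COHA-KV} with open restriction. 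Pulling back the correspondence along the open immersion $\calU=\calM_S^{\geqslant1}(m\delta+\nu)\subset\calM_S(m\delta+\nu)$ gives an open substack $\ev^{-1}(\calU)$ of the extension stack. By base change for $\ev_*$ (proper, base change along an open) and functoriality of the purity transformation for $\gr$ under the open restriction (the functoriality property in \S\ref{sssec-purity-transform}), $\res_\calU(x*y)$ depends only on the restriction of $x\boxtimes y$ to $\gr(\ev^{-1}(\calU))$. By the inclusion above, this restriction factors through $\calM_S(m\delta)\times\calM_S^{\geqslant1}(\nu)$. So if $\res^{\geqslant1}y=0$, then $\res^{\geqslant1}(x*y)=0$. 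Concretely, $\res_\calU(x*y)$ is computed from the correspondence
$$\calM_S(m\delta)\times\calM_S^{\geqslant1}(\nu)\leftarrow\ev^{-1}(\calU)\to\calU,$$
applied to $x\boxtimes\res^{\geqslant1}y$.

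The main obstacle is bookkeeping in the topological/pro setting. Both stacks are only locally of finite type, and the product is defined on pro-objects via quasi-compact exhaustions (\S\ref{sec-pro} and the Künneth isomorphism of Proposition~\ref{prop-kunneth}). We would handle this by working at each level of a quasi-compact open exhaustion of $\calU$ compatible with the HN filtration (Proposition~\ref{prop:good HN order}) and then passing to limits, using Corollary~\ref{cor-BM-limit}. One also needs that $\ev^{-1}(\calU)\to\calU$ remains proper, which holds because it is a base change of $\ev$. Finally, there is a grading check: the shifts $\vd_\nu$ match because the shift $\L^{\vdim\gr}$ in \eqref{eq-relative-COHA-KV} is unchanged by open restriction.
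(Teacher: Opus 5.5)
Your argument is correct. It is the ``direct'' proof that the paper mentions and then skips. The paper instead builds a relative action over the Chow variety on the 3D side: it applies the unit $1\to j_*j^*$ to the correspondence \eqref{induction-X-B} together with the integral isomorphism to obtain \eqref{hyp-res-2}, then takes global sections and transports the result to $S$ by dimensional reduction. Its 2D relative chain \eqref{rel-Hecke-S} is your computation, carried out over $B_S(\alpha)$ rather than on global sections.

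The geometric input is the same in both routes. Your inclusion $\ev^{-1}(\calM_S^{\geqslant1})\subset\gr^{-1}\big(\calM_S(\Z\delta)\times\calM_S^{\geqslant1}\big)$ is exactly what makes the open immersion $j_1$ in \eqref{induction-S-B} exist; the paper uses this without stating it.

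Your framing gains several things:
\begin{enumerate}
\item Because you show that the kernel of $\res^{\geqslant1}$ is a left submodule, associativity and unitality of $T$ come for free from associativity of $\bfH_S$. The paper simply asserts that its map defines an action.
\item You make explicit the identity $T_x\circ\res^{\geqslant1}=\res^{\geqslant1}\circ(x\cdot-)$, which the paper later uses in Proposition~\ref{prop:hecke on BPS Lie}(c).
\item Working with the 2D product throughout, you never need the 2D/3D comparison (Theorem~\ref{thm: comparison 2D 3D}), which rests on \cite{KKPS}.
\end{enumerate}

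The paper's route gains a sheaf-level action on $(\rho_X^{\geqslant1})^{\pro}_*\varphi_{\calM_X^{\geqslant1}(\alpha+\Z\delta)}$ over $B_X(\alpha)$. That action can be restricted to the semi-nilpotent locus to act on the pushforward of the BPS sheaf, which is the ingredient needed for Toda's conjecture. An argument on global sections alone does not provide this.
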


The proposition can be proved directly. Instead, we first prove a relative version for a later use, and then take the global sections.
Consider the following commutative diagram
\begin{align}\label{induction-X-B}
\begin{split}
\xymatrix{
&B_X(\alpha)&&\\
\calM_X(\Z\delta)\times\calM_X(\alpha+\Z\delta)\ar[ur]^-{a_X\times \rho_X} &
\ar[l]_-{\tilde\gr} \calM_X^\ex(\Z\delta,\alpha+\Z\delta) \ar@{=}[r]&
\calM_X^\ex(\Z\delta,\alpha+\Z\delta) \ar[r]^-{\tilde\ev}& 
\calM_X(\alpha+\Z\delta)\ar[ull]_-{\rho_X}\\
\calM_X(\Z\delta)\times\calM_X^{\geqslant 1}(\alpha+\Z\delta)\ar[u]^-{\id\times j}&
\ar[l]_-{\tilde\gr^\circ} \calM_X^{\ex,\circ}(\Z\delta,\alpha+\Z\delta)\ar[u]^-{j_2}&
\calM_X^{\ex,\geqslant 1}(\Z\delta,\alpha+\Z\delta)\ar[r]^-{\tilde\ev^1}\ar[l]_-{j_1}\ar[u]_-j \ar@/^1.8pc/[ll]_-{\tilde\gr^1}&
\calM_X^{\geqslant 1}(\alpha+\Z\delta)\ar[u]_-{j}
}
\end{split}
\end{align}
where $a_X$ is the map $\calM_X(\Z\delta)\to\Spec(\C)$.
The maps $\tilde\ev$ and $\tilde\ev^1$ 
are representable and proper.
The map $j$ is the obvious open immersion.
The left and right squares are Cartesian.
Replacing the threefold $X$ by the surface $S$ everywhere yields a similar diagram
\begin{align}\label{induction-S-B}
\begin{split}
\xymatrix{
&B_S(\alpha)&&\\
\calM_S(\Z\delta)\times\calM_S(\alpha+\Z\delta)\ar[ur]^-{a_S\times \rho_S} &
\ar[l]_-{\gr} \calM_S^\ex(\Z\delta,\alpha+\Z\delta) \ar@{=}[r]&
\calM_S^\ex(\Z\delta,\alpha+\Z\delta) \ar[r]^-{\ev}& 
\calM_S(\alpha+\Z\delta)\ar[ull]_-{\rho_S}\\
\calM_S(\Z\delta)\times\calM_S^{\geqslant 1}(\alpha+\Z\delta)\ar[u]^-{\id\times j}&
\ar[l]_-{\gr^\circ} \calM_S^{\ex,\circ}(\Z\delta,\alpha+\Z\delta)\ar[u]^-{j_2}&
\calM_S^{\ex,\geqslant 1}(\Z\delta,\alpha+\Z\delta)\ar[r]^-{\ev^1}\ar[l]_-{j_1}\ar[u]_-j \ar@/^1.8pc/[ll]_-{\gr^1}&
\calM_S^{\geqslant 1}(\alpha+\Z\delta)\ar[u]_-{j}
}
\end{split}
\end{align}
The maps $\ev$ and $\ev^1$ are representable and proper.
The maps $\gr$, $\gr^\circ$ and $\gr^1$ are quasi-smooth.
The left and right squares are Cartesian.
We abbreviate
$$\varphi_{\calM_X^{\geqslant 1}(\alpha+\Z\delta)}=j^!\varphi_{\calM_X(\alpha+\Z\delta)}
,\quad
\rho_X^{\geqslant 1}=\rho_X\circ j
,\quad
\rho_S^{\geqslant 1}=\rho_S\circ j.$$

\begin{proposition}\label{prop:H0-action}\hfill
\begin{enumerate}[label=$\mathrm{(\alph*)}$,leftmargin=8mm,itemsep=2mm]
\item 
There is a morphism of complexes of mixed Hodge modules over $B_X(\alpha)$
\begin{align}\label{Hecke-X}\bfH_X^0\otimes(\rho_X^{\geqslant 1})^{\pro}_*\varphi_{\calM_X^{\geqslant 1}(\alpha+\Z\delta)}\to
(\rho_X^{\geqslant 1})^{\pro}_*\varphi_{\calM_X^{\geqslant 1}(\alpha+\Z\delta)}\end{align}
which defines an $\bfH_X^0$-action $T$ by Hecke operators on the pro-complex
$(\rho_X^{\geqslant 1})^{\pro}_*\varphi_{\calM_X^{\geqslant 1}(\alpha+\Z\delta)}$.
\item
There is a morphism of complexes of mixed Hodge modules over $B_S(\alpha)$
\begin{align}\label{Hecke-S}\bfH_S^0\otimes(\rho_S^{\geqslant 1})^{\pro}_*\D\Q_{\calM_S^{\geqslant 1}(\alpha+\Z\delta)}\to
(\rho_S^{\geqslant 1})^{\pro}_*\D\Q_{\calM_S^{\geqslant 1}(\alpha+\Z\delta)}\end{align} 
which defines an $\bfH_S^0$-action $T$ by Hecke operators on the pro-complex
$(\rho_S^{\geqslant 1})^{\pro}_*\D\Q_{\calM_S^{\geqslant 1}(\alpha+\Z\delta)}$.
\item \label{item-H0-action-compati}
Under the dimensional reduction and forgetting the mixed Hodge structures, the Hecke $\bfH_X^0$-action on the complex
$\kappa_*(\rho_X^{\geqslant 1})^{\pro}_*\varphi_{\calM_X^{\geqslant 1}(\alpha+\Z\delta)}$ is identified
with the Hecke $\bfH_S^0$-action on the complex
$(\rho_S^{\geqslant 1})^{\pro}_*\D\Q_{\calM_S^{\geqslant 1}(\alpha+\Z\delta)}$.
\end{enumerate}
\end{proposition}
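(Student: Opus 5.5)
The plan is to obtain the Hecke action by restricting the relative CoHA multiplication of \S\ref{sec:relative-COHA} along the open immersion $j$, using the diagrams \eqref{induction-X-B} and \eqref{induction-S-B}. The key geometric input is that $\calM^{\geqslant 1}$ is closed under the relevant extensions. If $0\to\calF\to\calG\to\calE\to 0$ is exact with $\calE$ zero-dimensional and $\calF$ pure one-dimensional, then $\calG$ need not be pure. However, if $\calG$ is pure, then $\calF\subset\calG$ is pure, by Lemma~\ref{lem:support}(b) applied to the injection. So the square formed by $j$ on the right of \eqref{induction-X-B} is Cartesian: $\calM^{\ex,\geqslant 1}=\tilde\ev^{-1}(\calM_X^{\geqslant 1})$, and this stack maps into $\calM_X(\Z\delta)\times\calM_X^{\geqslant 1}$ via $\tilde\gr^1$. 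Here $\calM_X^{\ex,\circ}$ denotes the preimage under $\tilde\gr$ of $\calM_X(\Z\delta)\times\calM^{\geqslant 1}_X$, and $j_1$ is an open immersion.

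Step one constructs the map \eqref{Hecke-X}. I start from the adjoint of the integral isomorphism, i.e.\ \eqref{adjunction}, in the form $\tilde\ev_!\tilde\gr^*(\varphi_{\calM_X(\Z\delta)}\boxtimes\varphi)\to\varphi_{\calM_X(\alpha+\Z\delta)}$. I apply $j^!=j^*$ and use base change along the Cartesian right square to get $\tilde\ev^1_!\, j^*\tilde\gr^*(\cdots)\to\varphi_{\calM^{\geqslant1}_X}$. Since $j^*\tilde\gr^*=j_1^*(\tilde\gr^\circ)^*(\id\times j)^*$, this gives
\[
\tilde\ev^1_!(\tilde\gr^1)^*\bigl(\varphi_{\calM_X(\Z\delta)}\boxtimes\varphi_{\calM^{\geqslant 1}_X}\bigr)\to\varphi_{\calM^{\geqslant1}_X}.
\]
Next I push forward along $\rho_X^{\geqslant1}$, using the commutativity $\rho_X\circ\tilde\ev=\oplus\circ(a_X\times\rho_X)\circ\tilde\gr$ from \eqref{induction-B-X}. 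Over the point factor, this means pushing forward along $a_X$, which produces $\bfH^0_X\otimes(-)$; the $\oplus$ is trivial here because zero-dimensional cycles have class $0$ in $B(\alpha)$, so the relevant component of the Chow variety is a point. The unit map $(\tilde\gr^1)$-pullback followed by pushforward, together with the Künneth property of Proposition~\ref{prop-kunneth} and the quasi-compact exhaustions of \S\ref{sec-pro}, then yields the pro-map \eqref{Hecke-X}.

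Step two checks associativity and unitality. These follow because the construction is the $j$-restriction of the associative algebra $\tilde\rho^{\pro}_*\varphi_{\widetilde\calM}$ of \S\ref{sec:relative-COHA}, and restriction to an open substack that is stable under $\tilde\ev$ in the above sense commutes with the associativity diagrams. Concretely, iterated extensions by zero-dimensional sheaves ending in a pure sheaf form an open subset of the iterated extension stack, and base change applies there. Part (b) is proved identically using the 2D construction \eqref{eq-relative-COHA-KV} with $\gr^1$ quasi-smooth and the purity transformation, which is compatible with open restriction by its functoriality. Part (c) then follows by applying $\kappa_*$ and invoking Theorem~\ref{thm: comparison 2D 3D}, restricted to the open pieces via the étale compatibility of Theorem~\ref{thm-shifted-Lag-class}.

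The main obstacle is the pro-object bookkeeping. The stacks $\calM^{\geqslant1}(\alpha+n\delta)$ are not quasi-compact, and $\tilde\gr$ and $\tilde\ev$ must be matched on compatible quasi-compact exhaustions (HN truncations) so that the maps assemble into a morphism of pro-complexes. I would handle this by noting that multiplying by a length-$m$ sheaf sends a given HN-bounded open into a bounded one, which gives the cofinality needed to define the pro-morphism.
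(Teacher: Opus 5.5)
Your proposal is correct and is essentially the paper's argument. The paper applies the unit $1\to j_*j^*$ to the integral isomorphism and uses the Cartesian squares of \eqref{induction-X-B} to rewrite the result as $(\id\times j)_*(\tilde\gr^1)_*(\tilde\ev^1)^!\varphi_{\calM_X^{\geqslant 1}(\alpha+\Z\delta)}$; it then takes the adjoint and pushes forward along $a_X\times\rho_X$, uses the analogous unit/purity/counit chain for (b), and the argument of Theorem~\ref{thm: comparison 2D 3D} for (c). This is formally equivalent to your applying $j^*$ to the left-adjoint form and using proper base change.

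One correction on the pro-bookkeeping concerns the direction of boundedness. To define the pro-morphism, you need, for each quasi-compact open $V$ of the target, that $\tilde\gr^1\bigl((\tilde\ev^1)^{-1}(V)\bigr)$ lies in a quasi-compact open of the source. This lets you restrict the correspondence over $V$ and still use the counit for the proper map $(\tilde\ev^1)^{-1}(V)\to V$, and it is automatic from properness of $\tilde\ev^1$. The forward boundedness you cite (a bounded source lands in a bounded target) does not suffice: restricting over $\tilde\gr^{-1}(U)$ destroys properness, so that counit is no longer available.
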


\begin{proof}
The unit $1\to j_*j^*$ yields the following chain of maps
\begin{align*}
\varphi_{\calM_X(\Z\delta)}\boxtimes\varphi_{\calM_X(\alpha+\Z\delta)}
&\cong \tilde\gr_*\tilde\ev^!\varphi_{\calM_X(\alpha+\Z\delta)}\\
&\to \tilde\gr_*j_*j^*\tilde\ev^!\varphi_{\calM_X(\alpha+\Z\delta)}\\
&\cong (\id\times j)_*(\tilde\gr^1)_*(\tilde\ev^1)^!\varphi_{\calM_X^{\geqslant 1}(\alpha+\Z\delta)}.
\end{align*}
Taking the adjoint, we get a morphism of complexes of mixed Hodge modules
\begin{align}\label{hyp-res-2}\varphi_{\calM_X(\Z\delta)}\boxtimes\varphi_{\calM_X^{\geqslant 1}(\alpha+\Z\delta)}\to
(\tilde\gr^1)_*(\tilde\ev^1)^!\varphi_{\calM_X^{\geqslant 1}(\alpha+\Z\delta)}.\end{align}
Taking the pushforward by the map $a_X\times\rho_X$, this morphism yields as in \S\ref{sec:relative-COHA} a morphism
\begin{equation}\label{eq:Hecke action-X}
\bfH_X^0\otimes(\rho_X^{\geqslant 1})^{\pro}_*\varphi_{\calM_X^{\geqslant 1}(\alpha+\Z\delta)}\to
(\rho_X^{\geqslant 1})^{\pro}_*\varphi_{\calM_X^{\geqslant 1}(\alpha+\Z\delta)},
\end{equation}
proving part (a). The 2D version in part (b) is similar. Indeed, we have the following chain of maps
\begin{align}\label{rel-Hecke-S}
\begin{split}
(a_S\times\rho_S^{\geqslant 1})_*^{\pro}(\id\times j)^*(\D\Q_{\calM_S(\Z\delta)}\boxtimes\D\Q_{\calM_S(\alpha+\Z\delta)})
&\to(a_S\times\rho_S^{\geqslant 1})^{\pro}_*(\id\times j)^*\gr_*\gr^*(\D\Q_{\calM_S(\Z\delta)}\boxtimes\D\Q_{\calM_S(\alpha+\Z\delta)})\\
&\cong(a_S\times\rho_S^{\geqslant 1})^{\pro}_*(\gr^\circ)_*(\gr^\circ)^*(\D\Q_{\calM_S(\Z\delta)}\boxtimes\D\Q_{\calM_S^{\geqslant 1}(\alpha+\Z\delta)})\\
&\to(a_S\times\rho_S^{\geqslant 1})^{\pro}_*(\gr^\circ)_*\D\Q_{\calM_S^{\ex,\circ}(\Z\delta,\alpha+\Z\delta)}\\
&\to(a_S\times\rho_S^{\geqslant 1})^{\pro}_*(\gr^1)_*\D\Q_{\calM_S^{\ex,\geqslant 1}(\Z\delta,\alpha+\Z\delta)}\\
&\cong(\rho_S^{\geqslant 1})^{\pro}_*(\ev^1)_*(\ev^1)^!\D\Q_{\calM_S^{\geqslant 1}(\alpha+\Z\delta)}\\
&\to(\rho_S^{\geqslant 1})^{\pro}_*\D\Q_{\calM_S^{\geqslant 1}(\alpha+\Z\delta)}
\end{split}
\end{align}
where the first and fourth morphisms are the unit $1\to\gr_*\gr^*$ and $1\to (j_1)_*(j_1)^*$, 
the third is the purity transformation \eqref{eq-purity-transform} for the quasi-smooth map $\gr^\circ$,
the last is the properness of the map $\ev^1$ and the counit $(\ev^1)_!(\ev^1)^!\to 1$, 
and the other morphisms are given by base change and the commutativity of the diagram.
It yields a morphism of complexes of mixed Hodge modules
\begin{equation}\label{eq:Hecke action-S}
(a_S\times\rho_S^{\geqslant 1})^{\pro}_*\D\Q_{\calM_S(\Z\delta)}\boxtimes\D\Q_{\calM_S^{\geqslant 1}(\alpha+\Z\delta)}
\to(\rho_S^{\geqslant 1})^{\pro}_*\D\Q_{\calM_S^{\geqslant 1}(\alpha+\Z\delta)}
\end{equation}
which defines an $\bfH_S^0$-action
$$\bfH_S^0\otimes(\rho_S^{\geqslant 1})^{\pro}_*\D\Q_{\calM_S^{\geqslant 1}(\alpha+\Z\delta)}\to
(\rho_S^{\geqslant 1})^{\pro}_*\D\Q_{\calM_S^{\geqslant 1}(\alpha+\Z\delta)}.$$ 
Part (c) is proved as Theorem~\ref{thm: comparison 2D 3D}. Indeed, the arguments there show that the diagram
\begin{equation*}
	\begin{tikzcd}
		(a_X\times\rho_X^{\geqslant 1})^{\pro}_*(\varphi_{\calM_X(\Z\delta)}\boxtimes\varphi_{\calM_X^{\geqslant 1}(\alpha+\Z\delta)}) & (\rho_X^{\geqslant 1})^{\pro}_*(\varphi_{\calM_X^{\geqslant 1}(\alpha+\Z\delta)})\\
		(a_S\times\rho_S^{\geqslant 1})^{\pro}_*\D\Q_{\calM_S(\Z\delta)}\boxtimes\D\Q_{\calM_S^{\geqslant 1}(\alpha+\Z\delta)}
		&	(\rho_S^{\geqslant 1})^{\pro}_*\D\Q_{\calM_S^{\geqslant 1}(\alpha+\Z\delta)}
		\arrow["\eqref{eq:Hecke action-X}", from=1-1, to=1-2]
		\arrow["\eqref{eq:Hecke action-S}", from=2-1, to=2-2]
		\arrow["\eqref{eq:dimensional-reduction}"', from=1-1, to=2-1]
		\arrow["\eqref{eq:dimensional-reduction}", from=1-2, to=2-2]		
	\end{tikzcd}
\end{equation*}
commutes. 
\end{proof}

\begin{remark}
	In Proposition~\ref{prop:H0-action}(\ref{item-H0-action-compati}), we forgot the mixed Hodge structure since Theorem~\ref{thm: comparison 2D 3D} is stated only at the level of constructible complexes.
	We expect Theorem~\ref{thm: comparison 2D 3D} to upgrade to mixed Hodge modules with the same proof, and hence we expect that Proposition~\ref{prop:H0-action}(\ref{item-H0-action-compati}) holds without forgetting the mixed Hodge module structures.
\end{remark}

We can now prove Proposition \ref{prop:Hecke-COHA}.

\begin{proof}[Proof of Proposition $\ref{prop:Hecke-COHA}$]
Taking the global sections, the map \eqref{Hecke-X} yields a map 
$$\bfH_X^0\otimes\bfH_X^{\geqslant 1}(\alpha+\Z\delta)\to\bfH_X^{\geqslant 1}(\alpha+\Z\delta).$$
This map defines an $\bfH_X^0$-action on $\bfH_X^{\geqslant 1}(\alpha+\Z\delta).$
Under the dimensional reduction isomorphisms
$$
\bfH_X^0\cong\bfH_S^0
,\quad
\bfH_X^{\geqslant 1}(\alpha+\Z\delta)\cong\bfH_S^{\geqslant 1}(\alpha+\Z\delta)$$
we obtain the desired action of $\bfH_S^0$ on $\bfH_S^{\geqslant 1}(\alpha+\Z\delta)$. 
\end{proof}

\medskip

\subsection{Hecke operators and twisting by invertible sheaves} 

Let $\calL$ be an invertible sheaf on $S$.
The tensor product with $\calL$
is an automorphism $\otimes \calL$ of the stack $\calM_S$, and the
pushforward by $\otimes\calL$ an automorphism $\omega_\calL$ of the algebra $\bfH_S$.
Similarly, the tensor product with $\kappa^*\calL$
is an automorphism $\otimes \calL$ of the stack $\calM_X$, and the
pushforward by $\otimes\calL$ an automorphism $\omega_\calL$ of the algebra $\bfH_X$.
These automorphisms are intertwined by dimension reduction, so we abuse notation and denote them by the same symbol $\omega_\calL$.
We denote by the same symbol $\omega_\calL$ the automorphism of $\bfH^{\geqslant 1}_S(\alpha + \Z \delta)$ defined the same way.

The goal of this section is to prove that the
 automorphism $\omega_\calL$ of $\bfH^{\geqslant 1}_S(\alpha + \Z \delta)$ can be realised as a Hecke operator of horizontal degree $m\delta$, where $m=\alpha \cdot c_1(\calL)$. 
To do this, given a smooth projective curve $C\subset S$ and an integer $m >0$, we consider the closed substack 
$\calM_C(m\delta)$ of $\calM_S(m\delta)^\cl$
parametrizing length $m$ coherent sheaves on $S$ which are scheme theoretically supported on $C$. 
The fundamental class $[\calM_C(m\delta)]$ is an element in $\bfH_S^0$. Hence, the Hecke operator 
$T_{[\calM_C(m\delta)]}$ on $\bfH_S^{\geqslant 1}(\alpha+\Z\delta)$ is well defined. 

\begin{proposition}\label{prop:PicardisHecke}
Let $C\subset S$ be a smooth connected curve and $\Sigma$ an effective one-cycle on $S$. 
Assume that $C$ is not a component of any element of the linear system $|\Sigma|$. 
Set $\alpha=[\Sigma]$, $\beta=[C]$ and $m= \alpha\cdot\beta$. Assume that $m>0$. 
\begin{enumerate}[label=$\mathrm{(\alph*)}$,leftmargin=8mm,itemsep=2mm]
\item 
The automorphism $\omega_{\calO(C)}$ of $\bfH^{\geqslant 1}_S(\alpha + \Z \delta)$ is of degree $m\delta$.
\item
The Hecke operator 
$T_{[\calM_C(m\delta)]}$ on $\bfH_S^{\geqslant 1}(\alpha+\Z\delta)$ is invertible of degree $m\delta$.
\item
We have $T_{[\calM_C(m\delta)]}=\omega_{\calO(C)}$.
\end{enumerate}
\end{proposition}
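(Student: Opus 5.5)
The plan is to prove (a) by a direct computation, and to obtain (b) from (c), which is the real content. For (a): if $\calF$ is a pure one-dimensional sheaf with Mukai vector $\alpha+n\delta$, then $\calF\otimes\calO(C)$ is again pure one-dimensional with the same support cycle. Its Euler characteristic is $\chi(\calF)+c_1(\calF)\cdot C=\chi(\calF)+m$. Hence $\otimes\calO(C)$ restricts to an isomorphism of stacks $\calM_S^{\geqslant 1}(\alpha+n\delta)\cong\calM_S^{\geqslant 1}(\alpha+(n+m)\delta)$. This isomorphism preserves virtual dimensions, since the Mukai pairing $(\nu,\nu)=-\alpha^2$ does not depend on $n$. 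So $\omega_{\calO(C)}$ is an automorphism of horizontal degree $m\delta$ and vertical degree $0$.

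For (c), the idea is to realize tensoring by $\calO(C)$ as an elementary modification along $C$. For pure $\calF$ there is a canonical exact sequence
\[0\to\calF\to\calF(C)\to\calF(C)|_C\to 0.\]
The hypothesis that $C$ is not a component of any cycle in $|\Sigma|$ ensures that $\calF$ has no torsion along $C$. Hence $\calF\to\calF(C)$ is injective, and $\calF(C)|_C$ is a zero-dimensional sheaf of length $m$ that is scheme-theoretically supported on $C$.

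Next I look at the Hecke correspondence of Proposition~\ref{prop:H0-action}(b) restricted to $\calM_C(m\delta)$. It is the stack $Z$ of extensions $0\to\calF\to\calG\to\calQ\to 0$ with $\calQ\in\calM_C(m\delta)$ and $\calF,\calG$ pure. I claim that $(\gr^1,\ev^1)$ identifies $Z$ with the graph of $\otimes\calO(C)$. Indeed, any such $\calG$ satisfies $\calG\subset\calF(C)$: multiplying by the section $s_C$ kills $\calQ$, because $\calQ$ is an $\calO_C$-module. Comparing lengths, $\chi(\calF(C)/\calF)=m=\chi(\calG/\calF)$, so $\calG=\calF(C)$. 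The same argument works in families, which gives an isomorphism of classical stacks $Z^\cl\cong\calM_S^{\geqslant 1}(\alpha+\Z\delta)^\cl$ under which $\ev^1$ becomes $\otimes\calO(C)$.

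It remains to match the pull-push. I would compute $T_{[\calM_C(m\delta)]}$ by base-changing the correspondence along $\calM_C(m\delta)\hookrightarrow\calM_S(m\delta)$ and using the purity transformation for $\gr^\circ$. This reduces the claim to showing that the refined virtual pullback of the fundamental class $[\calM_C(m\delta)]$ to $Z$ equals the fundamental class of $Z\cong\calM_S^{\geqslant 1}$.

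This step is the main obstacle. It amounts to a virtual dimension count together with an excess-intersection check. The relative virtual dimension of $\gr$ at $(\calQ,\calF)$ is $-\chi(\calQ,\calF)=0$ by \eqref{E:eulerdeltaO}, since $\calF$ has rank zero, and $\calM_C(m\delta)$ has dimension $0$. The natural way to see that no excess bundle appears is to compare with the map $\calM_S^{\geqslant 1}\to\calM_C(m\delta)$, $\calF\mapsto\calF(C)|_C$, whose graph is $Z$. I would check transversality on tangent–obstruction complexes, namely the vanishing of $\Ext^2(\calQ,\calF)\cong\Hom(\calF,\calQ)^\vee$ modulo its image. If that check is delicate, I would fall back on proving the identity on an open dense set where $\calF$ meets $C$ transversally, and then extend using the Kirwan decomposition and degree reasons.

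Once (c) holds, (b) follows from (a), because $\omega_{\calO(C)}$ is invertible with inverse $\omega_{\calO(-C)}$.
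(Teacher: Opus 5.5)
This is essentially the paper's proof: the paper also reduces to (c), working relatively over $B_S(\alpha)$ and then taking global sections. It uses Lemma~\ref{lem:keylemma} (the only subsheaf of a pure $\calF$ with quotient in $\calM_C(m\delta)$ is $\calF(-C)$, which is your $\calG=\calF(C)$ argument read from the middle term) to identify the base-changed Hecke correspondence with the graph of $\otimes\calO(C)$, and compares the two pull--push chains by base change and associativity of purity transformations.

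The step you flag is that the purity transformation for the base change $h$ of $\gr$ turns $[\calM_C(m\delta)]\boxtimes(-)$ into the identity. The paper simply asserts this, and your graph/transversality check is the right justification. Write $\mathcal{Q}=(i_C)_*\mathcal{Q}'$ with $\mathcal{Q}'=\calF(C)|_C$. Grothendieck duality gives $\Ext^k_S(\mathcal{Q},\calF)\cong\Ext^{k-1}_C(\mathcal{Q}',\mathcal{Q}')$, so the relative tangent complex of $h$ is the shifted tangent complex of the smooth stack $\calM_C(m\delta)$, which is exactly what a graph into a smooth target has.

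Your fallback would only work at the sheaf level, where that composite is a degree-zero endomorphism of $\D\Q_{\calM_S^{\geqslant 1}}$, i.e.\ a locally constant function. It cannot be run through Kirwan surjectivity, which gives no injectivity of restriction to a dense open.
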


The proof hinges on the following lemmas.

\begin{lemma}\label{lem:keylemma}
Let $C\subset S$ be a smooth curve and let $\Sigma$ be an effective one-cycle on $S$. 
Assume that $C$ is not a component of any element of the linear system 
$|\Sigma|$. Set $\alpha=[\Sigma]$, $\beta=[C]$ and $m= \alpha\cdot\beta$. Assume that $m>0$. 
If $\calF \in \calM_S^{\geqslant 1}(\alpha+n\delta)$, 
then the only subsheaf $\calG$ such that $\calF/\calG \in \calM_C(m\delta)$ is $\calF(-C)$.
\end{lemma}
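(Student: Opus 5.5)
We first check that $\calF(-C)$ is a valid subsheaf, and then show that any $\calG$ with the stated property must contain $\calF(-C)$. Since the two sheaves then have the same length quotient, they coincide.

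\emph{$\calF(-C)$ works.} Tensor the exact sequence $0\to\calO_S(-C)\to\calO_S\to\calO_C\to 0$ with $\calF$. Because $\calF$ is pure one-dimensional and $C$ contains no component of its support, the map $\calF(-C)\to\calF$ is injective: its kernel is supported on $C\cap\supp\calF$, which is finite, and a pure sheaf has no zero-dimensional subsheaves. Here we use that the support cycle of $\calF$ lies in $|\Sigma|$, so $C$ is not one of its components. Hence $\calF/\calF(-C)=\calF\otimes\calO_C$ is zero-dimensional and scheme-theoretically supported on $C$. Its length is $\chi(\calF)-\chi(\calF(-C))$, which by Riemann--Roch on $S$ (the Mukai vector of $\calF\otimes\calL$ shifts by $c_1(\calF)\cdot c_1(\calL)\,\delta$) equals $\alpha\cdot\beta=m$. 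So $\calF/\calF(-C)\in\calM_C(m\delta)$.

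\emph{Uniqueness.} Let $\calG\subset\calF$ have quotient $Q=\calF/\calG\in\calM_C(m\delta)$. Scheme-theoretic support on $C$ means $Q$ is killed by the ideal $\calO_S(-C)$, i.e.\ $Q\otimes\calO_S(-C)\to Q$ is zero. The image of $\calF(-C)=\calF\otimes\calO(-C)\to\calF$ is $\calO(-C)\cdot\calF$, and it maps to $\calO(-C)\cdot Q=0$ in $Q$. Hence $\calF(-C)\subseteq\calG$. Then $\calF/\calF(-C)\twoheadrightarrow\calF/\calG$ is a surjection between sheaves of the same length $m$, so it is an isomorphism and $\calG=\calF(-C)$.

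The main point needing care is the injectivity and length computation in the first step. Specifically, one must verify that $C$ meets $\supp\calF$ properly for every $\calF\in\calM_S^{\geqslant1}(\alpha+n\delta)$, which follows from the hypothesis on $|\Sigma|$ since the support cycle of $\calF$ is an effective cycle of class $\alpha$ in that linear system. Given this, $\operatorname{Tor}_1(\calF,\calO_C)$ is zero-dimensional and hence zero by purity. Uniqueness is then formal.
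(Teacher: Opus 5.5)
Your proof is correct and is essentially the paper's argument. It has the same three steps: injectivity of $\calF(-C)\to\calF$ from the vanishing of $\mathrm{Tor}_1(\calO_C,\calF)$ by purity; the length $m$ of $\calF\otimes\calO_C$ via the Chern character (Riemann--Roch); and uniqueness, because any quotient scheme-theoretically supported on $C$ kills $\calF(-C)$, followed by a length comparison. The paper phrases the last step through the adjunction $\Hom_S(\calF,(i_C)_*\calG)=\Hom_C(\calF\otimes\calO_C,\calG)$, which is the same observation.

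One small caveat, which the paper shares: your claim that the support cycle of $\calF$ lies in $|\Sigma|$ is automatic on a K3 surface, since $\Pic(S)=\NS(S)$. On an Abelian surface, however, effective cycles of class $\alpha$ are spread over several linear systems, so the hypothesis must be read as ``$C$ is not a component of any effective cycle of class $\alpha$''.
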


\begin{proof}
Let  $i_C\colon  C \to S$ be the obvious closed immersion.
Let us first consider the exact sequence
$$0 \longrightarrow \mathrm{Tor}^1(\calO_C,\calF)   \longrightarrow \calF \otimes
 \calO(-C)  \longrightarrow \calF  \longrightarrow \calF \otimes \calO_C  \longrightarrow 0.$$
We have $ \mathrm{Tor}^1(\calO_C,\calF) =0$ since $\calF$ is pure one-dimensional and $C$ is not a component of the support of $\calF$. 
Thus
$\calF \overset{L}{\otimes} \calO_C=\calF \otimes \calO_C.$ 
We deduce that 
$$\ch(\calF \otimes \calO_C)=\ch(\calF) -\ch(\calF \otimes \calO(-C))=\ch(\calF)(1-e^{-[C]})=\nu \cdot [C] =m\delta.$$
Hence $\calF\otimes \calO_C$ is a zero-dimensional sheaf scheme-theoretically supported on $C$ of length $m$. 
It thus only remains to show that $\calF$ has a unique quotient of length $m$ which is scheme-theoretically supported on $C$. 
For any $\calG \in \mathsf{Coh}(C)$ we have 
$$\Hom_S(\calF,(i_C)_*\calG)=\Hom_C((i_C)^*\calF,\calG)=\Hom_C(\calF\otimes \calO_C,\calG),$$
hence any morphism $f\colon \calF \to \calG$ factors through the map $\calF \to \calF \otimes \calO_C$. 
Hence, we have $\calF(-C) \subseteq\ker(f)$, and therefore $\ker(f)=\calF(-C)$ for degree reasons.
\end{proof}

For a future use, we will prove a version of Proposition \ref{prop:PicardisHecke} which is relative over the Chow variety $B_S$ and then take the global sections.
Let us first quote the following easy lemma which follows from the fact that, for any invertible sheaf $\calL$ over $S$,
the automorphism $\otimes\calL$ of $\calM_S$ preserves the support of sheaves over $S$, 
hence the Chow map $\rho_S\colon \calM_S\to B_S$ is $\otimes\calL$-invariant.

\begin{lemma}
Let $\calL$ be an invertible sheaf on $S$. The automorphism $\otimes \calL$ of the stack $\calM_S$ preserves the open substack $\calM_S^{\geqslant 1}$.
The dualizing complexes $\D\Q_{\calM_S}$ and $\D\Q_{\calM_S^{\geqslant 1}}$ are $\otimes\calL$-equivariant. 
The pushforward by $\otimes \calL$ yields an automorphism $\omega_\calL$ of the
pro-complex of mixed Hodge modules $(\rho_S^{\geqslant 1})^{\pro}_*\D\Q_{\calM_S^{\geqslant 1}(\alpha+\Z\delta)}$
over $B_S(\alpha)$.
\end{lemma}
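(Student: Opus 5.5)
The lemma has three claims. First, $\otimes\calL$ preserves $\calM_S^{\geqslant 1}$. Second, the dualizing complexes are $\otimes\calL$-equivariant. Third, pushforward along $\otimes\calL$ gives an automorphism $\omega_\calL$ of $(\rho_S^{\geqslant 1})^{\pro}_*\D\Q_{\calM_S^{\geqslant 1}(\alpha+\Z\delta)}$ over $B_S(\alpha)$. I would prove them in that order, using only formal properties of the six-functor formalism together with the observation that twisting by a line bundle does not change supports.

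For the first claim, the plan is to note that $\calF\mapsto\calF\otimes\calL$ is an exact autoequivalence of $\Coh(S)$ with inverse $\otimes\calL^{-1}$. It sends zero-dimensional sheaves to zero-dimensional sheaves, since it preserves the support. Consequently it preserves the support filtration \eqref{eq:support filtration}, so $(\calF\otimes\calL)^0=\calF^0\otimes\calL$. In particular $\calF$ has no zero-dimensional subsheaf if and only if $\calF\otimes\calL$ has none. The same reasoning applies in families, since the condition defining $\calM_S^{\geqslant 1}$ is an open condition on points and $\otimes\calL$ is an automorphism of the derived stack $\calM_S$ (its inverse is $\otimes\calL^{-1}$). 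Hence it restricts to an automorphism of the open substack $\calM_S^{\geqslant 1}$.

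On Mukai vectors, twisting sends $\nu$ to $\nu\cdot e^{c_1(\calL)}$. For a one-dimensional $\nu=\alpha+n\delta$ this equals $\alpha+(n+\alpha\cdot c_1(\calL))\delta$. So $\otimes\calL$ permutes the components $\calM_S(\alpha+n\delta)$ and preserves $\calM_S(\alpha+\Z\delta)$.

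For the second claim, I would use that for any automorphism (indeed any isomorphism) $g$ of stacks there is a canonical isomorphism $g^!\D\Q\cong\D\Q$, namely $g^!=g^*$ for étale maps and the dualizing complex is intrinsic. This isomorphism is compatible with composition, which gives the equivariance structure. The same holds on $\calM_S^{\geqslant 1}$ by restriction.

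For the third claim, I would first show that $\rho_S\circ(\otimes\calL)=\rho_S$ on $\calM_S(\alpha+\Z\delta)_\red$. This holds because the Hilbert–Chow map of \cite{Rydh} only records the support cycle, which is defined by the fitting ideal or lengths at generic points of components, and $\calF\otimes\calL$ is locally isomorphic to $\calF$. I would check this on an arbitrary test family $T\to\calM_S$: locally on $S$ the families $\calF_T$ and $\calF_T\otimes\calL$ are isomorphic, so they have the same relative cycle, and Rydh's functor is Zariski local. Then $(\rho_S^{\geqslant 1})^{\pro}_*\D\Q\cong(\rho_S^{\geqslant 1})^{\pro}_*(\otimes\calL)_*(\otimes\calL)^!\D\Q\cong(\rho_S^{\geqslant 1})^{\pro}_*\D\Q$. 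This uses the equivariance from the second claim and $\rho\circ(\otimes\calL)=\rho$. Because $\otimes\calL$ preserves the pro-structure (it permutes HN strata up to relabelling, hence permutes quasi-compact opens cofinally), the isomorphism is one of pro-objects. Its inverse is induced by $\otimes\calL^{-1}$.

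The main obstacle I expect is the family-level invariance of the Chow map, together with compatibility with the reduced structure and the pro-system. I would handle it via the local triviality of $\calL$ and the locality of Rydh's cycle functor.
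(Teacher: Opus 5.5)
Your argument is essentially the paper's, which simply observes that $\otimes\calL$ preserves supports of sheaves. Hence $\calM_S^{\geqslant 1}$ is preserved and $\rho_S$ is $\otimes\calL$-invariant, and the rest is formal.

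One justification in your third step is false, however. The map $\otimes\calL$ does not in general permute HN strata. For a one-dimensional sheaf $\calG$ with $c_1(\calG)=\alpha'$ we have $\chi(\calG\otimes\calL)=\chi(\calG)+\alpha'\cdot c_1(\calL)$. So its slope shifts by $(\alpha'\cdot c_1(\calL))/(\alpha'\cdot\omega)$, which depends on $\alpha'$ unless $c_1(\calL)$ is numerically proportional to $\omega$. For example, a polystable sum of stable sheaves supported on curves of different classes can become unstable after twisting.

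You do not need this claim. The pro-pushforward is indexed by all quasi-compact open substacks, and any automorphism of the stack permutes these. So the induced isomorphism is automatically one of pro-objects, with inverse given by $\otimes\calL^{-1}$.

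Similarly, the family-level identity $\rho_S\circ(\otimes\calL)=\rho_S$, which you flagged as the main obstacle, is easy. The source $\calM_S(\alpha+\Z\delta)_\red$ is reduced and locally of finite type, and $B_S(\alpha)$ is separated, so it suffices to compare the two maps on $\C$-points. There it is clear, since $\calF$ and $\calF\otimes\calL$ have the same length at each generic point of the support. This avoids any appeal to Zariski-locality of Rydh's functor.
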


Proposition~$\ref{prop:PicardisHecke}$ follows from the next lemma by taking global sections.

\begin{lemma}\label{lem:PicardisHecke}
Let $C\subset S$ be a smooth connected curve and $\Sigma$ an effective one-cycle on $S$. 
Assume that $C$ is not a component of any element of the linear system $|\Sigma|$. 
Set $\alpha=[\Sigma]$, $\beta=[C]$ and $m= \alpha\cdot\beta$. Assume that $m>0$. 
\begin{enumerate}[label=$\mathrm{(\alph*)}$,leftmargin=8mm,itemsep=2mm]
\item 
The automorphism $\omega_{\calO(C)}$ of $(\rho_S^{\geqslant 1})^{\pro}_*\D\Q_{\calM_S^{\geqslant 1}(\alpha+\Z\delta)}$ is of degree $m\delta$.
\item
The Hecke operator 
$T_{[\calM_C(m\delta)]}$ on $(\rho_S^{\geqslant 1})^{\pro}_*\D\Q_{\calM_S^{\geqslant 1}(\alpha+\Z\delta)}$ is invertible.
\item
We have $T_{[\calM_C(m\delta)]}=\omega_{\calO(C)}$.
\end{enumerate}
\end{lemma}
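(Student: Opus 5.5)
Part (a) is bookkeeping. Tensoring with $\calO(C)$ preserves purity and support, so it maps $\calM_S^{\geqslant 1}(\alpha+n\delta)$ isomorphically onto $\calM_S^{\geqslant 1}(\alpha+(n+m)\delta)$. Indeed $\ch(\calF\otimes\calO(C))=\ch(\calF)e^{\beta}=\alpha+(n+\alpha\cdot\beta)\delta$ for a one-dimensional $\calF$. It also commutes with $\rho_S$, and both stacks have virtual dimension $-(\nu,\nu)=\alpha^2$, so no cohomological shift appears. Hence $\omega_{\calO(C)}$ is an automorphism of horizontal degree $m\delta$. Once (c) is proved, (b) follows from (a).

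The main work is (c). I would compute $T_{[\calM_C(m\delta)]}$ through the 2D chain \eqref{rel-Hecke-S}, restricted to the class $[\calM_C(m\delta)]\in\bfH_S(m\delta)$. By the purity transformation and base change, the Hecke operator is pull–push along the correspondence
$$\calM_S^{\geqslant 1}(\alpha+n\delta)\xleftarrow{\ q\ }\calZ\xrightarrow{\ p\ }\calM_S^{\geqslant 1}(\alpha+(n+m)\delta).$$
Here $\calZ=\calM_S^{\ex,\geqslant 1}\times_{\calM_S(m\delta)}\calM_C(m\delta)$ parametrizes extensions $0\to\calF'\to\calF\to\calQ\to0$ with $\calQ$ a length-$m$ sheaf scheme-theoretically supported on $C$ and $\calF$ pure. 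The map to $\calM_S^{\geqslant 1}$ of the subsheaf is automatic once $\calF$ is pure. The operator is a refined Gysin pullback along the quasi-smooth map $q$, capped with $[\calM_C(m\delta)]$ on the quotient factor, followed by proper pushforward along $p$.

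By Lemma~\ref{lem:keylemma}, for every pure $\calF$ the unique such $\calF'$ is $\calF(-C)$, and then $\calQ=\calF\otimes\calO_C$. I would upgrade this to families: a $T$-point of $\calZ$ is determined by $\calF$, since for flat families $\calF\otimes\calO_C$ is flat by the Tor-vanishing, so the quotient is forced. This should show that $p$ is an isomorphism on classical truncations, and that $q\circ p^{-1}$ is $\calF\mapsto\calF(-C)$, the inverse of $\otimes\calO(C)$. The correspondence is then the graph of $\otimes\calO(C)$, and the operator equals $\omega_{\calO(C)}$ up to a virtual-class factor.

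The obstacle is to show this factor is exactly $1$, i.e.\ that the refined virtual class on the graph (Gysin along $\gr$ intersected with $[\calM_C(m\delta)]$) equals the fundamental class of $\calM_S^{\geqslant 1}$ with no excess Euler class or multiplicity. I would check this by a tangent–obstruction comparison at a point $\calF'\subset\calF$: the relative obstruction theory of $\gr$ is $R\Hom(\calQ,\calF')[1]$, and $\calM_C(m\delta)$ has expected dimension $0$, equal to its actual dimension. I would try to show that these combine to an isomorphism of $T_\calZ$ with $T_{\calM^{\geqslant1}}$ and of the obstruction spaces, using $\Ext^2(\calQ,\calF')=\Hom(\calF',\calQ)^\vee$ and Serre duality on $C$. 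This would give a local complete-intersection identification and multiplicity one. Alternatively, I could use the 3D description together with Theorem~\ref{thm-shifted-Lag-class}\ref{item-shift-Lag-smooth} and \'etale compatibility: the correspondence is classically an isomorphism, so the Lagrangian class is invertible and matches the pushforward by the isomorphism. Invertibility of $T_{[\calM_C(m\delta)]}$ in (b) then follows.
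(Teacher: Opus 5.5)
Your proposal is correct and follows the paper's route. The paper also base-changes the Hecke correspondence to $\calM_C(\Z\delta)$ (its maps $h$ and $g$) and uses Lemma~\ref{lem:keylemma} to identify it with the graph of $\otimes\calO(C)$. It matches this chain with $T_{[\calM_C(m\delta)]}$ via associativity and base change of purity transformations, and deduces (a) and (b) from (c).

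The multiplicity-one issue you flag is left implicit in the paper, and it can be settled faster than by a tangent--obstruction comparison. Since $\calM_C(m\delta)$ is smooth of dimension $0$ and $(m\delta,\alpha+n\delta)=0$, the map $\pr_2\circ h$ is quasi-smooth of relative virtual dimension $0$; it is also a classical isomorphism, hence \'etale, hence an isomorphism of derived stacks, so the virtual pullback introduces no correction.
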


\begin{proof}[Proof of Lemma~$\ref{lem:PicardisHecke}$]
It is enough to prove part (c).
Let  $i_C\colon  \calM_C(\Z\delta) \to \calM_S(\Z\delta)^\cl$ be the obvious closed immersion. 
To avoid cumbersome notation we omit the classical truncation functors in this proof, hoping it will not create too much confusion.
The commutative diagram \eqref{induction-S-B} upgrades to the following commutative diagram
\begin{align}\label{induction-S-B2}
\begin{split}
\xymatrix{
&B_S(\alpha)&&\\
\calM_S(\Z\delta)\times\calM_S(\alpha+\Z\delta)\ar[ur]^-{a_S\times \rho_S} &
\ar[l]_-{\gr} \calM_S^\ex(\Z\delta,\alpha+\Z\delta) \ar@{=}[r]&
\calM_S^\ex(\Z\delta,\alpha+\Z\delta) \ar[r]^-{\ev}& 
\calM_S(\alpha+\Z\delta)\ar[ull]_-{\rho_S}\\
\calM_S(\Z\delta)\times\calM_S^{\geqslant 1}(\alpha+\Z\delta)\ar[u]^-{\id\times j}&
\ar[l]_-{\gr^\circ} \calM_S^{\ex,\circ}(\Z\delta,\alpha+\Z\delta)\ar[u]^-{j_2}&
\calM_S^{\ex,\geqslant 1}(\Z\delta,\alpha+\Z\delta)\ar[r]^-{\ev^1}\ar[l]_-{j_1}\ar[u]_-j&
\calM_S^{\geqslant 1}(\alpha+\Z\delta)\ar[u]_-{j} \\
\calM_C(\Z\delta)\times\calM_S^{\geqslant 1}(\alpha+\Z\delta)\ar[u]^-{i_C\times\id}&
\calM_C^{\ex,\circ}(\Z\delta,\alpha+\Z\delta)\ar[u]\ar[l]_-{\gr_C^\circ}&
\calM_C^{\ex,\geqslant 1}(\Z\delta,\alpha+\Z\delta)\ar[l]_-{j_3}\ar@/^1.8pc/[ll]_-{h}\ar[u]\ar[r]^-g&
\calM_S^{\geqslant 1}(\alpha+\Z\delta).\ar@{=}[u]
}
\end{split}
\end{align}
The middle left and right squares are Cartesian, as well as the lower left and middle ones.
Set $a_C=a_S\circ i_C$.
We consider the following chain of maps
\begin{align}\label{omega-S}
\begin{split}
(\rho_S^{\geqslant 1})^{\pro}_*\D\Q_{\calM_S^{\geqslant 1}(\alpha+\Z\delta)}
&\to(a_C\times\rho_S^{\geqslant 1})^{\pro}_*(\D\Q_{\calM_C(\Z\delta)}\boxtimes\D\Q_{\calM_S^{\geqslant 1}(\alpha+\Z\delta)})\\
&\to(a_C\times\rho_S^{\geqslant 1})^{\pro}_*h_*h^*(\D\Q_{\calM_C(\Z\delta)}\boxtimes\D\Q_{\calM_S^{\geqslant 1}(\alpha+\Z\delta)})\\
&\to(a_C\times\rho_S^{\geqslant 1})^{\pro}_*h_*\D\Q_{\calM_C^{\ex,\geqslant 1}(\Z\delta,\alpha+\Z\delta)}\\
&\cong(\rho_S^{\geqslant 1})^{\pro}_*g_*g^!\D\Q_{\calM_S^{\geqslant 1}(\alpha+\Z\delta)}\\
&\to(\rho_S^{\geqslant 1})^{\pro}_*\D\Q_{\calM_S^{\geqslant 1}(\alpha+\Z\delta)}.
\end{split}
\end{align}
The first map is the tensor product with the fundamental class $[\calM_C(m\delta)]$, the second one the unit $1\to h_*h^*$,
the third one the purity transformation \eqref{eq-purity-transform} for the map $h$, which is quasi-smooth by base change from $\gr$ because $j_3$ is an open immersion,
the fourth one is the commutativity of the diagram, the last one the properness of the map $g$ and the counit $g_!g^!\to 1$. 
Recall that $\calM_C^{\ex,\geqslant 1}(\Z\delta,\alpha+\Z\delta)$ is the stack of filtrations 
$$0\to\calG\to\calF\to\calE\to 0$$ 
where $\calE\in\calM_C(\Z\delta)$ and $\calF,\calG\in\calM_S^{\geqslant 1}(\alpha+\Z\delta)$.
Hence, Lemma~\ref{lem:keylemma} identifies $\calM_C^{\ex,\geqslant 1}(\Z\delta,\alpha+\Z\delta)$
 with the stack parametrizing the inclusions 
$\calF(-C) \subset \calF$ with $\calF\in\calM_S^{\geqslant 1}(\alpha+\Z\delta)$.
Further, the map $g$ is identified with the morphism of stacks 
$$\calM_C^{\ex,\geqslant 1} \to \calM_S^{\geqslant 1}(\alpha+(m+n)\delta), \quad (\calF(-C) \subset \calF) \mapsto \calF$$
and the composition of the second projection and $h$ with the morphism of stacks 
$$\calM_C^{\ex,\geqslant 1} \to \calM_S^{\geqslant 1}(\alpha+n\delta), \quad (\calF(-C) \subset \calF) \mapsto \calF(-C).$$
We deduce that the morphism
$$(\rho_S^{\geqslant 1})^{\pro}_*\D\Q_{\calM_S^{\geqslant 1}(\alpha+\Z\delta)}
\to(\rho_S^{\geqslant 1})^{\pro}_*\D\Q_{\calM_S^{\geqslant 1}(\alpha+\Z\delta)}$$
given by the composition of the chain of maps \eqref{omega-S}
coincides with the automorphism $\omega_{\calO(C)}$.
We must check it coincides also with the action of the Hecke operator $T_{[\calM_C(m\delta)]}$.
This Hecke operator is the specialization of the map \eqref{Hecke-S} at the class
$(i_C)_*[\calM_C(m\delta)]$ in $\bfH_S^0$. 
It is the composition of the following chain of maps
where the first map is the tensor product with the fundamental class $[\calM_C(m\delta)]$ and all other maps are as in  \eqref{rel-Hecke-S}
and use the purity transformation \eqref{eq-purity-transform} for the quasi-smooth map $\gr^\circ$
\begin{align}\label{rel-Hecke-S2}
\begin{split}
(\rho_S^{\geqslant 1})^{\pro}_*\D\Q_{\calM_S^{\geqslant 1}(\alpha+\Z\delta)}
&\to(a_S\times\rho_S^{\geqslant 1})^{\pro}_*(\D\Q_{\calM_S(\Z\delta)}\boxtimes\D\Q_{\calM_S^{\geqslant 1}(\alpha+\Z\delta)})\\
&\to(a_S\times\rho_S^{\geqslant 1})^{\pro}_*(\id\times j)^*\gr_*\gr^*(\D\Q_{\calM_S(\Z\delta)}\boxtimes\D\Q_{\calM_S(\alpha+\Z\delta)})\\
&\cong(a_S\times\rho_S^{\geqslant 1})^{\pro}_*(\gr^\circ)_*(\gr^\circ)^*(\D\Q_{\calM_S(\Z\delta)}\boxtimes\D\Q_{\calM_S^{\geqslant 1}(\alpha+\Z\delta)})\\
&\to(a_S\times\rho_S^{\geqslant 1})^{\pro}_*(\gr^\circ)_*\D\Q_{\calM_S^{\ex,\circ}(\Z\delta,\alpha+\Z\delta)}\\
&\to(a_S\times\rho_S^{\geqslant 1})^{\pro}_*(\gr^1)_*\D\Q_{\calM_S^{\ex,\geqslant 1}(\Z\delta,\alpha+\Z\delta)}\\
&\cong(\rho_S^{\geqslant 1})^{\pro}_*(\ev^1)_*(\ev^1)^!\D\Q_{\calM_S^{\geqslant 1}(\alpha+\Z\delta)}\\
&\to(\rho_S^{\geqslant 1})^{\pro}_*\D\Q_{\calM_S^{\geqslant 1}(\alpha+\Z\delta)}.
\end{split}
\end{align}
Here, the map $\gr^1$ is defined as in the diagram   \eqref{induction-S-B}.
Hence, we must compare \eqref{omega-S} with \eqref{rel-Hecke-S2}.
This follows from a diagram chase which reduces to comparing the purity transformations for the quasi-smooth maps $\gr_C^\circ$ and $\gr^\circ$ 
under the pushforward by the lower vertical maps in the diagram \eqref{induction-S-B2}.
To do this, we use the following associativity and functoriality properties of the purity transformation:
given quasi-smooth maps
$\xymatrix{X\ar[r]^-a&Y\ar[r]^-b&Z}$ 
the composed map $ba$ is also quasi-smooth and the following 
functoriality diagram commutes 
$$\xymatrix{a^*b^*\ar[r]\ar@{=}[d]&a^*b^!\ar[r]&a^!b^!\ar@{=}[d]\\(ba)^*\ar[rr]&&(ba)^!}$$
where the horizontal maps are purity transformations, and
given a Cartesian square
$$\xymatrix{X\ar[r]^a\ar[d]_-f&Y\ar[d]_-g\\W\ar[r]^-b&Z}$$
such that $a,$ $b$ are quasi-smooth and $f,$ $g$ are proper the following diagram commutes
$$\xymatrix{
b^*g_*\ar[r]^-1\ar[d]_-2&f_*a^*\ar[d]_-3\\
b^!g_*\ar[r]^-4&f_*a^!}$$
where the maps 2 and 3 are the purity transformations for $b$ and $a$, while 1 and 4 are the base changes.
See \S\ref{sssec-purity-transform} 
and \cite{K19} for details.

\end{proof}

\medskip

\subsection{The Hecke action on the BPS sheaf} \label{sec:Hecke-BPS}
In this section, for each class $\alpha\in \H_2(S,\Q)$, we define actions of the algebra $\bfA_S$ on the complex
of mixed Hodge modules
$$\kappa_*(\pi_X)_*\cBPS_{M_X(\alpha+\Z \delta)}\otimes \H^*(\BGM,\Q)_\vir$$ 
using relative Hecke correspondences over the Chow varieties $B_X$ and $B_S$.
These actions preserve the perverse filtration.
Let $i$ be the closed immersion $\Lambda_\snil^{\geqslant 1}\to\calM_X^{\geqslant 1}$.

\begin{proposition}\label{prop:H0-action}\hfill
\begin{enumerate}[label=$\mathrm{(\alph*)}$,leftmargin=8mm,itemsep=2mm]
\item 
There is a commutative diagram
\begin{align}\label{Hecke-BPS}
\begin{split}
\xymatrix{
\bfH_X^0\otimes(\rho_X^{\geqslant 1})^{\pro}_*i_!i^!\varphi_{\calM_X^{\geqslant 1}(\alpha+\Z\delta)}\ar[r]\ar[d]&
(\rho_X^{\geqslant 1})^{\pro}_*i_!i^!\varphi_{\calM_X^{\geqslant 1}(\alpha+\Z\delta)}\ar[d]\\
\bfH_X^0\otimes(\rho_X^{\geqslant 1})^{\pro}_*\varphi_{\calM_X^{\geqslant 1}(\alpha+\Z\delta)}\ar[r]&
(\rho_X^{\geqslant 1})^{\pro}_*\varphi_{\calM_X^{\geqslant 1}(\alpha+\Z\delta).}}
\end{split}
\end{align}
\item
The algebra $\bfH_X^0$ acts on the semisimple complex of mixed Hodge modules
$$(\pi_X)_*\cBPS_{M_X(\alpha+\Z\delta)}\otimes \H^*(\BGM,\Q)_\vir.$$
\item
The algebra $\bfH_S^0$ acts on the semisimple complex of mixed Hodge modules
$$(\pi_S)_*\cBPS_{M_S(\alpha+\Z\delta)}\otimes \H^*(\BGM,\Q).$$
The action of the subalgebra $\bfA_S$ preserves the perverse filtration.
\end{enumerate}
\end{proposition}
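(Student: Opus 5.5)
For part (a), I would build the upper horizontal map from the same chain as the lower one in the proof of Proposition~\ref{prop:H0-action} (the earlier one, giving \eqref{Hecke-X}), but apply $i_!i^!$ throughout. The first input is that the Hecke correspondence \eqref{induction-X-B} is compatible with the semi-nilpotent cones. Concretely, a filtered point $0\to\calG\to\calF\to\calE\to 0$ in $\calM_X^{\ex,\geqslant 1}$, with $\calE$ zero-dimensional, has $\tilde\ev^1$-image in $\Lambda_\snil$ if and only if its $\tilde\gr^1$-image lies in $\calM_X(\Z\delta)\times\Lambda_\snil^{\geqslant 1}$. To check this I would work on $B_X$. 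By \eqref{i-MMMB}, the locus $\Lambda_\snil^{\geqslant 1}$ is the preimage under $\rho_X$ of $i(\A^1\times B_S)$. Also, $\rho_X$ of a Higgs pair with zero-dimensional part only adds a zero-cycle, so it does not change the one-dimensional part of the cycle. Hence the relevant square of \eqref{induction-B-X} restricts, after replacing $B_X(\alpha)$ by $i(\A^1\times B_S(\alpha))$, to a Cartesian square on the $\alpha$-factor. I should be careful that $\calE$ itself is unconstrained here, which is why the full $\bfH_X^0$ still acts. Once this holds, base change gives $(\tilde\ev^1)^!i_!i^!\cong i'_!i'^!(\tilde\ev^1)^!$. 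I then apply $i_!i^!$ to the integral isomorphism and to the unit and counit maps in the chain, which produces the upper map. The vertical maps are the counits $i_!i^!\to 1$, and the square commutes by naturality of the counit with respect to $*$-pushforwards and base change.

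For part (b), I would restrict to the semistable part and take the perverse associated graded. Proposition~\ref{prop:coker} (and Remark~\ref{rem:snilp-BPS}) identify the image of ${}^\p\widehat\gr$ of the right vertical map of \eqref{Hecke-BPS} with $\kappa_*(\pi_X)_*\cBPS_{M_X(\alpha+\Z\delta)}\otimes\H^*(\BGM,\Q)_\vir$. That proposition is stated for $\calM_X(\nu)$, but the same proof works on the open union of HN strata $\calM_X^{\geqslant 1}$, since the semistable stratum is open in it. The relative Kirwan decomposition of Proposition~\ref{prop:rho-phi} splits everything into pure semisimple pieces, so taking ${}^\p\gr$ is an exact operation on these pieces. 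The commutative square \eqref{Hecke-BPS} says that the $\bfH_X^0$-action preserves the image of the vertical map. The image of an equivariant map between modules is a submodule, so this image becomes an $\bfH_X^0$-module. The image is still a semisimple complex. The main obstacle is that ${}^\p\gr$ is not canonically functorial for the pro-objects. To get around this, I would use purity and decomposition to make the action degree by degree on each ${}^\p\calH^i$, where the image is canonical. I would first try to define the action at the level of each ${}^\p\calH^i$ and only afterwards reassemble it.

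For part (c), I would transport (b) along $\kappa_*$ and dimensional reduction. I would use Proposition~\ref{prop:H0-action}(c) (earlier version) to identify the $\bfH_X^0$-action with the $\bfH_S^0$-action, together with $\kappa_*(\pi_X)_*\cBPS_{M_X}\cong(\pi_S)_*\cBPS_{M_S}\otimes\L^{-1/2}$ from \eqref{eq:BPS-S-X}. This step only works at the level of constructible complexes, so here I would rely on Theorem~\ref{thm: comparison 2D 3D}. For the perverse filtration, an element of $\bfA_S$ has vertical degree $0$, so it acts by a map $(\pi_S)_*\cBPS\to(\pi_S)_*\cBPS$ of cohomological degree $0$ between pure semisimple complexes over $B_S(\alpha)$. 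Such a map can only carry ${}^\p\calH^i$ into ${}^\p\calH^{\geqslant i}$, and by purity and weights it is strictly compatible with the perverse filtration. So $\bfA_S$ preserves the perverse filtration.
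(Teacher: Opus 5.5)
Your proposal follows the paper's proof. You build the upper arrow by base-changing \eqref{hyp-res-2} along the two Cartesian squares relating $\Lambda_\snil^{\geqslant 1}$ to the Hecke correspondence. The counit $i_!i^!\to 1$ is then a module map, so the image of its perverse associated graded is a submodule, identified with the BPS piece via Proposition~\ref{prop:coker}, and (c) follows by applying $\kappa_*$. Your Cartesianity check through the common map to $B_X(\alpha)$ repackages the paper's argument that otherwise the pure sheaf $\calF$ would split off a zero-dimensional summand. For non-semistable pure sheaves, the identity $\Lambda_\snil^{\geqslant 1}=(\rho_X^{\geqslant 1})^{-1}(i(\A^1\times B_S))$ follows from purity (a pure sheaf is supported on the support of its cycle), not from \eqref{i-MMMB}, which only concerns the semistable locus.

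One small correction in (c): a degree-zero morphism of semisimple complexes sends ${}^\p\calH^i$ into ${}^\p\calH^{\leqslant i}$, not ${}^\p\calH^{\geqslant i}$, since the components lie in $\Ext^{i-j}$, which vanishes for $j>i$. No purity or strictness argument is needed: a degree-zero map preserves the perverse filtration by functoriality of truncation, and in your construction degree-zero elements of $\bfA_S$ act on each ${}^\p\calH^i$ directly.
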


\begin{proof}
Let us first concentrate on part (a). 
We first define the upper arrow in the diagram \eqref{Hecke-BPS}. 
The lower one is already defined in \eqref{Hecke-X}.
To do this, we first claim that we have the following commutative diagram with Cartesian squares
\begin{align*}
\begin{split}
\xymatrix{
\calM_X(\Z\delta)_\red\times\calM_X^{\geqslant 1}(\alpha+\Z\delta)_\red&
\ar[l]_-{\tilde\gr^1} \calM_X^{\ex,\geqslant 1}(\Z\delta,\alpha+\Z\delta)_\red
\ar[r]^-{\tilde\ev^1}& \calM_X^{\geqslant 1}(\alpha+\Z\delta)_\red\\
\calM_X(\Z\delta)_\red\times\Lambda_\snil^{\geqslant 1}(\alpha+\Z\delta)
\ar@{^{(}->}[u]^-{\id\times i}&\ar[l]_-{\gr_\snil^1} 
\Lambda_\snil^{\ex,\geqslant 1}(\Z\delta,\alpha+\Z\delta)
\ar[r]^-{\ev_\snil^1}\ar@{^{(}->}[u]^-{i}& 
\Lambda_\snil^{\geqslant 1}(\alpha+\Z\delta).\ar@{^{(}->}[u]_-{i}
}
\end{split}
\end{align*}
Indeed, we first define $\Lambda_\snil^{\ex,\geqslant 1}(\Z\delta,\alpha+\Z\delta)$ such that the right square is Cartesian.
Then, we claim that the left square is also Cartesian. Indeed, if $\calF$ is a pure one-dimensional sheaf on $X$ which fits in an exact sequence of sheaves
over $X$
$$0\to\calG\to\calF\to\calE\to 0$$
with $\calG$ pure one-dimensional with semi-nilpotent support and $\calE$ zero-dimensional, then $\calF$ also has a semi-nilpotent support: 
otherwise, it would break down as a direct sum of sheaves $\calF=\calG' \oplus \calE'$ with $\calG' \supseteq \calG$ and $\calE' \subseteq \calE$, in 
contradiction with the purity of $\calF$.
Set
$$\varphi_{\Lambda^{\geqslant 1}_\snil(\alpha+\Z\delta)}=i^!\varphi_{\calM_X^{\geqslant 1}(\alpha+\Z\delta)}.$$
Applying a base change to the morphism \eqref{hyp-res-2}, we get a morphism of complexes of mixed Hodge modules
\begin{align}\label{hyp-res-3}
\varphi_{\calM_X(\Z\delta)}\boxtimes\varphi_{\Lambda_\snil^{\geqslant 1}(\alpha+\Z\delta)}
\to (\gr_\snil^1)_* (\ev_\snil^1)^! \varphi_{\Lambda_\snil^{\geqslant 1}(\alpha+\Z\delta)}.
\end{align} 
Taking the pushforward by the map $a_X\times\rho_X^{\geqslant 1}$, the properness of the map $\ev_\snil^1$ and the counit
$(\ev_\snil^1)_!(\ev_\snil^1)^!\to 1$, this morphism yields a morphism of complexes of mixed Hodge modules
\begin{align}\label{action-pure2}
\bfH_X^0\otimes (\rho_X^{\geqslant 1})^{\pro}_* 
i_!i^!\varphi_{\calM_X^{\geqslant 1}(\alpha+\Z\delta)} \to 
(\rho_X^{\geqslant 1})^{\pro}_* i_!i^!\varphi_{\calM_X^{\geqslant 1}(\alpha+\Z\delta)}
\end{align}
such that \eqref{Hecke-X} and \eqref{action-pure2} form the commutative square \eqref{Hecke-BPS}, proving part (a) of the proposition.

The map \eqref{Hecke-X} defines an $\bfH_X^0$-action on the complex of mixed Hodge modules
$(\rho_X^{\geqslant 1})^{\pro}_*\varphi_{\calM_X^{\geqslant 1}(\alpha+\Z\delta)}$.
The map \eqref{action-pure2} defines an $\bfH_X^0$-action on 
$(\rho_X^{\geqslant 1})^{\pro}_*i_!i^!\varphi_{\calM_X^{\geqslant 1}(\alpha+\Z\delta)}$.
The counit $i_!i^!\to 1$ yields an $\bfH_X^0$-module homomorphism
\begin{align}\label{morphism2}
(\rho_X^{\geqslant 1})^{\pro}_*i_!i^!\varphi_{\calM_X^{\geqslant 1}(\alpha+\Z\delta)}\to
(\rho_X^{\geqslant 1})^{\pro}_*\varphi_{\calM_X^{\geqslant 1}(\alpha+\Z\delta)}.
\end{align}
This yields an $\bfH_X^0$-action on the image of the associated graded of the map \eqref{morphism2} for the perverse filtration.  This proves part (b). 
Part (c) is similar: Applying the functor $\kappa_*$, by Proposition \ref{prop:coker}, this also yields an $\bfH_S^0$-action on
the complex of mixed Hodge modules
$$(\pi_S)_*\cBPS_{M_S(\alpha+\Z\delta)}\otimes \H^*(\BGM,\Q)_\vir\otimes\L^{-\frac12}.$$
Restricting the $\bfH_S^0$-action to the subalgebra of zero cohomological degree classes yields an $\bfA_S$-action 
which preserves the perverse filtration.
\end{proof}

\bigskip

\section{Proof of Toda's Gopakumar--Vafa conjecture}

\medskip

\subsection{Hecke operators and Toda's Gopakumar--Vafa conjecture}\label{sec:Proof Toda}
Let $\alpha\in \H_2(S,\Q)\setminus\{0\}$ be the class of an effective one-cycle $\Sigma$.
For each integer $n$,
the moduli stack $\calM_S(\alpha+n\delta)$ 
parametrizes one-dimensional coherent sheaves on $S$ with Mukai vector $\alpha+n\delta$, and the open substack
$\calM_S^\ss(\alpha+n\delta)$ parametrizes the semistable sheaves.
Taking the sum over all integers $n$, we get the stack $\calM_S(\alpha+\Z\delta)$  which is equipped with the Hilbert--Chow map from its reduction
$$\rho_S\colon \calM_S(\alpha+\Z\delta)_\red\to B_S(\alpha).$$
Taking the $(-1)$-shifted cotangent space of $\calM_S(\alpha+\Z\delta)$ we get
the moduli stack $\calM_X(\alpha+n\delta)$ 
of one-dimensional coherent sheaves $\calF$ on the CY 3-fold $X=\A^1\times S$ such that $\ch_2(\calF)=\alpha$
and $\chi(\calF)=n$.
The reduction of this stack is also equipped with the Hilbert--Chow map
$$\rho_X\colon \calM_X(\alpha+\Z\delta)_\red\to B_X(\alpha).$$ 
We consider the good quotient  
$p_X\colon \calM_X^\ss\to M_X$ and 
the BPS mixed Hodge module $\cBPS_{M_X}$ on $M_X$ given by
$$\cBPS_{M_X} = {}^{\text{p}}\calH^1((p_X)_* \varphi_{\calM_X^\ss}).$$
Following Toda \cite{T23}, we define a complex $\Psi_X(\alpha+n\delta)$ 
of mixed Hodge modules over $B_X(\alpha)$ by
$$\Psi_X(\alpha+n\delta)={}^{\text{p}}\gr((\pi_X)_*\cBPS_{M_X(\alpha+n\delta)}).$$
Note that this complex is semisimple, by purity of $\cBPS_{M_X}$ and projectivity of $\pi_X$.
We define similarly a semisimple complex $\Psi_S(\alpha+n\delta)$ 
of mixed Hodge modules over $B_S(\alpha)$ by
$$\Psi_S(\alpha+n\delta)={}^{\text{p}}\gr((\pi_S)_*\cBPS_{M_S(\alpha+n\delta)}).$$
Our goal is to prove the following result, which is a stronger form of \cite[conj.~1.2]{T23}. 

\begin{theorem} \label{thm:Toda}
The complex $\Psi_S(\alpha+n\delta)$ is independent of the integer $n$.
\end{theorem}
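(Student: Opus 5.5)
The plan is to realize the comparison between $\Psi_S(\alpha+n\delta)$ and $\Psi_S(\alpha+(n+1)\delta)$ by the Hecke operator $D_{1,0}(\beta)$ for a suitable ample class $\beta$, acting on the semisimple complex
\[
V\coloneq {}^{\p}\gr\Big((\pi_S)_*\cBPS_{M_S(\alpha+\Z\delta)}\otimes \H^*(\BGM,\Q)\Big)=\bigoplus_{n\in\Z}\Psi_S(\alpha+n\delta)\otimes\H^*(\BGM,\Q).
\]
By Proposition~\ref{prop:H0-action}(c), the algebra $\bfA_S$ acts on $(\pi_S)_*\cBPS_{M_S(\alpha+\Z\delta)}\otimes\H^*(\BGM,\Q)$ and preserves the perverse filtration, so it acts on $V$. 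The elements of $\bfA_S$ have cohomological degree zero, so the action on $V$ is through morphisms of complexes of mixed Hodge modules on $B_S(\alpha)$. If $D_{1,0}(\beta)$ acts invertibly, it gives isomorphisms $\Psi_S(\alpha+n\delta)\otimes\H^*(\BGM)\cong\Psi_S(\alpha+(n+1)\delta)\otimes\H^*(\BGM)$. Since both sides are semisimple, and tensoring with the graded space $\H^*(\BGM,\Q)=\Q[z]$ with finite-dimensional pieces can be cancelled (compare multiplicities of simple summands degree by degree), we obtain $\Psi_S(\alpha+n\delta)\cong\Psi_S(\alpha+(n+1)\delta)$.

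To prove invertibility I would apply the algebraic criterion, Proposition~\ref{prop:algebraic trick}. I apply it not to global sections but stalkwise or summandwise. Concretely, $V$ is semisimple, so I pass to the $\Z$-graded vector space $\Hom(\mathcal{IC},V)$ for each simple mixed Hodge module $\mathcal{IC}$ on $B_S(\alpha)$, tracking perverse degree as well. This is an $\bfA_S$-representation with finite-dimensional graded pieces, by finiteness of $\Psi_S$ in each horizontal degree. An isomorphism on every such multiplicity space gives an isomorphism of semisimple complexes.

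For hypothesis (a), choose $\calL=\calO(C)$ with $C$ a smooth connected very ample curve of class $\beta$, general enough that $C$ is not a component of any member of $|\Sigma|$; this is possible by Bertini, since only finitely many curve components occur in bounded families. Set $m=\alpha\cdot\beta>0$. Lemma~\ref{lem:PicardisHecke} shows that $T_{[\calM_C(m\delta)]}=\omega_{\calO(C)}$ is invertible on $(\rho^{\geqslant1}_S)_*\D\Q$. Proposition~\ref{prop:formula}(a) puts $[\calM_C(m\delta)]$ in $\bfA_\beta$.

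The main obstacle is transporting this invertibility to the BPS subquotient $V$. Tensoring with $\calL$ preserves semistability and the support morphism to $B_S$, so $\otimes\calL$ induces an automorphism of $\calM_X^{\geqslant1}$, of $\Lambda_\snil$, and of the semistable loci. It therefore commutes with the counit $i_!i^!\to1$ of \eqref{morphism2} and induces an automorphism of the image described in Proposition~\ref{prop:coker}. The point needing care is that the relative identity $T_{[\calM_C(m\delta)]}=\omega_{\calO(C)}$ of Lemma~\ref{lem:PicardisHecke} must also hold on the $i_!i^!$-version. I would redo that diagram chase after base change to $\Lambda_\snil$, as in the proof of Proposition~\ref{prop:H0-action}(a). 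This requires checking that Lemma~\ref{lem:keylemma} holds after restriction to semi-nilpotent pure sheaves on $X$, which follows because $\kappa_*$ preserves purity and supports. Granting this, $\sigma=\omega_\calL$ acts invertibly on $V$ via an element of $\bfA_\beta$.

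Hypothesis (b) follows from Proposition~\ref{prop:formula}(c), which gives $\sigma D_{1,1}(1)\sigma^{-1}=D_{1,1}(1)+D_{1,0}(\beta)$, using compatibility of $\omega_\calL$ with the Hecke action: $\omega_\calL$ is an algebra automorphism and the action map is $\otimes\calL$-equivariant. Proposition~\ref{prop:algebraic trick} then makes $D_{1,0}(\beta)$ an automorphism, which finishes the proof.

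For independence of the polarization $H$, I would argue that $V$ is intrinsic in the $\chi$-independent range. For $n$ with $\alpha+n\delta$ generic, a wall-crossing or chamber argument identifies $\Psi_S$ for different $H$; alternatively Theorem~\ref{thmintro:BPS Lie}(a) identifies BPS data with primitives of $\bfH_S$, which do not depend on $H$.
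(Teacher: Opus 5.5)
Your proposal is correct and follows the paper's proof essentially step by step. You decompose the semisimple complex into multiplicity spaces $V_P$ and let $\bfA_S$ act on them via Proposition~\ref{prop:H0-action}. You then take $\sigma$ to be the action of $[\calM_C(m\delta)]\in\bfA_\beta$, which is invertible by Lemma~\ref{lem:PicardisHecke}, get hypothesis (b) of Proposition~\ref{prop:algebraic trick} from Proposition~\ref{prop:formula}(c), and conclude that $D_{1,0}(\beta)$ is invertible.

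The step you flag as the main obstacle is easier than you suggest. The BPS piece is the image of an $\bfH^0_X$-linear, $\otimes\calL$-equivariant map into the ambient complex, and $\omega_{\calL}^{\pm 1}$ preserve that image. So the identity $T_{[\calM_C(m\delta)]}=\omega_{\calO(C)}$ on the ambient complex simply restricts to it. No diagram chase on the $i_!i^!$ side is needed, and neither is preservation of semistability, which does not hold for general $\calL$. Your final paragraph on polarization independence is not part of this statement.
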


\begin{proof}
Since the complex $(\pi_S)_*\cBPS_{M_S(\alpha+\Z\delta)}$  on $B_S$ is semisimple, we can write
$$(\pi_S)_*\cBPS_{M_S(\alpha+\Z\delta)}\otimes \H^*(\BGM,\Q)=\bigoplus_P P\otimes V_P$$
where each $P$ is a simple mixed Hodge module on $B_S$ and $V_P$ is the multiplicity $\Z^{\oplus 2}$-graded complex of vector spaces, where the second 
grading is the cohomological degree and the first records the Euler characteristic $\chi$ of the sheaves considered.  Since $(\pi_S)_*\cBPS_{M_S(\alpha+\Z\delta)}$ is semisimple, $(\pi_S)_*\cBPS_{M_S(\alpha+n\delta)}$ is independent of $n$ if and only if $(\pi_S)_*\cBPS_{M_S(\alpha+n\delta)}\otimes \H^*(\BGM,\Q)$ is.
The graded vector space $V_P$ has finite dimensional homogeneous componenents. 
Then Toda's conjecture is a consequence of the following facts:
\begin{enumerate}[label=$\mathrm{(\alph*)}$,leftmargin=8mm,itemsep=2mm]
\item
Proposition \ref{prop:H0-action} yields an $\bfA_S$-action on each graded vector space $V_P$ 
by homogeneous operators of zero cohomological degree.
\item
Let $C\subset S$ be a smooth projective curve of class $\beta\in\H_2(S,\Z)$
which is not a component of any element of the linear 
system $|\Sigma|$ and such that the integer $m= \alpha\cdot\beta$ is positive.
Using the notation of \S\ref{sec:Hecke-curve}, the class $[\calM_C(m\delta)]$ is identified with $[\calM_{(1^m)}]$, which belongs to 
$\bfA_\beta$ by Proposition  \ref{prop:formula}.
By Lemma \ref{lem:PicardisHecke}, this element
 acts isomorphically on each graded vector space $V_P$ by an operator of degree $(m,0)$. 
\item
By Propositions  \ref{prop:formula} and \ref{prop:algebraic trick} the $D_{1,0}(\beta)$-action on $V_P$ is invertible and of degree $(1,0)$.
\end{enumerate}
\end{proof}

\medskip

\subsection{Generalization}\label{sec:Toda generalization} In this section we briefly explain how to adapt the proof of the Toda conjecture 
to cover the case of a smooth quasi-projective surface $S$ and a class
$\alpha \in \H^2(S,\Z)$ of a properly supported effective one-cycle.
We assume that there is a 
smooth compactification $\iota\colon  S\hookrightarrow \overline{S}$ with a polarization $H$, an effective one-cycle 
with class $\overline{\alpha} \in \H^2(\overline{S},\Z)$ such that $\iota^*(\overline{\alpha})=\alpha$ and that for any 1-cycle $C$ in the class $\overline{\alpha}$ with support in $S$ there exists $D \in |K_{\overline{S}}|$ such that $C \cap D=\emptyset$. This is for instance the case for smooth quasi-projective symplectic surfaces, including the cotangent bundle of a curve, and the minimal resolution of a Kleinian singularity.
We consider the CY threefold $\overline{X}=\text{Tot}(K_{\overline{S}})$, the monodromic mixed Hodge module
$\cBPS_{M_{\overline{X}}(\overline{\alpha}+n\delta)}$ over $M_{\overline{X}}(\overline{\alpha}+n\delta)$, and its pushforward 
$(\pi_{\overline{X}})_*\cBPS_{M_{\overline{X}}(\overline{\alpha}+n\delta)}$ to $B_{\overline{X}}(\overline{\alpha})$.
Let $j\colon B_X(\alpha) \to B_{\overline{X}}(\overline{\alpha})$ be the inclusion of the open
subscheme parametrizing cycles supported on $X=\text{Tot}(K_{S})$. 
Set
$$\Psi_X(\alpha+n\delta)=j^*{}^\p\gr(\pi_{\overline{X}})_*\cBPS_{M_{\overline{X}}(\overline{\alpha}+n\delta)}.$$

\begin{corollary} The isomorphism class of the semisimple complex $\Psi_X(\alpha+n\delta)$ is independent of $n$.
\end{corollary}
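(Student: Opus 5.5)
The plan is to run the proof of Theorem~\ref{thm:Toda} on the compactification $\overline{S}$ and then restrict everything to the open set $B_X(\alpha)\subset B_{\overline{X}}(\overline{\alpha})$. Here $\overline{X}=\mathrm{Tot}(K_{\overline{S}})$ is no longer a product $\A^1\times\overline{S}$, so dimensional reduction is not available globally. Over the open locus of cycles supported in $X$, however, we are in the local-surface situation, and there all the earlier constructions go through.

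\textbf{Step 1: reduction to the open locus.} The restriction $j^*$ is exact for the perverse t-structure and commutes with ${}^\p\gr$. It also commutes with proper pushforward by base change along the Cartesian squares over $j$. Hence $\Psi_X(\alpha+n\delta)\cong{}^\p\gr(\pi_X)_*\cBPS_{M_{\overline{X}}}|_{j}$. Next, sheaves on $\overline{X}$ whose support cycle lies in $X$ are exactly the compactly supported sheaves on $X$. Since BPS sheaves are compatible with the étale (here open) restriction \eqref{eq-etale-compatibility}, this open part is $(\pi_X)_*\cBPS_{M_X(\alpha+n\delta)}$ for the stack of sheaves on the local CY threefold $X$. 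It remains to compare the relevant Gieseker stabilities: stability is computed with $H$ on $\overline{S}$, and one checks that it restricts appropriately.

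\textbf{Step 2: Hecke actions.} On $X$, dimensional reduction (Theorem~\ref{thm-dimensional-reduction}) applies to $\kappa\colon\calM_X\to\calM_S$, where $S$ is only quasi-projective. In place of the CY condition on $S$ we use that $K_{\overline{S}}$ is trivial near the support: for each cycle in the class $\overline{\alpha}$ supported in $S$ there is $D\in|K_{\overline{S}}|$ with $C\cap D=\emptyset$. Locally over $B_{\overline{S}}$, sheaves therefore live on the open set $\overline{S}\smallsetminus D$, where $K$ is trivial. So all of the following hold after base change to the open sets of $B$ indexed by such $D$, and these open sets cover $j(B_X(\alpha))$:
\begin{itemize}
\item the 2CY properties (Serre duality vanishing in Lemma~\ref{lem:affine fibration pure types} and Proposition~\ref{prop: purity-ss});
\item the purity input \cite{D24} and relative Kirwan surjectivity (Proposition~\ref{prop:rho-phi});
\item Proposition~\ref{prop:coker} and the $\bfH^0$-action of Proposition~\ref{prop:H0-action}.
\end{itemize}
For the zero-dimensional algebra we use the algebra of points supported on $\overline{S}\smallsetminus D$ (or on $S$). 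The explicit presentation of Theorem~\ref{thm:Hecke is gen in degree one} is needed only for the elements $D_{m,0}(\beta)$ and $D_{1,1}(1)$ and their brackets, which are local computations.

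\textbf{Step 3: invertibility.} Choose an ample $\beta$ on $\overline{S}$ and a smooth curve $C\in|k\beta|$ that is not a component of any cycle in the family, with $m=\overline{\alpha}\cdot[C]>0$. Lemma~\ref{lem:keylemma} and Lemma~\ref{lem:PicardisHecke} are pointwise statements about sheaves, and so hold over $B_X(\alpha)$. They give that $T_{[\calM_C(m\delta)]}=\omega_{\calO(C)}$ is invertible, and Proposition~\ref{prop:formula} gives $\omega_\calL(D_{1,1}(1))=D_{1,1}(1)+D_{1,0}(\beta)$. Now decompose the semisimple complex as $\bigoplus_P P\otimes V_P$ over simple $P$ on $B_X(\alpha)$. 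Proposition~\ref{prop:algebraic trick} then shows that $D_{1,0}(\beta)$ acts invertibly with degree $(1,0)$ on each $V_P$, which yields independence of $n$.

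\textbf{Main obstacle.} The hardest part is making Steps 1--2 rigorous. One needs the Chow-variety maps, the finiteness of $\oplus$, and the projectivity of $\pi_X$ in the non-proper setting, which should come from base change from $\overline{X}$ as in \S\ref{good-Chow}(a). One also needs the support and integrality theorems over an open subset of $B$ where $\overline{S}$ is only locally CY. I would handle the latter by working on $\overline{S}\smallsetminus D$ and gluing the resulting isomorphism classes, which are determined by the multiplicity spaces $V_P$.
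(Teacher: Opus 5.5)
Your proof is correct, and its core is the paper's. You cover $B_X(\alpha)$ by the opens $U$ of cycles missing a fixed $D\in|K_{\overline S}|$, observe that over $U$ everything lives on the quasi-projective symplectic surface $\overline S\smallsetminus D$ (times $\A^1$), and rerun \S\ref{sec:Proof Toda} there.

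Where you differ is in how the local information is assembled. The paper makes the Hecke action global. Since $\overline X$ is CY3, the 3D CoHA of zero-dimensional sheaves acts on $(\rho_X)_*\varphi_{\calM_X(\alpha+\Z\delta)}$ over all of $B_X(\alpha)$, with no 2CY input. The charts $U$ are then used only to check two local properties: that this action preserves the summand $(\pi_X)_*\cBPS_{M_X(\alpha+\Z\delta)}\otimes\H^*(\BGM,\Q)_\vir$, and that the operator corresponding to $D_{1,0}(\beta)$ is invertible. This produces a single global isomorphism realized by a Hecke operator.

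You instead build the actions chart by chart and glue only isomorphism classes. That suffices for the statement as formulated, which only concerns isomorphism classes. You should, however, make explicit the standard fact that makes the gluing work. For a simple $P$ on $B_X(\alpha)$ and an open $U$ meeting its support, $P|_U$ is simple, and non-isomorphic simples stay non-isomorphic. Hence $V_P$ is the multiplicity space of $P|_U$ in $\bigoplus_n\Psi_X(\alpha+n\delta)|_U$, and Proposition~\ref{prop:algebraic trick} can be applied there. In your Step 3 as written, you apply it to $V_P$ as though the action were globally defined.

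Separately, your claim that dimensional reduction is unavailable globally is inaccurate. Compactly supported sheaves on $\mathrm{Tot}(K_{\overline S})$ are pairs $\calF\to\calF\otimes K_{\overline S}$, and $\calM_{\overline X}\simeq\mathrm{T}^*[-1]\calM_{\overline S}$. This is why the paper can speak globally of ``the operator corresponding by dimensional reduction to $D_{1,0}(\beta)$''. What genuinely fails away from the charts is the 2CY input (Serre-duality vanishings, purity, the support lemma), not dimensional reduction. Since you localize anyway, this does not damage your argument.
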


\begin{proof}
Let $\calM_X(\alpha+n\delta),$ $ \calM^\ss_X(\alpha+n\delta),$ $ M_X(\alpha+n\delta),$ etc., be the substacks of 
$\calM_{\overline{X}}(\overline{\alpha}+n\delta), $ $\calM^\ss_{\overline{X}}(\overline{\alpha}+n\delta),$ 
$ M_{\overline{X}}(\overline{\alpha}+n\delta),$ etc., classifying coherent sheaves supported on $X$.
Let $\rho_X\colon  \calM_X(\alpha+n\delta)_\red \to B_X(\alpha)$ be the support map. 
We will use similar notations for $S$ instead of $X$. 
The arguments in \S\ref{sec:rel Kirwan} and \S\ref{sec:Hecke-action} imply that the complex of mixed Hodge modules
${}^\p\gr((\rho_X)^{\pro}_*\varphi_{\calM_X(\alpha+\Z\delta)})$ contains 
\begin{equation}
\label{aBPS_inside}(\pi_X)_*\cBPS_{M_X(\alpha+\Z\delta)}\otimes \H^*(\BGM,\Q)_\vir
\end{equation} as a subsheaf, and that
the 3D CoHA of zero-dimensional sheaves $\bfH^0_X$ acts on $(\rho_X)_*\varphi_{\calM_X(\alpha+\Z\delta)}$. 	
We claim that this action preserves the subsheaf \eqref{aBPS_inside}.

Since this can be checked locally, we fix a closed point $C$ of $B_X(\alpha)$.
Then $\kappa(C)$ is a closed point of  $B_S(\alpha)$. 
Fix a divisor $D\in |K_S|$ such that $D \cap\kappa(C)=\emptyset$. 
The same holds for all points in a Zariski neighborhood $U'$ of $\kappa(C)$ in $B_S(\alpha)$. 
Set $U=\kappa^{-1}(U')$ and  $S^\circ=S \smallsetminus D$.
We may thus identify 
$\rho_X^{-1}(U)$ with an open subset of the derived stack of properly supported coherent sheaves on 
$X^\circ=S^\circ \times \mathbb{A}^1$. 
The definition of the BPS sheaf and its main properties both hold for 
the quasi-projective symplectic surface $S^\circ$, see \cite[\S 7.2]{D24}.
We can define as in 
\S\ref{sec:Hecke-BPS} an action of $\mathbf{H}^0_{\overline{X}}$ on 
$$(\pi_X)_*\cBPS_{M_X(\alpha+\Z\delta)}|_U\otimes \H^*(\BGM,\Q)_\vir,$$
factoring through a $\bfH^0_X$-action, proving the claim. 
	
Since we have defined an action of $\bfH^0_X$ on $(\pi_X)_*\cBPS_{M_X(\alpha+\Z\delta)}\otimes \H^*(\BGM,\Q)_\vir$, it is 
again a local problem to check that the operator corresponding by dimensional reduction to $D_{1,0}(\beta)$ is an isomorphism. By 
our arguments above and \S\ref{sec:Proof Toda}, this holds on each open subset $U \subset B_X(\alpha)$ constructed as above.
\end{proof}

\section{The coproduct and the global affinized BPS Lie algebra}
In this section we construct a topological coproduct $\Delta$ on the 3D CoHA $\bfH_X$.
Via dimensional reduction, we use it to define an affinized BPS Lie algebra for the whole stack $\calM_S$. 
We compare this Lie algebra to the BPS Lie algebras introduced in \cite{DHSM2} for stacks of semistable sheaves.

\medskip

\subsection{Completed tensor products}\label{ssec-complete-tensor}

\subsubsection{Setting}\label{sssec:setting}

Throughout \S \ref{ssec-complete-tensor}-\ref{ssec-coprod}, we work with a general $2$CY category $\calC$.
Let $\calM$ be an open substack of $\calM_{\calC}$ satisfying 
the conditions \eqref{item-M-1-Artin}-\eqref{item-M-theta-reductive}
in \S\ref{para-assumptions-M}.
We further assume that there exists a monoid $\Gamma$ and a monoid homomorphism $\calM \to \Gamma$.
For each $\gamma \in \Gamma$ and $\uugamma \in \Gamma^n$, we define $\calM_{\gamma}$, $\calM_{\uugamma}$ and $\Filt_{\uugamma}$ in a manner similar to that in \S \ref{sec:int-identity}.
We assume the following variation of the condition \eqref{item-M-theta-reductive} in \S\ref{para-assumptions-M}:
\begin{enumerate}[label=(4'), ref=(4')]
	\item For each $\uugamma = (\gamma_1, \ldots, \gamma_n)$ with $\gamma = \sum \gamma_i$, the map $\Filt_{\uugamma} \to \calM_{\gamma}$ is proper.  \label{item-Gamma-theta-reductive}
\end{enumerate}
This condition is nothing but the condition \eqref{item-M-theta-reductive} if $\Gamma = \pi_0(\calM)$. 
We also assume that the Euler form 
\[
 \calM \times \calM \to \mathbb{Z} / 2 \mathbb{Z}, \quad (E, F) \mapsto \chi(E, F) \pmod  2 
\]
descends to a map $\chi \colon \Gamma \times \Gamma \to \mathbb{Z} / 2 \mathbb{Z}$.
These conditions are satisfied when $\calM = \calM_S$ and $\Gamma$ is the lattice of Mukai vectors.

Let $\Pro(\Vect)_{\Gamma}$ denote the $\infty$-category of $\Gamma$-graded pro-dg-vector spaces over $\mathbb{Q}$.
We define a symmetric monoidal structure $\hat{\otimes}_{\Gamma}$ on $\Pro(\Vect)_{\Gamma}$ as follows.
For $V = (V_{\gamma})_{\gamma \in \Gamma}$ and $W = (W_{\gamma})_{\gamma \in \Gamma}$, the tensor product is defined as
\[
 (V \hat{\otimes}_{\Gamma} W)_{\gamma} = \prod_{\gamma_1 + \gamma_2 = \gamma} V_{\gamma_1} \hat{\otimes} W_{\gamma_2}.	
\]
The braiding isomorphism 
\begin{equation}\label{eq-braiding-sign}
	V \hat{\otimes}_{\Gamma} W \cong W \hat{\otimes}_{\Gamma} V
\end{equation}
is twisted by the sign $(-1)^{\chi(\gamma_1, \gamma_2)}$ on the $(\gamma_1, \gamma_2)$-graded piece.
By symmetry of $\chi(-,-)$, these data defines a symmetric monoidal structure.
We will also use a locally finite version of the tensor product defined as 
\[
	(V \hat{\otimes}_{\Gamma}' W)_{\gamma} = \bigoplus_{\gamma_1 + \gamma_2 = \gamma} V_{\gamma_1} \hat{\otimes} W_{\gamma_2}.	
\]
Similarly to \eqref{AMpro}, define $\calA_{\calM,\pro } \in \Pro(\Vect)_{\Gamma}$ by
\[
	\calA_{\calM,\pro, \gamma } = \RGproBM(\calM _{\gamma},\Q)[- \vdim \calM_{\gamma}].
\]
Then, arguing as in \S\ref{sec:COHA}, we obtain an associative multiplication
\begin{equation}\label{eq-Gamma-CoHA}
	\calA_{\calM,\pro } \hat{\otimes}_{\Gamma}' \calA_{\calM,\pro }  \to \calA_{\calM,\pro}.	
\end{equation}
In general, we cannot define a multiplication if we use the tensor product $\hat{\otimes}_{\Gamma}$ because of possible infinite non-convergent sums.

\subsubsection{Extending multiplications}

Since $\hat{\otimes}_{\Gamma}'$ determines a symmetric monoidal structure, 
the map \eqref{eq-Gamma-CoHA} determines a multiplication map
\begin{equation}\label{eq-square-multiplication}
  (\calA_{\calM,\pro } \hat{\otimes}_{\Gamma}' \calA_{\calM,\pro }) \hat{\otimes}_{\Gamma}' (\calA_{\calM,\pro } \hat{\otimes}_{\Gamma}' \calA_{\calM,\pro }) \to \calA_{\calM,\pro } \hat{\otimes}_{\Gamma}' \calA_{\calM,\pro }
\end{equation}
defining an associative algebra structure on $\calA_{\calM,\pro } \hat{\otimes}_{\Gamma}' \calA_{\calM,\pro }$.
For later use, we extend this multiplication map.

\begin{proposition}\label{prop:coprod prod well defined}
	The multiplication map \eqref{eq-square-multiplication} naturally extends to a map
	\begin{equation}\label{eq-complete-square-multiplication}
		(\calA_{\calM,\pro } \hat{\otimes}_{\Gamma} \calA_{\calM,\pro }) \hat{\otimes}_{\Gamma}' (\calA_{\calM,\pro } \hat{\otimes}_{\Gamma} \calA_{\calM,\pro }) \to \calA_{\calM,\pro } \hat{\otimes}_{\Gamma} \calA_{\calM,\pro }.
	\end{equation}
\end{proposition}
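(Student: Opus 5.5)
The plan is to write both sides degreewise and check that, for each fixed output degree, the infinitely many terms produced by the multiplication only involve finitely many terms of the inputs at a time. Unwinding the definitions, the $\gamma$-graded piece of the source of \eqref{eq-complete-square-multiplication} is
\[
\bigoplus_{\gamma'+\gamma''=\gamma}\ \Big(\prod_{\gamma'_1+\gamma'_2=\gamma'}\calA_{\gamma'_1}\hat\otimes\calA_{\gamma'_2}\Big)\hat\otimes\Big(\prod_{\gamma''_1+\gamma''_2=\gamma''}\calA_{\gamma''_1}\hat\otimes\calA_{\gamma''_2}\Big),
\]
where we abbreviate $\calA_\delta=\calA_{\calM,\pro,\delta}$. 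The target's $\gamma$-piece is $\prod_{\gamma_1+\gamma_2=\gamma}\calA_{\gamma_1}\hat\otimes\calA_{\gamma_2}$.

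Since a map into a product is the same as a compatible family of maps into its factors, it suffices to define, for each fixed $(\gamma_1,\gamma_2)$ with $\gamma_1+\gamma_2=\gamma$, a map from each summand (indexed by $(\gamma',\gamma'')$) to $\calA_{\gamma_1}\hat\otimes\calA_{\gamma_2}$.

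First, the outer sum over $(\gamma',\gamma'')$ is locally finite, so we may fix $\gamma'$ and $\gamma''$. The factors that can contribute to $\calA_{\gamma_1}\hat\otimes\calA_{\gamma_2}$ are those with $\gamma'_1+\gamma''_1=\gamma_1$ and $\gamma'_2+\gamma''_2=\gamma_2$. Given $\gamma'$, $\gamma''$ and $\gamma_1$, such a pair is determined by $\gamma'_1$, which ranges over decompositions $\gamma'_1+\gamma''_1=\gamma_1$. I would use that this set is finite. This is the locally-finite decomposition property of $\Gamma$ that already underlies the well-definedness of $\hat\otimes'_\Gamma$ and \eqref{eq-Gamma-CoHA}: $\Gamma$ is the image of the effective monoid, and each class has finitely many decompositions, for instance by positivity of the Mukai vectors/Hilbert polynomials for $\calM_S$. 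If this finiteness is not available abstractly, I would add it as a standing assumption on $\Gamma$, which is satisfied in the case of interest.

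Next, define the component map as a finite sum over the finitely many such $\gamma'_1$. Each term is the composite of:
\begin{itemize}
\item the projection from the products onto $\calA_{\gamma'_1}\hat\otimes\calA_{\gamma'_2}$ and $\calA_{\gamma''_1}\hat\otimes\calA_{\gamma''_2}$;
\item the braiding \eqref{eq-braiding-sign} of the middle two factors, carrying the sign $(-1)^{\chi(\gamma'_2,\gamma''_1)}$;
\item the multiplication $\calA_{\gamma'_1}\hat\otimes\calA_{\gamma''_1}\to\calA_{\gamma_1}$, tensored with $\calA_{\gamma'_2}\hat\otimes\calA_{\gamma''_2}\to\calA_{\gamma_2}$.
\end{itemize}
This is exactly the formula for the $(\gamma_1,\gamma_2)$-component of \eqref{eq-square-multiplication}. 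Hence, restricted along the inclusion $\hat\otimes'_\Gamma\to\hat\otimes_\Gamma$ (sum into product), it recovers \eqref{eq-square-multiplication}, and the extension is natural.

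The main obstacle is working in the $\infty$-category $\Pro(\Vect)_\Gamma$. There one must check that $\hat\otimes$ commutes with the relevant infinite products, or at least admits a canonical comparison map from the tensor of products to the product of tensors. It suffices to construct the map into each factor and use the universal property of the product, together with the natural map $(\prod_i V_i)\hat\otimes W\to\prod_i(V_i\hat\otimes W)$. If one also wants associativity of the extended structure, the same reduction to finitely many terms applies componentwise. The coherence then follows from that of \eqref{eq-square-multiplication}, using the symmetric monoidal structure of $\hat\otimes'_\Gamma$.
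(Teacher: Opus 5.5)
Your argument has a genuine gap: the finiteness you rely on is false in the case of interest.

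\textbf{The finiteness claim fails.} You need that, for fixed $\gamma',\gamma''$ and a fixed target component $(\gamma_1,\gamma_2)$, only finitely many $\gamma'_1$ contribute. This fails for $(\calM,\Gamma)=(\calM_S,\H^{\ev}(S,\Z))$. Suppose $\gamma'_1,\gamma''_1,\gamma'_2,\gamma''_2$ are one-dimensional with nonzero effective curve classes. Replace them by $\gamma'_1+m\delta$, $\gamma''_1-m\delta$, $\gamma'_2-m\delta$, $\gamma''_2+m\delta$. This preserves all four constraints, and every class stays effective for every $m\in\Z$, since sheaves on a curve of class $\alpha$ realize every Euler characteristic by adding or removing points. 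Positive rank behaves similarly.

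This infinitude is exactly why the coproduct lands in the product $\hat\otimes_\Gamma$ at all. If your finiteness held, $\hat\otimes_\Gamma$ and $\hat\otimes'_\Gamma$ would coincide and the proposition would be empty. Also, $\hat\otimes'_\Gamma$ is a direct sum, so \eqref{eq-Gamma-CoHA} is defined summand by summand and uses no finiteness of decompositions, contrary to what you say.

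\textbf{Why the infinite sum is not defined.} Your component map becomes an infinite sum of maps out of a product, which is not defined in $\Pro(\Vect)$. A map from the source pro-object to a constant object $R\Gamma^{\BM}(\calU)\otimes R\Gamma^{\BM}(\calV)$ must factor through a finite stage. The universal property of products, and the comparison $(\prod_i V_i)\hat\otimes W\to\prod_i(V_i\hat\otimes W)$, only help with maps \emph{into} products.

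\textbf{The missing idea.} Finiteness holds stage-wise in the pro-system, and it comes from geometry rather than from $\Gamma$.
\begin{enumerate}
\item Fix quasi-compact opens $\calU\subset\calM_{\gamma_1}$ and $\calV\subset\calM_{\gamma_2}$. Consider $\ev\times\ev$ on $\coprod\Filt_{(\gamma'_1,\gamma''_1)}\times\Filt_{(\gamma'_2,\gamma''_2)}$, the disjoint union over all contributing index tuples.
\item Composing $\ev\times\ev$ with $\oplus\colon\calM_{\gamma_1}\times\calM_{\gamma_2}\to\calM_\gamma$ gives $\ev_{(\gamma',\gamma'')}\circ\oplus$, where this second $\oplus$ is the direct sum of filtrations into $\Filt_{(\gamma',\gamma'')}$.
\item The direct sum of filtrations is quasi-compact, as in \cite[\S 10.2.3]{Bu25b}. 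The map $\ev_{(\gamma',\gamma'')}$ is proper by assumption (4'). The map $\oplus$ on $\calM$ is quasi-separated because $\calM$ has affine diagonal. Hence $\ev\times\ev$ is quasi-compact.
\item So the preimage of $\calU\times\calV$ is quasi-compact and meets only finitely many components. The map to $R\Gamma^{\BM}(\calU)\otimes R\Gamma^{\BM}(\calV)$ is therefore a finite sum factoring through a finite sub-product.
\item Compatibility in $(\calU,\calV)$ then yields the extension.
\end{enumerate}
Your braiding signs, and the check that the extension restricts to \eqref{eq-square-multiplication}, are fine once this step is supplied.
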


\begin{proof}
Take $\alpha, \beta \in \Gamma$ with $\alpha + \beta = \gamma$ and quasi-compact open substacks $\calU \subset \calM_{\alpha}$ and $\calV \subset \calM_{\beta}$.
It is enough to show that the composite 
\[
	\displaystyle (\calA_{\calM,\pro } \hat{\otimes}_{\Gamma} \calA_{\calM,\pro }) \hat{\otimes}_{\Gamma}' (\calA_{\calM,\pro } \hat{\otimes}_{\Gamma} \calA_{\calM,\pro }) \to \calA_{\calM,\pro } \hat{\otimes}_{\Gamma} \calA_{\calM,\pro } \to R\Gamma^{\BM}(\calU, \mathbb{Q}) \otimes R\Gamma^{\BM}(\calV, \mathbb{Q})[-d -d']
\]
is well-defined, 
where we set $d \coloneqq \vdim \calU$, $d' \coloneqq \vdim \calV$ and the last map is given by the restriction.
Consider the following map
\[\begin{tikzcd}
	{\left(\coprod_{\alpha_1 + \alpha_2 = \alpha} \Filt_{(\alpha_1, \alpha_2)} \right) \times \left(\coprod_{\beta_1 + \beta_2 = \beta} \Filt_{(\beta_1, \beta_2)} \right) } & {\calM_{\alpha} \times \calM_{\beta}.}
	\arrow["{\ev_{\alpha} \times \ev_{\beta}}", from=1-1, to=1-2]
\end{tikzcd}\]
By construction, it is enough to show that the intersection
\[
	\displaystyle \left( \ev_{\alpha}^{-1}(\calU) \times \ev_{\beta}^{-1}(\calV)  \right)	\cap \left( \coprod_{\substack{\alpha_1 + \alpha_2 = \alpha, \\ \beta_1 + \beta_2 = \beta, \\ \alpha_1 + \beta_1 = \gamma_1}} \Filt_{(\alpha_1, \alpha_2)} \times \Filt_{(\beta_1, \beta_2)} \right)
\]
is quasi-compact for each $\gamma_1, \gamma_2 \in \Gamma$ with $\gamma_1 + \gamma_2 = \gamma$.
Consider the following commutative diagram:
\[\begin{tikzcd}
	\begin{array}{c}
		\displaystyle \left( \coprod_{\substack{\alpha_1 + \alpha_2 = \alpha, \\ \beta_1 + \beta_2 = \beta, \\ \alpha_1 + \beta_1 = \gamma_1}} \Filt_{(\alpha_1, \alpha_2)} \times \Filt_{(\beta_1, \beta_2)} \right) \end{array} & {\calM_{\alpha} \times \calM_{\beta}} \\
	{ \Filt_{(\gamma_1, \gamma_2)}} & {\calM_{\gamma}.}
	\arrow["{\ev_{\alpha} \times \ev_{\beta}}", from=1-1, to=1-2]
	\arrow["\oplus"', from=1-1, to=2-1]
	\arrow["\oplus", from=1-2, to=2-2]
	\arrow["{\ev_{(\gamma_1, \gamma_2)}}"', from=2-1, to=2-2]
\end{tikzcd}\]
By assumption \eqref{item-Gamma-theta-reductive}, the bottom horizontal map is quasi-compact.
Since we assume that $\calM$ has affine diagonal, it is quasi-separated, and thus so is the rightmost $\oplus$.
Also, by the argument in \cite[\S~10.2.3]{Bu25b}, the leftmost $\oplus$ is quasi-compact. 
So $\oplus \circ (\ev_{\alpha} \times \ev_{\beta})$ is quasi-compact, with $\oplus$ quasi-separated, and so $\ev_{\alpha} \times \ev_{\beta}$ is quasi-compact as desired.
\end{proof}

\medskip

\subsection{Construction of the coproduct}\label{ssec-coprod}

\smallskip

\subsubsection{The factorization coalgebra associated with $\calM$}
Our first goal is to construct a coproduct on the Borel--Moore homology of $\widetilde\calM$.
The key ingredient is a locally constant factorization coalgebra on the complex plane $\C$ whose space of global sections 
is the Borel--Moore homology of $\widetilde\calM$.

Recall that an analytic stack is a stack of $\infty$-groupoids on the category of (possibly singular) Stein
analytic spaces over $\C$, equipped with the Grothendieck topology consisting of open 
covers in the usual sense. Analytic stacks form a 2-category.
For every scheme $X$ of finite type over $\C$ we have a canonical analytic space $X^\an$ called
the analytification of $X$. 
Similarly, for any Artin stack $\calX$ over $\C$ we have a canonical analytic stack $\calX^\an$
called the
analytification, defined as the right Kan extension of $\calX$ from the category of affine schemes
of finite type to the category of Stein analytic spaces, see \cite[\S 4]{P19} or \cite[\S 3]{HP18}. 
The analytification yields a fully faithful functor
$$\mathsf{D_c^+}(\calX,\Q)\to \mathsf{D_c^+}(\calX^\an,\Q),\quad\calE\mapsto\calE^\an$$
between the derived categories of algebraically constructible sheaves,
which commutes with direct images and tensor products.
In particular, it yields an isomorphism
$$\H^*(\calX,\calE)\cong \H^*(\calX^\an,\calE^\an).$$
In order to unburden the notation, we will abbreviate $\calX=\calX^\an$ and $\calE=\calE^\an$.

Now, we return to the stacks $\calM$ and $\widetilde\calM$.
A point in $\widetilde\calM$ is  a pair of a point $[E] \in \calM$ together with an endomorphism $\phi$ of $E$,
see \S\ref{sec:nilp-micro}. For an open subset $U \subset \C$ with respect to the analytic topology, let 
$\widetilde\calM_{U} \subset \widetilde\calM$
be the analytic substack consisting of pairs $(E, \phi)$ such that all eigenvalues of $\phi$ are contained in $U$
(recall that we abbreviate $\widetilde\calM=\widetilde\calM^\an$).

\begin{lemma}
	The analytic stack $\widetilde\calM_{U}$ is open in $\widetilde\calM$.
\end{lemma}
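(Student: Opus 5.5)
The plan is to reduce the statement to the openness of an eigenvalue condition for a family of endomorphisms, which can be checked on an atlas. Openness is local in the smooth topology, so we fix a smooth morphism $T \to \calM$ from an affine scheme $T$ of finite type. We then work with the base change $\widetilde T = T\times_{\calM}\widetilde\calM$. As in the proof that $\Lambda_\nil$ is closed, $\widetilde T$ embeds $\G_m$-equivariantly as a closed subscheme of a vector bundle over $T$. Its points are pairs $(t,\phi)$ with $\phi\in\mathrm{End}(E_t)$. It suffices to show that the locus $\widetilde T_U$ of pairs all of whose eigenvalues lie in $U$ is open in $\widetilde T^{\an}$.

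The key step is to attach to each point a characteristic polynomial depending continuously on $(t,\phi)$. Shrinking $T$, the fibre dimension $r=\dim \mathrm{End}(E_t)$ is locally bounded on $T$; we stratify $T$ by this dimension if needed. Consider the operator $L_\phi\colon \psi\mapsto \phi\circ\psi$ acting on the finite dimensional algebra $\mathrm{End}(E_t)$. Its eigenvalues are exactly the eigenvalues of $\phi$, using the Jordan decomposition $E=\bigoplus_c E_c$ from the proof of Proposition~\ref{prop:nilp-equiv-def}. The coefficients of its characteristic polynomial are regular functions, at least on each stratum where the endomorphism spaces form a vector bundle.

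A cleaner route avoids stratification by the same trick as for $\Lambda_\nil$. Let $N$ bound the fibre dimensions over $T$. Then $\phi$ has all eigenvalues in $U$ if and only if its minimal polynomial has all roots in $U$. The subalgebra $\C[\phi]\subset\mathrm{End}(E_t)$ has dimension at most $N$. We therefore consider the closed subscheme $Z\subset \widetilde T\times\A^N$ of pairs $((t,\phi),(a_1,\dots,a_N))$ with $\phi^N+a_1\phi^{N-1}+\dots+a_N=0$. The projection $Z\to\widetilde T$ is surjective.

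The point $(t,\phi)$ fails to lie in $\widetilde T_U$ exactly when some eigenvalue $c\notin U$ exists. So the complement of $\widetilde T_U$ is the image of the closed set $\{(t,\phi,c)\,;\,c\in\C\smallsetminus U,\ \phi-c \text{ not invertible}\}\subset \widetilde T\times\C$. Here non-invertibility means that $\det(L_{\phi-c})=0$ on $\mathrm{End}(E_t)$, or equivalently that $\phi-c$ is not injective on $E_t$, which is a closed condition.

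The main obstacle is that $\C\smallsetminus U$ is not compact, so the projection need not be closed. To handle this, I would bound eigenvalues locally: near any point, $|c|\le$ some continuous bound in terms of the coefficients of a monic annihilating polynomial. Such bounds come from Cauchy's root bound applied to $(a_i)$, which can be chosen locally continuously via the Cayley--Hamilton theorem on the local vector bundle model. We can then replace $\C\smallsetminus U$ by the compact set $\{c\notin U,\ |c|\le R\}$. The projection along a compact factor is proper, so its image is closed, and $\widetilde T_U$ is open. Compatibility with smooth base change then gives openness in $\widetilde\calM$.
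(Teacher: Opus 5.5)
Your overall architecture matches the paper's. You write the complement of the good locus as the image of a closed ``spectral'' locus lying over $\C\smallsetminus U$, and then need that projection to be closed. The gap is that the one nontrivial step is asserted rather than proved: properness of the projection, i.e.\ that eigenvalues stay locally bounded as $(t,\phi)$ varies. None of the devices you name delivers it.
\begin{itemize}
\item The subscheme $Z\to\widetilde T$ is surjective, but its fibres are affine spaces of dimension $N-\deg(\text{minimal polynomial of }\phi)$. They are unbounded, so $Z$ yields neither bounded coefficients $a_i$ nor a continuous local section.
\item The characteristic polynomial of $L_\phi$ on $\mathrm{End}(E_t)$ has regular coefficients only on strata where $\dim\mathrm{End}(E_t)$ is constant. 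Openness is not stratum-local, and the jump loci are exactly where you must rule out eigenvalues escaping to infinity.
\item The vector bundle containing $\widetilde T$ contains $\phi$ as a point but carries no action of $\phi$. So ``Cayley--Hamilton on the local vector bundle model'' has no operator to apply to until you construct one, for instance by lifting the bilinear composition law on $\widetilde T$ to the ambient trivial bundle.
\end{itemize}
A smaller issue concerns the closedness of $\{\phi-c\ \text{not invertible}\}$. It is not justified by ``not injective'', since kernels do not commute with base change. Nor is it justified by $\det(L_{\phi-c})$, which is not a global function because $\mathrm{End}(E_t)$ is not a bundle. You would need the support of the cokernel, or the cone, of the universal $\Phi-c$.

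The paper closes both points with one device. For $f\colon T=\Spec R\to\widetilde\calM$ it takes $M_f=R\Hom(E_f,E_f)$ with the precomposition endomorphism $\phi_f^*$, and regards it as a perfect complex $\widetilde M_f$ on $T\times\C$. Its reduced support $\widetilde T_f$ is closed. Over each point $x$ it recovers the eigenvalues of $\phi_x$, because the identity in $\H^0$ is sent to $\phi_x$. The map $\widetilde T_f\to T$ is finite: each $\H^i(\widetilde M_f)$ is a finitely generated $R$-module, so every $R[x]/I_Z$ embedding into it is finite over $R$. Then $f^{-1}(\widetilde\calM_U)=T\smallsetminus\pi_f\bigl(\widetilde T_f\cap(T\times(\C\smallsetminus U))\bigr)$ is open, because finite maps are closed. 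This finiteness is exactly the properness your Cauchy bound was meant to supply. It uses only perfectness of $R\Hom$, so it also works for the arbitrary 2CY category in which the lemma is stated, whereas your injectivity formulation is specific to sheaves.
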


\begin{proof}
	Let $T = \Spec R$ be an affine scheme, $f \colon T \to \widetilde\calM$ be a morphism corresponding to an object $E_f \in \calC_{R}$ 
	and an endomorphism $\phi_f \colon E_{f} \to E_{f}$.
	It is enough to show that $f^{- 1}(\widetilde\calM_{U})$ is an analytic open subset of $T$.
	Consider the endomorphism module $M_f = R \Hom (E_f, E_f)$ defined as a perfect complex over $R$.
	The precomposition by $\phi_f$ defines an endomorphism
	$\phi_{f}^* \colon  M_f \to M_f$.
	For each point $x \in T$, the set of eigenvalues for $E_f |_{x}$ coincides with the set of eigenvalues for $\H^0 (\phi_f^* |_{x})$, 
	since the image of the identity section $\id \in \H^0(M_{f})$ under the map $\phi_f^*$ is identified with $\phi_f$.
	We let $\widetilde{M}_f$ denote the perfect complex over $T \times \C$ associated with $M_f$ and the endomorphism $\phi_f^*$.
	We let $\widetilde{T}_f$ denote the support of $\widetilde{M}_f$ equipped with the reduced scheme structure.
	We claim that the projection map $\pi_f \colon \widetilde{T}_f \to T$ is a finite morphism.
	Since we have an identification
	\[f^{-1} (\widetilde\calM_U) =  T \smallsetminus  \pi_f(\widetilde{T}_f \cap (T \times (\C \smallsetminus U)) ),\]
	this implies the desired statement. 
	To prove the finiteness of $\pi_f$, it is enough to show that the reduced support of the cohomology sheaf $\H^i(\widetilde{M}_f)$ denoted as 
	$\widetilde{T}_{f, i}$ is finite over $T$ for any $i \in \mathbb{Z}$.
	For each irreducible component $Z \subset \widetilde{T}_{f, i}$, there exists an element $m \in \H^i(\widetilde{M}_f)$ 
	such that the support of 
	the annihilator of $m$ is $Z$.
	This implies that there exists an injection
	$R[x] /I_{Z} \hookrightarrow   \H^i(\widetilde{M}_f)$
	where $I_{Z} \subset R[x]$ is the prime ideal corresponding to $Z$. Since $\H^i(\widetilde{M}_f) $ is finite over $R$,
	we see that $R[x] /I_{Z}$ is finite over $R$. In particular $Z \to T$ is a finite morphism.
	Therefore $\pi_f \colon \widetilde{T}_f \to T$ is finite as desired.
\end{proof}

For each analytic open subset $U \subset \C$, we set 
$$\varphi_{\widetilde\calM_{U}} = \varphi_{\widetilde\calM} |_{\widetilde\calM_U} \in \mathsf{Perv}(\widetilde\calM_U).$$

\medskip

\subsubsection{The sheaf $\DT_{\calM,\pro}$}
We define a presheaf $\DT_{\calM,\pro}$ over $\C$ with values in $\Pro(\mathrm{Vect})_{\Gamma}$ such that
\begin{align}\label{DTpro}
	\DT_{\calM,\pro}(U) = R  \Gamma_\pro( \widetilde\calM_{U}, \varphi_{\widetilde\calM_{U}})
\end{align}
for each analytic open subset $U\subset\C$. 
Here the pro-object $ R  \Gamma_\pro( \widetilde\calM_{U}, \varphi_{\widetilde\calM_{U}} )$ is defined using a 
covering of $\calM$ by quasi-compact open substacks.

\begin{lemma}\label{lem-DT-factorization}
	\hfill
	\begin{enumerate}[label=$\mathrm{(\alph*)}$,leftmargin=8mm,itemsep=2mm]
		\item There is an isomorphism $\DT_{\calM,\pro}(\C) \cong R \Gamma^{\BM}_\pro(\calM,\Q)[- \vd_\calM]$.
		\item Let $U \subset \C$ be any convex open subset. 
		The restriction gives an isomorphism 
		$$\DT_{\calM,\pro}(\C)\cong\DT_{\calM,\pro}(U)\cong R \Gamma^{\BM}_\pro(\calM,\Q)[-\vd_\calM].$$
		\item For disjoint open subsets $U_1, \ldots, U_l \subset \C$, there is a map
		\begin{equation}\label{eq-DT-kunnet-map}
			\DT_{\calM,\pro}(U_1) \,\widehat\otimes_{\Gamma}\, \cdots \widehat\otimes_{\Gamma} \DT_{\calM,\pro}(U_l) \to \DT_{\calM,\pro}(U_1 \sqcup \cdots \sqcup U_l)
		\end{equation}
		which is independent of the order of the open subsets up to the braiding isomorphism defined in \eqref{eq-braiding-sign}. If each $U_i$ is convex, this map is an isomorphism.
	\end{enumerate}
\end{lemma}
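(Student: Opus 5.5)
For (a), I would combine dimensional reduction with translation invariance. Since $\C$ is convex, $\widetilde\calM_\C=\widetilde\calM$. The functor $R\Gamma_\pro$ is built from a quasi-compact open cover $\{\calU\}$ of $\calM$, and the preimages $\kappa^{-1}(\calU)$ form a cover of $\widetilde\calM$ by open substacks. For each such $\calU$, the projection $\kappa\colon\widetilde\calU\to\calU$ gives $R\Gamma(\widetilde\calU,\varphi)\cong R\Gamma(\calU,\kappa_*\varphi_{\widetilde\calU})$. Theorem~\ref{thm-dimensional-reduction} identifies $\kappa_*\varphi_{\widetilde\calU}\cong\D\Q_\calU[-\vd]$ (shift/Tate twist as stated there). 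These identifications are compatible with restrictions along the cover, because dimensional reduction is compatible with étale (in particular open) pullback via Theorem~\ref{thm-shifted-Lag-class}(d). Passing to the pro-limit then gives (a).

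For (b), the key input is Proposition~\ref{prop:equivariance}. Part (b) there says $\varphi_{\widetilde\calM}$ is locally constant along the scaling $\G_m$-orbits, and part (a) gives equivariance under translation by $\A^1$, i.e.\ $\phi\mapsto\phi+c$. Let $U$ be convex and choose a point $u_0\in U$. Conjugating by the translation $\phi\mapsto\phi-u_0$, I may assume $0\in U$. Then $U$ is star-shaped about $0$, so the real dilations $\phi\mapsto t\phi$ for $t\in(0,1]$ carry $\widetilde\calM_\C$ into $\widetilde\calM_U$ in the limit. More precisely, I want to show that the restriction $R\Gamma(\widetilde\calM,\varphi)\to R\Gamma(\widetilde\calM_U,\varphi)$ is an isomorphism. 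I would argue as in the proof of Proposition~\ref{prop:nilp-cohomological-integrality}: both sides are computed after pushing forward to $\calM$ by $\kappa$. The fibres of $\widetilde\calM_U\to\calM$ are the open subsets of $\mathrm{End}$ with spectrum in $U$, and these are $\R_{>0}$-stable neighbourhoods of the nilpotent cone. The complex $\varphi$ is $\R_{>0}$-conic by Proposition~\ref{prop:equivariance}(b). The standard conic-sheaf lemma (Kashiwara--Schapira) then shows that $\kappa_*$ and $\kappa_*(\cdot|_{\widetilde\calM_U})$ both agree with the restriction to the zero section, giving $\kappa_*\varphi\cong 0^*\varphi\cong\kappa_{U,*}\varphi$. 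Compatibility with the cover is again automatic.

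For (c), the key geometric fact is a factorization statement for disjoint opens $U_1,\dots,U_l$: the generalized eigenspace decomposition gives an isomorphism of analytic stacks
\[
\widetilde\calM_{U_1\sqcup\cdots\sqcup U_l}\cong\widetilde\calM_{U_1}\times\cdots\times\widetilde\calM_{U_l}
\]
via $\oplus$. To see this, note that $\oplus$ is étale onto the locus where the spectral components are separated, because $\Ext^*$ between objects with disjoint spectra vanishes after $\phi$-twisting. It is bijective on points by the Jordan decomposition used in Proposition~\ref{prop:nilp-equiv-def}. This is compatible with the $(-1)$-shifted symplectic forms and orientations, which I would check by viewing it as the restriction of the attractor correspondence, in the spirit of Remark~\ref{rmk-etale-integral-identity}. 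By the Thom--Sebastiani isomorphism \eqref{eq-Thom-Sebastiani}, the product identification gives $\varphi_{\widetilde\calM_{\sqcup U_i}}\cong\boxtimes_i\varphi_{\widetilde\calM_{U_i}}$. The Künneth map then yields \eqref{eq-DT-kunnet-map}.

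Independence of the ordering follows from \eqref{eq-TS-commutative}. The sign $(-1)^{\deg o_1\deg o_2}$ equals $(-1)^{\vd\cdot\vd}$, which should agree with $(-1)^{\chi(\gamma_1,\gamma_2)}$ modulo $2$ once one uses $\vd\calM_\gamma=-\chi(\gamma,\gamma)$; this matches the braiding \eqref{eq-braiding-sign}. When each $U_i$ is convex, the Künneth map is an isomorphism on each quasi-compact piece by Proposition~\ref{prop-kunneth}. The point is that in the $\Gamma$-graded component $\gamma$ only finitely many decompositions meet a quasi-compact open, so the product in $\widehat\otimes_\Gamma$ agrees with the limit.

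I expect the main obstacle to be (c). One difficulty is showing that $\oplus$ is an open immersion of analytic stacks, not just a bijection, and that it respects the orientation. The other is matching the pro-structures, where the product over $\gamma_1+\gamma_2=\gamma$ must be controlled by a quasi-compactness argument like the one in Proposition~\ref{prop:coprod prod well defined}.
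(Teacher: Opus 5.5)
Your treatment of (a), (b) and the construction of the map in (c) follows the paper's route: dimensional reduction for (a); $\A^1$-equivariance plus conicity of $\varphi$ and a Kashiwara--Schapira conic-sheaf argument for (b); and for (c) the splitting $\widetilde\calM_{U_1\sqcup\cdots\sqcup U_l}\cong\prod_i\widetilde\calM_{U_i}$, orientations matched via the attractor correspondence, then Thom--Sebastiani and K\"unneth.

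\textbf{The gap: the sign in the order-independence claim of (c).} You attribute the braiding sign to the graded commutativity \eqref{eq-TS-commutative}, asserting that $(-1)^{\deg o_1\deg o_2}=(-1)^{\vd\calM_{\gamma_1}\vd\calM_{\gamma_2}}$ agrees with $(-1)^{\chi(\gamma_1,\gamma_2)}$. This is false.
\begin{itemize}
\item In a 2CY category $\vd\calM_\gamma=-\chi(\gamma,\gamma)=\dim\Ext^1(E,E)-2\dim\Hom(E,E)$ is always even, since $\Ext^1(E,E)$ carries the Serre-duality symplectic form. So the Thom--Sebastiani sign is always $+1$.
\item By contrast, $\chi(\gamma_1,\gamma_2)$ can be odd. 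For example, $\chi(\calO_{C_1},\calO_{C_2})=-1$ for two $(-2)$-curves on a K3 surface meeting transversally in one point.
\end{itemize}
As written, your argument only gives symmetry with respect to the plain Koszul swap. That is not the braiding \eqref{eq-braiding-sign}.

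\textbf{The missing input.} The identification of orientations $o^{\mathrm{sta}}_{\widetilde\calM_{U_1\sqcup U_2}}\cong o^{\mathrm{sta}}_{\widetilde\calM_{U_1}}\boxtimes o^{\mathrm{sta}}_{\widetilde\calM_{U_2}}$ obtained from the attractor correspondence depends on the ordering. Listing $U_2$ before $U_1$ amounts to using the opposite filtrations, i.e.\ the negative attractor correspondence. Lemma~\ref{lem-ori-differ-by-Euler-form} says the two induced orientations differ by $(-1)^{\vd\Filt(\calM)}$. Moreover
$\vd\Filt_{(\gamma_1,\gamma_2)}=\vd\calM_{\gamma_1}+\vd\calM_{\gamma_2}-\chi(\gamma_1,\gamma_2)\equiv\chi(\gamma_1,\gamma_2)\pmod 2$.
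This is the actual source of the sign in \eqref{eq-braiding-sign}. The Thom--Sebastiani symmetry contributes nothing, precisely because the virtual dimensions are even.

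\textbf{Two smaller points.}
\begin{itemize}
\item In (b), after translating so that $0\in U$, the fibres of $\widetilde\calM_U\to\calM$ are not $\R_{>0}$-stable. An $\R_{>0}$-stable open set containing the zero section is the whole space. They are only star-shaped, i.e.\ stable under $\phi\mapsto t\phi$ for $t\in(0,1]$, and the conic-sheaf argument has to be run with this property.
\item In the isomorphism part of (c), Proposition~\ref{prop-kunneth} applies only to algebraic stacks and algebraically constructible complexes, and $\varphi|_{\widetilde\calM_{U_i}}$ is not of that kind. You should first use the convexity of the $U_i$ (the same conic argument on $\widetilde\calM\times\cdots\times\widetilde\calM$) to replace $\prod_i\widetilde\calM_{U_i}$ by $\widetilde\calM\times\cdots\times\widetilde\calM$, and only then apply K\"unneth, as the paper does.
\end{itemize}
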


\begin{proof}
Statement (a) follows from Theorem \ref{thm-dimensional-reduction}.
The second statement (b) follows from Proposition \ref{prop:equivariance} and \cite[cor.~3.7.3]{KS13}.
We turn to (c). Note that we have an identification
\[\widetilde\calM_{U_1 \sqcup \cdots \sqcup U_l} \cong \widetilde\calM_{U_1} \times \cdots \times \widetilde\calM_{U_l}.\]
Further, using the fact that the correspondence \eqref{attractor-explicit} on the right is an oriented Lagrangian correspondence,
we see that the above isomorphism preserves the d-critical structure and there is a natural identification of the orientations
\begin{equation}\label{eq-identify-orientation}
	o^{\mathrm{sta}}_{\widetilde\calM_{U_1 \sqcup \cdots \sqcup U_l}} \cong o^{\mathrm{sta}}_{\widetilde\calM_{U_1 }} \boxtimes \cdots \boxtimes o^{\mathrm{sta}}_{\widetilde\calM_{U_l}}		
\end{equation}
which satisfies the associativity property.
In particular, the Donaldson--Thomas perverse sheaves on them are identified.
Therefore the map \eqref{eq-DT-kunnet-map} is constructed using the Thom--Sebastiani isomorphism \cite[prop.~4.1]{KPS}.
We claim that this map does not depend on the order of the open subsets up to the sign $(-1)^{\chi}$. 
For this, by the associativity, we may assume $l = 2$.
First, by Lemma~\ref{lem-ori-differ-by-Euler-form}, if we let $\sigma \colon \widetilde\calM_{U_1} \times \widetilde\calM_{U_2} \cong \widetilde\calM_{U_2} \times \widetilde\calM_{U_1}$ be the swapping isomorphism, the following diagram commutes:
\begin{equation}\label{eq-sta-ori-swap-localize}
	\begin{tikzcd}
	{o^{\mathrm{sta}}_{\widetilde\calM_{U_1  \sqcup U_2}}} &[50pt] {o^{\mathrm{sta}}_{\widetilde\calM_{U_2  \sqcup U_1}}} \\
	{o^{\mathrm{sta}}_{\widetilde\calM_{U_1 }}\boxtimes o^{\mathrm{sta}}_{\widetilde\calM_{U_2}}} & {\sigma^{\star} ( o^{\mathrm{sta}}_{\widetilde\calM_{U_2 }}\boxtimes o^{\mathrm{sta}}_{\widetilde\calM_{U_1}})}
	\arrow["\cong", from=1-1, to=1-2]
	\arrow["{\eqref{eq-identify-orientation}}"', from=1-1, to=2-1]
	\arrow["{\eqref{eq-identify-orientation}}", from=1-2, to=2-2]
	\arrow["{(-1)^{\chi} \cdot \mathrm{swap}}"', from=2-1, to=2-2]
\end{tikzcd}
\end{equation}
where $\mathrm{swap}$ denotes the braiding isomorphism involving the Koszul sign.
Since the Thom--Sebastiani isomorphism is symmetric by \eqref{eq-TS-commutative} and the evenness of the virtual dimension of $\calM$, we conclude the independence of the order as desired. 
To prove the second claim in (c), we consider the open inclusion
\[\widetilde\calM_{U_1} \times \cdots \times \widetilde\calM_{U_l} \hookrightarrow \widetilde\calM \times \cdots \times \widetilde\calM.\]
Using \cite[cor.~3.7.3]{KS13} again, we see that the following restriction map is an isomorphism
\[R \Gamma_\pro^\BM(\widetilde\calM \times \cdots \times\widetilde\calM, \varphi_{\widetilde\calM \times \cdots \times\widetilde\calM}) \to 
R \Gamma_\pro^\BM(\widetilde\calM_{U_1} \times \cdots \times \widetilde\calM_{U_l}, \varphi_{\widetilde\calM_{U_1} \times 
\cdots \times \widetilde\calM_{U_l}}).\]
Therefore the claim follows from Proposition \ref{prop-kunneth}.
\end{proof}

\medskip

\subsubsection{The definition of the coproduct}\label{para-construction-coproduct}
Using Lemma \ref{lem-DT-factorization}, we construct a coproduct on the $\Gamma$-graded pro-complex 
$$\calA_{\calM,\pro} = R \Gamma^{\BM}_\pro(\calM,\Q)[- \vd_\calM].$$
First, take a convex open subset $U \subset \C$ and consider an inclusion from the disjoint union of convex open subsets 
$V_1 \sqcup \cdots \sqcup V_l \subset U$.
Using Lemma \ref{lem-DT-factorization}, the restriction map for the presheaf $\DT_{\calM,\pro}$ gives rise to a map
\[\Delta^l_{ V_1 \sqcup \cdots \sqcup V_l \subset U} \colon \calA_{\calM,\pro} \to ( \calA_{\calM,\pro} )^{\widehat\otimes_{\Gamma} l}.
\]
Here the completed tensor product on the right is in the sense of \S \ref{sssec:setting}. We claim that this map does not depend on the choice of $V_1, \ldots, V_l, U$.
To prove this, it is enough to consider the case $U = \C$.  
If there is an inclusion $W_i \subset V_i$ for each $i$, then the maps
$$\Delta^l_{V_1 \sqcup \cdots \sqcup V_l \subset \C}
,\quad
\Delta^l_{W_1 \sqcup \cdots \sqcup W_l \subset \C}$$
are equivalent.
The claim follows, because any two sets of disjoint convex opens $(V_1, \ldots, V_l)$ and $(W_1, \ldots, W_l)$ 
are connected by a finite zigzag of inclusions of disjoint convex open subsets. 
We abbreviate 
$$\Delta^l = \Delta^l_{V_1 \sqcup \cdots \sqcup V_l \subset U}
,\quad
\Delta = \Delta^2.$$
The above discussion applied to the pairs $(V_1, V_2)$ and $(V_2, V_1)$ together with Lemma~\ref{lem-DT-factorization} (c) yields the commutativity of the following diagram
\[\begin{tikzcd}
	{\calA_{\calM,\pro}} & {\calA_{\calM,\pro} \widehat\otimes_{\Gamma} \calA_{\calM,\pro}} \\
	& {\calA_{\calM,\pro} \widehat\otimes_{\Gamma} \calA_{\calM,\pro}.}
	\arrow["{\Delta}", from=1-1, to=1-2]
	\arrow["{\Delta}"', from=1-1, to=2-2]
	\arrow["{\sigma}", from=1-2, to=2-2]
\end{tikzcd}\]
where $\sigma$ is the braiding isomorphism \eqref{eq-braiding-sign}. Likewise, by definition, we have the following commutative diagram
\[\begin{tikzcd}
	{\calA_{\calM,\pro}} & {\calA_{\calM,\pro} \widehat\otimes_{\Gamma} \calA_{\calM,\pro}} \\
	{\calA_{\calM,\pro} \widehat\otimes_{\Gamma} \calA_{\calM,\pro}} & {\calA_{\calM,\pro} \widehat\otimes_{\Gamma} \calA_{\calM,\pro} \widehat\otimes_{\Gamma} \calA_{\calM,\pro}.}
	\arrow["{\Delta}", from=1-1, to=1-2]
	\arrow["{\Delta}"', from=1-1, to=2-1]
	\arrow["{\Delta^3}"{description}, dashed, from=1-1, to=2-2]
	\arrow["{\Delta \widehat\otimes_{\Gamma} \id}", from=1-2, to=2-2]
	\arrow["{\id \widehat\otimes_{\Gamma} \Delta}"', from=2-1, to=2-2]
\end{tikzcd}\]
This shows that $\Delta$ equips $\calA_{\calM,\pro}$ with the structure of a coassociative, cocommutative coalgebra.

\medskip

\subsubsection{The bialgebra structure}
We next show that $\calA_{\calM,\pro}$ has a bialgebra structure.

\begin{proposition}\label{porp-prod-coprod-commutes}
	The comultiplication $\Delta$ and the multiplication $*$ defined in \eqref{eq-COHA-multi} and \eqref{eq-complete-square-multiplication} are compatible, i.e. we have 
	$$\Delta \circ (-*-)=  (\Delta(-)* \Delta(-))\colon  \calA_{\calM,\pro} \widehat{\otimes}_{\Gamma}' \calA_{\calM,\pro} \to 
	\calA_{\calM,\pro} \widehat{\otimes}_{\Gamma} \calA_{\calM,\pro},$$
	where $*$ on the right hand side stands for the multiplication in $\calA_{\calM,\pro} \widehat{\otimes} \calA_{\calM,\pro}$ and $\chi$ denotes the Euler form.
	
\end{proposition}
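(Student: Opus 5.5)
The compatibility follows from the factorization picture: eigenvalue localization of the endomorphism $\phi$ commutes with passing to extensions. Work throughout on $\widetilde\calM$ with the DT perverse sheaf, so both operations are sheaf-theoretic. The product $*$ comes from the integral isomorphism \eqref{eq-integral-identity} for the attractor correspondence $\widetilde\calM\times\widetilde\calM\leftarrow\Filt(\widetilde\calM)\to\widetilde\calM$. The coproduct $\Delta$ is restriction from $\widetilde\calM_{\C}$ to $\widetilde\calM_{V_1\sqcup V_2}\cong\widetilde\calM_{V_1}\times\widetilde\calM_{V_2}$ for disjoint convex $V_1,V_2$, followed by the inverse of the K\"unneth map of Lemma~\ref{lem-DT-factorization}(c).

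\textbf{Step 1 (geometry).} Describe the preimage of $\widetilde\calM_{V_1\sqcup V_2}$ in $\Filt(\widetilde\calM)$. A filtration $0\to(E',\phi')\to(E,\phi)\to(E'',\phi'')\to0$ with all eigenvalues of $\phi$ in $V_1\sqcup V_2$ has $\phi',\phi''$ with eigenvalues there too. By the generalized eigenspace decomposition used in Proposition~\ref{prop:nilp-equiv-def}, it splits canonically as a direct sum of a filtration with eigenvalues in $V_1$ and one with eigenvalues in $V_2$. This gives an isomorphism of analytic stacks
\[
\Filt(\widetilde\calM)_{V_1\sqcup V_2}\cong\Filt(\widetilde\calM)_{V_1}\times\Filt(\widetilde\calM)_{V_2},
\]
compatible with $\tilde\gr$ and $\tilde\ev$. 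Here the four copies of $\widetilde\calM$ in the source of $\tilde\gr$ are regrouped by a swap of the middle two factors. Restriction of pro-objects is compatible with the proper pushforward $\tilde\ev_!$ and with $\tilde\gr^*$ by base change along this open locus. Together with Proposition~\ref{prop:coprod prod well defined}, which makes the right-hand product well defined, this reduces the statement to an identity of maps of perverse sheaves on $\widetilde\calM_{V_1}^{\times 2}\times\widetilde\calM_{V_2}^{\times2}$.

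\textbf{Step 2 (sheaf identity).} Check that on this product the integral isomorphism for $\widetilde\calM_{V_1\sqcup V_2}$ equals the Thom--Sebastiani product of the integral isomorphisms for $\widetilde\calM_{V_1}$ and $\widetilde\calM_{V_2}$. Both are shifted Lagrangian classes, so this follows from multiplicativity and \'etale compatibility in Theorem~\ref{thm-shifted-Lag-class}. One applies them to the product of the two attractor correspondences, identified with the restricted attractor correspondence by the open, hence \'etale, embeddings of Step 1, in the spirit of Remark~\ref{rmk-etale-integral-identity}.

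\textbf{Step 3 (signs, the main obstacle).} The regrouping in Step 1 swaps the middle two factors, which introduces the braiding sign $(-1)^{\chi(\gamma_2',\gamma_1'')}$ of \eqref{eq-braiding-sign} on the $\Delta(-)*\Delta(-)$ side. The geometric side instead carries a comparison of orientations. The attractor orientation on $\Grad$ must be compared with the product of standard orientations; this uses \eqref{eq-loc-of-standard-ori-is-standard} and \eqref{eq-identify-orientation}. The swap of Thom--Sebastiani factors contributes the sign in \eqref{eq-TS-commutative}. I expect the cross term to appear as the filtration reversed relative to the ordering of the product, so that Lemma~\ref{lem-ori-differ-by-Euler-form} contributes $(-1)^{\vdim\Filt}$. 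The remaining $\vdim$ parity $\chi$ terms are even because $\vdim\calM$ is even. The hope is that the resulting signs match the braiding sign exactly, and I would carry out this bookkeeping carefully on each graded piece $(\gamma_1',\gamma_2')\times(\gamma_1'',\gamma_2'')$.
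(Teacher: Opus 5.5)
Your proposal is correct and follows essentially the same route as the paper: compatibility of the localized Hall product with restriction reduces the claim to a sheaf-level comparison on $\widetilde\calM_{U_1}^{\times 2}\times\widetilde\calM_{U_2}^{\times 2}$. The paper settles that comparison by multiplicativity and associativity of shifted Lagrangian classes together with Remark~\ref{rmk-etale-integral-identity}, which is the same mechanism as your \'etale-compatibility argument. The sign bookkeeping you defer in Step~3 is exactly the orientation diagram \eqref{eq-sta-ori-swap-localize}, already derived from Lemma~\ref{lem-ori-differ-by-Euler-form} in the proof of Lemma~\ref{lem-DT-factorization}(c), so you can cite it directly; since $\vdim\calM$ is even, $(-1)^{\vdim\Filt}$ is exactly the braiding sign $(-1)^{\chi}$.
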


\begin{proof}
	Using the same notation as in \S\ref{sec:int-identity}, we write 
	$$\widetilde\calM^{\mathrm{ex}} = \bigsqcup_{\gamma, \gamma' \in \Gamma} \widetilde{\Filt}_{\gamma, \gamma'}.$$
	We define an analytic open substack $\widetilde\calM^{\mathrm{ex}}_U \subset \widetilde\calM^{\mathrm{ex}}$ in a similar manner as 
	$\widetilde{\calM}_U$.
	The restriction of \eqref{attractor-explicit} yields the correspondence
	\[\widetilde\calM_{U} \times \widetilde\calM_{U} \xleftarrow[]{\wgr_U} 
	\widetilde\calM_{U}^{\mathrm{ex}} \xrightarrow[]{\wev_U} \widetilde\calM_{U}\]
	and the integral isomorphism \eqref{eq-integral-identity} restricts to an isomorphism
	\begin{equation}\label{eq-localized-integral-identity}
		\varphi_{\widetilde\calM_{U} } \boxtimes \varphi_{\widetilde\calM_{U} }   \cong (\wgr_U)_* (\wev_{U})^! \varphi_{\widetilde\calM_U}.    
	\end{equation}
	Taking the left adjoint and  the global sections, we obtain a localized version of the Hall multiplication in \eqref{mult0}
	\[*_{U} \colon \DT_{\calM,\pro}(U) \,\widehat\otimes_{\Gamma} \, \DT_{\calM,\pro}(U) \to \DT_{\calM,\pro}(U)\]
	where 
	$$\DT_{\calM,\pro}(U)=\bigoplus_{\gamma \in \Gamma} R\Gamma_{\pro}\left(\widetilde{\calM}_{U,\gamma}\,,\,\varphi_{\widetilde{\calM}_{U,\gamma}}\right) \in \Pro(\Vect)_{\Gamma}.$$
	By construction, this map is compatible with the restriction map.
	In particular, for any inclusion of disjoint open subsets $U_1 \sqcup U_2 \subset U$, the following diagram commutes
	\[\begin{tikzcd}
		{\DT_{\calM,\pro}(U) \,\widehat\otimes_{\Gamma}\, \DT_{\calM,\pro}(U)} &[30pt] {\DT_{\calM,\pro}(U)} \\
		{\DT_{\calM,\pro}(U_1 \sqcup U_2) \,\widehat\otimes_{\Gamma}\, \DT_{\calM,\pro}(U_1 \sqcup U_2) } & {\DT_{\calM,\pro}(U_1 \sqcup U_2).}
		\arrow["{*_{U}}", from=1-1, to=1-2]
		\arrow[from=1-1, to=2-1]
		\arrow[from=1-2, to=2-2]
		\arrow["{*_{U_1 \sqcup U_2}}", from=2-1, to=2-2]
	\end{tikzcd}\]
	Therefore it is enough to prove the commutativity of the following diagram
	\begin{equation}\label{eq-prod-coprod-absolute}
		\begin{tikzcd}
			{\substack{\displaystyle  \DT_{\calM,\pro}(U_1 \sqcup U_2) \\ \displaystyle  \widehat\otimes_{\Gamma}\, \DT_{\calM,\pro}(U_1 \sqcup U_2) }} &[-8pt]& {\DT_{\calM,\pro}(U_1 \sqcup U_2) } \\
			\begin{array}{c} \substack{\displaystyle \DT_{\calM,\pro}(U_1) \,\widehat\otimes_{\Gamma}\, \DT_{\calM,\pro}(U_2) \\ 
					\displaystyle \widehat\otimes_{\Gamma} \DT_{\calM,\pro}(U_1) \,\widehat\otimes_{\Gamma}\, \DT_{\calM,\pro}(U_2)} \end{array} & \begin{array}{c} \substack{\displaystyle \DT_{\calM,\pro}(U_1) \,\widehat\otimes_{\Gamma}\, \DT_{\calM,\pro}(U_1) \\ 
					\displaystyle \widehat\otimes_{\Gamma} \DT_{\calM,\pro}(U_2) \,\widehat\otimes_{\Gamma}\, \DT_{\calM,\pro}(U_2)} \end{array} & {\DT_{\calM,\pro}(U_1) \,\widehat\otimes_{\Gamma}\, \DT_{\calM,\pro}(U_2). }
			\arrow["{{*_{U_1 \sqcup U_2}}}", from=1-1, to=1-3]
			\arrow[from=2-1, to=1-1]
			\arrow["{\eqref{eq-braiding-sign}}"', from=2-1, to=2-2]
			\arrow["{*_{U_1} \widehat\otimes *_{U_2}}"', from=2-2, to=2-3]
			\arrow[from=2-3, to=1-3]
		\end{tikzcd}
	\end{equation}
	To see this, consider the following diagram
	\begin{equation}\label{eq-prod-coprod-relative}
		\begin{tikzcd}
			{\varphi_{\widetilde\calM_{U_1 \sqcup U_2}} \boxtimes \varphi_{\widetilde\calM_{U_1 \sqcup U_2}}} &[15pt]& {(\wgr_{U_1 \sqcup U_2})_* (\wev_{U_1 \sqcup U_2})^! \varphi_{\widetilde\calM_{U_1 \sqcup U_2}}} \\
			{\substack{\displaystyle (\varphi_{\widetilde\calM_{U_1}} \boxtimes \varphi_{\widetilde\calM_{U_2}}) \\ \displaystyle \boxtimes  (\varphi_{\widetilde\calM_{U_1}} \boxtimes \varphi_{\widetilde\calM_{U_2}}) }} & {\substack{\displaystyle (\id \times\sigma \times \id)^*  \Big((\varphi_{\widetilde\calM_{U_1}} \boxtimes \varphi_{\widetilde\calM_{U_1}}) \\ \displaystyle \boxtimes  (\varphi_{\widetilde\calM_{U_2}} \boxtimes \varphi_{\widetilde\calM_{U_2}} )\Big)}} & {\substack{ \displaystyle (\id \times\sigma \times \id)^*  \Big((\wgr_{U_1})_*( \wev_{U_1 })^! \varphi_{\widetilde\calM_{U_1 }}) \\ \displaystyle \boxtimes (\wgr_{U_2})_* (\wev_{U_2 })^! \varphi_{\widetilde\calM_{U_2}}\Big).}}
			\arrow["{\eqref{eq-localized-integral-identity}}", from=1-1, to=1-3]
			\arrow["\cong"', from=1-1, to=1-3]
			\arrow["\cong", from=2-1, to=1-1]
			\arrow["{(-1)^{\chi} \cdot \mathrm{swap}}"', from=2-1, to=2-2]
			\arrow[""{name=0, anchor=center, inner sep=0}, "\cong", "{{\eqref{eq-localized-integral-identity}}}"', from=2-2, to=2-3]
			\arrow["\cong"', from=2-3, to=1-3]
		\end{tikzcd}
	\end{equation}
	Here, the isomorphism $\sigma \colon \mathcal{M}_{U_1} \times \mathcal{M}_{U_2} \cong \mathcal{M}_{U_2} \times \mathcal{M}_{U_1}$ is the swapping isomorphism and also we use the identification of the oriented d-critical stacks 
	$$\widetilde\calM_{U_1} \times \widetilde\calM_{U_2} \cong \widetilde\calM_{U_1 \sqcup U_2}.$$
	By Remark \ref{rmk-etale-integral-identity}, the isomorphism 
	$\varphi_{\widetilde\calM_{U_1}} \boxtimes \varphi_{\widetilde\calM_{U_2}} \cong \varphi_{\widetilde\calM_{U_1 \sqcup U_2}}$ 
	is a restriction of the map \eqref{eq-localized-integral-identity} for $U = U_1 \sqcup U_2$ to 
	$$\widetilde{\calM}_{U_1} \times \widetilde{\calM}_{U_2} \hookrightarrow \widetilde{\calM}_{U_1 \sqcup U_2} \times \widetilde{\calM}_{U_1 \sqcup U_2}.$$
	Therefore the commutativity of the diagram \eqref{eq-prod-coprod-relative} follows from the associativity and the multiplicativity of the integral isomorphisms in Theorem~\ref{thm-shifted-Lag-class}
	 together with the commutativity of the diagram of orientations \eqref{eq-sta-ori-swap-localize}.
	Taking the left adjoint and the global sections, we conclude that the diagram \eqref{eq-prod-coprod-absolute} commutes.
\end{proof}

Taking the cohomology we may apply the above construction in two contexts: 
either we set $$(\calM\,,\,\Gamma)=(\calM_S\,,\,\H^{\ev}(S,\Z))$$ or, 
fixing a reduced polynomial $p \in \Q[x]$, we set $$(\calM\,,\,\Gamma)=(\calM^\ss_S(p)\,,\,\Gamma_p),$$
see \S\ref{Abelian_subcats_sec}. 
In the latter case, the set $\Gamma$ is a strictly convex cone in a lattice of finite rank. 
Let $\Delta$ and $\Delta^\ss$ denote these coproducts.

\begin{corollary}\label{cor:exists coproduct} Let $p$ be any reduced polynomial.\hfill
\begin{enumerate}[label=$\mathrm{(\alph*)}$,leftmargin=8mm,itemsep=2mm]
\item The coproduct $\Delta$ equips $\bfH_S$ with a graded cocommutative bialgebra structure.
\item The coproduct $\Delta^\ss$ equips $\bfH^{\ss}_S(p)$ with a graded cocommutative bialgebra structure.
\item The restriction morphism $\res^{p}\colon  \bfH_S(p) \to \bfH_S^{\ss}(p)$ is compatible with the 
coproduct, i.e., we have 
$$(\res^{p} \otimes \res^{p}) \circ \Delta=\Delta^{\ss}\circ \res^{p}.$$
\qed
\end{enumerate}	
\end{corollary}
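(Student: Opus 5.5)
The plan is to specialize the general construction of \S\ref{ssec-coprod} to the two pairs $(\calM_S,\H^{\ev}(S,\Z))$ and $(\calM^\ss_S(p),\Gamma_p)$, and then pass to cohomology.

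First I will check the hypotheses of \S\ref{sssec:setting} in both cases. Conditions \eqref{item-M-1-Artin}--\eqref{item-M-theta-reductive} were verified in the lemma on geometric properties of $\calM_S$ and $\calM_S^\ss$. The variant \ref{item-Gamma-theta-reductive} follows from properness of the flag (Quot) spaces for $\calM_S$, and from \cite[prop.~6.14]{alper2023existence} together with Lemma~\ref{lem:2CY semistable} for the semistable locus. The Euler form modulo $2$ descends to $\Gamma$ by Riemann--Roch, since $\chi(\calF,\calF')=\langle\nu(\calF),\nu(\calF')\rangle$.

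With the hypotheses in place, \S\ref{para-construction-coproduct} gives a coassociative, cocommutative coproduct on $\calA_{\calM,\pro}$ valued in $\widehat\otimes_\Gamma$. Proposition~\ref{porp-prod-coprod-commutes}, combined with the extended multiplication of Proposition~\ref{prop:coprod prod well defined}, gives compatibility with the CoHA product. Taking cohomology and applying the realization functor then yields (a). The Koszul and Euler-form signs are all even here: $\vdim\calM_S(\nu)$ is even, and $\chi(\gamma,\gamma)$ is even, so the braiding reduces to the ordinary graded one. Cohomology commutes with the completed tensor product because of the Künneth statement (Proposition~\ref{prop-kunneth}) and Mittag--Leffler finiteness (Corollary~\ref{cor-BM-limit}).

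For (b), the additional point is that the coproduct lands in the ordinary tensor product. Each $\calM^\ss_S(\nu)$ is quasi-compact with finite-dimensional Borel--Moore homology in each degree. Moreover, $\Gamma_p$ is a strictly convex cone, so any $\nu$ admits only finitely many decompositions $\nu=\nu_1+\nu_2$ in $\Gamma_p$. Hence $\widehat\otimes_{\Gamma_p}$ agrees with $\otimes$ in each degree, and the pro-structure is trivial.

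For (c), the open immersion $\calM^\ss_S(p)\subset\calM_S$ induces an open immersion $\widetilde\calM^\ss_U\subset\widetilde\calM_U$ for each $U\subset\C$. By \eqref{eq-etale-compatibility} the DT sheaves restrict to one another, and the Thom--Sebastiani identifications \eqref{eq-identify-orientation} are compatible with this restriction. The restriction maps therefore assemble into a morphism of presheaves $\DT_{\calM_S,\pro}\to\DT_{\calM^\ss_S(p),\pro}$ commuting with the maps \eqref{eq-DT-kunnet-map}. Since $\Delta$ is defined entirely through restrictions of these presheaves, $\res^p$ intertwines $\Delta$ and $\Delta^\ss$. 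This also requires that $\res^p$ be an algebra map, which holds by open base change as noted in \S\ref{sec:COHA-S}.

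The step I expect to be most delicate is (c), specifically its orientation compatibility. One must confirm that the standard orientations restrict correctly under the open immersion in a way compatible with \eqref{eq-identify-orientation}, so that no stray signs appear. The tool I would use is the uniqueness of orientations on attractor correspondences \cite[lem.~3.19]{KPS}.
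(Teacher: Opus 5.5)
Your proposal follows the paper's own argument. The corollary is the construction of \S\ref{ssec-coprod} (coassociativity and cocommutativity from the factorization structure of $\DT_{\calM,\pro}$, compatibility with $*$ from Proposition~\ref{porp-prod-coprod-commutes}) run for $(\calM_S,\H^{\ev}(S,\Z))$ and $(\calM^\ss_S(p),\Gamma_p)$. Strict convexity of $\Gamma_p$ gives an honest tensor product in (b), and (c) is, as the paper says, a direct consequence of the construction.

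One side remark is wrong, however: the braiding \eqref{eq-braiding-sign} does not reduce to the ordinary Koszul one. On the $(\gamma_1,\gamma_2)$-component it carries the sign $(-1)^{\chi(\gamma_1,\gamma_2)}$, and evenness of $\chi(\gamma,\gamma)$ says nothing about $\chi(\gamma_1,\gamma_2)$ for $\gamma_1\neq\gamma_2$. For rank-zero classes $\chi(\gamma_1,\gamma_2)=-\gamma_1^{(1)}\cdot\gamma_2^{(1)}$, which is odd, e.g.\ for a fibre and a section of an elliptic K3. The sign is genuinely produced by the construction. In Lemma~\ref{lem-DT-factorization}(c) it comes from Lemma~\ref{lem-ori-differ-by-Euler-form}, since $\vdim\Filt_{(\gamma_1,\gamma_2)}\equiv\chi(\gamma_1,\gamma_2)\pmod 2$. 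So in (a) and (b), cocommutativity and the product on the tensor square must be taken with respect to this twisted braiding, as in the paper; with the ordinary swap your claimed cocommutativity would fail on components with odd $\chi(\gamma_1,\gamma_2)$. What evenness of $\chi(\gamma,\gamma)$ does give you is that $\chi \bmod 2$ is alternating. Hence, if you want an ordinary super-bialgebra, you can untwist by a bilinear $\epsilon$ with $\epsilon(x,y)+\epsilon(y,x)\equiv\chi(x,y)$, at the cost of modifying $*$ and $\Delta$ by signs.

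Finally, the identity in (c) does not need $\res^{p}$ to be an algebra map. That fact matters only if you want $\res^{p}$ to be a morphism of bialgebras.
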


The compatibility of the coproduct with the restriction is a direct consequence of the construction.

\medskip

\subsection{Definition of the affinized BPS Lie algebra}

\subsubsection{The affinized BPS Lie algebras}
\begin{definition} Let $\nu \in \K_0(S)^+$ be a Mukai vector. 
We define ${\g}_S(\nu)$ to be the subspace of primitive elements in $\bfH_S(\nu)$, 
i.e., the subspace of elements satisfying $\Delta(x)=x \otimes 1 + 1 \otimes x$.  
The  sum ${\g}_S= \bigoplus_\nu {\g}_S(\nu)$ is a $\K_0(S)^+$-graded Lie algebra. 
We call it the affinized BPS Lie algebra.
\end{definition}

The group $\Pic(S)$ acts on $\bfH_S$ by bialgebra automorphisms. In particular, it restricts to an automorphism of $\g_S$. 
Fix a reduced Hilbert polynomial $p$ and consider the cone $\Gamma_p$. 

\begin{definition} For any $\nu \in \Gamma_p$,
let $\g_S^\ss(\nu)$ be the space of primitive elements in $\bfH_S^{\ss}(\nu)$, i.e., 
the elements satisfying $\Delta^{\ss}(x)=x \otimes 1 + 1 \otimes x$. 
The semistable affinized BPS Lie algebra associated to the polynomial $p$ is the sum
$\g_S^{\ss}(p)= \bigoplus_{\nu \in \Gamma_p} {\g}^{\ss}_S(\nu)$.
It is a $\Z \times\Gamma_p$-graded Lie algebra. The first grading is the cohomological degree.
\end{definition}

We say that a Mukai vector $\nu \in \Gamma_p$ is primitive with respect to the polarization $H$ if it cannot be written as a nontrivial sum 
$\sigma+\sigma'$ with $\sigma,\sigma' \in \Gamma_p$. 
When $H$ is not generic with respect to $\nu$, this condition is stronger than the usual notion of indivisibility.

Recall that $\bfH_S^{\ss}(p)$ has finite dimensional $\Z\times\Gamma_p$-graded pieces. 
The Milnor--Moore theorem thus yields a graded algebra isomorphism
\begin{align}\label{isomss}\U(\g_S^{\ss}(p)) \cong \bfH_S^{\ss}(p).\end{align}
In particular, we have $\frakg_S^\ss(\nu)=\bfH^\ss_S(\nu)$ for any Mukai vector 
$\nu \in \Gamma_p$ which is primitive with respect to $H$.

\medskip

\subsubsection{Comparison of $\frakg$, $\frakg^\ss$ and the BPS sheaf}

For any reduced polynomial $p$, set
$$\g_S(p)=\bigoplus_{\nu \in \Gamma_p}{\g}_S(\nu).$$
It is  a $\Z \times \Gamma_p$-graded Lie algebra. 
The following theorem is proved in \S\ref{sec:proofthmss}.

\begin{theorem}\label{thm:restriction-isom} Fix a reduced polynomial $p$.\hfill
\begin{enumerate}[label=$\mathrm{(\alph*)}$,leftmargin=8mm,itemsep=2mm]
\item
For any $\nu$ we have $\H^*_{\BPS}(M_S(\nu),\Q)\otimes \H^*(\BGM,\Q) = \frakg_S^\ss(\nu)$.
\item 
The restriction to the open subset $\calM_S^{\ss}(p)$ of $\calM_S(p)$ 
induces a Lie algebra isomorphism
$\mathrm{res}^{p}\colon {\g}_S(p) \to \g_S^{\ss}(p).$
\item
The canonical morphism $\U({\g}_S) \to \bfH_S$ is a dense embedding.
\end{enumerate}
\end{theorem}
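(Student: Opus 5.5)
The argument would go in the order (a), (b), (c). Part (a) reduces to comparing two descriptions of $\bfH^\ss_S(p)$. By Corollary~\ref{cor:absolute-integrality}, applied to $\calM=\calM^\ss_S(p)$, $\bfH^\ss_S(p)$ is isomorphic to $\Sym_+$ of $V=\bigoplus_\nu \H^*_\BPS(M_S(\nu),\Q)\otimes\H^*(\BGM,\Q)$, with suitable shifts. By \eqref{isomss}, $\bfH^\ss_S(p)\cong\U(\g^\ss_S(p))$, and by PBW this is $\Sym(\g^\ss_S(p))$ as graded vector spaces. Both graded pieces are finite dimensional and the cone $\Gamma_p$ is strictly convex, so matching characters by induction on $\nu$ gives equality of graded dimensions of $V(\nu)$ and $\g^\ss_S(\nu)$. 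To upgrade this to an actual identification inside $\bfH^\ss_S(\nu)$, I would show that the integrality map $V\to\bfH^\ss_S(p)$ lands in primitives. The coproduct is constructed from restriction to $\widetilde\calM_{U_1}\times\widetilde\calM_{U_2}$ with $U_1,U_2$ disjoint. Corollary~\ref{cor:snilp-BPS} identifies the BPS part with the image of the semi-nilpotent microlocal homology $\H^{\Lambda_\snil}$, and this consists of classes supported where $\phi$ has a single eigenvalue. Such a class restricts to zero on $\widetilde\calM_{U_1}\times\widetilde\calM_{U_2}$ unless one factor has class $0$. Choosing $U_1,U_2$ convex and translating via the $\A^1$-equivariance of Proposition~\ref{prop:equivariance}, this would give $\Delta(x)=x\otimes1+1\otimes x$. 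Equality of dimensions then yields (a).

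For (b), the restriction $\res^p$ is a bialgebra morphism by Corollary~\ref{cor:exists coproduct}(c), so it maps primitives to primitives, and it is surjective on $\bfH$ by Kirwan surjectivity (Corollary~\ref{cor:Kirwan-surj}). Injectivity on $\g_S(p)$ should come from the HN decomposition of Corollary~\ref{cor:Kirwan-surj}(b), refined as Proposition~\ref{prop:rho-phi}: every stratum with $\ell>1$ is built as a Hall product $\bfH^\ss(\nu_1)*\cdots*\bfH^\ss(\nu_\ell)$ of pieces of smaller slope-ordered classes. Using a splitting compatible with the Hall product, I would show that $\bfH_S(p)$ is the (completed) Hall product of $\bfH^\ss_S(p)$ with contributions of HN types having at least one factor outside $\Gamma_p$. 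One then checks that the primitive part of such a product, in degrees in $\Gamma_p$, is concentrated in the semistable piece. The check would compute $\Delta$ on a product with $x_i$ primitive via the bialgebra compatibility of Proposition~\ref{porp-prod-coprod-commutes}, and observe that the terms with distinct supports cannot cancel.

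Surjectivity onto primitives is the \textbf{main obstacle}: we need a primitive lift of each $y\in\g^\ss_S(\nu)$. By (a), $y$ comes from BPS cohomology, and I would lift it using the relative microlocal description. Namely, consider the image of $\kappa_*(\rho_X)_*i_!i^!\varphi$ on the full stack $\calM_X(\nu)$ (Proposition~\ref{prop:coker}), where the unstable strata map to zero. These lifts are semi-nilpotent, hence primitive by the same support argument as in (a).

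For (c), the same support argument and the Milnor--Moore theorem, applied on each quasi-compact truncation $\calM^+_S(\leqslant\!\uunu)$, would give injectivity of $\U(\g_S)\to\bfH_S$ once one checks that the filtered pieces are compatible with the coproduct. Density then follows because $\bfH_S$ is the inverse limit of its truncations (Corollary~\ref{cor-BM-limit}), and each truncation is spanned by products of semistable pieces for the various $p$, which by (b) lie in the image of $\U(\g_S)$.
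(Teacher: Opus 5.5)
Your arguments for (a) and for the surjectivity of $\res^p$ on primitives are the paper's. Semi-nilpotent classes are primitive because $\widetilde\calM_{U_1\sqcup U_2}\cap\Lambda_{\snil}$ splits as a disjoint union (Lemma~\ref{lem-snilp-prim-include}). The dimension count gives $\bfH^\ss_S(\nu)_{\snilp}=\frakg^\ss_S(\nu)$ (Lemma~\ref{lem-kappa-ss-isom}). Semi-nilpotent classes then lift to the whole stack. For arbitrary $p$, the lifting input should be Kirwan surjectivity for nilpotent microlocal homology, Corollary~\ref{cor:Kirwan-surj}(a), since Proposition~\ref{prop:coker} is only stated for one-dimensional $\nu$.

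The genuine gap is the injectivity of $\res^p$ on $\frakg_S(p)$. Your claim that ``the primitive part of such a product, in degrees in $\Gamma_p$, is concentrated in the semistable piece'' is the entire content of that step, and ``terms with distinct supports cannot cancel'' does not prove it, for three reasons. First, a Hall product of lifts of classes on $\calM^\ss_S(\nu_1)\times\cdots\times\calM^\ss_S(\nu_\ell)$ is not supported on the stratum $\calM^+_S(\uunu)$. It is only supported on the closed set $\calM^+_S(\succcurlyeq\uunu)$ (Lemma~\ref{lem:ev and HN order}), so products attached to different HN types overlap. Second, the splittings provided by purity (Corollary~\ref{cor:Kirwan-surj}) are only linear, and you do not say where a Hall-compatible splitting would come from. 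Third, computing $\Delta$ on products of primitive factors presupposes primitive lifts for every $p'$, which you only construct afterwards.

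What is missing is a mechanism by which the coproduct detects the lowest HN stratum on which a primitive class vanishing on $\calM^\ss_S(\nu)$ is nonzero. The paper obtains it in Lemma~\ref{lem-r-prim-inj} by localizing the factorization presheaf to HN strata. Purity (Theorem~\ref{thm-transition-surjective} and the injectivity of \eqref{eq-strata-product}) gives the left exact sequence \eqref{eq-primitive-left-exact} of primitive parts. On the union $\calM^+_S(\underline p)$ of strata of a fixed tuple of distinct reduced polynomials, the integral isomorphism identifies Borel--Moore homology, as a coalgebra, with $\U(\bigoplus_j\frakg^\ss_S(p_j))$. This has no primitives in multidegrees with $\ell\geqslant 2$ nonzero parts.

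Your global route can be completed, but in the reverse order. First take semi-nilpotent primitive lifts of all $\frakg^\ss_S(p')$. Then show that sending a monomial in these lifts to its HN-ordered Hall product is two things:
\begin{enumerate}
\item a coalgebra morphism from the completed symmetric coalgebra, by Proposition~\ref{porp-prod-coprod-commutes};
\item a topological isomorphism, by triangularity from Lemma~\ref{lem:ev and HN order}, PBW in each $\bfH^\ss_S(p')$, and Theorem~\ref{thm-transition-surjective}.
\end{enumerate}
The primitives of a symmetric coalgebra in characteristic zero are its degree-one part, so this gives injectivity and (c) simultaneously.

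As written, your argument for (c) also fails. The truncations $\calM^+_S(\leqslant\uunu)$ lie in a single $\calM_S(\nu)$ and are stable under neither the Hall product nor $\Delta$, so Milnor--Moore cannot be applied to them. Injectivity of $\U(\frakg_S)\to\bfH_S$ instead needs the PBW decomposition $\U(\frakg_S)\cong\vec{\bigotimes}_{(d,p)}\U(\frakg^d_S(p))$, together with detection of the minimal HN type via $(\res^\ss)^{\otimes s}\circ\Delta^{s-1}$ and Lemma~\ref{lem:ev and HN order}, as in the paper. Your density argument is fine.
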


\begin{remark}
Theorem~\ref{thm:restriction-isom} may be interpreted as saying that given an element $x\in \bfH_S^\ss(\nu)$ 
which is primitive for $\Delta^{\ss}$ there exists a unique way to extend $x$ to a class in $\bfH_S(\nu)$ which is primitive.
By \eqref{isomss}, this provides a canonical section of the restriction morphism $\mathrm{res}^{p}$.
\end{remark}

\begin{remark}
Parts (a) and (b) of Theorem~\ref{thm:restriction-isom} imply that the BPS cohomology is independent of the polarization $H'$.  Using Proposition \ref{prop:coker}, the same argument implies that $\H^*_{\BPS}(M_S(\nu),\Q)$, along with its perverse filtration induced by the support morphism $\pi_S$, is independent of polarization.
\end{remark}

\medskip

\subsection{Proof of Theorem~\ref{thm:restriction-isom}, part I}

In this section we prove simultaneously parts (a) and (b). Let us explain the strategy of the proof.
Let $\bfH_S(\nu)_\snilp$ be the image of the map 
$\H^{\Lambda_{\snilp}}_{*}(\calM_S(\nu),\Q) \to \bfH_S(\nu)$.
In a similar manner we define the subspace
$\bfH_S^\ss(\nu)_{\snilp}$ of $\bfH_S^\ss(\nu)$.
We first prove the following.

\begin{lemma}\label{lem-snilp-prim-include}
We have the inclusions
$\bfH_S(\nu)_{\snilp} \subseteq \frakg_S(\nu)$
and
$\bfH_S^\ss(\nu)_{\snilp} \subseteq  \frakg^\ss_S(\nu).$
\end{lemma}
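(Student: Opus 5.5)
We argue microlocally, working in the 3D picture and using the factorization description of $\Delta$ from \S\ref{para-construction-coproduct}. Under dimensional reduction, $\bfH_S(\nu)$ is identified with $\DT_{\calM,\pro}(\C)_\nu$. An element of $\bfH_S(\nu)_\snilp$ is the image of a class supported on $\Lambda_\snil\cap\widetilde\calM_\nu$, that is, on pairs $(\calF,\phi)$ where $\phi$ has a single eigenvalue $c\in\C$. By definition, $\Delta$ is the restriction map $\DT_{\calM,\pro}(U)\to\DT_{\calM,\pro}(V_1\sqcup V_2)$ for two disjoint convex opens $V_1,V_2$ inside a convex $U$, followed by the factorization isomorphism of Lemma~\ref{lem-DT-factorization}(c). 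Here $\widetilde\calM_{V_1\sqcup V_2}\cong\widetilde\calM_{V_1}\times\widetilde\calM_{V_2}$, and the $(\gamma_1,\gamma_2)$ component of $\Delta$ records the part of the spectrum lying in $V_1$ (with class $\gamma_1$) and in $V_2$ (with class $\gamma_2$).

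The key step concerns a class $x$ supported on $\Lambda_\snil$. Its restriction to $\widetilde\calM_{V_1}\times\widetilde\calM_{V_2}$ is supported on $(\Lambda_\snil\cap\widetilde\calM_{\gamma_1,V_1})\times(\Lambda_\snil\cap\widetilde\calM_{\gamma_2,V_2})$, and this set is empty unless $\gamma_1=0$ or $\gamma_2=0$, since a point of $\Lambda_\snil$ has only one eigenvalue. Consequently every mixed component $\Delta(x)_{\gamma_1,\gamma_2}$ with $\gamma_1,\gamma_2\neq0$ factors through the cohomology of the pullback of $\varphi$ to an empty closed set, and therefore vanishes. For the components with $\gamma_1=0$ or $\gamma_2=0$ we use that $\widetilde\calM_0$ is a point; by counitality these components are $1\otimes x$ and $x\otimes1$. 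The cleanest way to organize the argument is to factor $\Delta$ restricted to $\H^{\Lambda_\snil}_*$ through a coproduct on microlocal homology, defined by the same restriction maps applied to $\varphi|^!_{\Lambda_\snil}$, and then invoke naturality of restriction with respect to the inclusion of supports \eqref{eq:cone-BM1}. Since the definition of $\Delta$ is independent of the choice of $V_1,V_2$, no issue arises from where $c$ lies. Alternatively, one can take the convex set $U$ to be a small disc around $c$ and choose $V_1,V_2$ disjoint inside it.

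The same argument applies word for word to $\calM_S^\ss(p)$ with $\Gamma=\Gamma_p$, which gives the second inclusion. The main obstacle is making the support argument rigorous at the pro-level: the map \eqref{eq-DT-kunnet-map} is compatible with the maps $\H^{\Lambda}\to\H^\BM$, and the Thom--Sebastiani identification respects $!$-restriction to products of closed subsets. I expect this to follow from base change for $!$-restriction along open immersions, together with the fact that $\Lambda_\snil\cap\widetilde\calM_{V_1\sqcup V_2}$ is closed in $\widetilde\calM_{V_1\sqcup V_2}$. These checks must be carried out on each quasi-compact open piece and then passed to the limit.
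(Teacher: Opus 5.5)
Your proposal is correct and is essentially the paper's proof. The paper introduces the presheaf $U\mapsto R\Gamma_\pro(\widetilde\calM_U\cap\Lambda^\snilp,\varphi_{\widetilde\calM}|^!_{\widetilde\calM_U\cap\Lambda^\snilp})$, notes that in each nonzero class $\widetilde\calM_{U_1\sqcup U_2}\cap\Lambda^\snilp$ is the disjoint union of the piece with all eigenvalues in $U_1$ and the piece with all eigenvalues in $U_2$, and deduces primitivity from the naturality square comparing restrictions on $\DT^\snilp_{\calM,\pro}$ and $\DT_{\calM,\pro}$. One slip: for $\gamma_1,\gamma_2\neq0$ the empty set is $\Lambda_\snil\cap(\widetilde\calM_{\gamma_1,V_1}\times\widetilde\calM_{\gamma_2,V_2})$, not the product of the two semi-nilpotent loci, which contains pairs with distinct eigenvalues $c_1\in V_1$, $c_2\in V_2$; your stated reason (a single eigenvalue) does apply to the correct set.
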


\begin{proof}
We only prove the statement for $\calM_{S}$ since the semistable case is proved in an analogous manner.
Define a presheaf $\DT_{\calM,\pro}^\snilp$ on the complex plane $\mathbb{C}$ such that
\[\DT_{\calM,\pro}^\snilp(U)= R  \Gamma_\pro( \widetilde\calM_{U} \cap \Lambda^{\snilp}\,,\,\varphi_{\widetilde\calM} |^!_{\widetilde\calM_{U}  \cap \Lambda^{\snilp}}) \in \Pro(\mathrm{Vect})_{\Gamma}.\] 
We let $\DT_{\calM}^\snilp(U)$ be the realization of $\DT_{\calM,\pro}^\snilp(U)$.
There is a natural map of presheaves 
\begin{align}\label{DTsnilp}\DT_{\calM,\pro}^\snilp \to \DT_{\calM,\pro}\end{align}
where the right hand side is as in \eqref{DTpro}.
Let $U_1, U_2 \subset \mathbb{C}$ be disjoint convex open subsets.
We have 
\[
\widetilde\calM_{U_1 \sqcup U_2 } \cap \Lambda^{\snilp}  = 
(\widetilde\calM_{U_1} \cap \Lambda^{\snilp}) \sqcup  (\widetilde\calM_{U_2} \cap \Lambda^{\snilp}).
\]
In particular, we have a natural isomorphism
\[
\DT_{\calM,\pro}^\snilp(U_1 \sqcup U_2) \cong  \DT_{\calM,\pro}^\snilp(U_1)    \oplus  \DT_{\calM,\pro}^\snilp(U_2).
\]
Let $U_1 \sqcup U_2 \hookrightarrow U$ be an open inclusion to a convex open subset. Consider the following diagram
\[\begin{tikzcd}
{\DT_{\calM,\pro}^\snilp(U) } & {\DT_{\calM,\pro}(U) } \\
{\DT_{\calM,\pro}^\snilp(U_1 \sqcup U_2) } & {\DT_{\calM,\pro}(U_1 \sqcup U_2) } \\
{\DT_{\calM,\pro}^\snilp(U_1 ) \oplus \DT_{\calM,\pro}^\snilp( U_2)  } & {\DT_{\calM,\pro}(U_1 ) \,\widehat\otimes\, \DT_{\calM,\pro}( U_2).  }
\arrow[from=1-1, to=1-2]
\arrow[from=1-1, to=2-1]
\arrow[from=1-2, to=2-2]
\arrow[from=2-1, to=2-2]
\arrow[from=2-1, to=3-1]
\arrow[from=2-2, to=3-2]
\arrow[from=3-1, to=3-2]
\end{tikzcd}\]
The commutativity of this diagram shows that the image of the map 
\eqref{DTsnilp} consists of primitive elements.
The lemma follows.
\end{proof}

Next, we consider the following commutative diagram
\begin{equation}\label{diag-res}
\begin{tikzcd}
		{ \bfH_S(\nu)_{\snilp} } & {\frakg_S(\nu) } \\
		{ \bfH_S^{\ss}(\nu)_{\snilp} } & { \frakg_S^\ss(\nu).}
		\arrow["\kappa", hook, from=1-1, to=1-2]
		\arrow["{\res_\snil^\ss}", from=1-1, to=2-1]
		\arrow["{\res_\prim^\ss}", from=1-2, to=2-2]
		\arrow["{\kappa^{\mathrm{ss}}}", hook, from=2-1, to=2-2]
\end{tikzcd}
\end{equation}
Here $\res_\snil^\ss$ and $\res_\prim^\ss$ are the restrictions to the semistable locus.
We have 
\begin{enumerate}[label=$\mathrm{(\alph*)}$,leftmargin=8mm,itemsep=2mm]
\item $\res_\snil^\ss$ is surjective by Corollary \ref{cor:Kirwan-surj},
\item $\mathrm{\kappa}^{\mathrm{ss}}$ is an isomorphism by Lemma \ref{lem-kappa-ss-isom} below,
\item $\res_\prim^\ss$ is injective by Lemma \ref{lem-r-prim-inj}.
\end{enumerate}
Combining these three statements, we conclude that all morphisms in the above diagram are isomorphisms.
Now, let us prove the steps (b) and (c).

\begin{lemma}\label{lem-kappa-ss-isom}
The map $\kappa^{\mathrm{ss}}$ is an isomorphism $\bfH_S^{\ss}(\nu)_{\snilp} \cong  \frakg^\ss_S(\nu)$.
\end{lemma}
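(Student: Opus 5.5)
The plan is to compare both sides with the BPS cohomology through cohomological integrality. First, by Corollary~\ref{cor:snilp-BPS}, applied to the semistable stack $\calM=\calM^\ss_S(p)$ (which satisfies all the conditions of \S\ref{para-assumptions-M}, and for which good moduli spaces exist), the subspace $\bfH_S^\ss(\nu)_\snilp$ is identified with
\[
\H^*_{\BPS}(\wM_\nu,\Q)\otimes \H^*(\BGM,\Q)_{\vir}
\]
up to the shift $\mathbb{L}^{-\frac12\vdim\calM_\nu}$. Moreover, under the integrality isomorphism \eqref{eq:absolute-integrality}, it is exactly the summand of $\Sym_+$ of length one that sits in degree $\nu$. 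Injectivity of $\kappa^\ss$ is already known from Lemma~\ref{lem-snilp-prim-include}. It therefore remains to show that the primitive part $\frakg^\ss_S(\nu)$ is no bigger than this length-one summand.

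For this I would compare graded dimensions. Since $\bfH^\ss_S(p)$ has finite-dimensional $\Z\times\Gamma_p$-graded pieces and $\Gamma_p$ is a strictly convex cone, the Milnor--Moore isomorphism \eqref{isomss} gives a PBW isomorphism $\Sym(\frakg^\ss_S(p))\cong\bfH^\ss_S(p)$ of graded vector spaces. On the other hand, \eqref{eq:absolute-integrality} gives $\Sym_+\big(\bigoplus_\nu \bfH_S^\ss(\nu)_\snilp\big)\cong\bfH^\ss_S(p)$, again with finite-dimensional graded pieces. I would argue by induction on $\nu$ in the cone $\Gamma_p$, which is possible since each element has only finitely many decompositions. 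Suppose $\frakg^\ss_S(\nu')=\bfH^\ss_S(\nu')_\snilp$ for all $\nu'<\nu$. Then the two symmetric algebras agree in every degree $\nu$ except for their length-one contributions, and equality of the total graded dimensions in degree $\nu$ forces $\dim\frakg^\ss_S(\nu)_n=\dim\bfH^\ss_S(\nu)_{\snilp,n}$ for each cohomological degree $n$. Combined with the inclusion $\kappa^\ss$, this gives equality.

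One subtlety is the sign convention. The braiding \eqref{eq-braiding-sign} carries the sign $(-1)^{\chi}$, so $\Sym$ has to be taken in this super-symmetric sense, and I need to check that the cohomological integrality symmetrization uses the same twisted symmetric structure. In the 2CY case $\chi(\gamma_1,\gamma_2)$ is even and the virtual dimensions are even, so the signs should match once the parities are aligned with the $\mathbb{L}^{1/2}$ twists. I expect this compatibility of parities and signs to be the main obstacle, together with confirming that the grading shifts in \eqref{eq:absolute-integrality} match those in the definition of $\bfH^\ss_S$. The dimension count itself is then formal generating-function bookkeeping.
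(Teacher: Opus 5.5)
Your proposal is correct and follows the paper's proof essentially verbatim. Both arguments compare $\bfH^\ss_S(p)$ with two symmetric algebras: $\Sym$ of $\bfH^\ss_S(p)_{\snilp}$, via Corollaries~\ref{cor:absolute-integrality} and~\ref{cor:snilp-BPS}, and $\Sym$ of $\frakg^\ss_S(p)$, via Milnor--Moore and PBW. Your induction over the strictly convex cone $\Gamma_p$ just spells out why equal graded dimensions of the two $\Sym$'s force equal graded dimensions of the generators, and combined with the inclusion from Lemma~\ref{lem-snilp-prim-include} this gives the isomorphism.

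One correction to your sign remark: $\chi(\gamma_1,\gamma_2)$ need not be even. For one-dimensional classes it equals $-\alpha_1\cdot\alpha_2$, which can be odd. This does no harm, because graded dimensions of $\Sym$ depend only on the self-braiding signs $(-1)^{\chi(\nu,\nu)}$, and these are trivial since the Mukai pairing is even.
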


\begin{proof}
Let $p=p(\nu)$ so that $\nu \in \Gamma_p$.
Since $\H^{\BM}_i(\calM_S^\ss(\nu),\Q)$ is finite dimensional for each $i$, it is enough to prove the equality
\begin{equation}\label{eq-dim-equal}
\dim \H^{\BM}_i(\calM_S^\ss(\nu),\Q)_{\snilp} = \dim \H^{\BM}_i(\calM_S^\ss(\nu),\Q) \cap \frakg^\ss_S(\nu)
\end{equation}
for any $i \in \Z$.
To see this, we consider the symmetric product operation of $\mathbb{Z} \times \Gamma_p$-graded $\Q$-vector spaces
\[
\Sym \colon \Vect_{\mathbb{Q}}^{\mathbb{Z} \times \Gamma_p} \to \Vect_{\mathbb{Q}}^{\mathbb{Z} \times \Gamma_p}. 
\]
Here we use the Koszul sign rule for the first grading and no sign intervention for the second one.
The sum
\[
\bfH_S^{\ss}(p)_{\snilp}=\bigoplus_{\nu\in\Gamma_p} \bfH_S^\ss(\nu)_{\snilp}
\]
is an object in $\Vect_{\mathbb{Q}}^{\mathbb{Z} \times \Gamma_p}$.
Here the first grading is the cohomological one and the second grading is the grading by the Mukai vector $\nu$.
The equality \eqref{eq-dim-equal} is equivalent to
\[
\dim\, \Sym\, \bfH_S^{\ss}(p)_{\snilp} = 
	\dim\, \Sym \,\frakg^{\ss}_S(p)
	\]
where $\dim\,$ is the $\mathbb{Z} \times \Gamma_p$-graded dimension. 
Corollaries \ref{cor:absolute-integrality} and \ref{cor:snilp-BPS} imply that
\[\dim\, \Sym\, \bfH_S^{\ss}(p)_{\snilp} = 
\dim\,  \bfH_S^{\ss}(p).\]
The Milnor--Moore theorem and the PBW theorem together imply
$$
\dim\, \Sym \; \frakg^{\ss}_S(p) = \dim\,  \bfH_S^{\ss}(p).$$
The lemma is proved.
\end{proof}

\begin{lemma}\label{lem-r-prim-inj}
The map $\res_\prim^\ss \colon  \frakg_S(\nu) \to \frakg^\ss_S(\nu)$ is injective.
\end{lemma}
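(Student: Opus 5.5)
The plan is to exploit the Kirwan decomposition of $\bfH_S(\nu)$ along the HN stratification (Corollary~\ref{cor:Kirwan-surj}(b)) together with the compatibility of the coproduct with the CoHA multiplication. Let $x\in\frakg_S(\nu)$ with $\res^\ss(x)=0$; we want $x=0$. Take $\uunu$ minimal in the refined total order on $\HN(\nu)$ such that $x$ restricts nontrivially to $\calM_S^+(\leqslant\!\uunu)$. Since $\res^\ss(x)=0$ we have $\uunu\neq(\nu)$, so $\ell(\uunu)\geqslant 2$. By Theorem~\ref{thm-transition-surjective} and the localization sequence, the restriction of $x$ to $\calM_S^+(\leqslant\!\uunu)$ comes from a class supported on the closed stratum $\calM_S^+(\uunu)$, which by Proposition~\ref{prop: purity-ss2}(b) is identified, via $\gr_\uunu$ pullback and $\ev_\uunu$ pushforward, with a class $y\in\bfH^\ss_S(\nu_1)\otimes\cdots\otimes\bfH^\ss_S(\nu_\ell)$. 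In other words, modulo classes supported on strata $>\uunu$, $x$ agrees with the CoHA product $y_1*\cdots*y_\ell$ (a sum of such products).

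Next I would compute the component of $\Delta(x)$ in bidegree $(\nu_1+\dots+\nu_{k},\nu_{k+1}+\dots+\nu_\ell)$ for a nontrivial split, restricted to the open piece $\calM_S^+(\leqslant\!\uunu')\times\calM_S^+(\leqslant\!\uunu'')$ with $\uunu'=(\nu_1,\dots,\nu_k)$, $\uunu''=(\nu_{k+1},\dots,\nu_\ell)$. Using Proposition~\ref{porp-prod-coprod-commutes}, $\Delta(y_1*\cdots*y_\ell)=\prod_i\Delta(y_i)$, and the term $(y_1*\cdots*y_k)\otimes(y_{k+1}*\cdots*y_\ell)$ appears. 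The key geometric input is that the factorization coproduct is local over the Chow/good moduli base and, restricted to HN strata, the classes coming from strata strictly larger than $\uunu$ cannot contribute to this component on the chosen open piece (their summands land in strata of the factors that are larger than $\uunu'$ or $\uunu''$ by additivity of the HN polygons under direct sum). Thus this component of $\Delta(x)$ is nonzero, contradicting primitivity of $x$, unless $y=0$.

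The main obstacle is precisely this support/triangularity claim: showing that under $\Delta$ the HN filtration on $\bfH_S(\nu)$ maps into the tensor product of HN filtrations in a way that is ``upper triangular'' with leading term the split of the product. I would try to establish it sheaf-theoretically via the $\widetilde\calM_U$ description: on the open locus where eigenvalues lie in $U_1\sqcup U_2$, the product decomposition $\widetilde\calM_{U_1}\times\widetilde\calM_{U_2}$ is compatible with HN strata (a direct sum has HN type the merge of the HN types), and the integral isomorphism \eqref{eq-integrality-HN-strata} is compatible with Thom--Sebastiani by Theorem~\ref{thm-shifted-Lag-class}(c), so the graded pieces of $\Delta$ can be read off stratum by stratum. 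Purity (Theorem~\ref{thm-purity}) ensures all the relevant splittings are canonical enough for this comparison to be made on associated gradeds, which suffices for injectivity.
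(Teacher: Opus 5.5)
Your skeleton is viable: take the first stratum $\uunu=(\nu_1,\ldots,\nu_\ell)$ with $\ell\geqslant 2$ on which $x$ is nonzero, and show that its leading term $y$ is incompatible with $\Delta(x)=x\otimes 1+1\otimes x$. This is a genuinely different route from the paper's. However, the decisive second paragraph has two holes.

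First, the open set $\calM^+_S(\leqslant\!\uunu')\times\calM^+_S(\leqslant\!\uunu'')$ is the wrong one. The total order on $\HN(\nu)$ is an arbitrary refinement of $\preccurlyeq$ (Lemma~\ref{lem:appC}), chosen separately for each weight. So $\uunu_1\leqslant\uunu'$ and $\uunu_2\leqslant\uunu''$ need not imply that the merged type is $\leqslant\uunu$. The image of your open set under $\oplus$ may therefore meet strata $>\uunu$, where $x-\sum y_1*\cdots*y_\ell$ need not vanish.

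Second, you never control the other terms of $\prod_i\Delta(y_i)$. The $y_i$ are arbitrary lifts to $\bfH_S(\nu_i)$, so $\Delta(y_i)$ has components in arbitrary bidegrees $(a_i,b_i)$. Your support argument concerns only $x-\sum y_1*\cdots*y_\ell$, and nothing yet prevents these cross terms from cancelling $(y_1*\cdots*y_k)\otimes(y_{k+1}*\cdots*y_\ell)$.

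Both problems disappear if you argue as in the proof of Theorem~\ref{thm:restriction-isom}(c): apply $(\res^\ss)^{\otimes\ell}\circ\Delta^{\ell-1}$ and take the component of multidegree $(\nu_1,\ldots,\nu_\ell)$.
\begin{itemize}
\item This kills $x$, since $x$ is primitive and $\ell\geqslant 2$.
\item It kills $x-\sum y_1*\cdots*y_\ell$, because $\oplus(\prod_i\calM^\ss_S(\nu_i))\subseteq\calM^+_S(\uunu)$. No compatibility of orders is needed.
\item On $\prod_j\Delta^{\ell-1}(y_j)$ only the diagonal term survives. Suppose the term in which slot $m$ receives a Hall product of degrees $(a_{1m},\ldots,a_{\ell m})$ is nonzero on $\prod_m\calM^\ss_S(\nu_m)$. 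Then you can pick semistable $F_m$ of class $\nu_m$ carrying such filtrations. The direct sum of these filtrations is a point of $\Filt_\uunu$ lying over $\bigoplus_m F_m$, which has HN type $\uunu$. Since $\ev_\uunu$ is an isomorphism over $\calM^+_S(\uunu)$, this is the HN filtration. Hence each nonzero subquotient $A_{jm}$ of $F_m$ is semistable of reduced polynomial $p_j$. Comparing the first subsheaf and the last quotient of the semistable $F_m$ with $p_m$ forces $a_{mm}=\nu_m$ and $a_{jm}=0$ for $j\neq m$.
\end{itemize}
By counitality the diagonal term is $\pm y$, so $y=0$, a contradiction. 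This also makes unnecessary the stratum-level Thom--Sebastiani compatibility you invoke in your last paragraph.

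With this repair your proof differs from the paper's as follows.
\begin{itemize}
\item The paper defines primitive parts of $\H\calA_{\calM,\uunu}$ and $\H\calA_{\calM,\leqslant\uunu}$ through the localized presheaves $\HDT$. It proves left exactness of \eqref{eq-primitive-left-exact} from Theorem~\ref{thm-transition-surjective} and the purity-based injectivity of \eqref{eq-strata-product} on $\oplus^{-1}(\calM^+_S(\leqslant\!\uunu))$, so it never has to compare HN orders under $\oplus$. It then kills $(\H\calA_{\calM,\uunu})_\prim$ for $\ell\geqslant 2$ by identifying $\H\calA_{\calM,\underline{\Gamma}}$, as a coalgebra, with $\U(\bigoplus_j\frakg^\ss_S(p_j))$ via the Milnor--Moore argument of Lemma~\ref{lem-kappa-ss-isom}.
\item Your route brings forward the leading-term argument the paper only uses later for Theorem~\ref{thm:restriction-isom}(c), with Kirwan lifts in place of elements of $\U(\frakg_S(p))$. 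It needs only Proposition~\ref{porp-prod-coprod-commutes}, counitality and the supports of Hall products. It avoids both Milnor--Moore and the compatibility of the coproduct on strata with \eqref{eq:integral uutheta}.
\item The paper's argument, in exchange, computes the whole primitive part of each union of strata and stays within the factorization formalism that defines $\Delta$.
\end{itemize}
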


\begin{proof} 
	For each $\uunu \in \HN(\nu)$, we set 
	\[
	\H\calA_{\calM, \uunu} = \H^{\BM}_{ - * + \vd_\calM}( \calM_S^+(\uunu),\Q), \quad
	\H\calA_{\calM, \leqslant\uunu} = \H^{\BM}_{ - * + \vd_\calM}( \calM_S^+(\leqslant\uunu),\Q).
	\]
	Consider the following presheaves on the complex plane
	\begin{align*}
		\HDT_{\calM, \uunu}  &\colon  \mathbb{C} \supset U \mapsto \H^{*}(\widetilde\calM_{U} \cap \calM_X^+(\uunu), \varphi_{\widetilde\calM}|_{\widetilde\calM_{U} \cap \calM_X^+(\uunu)}),\\
		\HDT_{\calM, \leqslant\uunu}  &\colon  \mathbb{C} \supset U \mapsto \H^{*}(\widetilde\calM_{U} \cap \calM_X^+(\leqslant\uunu), \varphi_{\widetilde\calM}|_{\widetilde\calM_{U} \cap \calM_X^+(\leqslant\uunu)}).
	\end{align*}
	Arguing as the proof of Lemma \ref{lem-DT-factorization}, we obtain 
	\[
	\HDT_{\calM, \uunu}(\mathbb{C}) \cong  \H\calA_{\calM, \uunu}
	, \quad 
	\HDT_{\calM, \leqslant\uunu}(\mathbb{C}) \cong  \H\calA_{\calM, \leqslant\uunu}.
	\]
	We fix non-empty disjoint convex open subsets $U_1, U_2 \subset \mathbb{C}$.
	Let $(\H\calA_{\calM, \uunu})_\prim \subset \H\calA_{\calM, \uunu}$ be the kernel of the composition
	\[
	\H\calA_{\calM, \uunu} \cong \HDT_{\calM, \uunu}(\mathbb{C}) \to \HDT_{\calM, \uunu}(U_1 \sqcup U_2) \to
	\HDT_{\calM, \uunu}(U_1 \sqcup U_2)_{\mathrm{mult}}
	\]
	where
	$$\HDT_{\calM, \uunu}(U_1 \sqcup U_2)_{\mathrm{mult}} = \HDT_{\calM, \uunu}(U_1 \sqcup U_2) / (\HDT_{\calM, \uunu}(U_1) + \HDT_{\calM, \uunu}(U_2) ).$$
	We define $(\H\calA_{\calM, \leqslant\uunu})_\prim \subset \H\calA_{\calM, \leqslant\uunu}$ 
	in a similar manner.
	Let $\uunu'$ be the predecessor of $\uunu$ in $\HN(\nu)$. 
	Thanks to Corollary~\ref{cor-BM-limit}, to prove the Lemma it is enough to show that the following restriction map is injective
	\[
	(\H\calA_{\calM, \leqslant\uunu})_\prim \to (\H\calA_{\calM, \leqslant\uunu'})_\prim
	\]
	unless $\uunu=(\nu)$. We will prove this by the following two steps: first, we will show that the sequence
	\begin{equation}\label{eq-primitive-left-exact}
		0 \to  (\H\calA_{\calM, \uunu})_\prim \to  (\H\calA_{\calM, \leqslant\uunu})_\prim \to(\H\calA_{\calM, \leqslant\uunu'})_\prim .
	\end{equation}
	is exact, and then we will prove the vanishing $  (\H\calA_{\calM, \uunu})_\prim = 0$ if $\uunu\neq(\nu)$.
	To prove the exactness of \eqref{eq-primitive-left-exact}, consider the following diagram:
	\[\begin{tikzcd}
		& 0 & 0 & 0 \\
		0 & { (\H\calA_{\calM,\uunu})_\prim } & { (\H\calA_{\calM, \leqslant\uunu})_\prim } & {(\H\calA_{\calM, \leqslant\uunu'})_\prim } \\
		0 & { \H\calA_{\calM, \uunu}} & {\H\calA_{\calM, \leqslant\uunu}} & { \H\calA_{\calM, \leqslant\uunu'}} \\
		& {\HDT_{\calM, \uunu}(U_1 \sqcup U_2)_{\mathrm{mult}}} & {\HDT_{\calM, \leqslant\uunu}(U_1 \sqcup U_2)_{\mathrm{mult}}} & {\HDT_{\calM, \leqslant\uunu'}(U_1 \sqcup U_2)_{\mathrm{mult}}.}
		\arrow[from=1-2, to=2-2]
		\arrow[from=1-3, to=2-3]
		\arrow[from=1-4, to=2-4]
		\arrow[from=2-1, to=2-2]
		\arrow[from=2-2, to=2-3]
		\arrow[from=2-2, to=3-2]
		\arrow[from=2-3, to=2-4]
		\arrow[from=2-3, to=3-3]
		\arrow[from=2-4, to=3-4]
		\arrow[from=3-1, to=3-2]
		\arrow[from=3-2, to=3-3]
		\arrow[from=3-2, to=4-2]
		\arrow[from=3-3, to=3-4]
		\arrow[from=3-3, to=4-3]
		\arrow[from=3-4, to=4-4]
		\arrow[from=4-2, to=4-3]
		\arrow[from=4-3, to=4-4]
	\end{tikzcd}\]
	Here, the vertical sequences and the lower horizontal sequence are exact. 
	Also, Theorem \ref{thm-transition-surjective} implies that the middle horizontal sequence is exact.
	Therefore it is enough to show that the map 
	\[
	\HDT_{\calM, \uunu}(U_1 \sqcup U_2)_{} \to \HDT_{\calM, \leqslant\uunu}(U_1 \sqcup U_2)_{}    
	\]
	is injective (as the induced map $	\HDT_{\calM, \uunu}(U_1 \sqcup U_2)_{\mathrm{mult}} \to \HDT_{\calM, \leqslant\uunu}(U_1 \sqcup U_2)_{\mathrm{mult}} $ will then also be injective).
	Arguing as the proof of Lemma \ref{lem-DT-factorization}, we have
	\begin{align*}
		& \HDT_{\calM, \uunu}(U_1 \sqcup U_2)_{} \cong \H^{\BM}_{-* + 2\vd_\calM}(\oplus^{-1}(\calM^+_{S}(\uunu))), \\
		& \HDT_{\calM, \leqslant\uunu}(U_1 \sqcup U_2)_{} \cong \H^{\BM}_{-* + 2\vd_\calM}(\oplus^{-1}(\calM^+_{S}( \leqslant\uunu)))
	\end{align*}
	where $\oplus\colon  \calM_S \times \calM_S \to \calM_S$ is the direct sum map.
	Therefore we are reduced to proving that the map
	\begin{equation}\label{eq-strata-product}
		\H^{\BM}_*(\oplus^{-1}(\calM^+_{S}( \uunu)),\Q)  \to \H^{\BM}_*(\oplus^{-1}(\calM^+_{S}(\leqslant\uunu)),\Q)
	\end{equation}
	is injective. We have a stratification
	\[\bigsqcup_{\uunu_1 \cup \uunu_2 = \uunu} \calM^+_{S}( \uunu_1) \times \calM^+_{S}(\uunu_2)  =  \oplus^{-1}(\calM_{S}^+(\uunu)), \quad  \bigsqcup_{\uunu_1 \cup \uunu_2 \leqslant \uunu} \calM^+_{S}(\uunu_1) \times \calM_{S}^+(\uunu_2)  =  \oplus^{-1}(\calM^+_{S} (\leqslant\uunu))
	\]
	where $\uunu_1 \cup \uunu_2$ is the reordering according to dimension and slope of the concatenation of $\uunu_1$ and $\uunu_2$.
	Thus, the argument of the proof of Theorem \ref{thm-transition-surjective} implies that 
	$$\H^{\BM}_*(\oplus^{-1}(\calM^+_{S}(\uunu)),\Q)
	,\quad
	\H^{\BM}_*(\oplus^{-1}(\calM^+_{S}(\leqslant\uunu')),\Q)$$
	underlie pure Hodge structures. Therefore the map \eqref{eq-strata-product} is injective.
	
	Now we prove that $(\H\calA_{\calM, \uunu})_\prim = 0$ for $\uunu = (\nu_1, \ldots, \nu_\ell)$ with $\ell \geqslant 2$.
	Set $p_j=p(\nu_j)$, so that $\nu_{j}\in \Gamma_{p_j}$. Note that by construction the elements $p_1, \ldots, p_\ell$ are all distinct and ordered according to the dimension and the order among polynomials of the same degree.
	Let $\underline{\Gamma}=\Gamma_1 \times \cdots \Gamma_\ell$, where $\Gamma_i=\Gamma_{p_i}$ for all $i$. Consider the stack $$\calM^+_S(\underline{p})\coloneqq\bigsqcup_{\underline{\sigma}} \calM^+_S(\underline{\sigma})$$
	where the union ranges over all HN-types $\underline{\sigma}=(\sigma_1, \ldots, \sigma_\ell)$ such that $\sigma_i \in \Gamma_{p_i}$ for all $i$.
	We also set
	$$\H\calA_{\calM, \underline{\Gamma}} = \H^{\BM}_{ - * + \vd_\calM}( \calM_S^+(\underline{p}),\Q), \quad \H\calA_{\calM, \Gamma_j} = \H^{\BM}_{- * + \vd_\calM}(\calM_S^{\ss}(p_j)). $$
	The integral isomorphism \eqref{eq-integrality-HN-strata} implies
	\begin{equation}\label{eq:integral uutheta}
		\H\calA_{\calM, \underline{\Gamma}}\cong  \H\calA_{\calM, \Gamma_1} \otimes \cdots \otimes  \H\calA_{\calM, \Gamma_\ell}.
	\end{equation}
	By repeating the construction of the coproduct in \S\ref{para-construction-coproduct}, we obtain a coproduct
	\begin{equation}\label{eq:coproduct uutheta}
		\H\calA_{\calM, \underline{\Gamma}}  \to  \H\calA_{\calM, \underline{\Gamma}}  \otimes  \H\calA_{\calM, \underline{\Gamma}}.
	\end{equation}
	Let $(\H\calA_{\calM,\underline{\Gamma}})_\prim  \subset  \H\calA_{\calM, \underline{\Gamma}} $ denote the primitive part.
	We have 
	\[(\H\calA_{\calM, \uunu})_\prim =  (\H\calA_{\calM,\underline{\Gamma}})_\prim  \cap  \H\calA_{\calM, \uunu}.\]
	Now we will identify the primitive part $  (\H\calA_{\calM,\underline{\Gamma}})_\prim $. The coproduct~\eqref{eq:coproduct uutheta} is compatible with the isomorphism~\eqref{eq:integral uutheta} and the coproduct on each $\H\calA_{\calM, \Gamma_j}$.
	The proof of Lemma \ref{lem-kappa-ss-isom} yields isomorphisms
	\[\H\calA_{\calM, \Gamma_j} \cong \U( \frakg^{\ss}_S(p_j)), \quad j=1, \ldots, \ell\]
	and hence an isomorphism of coalgebras
	\[\H\calA_{\calM, \underline{\Gamma}} \cong \U( \frakg^{\ss}_S(p_1) ) \otimes \cdots \otimes 
	\U(\frakg^{\ss}_S(p_\ell)) \cong \U\Big(\bigoplus_{j} \frakg^{\ss}_S(p_j) \Big).\]
	We deduce that under the above isomorphism
	\[(\H\calA_{\calM,\underline{\Gamma}})_\prim   = \bigoplus_{j} \frakg^{\ss}_S(p_j)\]
	which implies the desired vanishing $  (\H\calA_{\calM, \uunu})_\prim = 0$ because $\ell\geqslant 2$. This finishes the proof of Lemma~\ref{lem-r-prim-inj}.
\end{proof}

We can now finish the proof of Theorem \ref{thm:restriction-isom}, parts (a) and (b). Lemma \ref{lem-kappa-ss-isom} and 
Lemma \ref{lem-r-prim-inj} together imply that the map $\res_\prim^\ss$ in \eqref{diag-res} is invertible. Along the way, we have also shown that
$$\res_\snil^\ss \colon  \bfH_S(\nu)_{\snilp} \to \bfH_S^\ss(\nu)_{\snilp}$$ 
is an isomorphism. Combining this with Corollary \ref{cor:snilp-BPS}, we obtain an isomorphism of $\H^\ev(S,\Z)\times\Z$-graded 
vector spaces
\[\HH_{\mathrm{BPS}}^{*}(M_S,\Q) \otimes \H^*(\BGM,\Q)_{\vir} \cong \frakg_S.\]

\qed

\medskip

\subsection{Proof of Theorem~\ref{thm:restriction-isom}, part II}\label{sec:proofthmss}
We now prove part (c) of Theorem~\ref{thm:restriction-isom}. For each $p$ there is an isomorphism 
$$\res^\ss\colon \U(\frakg_S(p)) \cong\U(\frakg_S^{\ss}(p)) = \bfH_S^{\ss}(p).$$ 

By the PBW theorem the multiplication map induces a graded vector space isomorphism
$$\vec{\bigotimes_{(d,p)} }\U(\frakg^d_S(p))\cong \U(\frakg_S).$$
Here $\vec{\bigotimes}$ is the restricted tensor product, ordered increasingly according to the dimension and the order among polynomials. 
We must prove that the multiplication map is a dense embedding 
$$m\colon \vec{\bigotimes_{(d,p)}} \U(\frakg^d_S(p)) \to \bfH_S.$$
An element $x \in \text{Ker}(m)$ may be written as 
$$x=\sum_{\underline{d},\underline{p}}x_{i,1} \otimes \cdots \otimes x_{i,s_i}, \quad x_{i,j} \in \U(\frakg_S^{d_{i,j}}(p_{i,j})),$$
where for each fixed $i$, the sequence of pairs $(d_{i,1},p_{1,i}), \ldots, (d_{i,s_i},p_{i,s_i})$ is 
increasing. Assume that $x$ is homogeneous of weight $\nu$.
Let $\nu_{i,j} \in \Gamma_{p_{i,j}}$ be the degree of $x_{i,j}$.
Hence $\sum_j \nu_{i,j}=\nu$ for all $i$. 
Assume $x \neq0$.
There exits $i$ 
such that the HN-type $\underline{\nu}_i=(\nu_{i,1}, \ldots, \nu_{i,s_i})$ is minimal for the total order $<$. 
Let us consider the composition
$$\pi_{\underline{\nu}_i}\colon  \bfH_S(\nu) \xrightarrow{\Delta^{s_i-1}} \bfH_S(\nu_{i,1}) \wotimes \cdots \wotimes \bfH_S(\nu_{i,s_i}) \xrightarrow{\res^\ss} \bfH_S^\ss(\nu_{i,1}) \otimes \cdots \otimes \bfH_S^\ss(\nu_{i,s_i}).$$ 
We abbreviate $x^\ss=\res^\ss(x)$.
By Lemma~\ref{lem:ev and HN order} we have
$$\pi_{\underline{\nu}_i}(x)=\pi_{\underline{\nu}_i}(x_{i,1} \cdots x_{i,s_i})=x_{i,1}^\ss \otimes \cdots \otimes x_{i,s_i}^\ss \neq 0.$$
This shows that $m$ is injective. The proof that $m$ has a dense image is similar. By the 
preceding argument, for any $\underline{\nu}=(\nu_1, \ldots, \nu_s) \in \HN(\nu)$ and any 
$z \in \H_*^\BM(\calM^+_S(\underline{\nu}),\Q)$ there exists an element
$$\overline{z} \in \U(\frakg_S(\nu_{1})) \otimes \cdots \otimes \U(\frakg_S(\nu_s))$$ such that 
$$m(\overline{z}) \in \H_*^\BM(\calM_S^+(\geqslant \!\underline{\nu}),\Q)
,\quad
m(\overline{z}) \in z + \H_*^\BM(\calM_S^+(>\!\underline{\nu}),\Q).$$
We deduce that for any $y\in \bfH_S(\nu)$ and any $\underline{\nu}' \in \HN(\nu)$ there exists an element 
$x \in \bigotimes_{(d,p)} \U(\frakg_S^d(p))$ such that $y$ and $m(x)$ have the same image in 
$\H_*^\BM(\calM^+_S(\leqslant\! \underline{\nu}'))$.
The density of $\text{Im}(m)$ follows. This concludes the proof of Theorem~\ref{thm:restriction-isom}. 
\qed

\medskip

\subsection{Action of Hecke operators on $\frakg_S$}
We define the vector space 
$$\frakg_S^{\geqslant 1}\subset  \bfH_S^{\geqslant 1}$$
to  be the image of the affinized BPS Lie algebra $\frakg_S$ under the restriction morphism 
$\res^{\geqslant 1}\colon \bfH_S \to \bfH_S^{\geqslant 1}$.
There is an obvious decomposition $\frakg_S^{\geqslant 1}=\bigoplus_\nu \frakg_S^{\geqslant 1}(\nu)$.
The elements of $\frakg^{\geqslant 1}_S$ are called quasi-primitive.

\begin{proposition}\label{prop:hecke on BPS Lie} Let $\nu$ be a Mukai vector of dimension $d>0$. \hfill
\begin{enumerate}[label=$\mathrm{(\alph*)}$,leftmargin=8mm,itemsep=2mm]
\item The kernel of the restriction
$\res^{\geqslant 1}\colon \frakg_S \to \frakg_S^{\geqslant 1}$ is equal to $\frakg_S(\N\delta)$. 
\item The map
$\res^{\geqslant 1}$ gives an isomorphism of graded vector spaces $\frakg_S(\nu) \cong \frakg^{\geqslant 1}_S(\nu)$.
\item The subspace $\frakg^{\geqslant 1}_S$  of $\bfH^{\geqslant 1}_S$ is preserved under the $\bfH_S^0$-action on $\bfH^{\geqslant 1}_S$.
\end{enumerate}
\end{proposition}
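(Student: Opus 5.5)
We prove (a) and (b) together by examining what the restriction $\res^{\geqslant 1}$ does on each HN stratum, and then deduce (c) from the Hecke action on semi-nilpotent microlocal homology.

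\emph{Parts (a) and (b).} For $\nu$ of dimension $d>0$, the complement of $\calM_S^{\geqslant 1}(\nu)$ in $\calM_S(\nu)$ is the union of the HN strata with nonzero zero-dimensional part. It is closed, since it is the image of $\ev$ on the proper extension stack. By Corollary~\ref{cor:Kirwan-surj} and Corollary~\ref{cor-BM-limit}, $\bfH_S(\nu)$ then decomposes as the product over HN types of the stratum contributions. Restriction to the open union of strata with $\uunu_0=0$ kills exactly the factors with $\uunu_0\neq 0$.

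The proof of Theorem~\ref{thm:restriction-isom}(c), namely the PBW decomposition together with the projections $\pi_{\uunu}$ of Lemma~\ref{lem:ev and HN order}, shows more. A nonzero primitive $x\in\frakg_S(\nu)$ has nonzero image under $\pi_{(\nu)}$, and this image is just $\res^{\ss}(x)$, because $\res^{\ss}_{\prim}$ is injective by Lemma~\ref{lem-r-prim-inj}. The semistable stack $\calM_S^{\ss}(\nu)$ consists of pure sheaves, so it lies inside $\calM^{\geqslant 1}_S(\nu)$. Hence $\res^{\ss}$ factors through $\res^{\geqslant 1}$, and therefore $\res^{\geqslant 1}$ is injective on $\frakg_S(\nu)$. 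This gives (b); surjectivity is by definition of $\frakg^{\geqslant 1}_S$.

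For (a), note that $\calM^{\geqslant 1}_S(m\delta)$ is empty for $m>0$, so $\frakg_S(\N\delta)$ lies in the kernel. Since $\res^{\geqslant 1}$ respects the horizontal grading, the kernel is the sum of the graded kernels, and these vanish in dimension $>0$ by (b). This proves (a).

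\emph{Part (c).} Here we use the semi-nilpotent description. The proof of Theorem~\ref{thm:restriction-isom}(a),(b) identifies $\frakg_S(\nu)$ with $\bfH_S(\nu)_{\snilp}$, the image of $\H^{\Lambda_\snil}_*(\calM_S(\nu),\Q)\to\bfH_S(\nu)$. Restriction is compatible with open restriction of microlocal homology. Consequently $\frakg_S^{\geqslant 1}(\nu)$ is the image of $\H^{\Lambda_\snil}_*(\calM^{\geqslant 1}_S(\nu),\Q)$. To see this, use Kirwan surjectivity in its $\Lambda_{\nil}$ version (Corollary~\ref{cor:Kirwan-surj}), transported to $\Lambda_\snil$ by \eqref{eq-nilp-vs-seminilp}: the semi-nilpotent microlocal homology of $\calM_S(\nu)$ surjects onto that of $\calM^{\geqslant 1}_S(\nu)$.

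Now take global sections of the commutative square \eqref{Hecke-BPS} of Proposition~\ref{prop:H0-action}(a). The top arrow gives an $\bfH^0_X$-action on $\H^*(\Lambda^{\geqslant 1}_\snil,\varphi|^!)$. The vertical arrows are the maps to $\bfH^{\geqslant 1}_X\cong\bfH^{\geqslant 1}_S$, and they intertwine this action with the Hecke action $T$ of Proposition~\ref{prop:Hecke-COHA}, using part (c) of the first Proposition~\ref{prop:H0-action} for the dimensional reduction compatibility. Hence the image, which is $\frakg^{\geqslant 1}_S$, is $\bfH^0_S$-stable.

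The main obstacle is justifying that image of the global sections of $(\rho_X^{\geqslant1})_*i_!i^!\varphi$ in $\bfH^{\geqslant 1}_X$ is exactly $\res^{\geqslant 1}(\bfH_S(\nu)_{\snilp})$ rather than something larger. This needs the surjectivity of restriction on semi-nilpotent microlocal homology, which I would verify by rerunning the purity argument of Theorem~\ref{thm-transition-surjective} with $\Lambda_\snil$ in place of $\Lambda_\nil$.
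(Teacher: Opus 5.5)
Your proposal is correct. Parts (a) and (b) follow the paper's argument, and part (c) takes a genuinely different route.

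\textbf{Parts (a) and (b).} These are the paper's argument. For $\dim\nu>0$, the map $\res^{\ss}$ on $\frakg_S(\nu)$ factors through $\res^{\geqslant 1}$, because semistable sheaves of positive dimension are pure. Moreover, $\res^{\ss}$ is injective there by Lemma~\ref{lem-r-prim-inj} (equivalently, Theorem~\ref{thm:restriction-isom}(b)). The detour through HN-strata and the projections $\pi_{\uunu}$ is not needed.

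\textbf{Part (c): the paper's route.} The paper argues entirely inside the bialgebra. Take $x\in\frakg_S(n\delta)$ with $n>0$ and $y\in\frakg_S$. Then $\res^{\geqslant 1}(y\cdot x)=0$ for support reasons, since the middle term of such an extension has a zero-dimensional subsheaf. Hence $T_x(\res^{\geqslant 1}(y))=\res^{\geqslant 1}(x\cdot y)=\res^{\geqslant 1}([x,y])\in\frakg^{\geqslant 1}_S$. Since $\bfH^0_S$ is generated by its primitive part $\frakg_S(\N\delta)$, this gives stability under all of $\bfH^0_S$.

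\textbf{Part (c): your route.} You instead realize $\frakg^{\geqslant 1}_S$ as the image of the $\Lambda_{\snil}$-microlocal homology of $\calM^{\geqslant 1}_S$, and then take global sections of the square \eqref{Hecke-BPS}. This is valid, but it uses two inputs that the algebraic argument avoids.
\begin{itemize}
\item The surjectivity of restriction of $\Lambda_{\snil}$-microlocal homology from $\calM_S(\nu)$ onto the open union of strata $\calM^{\geqslant 1}_S(\nu)$. As you indicate, this does follow by rerunning the purity splitting of Theorem~\ref{thm-transition-surjective} and transporting from $\Lambda_{\nil}$ via \eqref{eq-nilp-vs-seminilp}.
\item The 3D microlocal Hecke action, together with its compatibility with dimensional reduction.
\end{itemize}

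\textbf{What each route buys.} Your route is the global-sections shadow of the sheaf-level construction over the Chow variety used for Toda's conjecture. So it makes visible that the Lie-algebra statement and the relative $\chi$-independence come from the same mechanism. The paper's route is only a few lines. It needs only the coproduct and the identity $T_x\circ\res^{\geqslant 1}=\res^{\geqslant 1}\circ(x\cdot -)$ from Proposition~\ref{prop:Hecke-COHA}. It also applies verbatim to any $\bfH^0_S$-action that descends from multiplication.
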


\begin{proof}
Part (a) is a consequence of the factorization
\begin{equation}\label{eq:ssthroughgeqone}
\res^{\ss}\colon \frakg_S \xrightarrow{\res^{\geqslant 1}} \frakg_S^{\geqslant 1} \longrightarrow \frakg_S^{\ss}
\end{equation}
and the fact that $\res^\ss\colon \frakg_S(\nu) \to \frakg^\ss_S(\nu)$ is an isomorphism if $\nu$ is of dimension $d>0$ by 
Theorem~\ref{thm:restriction-isom}.
Part (b) follows from (a).
To prove (c), observe that for any integer $n>0$ we have $\res^{\geqslant 1}(\bfH_S \cdot \bfH_S(n\delta))=0$ 
for support reasons. Thus for any $x \in \frakg_S(n\delta)$ and any $y \in \frakg_S$ we have
$$T_x (\res^{\geqslant 1}(y))=\res^{\geqslant 1}(x\cdot y)=\res^{\geqslant 1}([x,y]) \in \res^{\geqslant 1}(\frakg_S)=\frakg_S^{\geqslant 1}.$$
\end{proof}

\medskip

\subsection{Proof of the $\chi$-independence of $\frakg_S$}
Taking global sections in Theorem \ref{thm:Toda} yields the following:

\begin{theorem}\label{thm:chi-indep-Lie-algebra}
Let $\alpha$ be the class of an effective one-cycle. For any ample $\beta \in \H^2(S,\Z)$ we have 
\begin{enumerate}[label=$\mathrm{(\alph*)}$,leftmargin=8mm,itemsep=2mm]
		\item the Hecke operator
		$T_{D_{1,0}(\beta)}\colon  \frakg^{\geqslant 1}_S(\alpha+n\delta)\to \frakg^{\geqslant 1}_S(\alpha+(n+1)\delta)$
		is an isomorphism of graded vector spaces,
		\item the adjoint operator
		$\ad_{D_{1,0}(\beta)}\colon  \frakg_S(\alpha+n\delta)\to \frakg_S(\alpha+(n+1)\delta)$
		is an isomorphism of graded vector spaces.
	\end{enumerate}
\end{theorem}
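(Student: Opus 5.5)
Part (b) will be deduced from part (a), and part (a) will be proved by running the argument of Theorem~\ref{thm:Toda} on global sections rather than over $B_S(\alpha)$.

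\emph{Setting up the module.} Set $V=\bigoplus_{n\in\Z}\frakg^{\geqslant 1}_S(\alpha+n\delta)$, graded by $n$ (we may also keep the cohomological grading). By Proposition~\ref{prop:hecke on BPS Lie}(c), $V$ is stable under the Hecke action $T$ of $\bfH_S^0$ on $\bfH_S^{\geqslant 1}(\alpha+\Z\delta)$ from Proposition~\ref{prop:Hecke-COHA}. By Proposition~\ref{prop:hecke on BPS Lie}(b) and Theorem~\ref{thm:restriction-isom}(a), each piece $\frakg^{\geqslant 1}_S(\alpha+n\delta)\cong \frakg_S(\alpha+n\delta)$ is identified with $\H^*_{\BPS}(M_S(\alpha+n\delta),\Q)\otimes\H^*(\BGM,\Q)$. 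The operators in $\bfA_S$ have vertical degree zero, so we restrict attention to a fixed cohomological degree. There the homogeneous components of $V$ are finite dimensional, because $\H^*_\BPS$ of the projective $M_S$ is finite dimensional in each degree. Hence $V$ is a $\Z$-graded $\bfA_S$-representation with finite dimensional homogeneous components.

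\emph{Checking the hypotheses of Proposition~\ref{prop:algebraic trick}.} Because $\beta$ is ample, for $k\gg0$ the class $k\beta$ is very ample. Choose a smooth connected curve $C\in|k\beta|$ that is not a component of any cycle of class $\alpha$. Such a $C$ exists since the cycles of class $\alpha$ form a finite-type family, so their components sweep out only a bounded family of curves, and a generic member of $|k\beta|$ avoids it. Then $m=k\,\alpha\cdot\beta>0$. By Proposition~\ref{prop:PicardisHecke}, $T_{[\calM_C(m\delta)]}=\omega_{\calO(C)}$ is invertible on $\bfH^{\geqslant 1}_S(\alpha+\Z\delta)$. Its restriction to $V$ is also invertible: $\omega_{\calO(C)}$ is induced by a bialgebra automorphism of $\bfH_S$ that preserves $\frakg_S$ and commutes with $\res^{\geqslant 1}$, so it preserves $V$, and the same holds for its inverse $\omega_{\calO(-C)}$.

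By Proposition~\ref{prop:formula}(a), $[\calM_C(m\delta)]=[\calM_{(1^m)}]$ lies in $\bfA_{k\beta}=\bfA_\beta$. This gives hypothesis (a) of Proposition~\ref{prop:algebraic trick}. For hypothesis (b), we use that $T$ is a left action obtained from multiplication in $\bfH_S$ and that $\omega_\calL$ is an algebra automorphism. Hence $\sigma T_x\sigma^{-1}=T_{\omega_{\calO(C)}(x)}$, and Proposition~\ref{prop:formula}(c) gives $\sigma D_{1,1}(1)\sigma^{-1}=D_{1,1}(1)+k\,D_{1,0}(\beta)$. Proposition~\ref{prop:algebraic trick} then shows that $D_{1,0}(\beta)$ acts invertibly on $V$ in degree one, which is exactly (a).

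\emph{Deducing (b).} For $y\in\frakg_S(\alpha+n\delta)$, the computation in the proof of Proposition~\ref{prop:hecke on BPS Lie}(c) gives $\res^{\geqslant 1}(\ad_{D_{1,0}(\beta)}y)=T_{D_{1,0}(\beta)}\res^{\geqslant 1}(y)$. By Proposition~\ref{prop:hecke on BPS Lie}(b), $\res^{\geqslant 1}$ is an isomorphism in these one-dimensional degrees, so (b) follows from (a).

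The main obstacle is the middle step, in particular checking that the Hecke and Picard actions preserve $V$ and that conjugating $T$ by $\omega_\calL$ matches $\omega_\calL$ acting on $\bfH^0_S$. I would handle both by treating $\omega_\calL$ as a bialgebra automorphism compatible with the restriction $\res^{\geqslant 1}$.
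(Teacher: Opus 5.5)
Your proposal is correct and follows the paper's proof. You apply Proposition~\ref{prop:algebraic trick} to $\frakg^{\geqslant 1}_S(\alpha+\Z\delta)$ in a fixed cohomological degree, with stability under Hecke operators and under $\otimes\calL$ coming from Proposition~\ref{prop:hecke on BPS Lie}(c), and you obtain (b) from (a) via the restriction isomorphism of Proposition~\ref{prop:hecke on BPS Lie}. You also spell out what the paper leaves implicit: the choice of a smooth connected $C\in|k\beta|$, the invertible element $[\calM_C(m\delta)]\in\bfA_{k\beta}=\bfA_\beta$ from Propositions~\ref{prop:PicardisHecke} and~\ref{prop:formula}, and the conjugation identity $\sigma T_x\sigma^{-1}=T_{\omega_{\calO(C)}(x)}$ that gives hypothesis (b) of that proposition.
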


\begin{proof}
We give a proof which does not depend on the comparison between 2D and 3D CoHAs. 
By Proposition~\ref{prop:hecke on BPS Lie} there is an action of $\bfH_{S}^0$ by Hecke operators on the vector space 
$$\frakg^{\geqslant 1}_S(\alpha+\Z\delta)=\bigoplus_{n \in\Z}\frakg^{\geqslant 1}_S(\alpha+n\delta).$$
This action restricts to an action of $\bfA_S$ on $\frakg^{\geqslant 1}_S(\alpha+\Z\delta)$.
It is of degree zero for the cohomological grading. 
For each $\nu$, the subspace $\frakg_{S,i}^{\geqslant 1}(\nu)\subset \frakg^{\geqslant 1}_S(\nu)$ of 
elements of cohomological degree $i$ is finite dimensional. The space 
$$\frakg_{S,i}^{\geqslant 1}(\alpha+\Z\delta)=\bigoplus_{n \in \Z} \frakg_{S,i}^{\geqslant 1}(\alpha +n\delta)$$ 
is stable under the automorphism $\otimes \calL$ for any line bundle $\calL \in \Pic(S)$. 
We may now apply Proposition~\ref{prop:algebraic trick} to prove part (a). 
Part (b) follows from Proposition~\ref{prop:hecke on BPS Lie} (a).
\end{proof}

\begin{remark} The theorem (and its proof) are valid more generally whenever $\beta$ is an effective curve 
class such that $\alpha \cdot \beta >0$ and $|n\beta|$ contains a smooth irreducible curve for some $n \geqslant 1$.
This holds for ample curve classes thanks to Bertini's connectedness theorem. 
\end{remark}

\subsection{Calculation of Hodge numbers}\label{formulae_sec}
Combined with earlier work of G\"ottsche, Huybrechts, Soergel and Yoshioka our $\chi$-independence result enables us to write down explicit generating sequences determining the mixed Hodge polynomials of the Borel--Moore
homology of stacks of one-dimensional semistable coherent sheaves on K3 surfaces and Abelian surfaces, as well as intersection mixed Hodge polynomials for their coarse moduli spaces.

Let $V$ be a $\Gamma_p\oplus\Z$-graded mixed $\Q$-Hodge structure.  We define its mixed Hodge series $\hdg^{\mathrm{gr}}(V)\coloneqq \sum_{\nu\in\Gamma_p}\hdg(V_\nu)T^\nu$, where for a cohomologically graded mixed Hodge structure $U$ we define $$\hdg(U)=\sum_{p,q,i\in\Z}\dim_{\C}(\mathrm{Gr}_\mathrm{W}^{p+q}(U^i)_{\C}^{p,q})u^pv^q(-t)^i\ .$$
Fix a K3 surface S.  We consider the $\mathbb{N}\oplus\Z$-graded (pure) mixed Hodge structure $\mathcal{H}_S=\bigoplus_{n\in\mathbb{N}} \H^*(\Hilb^n(S),\Q)\otimes \L^{-\otimes n}$ where $\L$ is defined in \S \ref{conventions_sec}; it is a pure weight 2 mixed Hodge structure concentrated in cohomological degree $2$.  It follows from \cite{Gottsche-Soergel} that there is an isomorphism of $\mathbb{N}\oplus\Z$-graded mixed Hodge structures $\mathcal{H}_S\cong \Sym\left(\bigoplus_{n\geqslant 1} \H^*(S,\Q)\otimes\L^{-1}\right)$, and so an identity of generating series
\begin{align*}
\mathcal{Z}^{\Kthree}(u,v,t,T)\coloneqq \hdg^{\mathrm{gr}}(\mathcal{H}_S)=&\Exp\left(\sum_{n\geqslant 1} \hdg(\H^*(S,\Q))T^nt^{-2}u^{-1}v^{-1}\right)\\=&\Exp\left(\sum_{n\geqslant 1} (t^{-2}u^{-1}v^{-1}+uv^{-1}+20+u^{-1}v+uvt^2)T^n\right)\eqqcolon&\sum_{n\geqslant 0}\mathcal{Z}^{\Kthree}_{n-1}T^{n}
\end{align*}
where $\Exp$ denotes the plethystic exponential in the variables $u,v,t,T$, and the final equation serves to define Laurent polynomials $\mathcal{Z}^{\Kthree}_n$ in $t,u,v$ for $n\geqslant -1$.  We similarly define Laurent polynomials $\mathcal{Z}^{\Abelian}_n$ by setting $$\mathcal{Z}^{\Abelian}(u,v,t,T)=\sum_{n\geqslant 0}\mathcal{Z}^{\Abelian}_nT^n=(1-tu)^2(1-tv)^2t^{-2}u^{-1}v^{-1}\Exp(\sum_{n \geqslant 1}(1-tu)^2(1-tv)^2t^{-2}u^{-1}v^{-1}T^n).$$

Now let $\nu=(0,\alpha,n\delta)$ be a one-dimensional Mukai vector.  We write $\heartsuit=\Kthree$ if $S$ is a K3 surface and $\heartsuit=\Abelian$ if $S$ is Abelian.  We assume that $ \nu\cdot\nu >0$ in order to derive new results below.  By Theorem \ref{thm:Toda} there is an isomorphism of mixed Hodge structures $\g^{\mathrm{ss}}_S(\nu)\cong \g^{\mathrm{ss}}_S(\nu')$ where $\nu'=\nu+l\delta$ is a primitive Mukai vector for some sufficiently generic polarization $H'$.  Since $\nu'$ is primitive, by polarization independence of BPS cohomology (a consequence of Theorem \ref{thm:restriction-isom}) we have $\g^{\mathrm{ss}}_S(\nu')\cong \H^*({\calM'}_S^\ss(\nu'),\Q)\otimes \L^{-\dim M'_S(\nu')/2}\cong \H^*(M'_S(\nu'),\Q)\otimes\L^{-\dim M'_S(\nu')/2}\otimes \H^*(\mathrm{B}\C^*,\Q)$, where $M'_S(\nu')$ and ${\calM'}_S^\ss(\nu')$ are defined with respect to $H'$.  By \cite{Yoshioka3,Huyb} the coarse moduli space $M'_S(\nu')$ is deformation equivalent to $\Hilb^{\nu'\cdot\nu'/2+1}(S)$ if $S$ is a K3 surface, and $\widehat{S}\times \text{Hilb}^{\nu' \cdot \nu' / 2}(S)$ if it is an Abelian surface, where $\widehat{S}$ is the dual surface.  Note that since $\nu$ is one-dimensional we have $\nu\cdot \nu=\nu'\cdot\nu'$ and we deduce $\hdg(\g^{\mathrm{ss}}_S(\nu))=\mathcal{Z}^{\heartsuit}_{\nu\cdot\nu/2}(1-uvt^2)^{-1}$.  Combining this equality with the generating function identity induced by \eqref{intro1} after passing to derived global sections, we deduce the identities
\begin{align}
\label{Hodge_identities1}
\hdg^{\mathrm{gr}}(\bfH^{\mathrm{ss}}_S(p))=&\Exp\left(\sum_{\nu\in \Gamma_p^\times} \mathcal{Z}^{\heartsuit}_{\nu\cdot\nu/2}T^{\nu}(1-uvt^2)^{-1}\right),
\\
\label{Hodge_identities2}
 \hdg^{\mathrm{gr}}({}^{0}\bfH^{\mathrm{ss}}_S(p))=&\Exp\left(\sum_{\nu\in \Gamma_p^\times} \mathcal{Z}^{\heartsuit}_{\nu\cdot\nu/2}T^{\nu}\right)
\end{align}
where we denote by ${}^{0}\bfH^{\mathrm{ss}}_S(p)$ the zeroth piece of the perverse filtration on $\bfH^{\mathrm{ss}}_S(p)$, which is defined by taking derived global sections of $\bigoplus_{\nu\in\Gamma_p} {}^{\text{p}}\calH^0\big(({p}_S)_* \mathbb{DQ}_{\calM^\ss_S(\nu)}\big)$, considered as a mixed Hodge module.  

Equation \eqref{Hodge_identities1} determines the mixed Hodge series of the Borel--Moore homology of $\calM_S^\ss(\nu)$ for every one-dimensional $\nu$.  By \eqref{intro2} we have an isomorphism of algebras ${}^{0}\bfH^{\mathrm{ss}}_S(p)\cong \mathrm{T}\left(\bigoplus_{\nu\in\Gamma^\times_p} \IH^*(M_S(\nu))\right)$, where the target is the free associative algebra generated by the intersection cohomology of the coarse moduli spaces of polystable sheaves.  Combining with the second equation \eqref{Hodge_identities2} we deduce
\begin{equation}\label{IC_calc}
\hdg^{\mathrm{gr}}\left(\bigoplus_{\nu\in\Gamma_p^\times} \IH^*(M_S(\nu))\right)=1-\Exp\left(-\sum_{\nu\in \Gamma_p^\times} \mathcal{Z}^{\heartsuit}_{\nu\cdot\nu/2}T^{\nu}\right).
\end{equation}
We use here the identities $\Exp(F)^{-1}=\Exp(-F)$ and $\hdg^{\mathrm{gr}}(\mathrm{T}(U))=(1-\hdg^{\mathrm{gr}}(U))^{-1}$.  Equation \eqref{IC_calc} fully determines the intersection Hodge polynomials of all coarse moduli spaces of polystable sheaves with one-dimensional Mukai vector.

\bigskip

\section{Tautological classes and a generalization of Markman's theorem}

\medskip

In this section, we apply a variation of the ideas in the previous sections to give a characterization of the affinized BPS Lie algebra 
$\frakg_S(\nu)$ in terms of tautological classes, see Theorem~\ref{thm:Markman}.
In order to state this result, we first introduce tautological classes, in cohomology and in Borel--Moore homology.

\medskip

\subsection{Tautological classes and their action on the CoHA}
The aim of this section is to describe some compatibility between the operators of multiplication by tautological classes on $\calM_S(\nu)$ and 
the CoHA.

\medskip

\subsubsection{Tautological classes} Let $\calE_\nu \in \Coh(\calM_S(\nu) \times S)$ be the tautological sheaf. 
Let $\ch(\calE_\nu)=\sum_i \ch_i(\calE_\nu)$ be its Chern character. 
Let $\Lambda=\Q[p_1, p_2, \ldots]$ be the ring of symmetric functions. Consider the morphism 
$$\text{a}_\nu\colon  \Lambda \to\H^*(\calM_S(\nu) \times S,\Q), \quad \frac{p_i}{i!} \mapsto \ch_i(\calE_\nu), \quad i \in \N.$$
To each cohomologically homogeneous $\gamma \in \H^*(S,\Q)$ and degree $i$ polynomial $f \in \Lambda$ we associate the cohomology class
$$f(\gamma) = \int_S \text{a}_\nu(f) \cup \gamma \in \H^{2i+|\gamma|-4}(\calM_S(\nu)).$$
By definition, the tautological cohomology ring $\H^*_\taut(\calM_S(\nu),\Q)$ of $\calM_S(\nu)$ is the subring of 
$\H^*(\calM_S(\nu),\Q)$ generated by the classes $f(\gamma)$ for $f \in \Lambda$ and $\gamma \in \H^*(S,\Q)$. 


\medskip

\subsubsection{Universal ring of tautological classes} We consider the graded algebra
$$\Lambda_S= \Q[\,\overline{p}_n(\gamma)\;;\; n \geqslant 0, \gamma \in \H^*(S,\Q)]$$
where $\deg(\overline{p}_n(\gamma))=2n-4+|\gamma|$,
in which we impose the following relations
$$\overline{p}_n(\gamma+\eta)=\overline{p}_n(\gamma) + \overline{p}_n(\eta)
, \quad 
\overline{p}_n(a\gamma)=a\,\overline{p}_n(\gamma)$$
for  $a \in\Q$ and $\gamma,\eta \in \H^*(S,\Q)$.
Further, we set
$$\overline{p}_n(\gamma)=0\ \text{if}\ 2n + |\gamma| <4.$$
Thus, the ring $\Lambda_S$ is a non-negatively graded supercommutative polynomial ring.
It is convenient to introduce a generating series for the elements $\overline{p}_n(\gamma)$.
Let $\{\pi_i\}$ be a homogeneous basis of $\H^*(S,\Q)$, and let $\{\pi^i\}$ 
be the dual basis with respect to the intersection pairing.
Hence,  we have
$$[\Delta_S]=\sum_i \pi_i \otimes \pi^i\in \H^*(S,\Q) \otimes \H^*(S,\Q).$$
We set
\begin{align}\label{ch}
	\overline{\ch}(s)= \sum_{i,n} \overline{p}_n(\pi_i)\otimes \pi^i \otimes \frac{s^n}{n!}\in \Lambda_S\otimes \H^*(S,\Q)[[s]].
\end{align}
For any Mukai vector $\nu$ there is an algebra morphism
\begin{align}\label{ev}
	\text{e}_\nu\colon \Lambda_S \to \H^*(\calM_S(\nu),\Q), \quad \overline{\ch}(s) \mapsto \ch(\calE_\nu,s)
	, \quad 
	\overline{p}_n(\gamma) \mapsto p_n(\gamma).
\end{align}
Note that the classes $\overline{p}_0(\gamma),$ $\overline{p}_1(\eta)$ and $ \overline{p}_2(1)$
evaluate to scalars in $\H^0(\calM_S(\nu),\Q)$ for any $\gamma \in \H^4(S,\Q)$, $\eta \in \H^2(S,\Q)$ and 
any Mukai vector $\nu$.
This scalar depends on $\nu$. 
In fact, these classes provide a way to encode the class $\nu$ itself. 
For instance, for $S$ a K3 surface, we have
$$\text{e}_\nu(\overline{p}_0(\delta))=\text{rk}(\nu), \quad \text{e}_\nu(\overline{p}_2(1))=2(\nu_2-\rk(\nu)).$$

\medskip

\subsubsection{Compatibility with tautological classes}

The ring $\Lambda_S$ acts on the algebra $\bfH_S$ as follows
\begin{align}\label{bullet}
	c \bullet x=\text{e}_\nu(c) \cap x, \quad c \in \Lambda_S, \quad x \in \bfH_S(\nu).
\end{align}
This action is graded in the sense that we have
$\Lambda_{S,n} \bullet \bfH_{S,m}(\nu) \subseteq \bfH_{S,m-n}(\nu).$ 
Since the cap product commutes with open restriction, 
for any reduced polynomial $p$, the $\Lambda_S$-action given by $\bullet$ on $\bfH_S$ 
intertwines with the $\Lambda_S$-action on the quotient algebra $\bfH_S^\ss(p)$.
We also denote this action by  
$$\bullet\colon \Lambda_S\otimes\bfH_S^\ss(p)\to\bfH_S^\ss(p).$$
We equip the graded ring $\Lambda_S$ with the cocommutative comultiplication 
$\Delta\colon \Lambda_S \to \Lambda_S \otimes \Lambda_S$ 
such that
\begin{equation}\label{def:coprod BBHS}
	(\Delta\otimes \id)(\overline{\ch}(s))=\overline{\ch}(s)\otimes 1 + 1 \otimes \overline{\ch}(s)
\end{equation}
in terms of the formal series $\overline{\ch}$ introduced in \eqref{ch}.
The additivity of the Chern character implies that, for each $c \in \Lambda_S$ and $\nu,\nu_1,\nu_2$ with $\nu=\nu_1+\nu_2$, we have
\begin{equation}\label{eq:taut classes coprod}
	\ev^*(\text{e}_\nu(c))=\gr^*((\text{e}_{\nu_1} \otimes \text{e}_{\nu_2})(\Delta(c)))
\end{equation}
where the evaluation morphism $\text{e}_\nu$ is as in \eqref{ev} and the maps
$\ev$, $\gr$ are as in \eqref{induction-diagram-S}.
Equivalently, the comultiplication $\Delta$ of $\Lambda_S$ is characterized by the commutativity of the following diagram
$$\xymatrix{\Lambda_S \ar[d]_-{\text{e}_\nu} \ar[r]^-{\Delta} & \Lambda_S \otimes \Lambda_S \ar[d]^-{\text{e}_{\nu_1} \otimes \text{e}_{\nu_2}}\\ 
	\text{H}^*_S(\nu)\ar[r]^-{\bigoplus^*} & \text{H}^*_S(\nu_1)\otimes \text{H}^*_S(\nu_2)}.$$

The action of $\Lambda_S$ on $\bfH_S$ gives an action of
$\Lambda_S\otimes\Lambda_S$ on $\bfH_S\,\widehat\otimes\,\bfH_S$ satisfying the usual graded-commutative rules.
For any $c \in \Lambda_S$ we write $\Delta(c)=\sum_i c_{i,1}\otimes c_{i,2}$, and for any $x,y \in \bfH_S$ we define
\begin{equation}\label{prod compat0}
	\Delta(c)\bullet (x\otimes y)=\sum_i (-1)^{\deg(x)\cdot\deg(c_{i,2})}(c_{i,1}\bullet x) \cdot (c_{i,2}\bullet y).
\end{equation}

\begin{proposition}\label{prop:compat} 
\hfill
\begin{enumerate}[label=$\mathrm{(\alph*)}$,leftmargin=8mm,itemsep=2mm]
\item
$\bfH_S$ is a Hopf-module algebra over $\Lambda_S$, i.e., 
for any $c \in \Lambda_S$ and $u,v \in \bfH_S$ we have
\begin{equation}\label{prod compat}
c \bullet (u \cdot v)=\Delta(c)\bullet (u\otimes v)=\sum_i (-1)^{|u|\cdot|c_{i,2}|}(c_{i,1}\bullet u) \cdot (c_{i,2} \bullet v).
\end{equation}
\item
$\bfH_S$ is a Hopf-module coalgebra over $\Lambda_S$, i.e.,
for any $c \in \Lambda_S$ and $u,v \in \bfH_S$ we have
\begin{equation}\label{coprod compat}
\Delta (c \bullet u)=\Delta(c) \bullet \Delta (u).
\end{equation}
\item
The same holds for $\bfH_S^{\ss}(p)$ for any reduced polynomial $p$.
\end{enumerate}
\end{proposition}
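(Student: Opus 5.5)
For (a) I will use the 2D description of the product in \eqref{eq-relative-COHA-KV}, i.e.\ $u\cdot v=\ev_*\big(\gr^!\,(u\boxtimes v)\big)$, where $\gr^!$ is the virtual pullback built from the purity transformation for the quasi-smooth map $\gr$ in \eqref{induction-diagram-S}. The computation is a projection-formula chain. Cap product by $\mathrm{e}_\nu(c)$ commutes with proper pushforward: $\mathrm{e}_\nu(c)\cap\ev_*(z)=\ev_*(\ev^*\mathrm{e}_\nu(c)\cap z)$. By \eqref{eq:taut classes coprod} we may replace $\ev^*\mathrm{e}_\nu(c)$ by $\gr^*\big((\mathrm{e}_{\nu_1}\otimes\mathrm{e}_{\nu_2})\Delta(c)\big)$. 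The virtual pullback is linear over cohomology pulled back from the base, i.e.\ $\gr^*(a)\cap\gr^!(w)=\gr^!(a\cap w)$. This holds because $\mathrm{pur}_{\gr}$ is defined as a tensor product with the class $[\gr]$, so it is a natural transformation of $\H^*$-modules by its construction in \S\ref{sssec-purity-transform}. Finally, the exterior cross product turns $(a_1\otimes a_2)\cap(u\boxtimes v)$ into $(a_1\cap u)\boxtimes(a_2\cap v)$ up to the Koszul sign $(-1)^{|u||a_2|}$, which gives \eqref{prod compat}. The topological completions cause no trouble: everything can be checked on each quasi-compact open $\calM^+_S(\leqslant\uunu)$, because cap products commute with open restriction and the product is compatible with these restrictions.

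For (b) the coproduct is defined in 3D through the factorization presheaf $\DT_{\calM,\pro}$, so I first transport the $\Lambda_S$-action to $\widetilde\calM=\calM_X$. There the classes act by cap product with $\kappa^*\mathrm{e}_\nu(c)$ on $\varphi_{\widetilde\calM}$. Under dimensional reduction \eqref{eq:dimensional-reduction} this matches the action on $\H^{\BM}(\calM_S)$, since the isomorphism $\kappa_*\varphi\cong\D\Q\otimes\L^{\cdot}$ is $\H^*(\calM_S)$-linear by the projection formula. Restricting to the open set $\widetilde\calM_U$ is $\H^*$-linear. The key point is the identification $\widetilde\calM_{U_1\sqcup U_2}\cong\widetilde\calM_{U_1}\times\widetilde\calM_{U_2}$: it is induced by the direct sum map, and the universal sheaf restricts to $\calE_{\nu_1}\boxplus\calE_{\nu_2}$. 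Additivity of the Chern character then shows that $\kappa^*\mathrm{e}_\nu(c)$ restricts to $(\mathrm{e}_{\nu_1}\otimes\mathrm{e}_{\nu_2})\Delta(c)$, which is the defining property \eqref{def:coprod BBHS} of $\Delta$ on $\Lambda_S$. It remains to check that the Thom--Sebastiani isomorphism \eqref{eq-Thom-Sebastiani} and the Künneth map of Lemma~\ref{lem-DT-factorization}(c) are $\H^*(\calX_1)\otimes\H^*(\calX_2)$-linear, with Koszul signs. I expect this to follow from the naturality of Thom--Sebastiani in the factors, or equivalently from the fact that it is an isomorphism of sheaves, so it commutes with the module structure over pulled-back cohomology classes. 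Composing these steps gives \eqref{coprod compat}.

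Part (c) follows because both the product and the coproduct on $\bfH^\ss_S(p)$ are obtained by open restriction (Corollary~\ref{cor:exists coproduct}(c), Kirwan surjectivity), and cap products commute with open restriction. I expect the main obstacle to be the linearity in (b), namely that the factorization and Thom--Sebastiani identifications intertwine cap products with the correct sign convention, including the twist $(-1)^{\chi}$ in \eqref{eq-braiding-sign}. I would handle this by noting that the cohomology classes are even in the relevant $\Z/2$-grading coming from $\chi$, so only the cohomological Koszul sign survives.
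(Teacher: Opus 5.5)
Your proposal is correct and follows the paper's argument. Part (a) is the same projection-formula chain, using \eqref{eq:taut classes coprod} and the $\H^*$-linearity of $\gr^!$. Part (b) is the paper's identification, via dimensional reduction, of the cap product with cap product by $\kappa^*\ch(\calE_\nu)$ on $\calM_X$, followed by additivity of the Chern character; your check that the factorization, Thom--Sebastiani and K\"unneth identifications are linear over pulled-back classes just spells out what the paper leaves implicit. For (c), deducing the semistable statements from the full ones via Kirwan surjectivity and Corollary~\ref{cor:exists coproduct}(c) is a valid alternative to the paper's remark that the same proof applies verbatim on $\calM^\ss_S(p)$.
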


\begin{proof} The proof of (a) appears in \cite[prop.~1.18]{MMSV}, we provide it for the reader's convenience. 
Starting from the induction diagram~\eqref{induction-diagram-S} we have, by the projection formula 
\begin{align*}
		c \bullet (u \cdot v)&=\text{e}_{\nu}(c) \cap \ev_*\gr^!(u \otimes v)\\
		&=\ev_*(\ev^*(\text{e}_{\nu}(c) )\cap \gr^!(u \otimes v)).
\end{align*}
By \eqref{eq:taut classes coprod}, we have $\ev^*(\text{e}_{\nu}(c))=\gr^*((\text{e}_{\nu_1} \otimes \text{e}_{\nu_2})(\Delta(c)))$, 
from which we get
$$\ev_*(\ev^*(\text{e}_{\nu}(c)) \cap \gr^!(u \otimes v))=\ev_*\gr^!((\text{e}_{\nu_1} \otimes \text{e}_{\nu_2})(\Delta(c)) \cap u \otimes v)$$
as wanted. The same proof works in the case of $\bfH_S^{\ss}(p)$.
	
Next, we give the proof of (b).
By the projection formula, under the dimensional reduction isomorphism \eqref{eq:dimensional-reduction}, the cap-product 
$\bfH_S(\nu)\to\bfH_S(\nu)$ with the Chern character
$\ch(\calE_\nu)$ in $\H^*(\calM_S(\nu),\Q)$ is identified with the cap-product 
$\bfH_X(\nu)\to\bfH_X(\nu)$ with the cohomology class 
$\kappa^*\ch(\calE_\nu)$ in $\H^*(\calM_X(\nu),\Q)$.
Here the map $\kappa$ is as in \eqref{kappa}.
Hence, the claim follows from the additivity of the Chern character.
The proof for (c) is identical.
\end{proof}

\medskip

\subsection{The space of tautological classes in the CoHA}
We next introduce a subspace of tautological classes in $\bfH_S$.
Following Schürg--Toën--Vezzosi, see \cite{STV} to which we refer for details, we first briefly recall the definition of the reduced virtual 
fundamental class $[\calM_S(\nu)^\rd]$. 
The notation $[\mathcal M_S(\nu)^{\mathrm{rd}}]$ should not be confused with the reduced classical stack 
$\mathcal M_S(\nu)_{\mathrm{red}}$ appearing in \eqref{reduced-stack}.
Let $\Pic(S)$ be the derived Picard stack of $S$ and let $\Pic(S)^\cl$ be its classical truncation. For 
any Mukai vector $\nu$ there is a canonical map of derived stacks
\begin{equation}\label{eq:redclassone}
\det_S\colon  \calM_S(\nu) \to \Pic(S)
\end{equation}
which sends a coherent sheaf $\calF$ to its determinant line bundle.
The tangent morphism at a point $\calF$ is given by the trace map
$$\mathbb{T}_\calF \det_S=\mathrm{tr}_\calF[1]\colon  R\Hom(\calF,\calF)[1] \to R\Hom(\calO_S,\calO_S)[1].$$
By \cite[prop.~4.6]{STV} there is a chain of morphisms
\begin{equation}\label{eq:redclasstwo}
\mathrm{pr}\colon \Pic(S) \stackrel{\sim}{\to} \Pic(S)^\cl \times \Spec\,\Sym(\H^0(S,K_S)[1]) \to \Spec\,\Sym(\H^0(S,K_S)[1]).
\end{equation}
On tangent spaces it corresponds to the projection
\begin{equation*}\label{eq:redclasstwo2}
\mathbb{T}_{\mathrm{pr}}\colon \mathbb{T}_eR\Hom(\calO_S,\calO_S)[1] \to 
\mathbb{T}_0\Spec\,\Sym(\H^0(S,K_S)[1])\cong\Ext^2(\calO_S,\calO_S).
\end{equation*}
Composing \eqref{eq:redclassone} and \eqref{eq:redclasstwo} we get a canonical morphism
$$\pi\colon \calM_S(\nu) \to \Spec\,\Sym(\H^0(S,K_S)[1]).$$
The reduced stack $\calM_S(\nu)^\rd$ is defined by the homotopy Cartesian square
$$\xymatrix{\calM_S(\nu)^\rd\ar[d] \ar[r]& \calM_S(\nu) \ar[d]^-{\pi}\\ \Spec \C \ar[r] &\Spec\,\Sym(\H^0(S,K_S)[1]).}$$
Equivalently, we have
$$\calM_S(\nu)^\rd=\calM_S(\nu) \underset{\Pic(S)}{\times}\Pic(S)^\cl.$$
By construction, we have
$$(\calM_S(\nu)^\rd)^\cl=\calM_S(\nu)^\cl, \quad \dim(\calM_S(\nu)^\rd)=\dim(\calM_S(\nu))+1$$
and the derived stack $\calM_S(\nu)^\rd$ is quasi-smooth. 
We denote its fundamental class by 
$$[\calM_S(\nu)^\rd]\in \H_{2\dim(\calM_S(\nu))+2}^\BM(\calM_S(\nu),\Q).$$

\begin{definition}
We define
$\bfH_S^\taut(\nu)=\Lambda_S \bullet [\calM_S(\nu)^\rd]$ and $\bfH_S^\taut=\bigoplus_\nu \bfH_S^\taut(\nu).$
We define
$\bfH_S^{\taut,\geqslant 1}(\nu)$ and $ \bfH_S^{\taut,\geqslant 1}$ likewise.
\end{definition}

\begin{remark}\label{rem:reduce class is classical class}
Assume that $\calM^{\geqslant 1}_S(\nu)$ is non-empty, irreducible and that $\calM^\ss_S(\nu)^\cl$ 
is nonempty and generically a $\GM$-gerbe over $M_S(\nu)$. Then, we have
$$\dim(\calM^{\geqslant 1}_S(\nu))^\cl=\dim(\calM^{\geqslant 1}_S(\nu))+1=\dim(\calM^{\geqslant 1}_S(\nu)^\rd).$$
It follows that the derived stack $\calM_S^{\geqslant 1}(\nu)^\rd$ is generically underived.
Thus we have
\begin{equation}\label{eq:reduced class is truncated class}
[\calM_S^{\geqslant 1}(\nu)^\cl]=[\calM_S^{\geqslant 1}(\nu)^\rd].
\end{equation}
In this case the space $\bfH_S^{\taut,\geqslant 1}(\nu)$ coincides with the space of classical tautological classes 
$\Lambda_S \bullet [\calM^{\geqslant 1}_S(\nu)^\cl]$.
\end{remark}

\begin{proposition}\label{prop:hecke acts on reduced tautological} The subspace 
$\bfH_S^{\taut,\geqslant 1}$ of $ \bfH_S^{\geqslant 1}$ is preserved by Hecke operators.
\end{proposition}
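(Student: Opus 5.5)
We need to show that for $x\in\bfH_S^0$ and $c\in\Lambda_S$, the Hecke operator $T_x$ sends $\res^{\geqslant 1}(c\bullet[\calM_S(\nu)^\rd])$ into $\Lambda_S\bullet[\calM_S^{\geqslant 1}(\nu+m\delta)^\rd]$. The plan is to separate the problem into two parts: commuting the tautological action past the Hecke operator, and computing Hecke operators applied to the reduced fundamental class itself.

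\emph{Step 1: reduction to $c=1$.} The Hecke action is induced by the CoHA product followed by $\res^{\geqslant 1}$ (Proposition~\ref{prop:Hecke-COHA}), and cap products commute with open restriction. So by Proposition~\ref{prop:compat}(a) we get
\[
T_x(c\bullet y)=\res^{\geqslant 1}\big(x\cdot(c\bullet y)\big)=\sum_i \pm\, c_{i,1}\bullet \res^{\geqslant 1}\big((S(c_{i,2})\bullet x)\cdot y\big),
\]
where we write $c$ in terms of $\Delta$ and invert using that $\Delta$ on $\Lambda_S$ is primitive on generators, i.e.\ we use the antipode $S(\overline p_n(\gamma))=-\overline p_n(\gamma)$. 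Moreover, $\bfH_S^0$ is stable under $\Lambda_S\bullet$. It therefore suffices to show that $T_x[\calM_S^{\geqslant 1}(\nu)^\rd]\in\bfH_S^{\taut,\geqslant1}$ for every $x\in\bfH_S^0$.

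\emph{Step 2: reduction to generators.} By Theorem~\ref{thm:Hecke is gen in degree one}, $\bfH_S^0$ is generated by the elements $D_{1,n}(\beta)=\beta z^n\cap[\calM_S(\delta)^\cl]$. Each of these is a tautological class applied to $[\calM_S(\delta)]$, which equals the reduced class $[\calM_S(\delta)^\rd]$ since $\calM_S(\delta)^\cl=S\times\BGM$; concretely $z^n\beta$ is expressed through the $p_k(\gamma)$ on $\calM_S(\delta)$. By Step 1 applied iteratively, we only need to show that $T_{D_{1,0}(1)}$, i.e.\ multiplication by $[\calM_S(\delta)^\rd]$, sends $[\calM^{\geqslant1}_S(\nu)^\rd]$ into $\bfH^{\taut,\geqslant1}_S$. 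Strictly speaking, Step 1 controls $c\bullet(x\cdot y)$ rather than $(c\bullet x)\cdot y$. However, using Proposition~\ref{prop:compat}(a) with $\Delta(c)=c\otimes1+1\otimes c+\dots$ and induction on the degree of $c$, we get $(c\bullet x)\cdot y\in c\bullet(x\cdot y)+\sum(\text{lower})\bullet(x'\cdot(c'\bullet y))$. Together with Step 1 this closes the induction.

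\emph{Step 3: the main obstacle.} It remains to identify $\res^{\geqslant1}\big([\calM_S(\delta)^\rd]\cdot[\calM_S(\nu)^\rd]\big)$. I would use the reduced/microlocal description from Proposition~\ref{prop-scal-microlocal}: the reduced class is the image of $1$ under $\H^{\Lambda_{\rm scal}}\to\H^\BM$. The product of reduced classes should then equal $\Lambda_S$-multiples of the reduced class of the target, by a virtual pullback computation. On the pure locus, the extension correspondence $\calM_S^{\ex,\geqslant1}(\delta,\nu)\to\calM^{\geqslant1}_S(\nu+\delta)$ is the relative Quot scheme of length-one quotients, which is a $\mathbb P(\calE)$-type bundle. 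The virtual pullback along the quasi-smooth map $\gr$ of $[\calM_S(\delta)^\rd]\boxtimes[\calM_S(\nu)^\rd]$ should compare with the pullback along $\ev$ of $[\calM_S(\nu+\delta)^\rd]$ by a Chern/Euler class of the excess bundle. That excess bundle is built from the universal sheaves, so pushing forward along $\ev$ gives tautological classes capped with $[\calM_S^{\geqslant1}(\nu+\delta)^\rd]$, via the Grothendieck--Riemann--Roch and projective bundle formulas. I expect the delicate point to be the comparison of the two derived (reduced) structures on $\calM_S^{\ex,\geqslant1}$: the two reductions remove one trivial factor each on the source but only one on the target, so an extra $\mathcal O$ appears. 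I would handle it through the trace map compatibility $\det_S$ with direct sums, using the functoriality of purity transformations from \S\ref{sssec-purity-transform}. The expected output is a formula $T_{D_{1,0}(1)}[\calM^{\geqslant1}_S(\nu)^\rd]=\ev_*(e(\text{excess})\cap\ev^![\calM_S^{\geqslant1}(\nu+\delta)^\rd])$, which is manifestly tautological.
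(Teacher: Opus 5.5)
\textbf{Gap in Step 2: the reduction to $D_{1,0}(1)$ on the bare reduced class is circular.}

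Your Step 1 is correct. The Hopf-module identity $x\cdot(c\bullet y)=\sum\pm c_{(1)}\bullet\big((S(c_{(2)})\bullet x)\cdot y\big)$ reduces the claim to $T_x[\calM_S^{\geqslant1}(\nu)^\rd]\in\bfH_S^{\taut,\geqslant1}$ for all $x\in\bfH_S^0$. Combined with Theorem~\ref{thm:Hecke is gen in degree one}, this correctly reduces further to $x\in\bfH_S(\delta)$, i.e.\ to all $D_{1,n}(\beta)$.

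The gap is the next reduction in Step 2, to the single element $m_0=D_{1,0}(1)=[\calM_S(\delta)^\cl]$ acting on $y=[\calM_S^{\geqslant1}(\nu)^\rd]$. For primitive $c$, Proposition~\ref{prop:compat}(a) gives $T_{c\bullet m_0}(y)=c\bullet T_{m_0}(y)\pm T_{m_0}(c\bullet y)$. Step 1 then rewrites $T_{m_0}(c\bullet y)$ back in terms of $T_{c\bullet m_0}(y)$. The term $1\otimes c$ of $\Delta(c)$ never lowers the degree, so your induction is circular. Equivariance only determines the Hecke pairing on the $\Lambda_S$-submodule generated by $m_0\otimes y$ under the diagonal action, and that submodule does not contain $(c\bullet m_0)\otimes y$.

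In fact the reduced statement carries no information. When $\alpha^2>0$ and $|\alpha|$ contains an irreducible curve, $T_{m_0}(y)$ lies in $\H^\BM_{2\alpha^2+4}(\calM_S^{\geqslant1}(\nu+\delta),\Q)$. This group vanishes because $\dim\calM_S^{\geqslant1}(\nu+\delta)^\cl=\alpha^2+1$ (Proposition~\ref{prop:estimateclassicaldim}). By contrast, $T_{D_{1,0}(\beta)}$ for ample $\beta$ is injective on $\frakg_S^{\geqslant1}$ (Theorem~\ref{thm:chi-indep-Lie-algebra}). All of the content sits in the insertions.

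\textbf{What is needed instead (the route of Appendix~\ref{app:F}).}

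You need a geometric description of the length one Hecke correspondence that handles all insertions at once.
\begin{enumerate}
\item Base change $\gr$ along $\calM_S(\delta)^\cl\times\calM_S(\nu)^\rd\to\calM_S(\delta)\times\calM_S(\nu)$.
\item The map $\calM_S(\delta)^\cl\to\Pic(S)$ factors through $\Pic(S)^\cl$, and $\det_S$ is multiplicative. Hence the resulting stack also equals $\overline{\calM}_S^\ex(\delta,\nu)\times_{\calM_S(\nu+\delta)}\calM_S(\nu+\delta)^\rd$, so it lies over the reduced target; see \eqref{diag:partialtruncations}.
\item On the pure locus this stack is $X(\delta,\nu)$. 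It is a derived projectivization both of $\calE_\nu^\vee\otimes K_S[1]$ over $\calM_S^{\geqslant1}(\nu)\times S$ and of $\calE_{\nu+\delta}$ over $\calM_S^{\geqslant1}(\nu+\delta)\times S$. So both legs are quasi-smooth, and
\[
(\gr')^!\big([\calM_S(\delta)^\cl]\boxtimes[\calM_S^{\geqslant1}(\nu)^\rd]\big)=(\ev')^!\big([\calM_S^{\geqslant1}(\nu+\delta)^\rd]\big).
\]
\item Consequently $T_{c_1\bullet m_0}(c_2\bullet y)=\pm\,\ev'_*\big((\gr')^*(c_1\boxtimes c_2)\cap(\ev')^![\calM_S^{\geqslant1}(\nu+\delta)^\rd]\big)$. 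The projective bundle and Grothendieck--Riemann--Roch computation of \cite[\S2]{MMSV} then places this in $\Lambda_S\bullet[\calM_S^{\geqslant1}(\nu+\delta)^\rd]$.
\end{enumerate}

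There is no excess class in this comparison. The mismatch of trivial factors you noticed is absorbed into the relative virtual dimension $1$ of $\ev'$. If the extra trivial summand were instead treated as an excess bundle, its Euler class would kill everything.

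The microlocal description of the reduced class (Proposition~\ref{prop-scal-microlocal}) plays no role here. The paper uses it only for the inclusion $\bfH_S^\taut\subseteq\frakg_S$.
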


\begin{proof}
A similar result is proved in \cite[thm.~C, \S2]{MMSV} for the space of non-reduced tautological classes. 
The proof of Proposition~\ref{prop:hecke acts on reduced tautological} is a variation. We sketch it in \S\ref{app:F}.
\end{proof}

\medskip

\subsection{Markman's Theorem for curve classes}

The following result, which relates tautological classes in Borel--Moore homology with the affinized BPS Lie algebra is proved in
\S\ref{sec:proofmarkman}.

\begin{theorem}\label{thm:Markman}
Let $\alpha \in \H^2(S,\Z)$ be the class of an effective one-cycle. For any integer $n$ we have a commutative diagram of isomorphisms
$$\xymatrix{\bfH_S^{\taut}(\alpha+n\delta) \ar[r]^-\cong \ar[d]_-\cong & \frakg_S(\alpha+n\delta) \ar[d]^-\cong\\ 
\bfH_S^{\taut,\ss}(\alpha+n\delta)\ar[r]^-\cong & \frakg^{\ss}_S(\alpha+n\delta)}$$
where the vertical maps are induced by the restrictions to the open subset $\calM^\ss_S$ of $\calM_S$. 
\end{theorem}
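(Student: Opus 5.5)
The plan is to reduce everything to the pure locus $\calM_S^{\geqslant 1}$, where Hecke operators act, and then transport Markman's theorem from a primitive generic Mukai vector along the $\chi$-direction. The right vertical arrow is an isomorphism by Theorem~\ref{thm:restriction-isom}(b), and by Proposition~\ref{prop:hecke on BPS Lie}(b) restriction gives $\frakg_S(\alpha+n\delta)\cong\frakg_S^{\geqslant 1}(\alpha+n\delta)$. The first real step is to show that tautological classes are primitive, i.e.\ $\bfH_S^{\taut}(\nu)\subseteq\frakg_S(\nu)$. For this I would realize the reduced class via microlocal homology as in Proposition~\ref{prop-scal-microlocal}: $[\calM_S(\nu)]_{\rd}$ is the image of $1$ under $\H^{\Lambda_{\mathrm{scal}}}\to\H^\BM$. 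Since $\Lambda_{\mathrm{scal}}\subset\Lambda_\snil$, this class lies in $\bfH_S(\nu)_\snilp$, hence is primitive by Lemma~\ref{lem-snilp-prim-include}. Next, Proposition~\ref{prop:compat}(b) together with \eqref{def:coprod BBHS} shows that primitive elements are stable under $\Lambda_S\bullet$. Indeed, $\Delta(c)\bullet(x\otimes 1+1\otimes x)$ equals $c\bullet x\otimes 1+1\otimes c\bullet x$, because elements of $\Lambda_S$ of positive degree act on $\bfH_S(0)=\Q$ by zero and $\Delta(c)=c\otimes1+1\otimes c+\cdots$. I would check this by working on generators $\overline{p}_n(\gamma)$, which are primitive in $\Lambda_S$, and then extending multiplicatively. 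This gives the top horizontal inclusion, and by restriction the bottom one as well.

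Second, I would prove equality at one $n$. Pick a polarization $H'$ generic for a primitive $\nu'=\alpha+n_0\delta$. Then $\calM^\ss_S(\nu')$ is a $\GM$-gerbe over the smooth $M_S(\nu')$, and Markman's theorem, transported to Borel--Moore homology via Poincaré duality and the reduced class (which by Remark~\ref{rem:reduce class is classical class} is the fundamental class), gives $\bfH_S^{\taut,\ss}(\nu')=\H^\BM_*(\calM^\ss_S(\nu'))=\frakg^\ss_S(\nu')$. One point needs care: I must handle the $\H^*(\BGM)$ factor, i.e.\ check that powers of the gerbe class are tautological, using $\overline{p}_n$ of the point class. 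Polarization independence of $\frakg_S$ (Theorem~\ref{thm:restriction-isom}) then lets me switch back to $H$. Pulling back along the isomorphism $\frakg_S\to\frakg_S^\ss$, we obtain $\bfH_S^\taut(\nu')=\frakg_S(\nu')$, since the inclusion is compatible with restriction and the dimensions agree degree-wise.

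Third, I would propagate to all $n$ on the pure locus. Proposition~\ref{prop:hecke acts on reduced tautological} says $\bfH_S^{\taut,\geqslant1}$ is Hecke-stable, and Theorem~\ref{thm:chi-indep-Lie-algebra}(a) says $T_{D_{1,0}(\beta)}\colon\frakg^{\geqslant1}_S(\alpha+n\delta)\to\frakg^{\geqslant1}_S(\alpha+(n+1)\delta)$ is an isomorphism. So equality at $n_0$ propagates upward. To go downward, I would use $T_{D_{1,0}(\beta)}^{-1}$, arguing via finite-dimensionality in each cohomological degree: the inclusion $\bfH^{\taut,\geqslant1}(n)\subseteq\frakg^{\geqslant1}(n)$ combined with an injective map $T$ sending $\bfH^{\taut,\geqslant1}(n)$ into $\bfH^{\taut,\geqslant1}(n+1)=\frakg^{\geqslant1}(n+1)$ gives only an inequality of dimensions. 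So instead I would use the degree-$m\delta$ invertible Hecke operator $\omega_{\calO(C)}$ of Lemma~\ref{lem:PicardisHecke}, whose inverse $\omega_{\calO(-C)}$ is tensoring by a line bundle, which visibly preserves tautological classes. This covers all residues by combining it with the upward step. Finally, I would lift from $\geqslant1$ to $\calM_S$ using the isomorphism $\res^{\geqslant1}\colon\frakg_S(\nu)\cong\frakg_S^{\geqslant1}(\nu)$ and the inclusion of the first step; the diagram then commutes because all maps are restrictions.

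I expect the main obstacle to be the first step: rigorously identifying the microlocal construction of the reduced class with a class in the image of $\H^{\Lambda_\snil}$, compatibly with the factorization coproduct. A secondary subtlety is the $\H^*(\BGM)$ factor in the Markman step.
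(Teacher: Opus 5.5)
Your proposal is correct and has the same architecture as the paper's proof. You show tautological classes are primitive via the scalar cone $\Lambda_{\mathrm{scal}}$, and get equality at a primitive $H'$-generic vector from Markman. You then propagate along $\alpha+\Z\delta$ on the pure locus using Hecke stability of $\bfH_S^{\taut,\geqslant 1}$, and lift back via $\frakg_S(\nu)\cong\frakg_S^{\geqslant1}(\nu)$. Three points differ.

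\emph{Primitivity of tautological classes.} The paper runs the factorization argument directly on a presheaf $\DT^{\mathrm{scal}}_{\calM_S}$, which shows at once that all of $\H^*(\calM_S)\cap[\calM_S]_{\rd}$ is primitive. Your detour through $\Lambda_{\mathrm{scal}}\subset\Lambda_\snil$ and Lemma~\ref{lem-snilp-prim-include} is equivalent. It is not the obstacle you anticipate: Proposition~\ref{prop-scal-microlocal}(b) together with functoriality of microlocal homology in the cone is all that is needed. Moreover, since the whole image of $\H^{\Lambda_{\mathrm{scal}}}$ factors through $\H^{\Lambda_\snil}$, you do not even need stability of primitives under $\Lambda_S\bullet$ (the paper's Lemma~\ref{lem:quasiprimitive}(a)).

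\emph{Base case.} You work on the full stack, using Theorem~\ref{thm:restriction-isom}(b) for $H'$ and only smoothness of the reduced stack on the stable locus. The paper instead works on $\calM_S^{\geqslant1}$ and invokes Remark~\ref{rem:reduce class is classical class}. No ``switching back'' to $H$ is needed, since neither $\frakg_S(\nu')$ nor $\bfH^\taut_S(\nu')$ involves a polarization.

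\emph{Downward propagation.} This is the substantive difference. The paper applies Proposition~\ref{prop:algebraic trick} a second time, to $\bfH_S^{\taut,\geqslant1}(\alpha+\Z\delta)$ itself, so that $T_{D_{1,0}(\beta)}$ restricts to isomorphisms between tautological subspaces. It then moves in both directions with $T_{D_{1,0}(\beta)}^{\pm1}$, which gives $\chi$-independence of tautological classes as a statement in its own right. You use $T_{D_{1,0}(\beta)}$ only upward, where Theorem~\ref{thm:chi-indep-Lie-algebra}(a) plus Hecke stability already force equality, and move downward with $\omega_\calL^{-1}$. This is valid and runs the trace argument only once. Lemma~\ref{lem:PicardisHecke} plays no role here: any $\calL$ with $\alpha\cdot c_1(\calL)>0$ works. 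You should, however, state explicitly that $\omega_\calL^{\pm1}$ also preserves $\frakg_S^{\geqslant1}$. This holds because $\omega_\calL$ is a bialgebra automorphism of $\bfH_S$ commuting with $\res^{\geqslant1}$.

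One correction in the Markman step. The classes $\overline{p}_n(\delta)$ cannot produce the $\H^*(\BGM,\Q)$ direction. For sheaves of rank zero, $\int_S\ch(\calE_{\nu'})\cup\delta$ restricts to $\rk(\nu')e^{\pm z}=0$ on each residual gerbe. By contrast, $\overline{p}_2(\beta)$ restricts to $\pm 2(\alpha\cdot\beta)z$, which is nonzero for $\beta$ ample. Better still, cite Proposition~\ref{prop:stackymarkman}. It also handles the other half of the issue: pullbacks of Markman's generators, built from a quasi-universal sheaf on $M_S(\nu')$, are tautological on $\calM^\ss_S(\nu')$, via the twist of $\calE_{\nu'}$ by $V$.
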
 

\begin{remark} A corollary of Theorem~\ref{thm:Markman} is that the relations satisfied by tautological classes in 
$\bfH_S(\nu)$ and in $\bfH_S^\ss(\nu)$ are the same. In other words, a tautological class on 
$\calM_S(\nu)$ whose restriction to $\calM_S^\ss(\nu)$ is zero in fact vanishes on the whole of $\calM_S(\nu)$. This is in stark 
contrast to the case of the moduli stack of vector bundles on a curve $C$, where by the Atiyah--Bott theorem $\H_*^\BM(\Bun_C(\nu),\Q)$ is 
a free module of rank one over the ring $\BBH^{\geqslant 1}_C$ of tautological classes (generated by Chern classes of degree at most the 
rank). 
\end{remark}

\medskip

\subsection{Stacky version of Markman's theorem} We will need the following slight variant of Markman's `generation of cohomology' theorem:

\begin{proposition}\label{prop:stackymarkman} Let $\nu\neq \delta$ be an effective primitive Mukai vector which is generic with respect to a polarization $H$. Then the canonical restriction map $\res^\ss\colon \bfH_S^{\taut,\geqslant 1}(\nu) \to \bfH_S^{\ss}(\nu)$ is surjective.
\end{proposition}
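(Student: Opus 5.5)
The plan is to reduce the statement to Markman's theorem for the smooth coarse moduli space, working one step at a time. Since $\nu$ is primitive and $H$-generic, every semistable sheaf of class $\nu$ is stable. Hence $\calM^\ss_S(\nu)^\cl\to M_S(\nu)$ is a $\GM$-gerbe over a smooth projective symplectic variety of dimension $\nu\cdot\nu+2$. In particular $\calM^\ss_S(\nu)$ is classical and smooth of dimension $\dim M_S(\nu)-1=\vdim\calM_S(\nu)+1$, so the reduced class restricts to the ordinary fundamental class: $[\calM_S(\nu)^\rd]|_{\calM^\ss_S(\nu)}=[\calM^\ss_S(\nu)]$, in the spirit of Remark~\ref{rem:reduce class is classical class}. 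The condition $\nu\neq\delta$ is needed here; for $\nu=\delta$, i.e.\ zero-dimensional sheaves, semistable means length one and the statement is of a different nature. Because the $\Lambda_S$-action commutes with open restriction, the image of $\res^\ss$ is $\Lambda_S\bullet[\calM^\ss_S(\nu)]$. By Poincaré duality on the smooth stack this equals $\H^*_\taut(\calM^\ss_S(\nu),\Q)\cap[\calM^\ss_S(\nu)]$, so it suffices to show that the tautological classes $p_n(\gamma)$ generate $\H^*(\calM^\ss_S(\nu),\Q)$ as a ring.

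Next I would split the cohomology of the gerbe. Choose a global equivariant parameter as in \S\ref{sec-global}, for instance a suitable combination of $\ch_i(\calE_\nu)$ integrated against a class on $S$; by primitivity of $\nu$ there is a weight-one (or nonzero-weight) determinant-type line bundle. Its first Chern class $\theta$ restricts to a generator of $\H^*(\BGM,\Q)$ on fibres. Leray--Hirsch then gives $\H^*(\calM^\ss_S(\nu),\Q)\cong\H^*(M_S(\nu),\Q)[\theta]$, and $\theta$ is itself tautological, being of the form $p_n(\gamma)$ for some $\gamma$ with $\int$ pairing with $\nu$ nonzero.

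It remains to see that $\H^*(M_S(\nu),\Q)$, pulled back, lies in the tautological ring. Markman's theorem says $\H^*(M_S(\nu),\Q)$ is generated by the Künneth components of the Chern character of a (possibly twisted) universal sheaf $\calE^{tw}$ on $M_S(\nu)\times S$, normalized in some way. Pulled back to $\calM^\ss_S(\nu)\times S$, $\calE^{tw}$ differs from $\calE_\nu$ by a twist by a line bundle pulled back from the stack, namely by $\theta$ up to a rational multiple. Therefore $\ch(\calE^{tw})=\ch(\calE_\nu)e^{-c\theta}$, and its Künneth components are polynomials in the $p_n(\gamma)$ and $\theta$, hence tautological.

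The main obstacle is this last comparison. Markman's generators are stated for a normalized universal class, which exists only as a twisted sheaf or via a rational Chern character when no universal sheaf exists. I will therefore check carefully that his normalization (via $\ch(\calE)\cdot\exp$ of a correction divisor class) is expressed in the $p_n(\gamma)$ and $\theta$ with rational coefficients. I expect to be able to do this using the degree-zero and degree-two classes $\overline{p}_0,\overline{p}_1,\overline{p}_2$, which evaluate to constants determined by $\nu$.
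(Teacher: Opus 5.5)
This is essentially the paper's argument. Since $\calM^\ss_S(\nu)^\rd$ is the smooth $\GM$-gerbe, surjectivity reduces to tautological generation of $\H^*(\calM^\ss_S(\nu),\Q)$, which you obtain from Leray--Hirsch with a tautological degree-two class of nonzero weight and Markman's generators pulled back from $M_S(\nu)$ (note it is the reduced stack that is classical, not $\calM^\ss_S(\nu)$ itself, which carries a trivial obstruction line). The normalization issue you flag is what the paper sidesteps by using the quasi-universal sheaf descended from $\calE_\nu\otimes V$ with $V=((\pi_{\calM})_*(\calE_\nu\otimes\pi_S^*\calL))^{\vee}$: Markman's theorem applies to it directly, its correction factor $\ch(V)$ is tautological by Grothendieck--Riemann--Roch, and $\ch_1(V)$ plays the role of your $\theta$.
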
 
\begin{proof} By our assumptions, the canonical morphism $p\colon \calM^\ss_S(\nu) \to M_S(\nu)$ is a $\mathbb{G}_m$-gerbe. This gerbe may be nontrivial, but there always exists a quasi-universal sheaf $U \in \Coh(M_S(\nu) \times S)$, whose construction we briefly recall. Let $V=((\pi_{\calM})_*(\calE_\nu\otimes \pi_S^*\mathcal{L}))^{\vee}$ where $\mathcal{L}$ is a sufficiently ample line bundle on $S$. This is a vector bundle on $\calM_S^\ss(\nu)$ with weight $-1$ with respect to the $\BGM$-action and the descent $U$ to $M_S(\nu) \times S$ of $\calE_\nu \otimes V$ is a quasi-universal sheaf. 

As an algebra, $\H^*(\calM^\ss_S(\nu),\Q)$ is generated by $p^*\H^*(M_S(\nu),\Q)$ and $\ch_1(V)$. Note that $\ch_i(V) \in \H^*_\taut(\calM^\ss_S(\nu),\Q)$ for all $i$ by construction.
By Markman's theorem \cite[Cor.~2, Rem.~3]{Markman} (see also \cite[Thm~1.1, Rem.~3.8]{Bottini}), $\H^*(M_S(\nu),\Q)$ is generated by the components of $\int_S \ch(U) \cup \eta$ for $\eta \in \H^*(S,\Q)$. Thus $p^*(\H^*(M_S(\nu),\Q))$ is generated by the components of $\int_S \ch(\calE_\nu) \cup \ch(V) \cup \eta=\ch(V) \cup \int_S \ch(\calE_\nu) \cup \eta$ for $\eta \in \H^*(S,\Q)$. As $\ch(V) \in \H^*_\taut(\calM^\ss_S(\nu),\Q)$ it follows that $p^*(\H^*_\taut(M_S(\nu),\Q)) \subset \H^*_\taut(\calM^\ss_S(\nu),\Q)$, and hence $\H^*_\taut(\calM^\ss_S(\nu),\Q)=\H^*(\calM^\ss_S(\nu),\Q)$ as wanted.
\end{proof}

\medskip

\subsection{Proof of Theorem~\ref{thm:Markman}}\label{sec:proofmarkman}
 
\medskip

Let $\alpha$ satisfy the hypotheses of Theorem~\ref{thm:Markman}. 
We first prove the inclusion
\begin{equation}\label{eq:proof markman 1}
\bfH_S^{\taut}(\alpha+\Z\delta) \subseteq \frakg_S(\alpha+\Z\delta).
\end{equation}
Recall from \eqref{eq-def-Lambda-scal} that we have defined a subcone $\Lambda_{\mathrm{scal}} \subset \widetilde{\calM_S}$ consisting of pairs with scalar endomorphisms.
We define a presheaf $\DT_{\calM_S,\pro}^\mathrm{scal}$ over the complex plane by the following assignment for $U \subset \mathbb{C}$
\[\DT_{\calM_S}^\mathrm{scal}(U)= R  \Gamma( \widetilde\calM_{S, U} \cap \Lambda_{\mathrm{scal}}\,,\,\varphi_{\widetilde\calM_S} |^!_{\widetilde\calM_{S, U}  \cap \Lambda_{\mathrm{scal}}}) \in \Pro(\mathrm{Vect})_{\Gamma}.\] 
By Proposition~\ref{prop-scal-microlocal}\ref{item-scal-compute}, there is an isomorphism
$\DT_{\calM_S}^\mathrm{scal}(U) \cong \mathrm{H}^*(\calM_S)$
for convex $U$, which shifts the grading by $\vdim \calM_S + 2$. Now take two disjoint convex open subsets $U_1, U_2 \subset U$ and consider the following commutative diagram (we omit the shifts for simplicity)
\[\begin{tikzcd}
	{\mathrm{H}^*(\calM_S)} & {\mathrm{DT}^{\mathrm{scal}}_{\calM_S}(U)} & {\mathrm{DT}_{\calM_S}(U)} & {\bfH_S} \\
	{\mathrm{H}^*(\calM_S) \oplus \mathrm{H}^*(\calM_S)} & {\mathrm{DT}^{\mathrm{scal}}_{\calM_S}(U_1 \coprod U_2)} & {\mathrm{DT}_{\calM_S}(U_1 \coprod U_2)} & {\bfH_S \hat{\otimes} \bfH_S.}
	\arrow["\cong", from=1-1, to=1-2]
	\arrow["{(\id, \id)}"', from=1-1, to=2-1]
	\arrow[from=1-2, to=1-3]
	\arrow["{\mathrm{res}}", from=1-2, to=2-2]
	\arrow["\cong", from=1-3, to=1-4]
	\arrow["{\mathrm{res}}", from=1-3, to=2-3]
	\arrow["\Delta", from=1-4, to=2-4]
	\arrow["\cong"', from=2-1, to=2-2]
	\arrow[from=2-2, to=2-3]
	\arrow["\cong"', from=2-3, to=2-4]
\end{tikzcd}\]
The commutativity of this diagram shows that the image of the composite of the upper horizontal maps is contained in $\mathfrak{g}_S$.
On the other hand, Proposition~\ref{prop-scal-microlocal}\ref{item-scal-VFC} shows that the image of the composite of the upper horizontal maps is equal to $\mathrm{H}^*(\calM_S) \cap [\mathcal{M}_S]_{\mathrm{rd}} \subset \bfH_S$.
In particular, we obtain the desired inclusion \eqref{eq:proof markman 1}.

It remains to check the inverse inclusion of \eqref{eq:proof markman 1}. 
By Proposition~\ref{prop:hecke acts on reduced tautological}, the graded vector space 
$\bfH_S^{\taut,\geqslant 1}(\alpha+\Z\delta)$ is stable under all Hecke operators. It is also preserved by 
$\otimes \calL$ for any line bundle $\calL$ over $S$, because 
$$[\calM^{\geqslant 1}_S(\nu\otimes \calL)^\rd]=\omega_\calL ([\calM^{\geqslant 1}_S(\nu)^\rd]).$$ 
As in the proof of 
Theorem~\ref{thm:chi-indep-Lie-algebra}, it follows by application of Proposition~\ref{prop:algebraic trick} that for any 
$\beta\in\H^2(S;\Z)$ such that $\alpha \cdot \beta \neq 0$, the map $T_{D_{1,0}(\beta)}$ restricts to isomorphisms 
$$T_{D_{1,0}(\beta)}\colon \bfH_S^{\taut,\geqslant 1}(\alpha+n\delta) \cong \bfH_S^{\taut,\geqslant 1}(\alpha+(n+1)\delta).$$
The Mukai vector $\alpha+\delta$ is primitive and there exists a polarization $H$ with respect to which it is generic. 
It follows from Yoshioka's theorem that $\calM^\ss_S(\alpha+\delta)=\calM_S^\st(\alpha+\delta)$ is non-empty and is a $\GM$-gerbe over a smooth scheme. 
From this we deduce by Remark \ref{rem:reduce class is classical class}
that $[\calM_S^{\geqslant 1}(\alpha+\delta)^\rd]=[\calM_S^{\geqslant 1}(\alpha+\delta)^\cl]$. 
Moreover, by the stacky version of Markman's theorem (see Proposition~\ref{prop:stackymarkman}) we have a surjection
$$\bfH_S^{\taut,\geqslant 1}(\alpha+\delta) \to \bfH_S^\ss(\alpha+\delta)=\frakg^\ss_S(\alpha+\delta).$$
Using Theorem \ref{thm:restriction-isom}, Proposition \ref{prop:hecke on BPS Lie} and \eqref{eq:proof markman 1} we get the chain of maps
$$\bfH_S^{\taut,\geqslant 1}(\alpha+\delta) \subseteq \frakg^{\geqslant 1}_S(\alpha+\delta) 
\xrightarrow{\sim}  \frakg^\ss_S(\alpha+\delta).$$
We deduce that 
$$\bfH_S^{\taut,\geqslant 1}(\alpha+\delta) = \frakg^{\geqslant 1}_S(\alpha+\delta).
$$
Then, we use Proposition \ref{prop:hecke on BPS Lie} and Theorem \ref{thm:chi-indep-Lie-algebra}.
Applying the Hecke operator $T_{D_{1,0}(\beta)}$ and its inverse successively, we obtain the equality
$$\bfH_S^{\taut,\geqslant 1}(\alpha+n\delta) = \frakg^{\geqslant 1}_S(\alpha+n\delta).$$ 
Hence, using Proposition \ref{prop:hecke on BPS Lie} and \eqref{eq:proof markman 1} we deduce that
$$\bfH_S^{\taut}(\alpha+n\delta) = \frakg_S(\alpha+n\delta)$$
for any $n \in \Z$ as wanted. 
This also proves that the restriction morphism 
$$\bfH_S^{\taut}(\alpha+n\delta) \to \bfH_S^{\taut,\ss}(\alpha+n\delta)$$ 
is an isomorphism. We are done. \qed

\medskip

We provide another proof of Theorem~\ref{thm:Markman} in Appendix~\ref{app:G} (valid under a weak positivity assumption on the class $\alpha$) which does not depend on \cite{KKPS}.

\bigskip

\section{The case of Higgs bundles}

In this section, we state the analogs of our results for an open surface of the form $S=T^*C$ for $C$ a smooth, 
connected projective curve of genus $g$. In that case, a pure, properly supported coherent sheaf is either of dimension zero or of dimension 
one, so that it may be identified with a Higgs bundle on $C$ via the Beauville--Narasimhan--Ramanan correspondence \cite{BNR_correspondence}. 
The case of zero-dimensional sheaves is described in \cite{MMSV}.
Here we focus on the case of Higgs bundles.
We have
$$\H^0(T^*C,\Q)=\Q, \quad \H^1(T^*C,\Q)=\H^1(C,\Q)=\Q^{2g}, \quad \H^2(T^*C,\Q)=\Q\eta, \quad \H^{>2}(T^*C,\Q)=\{0\}$$
where $\eta$ is the class of a fiber of the projection map $T^*C \to C$. Any effective $1$-cycle on $S$ with proper support 
is equivalent to $n [C]$ for some integer $n \geqslant 0$, where $[C]$ is the class of the zero section. 
For any pair $(r,d)\in \N \times \Z$, we let 
$\calM_{T^*C}(r,d)$ and $\calM^\ss_{T^*C}(r,d)$ be the moduli stack of rank $r$ and degree $d$ Higgs sheaves and semistable Higgs 
sheaves on $C$. They are both quasi-smooth derived Artin stacks of virtual dimension $\vd_\calM=2(g-1)r^2$. 
There is a good moduli space 
$p\colon  \calM^\ss_{T^*C}(r,d) \to M_{T^*C}(r,d)$ and a BPS sheaf $\mathcal{BPS}_{M_{T^*C}(r,d)}$ in $ \MHM(M_{T^*C}(r,d))$.  
The Hitchin map is
$$\pi\colon  M_{T^*C}(r,d) \xrightarrow{} \A_r\cong \A^{(g-1)r^2+1}.$$
The CoHA  is defined as in \S\ref{sec:COHA-S}
$$\bfH_C=\bigoplus_{r,d} \H_{-*+\vd_\calM}^\BM(\calM_{T^*C}(r,d),\Q).$$ 
For any slope $\mu \in \Q \cup \{\infty\}$ we also have the semistable COHA 
$$\bfH^\ss_C(\mu)=\bigoplus_{d/r=\mu} \H_{-*+\vd_\calM}^\BM(\calM^\ss_{T^*C}(r,d),\Q).$$

\smallskip
\begin{theorem}\label{thm: Higgs bundles case}
\hfill
\begin{enumerate}[label=$\mathrm{(\alph*)}$,leftmargin=8mm,itemsep=2mm]
\item The algebras $\bfH_{C}$ and $\bfH^\ss_C(\mu)$ have compatible cocommutative coproducts 
$$\Delta\colon  \bfH_C \to \bfH_C\, \wotimes \,\bfH_C
,\quad
\Delta^\ss\colon  \bfH^\ss_C(\mu) \to \bfH^\ss_C(\mu)\, \wotimes\, \bfH^\ss_C(\mu).$$
\item
The Lie algebras of 
primitive elements of $\bfH_C$ and $\bfH^\ss_C(\mu)$ are
$$\frakg_C=\bigoplus_{r,d}\frakg_C(r,d)
,\quad
\frakg_C^\ss(\mu)=\bigoplus_{d/r=\mu}\frakg^\ss_C(r,d).$$ 
The multiplication gives a dense embedding and an isomorphism of graded 
vector spaces
$$\U(\frakg_C) \to \bfH_{C}, \quad \U(\frakg_C^\ss(\mu)) \cong \bfH^\ss_C(\mu).$$
The restriction yields an isomorphism of graded Lie algebras
$$\res^\ss\colon \bigoplus_{d/r=\mu}\frakg_C(r,d) \cong \frakg_C^\ss(\mu).$$
\item For any pair $(r,d)$, the adjoint action of $D_{1,0}(\eta)\in\frakg_C(0,1)$ yields a graded vector space isomorphism
$$\frakg_C(r,d) \cong \frakg_C(r,d+1).$$
\item For any pair $(r,d)$, the Hecke operator $T_{D_{1,0}(\eta)}$ gives an isomorphism in $\MHM(\A_r)$
$$\pi_*\mathcal{BPS}_{M_{T^*C}(r,d)} \cong \pi_*\mathcal{BPS}_{M_{T^*C}(r,d+1)}.$$
\item Assume that $g>1$. Then, we have
$$\frakg_C(r,d)\cong \frakg^{\geqslant 1}_C(r,d)=\H_*^\taut(\calM_{T^*C}^{\geqslant 1}(r,d),\Q).$$
\end{enumerate}	
\qed
\end{theorem}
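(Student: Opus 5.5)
Theorem~\ref{thm: Higgs bundles case} is the specialization of the preceding sections to $S=T^*C$, so the plan is to check that the general machinery applies and then run the same arguments, replacing the ample class by the fibre class $\eta$. The setting is the 2CY category of compactly supported coherent sheaves on $T^*C$, and I take $\calM=\calM_{T^*C}$ with $\Gamma=\N\times\Z$ recording $(r,d)$. Via the Beauville--Narasimhan--Ramanan correspondence, compactly supported pure one-dimensional sheaves are exactly Higgs bundles, and zero-dimensional sheaves are torsion Higgs sheaves.

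The first step is to verify conditions \eqref{item-M-1-Artin}--\eqref{item-M-theta-reductive}, together with \ref{item-Gamma-theta-reductive}. Properness of $\Filt_{\uugamma}\to\calM_\gamma$ follows from properness of Quot schemes of Higgs sheaves with compact support. I also need the Harder--Narasimhan stratification with the properties of Propositions~\ref{prop:finite interval partial} and \ref{prop:good HN order}. Here only dimensions $0$ and $1$ occur, and boundedness of semistable Higgs sheaves of fixed $(r,d)$ gives local finiteness. Purity of $(p)_*\D\Q$ and of BPS cohomology for quasi-projective symplectic surfaces is covered by \cite{D24}, as already used in \S\ref{sec:Toda generalization}.

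With these inputs, the remaining parts follow the earlier proofs.

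Part (a) is \S\ref{ssec-coprod} verbatim, since Lemma~\ref{lem-DT-factorization} and Proposition~\ref{porp-prod-coprod-commutes} only use the general setting.

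Part (b) repeats \S\S5.4--5.5. Lemma~\ref{lem-snilp-prim-include} holds formally. Lemma~\ref{lem-kappa-ss-isom} uses cohomological integrality and Milnor--Moore on each $\bfH^\ss_C(\mu)$, whose graded pieces are finite dimensional. Lemma~\ref{lem-r-prim-inj} uses Kirwan surjectivity (Theorem~\ref{thm-transition-surjective}) and the HN product decomposition. The PBW density argument is unchanged.

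For part (d), the Hitchin base $\A_r$ plays the role of the Chow variety. Proposition~\ref{prop:coker} and Proposition~\ref{prop:H0-action} give an $\bfA_S$-action on $\pi_*\mathcal{BPS}\otimes\H^*(\BGM,\Q)$. The analogue of Lemma~\ref{lem:PicardisHecke} takes $C'$ to be a cotangent fibre $T^*_xC$, whose class is $\eta$ and which has intersection number $r$ with the spectral class $r[C]$. A Hecke operator supported on $T^*_xC$ then realizes $\otimes\calO(T^*_xC)=\otimes\pi^*\calO_C(x)$, which is invertible of degree $r\delta$.

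The condition ``not a component of any spectral curve'' holds automatically, because spectral curves are finite over $C$. Lemma~\ref{lem:keylemma} therefore applies. Proposition~\ref{prop:formula} holds for $\beta=\eta$, since $D_{1,0}(\eta)$ corresponds to the length-one Hecke operator at a point of $C$ (modulo $\eta$ on $S$). Proposition~\ref{prop:algebraic trick} then makes $D_{1,0}(\eta)$ invertible on each multiplicity space $V_P$, and semisimplicity gives (d).

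Part (c) follows from Proposition~\ref{prop:hecke on BPS Lie} and the proof of Theorem~\ref{thm:chi-indep-Lie-algebra} with $\beta=\eta$. For $r=0$ the statement is already contained in the description of $\bfH^0$.

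For part (e), I follow \S\ref{sec:proofmarkman}. Primitivity of $\Lambda_S\bullet[\calM^\rd]$ comes from Proposition~\ref{prop-scal-microlocal}, and Hecke stability from Proposition~\ref{prop:hecke acts on reduced tautological}. For the base case I choose $d$ coprime to $r$. The moduli space of stable Higgs bundles is then smooth, and cohomological generation by tautological classes, i.e.\ Markman's theorem for Higgs bundles due to Markman/Hausel--Thaddeus, plays the role of Proposition~\ref{prop:stackymarkman}. Here $g>1$ ensures that the reduced class equals the classical fundamental class, as in Remark~\ref{rem:reduce class is classical class}, because $\calM^{\geqslant 1}$ is irreducible of the expected dimension. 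Transporting from $d$ to all degrees via $T_{D_{1,0}(\eta)}$ then finishes the argument.

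I expect the main obstacle to be the non-properness of $S=T^*C$. The relevant stacks are not of finite type in the proper-surface sense, and the reduced class for $T^*C$ must be defined through the $\BGM$ moment map of \S\ref{red_vc_sec} rather than the Picard construction. Checking $\Theta$-reductivity and the properness of the Hitchin map $\pi$, which is needed for the semisimplicity used in (d), is where I would spend the most care.
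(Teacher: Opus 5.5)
Your plan follows the paper's route. The paper only asserts that each part is a simpler version or a straightforward modification of the projective-surface arguments, and your choices are the intended ones:
\begin{itemize}
\item the Hitchin base $\A_r$ serves as the Chow variety of proper one-cycles of class $r[C]$;
\item a cotangent fibre $T^*_xC$ of class $\eta$, with $\alpha\cdot\eta=r$, plays the role of the smooth curve in Lemma~\ref{lem:PicardisHecke};
\item $d$ coprime to $r$, together with Markman's generation theorem for Higgs moduli, gives the base case of (e).
\end{itemize}
Your warning that on $T^*C$ the reduced class must come from the $\BGM$ moment map of \S\ref{red_vc_sec} is apt. It also affects the Hecke-stability argument of Appendix~\ref{app:F}, which you invoke but which is written with the Sch\"urg--To\"en--Vezzosi Picard construction.

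One claim is false. The case $r=0$ of (c) is not ``contained in the description of $\bfH^0$''; that description actually refutes it.
\begin{itemize}
\item For $d\geq1$ and $\beta\neq0$, the element $D_{d,0}(\beta)$ is a nonzero primitive element of $\bfH_C(0,d)$. For $d=1$ this is trivial. For $d\geq2$ it equals $\frac{1}{d-1}[D_{1,1}(1),D_{d-1,0}(\beta)]$.
\item The relation $[D_{m,n}(\beta),D_{m',n'}(\beta')]=(m'n-mn')D_{m+m',n+n'-1}(\beta\beta')$ gives $[D_{1,0}(\eta),D_{d,0}(\beta)]=0$.
\item Hence $\ad_{D_{1,0}(\eta)}$ is not injective on $\frakg_C(0,d)$.
\end{itemize}
Part (e) fails for $r=0$ as well, since $\calM^{\geqslant1}_{T^*C}(0,d)=\emptyset$ for $d>0$. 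So (c)--(e) must be read for $r\geq1$, i.e.\ for Higgs bundles as the section intends. The condition $r>0$ is exactly the positivity $\alpha\cdot\beta>0$ that your use of Lemma~\ref{lem:keylemma} and Proposition~\ref{prop:algebraic trick} requires.

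A smaller point concerns Proposition~\ref{prop:formula}. What you actually need is its part (a): that $[\calM_{T^*_xC}(r\delta)]$ lies in $\bfA_\eta$, so that $\omega_{\calO(T^*_xC)}$ fixes $\bfA_\eta$. That result is cited for projective curves. For the affine fibre you need its analogue for Higgs torsion sheaves, for instance from \cite{Minets} or \cite{MMSV}. Identifying $D_{1,0}(\eta)$ with a length-one modification does not supply this.
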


The proofs of all of the above statements are either simpler versions or straightforward modifications of the arguments used in the case of 
projective surfaces. The fact that $\pi_*\mathcal{BPS}_{M_{T^*C}(r,d)}$ is independent of $d$ was proved in \cite{KinjoKoseki} by 
other means. Theorem~\ref{thm: Higgs bundles case} provides a more conceptual explanation of the independence of $d$ phenomenon 
observed by Hausel and Thaddeus \cite{HauselThaddeus} in the context of moduli spaces of stable Higgs bundles.

\bigskip

{\centerline{\textbf{Acknowledgments}}}

\medskip

We would like to thank the following people for enlightening discussions and correspondence:
M.~Aprodu, A.~Bayer, J.~Bi, P.~Descombes, D.~Fratila, B.~Hennion, E.~Macr\`i, D.~Maulik, T.~P\u{a}durariu, M.~Porta, F.~Sala and N. Seroux. We are especially grateful to J.~Shen and Y.~Toda for discussions and for encouraging us to consider the relative version of $\chi$-independence, and to K.~Costello for conversations regarding the coproduct that inspired one of the main constructions in this paper.
B. D is supported by the EPSRC grant UKRI3202 {`BPS cohomology in geometry and representation theory'}.
T. K is supported by JSPS KAKENHI Grant Number 25K17229.
O.S is in part supported by the PNRR grant  CF 44/14.11.2022 {`COHAs of smooth surfaces and applications'} 
and would like to thank the Simion Stoilow Institute of Mathematics for great working conditions.
E.V. is partially supported by the ANR grant ANR-24-CE40-3389 (GRAW).

\bigskip

\appendix

\section{Pro-objects and K\"unneth theorem}\label{app:A}

Since we work with non-quasi-compact stacks, it is useful to use pro-objects to keep track of the filtrations induced from the system of quasi-compact open substacks.
For this purpose, we will collect some basic facts about pro-objects in $\infty$-categories that were used in the main body of the text.
See \cite[Appendix A]{PortaSala} for a similar discussion.
One of the main goals of this appendix is to prove the K\"unneth theorem for non-quasi-compact stacks using the complete tensor product on pro-objects.

\subsection{Pro-objects}\label{sec-pro}

Let $\calC$ be an accessible $\infty$-category which admits finite limits.
Let $\mathsf{S}$ be the $\infty$-category of spaces, in the sense of \cite[\S 1.2.16]{lurie2009higher}.
Following \cite[def.~3.1.1]{lurie2011homotopy}, we define the 
$\infty$-category of pro-objects $\mathsf{Pro}(\calC)$  in $\calC$ (=the pro-category of $\calC$) to be the full-subcategory of 
$\mathsf{Fun}(\calC, \mathsf{S})^{\mathrm{op}}$ consisting of accessible functors preserving finite limits.
It follows from the proof of \cite[prop.~3.1.6]{lurie2011homotopy} that an object $F$ in $\mathsf{Fun}(\calC, \mathsf{S})^{\mathrm{op}}$ belongs to $\mathsf{Pro}(\calC)$ if and only if $F$ is a filtered limit of representable functors.
For such a filtered system $F \colon I \to \calC$, let $\Lim{I} F(i)$ denote the limit in $\mathsf{Pro}(\calC)$.
As in \cite[rem.~3.1.7]{lurie2011homotopy}, for a functor $f \colon \mathcal{C} \to \mathcal{D}$ between accessible $\infty$-categories with finite limits, 
one can naturally define $\mathsf{Pro}(f) \colon \mathsf{Pro}(\mathcal{C}) \to \mathsf{Pro}(\mathcal{D})$.
If there is no risk of confusion, $\mathsf{Pro}(f)$ will be simply denoted by $f$.

Assume that $\calC$ is a stable $\infty$-category equipped with a t-structure and $\calC^{\heartsuit}$ be the heart of the t-structure.
Then the $i$-th cohomology functor $\mathrm{H}^{i}(-) \colon \calC \to \calC^{\heartsuit}$ extends to 
\[
	\mathrm{H}^{i}(-) \colon \Pro(\calC) \to \Pro(\calC^{\heartsuit}).
\]
By \cite[thm~8.6.5]{KS06}, the category $\Pro(\calC^{\heartsuit})$ is abelian. We define
\[
\mathrm{H}^{*}(-) \coloneqq\bigoplus_{i\in\Z} \mathrm{H}^{i}(-)[-i] \colon \Pro(\calC) \to \Pro(\calC^{\heartsuit})^{\mathbb{Z}-\mathrm{gr}},
\]
where $\Pro(\calC^{\heartsuit})^{\mathbb{Z}-\mathrm{gr}}$ denotes the category of $\mathbb{Z}$-graded objects in $\Pro(\calC^{\heartsuit})$.

If $\calC$ is equipped with a symmetric monoidal product $\otimes$,
by \cite[rem.~2.4.2.7, prop.~4.8.1.10]{lurie2017higher}, the $\infty$-category  $\mathsf{Pro}(\calC)$ is equipped with a symmetric monoidal 
product $\widehat{\otimes}$ which extends the monoidal product $\otimes$ under the Yoneda embedding 
$\calC\to\mathsf{Pro}(\calC)$ and which preserves the small filtered limits separately in each variable.
We refer to $\widehat{\otimes}$ as the {complete tensor product}.

Let $\calX$ be a derived Artin stack and $f \colon \calX \to B$ a morphism to a quasi-compact derived Artin stack.
For any object $\calF$ in $ \Dhcmon(\calX)$ we define the pro-direct image by
\[
 f^{\pro}_* \calF = \Lim{\mathcal{U} } (f |_{\calU})_* (\calF|_{\calU}) \in \Pro(\Dhcmon(B))
\]
where $\calU$ runs over all quasi-compact open substacks of $\calX$.
The realization functor 
$\Pro(\Dhcmon(B)) \to \Dh^{\mathsf{mon}}(\bullet)$ 
applied to the complex $f^{\pro}_* \calF$ recovers the pushforward complex $f_* \calF$.
When $B$ is the point, the pro-direct image will be denoted as
\[
R \Gamma_\pro(\calX, \mathcal{F})    = \Lim{\mathcal{U} } R \Gamma(\calU, \calF |_{\calU}) \in \Pro(\Dhcmon(\bullet)).
\]

Let $\D\Q_{\calX}$ be the dualizing complex. We set 
\begin{align}\label{RGproBM}
	R \Gamma_\pro^\BM(\calX,\Q) = R \Gamma_\pro(\calX, \D \Q_{\calX})
	,\quad
	R \Gamma^\BM(\calX,\Q) = R \Gamma(\calX, \D \Q_{\calX}).
\end{align} 
Let $\H_{*}^\BM(\calX,\Q)_\pro$ and $\H_{*}^\BM(\calX,\Q)$ 
be the cohomology space of the complexes $R \Gamma_\pro^\BM(\calX,\Q) $ and
$R \Gamma^\BM(\calX,\Q) $. The vector space $\H_{*}^\BM(\calX,\Q)$ is equipped with the pro-topology,
which is the coarsest topology such that the projections
$\H_{*}^\BM(\calX,\Q)\to\H_{*}^\BM(\calU,\Q)$ are continuous for each quasi-compact open subset $\calU\subset\calX$.

\subsection{K\"unneth theorem}

We have the following version of the K\"unneth theorem.

\begin{proposition}\label{prop-kunneth}
	Let $\calX_1$ and $\calX_2$ be derived Artin stacks with affine diagonal and $f_1 \colon \calX_1 \to B_1$ and $f_2 \colon \calX_2 \to B_2$ be morphisms to quasi-projective schemes.
	Let $\calF_1 \in \Dhcmonb(\calX_1)$ and $\calF_2 \in \Dhcmonb(\calX_2)$ be monodromic mixed Hodge complexes.
	Then the obvious map yields an isomorphism
	\[
	 (f_1)^{\pro}_* \calF_1 \widehat{\boxtimes} (f_2)^{\pro}_* \calF_2 \cong (f_1 \times f_2)^{\pro}_* (\calF_1 \boxtimes \calF_2).
	\]
\end{proposition}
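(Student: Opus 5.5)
The plan is to reduce to the quasi-compact case and then pass to the limit using the defining properties of the complete tensor product. First I would construct the comparison map. For quasi-compact opens $\calU_1\subset\calX_1$ and $\calU_2\subset\calX_2$, the usual Künneth map for the six-functor formalism of monodromic mixed Hodge modules on stacks (Tubach) gives a natural morphism
\[
(f_1|_{\calU_1})_*(\calF_1|_{\calU_1})\boxtimes(f_2|_{\calU_2})_*(\calF_2|_{\calU_2})\to(f_1\times f_2|_{\calU_1\times\calU_2})_*\big((\calF_1\boxtimes\calF_2)|_{\calU_1\times\calU_2}\big).
\]
It is compatible with restriction to smaller opens. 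Since $\widehat{\boxtimes}$ preserves filtered limits separately in each variable, the left side of the proposition is $\Lim{(\calU_1,\calU_2)}$ of the left side above. Taking pro-limits therefore yields a map to $\Lim{(\calU_1,\calU_2)}$ of the right side.

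Second, I would identify this target with $(f_1\times f_2)^{\pro}_*(\calF_1\boxtimes\calF_2)$. That pro-object is indexed by all quasi-compact opens $\calW\subset\calX_1\times\calX_2$, so it suffices to check that the products $\calU_1\times\calU_2$ are cofinal. Any quasi-compact $\calW$ has quasi-compact images under the two projections. Those images are open, because projections are flat and locally of finite presentation, hence open maps. So $\calW\subset \mathrm{pr}_1(\calW)\times \mathrm{pr}_2(\calW)$, which is a product of quasi-compact opens, and cofinality follows.

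Third, the remaining and main step is the quasi-compact Künneth isomorphism for $*$-pushforward along the possibly non-representable maps $f_i|_{\calU_i}$ to quasi-projective schemes. My approach is to write each $\calU_i$ as a colimit of a smooth hypercover by affine (or quasi-projective) schemes. Affine diagonal is what makes such a Čech nerve consist of schemes with affine fibre products. Then $(f_i)_*$ becomes a totalization, i.e. a limit over $\Delta$, of pushforwards from schemes, where Künneth holds by the classical result for mixed Hodge modules on separated schemes of finite type. The product hypercover computes $(f_1\times f_2)_*$.

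The obstacle is commuting $\boxtimes$ with the two totalizations. I would handle this with boundedness: since $\calF_i$ is bounded, each $\calU_i$ has bounded dimension. By a connectivity estimate, the cohomology in any fixed degree of the totalization is determined by a finite truncation $\mathrm{Tot}^{\le N}$, which is a finite limit and so commutes with the exact functor $\boxtimes$. Hence the map is an isomorphism on each perverse cohomology sheaf, and therefore an isomorphism. Alternatively, if one prefers, one can check the statement after applying the conservative forgetful functor to constructible sheaves and argue stalkwise on $B_1\times B_2$ via base change.
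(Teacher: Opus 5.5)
Your argument is correct, but for the core step it takes a different route from the paper. The paper's proof is three lines:
\begin{enumerate}
\item it reduces to the quasi-compact case without spelling out why;
\item it notes that it suffices to check the map after forgetting the monodromic mixed Hodge structure, because the forgetful functor is conservative and compatible with $*$-pushforward, and on underlying complexes the monodromic product is the ordinary external product;
\item it then quotes the Künneth formula for quasi-compact stacks with affine diagonal from \cite[prop.~A.5.19]{gaitsgory2019weil}.
\end{enumerate}

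You instead make the reduction explicit. Your cofinality argument, using the open projections $\calW\subset \mathrm{pr}_1(\calW)\times\mathrm{pr}_2(\calW)$, is exactly what justifies the paper's ``we may assume quasi-compact''. You then reprove the quasi-compact Künneth formula from scratch. The ingredients are descent along Čech nerves of affine smooth atlases (affine diagonal makes every term affine), Künneth for schemes, and a convergence estimate. That estimate shows $\boxtimes$ commutes with the totalizations, because the cosimplicial terms are uniformly bounded below.

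Your route gives a self-contained proof. It shows exactly where quasi-compactness, affine diagonal and boundedness of $\calF_i$ are used, and hence why the pro-formulation is needed in the non-quasi-compact case. It also avoids importing a result stated in the $\ell$-adic setting. The paper's route buys brevity.

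Two small points for the write-up:
\begin{enumerate}
\item The uniform lower bound on the terms $(g_n)_*u_n^*\calF_i$ comes from two separate inputs: boundedness of $\calF_i$ and the finite dimension of the quasi-compact $\calU_i$. One does not imply the other, as your phrasing suggests. The dimension bound matters only for the perverse t-structure, where $u_n^*$ shifts by relative dimension and $(g_n)_*$ by fibre dimension.
\item Do the forgetful reduction first, as the paper does. The monodromic $\boxtimes$ is not the naive external product of mixed Hodge modules, so you should not construct the Künneth map directly in that setting. After forgetting, use the standard t-structure: there $u_n^*$ is t-exact and $(g_n)_*$ left t-exact, so the uniform bound is immediate.
\end{enumerate}
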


\begin{proof}
	For the proof, we may assume that $\calX_1$ and $\calX_2$ are quasi-compact.
	Also, it is enough to show that the obvious map is invertible on the underlying dg-vector space after 
	forgetting the monodromic mixed Hodge structure.
	In this case, the statement is proved in \cite[prop.~A.5.19]{gaitsgory2019weil}.
\end{proof}

\begin{corollary}\label{cor:Kunneth}
For derived Artin stacks $\calX_1$ and $\calX_2$ with affine diagonal, there is a natural isomorphism 
\[
R \Gamma^\BM(\calX_1,\Q)_\pro \,\widehat{\otimes}\, R \Gamma_\pro^\BM(\calX_2,\Q) \cong 
R \Gamma^\BM(\calX_1 \times \calX_2,\Q)_\pro.
\]
\qed
\end{corollary}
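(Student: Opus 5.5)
The plan is to deduce Corollary~\ref{cor:Kunneth} from Proposition~\ref{prop-kunneth} by specializing it. We take both bases to be the point, $B_1=B_2=\bullet$, which is a quasi-projective scheme, and take $f_i\colon\calX_i\to\bullet$ to be the structure maps. For the sheaves we choose $\calF_i=\D\Q_{\calX_i}$, regarded as monodromic mixed Hodge complexes with trivial monodromy.

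By the definition \eqref{RGproBM}, the pro-direct images are $(f_i)^{\pro}_*\D\Q_{\calX_i}=R\Gamma^\BM_\pro(\calX_i,\Q)$. Over the point, the exterior product $\widehat\boxtimes$ in $\Pro(\Dhcmon(\bullet))$ is the complete tensor product $\widehat\otimes$, so Proposition~\ref{prop-kunneth} gives
\[
R\Gamma^\BM_\pro(\calX_1,\Q)\,\widehat\otimes\,R\Gamma^\BM_\pro(\calX_2,\Q)\cong R\Gamma_\pro(\calX_1\times\calX_2,\D\Q_{\calX_1}\boxtimes\D\Q_{\calX_2}).
\]

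The remaining input is the canonical isomorphism $\D\Q_{\calX_1}\boxtimes\D\Q_{\calX_2}\cong\D\Q_{\calX_1\times\calX_2}$. In the six-functor formalism of Tubach \cite{Tubstack}, this is the compatibility of $!$-pullback with exterior products, $a_1^!\Q\boxtimes a_2^!\Q\cong(a_1\times a_2)^!(\Q\boxtimes\Q)$. To prove it, we reduce along smooth atlases to separated schemes of finite type, where it is the classical Künneth formula for dualizing complexes. On each quasi-compact open $\calU_1\times\calU_2$ this restricts compatibly, so it induces an isomorphism of the pro-systems. Substituting it into the display gives the claimed natural isomorphism.

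The main point to check is the hypotheses of Proposition~\ref{prop-kunneth}. It requires $\calF_i\in\Dhcmonb(\calX_i)$, which is boundedness, and $\D\Q_{\calX_i}$ is not bounded on non-quasi-compact $\calX_i$, or even on stacks with positive-dimensional stabilizers. However, the proof of Proposition~\ref{prop-kunneth} first reduces to quasi-compact $\calX_i$ and then to underlying complexes via \cite[prop.~A.5.19]{gaitsgory2019weil}, which holds for dualizing complexes on quasi-compact stacks with affine diagonal. So we apply that argument on each quasi-compact pair $\calU_1,\calU_2$ of opens and pass to the limit. This last step uses that the products $\calU_1\times\calU_2$ are cofinal among quasi-compact opens of $\calX_1\times\calX_2$, because any quasi-compact open of the product lies in $p_1(\calU)\times p_2(\calU)$.
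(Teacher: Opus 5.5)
Your proposal is correct and follows the paper's route: the corollary is Proposition~\ref{prop-kunneth} specialized to $B_1=B_2=\bullet$ and $\calF_i=\D\Q_{\calX_i}$, combined with the canonical identification $\D\Q_{\calX_1}\boxtimes\D\Q_{\calX_2}\cong\D\Q_{\calX_1\times\calX_2}$ and the cofinality of the products $\calU_1\times\calU_2$ of quasi-compact opens. One small correction: positive-dimensional stabilizers do not make the dualizing complex unbounded (for instance $\D\Q_{\mathrm{B}\G_m}\cong\Q[-2]$, and only its cohomology is unbounded), so on quasi-compact opens you can apply the proposition directly instead of reopening its proof.
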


\bigskip

\section{The HN-stratification of arbitrary sheaves}\label{app:C}

\medskip

\subsection{Proof of Proposition~\ref{prop:finite interval partial}.}\label{app:finite fibers}

We begin with (a). From the construction it is clear that the set of pairs of convex paths $(\rho^2,\rho^1)$ fitting between 
$(\rho^2_{\uunu},\rho^1_{\uunu})$ and $(\rho^2_{\uunu'},\rho^1_{\uunu'})$
is finite. Thus it is enough to show that the map which assigns to $\underline{\sigma}\in \HN(\nu)$ the pair of paths 
$(\rho^2_{\underline{\sigma}},\rho^1_{\underline{\sigma}})$ has finitely many fibers corresponding to effective HN-types. We will deduce this from the Bogomolov inequality. Fix a 
pair $(\rho^2,\rho^1)$ and describe all potential HN-types $\uunu=(\uunu_2,\uunu_1,\nu_0)$ mapping to 
it. We write $\nu_1,\nu_2$ for the weights of $\uunu_1,\uunu_2$ respectively.  
We use the notation $\uunu_i=(\uunu_{i,1},\ldots,\uunu_{i,s})$.

First note that $\nu_0$ is uniquely determined (as the length of the vertical path in $\rho^1$), as is $\ch_2(\nu_1)$. It follows that $\ch_2(\nu_2)$ is also fully determined. Denoting by $\pi\colon  \NS(S)_\mathbb{R} \to \mathbb{R}\omega^\perp$ the orthogonal 
projection, what remains to be determined are the values of $\pi(\uunu_{1,i})$, $\pi(\uunu_{2,j})$ as well as $\ch_2(\uunu_{2,j})$, for all $i,j$. 
Note that by Hodge's theorem, for any $a \in \NS(S)$ we have 
$$a^2 = \frac{(a\cdot \omega)^2}{\omega^2} +\pi(a)^2 \leqslant \frac{(a\cdot \omega)^2}{\omega^2}.$$ 
Using this, Bogomolov's inequality yields upper bounds
$$\frac{(c_1(\uunu_{2,j})\cdot \omega)^2}{2\rk(\uunu_{2,j}) \omega^2} \geqslant \ch_2(\uunu_{2,j})$$
for all $j$ and for the $\ch_2(\uunu_{2,j})$ which, coupled with the relation $\sum_j \ch_2(\uunu_{2,j})=\ch_2(\nu_2)$, cuts out a finite set of 
possible values for the tuple $(\ch_2(\uunu_{2,j}))_j$. Next, for each such choice of tuple, Bogomolov's inequality again implies that the 
elements $\pi(\uunu_{2,j})$ satisfy the following inequality for all $j$
$$\pi(\uunu_{2,j})^2 \geqslant 2\rk(\uunu_{2,j})\ch_2(\uunu_{2,j})-\frac{(c_1(\uunu_{2,j})\cdot\omega)^2}{\omega^2}.$$
As the restriction of the intersection pairing on $\mathbb{R}\omega^\perp$ is negative definite, these inequalities have finitely many 
solutions. Finally, we observe that once the tuple $(\uunu_{2,j})_j$ has been fixed, $\nu_1$, and in particular $c_1(\nu_1)$ is uniquely 
determined. But then, as $\c_1(\uunu_{1,i}) \in \NS(S)^{\mathrm{eff}}$ for all $i$, there remain only finitely many possible choices for each 
$c_1(\uunu_{1,i})$. This proves (a).

\begin{lemma}\label{lem:appC}
	Let $S$ be a countable set, and let $\preccurlyeq$ be a partial order on $S$. Put $S_{\preccurlyeq s}\coloneqq \{t \in S\;|\; t\preccurlyeq s\}$. Assume that for any $s \in S$ the cardinality of $S_{\preccurlyeq s}$ is finite.
	Then there is a total order $\leqslant$ refining $\preccurlyeq$ which is locally finite and contains a smallest element, equivalently there is an order preserving bijective morphism $S \cong \N$.
\end{lemma}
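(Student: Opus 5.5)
We build the enumeration greedily, always picking a minimal remaining element, and interleave this with a fixed enumeration of $S$ so that nothing is skipped. Since $S$ is countable, fix an enumeration $S=\{s_0,s_1,s_2,\ldots\}$ (if $S$ is finite the statement is trivial: any linear extension of a finite poset works, and all finite total orders are locally finite). We define inductively a sequence $t_0,t_1,\ldots$ of distinct elements of $S$ with $t_n\preccurlyeq$-minimal in $S\smallsetminus\{t_0,\ldots,t_{n-1}\}$, and declare $t_i\leqslant t_j$ iff $i\leqslant j$.

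For the inductive step, let $R_n=S\smallsetminus\{t_0,\ldots,t_{n-1}\}$, nonempty since $S$ is infinite. Let $k$ be the smallest index with $s_k\in R_n$. The set $S_{\preccurlyeq s_k}\cap R_n$ is finite and nonempty (it contains $s_k$), so it has a $\preccurlyeq$-minimal element $t_n$. This $t_n$ is in fact minimal in all of $R_n$: if $u\preccurlyeq t_n$ with $u\in R_n$, then $u\preccurlyeq s_k$ by transitivity. So $u$ lies in the finite set $S_{\preccurlyeq s_k}\cap R_n$, and minimality there gives $u=t_n$.

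We then check three things.
\begin{enumerate}
\item The refinement property. If $u\preccurlyeq v$ with $u\neq v$ and $v=t_n$, then $u$ cannot lie in $R_n$, since $t_n$ is minimal there. Hence $u=t_m$ for some $m<n$, and so $u\leqslant v$.
\item Surjectivity. Each $s_k$ is eventually chosen. While $s_k$ remains in $R_n$, the least remaining index is at most $k$. Each step removes one element of the finite set $\bigcup_{j\leqslant k}S_{\preccurlyeq s_j}$, because $t_n\preccurlyeq s_{k'}$ for the least remaining index $k'\leqslant k$. Therefore after finitely many steps $s_k$ itself is removed.
\item Injectivity. This is clear, since the $t_n$ are chosen distinct.
\end{enumerate}
The bijection $n\mapsto t_n$ is then an order-preserving bijection $\N\cong S$. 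Local finiteness and the existence of a smallest element $t_0$ are immediate from this description.

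The only step requiring care is the surjectivity argument. The greedy choice must be anchored to the least-indexed remaining element, so that its finite down-set forces progress toward each $s_k$; this is exactly where the finiteness hypothesis on $S_{\preccurlyeq s}$ is used.
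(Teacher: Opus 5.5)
Your proof is correct and is essentially the paper's argument. The paper also fixes an enumeration $\gamma\colon\N\to S$ and successively appends a linear extension of the finite set $S_{\preccurlyeq\gamma(i)}\cap S^{(i-1)}$ of not-yet-placed predecessors of $\gamma(i)$; your greedy choice of a minimal element below the least-indexed remaining $s_k$ produces exactly such a linear extension one element at a time, which has the small advantage of giving the bijection with $\N$ and the refinement property directly.
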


\begin{proof} By our assumption the set of minimal element is nonempty; let $s_0$ be one such minimal element. Let us begin by fixing an arbitrary bijection $\gamma\colon \N \to S$ sending $0$ to $s_0$. We build a total order $\leqslant$ on $S$ by the following 
	inductive process. We begin by extending the partial order by requiring that $\gamma(0) < S^{(0)}\coloneqq S \smallsetminus\{\gamma(0)\}$ and we refine the restriction of $\preccurlyeq$ to $S_{\preccurlyeq\gamma(1)} \cap S^{(0)}$ to a total order. We set 
	$$S^{(1)}= S^{(0)} \smallsetminus S_{\preccurlyeq\gamma(1)}\ .$$
	By construction, for any $t \not\in S^{(1)}$ and $u \in S^{(1)}$ we have $u \not\preccurlyeq t$. Thus we may
	extend the partial order by requiring
	$$\{\gamma(0)\} \cup  S_{\preccurlyeq \gamma(1)} < S^{(1)}.$$
	Next, we refine the partial ordering on the (finite) poset $S_{\preccurlyeq\gamma(2)} \cap S^{(1)}$ into a total order $\leqslant$, and we set
	$$ S^{(2)}= S^{(1)} \smallsetminus \left(S_{\preccurlyeq \gamma(2)} \cap S^{(1)}\right),$$
	$$ \left(S_{\preccurlyeq \gamma(2)} \cap S^{(1)}\right) < S^{(2)}.$$
	Iterating the process we build a total order on $S$ compatible with $\preccurlyeq$. Observe that $\bigcap_i S^{(i)}=\emptyset$ as $\gamma(i) \not\in S^{(i)}$ by construction. In addition, $<$ has finite intervals since for any $i$, 
	$$\{t\;|\;\gamma(0)<t<\gamma(i)\} \subset \bigcup_{j\leqslant i} S_{\preccurlyeq\gamma(j)}.$$
\end{proof}

To prove (b), it now only remains to check that for any fixed HN-type $\uunu \in \text{HN}(\nu)$, the cardinality of the set $\text{HN}(\nu)_{\preccurlyeq \uunu}\coloneqq \{\underline{\sigma} \in \text{HN}(\nu)\;|\; \underline{\sigma} \preccurlyeq \uunu\}$ is finite. This amounts to proving that there are only finitely many HN-types $\underline{\sigma}$ of weight $\nu$ whose associated pair of paths $(\rho^2,\rho^1)$ lies above $(\rho^2_{\uunu},\rho^1_{\uunu})$. For this one may argue as follows. First of all, note that there are only finitely many convex paths $\rho^2$ lying above $\rho^2_\uunu$. Next, as $\rho^1$ lies above $\rho^1_{\uunu}$, $\ch_2(\underline{\sigma}_1)$ is bounded above. By the same argument as in (a) using the Bogomolov inequality, this implies that there are only finitely many possible choices for the collection of Mukai vectors $(\underline{\sigma}_2)_j$, and hence also for $\ch_2(\underline{\sigma}_1)$. It then follows that there are finitely many possible choices for the collection $(\underline{\sigma}_1)_j$ as well, as desired. \qed

\medskip

\subsection{Proof of Proposition $\ref{prop:good HN order}$}
Statement (b) follows from (a) together with \cite[thm.~3.3.7]{HL} and Proposition~\ref{prop:finite interval partial} (a). Statement (c) 
then also follows, hence we only need to prove (a). Since $\calM^+_S(\succcurlyeq\underline{\gamma}) \subseteq \calM^+_S(\succcurlyeq\uunu)$ 
whenever $\underline{\gamma} \succcurlyeq \uunu$, it is enough to prove that $\calM^+_S(\succcurlyeq\uunu)$ contains the Zariski 
closure of $\calM_S(\uunu)$ for any $\uunu$. Consider the morphism $\ev_{\uunu}\colon  \Filt_\uunu \to \calM_S(\nu)$ which is 
representable and proper.

The map $\ev_{\uunu}$ is an isomorphism over $\calM_S^+(\uunu)$, in particular 
$\calM_S^+(\uunu) \subset \text{Im}(\ev_{\uunu})$. Thus it is enough to prove the following

\begin{lemma}\label{lem:ev and HN order} Let $\nu$ be a Mukai vector and $\uunu$ an arbitrary HN-type of weight $\nu$. The map $\ev_{\uunu}\colon  \Filt_\uunu \to \calM_S(\nu)$ factors
	through the inclusion $\calM_S^+(\succcurlyeq\uunu) \to \calM_S(\nu)$.
\end{lemma}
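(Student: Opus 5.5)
The claim to prove is that every sheaf $\calF$ admitting a filtration of type $\uunu$ has HN-type $\uunu(\calF)\succcurlyeq\uunu$. In other words, the pair of HN paths $(\rho^2_{\uunu(\calF)},\rho^1_{\uunu(\calF)})$ should lie below $(\rho^2_{\uunu},\rho^1_{\uunu})$ in both coordinates. The argument is a polygon comparison, the analogue of the classical fact that any filtration's polygon lies below the HN polygon. It is carried out separately in each dimension by means of the support filtration.

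The first step is to pass from the filtration to the dimension-graded pieces. Let $\calF$ have a filtration $0=\calF_\ell\subset\cdots\subset\calF_0=\calF$ whose subquotients $\calG_i$ are semistable with Mukai vectors in the order prescribed by $\uunu=(\uunu_2,\uunu_1,\nu_0)$. Since $\uunu$ is a sheaf HN-type, the pieces come ordered by dimension, with the zero-dimensional ones innermost. Lemma \ref{lem:support} shows that dimensions cannot increase under subsheaves. Hence the induced filtration on $\calF^{\leqslant 1}$ consists exactly of the steps with one- and zero-dimensional subquotients. The quotient $\calF^{\geqslant 2}=\calF^2$ then inherits a filtration by the two-dimensional pieces, but with a defect: a two-dimensional semistable piece may become non-pure in $\calF$ only in a controlled way.

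The cleaner route is to compare polygons directly. For $d=2$ I would use the standard Shatz-type argument, as in \cite{HL}. Take the subsheaves $\calF_i$ of the given filtration and their images in $\calF^{\geqslant 2}$. The polygon with vertices $(\alpha_2(\calF_i),\alpha_1(\calF_i))$ has endpoint $(\alpha_2(\nu),\alpha_1(\nu))$. Every subsheaf $\calE\subset\calF^2$ satisfies $(\alpha_2(\calE),\alpha_1(\calE))$ lying on or below the HN polygon of $\calF^2$, by maximality of the HN destabilizers. Applying this to the images of the $\calF_i$ in $\calF^2$ gives $\rho^2_{\uunu}$ below $\rho^2_{\uunu(\calF)}$, that is $\rho^2_{\uunu(\calF)}\preccurlyeq$-dominated in the stated sense. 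For $d=1$ I would run the same argument on $\calF^{\leqslant 1}/\calF^0$ shifted by the starting point $(\alpha_1(\calF^2),\alpha_0(\calF^2))$. The one-dimensional part of the given filtration and the subsheaf $\calF_{\ell_1}\cap\calF^{\leqslant 1}$ enter here.

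The main obstacle is the shift in $\rho^1$. In $\calF$, torsion that the filtration attributes to the two-dimensional part (via $\calG_i$ that are only slope-compatible) must be accounted for. The starting point of $\rho^1_{\uunu}$ is $(\alpha_1(\nu_2),\alpha_0(\nu_2))$, while that of $\rho^1_{\uunu(\calF)}$ is $(\alpha_1(\calF^2),\alpha_0(\calF^2))$. I would first show $\nu(\calF^2)=\nu_2$ up to lower-dimensional terms, using Lemma \ref{lem:support}(c). The inclusions of the filtration subsheaves then induce injections on pure graded parts. The difference between the two is a sheaf of dimension at most one mapping into $\calF^{\leqslant 1}$. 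I would then verify that it moves the starting point only downward-compatibly, so the polygon inequality for $d=1$ still holds. This reduces to checking the inequality $\alpha_0(\calE)\leqslant$ polygon value for subsheaves $\calE$ of $\calF^{\geqslant1}$.
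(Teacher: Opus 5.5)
Your route is not the paper's. The paper concatenates the HN polygons of the subquotients $\calF_i/\calF_{i+1}$, then straightens the resulting sequence of semistable pieces by swapping adjacent out-of-order pairs. Each swap keeps the path inside a parallelogram and so only lowers it, until the path becomes the HN polygon. You instead want a global Shatz-type comparison. In a single dimension that works and is more direct, but the orientation is reversed. The vertices of $\rho^d_\uunu$ are the classes $\nu_{\leqslant i}$ of the \emph{quotients} $\calF/\calF_i$, so the polygon through the points $(\alpha_2(\calF_i),\alpha_1(\calF_i))$ is the point reflection of $\rho^2_\uunu$. Hence ``subsheaf points lie below the HN polygon'' translates into ``$\rho^2_\uunu$ lies \emph{above} $\rho^2_{\uunu(\calF)}$'', which is $\uunu\preccurlyeq\uunu(\calF)$; you wrote the opposite. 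Also, passing from the images of the $\calF_i$ in $\calF^2$ back to the $\calF_i$ needs $\alpha_1(\calF_i)\leqslant\alpha_1(\mathrm{im}\,\calF_i)+\alpha_1(\calF^{\leqslant 1})$. In other words, the vertical segment of $\calF^{\leqslant1}$ must be part of the polygon you compare with.

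The genuine gap is in $\rho^1$, exactly at the step you call the main obstacle and leave as ``I would then verify''.
\begin{itemize}
\item $\Filt_\uunu$ parametrizes filtrations whose subquotients are \emph{arbitrary} sheaves of the prescribed classes. This is what makes $\ev_\uunu$ proper, and properness is how the lemma is used. If the subquotients were semistable, as in your setup, the two-dimensional-class part of the filtration would be pure and there would be no shift at all.
\item Your claim that the induced filtration on $\calF^{\leqslant1}$ consists exactly of the lower-dimensional steps is false. Let $\calF_k$ be the step below the two-dimensional-class pieces. It has rank $0$, so $\calF_k\subseteq\calF^{\leqslant1}$. But $\mathcal{T}:=\calF^{\leqslant1}/\calF_k=(\calF/\calF_k)^{\leqslant1}$ can be nonzero, and then $\nu_2=\nu(\calF^2)+\nu(\mathcal{T})$.
\item Consequently the starting point of $\rho^1_\uunu$ is that of $\rho^1_{\uunu(\calF)}$ translated by $(\alpha_1(\mathcal{T}),\chi(\mathcal{T}))$. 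Since $\chi(\mathcal{T})$ can have either sign, ``moves the starting point only downward-compatibly'' is not the mechanism.
\end{itemize}
The missing idea is to use the filtration $\calF^{\leqslant1}\supset\calF_k\supset\cdots\supset 0$ of all of $\calF^{\leqslant1}$, not of $\calF^{\leqslant1}/\calF^0$. The zero-dimensional vertical segment belongs to $\rho^1$, and zero-dimensional torsion inside the one-dimensional-class steps causes the same problem one level down. Translate the quotient polygon of this filtration by $(\alpha_1(\calF^2),\alpha_0(\calF^2))$. It begins with the segment of $\mathcal{T}$ and then runs exactly along $\rho^1_\uunu$. The one-dimensional comparison for $\calF^{\leqslant1}$, with zero-dimensional sheaves of infinite slope, puts this whole polygon above $(\alpha_1(\calF^2),\alpha_0(\calF^2))$ plus the HN polygon of $\calF^{\leqslant1}$, which is $\rho^1_{\uunu(\calF)}$. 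In particular its tail $\rho^1_\uunu$ lies above $\rho^1_{\uunu(\calF)}$ on its domain. The paper's straightening sidesteps this bookkeeping because it carries the torsion of every subquotient along as extra lower-dimensional segments from the start. With this addition, your dimension-by-dimension argument becomes a complete proof.
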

\begin{proof}Let us fix a filtration 
	$\mathcal{F}_\bullet =\calF_S \subset \cdots \subset \calF_1 \in \Filt_\uunu$  and show that $\mathcal{F}_1$ lies in a HN-stratum 
	$\calM_S^+(\underline{\beta})$ satisfying $\underline{\beta} \succcurlyeq \uunu$. 
	We will make use of the following standard observation, which is a direct consequence of Theorem~\ref{prop: purity-ss}. 
	Let $\mathcal{G}, \mathcal{H}$ be two pure two dimensional semistable coherent sheaves, of Mukai vectors $\alpha,\beta$ respectively. 
	Assume that $\mu^2(\alpha) > \mu^2(\beta)$. Then for any extension $ 0 \to \mathcal{H} \to \mathcal{J} \to \mathcal{G} \to 0$, the path  
	$\rho^2(\calJ)$ lies in the parallelogram with sides 
	$\rho^2_{(\alpha)}=(\rk(\alpha), c_1(\alpha)\cdot \omega)$ and $\rho^2_{(\beta)}= (\rk(\beta), c_1(\beta)\cdot \omega)$, 
	see the example below 
	
	\vspace{.15in}
	
	{ \hspace{2in}
		\begin{tikzpicture}
			\draw[black] (-0.5,1) --(0.5,2.5) -- (3,2) --  (2,0.5) -- (-0.5,1);
			\draw[red, thick] (-0.5,1) -- (1.5,1) -- (2.5,1.5) --(3,2);
			\fill[black] (-0.5,1) circle (0.05cm) node[anchor=east] {$0$};
			\fill[black] (0.5,2.5) circle (0.05cm);
			\fill[black] (-0.4,1.7) circle (0.0cm) node[anchor=south] {$\rho^2_{(\alpha)}$};
			\fill[black] (3,2) circle (0.05cm);
			\fill[black] (2,0.5) circle (0.05cm);
			\fill[black] (0.8,0.7) circle (0.0cm) node[anchor=north] {$\rho^2_{(\beta)}$};
			\fill[black] (1,1.7) circle (0.0cm) node[anchor=north] {${\color{red}{\rho^2(\calJ)}}$};
	\end{tikzpicture}}
	
	The same estimate on the HN-type of an extension of semistable one-dimensional sheaves holds, with the paths $\rho^2$ being replaced by the paths $\rho^1$. The situation is the same for an extension $0 \to \calH \to \calJ \to \calG \to 0$ with $\calH$ being a semistable 
	$d$-dimensional sheaf and $\calG$ a semistable $e$ dimensional sheaf with $d>e$; the only difference is that the $\mu^d$-slope of $\calG$ being infinite, the corresponding path is vertical as in the example in the figure below: 
	
	\vspace{.15in}
	
	{ \hspace{2in}
		\begin{tikzpicture}
			\draw[black] (-0.5,1) --(-0.5,2.5) -- (3,2) --  (3,0.5) -- (-0.5,1);
			\draw[red, thick] (-0.5,1) -- (1.5,1) -- (3,1.5) -- (3,1.5) -- (3,2);
			\fill[black] (-0.5,1) circle (0.05cm) node[anchor=east] {$0$};
			\fill[black] (-0.5,2.5) circle (0.05cm);
			\fill[black] (-0.9,1.4) circle (0.0cm) node[anchor=south] {$\rho^2_{(\alpha)}$};
			\fill[black] (3,2) circle (0.05cm);
			\fill[black] (3,0.5) circle (0.05cm);
			\fill[black] (1,0.8) circle (0.0cm) node[anchor=north] {$\rho^2_{(\beta)}$};
			\fill[black] (1,1.7) circle (0.0cm) node[anchor=north] {${\color{red}{\rho^2(\calJ)}}$};
	\end{tikzpicture}}
	
	In order to estimate the HN-type of $\mathcal{F}_1$, we use the above observation repeatedly. We first consider the concatenation of the paths $\rho^2(\mathcal{F}_i / \mathcal{F}_{i+1})$ and denote by $a_1, a_2, \ldots, a_t$ the successive line segments in the path obtained, as in the picture below. To each segment $a_i$ there corresponds a semistable sheaf $\mathcal{G}_i$, (which is a subquotient of $\mathcal{F}_1$), and to each pair $a_i, a_{i+1}$ corresponds an extension between $\mathcal{G}_i$ and $\mathcal{G}_{i+1}$. 
	
	\vspace{.05in}
	
	{ \hspace{2in}
		\begin{tikzpicture}
			\draw[black] (-0.5,2.5) --(0.75,1.25) -- (2,1) --  (3.75,2) ;
			\draw[red, thick] (-0.5,2.5) -- (-0.25,1.75) -- (0,1.5) --(0.75,1.25) -- (1, 0.75) -- (2,1) -- (2.5, 0.5) -- (3.75, 1) -- (3.75, 2);
			\fill[black] (-0.5,2.5) circle (0.05cm) node[anchor=east] {\tiny{$0$}};
			\fill[black] (3.75,2) circle (0.05cm);
			\fill[black] (0.4,1.7) circle (0.0cm) node[anchor=south] {\tiny{$\rho^2_{(\nu_1)}$}};
			\fill[black] (1.5,1.05) circle (0.0cm) node[anchor=south] {\tiny{$\rho^2_{(\nu_2)}$}};
			\fill[black] (2.8,1.5) circle (0.0cm) node[anchor=south] {\tiny{$\rho^2_{(\nu_3)}$}};
			\fill[black] (-0.25,2) circle (0.0cm) node[anchor=east] {\color{red}{\tiny{$a_1$}}};
			\fill[black] (-0,1.55) circle (0.0cm) node[anchor=east] {\color{red}{\tiny{$a_2$}}};
			\fill[black] (0.5,1.25) circle (0.0cm) node[anchor=east] {\color{red}{\tiny{$a_3$}}};
			\fill[black] (0.95,1) circle (0.0cm) node[anchor=east] {\color{red}{\tiny{$a_4$}}};
			\fill[black] (1.8,0.75) circle (0.0cm) node[anchor=east] {\color{red}{\tiny{$a_5$}}};
			\fill[black] (2.3,0.75) circle (0.0cm) node[anchor=east] {\color{red}{\tiny{$a_6$}}};
			\fill[black] (3.5,0.6) circle (0.0cm) node[anchor=east] {\color{red}{\tiny{$a_7$}}};
			\fill[black] (4.25,1.4) circle (0.0cm) node[anchor=east] {\color{red}{\tiny{$a_8$}}};
	\end{tikzpicture}}

	\medskip
	
	If the path $\underline{a}=(a_1, \ldots, a_t)$ is convex then it is necessarily equal to $\rho^2(\mathcal{F}_1)$. Note that this path lies, by construction, entirely below the path $\rho^2_{\uunu}$. If on the other hand, say $\mu^2(a_i) > \mu^2(a_{i+1})$ then by the observation above the HN-polygon of the extension corresponding to $a_i,a_{i+1}$ is a convex path $b_1, \ldots, b_{n}$ lying entirely in the parallelogram with sides $a_i, a_{i+1}$, as in the figure below.
	
	\vspace{.05in}
	
	{ \hspace{2in}
		\begin{tikzpicture}
			\draw[black] (-0.5,2.5) --(0.75,1.25) -- (2,1) --  (3.75,2) ;
			\draw[red, thick] (-0.5,2.5) -- (-0.25,1.75) -- (0,1.5) --(0.25,1) -- (1, 0.75) -- (2,1) -- (2.5, 0.5) -- (3.75, 1) -- (3.75, 2);
			\draw[red, thick] (0,1.5) --(0.75,1.25) -- (1, 0.75);
			\draw[blue, thick] (0,1.5) --(0.5,1) -- (1, 0.75);
			\fill[black] (-0.5,2.5) circle (0.05cm) node[anchor=east] {\tiny{$0$}};
			\fill[black] (3.75,2) circle (0.05cm) node[anchor=west] {\tiny{$\alpha$}};
			\fill[black] (0.25,1.8) circle (0.0cm) node[anchor=south] {\tiny{$\rho^2_{(\nu_1)}$}};
			\fill[black] (1.5,1.05) circle (0.0cm) node[anchor=south] {\tiny{$\rho^2_{(\nu_2)}$}};
			\fill[black] (2.8,1.5) circle (0.0cm) node[anchor=south] {\tiny{$\rho^2_{(\nu_3)}$}};
			\fill[black] (-0.25,2) circle (0.0cm) node[anchor=east] {\color{red}{\tiny{$a_1$}}};
			\fill[black] (-0,1.55) circle (0.0cm) node[anchor=east] {\color{red}{\tiny{$a_2$}}};
			\fill[black] (0.25,1.25) circle (0.0cm) node[anchor=east] {\color{blue}{\tiny{$b_1$}}};
			\fill[black] (0.95,0.75) circle (0.0cm) node[anchor=east] {\color{blue}{\tiny{$b_2$}}};
			\fill[black] (1.8,0.75) circle (0.0cm) node[anchor=east] {\color{red}{\tiny{$a_5$}}};
			\fill[black] (2.3,0.75) circle (0.0cm) node[anchor=east] {\color{red}{\tiny{$a_6$}}};
			\fill[black] (3.5,0.6) circle (0.0cm) node[anchor=east] {\color{red}{\tiny{$a_7$}}};
			\fill[black] (4.25,1.4) circle (0.0cm) node[anchor=east] {\color{red}{\tiny{$a_8$}}};
	\end{tikzpicture}}

	\medskip
	
	Since this parallelogram lies below the path $\uunu$ it follows that the new path $\underline{a}'$ obtained by
	replacing in $(a_1, \ldots, a_t)$ the subpath $(a_i,a_{i+1})$ with $(b_1, \ldots, b_n)$ also lies below $\uunu$. 
	This new path $\underline{a}'$ is a better approximation of $\rho^2(\mathcal{F}_1)$ in the sense that it is more convex than 
	$(a_1, \ldots, a_t)$. This can be made precise as follows. For any path $\underline{u}=(u_1, \ldots, u_k)$ let us denote by 
	$\#\underline{u}$ the path obtained by rearranging the segments of $\underline{u}$ in increasing order of slope 
	(so that $\#\underline{u}$ lies below $\underline{u}$), and let $d(\underline{u})$ be the area of the polygon bounded by 
	$\underline{u}$ and $\#\underline{u}$. Note that 
	$d(\underline{u})=0$ if and only if $\underline{u}$ is convex. It is easy to see that if $\underline{a}$ is not convex then the polygon 
	bounded by $\#\underline{a}'$ and $\underline{a}'$ lies inside the polygon bounded by $\underline{a}$ and $\#\underline{a}$. It 
	follows that $d(\underline{a}') < d(\underline{a})$. Iterating this process yields a sequence of nested rational polygons whose areas 
	strictly decrease. Note that as $\omega$ is ample, the possible endpoints of paths forms a discrete set. Hence the above straightening process has to terminate after a finite number of steps -- that is, results in a convex path --- and converges to $\rho^2(\mathcal{F}_1)$, 
	which therefore lies below $\rho^2_{\uunu}$ as wanted. Note that the resulting path may have a longer vertical 
	segment at the end than $\rho^2_{\uunu}$, but it contains that segment. We now apply the exact same straightening 
	process to compute $\rho^1(\calF_1)$, and deduce that $\rho^1_{\uunu}$ lies above $\rho^1(\calF_1)$. 
	Note that if $\rho^i_{\uunu}=\rho^i(\calF_1)$ for $i=1,2$ then necessarily $\calF_1 \in \calM_S^+(\uunu)$. 
	This finishes the proof of Lemma~\ref{lem:ev and HN order} and Proposition~\ref{prop:good HN order}.
\end{proof}

\section{Proof of Proposition~\ref{prop:hecke acts on reduced tautological}}\label{app:F}

Consider the following large commutative diagram, which involves several partial classical truncations of the diagram defining length one Hecke operators

\begin{equation}\label{diag:partialtruncations}
\xymatrix{
\overline\calM_S^\ex(\delta,\nu)^\rd \ar[r] \ar[d] & \calM_S(\delta)^\cl \times \calM_S(\nu)^\rd \ar[d] \ar[r] & \Pic(S)^\cl \times \Pic(S)^\cl \ar[r]^-{*} \ar[d] & \Pic(S)^\cl \ar[d]\\
\overline\calM_S^\ex(\delta,\nu) \ar[r] \ar[d] & \calM_S(\delta)^\cl \times \calM_S(\nu) \ar[d] \ar[r] & \Pic(S)^\cl \times \Pic(S) \ar[r]^-{*} \ar[d] & \Pic(S) \ar@{=}[d]\\
{\calM_S}^\ex(\delta,\nu) \ar[r]^-{\gr} \ar[d]^-{\ev} & \calM_S(\delta) \times \calM_S(\nu) \ar[r] & \Pic(S) \times \Pic(S) \ar[r]^-{*}  & \Pic(S) \ar@{=}[d]\\
\calM_S(\gamma) \ar[rrr] &&& \Pic(s)\\
\calM_S(\gamma)^\rd \ar[u] \ar[rrr] &&& \Pic(s)^\cl \ar[u]
}
\end{equation}

where
\begin{itemize}
	\item $\gamma=\nu+\delta$ and $\gr, \ev$ are defined as in \eqref{induction-diagram-S},
	\item all the squares in the top row, the leftmost square in the second row and the bottom square are Cartesian (this defines $\overline\calM_S^\ex(\delta,\nu)$ and $\overline\calM_S^\ex(\delta,\nu)^\rd$),
	\item the morphism $*$ is the group stack structure on $\Pic(S),$ $\Pic(S)^\cl$.
\end{itemize}
The Cartesianity of the upper rightmost square comes from the fact that $\Pic(S)^\cl$ is a closed group substack of $\Pic(S)$.

From \eqref{diag:partialtruncations} we extract a reduced\footnote{Here the word `reduced' is used in relation to the {reduced fundamental class}, not in the usual sense of `reduced' in derived algebraic geometry.} and (partially) truncated version of the induction diagram \eqref{induction-diagram-S}
$$\xymatrix{\calM_S(\delta)^\cl \times \calM_S(\nu)^\rd & \overline\calM_S^\ex(\delta,\nu)^\rd \ar[r]^-{\ev^\rd} \ar[l]_-{\gr^\rd}& \calM_S(\gamma)^\rd}.$$
Restricting to the open substack of sheaves containing no zero-dimensional subsheaf as in \cite[Sec.2]{MMSV} yields a diagram
\begin{equation}\label{diag:restrictedinducdimone}
	\xymatrix{\calM_S(\delta)^\cl \times \calM_S^{\geqslant 1}(\nu)^\rd& X(\delta,\nu) \ar[r]^-{\ev'} \ar[l]_-{\gr'}& \calM_S^{\geqslant 1}(\gamma)^\rd}.
\end{equation}
As in [{loc.cit.}, Prop. 2.2], $X(\delta,\nu)$ is canonically isomorphic to (the base change of) the derived projectivizations of tautological sheaves $\calF_\nu=\calE_\nu^\vee \otimes K_S [1]$ and $\calE_\gamma$ on $\calM_S^{\geqslant 1}(\nu) \times S$, which are both of perfect amplitude $[-1,0]$; in particular $\gr',\ev'$ are quasi-smooth. 
The commutativity of \eqref{diag:partialtruncations} thus implies that \begin{equation}\label{eq:reducedclasses agree}
	(\gr')^!([\calM_S(\delta)^\cl] \boxtimes [\calM_S^{\geqslant 1}(\nu)^\rd])=(\ev')^!([\calM_S^{\geqslant 1}(\gamma)^\rd]).
\end{equation}

The computation of the action of length one Hecke operators on tautological classes conducted in [{loc.cit}, Sec.2] may now be carried out {verbatim} in the case of reduced tautological classes $\bigoplus_{n \in \Z} \Lambda_S \bullet [\calM_S^{\geqslant 1}(\alpha+n\delta)^\rd]$, yielding the desired statement.\qed

\bigskip

\section{Generalization of Markman's theorem for positive curve classes}\label{app:G}

\medskip
In this appendix, we give a proof of the following slightly weaker form of Theorem~\ref{thm:Markman}, which is independent of \cite{KKPS}. Denote by $\g_S^{\geqslant 1}(\sigma) \subseteq \bfH_S^{\geqslant 1}(\sigma)$ the image of $\g_S(\sigma)$ under the restriction morphism $\bfH_S \to \bfH_S^{\geqslant 1}$. Elements of $\g_S^{\geqslant 1}=\bigoplus_\sigma \g_S^{\geqslant 1}(\sigma)$ will be called \textit{quasi-primitive}.

\begin{theorem}\label{thm:weakMarkman} Let $\alpha \in \H^2(S,\Z)$ be the class of an effective $1$-cycle such that $\alpha^2>0$ and the linear system $|\alpha|$ contains an irreducible curve. Then
	$$\bfH_S^{\taut,\geqslant 1}(\alpha+n\delta) = \frakg^{\geqslant 1}_S(\alpha+n\delta)$$ 
	for any $n \in \Z$.
\end{theorem}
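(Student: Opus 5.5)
The point is to avoid the inclusion \eqref{eq:proof markman 1}, which in \S\ref{sec:proofmarkman} relies on the microlocal description of the reduced class (Proposition~\ref{prop-scal-microlocal}, hence \cite{KKPS}). Instead we use three ingredients: primitivity of the reduced class obtained by a direct geometric argument, Hecke stability, and Markman's theorem in one primitive degree. The argument proceeds in three steps.

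\textbf{Step 1: the reduced class is quasi-primitive.} We show that $[\calM_S^{\geqslant 1}(\alpha+n\delta)^\rd]\in\frakg_S^{\geqslant 1}(\alpha+n\delta)$. Since $\alpha^2>0$ and $|\alpha|$ contains an irreducible curve, $\calM_S^{\geqslant 1}(\alpha+n\delta)$ should be irreducible, and by Remark~\ref{rem:reduce class is classical class} its reduced class equals the classical fundamental class. (Irreducibility follows from the compactified-Jacobian description over the locus of integral curves, whose complement has smaller dimension when $\alpha^2>0$.)

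We then compute $\Delta[\calM_S(\alpha+n\delta)]$ using the description of $\Delta$ via the restriction $\DT(\C)\to\DT(U_1\sqcup U_2)$. The restriction of the fundamental class of $\widetilde\calM$ to $\widetilde\calM_{U_1}\times\widetilde\calM_{U_2}$ is its fundamental class there. Components in which both factors have positive curve class have dimension strictly smaller than the reduced class would require, by the inequality $(\alpha_1+\alpha_2)^2>\alpha_1^2+\alpha_2^2$ coming from $\alpha_1\cdot\alpha_2>0$ (Hodge index). Their contribution therefore vanishes, and so do the cross terms involving zero-dimensional factors after restriction to $\calM^{\geqslant 1}$. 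Applying Proposition~\ref{prop:compat}(b) then gives $\Lambda_S\bullet[\ldots]\subseteq\frakg^{\geqslant 1}_S$, modulo checking that $\Lambda_S$ preserves quasi-primitives. That check amounts to the vanishing of $\res^{\geqslant 1}$ on terms in which one tensor factor has class in $\N\delta$, as in Proposition~\ref{prop:hecke on BPS Lie}.

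This step is the main obstacle. The dimension count requires the reduced structure to be handled carefully, and I would try to realize the coproduct restriction directly through the 2D pullback along $\oplus$ and a vanishing of excess classes.

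\textbf{Step 2: Hecke transport.} By Proposition~\ref{prop:hecke acts on reduced tautological}, Proposition~\ref{prop:hecke on BPS Lie}(c), and Proposition~\ref{prop:algebraic trick}, applied as in Theorem~\ref{thm:chi-indep-Lie-algebra} with an ample $\beta$, the operator $T_{D_{1,0}(\beta)}$ induces isomorphisms in all degrees $n$ on both $\bfH_S^{\taut,\geqslant 1}(\alpha+\Z\delta)$ and $\frakg^{\geqslant 1}_S(\alpha+\Z\delta)$.

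\textbf{Step 3: the primitive degree.} For $n=1$, with a polarization generic for $\alpha+\delta$, Proposition~\ref{prop:stackymarkman}, combined with the isomorphism $\frakg_S^{\geqslant 1}(\alpha+\delta)\cong\frakg^\ss_S(\alpha+\delta)=\bfH^\ss_S(\alpha+\delta)$ from Theorem~\ref{thm:restriction-isom}, shows that $\bfH_S^{\taut,\geqslant 1}(\alpha+\delta)$ surjects onto $\frakg^{\geqslant 1}_S(\alpha+\delta)$. Together with the inclusion from Step 1 this gives equality in degree $n=1$, and Step 2 transports the equality to every $n$.
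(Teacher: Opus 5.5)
Your Steps 2 and 3 are exactly the paper's argument: the proof of Theorem~\ref{thm:Markman} run verbatim. The shape of your Step 1 also matches the paper, which obtains quasi-primitivity of $[\calM_S^{\geqslant 1}(\nu)^\rd]$ from a pure degree count followed by Lemma~\ref{lem:quasiprimitive}. The gap is that your degree count is wrong, and the missing ingredient is exactly where the hypothesis on $|\alpha|$ enters.

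Since $\Delta$ preserves the cohomological grading, you do not need to identify the restriction of a class to $\widetilde\calM_{U_1}\times\widetilde\calM_{U_2}$. (There is also no general principle that it is ``the fundamental class there''.) What you need is that $\bfH^{\geqslant 1}_S(\nu')\wotimes\bfH^{\geqslant 1}_S(\nu'')$, for one-dimensional $\nu'+\nu''=\nu$, vanishes in the degree $\nu^2+2$ of $[\calM_S(\nu)^\cl]$. By the HN decomposition this must be checked for every pair of purely one-dimensional HN types, not for ``components''. The top degree of a stratum with semistable pieces $\alpha_i$ is governed by $\dim\calM_S^\ss(\alpha_i)^\cl=\alpha_i^2+\partial(\alpha_i)$. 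The excess $\partial(\alpha_i)$ is $1$ if $\alpha_i^2>0$, is $m$ for $m$ times an isotropic class, and is $m^2$ for $m$ times a $(-2)$-class. The required inequality is
\[
\nu^2+2>\sum_i\alpha_i^2+2\sum_i\partial(\alpha_i),
\]
with the sums running over the pieces of both factors. Your inequality $(\alpha_1+\alpha_2)^2>\alpha_1^2+\alpha_2^2$ forgets that each factor carries its own excess. Already for two semistable pieces of positive square you need $\alpha_1\cdot\alpha_2\geqslant 2$; if $\alpha_1\cdot\alpha_2=1$, the top degree of the tensor product is exactly $\nu^2+2$ and nothing vanishes. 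Worse, Hodge index gives no lower bound on $\alpha_1\cdot\alpha_2$ once a piece has non-positive square (two disjoint $(-2)$-curves have intersection $0$), and in that regime $\partial$ grows quadratically.

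The correct input is the $2$-connectedness of $\alpha$. Because $\alpha^2>0$ and $|\alpha|$ contains an irreducible curve, any decomposition into nonzero effective classes has mutual intersection at least $2$ (Saint-Donat for K3 surfaces, Birkenhake--Lange for Abelian surfaces). This gives Lemma~\ref{lem:twoconnec}, $\sum_i\partial(\alpha_i)\leqslant\sum_{i<j}\alpha_i\cdot\alpha_j$, which is precisely the inequality above.

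The same lemma is also what proves that $\calM_S(\nu)$ is irreducible, since unstable HN strata then have classical dimension at most $\nu^2<\nu^2+1$ (Proposition~\ref{prop:estimateclassicaldim}(c)). Your compactified-Jacobian remark asserts the needed dimension drop off the integral locus without proof. Without irreducibility you cannot identify $[\calM_S^{\geqslant 1}(\nu)^\rd]$ with the restriction of $[\calM_S(\nu)^\cl]$ via Remark~\ref{rem:reduce class is classical class}.

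Finally, the degree vanishing only establishes condition \eqref{eq:quasiprimcriteria}. To land in $\frakg_S^{\geqslant 1}=\res^{\geqslant 1}(\frakg_S)$ you still need Lemma~\ref{lem:quasiprimitive}(b). It corrects $y$ by classes supported off $\calM_S^{\geqslant 1}(\nu)$ to produce an honest primitive lift.
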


The proof of Theorem~\ref{thm:Markman} applies verbatim, once we have the following result:

\begin{proposition}\label{prop:quasiprimitivity fund class}
	Let $\alpha$ be the class of an effective one-cycle such that $\alpha^2>0$.
	Assume that $|\alpha|$ contains an irreducible curve. Then for any integer $n$, 
	the reduced virtual fundamental class $[\calM_S^{\geqslant 1}(\alpha+n\delta)^\rd]$ is quasi-primitive and coincides with 
	$[\calM^{\geqslant 1}_S(\alpha+n\delta)^\cl]$.
\end{proposition}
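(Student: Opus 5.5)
The plan has three steps: (i) show that $\calM^{\geqslant 1}_S(\alpha+n\delta)^\cl$ is irreducible of the expected dimension, which gives the equality of the two classes; (ii) establish quasi-primitivity for one well-chosen $n$; (iii) propagate quasi-primitivity to all $n$ using Hecke operators, which preserve $\frakg^{\geqslant 1}_S$ by Proposition~\ref{prop:hecke on BPS Lie}(c).

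For the equality $[\calM^{\geqslant 1}_S(\alpha+n\delta)^\rd]=[\calM^{\geqslant 1}_S(\alpha+n\delta)^\cl]$ I would verify the hypotheses of Remark~\ref{rem:reduce class is classical class}.
\begin{itemize}
\item The assumption that $|\alpha|$ contains an irreducible curve, together with $\alpha^2>0$, implies that curves in $|\alpha|$ that are integral and generically smooth form a dense open subset of $|\alpha|$, of dimension $\alpha^2/2+1$ in the K3 case (with the analogous count in the Abelian case).
\item Over this open subset, pure sheaves of rank one on the support curve form compactified Jacobians. This open substack of $\calM^{\geqslant 1}_S(\alpha+n\delta)^\cl$ is a $\GM$-gerbe over a smooth variety of dimension $\alpha^2+2=\vdim+2$, as in Yoshioka/Mukai.
\item The stack dimension is therefore $\vdim\calM_S+1$.
\item The remaining pure sheaves are supported on non-integral members of $|\alpha|$, or are not of rank one on their support. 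I would stratify them by support type $\sum m_iC_i$ and bound each stratum by an estimate like $\dim\le\sum_i(\alpha_i^2+1)\cdot(\text{fiber contributions})$. The key input is a 2CY dimension estimate: for any point $\calF$, $\dim \Ext^1(\calF,\calF)-\dim\Hom(\calF,\calF)$ is controlled, and the locus of non-simple sheaves has codimension at least one. Under $\alpha^2>0$ this shows these strata have strictly smaller dimension.
\end{itemize}
It follows that the stack is irreducible and generically a $\GM$-gerbe, and the remark applies. I expect this dimension estimate for non-reduced supports to be the main obstacle. I would first try the standard argument, used e.g.\ for Higgs sheaves, of bounding via the HN-type stratification (Proposition~\ref{prop: purity-ss2}): each unstable stratum is an iterated vector bundle stack of rank $\vdim\gr_\uunu$ over products of semistable stacks, and one uses $(\nu_i,\nu_j)$-positivity to obtain strict inequalities.

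For quasi-primitivity, first choose $n_0$ with $\alpha+n_0\delta$ primitive, and a polarization $H$ generic for it. Then $\calM^\ss_S(\alpha+n_0\delta)$ is a $\GM$-gerbe over a smooth variety, and $\frakg^\ss_S(\alpha+n_0\delta)=\bfH^\ss_S(\alpha+n_0\delta)$ by Milnor--Moore. Theorem~\ref{thm:restriction-isom} gives a unique primitive lift $y\in\frakg_S(\alpha+n_0\delta)$ of $[\calM^\ss_S]$. The remaining issue is to show that $\res^{\geqslant 1}(y)=[\calM^{\geqslant 1}_S]$, i.e.\ that the difference, which is supported on unstable strata inside $\calM^{\geqslant 1}$, vanishes. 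Since the unstable strata have dimension strictly below $\dim\calM^{\geqslant 1}$ by the estimate above, the top-degree Borel--Moore homology of $\calM^{\geqslant 1}$ injects into that of the semistable locus by Kirwan surjectivity with purity (Corollary~\ref{cor:Kirwan-surj}). Both classes sit in top degree and agree on $\calM^\ss$, so they are equal.

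To pass from $n_0$ to all $n$:
\begin{itemize}
\item Take a smooth curve $C$ in an ample class $\beta$, with $C$ not a component of any member of $|\alpha|$. By Lemma~\ref{lem:PicardisHecke}, $\omega_{\calO(\pm C)}=T_{[\calM_C(m\delta)]}^{\pm1}$. These are Hecke operators, so they preserve quasi-primitivity, and since $\omega_\calL$ maps reduced fundamental classes to reduced fundamental classes, this moves $n$ by multiples of $m=\alpha\cdot\beta$.
\item To fill the residues modulo $m$, use (eq:reducedclasses agree): $T_{z^k\cap[\calM_S(\delta)^\cl]}$ acts through the derived projectivizations of Appendix~\ref{app:F}. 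For the appropriate $k$ (the relative dimension), this gives $T_x[\calM^{\geqslant 1}_S(\nu)^\rd]=c\,[\calM^{\geqslant 1}_S(\nu+\delta)^\rd]$ with $c$ the degree of a top Segre class on a projective-space fiber, hence $c\neq0$.
\end{itemize}
Combining the two shows every $[\calM^{\geqslant 1}_S(\alpha+n\delta)^\rd]$ lies in $\frakg^{\geqslant 1}_S$.
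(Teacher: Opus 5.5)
Your route differs from the paper's, and it has a genuine gap at the step you flag as the main obstacle. There is also a miscomputation in the propagation step.

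\textbf{The missing dimension estimate.} Both the equality of classes in your step (i) and the top-degree injectivity in step (ii) rest on the bound $\dim\calM^+_S(\uunu)^\cl\le\nu^2$ for every pure one-dimensional HN-type $\uunu=(\nu_1,\dots,\nu_\ell)$ with $\ell\ge2$. Step (i) also needs irreducibility of the semistable locus. Since $\gr_\uunu$ is a vector bundle stack of rank $\sum_{i<j}\nu_i\cdot\nu_j$, the bound is equivalent to
\[
\sum_i\partial(\nu_i)\le\sum_{i<j}\nu_i\cdot\nu_j,\qquad \partial(\gamma)=\dim\calM^\ss_S(\gamma)^\cl-\dim\calM^\ss_S(\gamma).
\]
Here $\partial(\nu_i)$ is $n_i^2$, $n_i$ or $1$ according as $\nu_i=n_i\beta_i$ with $\beta_i^2=-2$, $\beta_i^2=0$, or $\nu_i^2>0$.

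Neither mechanism you suggest proves this.
\begin{itemize}
\item The pointwise tangent bound only gives $\nu^2+\dim\Hom(\calF,\calF)$, so knowing that non-simple sheaves have codimension one is far from enough.
\item Pairwise positivity of $\nu_i\cdot\nu_j$ is false. Two HN factors can have the same $(-2)$ curve class with different Euler characteristics, giving $\nu_i\cdot\nu_j=-2$. Moreover a factor $n_i\beta_i$ with $\beta_i^2=-2$ costs $n_i^2$.
\end{itemize}

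The missing idea is $2$-connectedness. When $\alpha^2>0$ and $|\alpha|$ contains an irreducible curve, every splitting of $\alpha$ into nonzero effective curve classes has intersection number at least $2$ (\cite{SaintDonat} for K3, \cite{CAV} for abelian surfaces). Write $\nu_{\neq i}=\sum_{j\ne i}\nu_j$ and apply this to $\beta_i+\bigl((n_i-1)\beta_i+\nu_{\neq i}\bigr)$. This gives $\nu_i\cdot\nu_{\neq i}\ge2n_i^2$, $2n_i$ or $2$ in the three cases, and summing over $i$ yields the inequality; this is Lemma~\ref{lem:twoconnec}. This is where the irreducible-curve hypothesis is really used, not merely to produce integral members of $|\alpha|$.

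For non-primitive $\nu$ you also need irreducibility of the semistable part. The paper obtains it by taking $H$ generic, using \cite{KaledinLehnSorger} for the stable locus and the local model of \cite{D24} for density of the stable locus.

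\textbf{The propagation constant in step (iii).} Let $\calF$ be a generic point, a line bundle on a smooth support curve $C$. The fibre of $\ev'\colon X(\delta,\nu)\to\calM^{\geqslant1}_S(\nu+\delta)^\rd$ over $\calF$ is $C$ itself, not a projective space. Degree matching forces $k=1$. Then \eqref{eq:reducedclasses agree} and the projection formula give
\[
T_{z\cap[\calM_S(\delta)^\cl]}[\calM^{\geqslant1}_S(\nu)^\rd]=c\,[\calM^{\geqslant1}_S(\nu+\delta)^\rd],\qquad c=\pm\deg(\calF|_C).
\]
This $c$ is affine in $n$ with slope $\pm1$, so it vanishes for exactly one $n$.

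Use $x=D_{1,0}(\beta)$ with $\beta$ ample instead. Then $c=\alpha\cdot\beta>0$; it is constant because it is a degree-zero class on $\calM^{\geqslant1}_S(\nu+\delta)$, which is connected by step (i). The twisting part needs no Hecke interpretation, since $\omega_\calL$ is a bialgebra automorphism commuting with $\res^{\geqslant1}$.

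\textbf{Comparison with the paper.} Once the inequality is available, your step (ii) is correct. However, the detour through a primitive $n_0$, Theorem~\ref{thm:restriction-isom} and Hecke propagation is unnecessary.

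The paper applies the same degree count to the coproduct, for every $n$ at once. For each splitting $\nu=\nu'+\nu''$ into one-dimensional classes, $\bfH^{\geqslant1}_S(\nu')\wotimes\bfH^{\geqslant1}_S(\nu'')$ has no component in the degree of the class $y=[\overline{\calM^{\geqslant1}_S(\nu)}^{\cl}]$. Since $\res^{\geqslant1}$ kills zero-dimensional classes, $(\res^{\geqslant1}\otimes\res^{\geqslant1})\Delta(y)=\res^{\geqslant1}(y)\otimes1+1\otimes\res^{\geqslant1}(y)$, and Lemma~\ref{lem:quasiprimitive}(b) applies directly. The equality of classes follows from Proposition~\ref{prop:estimateclassicaldim}(c) and Remark~\ref{rem:reduce class is classical class}. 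Your top-degree injectivity argument is essentially this same count applied to a single factor.
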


\medskip

\subsection{A criterion of quasi-primitivity.} We begin with the following simple observation.

\begin{lemma}\label{lem:quasiprimitive} Let $\nu \in \mathrm{H}^{\ev}(S,\Z)$ be an effective class.
	\begin{enumerate}[label=$\mathrm{(\alph*)}$,leftmargin=8mm,itemsep=2mm]
		\item The spaces 
		$\frakg^{\geqslant 1}_S(\nu)$ and $\frakg^{}_S(\nu)$ are preserved under multiplication by tautological classes.
		\item
		Let $y \in \bfH_S$ satisfy 
		\begin{equation}\label{eq:quasiprimcriteria}
			(\res^{\geqslant 1} \otimes \res^{\geqslant 1})\Delta(y)=(\res^{\geqslant 1}(y) \otimes 1) + (1 \otimes \res^{\geqslant 1}(y)).
		\end{equation}
		Then $\res^{\geqslant 1}(y)$ is quasi-primitive.
	\end{enumerate} 
\end{lemma}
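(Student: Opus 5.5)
For (a), I would use Proposition~\ref{prop:compat}(b). Let $c\in\Lambda_S$ be a tautological class and $x\in\frakg_S(\nu)$ a primitive element. By \eqref{def:coprod BBHS}, $\Delta(c)=\sum_i c_{i,1}\otimes c_{i,2}$. By \eqref{coprod compat},
$$\Delta(c\bullet x)=\Delta(c)\bullet(x\otimes 1+1\otimes x)=\sum_i \pm(c_{i,1}\bullet x)\otimes(c_{i,2}\bullet 1)+\sum_i\pm (c_{i,1}\bullet 1)\otimes (c_{i,2}\bullet x).$$
The unit $1$ lives in $\bfH_S(0)=\H^\BM_*(\calM_S(0))=\Q$, and $\calM_S(0)$ is a point. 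So $c_{i,2}\bullet 1=\text{e}_0(c_{i,2})$ is a scalar. Since $\text{e}_0$ kills all positive-degree generators, this scalar equals the counit $\epsilon(c_{i,2})$. Because $\Delta$ on $\Lambda_S$ is the coproduct making the generators primitive, $\sum_i c_{i,1}\epsilon(c_{i,2})=c$. The same holds on the other side. Hence $\Delta(c\bullet x)=(c\bullet x)\otimes1+1\otimes(c\bullet x)$, so $\frakg_S(\nu)$ is stable. The one thing to check is the Koszul signs: for $c_{i,2}$ a scalar the sign is trivial.

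For the stability of $\frakg_S^{\geqslant1}(\nu)$, I would use that the cap product commutes with the open restriction $\res^{\geqslant1}$. The tautological classes on $\calM_S^{\geqslant1}(\nu)$ are restrictions of those on $\calM_S(\nu)$, since the universal sheaf restricts. So $c\bullet\res^{\geqslant1}(x)=\res^{\geqslant1}(c\bullet x)$, which lies in $\res^{\geqslant1}(\frakg_S)$.

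For (b), I would write $y$ via Theorem~\ref{thm:restriction-isom}(c) and PBW, with $\U(\frakg_S)\subset\bfH_S$ dense, and aim to find a primitive $x\in\frakg_S(\nu)$ with $\res^{\geqslant1}(x)=\res^{\geqslant1}(y)$. Work in one degree $\nu$ of dimension $\geqslant1$. The kernel of $\res^{\geqslant1}$ on $\bfH_S(\nu)$ is spanned topologically by products $z\cdot w$ with $z\in\bfH_S(m\delta)$, $m>0$, as in the proof of Proposition~\ref{prop:hecke on BPS Lie}(c). In PBW terms this means: ordered monomials whose leftmost factor lies in $\frakg_S(\N\delta)$, with the zero-dimensional part ordered first by the refined HN order used in \S\ref{sec:proofthmss}. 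Hence $\bfH_S^{\geqslant1}(\nu)$ is identified with the image of the ordered product of $\U(\frakg_S^d(p))$ over $d\geqslant1$.

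Next I would compute the reduced coproduct $\bar\Delta=\Delta-(\cdot\otimes1)-(1\otimes\cdot)$ modulo $\ker\res^{\geqslant1}\otimes\ker\res^{\geqslant1}$. On that quotient, $\res^{\geqslant1}\bfH_S$ is modeled by the cocommutative coalgebra $\U(\frakg_S^{\geqslant1}\text{-part})$, obtained from $\U(\frakg_S)$ by quotienting by the left ideal generated by $\frakg_S(\N\delta)$. This quotient is a coalgebra quotient $\U(\frakg)\to \U(\frakg)\otimes_{\U(\frakh)}\Q$. Its primitive elements, in degrees of positive dimension, are exactly the images of $\frakg$: by PBW it is $\Sym$ of the complement $\frakg/\frakh$, where $\frakh=\frakg_S(\N\delta)$. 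Condition \eqref{eq:quasiprimcriteria} says that $\res^{\geqslant1}(y)$ is primitive in this quotient coalgebra, so it equals $\res^{\geqslant1}(x)$ for some $x\in\frakg_S(\nu)$, and is therefore quasi-primitive.

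The main obstacle is making this coalgebra-quotient identification rigorous in the topological (completed) setting. One must show that $\ker\res^{\geqslant1}$ is a coideal and that the identification with $\U(\frakg)\otimes_{\U(\frakh)}\Q$ holds degreewise. I would do this by truncating to the quasi-compact opens $\calM_S^+(\leqslant\uunu)$, where everything is finite dimensional in each cohomological degree by Corollary~\ref{cor-BM-limit}, and passing to the limit.
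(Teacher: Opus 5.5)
Your part (a) is the paper's argument: Proposition~\ref{prop:compat}(b) handles $\frakg_S(\nu)$, and compatibility of the cap product with open restriction handles $\frakg^{\geqslant 1}_S(\nu)$. One precision is needed. What identifies $\mathrm{e}_0$ with the counit of $\Lambda_S$ is that it kills \emph{all} generators $\overline{p}_n(\gamma)$. This includes the degree-zero ones $\overline{p}_0(\gamma)$, $\overline{p}_1(\eta)$, $\overline{p}_2(1)$, which $\mathrm{e}_\nu$ sends to nonzero scalars for $\nu\neq 0$; they vanish at $\nu=0$ because the tautological sheaf on $\calM_S(0)$ is zero. Vanishing in positive degree alone would not suffice.

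For (b) your route is a legitimate repackaging of the paper's, but one step is stated on the wrong side. In the CoHA product $x\cdot y$ the right factor $y$ is the \emph{subsheaf}. Proposition~\ref{prop:hecke on BPS Lie}(c) gives $\res^{\geqslant 1}(\bfH_S\cdot\bfH_S(m\delta))=0$, not $\res^{\geqslant 1}(\bfH_S(m\delta)\cdot\bfH_S)=0$. Products $z\cdot w$ with $z$ zero-dimensional on the left are exactly the Hecke operators $T_z(\res^{\geqslant 1}w)$. These are far from zero: $T_{[\calM_C(m\delta)]}$ is even invertible by Proposition~\ref{prop:PicardisHecke}. So the kernel must be described by PBW monomials whose \emph{rightmost} factor lies in $\U(\mathfrak{h})^+$, where $\mathfrak{h}=\frakg_S(\N\delta)$; the zero-dimensional part of an HN filtration is its deepest subsheaf. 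Your algebraic objects $\U(\frakg_S)\mathfrak{h}$ and $\U(\frakg_S)\otimes_{\U(\mathfrak{h})}\Q$ already have the correct side. It is your geometric description (``$z\cdot w$'', ``leftmost factor'') that contradicts them and must be fixed. Note also that $\mathfrak{k}=\bigoplus_{\dim\nu\geqslant 1}\frakg_S(\nu)$ is an ideal, so the quotient coalgebra is $\U(\mathfrak{k})$.

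Proposition~\ref{prop:hecke on BPS Lie}(c) also gives only the inclusion $\bfH_S\cdot\bfH_S^{0,+}\subseteq\ker\res^{\geqslant 1}$. The substantive input is that $\U(\mathfrak{k})\to\bfH^{\geqslant 1}_S$ is injective with dense image, equivalently that $\ker\res^{\geqslant 1}$ is the closure of $\U(\frakg_S)\mathfrak{h}$. This is the triangularity argument of \S\ref{sec:proofthmss} (Lemma~\ref{lem:ev and HN order}), run over the HN strata with zero $0$-dimensional part, which are exactly those making up $\calM^{\geqslant 1}_S$.

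The coideal property needs no PBW at all. Since $\calM^{\geqslant 1}_S$ is closed under direct sums and summands, the factorization coproduct restricts to it as in Corollary~\ref{cor:exists coproduct}(c).

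You can also avoid computing primitives of a completed coalgebra. Write $y=y_1+y_2$ with $y_1$ in the closure of $\U(\mathfrak{k})$ and $y_2\in\ker\res^{\geqslant 1}$. The coideal property kills the contribution of $y_2$. Then $\res^{\geqslant 1}\wotimes\res^{\geqslant 1}$ is injective on the closure of $\U(\mathfrak{k})\wotimes\U(\mathfrak{k})$, which contains $\Delta(y_1)$. So the hypothesis forces $y_1$ to be primitive in $\bfH_S$, i.e.\ $y_1\in\frakg_S(\nu)$ by definition, with $\res^{\geqslant 1}(y_1)=\res^{\geqslant 1}(y)$.

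That last step is precisely the paper's proof: modify $y$ by classes supported on $\calM_S(\nu)\smallsetminus\calM^{\geqslant 1}_S(\nu)$ via the inductive construction of Theorem~\ref{thm:restriction-isom}(c). Your packaging adds the conceptual statement that $\bfH^{\geqslant 1}_S$, with its induced coproduct, is a completion of $\U(\mathfrak{k})$ whose primitives are exactly $\frakg^{\geqslant 1}_S$. The paper's version is shorter but leaves the same triangularity argument implicit.
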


\begin{proof}
	Part (a) follows from the compatibility of the coproduct and the operators of multiplication by tautological classes, see 
	Proposition~\ref{prop:compat} (b). Next, let $y\in \bfH_S(\nu)$ satisfy \eqref{eq:quasiprimcriteria}. We may adapt the inductive construction 
	used in the proof of Theorem~\ref{thm:restriction-isom}, part (c) to modify $y$ by elements supported on 
	$\calM_S(\nu) \smallsetminus \calM_S^{\geqslant 1}(\nu)$ to obtain a primitive element $y'\in \frakg_S(\nu)$ satisfying 
	$\res^{\geqslant 1}(y')=\res^{\geqslant 1}(y)$. This proves (b).
\end{proof}

\medskip

\subsection{Classical dimension estimates} We next provide some estimates of classical dimension for the stacks $\calM_S(\nu)$. Recall that for any Mukai vector $\nu$ such that the derived stack $\calM_S(\nu)$ is non empty we have
$$\dim\calM_S(\nu)=-(\nu,\nu).$$ 
If $\rk(\nu)=0$ then this simplifies to 
$$\dim\calM_S(\nu)=\nu^2.$$  As this is all that we need in this paper, we restrict our attention to one-dimensional sheaves.

\begin{proposition}\label{prop:estimateclassicaldim} Let $\nu  \in \NS(S)^{\mathrm{eff}}\oplus \Z\delta$ and assume that $\omega$ and $\nu$ are in general position.
\begin{enumerate}[label=$\mathrm{(\alph*)}$,leftmargin=8mm,itemsep=2mm]
\item If $\nu=n\nu_\pr$ with $\nu_\pr$ primitive and $\nu_\pr^2=-2$ then $\calM_S^\ss(\nu)$ has a single closed point corresponding to 
$\calE^{\oplus n}$ where $\calE$ is the unique stable sheaf of Mukai vector $\nu_\pr$. Thus $\dim \calM_S^{\ss}(\nu)^\cl=-n^2.$
\item If $\nu=n\nu_\pr$ with $\nu_\pr$ primitive and $\nu_\pr^2=0$ then $\calM_S^\ss(\nu)$ is irreducible and the generic element is a direct 
sum $\calE_1 \oplus \cdots \oplus \calE_n$ of distinct stable sheaves of class $\nu_\pr$. Thus $\dim \calM_S^{\ss}(\nu)^\cl=n.$
\item If $\nu^2>0$ then $\dim(t_0\calM_S^{ss}(\nu))=\nu^2+1$ and $\calM_S^{\ss}(\nu)$ is irreducible. 
If moreover $\nu=\alpha+l\delta$ with $\alpha\in\NS(S)^{\mathrm{eff}}$ and the linear system $\lvert \alpha\lvert$ contains an irreducible curve, then the stack $\calM_S(\nu)$ is irreducible. 
\end{enumerate}
\end{proposition}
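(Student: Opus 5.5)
The plan is to reduce everything to known results on moduli of sheaves on K3/Abelian surfaces (Mukai, Yoshioka, Kaledin--Lehn--Sorger) together with the structure of the semistable stack as built from stable pieces. I will work with the Jordan--Hölder/polystable stratification of $\calM^\ss_S(\nu)^\cl$: each stratum is indexed by a decomposition type $\nu=\sum_i m_i\nu_i$ with distinct stable summands, its image in $M_S(\nu)$ has dimension $\sum_i \dim M_S^\st(\nu_i)$, and the fibres of $p_S$ over such a polystable point have dimension bounded via the 2CY property: $\dim \Ext^1-\dim\Hom$ computed from $(\nu_i,\nu_j)$. The genericity of $\omega$ ensures that all stable summands have Mukai vectors proportional to $\nu_\pr$, so $\nu_i=k_i\nu_\pr$, and by genericity again only $k_i=1$ stable sheaves exist when $\nu_\pr^2\leqslant 0$ (a stable sheaf of class $k\nu_\pr$ has $v^2\geqslant -2$, forcing $k=1$).

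For (a): $\nu_\pr^2=-2$ means the moduli of stable sheaves of class $\nu_\pr$ has dimension $\nu_\pr^2+2=0$ and is a single point (Mukai's rigidity: two stable sheaves $\calE,\calE'$ of class $\nu_\pr$ satisfy $\chi(\calE,\calE')=2$, so $\Hom\neq 0$ and $\calE\cong\calE'$). Hence every semistable sheaf is an iterated extension of $\calE$, and since $\Ext^1(\calE,\calE)=0$ it is $\calE^{\oplus n}$; the stack is then $B\mathrm{GL}_n$ locally, giving classical dimension $-n^2$. For (b): $M^\st_S(\nu_\pr)$ is a smooth surface $S'$ (Mukai), polystables are $\Sym^n(S')$-points, the open locus of distinct summands has stabilizer $\GM^n$ and no extensions between distinct summands ($\chi=0$, $\Hom=0\Rightarrow\Ext^1=0$), so dimension $2n-n=n$; the other strata are shown smaller by the fibre count, giving irreducibility.

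For (c): when $\nu^2>0$ the stable locus is nonempty of dimension $\nu^2+2$ and is a $\GM$-gerbe over it, giving $\nu^2+1$; I would then check each strictly semistable stratum of type $\sum m_i(k_i\nu_\pr)$ has dimension $<\nu^2+1$ using $\sum m_i(k_i^2\nu_\pr^2+2)-\sum m_i^2+\sum_{i<j}m_im_j k_ik_j\nu_\pr^2$-type counts versus $n^2\nu_\pr^2+1$ (the KLS estimate), which is the main obstacle — a careful inequality, which I would attack by convexity in the $m_i$. Irreducibility of $\calM^\ss_S(\nu)$ follows since $M^\st$ is irreducible (deformation to Hilbert schemes). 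For the last claim on $\calM_S(\nu)$ I would use the HN stratification (Proposition~\ref{prop: purity-ss2}): each unstable stratum $\calM^+_S(\uunu)$ has dimension $\sum_i\dim\calM^\ss(\nu_i)+\vdim\gr_\uunu$ with the pieces of type $\alpha_i+l_i\delta$ and zero-dimensional parts; using the irreducible curve in $|\alpha|$, $\alpha$ is nef-like so $\alpha_i\cdot\alpha_j\geqslant 0$ for decompositions, and the estimate $\sum(\nu_i^2+1)-\sum_{i<j}(\nu_i,\nu_j)<\nu^2+1$, i.e. $\sum_{i<j}\nu_i\cdot\nu_j>\ell-1$ type bound, shows unstable strata cannot contain components; zero-dimensional strata are handled by Lemma~\ref{lem:affine fibration pure types}. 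This dimension inequality for HN strata is the second delicate step.
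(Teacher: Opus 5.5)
Your overall architecture matches the paper's. You cite Mukai--Yoshioka for (a) and (b), and use the $\GM$-gerbe over the stable locus for the dimension in (c). For irreducibility of the stable locus you cite Kaledin--Lehn--Sorger; for non-primitive $\nu$ this is \cite[thm.~4.4]{KaledinLehnSorger}, not a deformation to a Hilbert scheme. The paper then gets density of the stable locus from the local model of \cite[thm.~5.11]{D24}, where you would use a stratum count. For the last assertion, both compare HN strata with the lower bound $\nu^2+1$ on every irreducible component. That bound comes from $\Ext^2(\calF,\calF)\cong\mathrm{End}(\calF)^\vee$; you use it implicitly and should state it.

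\textbf{Gap 1: the HN inequality.} By Proposition~\ref{prop: purity-ss2}, a pure stratum $\calM_S^+(\uunu)$ with $\uunu=(\nu_1,\dots,\nu_\ell)$, $\ell\ge 2$, has dimension $\sum_i\dim\calM_S^\ss(\nu_i)^\cl+\sum_{i<j}\nu_i\cdot\nu_j$. You replaced $\dim\calM_S^\ss(\nu_i)^\cl$ by $\nu_i^2+1$, which is only right when $\nu_i^2>0$. By (a) and (b), the excess $\partial(\nu_i)=\dim\calM_S^\ss(\nu_i)^\cl-\nu_i^2$ equals $n^2$ if $\nu_i=n\beta$ with $\beta^2=-2$, and $n$ if $\beta^2=0$. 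So the correct requirement is $\sum_i\partial(\nu_i)\le\sum_{i<j}\nu_i\cdot\nu_j$ (Lemma~\ref{lem:twoconnec}), not $\sum_{i<j}\nu_i\cdot\nu_j>\ell-1$.

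The tool you propose, ``$\alpha$ is nef-like so $\alpha_i\cdot\alpha_j\ge 0$'', is false. An HN-type may contain $E+a\delta$ and $E+b\delta$ with $a\neq b$ for a $(-2)$-curve $E$, and their intersection number is $E^2=-2$. Even if it were true, nonnegativity gives no positive lower bound.

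The missing idea is $2$-connectedness. If $\alpha^2>0$ and $|\alpha|$ contains an irreducible curve, every splitting $\alpha=a+b$ into nonzero effective classes has $a\cdot b\ge 2$ (Saint-Donat for K3, Birkenhake--Lange for Abelian surfaces). Apply it to $\nu=\beta_i+\bigl((n_i-1)\beta_i+\alpha_{\neq i}\bigr)$. This gives $\alpha_i\cdot\alpha_{\neq i}\ge 2n_i^2$, $2n_i$ or $2$ according as $\beta_i^2=-2$, $0$ or $>0$. Summing over $i$ gives exactly the needed bound, which is why (a) and (b) feed into (c).

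\textbf{Gap 2: zero-dimensional torsion.} Your claim that these strata are ``handled by Lemma~\ref{lem:affine fibration pure types}'' fails, and cannot be repaired. That lemma shows $\calM_S(0,\nu-m\delta,m\delta)$ is a vector bundle stack of rank $-\chi(\nu-m\delta,m\delta)=-m\,\rk(\nu)=0$ over $\calM_S^{\geqslant 1}(\nu-m\delta)\times\calM_S(m\delta)$. Since $(\nu-m\delta)^2=\nu^2$ and $\dim\calM_S(m\delta)^\cl=m$, the stratum has dimension at least $\nu^2+1+m$. Already the sheaves $\calG\oplus\calO_x$ with $x\notin\supp\calG$ form a family of dimension $\ge\nu^2+2$.

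These strata therefore do not lie in the closure of the open pure locus, which has dimension $\nu^2+1$. So the full stack $\calM_S(\nu)$ is in fact reducible. The assertion holds, and is only used later (Remark~\ref{rem:reduce class is classical class}, Proposition~\ref{prop:quasiprimitivity fund class}), for $\calM_S^{\geqslant 1}(\nu)$. You should restrict to pure HN-types, absorbing the $\delta$-parts of one-dimensional pieces via $\partial(\alpha_1+\alpha_0)\le\partial(\alpha_1)$. That is also all the paper's argument actually covers.

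\textbf{Two smaller points.}

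First, you exclude stable sheaves of class $k\nu_\pr$, $k\ge 2$, via $(k\nu_\pr)^2\ge -2$. This works only when $\nu_\pr^2=-2$; in the isotropic case $(k\nu_\pr)^2=0$ for all $k$. Nonexistence there is a separate result of Mukai and Yoshioka (via Fourier--Mukai). It is essential, since such a sheaf would give an extra component.

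Second, in (b), showing the non-generic strata are smaller does not prove irreducibility. The deformation-theoretic lower bound on component dimension there is only $\vdim+1=1$, and the stratum of sheaves with polystable type $\calE^{\oplus n}$ has dimension exactly $1$. You need the local model near polystable points (stacks of commuting pairs, which are irreducible), as in \cite[thm.~5.11]{D24}.
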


\begin{proof}Statements (a) and (b) are well-known and due to Mukai and Yoshioka, see \cite{MukaiBombay, Yoshioka}.
We concentrate on (c).  
Let $\nu \in \NS(S)^{\mathrm{eff}}\oplus \Z\delta$ satisfy $\nu^2>0$. Assume first that $\nu=\alpha+l\delta$ is $\omega$-general and $l \neq 0$. 
We have $\dim\Ext^2(\calE,\calE)=\dim\Hom(\calE,\calE)=1$ for any stable sheaf $\calE$ hence 
$\dim \calM_S^{\mathrm{st}}(\nu)^\cl=\dim\calM_S^{\mathrm{st}}(\nu)+1=\nu^2+1$.
The irreducibility of $\calM_S^{\mathrm{st}}(\nu)$ is then given by 
\cite[thm.~4.4]{KaledinLehnSorger}. 
The local description of $\calM_S^\ss(\nu)$ over its coarse moduli space given in \cite[thm. 5.11]{D24} in terms of 
representations of preprojective algebras shows that  $\calM_S^{\mathrm{st}}(\nu)$ is dense in  $\calM_S^{\mathrm{ss}}(\nu)$, from which we 
deduce that $\calM_S^\ss(\nu)$ is itself irreducible. We turn to the second statement of (c). Let $\calK \subset \calM_S(\nu)$ be an irreducible 
component and let $\calU \subset \calK$ be open in $\calM_S(\nu)$. Then $\calU$ is quasi-smooth of dimension $\nu^2$. 
For any $\calF \in \calU$ we have $\H^1(\mathbb{T}_{\calM_S(\nu)})|_{\calF}=\Ext^2(\calF,\calF) \cong \mathrm{End}(\calF)^\vee$. 
In particular, $\dim \calK^\cl =\dim \calU^\cl \geqslant \dim \calU +1$. To finish the proof, it suffices, given the irreducibility of $\calM_S^\ss(\nu)$, 
to show that for any HN-type $\uunu \in \HN(\nu)$ of length greater than one we have $\dim \calM_S^\ss(\uunu)^\cl \leqslant \nu^2$. 
In light of Proposition~\ref{prop: purity-ss2} (b) this boils down to the inequality
\begin{equation}\label{eq:irredineq}
\nu^2 \geqslant \sum_{i<j} \nu_i \cdot \nu_j + \sum_i \dim \calM_S^\ss(\nu_i)^\cl.
\end{equation}
Setting $\partial(\gamma)= \dim \calM_S^\ss(\gamma)^\cl-\dim \calM_S^\ss(\gamma)$ 
for an effective Mukai vector $\gamma$, \eqref{eq:irredineq} is easily seen to be a special case of the following.
	
\begin{lemma}\label{lem:twoconnec}
Let $\nu=\alpha+l\delta \in \NS(S)^{\mathrm{eff}} \oplus \Z\delta$ satisfy $\nu^2>0$ and assume that the linear system $|\alpha|$ contains an 
irreducible curve. Let $\nu=\sum_i \alpha_i$ be a decomposition into a sum of at least two effective Mukai vectors. Then $\sum_i \partial(\alpha_i) \leqslant \sum_{i<j} \alpha_i \cdot \alpha_j$.
\end{lemma}

\begin{proof} 
Observe that for $\alpha =\alpha_1+\alpha_0$ with $\alpha_1 \in \NS(S)^{\mathrm{eff}}$ and $\alpha_0\in \Z\delta$ we have 
$\partial(\alpha_1) \geqslant \partial(\alpha)$; by Proposition \ref{prop:estimateclassicaldim} the number $\partial(\alpha)$ decreases as we decrease the divisibility of $\alpha$ . We may thus assume that $\nu$ and each of the $\alpha_i$ belongs to 
$\NS(S)^{\mathrm{eff}}$.
Let us order the classes $\alpha_i$ in such a way that $\alpha_i^2<0$ for $1 \leqslant i \leqslant l$, $\alpha_i^2=0$ for $l<i\leqslant n$ and 
$\alpha_i^2>0$ for $i>n$. Let us write $\alpha_i=n_i\beta_i$ with $\beta_i$ primitive for $i \leqslant n$. Thus,
$$\sum_i \partial(\alpha_i)=\sum_{i=1}^l n_i^2 + \sum_{i=l+1}^n n_i+\sum_{i >n}1.$$
We will now use the fact that under our hypotheses, $\nu$ is $2$-connected, i.e. for any decomposition $\nu=\alpha+\beta$ into nonzero 
effective classes $\alpha,\beta \in \NS(S)^{\mathrm{eff}}$ we have $\alpha \cdot \beta\geqslant 2$.  If $S$ is a K3 surface this is \cite[lem.~3.7]{SaintDonat}, while if $S$ is an Abelian surface it follows from \cite[lem.~10.4.6]{CAV}.
Put $\alpha_{\neq i}=\sum_{j \neq i} \alpha_j$. Fix $i \leqslant n$. Writing $\nu=\beta_i + \left((n_i-1)\beta_i + \alpha_{\neq i}\right)$ we get
$$2 \leqslant \beta_i \cdot \left((n_i-1)\beta_i + \alpha_{\neq i}\right)=-2(n_i-1)+\beta_i\cdot \alpha_{\neq i}$$
hence $\beta_i \cdot \alpha_{\neq i} \geqslant 2n_i$ and $\alpha_i\cdot \alpha_{\neq i} \geqslant 2n_i^2$. 
A similar reasoning yields $\alpha_i \cdot \alpha_{\neq i} \geqslant 2n_i$ for $l<i\leqslant n$. Thus
$$2\sum_{i <j} \alpha_i\cdot \alpha_j=\sum_i \alpha_i \cdot \alpha_{\neq i} \geqslant \sum_{i=1}^l 2n_i^2 + \sum_{i=l+1}^n 2n_i + \sum_{i>l} 2$$
which is what we wanted. This concludes the proof of Lemma~\ref{lem:twoconnec}. 
\end{proof}

To finish the proof of (c), it remains to observe that the irreducibility of $\calM_S(\nu)$ is independent of the choice of $\omega$ (which can 
hence be picked to be $\nu$-general), and obviously invariant under tensoring $\nu$ by any line bundle so that we may remove the condition 
$l\neq 0$ (notice also that the irreducibility of $\calM_S(\nu)$ implies that of $\calM^\ss(\nu)$ for any $\omega$). 
Proposition~\ref{prop:estimateclassicaldim} is proved. 
	
\end{proof}

\begin{remark}\hfill
	\begin{enumerate}[label=$\mathrm{(\alph*)}$,leftmargin=8mm,itemsep=2mm]
		\item Any ample class has an irreducible (even smooth) curve in its associated linear system hence Proposition~\ref{prop:estimateclassicaldim} (c) holds for ample classes.
		\item
		One can show that $\calM_S^{\geqslant 1}(\nu)$ is in fact irreducible whenever $\nu^2>0$. This is false in general when $\nu^2 \leqslant 0$. This is similar to the statement that the stack of Higgs bundles of fixed rank and degree on a curve of genus $g>1$ is irreducible, see, e.g., \cite[\S8]{MMSV}. We will not need this. 
	\end{enumerate}
\end{remark}

\medskip

\subsection{Proof of Proposition~\ref{prop:quasiprimitivity fund class}.} Finally, let $\gamma\in \NS(S)^{\mathrm{eff}}$ be a class satisfying 
$\gamma^2>0$ and whose associated linear system contains an irreducible curve. Let $d \in \Z$ and set $\nu=\gamma+d\delta$. 
By Proposition~\ref{prop:estimateclassicaldim} (c), the fundamental class of $\overline{\calM_S^{\geqslant 1}(\nu)}^\cl$ is of degree
$$\deg \left[\overline{\calM_S^{\geqslant 1}(\nu)}^\cl\right]=2\nu^2+2-\nu^2=\nu^2+2$$
(recall our shift convention \eqref{eq:degreeshiftin defCOHA} on the homological grading of $\bfH_S$). Let $\alpha,$ $\beta$ be dimension one 
effective classes such that $\nu=\alpha+\beta$. We will prove that $\bfH_S(\alpha) \wotimes \bfH_S(\beta)$ has no graded piece of degree 
$\nu^2+2$ hence the component of $\Delta\left(\left[\overline{\calM_S^{\geqslant 1}(\nu)}^\cl \right]\right)$ in 
$\bfH_S(\alpha) \wotimes \bfH_S(\beta)$ must be zero. This will imply that $\left[\overline{\calM_S^{\geqslant 1}(\nu)}^\cl \right]$ satisfies the 
conditions of Lemma~\ref{lem:quasiprimitive} (b), hence that $\left[\calM_S^{\geqslant 1}(\nu)^\cl \right]$ is quasi-primitive as wanted. It suffices to show that for any pair of purely one-dimensional HN-types $(\underline{\alpha},\underline{\beta})$ in $\HN(\alpha) \times \HN(\beta)$ we have
$$\nu^2+2 > \sum_i 2 \dim \calM_S^\ss(\alpha_i)^\cl+ 2\sum_{i<j} \alpha_i\cdot \alpha_j -\alpha^2+ \sum_k 2 \dim \calM_S^\ss(\beta_k)^\cl + 2\sum_{k<l} \beta_k\cdot \beta_l -\beta^2$$
which reduces to
$$\nu^2+2 > \sum_i \alpha_i^2 + 2\sum_i \partial(\alpha_i) + \sum_k \beta_k^2 + 2\sum_k \partial(\beta_k)$$
where we have put $\partial(\gamma)=\dim \calM_S^\ss(\gamma)^\cl -\dim\calM_S^\ss(\gamma)$. 
This last inequality is a direct application of Lemma~\ref{lem:twoconnec} for the decomposition $\nu=\sum_i \alpha_i+\sum_k\beta_k$. 
This finishes the proof of Proposition~\ref{prop:quasiprimitivity fund class}, and hence also of Theorem~\ref{thm:weakMarkman}.


\bigskip


\begin{thebibliography}{99}
	
	
	\bibitem{AHPS23}
	E. Ahlqvist, J. Hekking, M. Pernice, M. Savvas, \emph{Good moduli spaces in derived algebraic geometry}, arXiv:2309.16574 (2023).
	\bibitem{alper2023existence}
	J. Alper,  D. Halpern-Leistner, J. Heinloth,
	\emph{Existence of moduli spaces for algebraic stacks},
	Invent. Math.
	\textbf{234} (2023), 949--1038.
	
	\bibitem{BM14}
	A.~Bayer, E.~Macr\`i,
	\emph{MMP for moduli of sheaves on K3s via wall-crossing: nef
              and movable cones, Lagrangian fibrations}, Invent. Math. \textbf{198} (2014), 505--590.

	\bibitem{BNR_correspondence}
	A.~Beauville, M.~Narasimhan, S.~Ramanan,
	\emph{Spectral curves and the generalised theta divisor}, J. Reine Angew. Math. \textbf{398} (1989), 169--179.
	
	\bibitem{BBDG} 
	A. A. Beilinson, J. Bernstein, P. Deligne, O. Gabber,
	\emph{Faisceaux pervers}, Astérisque, \textbf{100},
	Société Mathématique de France, Paris, 1982, 5--171.
	
	\bibitem{CAV}
	C. Birkenhake, H. Lange,
	\emph{Complex Abelian Varieties},
	Grundlehren Math. Wiss., \textbf{302}, Springer Verlag, (1992).
	
	\bibitem{BottaDavison}
	T. Botta, B. Davison, \emph{Okounkov's conjecture via BPS Lie algebras}, arXiv:2312.14008 (2023).
	
	\bibitem{Bottini}
	A. Bottini, \emph{Stable sheaves on K3 surfaces via wall-crossing}, Kyoto J. Math. \textbf{64} (2024), no. 2, 459–499.
	
	\bibitem{bozec2020relative}
	T. Bozec, D. Calaque, S. Scherotzke, \emph{Relative critical loci and quiver moduli},
	Ann. Sci. Éc. Norm. Supér. (4) \textbf{57} (2024), no. 2, 553–614.
	
	\bibitem{BBDJS15}
	C. Brav, V. Bussi, D. Dupont, D. Joyce, B. Szendroi, \emph{Symmetries and stabilization for sheaves of vanishing cycles, with an appendix by J\"org Sch\"urmann}, J. Singul.
	\textbf{11} (2015) 85--151.
	
	\bibitem{BD19}
	C. Brav, T. Dyckerhoff, \emph{Relative Calabi-Yau structures}, Compos. Math.
	\textbf{155} (2019), 372--412.
	
	\bibitem{BD21}
	C. Brav, T. Dyckerhoff, \emph{Relative Calabi-Yau structures II: shifted Lagrangians
	in the moduli of objects}, Selecta Math. (N.S.) \textbf{27} (2021).
	
	\bibitem{BrThesis}
	T. Bridgeland, \emph{Fourier--Mukai transforms for elliptic surfaces}, J. Reine Angew. Math. \textbf{498} (1998), 115--133.
	
	\bibitem{Bu25b}
	C. Bu, B. Davison, A. Ib\'a\~nez N\'u\~nez, T. Kinjo, T. P\u{a}durariu, \emph{Cohomology of symmetric stacks},  arXiv:2502.04253,
	(2025).

	\bibitem{Bu25a}
	C. Bu, D. Halpern-Leistner, A. Ib\'a\~nez N\'u\~nez, T. Kinjo, \emph{Intrinsic Donaldson--Thomas theory. I. Component lattices of stacks}, arXiv:2502.13892, (2025).
	
	
	\bibitem{Cal15}
	D. Calaque, \emph{Lagrangian structures on mapping stacks and semi-classical TFTs}, Contemp. Math. \textbf{643} (2015), 1--23.

	\bibitem{Cal19} D. Calaque, \emph{Shifted cotangent stacks are shifted symplectic}, Ann. Fac. Sci. Toulouse Math. (6)  28 (2019),
67--90.

	\bibitem{CS}
	D. Calaque, P. Safronov, \emph{Shifted cotangent bundles, symplectic groupoids and deformation to the normal cone}, arXiv:2407.08622, (2024).
	
	\bibitem{CG}
	K. Costello, O. Gwilliam, \emph{Factorization algebras in quantum field theory, Vol. 1}, New Mathematical Monographs
		\textbf{31}, Cambridge Univ. Press, Cambridge,
		(2017).
	
	\bibitem{BenDimRed}
	B. Davison, \emph{The critical CoHA of a quiver with potential}, Q. J. Math. \textbf{68} (2017), no. 2, 635–703.
	
	\bibitem{DM}
	B. Davison, S. Meinhardt, \emph{Cohomological Donaldson--Thomas theory of a quiver with potential and quantum enveloping algebras}, Invent. Math. \textbf{221} (2020), 777--871.
	
	\bibitem{DavisonKac}
	B. Davison, \emph{The integrality conjecture and the cohomology of preprojective stacks}, J. Reine Angew. Math. \textbf{804} (2023), 105–154.
	
	\bibitem{D24}
	B. Davison, \emph{Purity and 2-Calabi-Yau categories},
	Invent. Math. \textbf{238} (2024), no. 1, 69--173.
	
	\bibitem{Davison_affine_gl_1}
	B. Davison, \emph{Affine BPS algebras, W algebras, and the cohomological Hall algebra of $\mathbb{A}^2$}, in \textit{Homotopical methods in Geometry and Physics}, of Contemp. Math., 841, Amer. Math. Soc., Providence (2026), 163--199.
	
	\bibitem{DHSM1}
	B. Davison, L. Hennecart, S. Schlegel Mejia, \emph{BPS Lie algebras for totally negative 2-Calabi-Yau categories and nonabelian Hodge theory for stacks}, arXiv:2212.07668, (2022).
	
	\bibitem{DHSM2}
	B. Davison, L. Hennecart, S. Schlegel Mejia, \emph{BPS algebras and generalised Kac-Moody algebras from 2-Calabi-Yau categories}, arXiv:2303.12592, (2023).
	

	\bibitem{Des}
	P. Descombes, \emph{Hyperbolic localization in Donaldson--Thomas theory}, arXiv:2506.22400, (2025).	
	
	\bibitem{DPSSV}
	D.-E. Diaconescu, M. Porta, F. Sala, O. Schiffmann, E. Vasserot, \emph{Cohomological Hall algebras of one-dimensional sheaves on surfaces and Yangians}, arXiv:2502.19445, (2025).
	
	\bibitem{gaitsgory2019weil}
	D. Gaitsgory, J. Lurie, \emph{Weil's conjecture for function fields. Vol. 1}, Ann. of Math. Stud., \textbf{199}
	Princeton University Press, Princeton, NJ, 2019.
	
	\bibitem{Gottsche-Soergel}
	L. Göttsche, W. Soergel, \emph{Perverse sheaves and the cohomology of Hilbert schemes of smooth algebraic surfaces}, Math. Ann. \textbf{296} (1993), no. 2, 235–245.
	
	\bibitem{GWZ}
	M. Groechenig, D. Wyss, P. Ziegler, \emph{$\chi$-independence for K3-surfaces via $p$-adic integration}, (2026).

	\bibitem{halpern2014structure}
	D. Halpern-Leistner, \emph{On the structure of instability in moduli theory}, arXiv:1411.0627, (2014).
	
	\bibitem{halpern2016theta}
	D. Halpern-Leistner, \textit{Theta-stratifications, {T}heta-reductive stacks, and applications}, arXiv:1608.04797, (2016).
	
	\bibitem{HauselThaddeus}
	T. Hausel, M. Thaddeus, \emph{ Mirror symmetry, Langlands duality, and the Hitchin system}, Invent. Math. \textbf{153} (2003), no. 1, 197–229.
	
	\bibitem{HK_BPS}
	L. Hennecart, T. Kinjo, \emph{The BPS decomposition theorem}, arXiv:2509.21298, (2025).

	\bibitem{HP18}
	J. Holstein, M. Porta, \emph{Analytification of mapping stacks}, Algebr. Geom. \textbf{12} (2025), 13--83.
	
	\bibitem{Huyb}
	D. Huybrechts, \emph{Compact hyper-K\"ahler manifolds: basic results}, Invent. Math. \textbf{135} (1999), no. 1, 63--113.
	
	\bibitem{K3Global}
	D. Huybrechts, \emph{Lectures on $K3$ surfaces}, Cambridge Stud. Adv. Math. \textbf{158}
	Cambridge University Press, Cambridge, (2016).
	
	\bibitem{HL}
	D. Huybrechts, M. Lehn, \emph{The geometry of moduli spaces of sheaves}, Aspects Math. E31
	Friedr. Vieweg \& Sohn, Braunschweig, (1997).
	
	\bibitem{IN23}
	A. Ib\'anez N\'unez, \emph{Refined Harder-Narasimhan filtrations in moduli theory}, arXiv:2311.18050, (2023).
	
		\bibitem{Shivang}
	S. Jindal, \emph{CoHA of cyclic quivers and an integral form of affine Yangians}, arXiv:2408.02618, (2024).

	
	\bibitem{Jindal-Latyntsev-Sarunas}
	S. Jindal, S. Kaubrys, A. Latyntsev,  \emph{Critical CoHAs, vertex coalgebras and deformed Drinfeld coproducts}, arXiv:2603.21707, (2026).

	\bibitem{Joyce-Safronov}
	D. Joyce, P. Safronov, \emph{A Lagrangian neighbourhood theorem for shifted symplectic derived schemes}, Ann. Fac. Sci. Toulouse Math. (6) 28 (2019), no. 5, 831--908.
	
	\bibitem{KaledinLehnSorger}
	D. Kaledin, M. Lehn, C. Sorger, \emph{Singular symplectic moduli spaces}, Invent. Math. \textbf{164} (2006),  591--614.
	
	\bibitem{KV19}
	M. Kapranov, E. Vasserot, \emph{The cohomological Hall algebra of a surface and factorization cohomology}, 
	J. Eur. Math. Soc. (JEMS) \textbf{25} (2023), 4221--4289.
	
	\bibitem{KS06}
	M. Kashiwara, P. Schapira, \emph{Categories and sheaves}, Grundlehren Math. Wiss., \textbf{352}
	Springer-Verlag, Berlin, (2006).

	\bibitem{KS13}
	M. Kashiwara, P. Schapira, \emph{Sheaves on Manifolds}, Grundlehren Math. Wiss., \textbf{292}
	Springer-Verlag, Berlin, (1990).
	
	\bibitem{keller2011deformed}
	B. Keller, \emph{Deformed Calabi-Yau completions},
	J. Reine Angew. Math. \textbf{654} (2011), 125–180.
	
	\bibitem{K19} 
	A. A. Khan, \emph{Virtual fundamental classes of derived stacks I}, arXiv:1909.01332, (2019).
	
	\bibitem{KhanRavi}
	A. A. Khan, C. Ravi, \emph{Generalized cohomology theories for algebraic stacks}, Adv. Math. \textbf{458} (2024), article no. 109975, 104 pp.
	
	\bibitem{KK}
	A. A. Khan, T. Kinjo, \emph{3D cohomological Hall algebras for local surfaces}, \url{https://www.preschema.com/papers/dimredcoha.pdf}.

	\bibitem{KKPS}
	A. A. Khan, T. Kinjo, H. Park, P. Safronov, \emph{Shifted Lagrangian classes}, to appear.

	\bibitem{KKPSpervpull}
	A. A. Khan, T. Kinjo, H. Park, P. Safronov, \emph{Perverse pullbacks}, arXiv:2510.16563 (2025).

	\bibitem{K}
	T. Kinjo, \emph{Dimensional reduction in cohomological Donaldson--Thomas theory}, Compos. Math. \textbf{158} (2022), 123--167.
	
	\bibitem{kinjo2021virtual}
	T. Kinjo, \emph{Virtual classes via vanishing cycles}, arXiv:2109.06468, (2021).
	
	\bibitem{KinjoKoseki}
	T. Kinjo, N. Koseki, \emph{Cohomological $\chi$-independence for Higgs bundles and Gopakumar--Vafa invariants}, J. Eur. Math. Soc. (JEMS) \textbf{28} (2026), no. 2, 619--671.
	
	\bibitem{KPS}
	T. Kinjo, H. Park, P. Safronov, \emph{Cohomological Hall algebras for 3-Calabi-Yau categories}, arXiv:2406.12838 (2024).	
	
	
	\bibitem{lurie2017higher} 
	J. Lurie, \emph{Higher Algebra}, available at \url{https://www.math.ias.edu/~lurie/} (2017).
	
	\bibitem{lurie2009higher}
	J. Lurie, \emph{Higher Topos Theory}, Ann. of Math. Stud., \textbf{170}, Princeton University Press, Princeton, NJ, (2009). 

	\bibitem{lurie2011homotopy}
	J. Lurie, \emph{Derived Algebraic Geometry XIII: Rational and $p$-adic Homotopy Theory}, (2011).
	
	\bibitem{Markman}
	E. Markman, \emph{Generators of the cohomology ring of moduli spaces of sheaves on symplectic surfaces},	
	J. Reine Angew. Math. \textbf{544} (2002), 61--82.

	\bibitem{MSP=W}
	D. Maulik, J. Shen, \emph{The  $P=W$  conjecture for $GL_n$}, Ann. of Math. (2) \textbf{200} (2024), no. 2, 529–556.
		
	\bibitem{MT19}
	D.~Maulik, R.~Thomas, \emph{Sheaf counting on local K3 surfaces}, Pure Appl. Math. Q. \textbf{14} (2018), no. 3-4, 419--441.
	
	\bibitem{MT18}
	D.~Maulik, Y.~Toda, \emph{Gopakumar--Vafa invariants via vanishing cycles}, Invent. Math. \textbf{213} (2018), 1017--1097.
	
	\bibitem{MMHS}
	A. Mellit, A. Minets, O. Schiffmann, T. Hausel, \emph{$P=W$ via $\mathcal{H}_2$}, arXiv:2209.05429 (2022).
		
	\bibitem{MMSV}
	A. Mellit, A. Minets, O. Schiffmann, E. Vasserot, \emph{Coherent sheaves on surfaces, COHAs and deformed $W_{1+\infty}$-algebras}, arXiv:2311.13415 (2023).
	
	\bibitem{milnor1965structure}
	J. Milnor, J. Moore, \emph{On the structure of Hopf algebras}, Ann. of Math. (2) \textbf{81} (1965), 211--264.
	
	\bibitem{Minets}
	A. Minets, \emph{Cohomological Hall algebras for Higgs torsion sheaves, moduli of triples and sheaves on surfaces}, Selecta Math. (N.S.) \textbf{26} (2020), no. 2,
	Paper No. 30, 67 pp.
	
	\bibitem{MukaiInvent84}
	S. Mukai, \emph{Symplectic structure of the moduli space of sheaves on an Abelian or $K3$ surface}, Invent. Math. \textbf{77} (1984), no. 1, 101–116.
	
	\bibitem{MukaiBombay}
	S. Mukai, \emph{On the moduli space of bundles on K3 surfaces. I.}, in \textit{Vector bundles on algebraic varieties (Bombay, 1984)}, volume 11 of Tata Inst. Fund. Res. Stud. Math., pages 341–413. Bombay, 1987.
	
	\bibitem{N11}
	N. Nitsure, \emph{Schematic Harder-Narasimhan stratification}, Internat. J. Math.
	\textbf{22} (2011), 1365--1373.
	
	\bibitem{OGrady}
	K. O'Grady, \emph{Desingularized moduli spaces of sheaves on a $K3$}, J. Reine Angew. Math. \textbf{512} (1999), 49–117.
	
	\bibitem{Orlov}
	D. Orlov, \emph{Equivalences of derived categories and K3 surfaces}, J. Math. Sci. (N.Y.) \textbf{84} (1997), no. 5, 1361--1381.

	
	\bibitem{PTVV13} 
	T. Pantev, B. To\"en, M. Vaqui\'e, G. Vezzosi, \emph{Shifted symplectic structures}, Publ. Math. Inst. Hautes \'Etudes Sci.
	\textbf{117} (2013),  271--328.
	
	\bibitem{P19}
	M. Porta, \emph{GAGA theorems in derived complex geometry}, J. Algebraic Geom. \textbf{28}, no. 3, 519--565
	(2019).
	
	\bibitem{PortaSala}
	M. Porta, F. Sala, \emph{Two dimensional categorified Hall algebras}, J. Eur. Math. Soc. (JEMS) \textbf{25} (2023), no. 3, 1113--1205.
	
	\bibitem{Roz}
	N. Rozenblyum, \emph{Connections on moduli spaces and infinitesimal Hecke modifications}, arXiv:2108.07745 (2021).

	\bibitem{Rydh}
	D. Rydh, Families of cycles, \url{https://people.kth.se/~dary/thesis/thesis-paperIV.pdf}.
	
	\bibitem{SaintDonat}
	B. Saint-Donat, \emph{Projective models of $K3$ surfaces}, Amer. J. Math. \textbf{96} (1974), 602–639.
	
	\bibitem{SS}
	F. Sala, O. Schiffmann, \emph{Cohomological Hall algebra of Higgs sheaves on a curve}, Algebr. Geom. \textbf{7} (2020), 346--376.
	
	\bibitem{Schefers22} 
	K. Schefers, \emph{An equivalence between vanishing cycles and microlocalization}, arXiv:2205.12436 (2022).
	
	\bibitem{SchiffmannKac}
	O. Schiffmann, \emph{ Indecomposable vector bundles and stable Higgs bundles over smooth projective curves},
	Ann. of Math. (2) \textbf{183} (2016), no. 1, 297–362.
	
	\bibitem{SVIHES}
	O. Schiffmann, E. Vasserot, \emph{Cherednik algebras, W-algebras and the equivariant cohomology of the moduli space of instantons on $\mathbb{A}^2$}, Publ. Math. Inst. Hautes Études Sci. \textbf{118} (2013), 213–342.
	
	\bibitem{SVgenerators}
	O. Schiffmann, E. Vasserot, \emph{On cohomological Hall algebras of quivers: generators}, J. Reine Angew. Math. \textbf{760} (2020), 59–132.
	

	\bibitem{STV}
	T. Schürg, B. Toën, G. Vezzosi, \emph{Derived algebraic geometry, determinants of perfect complexes, and applications to obstruction theories for maps and complexes}, J. Reine Angew. Math. \textbf{702} (2015), 1--40.
		
	\bibitem{TanakaThomas}
	Y. Tanaka, R. Thomas, \emph{Vafa--Witten invariants for projective surfaces I: stable case}, 	J. Algebraic Geom. \textbf{29} (2020), 603–668.

	\bibitem{Th00}
	R.~Thomas, \emph{A holomorphic Casson invariant for Calabi--Yau 3-folds,
              and bundles on $K3$ fibrations}, J. Differential Geom. \textbf{54} (2000), 367--438.
	
	\bibitem{T23}
	Y. Toda, \emph{Gopakumar--Vafa invariants and wall-crossing}, J. Differential Geom. \textbf{123} (2023), 141--193.
	
	\bibitem{TV07} 
	B. To\"en, M. Vaqui\'e, \emph{Moduli of objects in dg-categories}, Ann. Sci. \'Ec. Norm. Sup\'er. (4) \textbf{40} (2007), no. 3, 387–444.
	
	\bibitem{Tubstack}
	S. Tubach, \emph{Mixed Hodge modules on stacks}, arXiv:2407.02256 (2024).
	
	\bibitem{Yang-Zhao}
	Y. Yang, G. Zhao, \emph{ The cohomological Hall algebra of a preprojective algebra}, Proc. Lond. Math. Soc. (3) \textbf{116} (2018), no. 5, 1029–1074.
	
	\bibitem{Yoshioka}
	K. Yoshioka, \emph{Moduli spaces of stable sheaves on Abelian surfaces}, Math. Ann. \textbf{321} (2001), 817--884.
	
	\bibitem{Yoshioka2}
	K. Yoshioka, \emph{Chamber structure of polarizations and the moduli of stable sheaves on a
	ruled surface}, Internat. J. Math. \textbf{7} (1996), no. 3, 411–431. 
	
	\bibitem{Yoshioka3}
	K. Yoshioka, \emph{Twisted stability and Fourier--Mukai transform II}, Manuscripta Math. \textbf{110} (2003), no. 4, 433--465.
	
\end{thebibliography}
\end{document}